\documentclass[12pt,a4paper, reqno]{amsart}
\usepackage{}
\usepackage{txfonts}
\usepackage{color}
\usepackage{url}
\usepackage[all]{xypic}
\usepackage{graphicx}
\usepackage[utf8]{inputenc}
\usepackage{extarrows}
\usepackage{wasysym}
\usepackage{amsmath,amsthm}
\usepackage{fourier}
\usepackage{bbm}
\usepackage{amsfonts}
\usepackage{amsxtra}
\usepackage{amssymb}
\usepackage{amscd}
\usepackage{amsthm}
\usepackage{mathrsfs}
\usepackage{leqno}
\usepackage{multirow}
 \usepackage{color}
\usepackage{amsthm}  
\usepackage[backref=page]{hyperref}
\usepackage{amsmath}
\allowdisplaybreaks[4]

\numberwithin{equation}{section}
\newtheorem*{theorem*}{Theorem}
\newtheorem{lemma}{Lemma}[section]
\newtheorem{proposition}[lemma]{Proposition}
\newtheorem{remark}[lemma]{Remark}
\newtheorem{example}[lemma]{Example}

\newtheorem{theorem}[lemma]{Theorem}
\newtheorem{definition}[lemma]{Definition}

\newtheorem{corollary}[lemma]{Corollary}

\newtheorem*{question*}{Question}
\newtheorem*{assumption*}{Assumption}
\newtheorem*{axiom*}{Axiom}

\newtheorem*{theorem*1}{Theorem B}
\newtheorem*{theorem*2}{Theorem C}
\baselineskip 20pt \textwidth 18cm  \theoremstyle{plain}
\newtheorem*{proposition*1}{Proposition A}

\newcommand{\Hom}{\operatorname{Hom}}

\renewcommand{\Im}{\operatorname{Im}}

\newcommand{\ind}{\operatorname{ind}}

\newcommand{\Ind}{\operatorname{Ind}}

\newcommand{\Res}{\operatorname{Res}}

\newcommand{\Ha}{\operatorname{H}}
\newcommand{\Oa}{\operatorname{O}}

\newcommand{\sgn}{\operatorname{sgn}}

\newcommand{\C}{\mathbb C}

\newcommand{\N}{{\mathbb N}}
\newcommand{\Q}{\mathbb Q}
\newcommand{\R}{\mathbb R}
\newcommand{\Za}{\mathbb Z}

\newcommand{\T}{\mathbb T}
\newcommand{\Z}{\mathbb Z}

\newcommand{\Gal}{\operatorname{Gal}}
\newcommand{\GL}{\operatorname{GL}}

\newcommand{\GSp}{\operatorname{GSp}}

\newcommand{\SL}{\operatorname{SL}}
\newcommand{\SO}{\operatorname{SO}}
\newcommand{\Sp}{\operatorname{Sp}}

\newcommand{\SU}{\operatorname{SU}}
\newcommand{\U}{\operatorname{U}}

\newcommand{\Span}{\operatorname{Span}}

\newcommand{\Ree}{\operatorname{Re}}

\newcommand{\diag}{\operatorname{diag}}

\newcommand{\PGL}{\operatorname{PGL}}
\newcommand{\supp}{\operatorname{supp}}

\newcommand{\Arg}{\operatorname{Arg}}

\begin{document}
\title{Siegel modular  forms associated to  Weil representations: $\SL_2(\R)\&\GL_2(\R)$ cases }
\author{Chun-Hui Wang}
\address{School of Mathematics and Statistics\\Wuhan University \\Wuhan, 430072,
P.R. CHINA}
\email{cwang2014@whu.edu.cn}
\begin{abstract}
We investigate explicit modular forms of weights $1/2$ and $3/2$—classical, minus, and fermionic theta series—arising from the classical Weil representation associated to $\SL_2(\R)$ via the $2$-cocycles of Rao, Kudla, Perrin, Lion--Vergne and Satake--Takase. We reorganize these forms using (tensor) induction, and  subsequently extend our study to the similitude group $\GL_2(\R)$.
\end{abstract}
\maketitle
\setcounter{secnumdepth}{5}
\tableofcontents{}
\section{Introduction}
 \subsection{Notations and conventions}
 Let $\R$ and $\C$ denote  the usual real numbers and complex numbers. Let $i=\sqrt{-1}\in \C$. For an element $z=u+iv\in \C$ with $u,v\in \R$, denote $u=\Ree z$, $v=\Im z$, $\overline{z}=u-iv$ and $|z|^2=u^2+v^2$. The principal argument function $\Arg: \C \setminus \{0\} \to  [-\pi, \pi)$ assigns to each $z \in \C \setminus \{0\}$ its uniquely defined angle $\Arg z\in [-\pi, \pi)$. 
 Define the domains:  $\mathbb{H}=\{ x+yi\in \C\mid y> 0\}$ and   $\mathbb{H}^{\pm}=\{ x+yi\in \C\mid y\neq 0\}$.
Let $\U(\C)=\{ \cos t+ i\sin t \in \C\mid -\pi\leq t <\pi\}$.

 Let $(W=\R^2, \langle, \rangle)$ be a symplectic vector space  of  dimension $2$   over  $\R$, endowed with the symplectic form:
 \[\langle v, v'\rangle=x_1x_1^{'\ast}-x_1'x_1^{\ast},\] 
 for $v=(x_1, x_1^{\ast}), v'=(x_1', x_1^{'\ast})\in \R^2$.  Let $\{e_1=(1, 0),e_1^{\ast}=(0, 1)\}$ be a symplectic basis of $\R^2$. Let $X=\Span_\R\{ e_1\}$, $X^{\ast}=\Span_\R\{e_1^{\ast}\}$. Let $\Ha(W)=\R^2\oplus \R$ denote the  Heisenberg group, defined  by
\[(v, t)\cdot(v',t')=(v+v', t+t'+\tfrac{\langle v,v'\rangle}{2}),\]
for $ v, v'\in \R^2, t,t'\in \R$.  
In $\SL_2(\R)$, we denote  $P(\R)=\{ g=\begin{pmatrix}a & b\\ 0& a^{-1}\end{pmatrix} \in\SL_2(\R) \}$, $ P_{>0}(\R)=\{ g=\begin{pmatrix}a & b\\ 0& a^{-1}\end{pmatrix} \in P(\R)\mid a>0 \}$, $N(\R)=\{ g=\begin{pmatrix}1 & b\\ 0& 1\end{pmatrix} \in\SL_2(\R) \}$, $N_-(\R)=\{ g=\begin{pmatrix}1 & 0\\ c& 1\end{pmatrix} \in\SL_2(\R) \}$, $\SO_2(\R)=\{ g= \begin{pmatrix} \cos t & -\sin t\\ \sin t &  \cos t\end{pmatrix}\mid -\pi \leq t <\pi\}$. Let $\SL_2^{\pm}(\R)=\{ g\in \GL_2(\R)\mid \det g=1 \textrm{ or} -1\}$.  In $\SL_2^{\pm}(\R)$, let   $P^{\pm}_{>0}(\R)=\{ g=\begin{pmatrix} a & b\\ 0& a^{-1} \det g  \end{pmatrix} \in\SL^{\pm}_2(\R)\mid a>0\}$,  $\Oa_2(\R)=\{ g= \begin{pmatrix} \cos t & \sin t\\ \mp\sin t & \pm \cos t\end{pmatrix}\mid -\pi \leq t < \pi\}$,
 $\SL_2^{\pm}(\Z)=\{ g=\begin{pmatrix}a & b\\ c& d\end{pmatrix} \in \SL_2^{\pm}(\R)\mid a,b,c,d\in \Z\}$, $F_2=\langle s(-1)\rangle$ for $s(-1)=\begin{pmatrix} 1 & 0\\ 0& -1\end{pmatrix}\in \SL^{\pm}_2(\R)$. Let  $\Ha^{\pm}(W)=F_2\ltimes (\R\oplus \R \oplus \R)$, with the action: $(x, y; t)^{s(-1)}=(x,-y; -t)$. 

 Let $V=\C$, and let $\Ha(V)=\C\oplus \R$ denote another   Heisenberg group with the multiplication: $$(z, t)(z',t')=(z+z', t+t'+\tfrac{1}{2}\Im(\overline{z}z')).$$
 Let $\Gal(\C/\R)=\langle \sigma\rangle$ act on $\Ha(\C)$ by  $(z, t)^{\sigma}=(z^{\sigma}, -t)$. Form the semi-direct product  $\Gal(\C/\R)\ltimes \Ha(\C)$ with this action.
 
   Let $(\,\cdot\,,\cdot\,)_{\mathbb R}\colon \mathbb R\times \mathbb R\to\{\pm 1\}$ denote the Hilbert symbol(extending the classical definition for use), given by  
\[
(a,b)_{\mathbb R}= 
\begin{cases}
-1,& a<0\ \text{and}\ b<0,\\
1,& \text{otherwise}.
\end{cases}
\]
  For any real number \(a\), define  
\[
((a)) := 
\begin{cases}
0,& \text{if } a \in \mathbb Z,\\
a - \lfloor a \rfloor - \tfrac12,& \text{otherwise}.
\end{cases}
\]
For coprime integers \(c,d\) with \(c>0\), the Dedekind sum \(s(d,c)\) is defined by  
\[
s(d,c)=\sum_{k=1}^{c}\Bigl(\!\Bigl(\frac{k}{c}\Bigr)\!\Bigr)\Bigl(\!\Bigl(\frac{kd}{c}\Bigr)\!\Bigr).
\]
For elements in $\GL_2(\R)$, we denote by    $\omega=\begin{pmatrix}0 & -1\\ 1& 0\end{pmatrix}$, $u(b)=\begin{pmatrix}
  1&b\\
  0 & 1
\end{pmatrix}$,  $u_-(c)=\begin{pmatrix}
  1&0\\
  c & 1
\end{pmatrix}$, $h(a)=\begin{pmatrix}
  a&0\\
  0 & a^{-1}
\end{pmatrix} $, $h_{\epsilon}=\begin{pmatrix}
  1&0\\
  0& \epsilon
\end{pmatrix}$.

If $\psi$ is a non-trivial character of $\R$ and $a\in \R$,   we will  write   $\psi^a$ for the character: $t\longrightarrow \psi(at)$.  We will let  $\psi_0$ denote the fixed character of $\R$ defined as: $ t \longmapsto e^{2\pi it}$, for $t\in \R$. Let $\mu_n=\langle e^{\tfrac{2\pi i}{n}}\rangle$,  $ e^{\tfrac{2\pi i}{n}} \in \C^{\times}$. Let $\mathbb{T}=\{ e^{i\theta}\mid \theta \in \R\}=\U(\C)$. For a finite set  $G$, let $|G|$ denote the cardinality of $G$. On $\R$, let $du$ denote the Lebsegue measure. On $\C$, let  $dz=du+idv$, and $d(z)=dudv$, unless overstated. 

\subsection{Weil representations}
Let $\psi$ be a non-trivial continuous unitary character of $\R$.  According to the Stone-von Neumann's theorem, there exists  only one unitary irreducible complex representation of $\Ha(W)$ with central character $\psi$, known as the Heisenberg representation. Details are in Section  \ref{Weilmodel}. 

 According to Weil's work, the Heisenberg representation  can be extended to  a projective representation of $\SL_2(\R)$, and then  to an actual representation of a $\C^{\times}$-covering group over $\SL_2(\R)$. This group is called the Metaplectic group, and the representation is called the Weil representation.   In the real field case,  this special  central cover can descend  to an  $8$-degree or  $2$-degree cover over $\SL_2(\R)$. There are three common models for realizing the Weil representation: the Sch\"odinger model, the Lattice model and the Fock model.  Details are in Section \ref{Weilmodel}.
 
 Various cocycles are linked to  Weil representations. Each has unique characteristics and historical origins.  In the present context,  $\widetilde{c}_{X^{\ast}}$ is a $\mu_8$-valued cocycle on $\SL_{2}(\R)$, whereas $\overline{c}_{X^{\ast}}$ represents a $\mu_2$-valued cocycle  on the same group. Their difference stems from the constant $m_X^{\ast}$ 
and the relation established in  Lem. \ref{relacto}(3).   Neither of these cocycles is continuous.  For further details on this aspect, we refer the reader to Rao \cite{Ra}, Kudla \cite{Ku}, Perrin \cite{Pe}, and Lion-Vergne \cite{LiVe}. We mention that these works also treat the case of higher-rank symplectic groups.    In \cite{Ta1}, Takase introduced another real analytic cocycle $\widehat{c}_{z}$ linked to  the Fock model of the Weil representation, which  is useful in studying the irreducible $\SO_2(\R)$-components of the Weil representation. We reorganize their results in Sections \ref{TRCGSLR} and \ref{Weilmodel}. For our purpose, we extend the $2$-cocycles and Weil representations from $\SL_2(\R)$ to  $\GL_2(\R)$ in  Sections \ref{TrcgGLR} and \ref{chap4exWe}, following Barthel's work  \cite{Ba} on the $p$-adic field.  For a deeper analysis of the Gobal field case, we direct the reader to Waldspurger's seminal works \cite{Wa1, Wa2}.

Throughout this paper, lowercase symbols ($\widetilde{c}_{X^{\ast}}$, $\overline{c}_{X^{\ast}}$, $\widehat{c}_{z}$) indicate $2$-cocycles on $\SL_2(\R)$, while uppercase symbols ($\widetilde{C}_{X^{\ast}}$, $\overline{C}_{X^{\ast}}$, $\widehat{C}_{z}$) indicate their counterparts on $\GL_2(\R)$. The associated central extension groups are denoted by $\widetilde{\SL}_2(\R)$, $\overline{\SL}_2(\R)$, $\widehat{\SL}^z_2(\R)$, $\widetilde{\GL}_2(\R)$, $\overline{\GL}_2(\R)$, and $\widehat{\GL}^z_2(\R)$, respectively. When $z=z_0$, we  abbreviate $\widehat{\SL}_2(\R)$ and $\widehat{\GL}_2(\R)$ for $\widehat{\SL}^{z_0}_2(\R)$ and $\widehat{\GL}^{z_0}_2(\R)$, respectively. The notations for the corresponding central covering subgroups follow the same convention.

Lowercase $\pi$  always denotes the Weil representation of $\SL_2$, while uppercase $\Pi$ denotes that of $\GL_2$. 
  \subsection{ Modular forms of half-integral weight}\label{siegelmod} Theta functions, as is well known, have a long and rich history. In what follows, we mention only the papers and results most relevant to our work. For broader historical context and recent developments, we refer the reader to the introduction in \cite{Str}.

  Let $z\in \mathbb{H}$.  According to D. Zagier's book \cite[p.27]{Za}, the theta series, the minus theta series, and the fermionic theta series are defined respectively by:
  $$\theta(\tfrac{z}{2})= \sum_{n\in \Z} e^{i   \pi z n^2 },  \quad \theta^{M}(\tfrac{z}{2})=\sum_{n\in \Z} (-1)^{n}e^{i   \pi z n^2 }, \quad \theta^F(\tfrac{z}{2})= \sum_{n\in \Z} e^{i   \pi z(n+\tfrac{1}{2})^2} .$$
Moreover, these satisfy the identity(cf. \cite[pp. 27--30]{Za})
  \[\theta^{M}(\tfrac{z}{2})\cdot \theta(\tfrac{z}{2}) \cdot \theta^F(\tfrac{z}{2})=2\eta^3(z),\]
  where $\eta(z)$ is the Dedekind eta-function. By Shimura's classical  paper \cite{Sh} on modular forms of half-integral weight,  the theta series $\theta(z)$ is a modular form of weight $1/2$ for the group $\Gamma_0(4)$ satisfying the equality: 
  \begin{theorem}\label{shimura}
$\theta(\tfrac{az+b}{cz+d})=\nu(r)\sqrt{cz+d} \theta(z)$, for $r=\begin{pmatrix} a& b\\ c&d\end{pmatrix}\in \Gamma_0(4), \nu( r)= \left(\frac{c}{d}\right)\epsilon^{-1}_d$.
\end{theorem}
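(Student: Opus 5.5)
The plan is to follow the classical route: a Jacobi-type inversion formula for a twisted theta function, combined with an explicit evaluation of quadratic Gauss sums. Here $\theta(z)=\sum_{n\in\Z}e^{2\pi i n^2 z}$, with the principal branch of $\sqrt{cz+d}$. The symbol $\left(\frac{c}{d}\right)$ is Shimura's extended Jacobi symbol. We set $\epsilon_d=1$ for $d\equiv 1 \bmod 4$ and $\epsilon_d=i$ for $d\equiv 3\bmod 4$.

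\textbf{Reductions.} Both sides are compatible with $r\mapsto -r$ and with the translations $u(1)$. Indeed $\theta(z+1)=\theta(z)$, and $\nu$ is invariant under $d\mapsto d+4c$ by the conventions for $\left(\frac{c}{d}\right)$. So we may assume $c>0$. The case $c=0$ is trivial, since then $r=\pm u(b)$.

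\textbf{Fourier side.} Since $4\mid c$ and $\gcd(c,d)=1$, $d$ is odd. Write $n=m+2c\ell$ with $m \bmod 2c$, which splits $\theta(rz)$ into residue classes. Set $w=cz+d$, so that $\frac{az+b}{cz+d}=\frac{a}{c}-\frac{1}{c w}$. Each class becomes a shifted Gaussian sum
\[
e^{2\pi i a m^2/c}\sum_{\ell\in\Z} e^{-2\pi i (m+2c\ell)^2/(cw)} .
\]
Poisson summation in $\ell$ (Jacobi inversion $\sum_\ell e^{-\pi t(\ell+x)^2}=t^{-1/2}\sum_k e^{-\pi k^2/t+2\pi i kx}$, analytically continued to $\Ree t>0$) turns this into a factor $\sqrt{w/(4ic)}$ times a sum over $k$ of $e^{2\pi i k^2 w/(16c)}\,e^{2\pi i km/(2c)}$. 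Regrouping the $m$-sum then gives
\[
\theta(rz)=\sqrt{cz+d}\,\sum_k G_k\, e^{2\pi i k^2 z/16}\cdots
\]
Only $k\equiv 0\bmod 4$ survives, via the orthogonality relation coming from $4\mid c$. The coefficient of each term is a normalized Gauss sum
\[
\frac{1}{\sqrt{-2ic}}\sum_{m \bmod 2c} e^{2\pi i (a m^2 + km + dk^2/4\cdots)/c}.
\]
After completing the square using $ad\equiv 1 \bmod c$, this coefficient becomes independent of $k$ and equals $\frac{1}{\sqrt{-2ic}}\,G(a,2c)$, up to bookkeeping. The result is $\theta(rz)=\kappa\,\sqrt{cz+d}\,\theta(z)$ for an explicit constant $\kappa$.

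\textbf{Identifying $\kappa$.} This is the main obstacle. One must show $\kappa=\left(\frac{c}{d}\right)\epsilon_d^{-1}$, with the correct branch of $\sqrt{\;}$. The approach is to write $c=2^s c'$ with $c'$ odd. The generalized Gauss sum then factors via the Chinese remainder theorem into an odd part $G(a\cdot 2^{s+1},c')$ and a $2$-power part. The odd part is evaluated by Gauss's formula $\left(\frac{\cdot}{c'}\right)\epsilon_{c'}\sqrt{c'}$. The $2$-power part is evaluated by the standard formula $(1+i^{a})\left(\frac{2^{s+1}}{a}\right)\sqrt{2^{s+1}}$, up to conjugation. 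Quadratic reciprocity then converts the resulting Jacobi symbols in $a$ and $c'$ into $\left(\frac{c}{d}\right)$, using $ad\equiv1$. The identity $\epsilon_a\epsilon_{c'}\cdots$ combines the $\epsilon$'s into $\epsilon_d^{-1}$.

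\textbf{Fallback.} As an alternative that avoids sign bookkeeping, the multiplier system of $\theta$ can be matched on generators. Observe that $j(r,z)=\theta(rz)/\theta(z)$ satisfies $j^2=\pm(cz+d)$, by the weight-one transformation of $\theta^2$. The function $r\mapsto \theta(rz)/(\sqrt{cz+d}\,\theta(z))$ is a character-like multiplier, and one can verify that $\nu$ satisfies the same cocycle relation, using the sign convention $\sigma(r_1,r_2)$ from the branch of the square roots. It then suffices to check agreement on $u(1)$ and $u_-(4)$. For $u_-(4)$, the identity $\theta(\frac{z}{4z+1})=\sqrt{4z+1}\,\theta(z)$ follows from $\theta(-1/(4z))=\sqrt{-2iz}\,\theta(z)$ applied twice. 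Here $\Gamma_0(4)$ modulo $\pm1$ is generated by $u(1)$ and $u_-(4)$. The delicate point in this fallback is verifying that Shimura's $\nu$ obeys the branch cocycle; that is again reciprocity-type bookkeeping, so either way the Gauss-sum sign computation is the crux.
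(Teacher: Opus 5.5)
Your main route is sound. It is the classical analytic proof, and it differs genuinely from what the paper does. The paper does not prove Theorem~\ref{shimura}; it quotes Shimura and later re-derives the identity as Proposition~\ref{thetashi}.

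\textbf{Your route versus the paper's.} You apply Poisson summation to $\theta$ at a fixed $r$. This collapses the whole multiplier into one normalized Gauss sum of modulus $c$, whose value you then evaluate.

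The paper instead realizes $\theta(z/2)$ as the value at $0\in W$ of the lattice-model image $\theta_{L,X^{\ast}}$ of the Weil-translated Gaussian (Section~\ref{gammatheta}). The $\Gamma_\theta$-equivariance $\Pi_{L,\psi}(r)f(w)=\epsilon_r f(wr)$ of (\ref{chapeq8}), evaluated at $w=0$, gives the transformation law with multiplier $\lambda=m_{X^{\ast}}\widetilde{\beta}^{-1}$. Lemma~\ref{gammar} evaluates $\lambda$ from the Gauss-sum formulas of Lemma~\ref{chap8betacd}. Conjugation by $\mathfrak{w}=\diag(2,1)$ then carries $\Gamma(2)$ onto $\Gamma_0(4)$ and yields $\left(\frac{c}{d}\right)\epsilon_d^{-1}$ (Section~\ref{Examples11}).

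Structurally, the paper's argument is your fallback, with your ``delicate point'' supplied by cocycle theory:
\begin{itemize}
\item the branch cocycle of $\sqrt{cz+d}$ is Rao's $\overline{c}_{X^{\ast}}$ (the lemma following Remark~\ref{remachoArg});
\item $\lambda$ obeys exactly that cocycle on $\Gamma_\theta$ (Lemma~\ref{mutirr12}, resting on Lion--Vergne's trivialization in Lemma~\ref{spgamma12}(1));
\item the scalars $\epsilon_r$ are pinned down by checking the character $r\mapsto\widetilde{\beta}(r)\epsilon_r$ on generators.
\end{itemize}
Your proof is elementary and self-contained, but all sign and branch bookkeeping sits in a single explicit Gauss-sum evaluation. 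The paper's proof imports Rao's and Lion--Vergne's computations. In exchange it packages the multiplier as a genuine character $\overline{\lambda}$ of $\overline{\Gamma}_\theta$. That is what the paper needs for the extension to $\Gamma_0(4)^{\kappa}$ and for the induced and tensor-induced versions.

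\textbf{Fixing your constants.} With $w=cz+d$, the Poisson prefactor is $\sqrt{w/(8ic)}$, not $\sqrt{w/(4ic)}$. The constant comes out as
\[
\kappa=\frac{1}{\sqrt{2ic}}\sum_{m \bmod c}e^{2\pi i a m^{2}/c}.
\]
Your $\sqrt{-2ic}$ would be off by a root of unity. The case $r=u_-(4)$ is a good check: there the sum is $2+2i=\sqrt{8i}$, so $\kappa=1$.

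\textbf{A shorter endgame.} You can avoid the full CRT chase. For $4\mid c$ and odd $a$,
\[
\sum_{m\bmod c}e^{2\pi i a m^2/c}=(1+i)\,\epsilon_a^{-1}\left(\frac{c}{|a|}\right)\sqrt{c},
\]
so $\kappa=\epsilon_a^{-1}\left(\frac{c}{|a|}\right)$. Since $ad\equiv 1\pmod c$, you get $a\equiv d\pmod 4$. You also get $\left(\frac{c}{|a|}\right)=\left(\frac{c}{|d|}\right)$, because $n\mapsto\left(\frac{c}{|n|}\right)$ is a character modulo $c$ when $c>0$ and $4\mid c$. This is Shimura's $\left(\frac{c}{d}\right)\epsilon_d^{-1}$.

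\textbf{A minor slip.} Right multiplication by $u(1)$ sends $d\mapsto d+c$, not $d+4c$. This is harmless, since only the reduction to $c>0$ via $r\mapsto -r$ is actually used.
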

In Lion-Vergne's book \cite{LiVe}, the authors approach theta series via Weil representations. Indeed, in \cite[Sect. 2.4.13, Theorem]{LiVe}, they state the result for $\theta(\tfrac{z}{2})$ rather than $\theta(z)$.  According to the theory of modular forms(cf. \cite{DiSh}), these two functions differ only by a certain slash operator.     Let:
\begin{itemize}
\item[(1)] $\theta^{1/2}(z) =\theta(\tfrac{z}{2})=\sum_{n \in \mathbb{Z}}  e^{i\pi z n^2}$;
\item[(2)] $\Gamma_{\theta}=\{ g=\begin{pmatrix} a & b \\ c& d\end{pmatrix} \in \SL_2(\Z) \mid  a c \equiv 0(\bmod 2),  b d\equiv 0(\bmod 2) \}$;
\item[(3)] $\lambda(r)=\left\{ \begin{array}{cl} \left(\frac{2c}{d}\right)\epsilon^{-1}_d, & r=\begin{pmatrix} a& b\\ c& d\end{pmatrix}\in \Gamma(2),\\ \left(\frac{2d}{c}\right)\epsilon_ce^{-\tfrac{i \pi}{4}}(c,d)_{\R},& r=\begin{pmatrix} a& b\\ c& d\end{pmatrix} \in \Gamma_{\theta} \setminus \Gamma(2).\end{array} \right.$
\end{itemize}
\begin{theorem}[{\cite[Sect. 2.4.13, Theorem]{LiVe}}]
$\theta^{1/2}(\tfrac{az+b}{cz+d})=\lambda(r) \sqrt{cz+d}  \,\theta^{1/2}(z)$, for $ r=\begin{pmatrix} a& b\\ c& d\end{pmatrix}\in \Gamma_{\theta}$, $z\in \mathbb{H}$.
\end{theorem}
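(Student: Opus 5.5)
Since the statement is only about $\theta^{1/2}(z)=\theta(\tfrac z2)$, the plan is to deduce it from Theorem \ref{shimura} (Shimura's transformation law for $\theta$ on $\Gamma_0(4)$), together with the generators of $\Gamma_\theta$.

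\textbf{Conjugation to $\Gamma_0(4)$.} Put $\delta=\diag(1,2)$ up to scaling, so that $\theta^{1/2}(z)=\theta(z/2)$. For $r=\begin{pmatrix} a&b\\ c&d\end{pmatrix}$, a direct computation gives
\[
\tfrac12\cdot\tfrac{az+b}{cz+d}=\tfrac{a(z/2)+b/2}{2c(z/2)+d}.
\]
So $r$ corresponds to $r'=\begin{pmatrix} a&b/2\\ 2c&d\end{pmatrix}$. This matrix lies in $\Gamma_0(4)$ exactly when $b$ is even and $c$ is even, i.e.\ when $r\in\Gamma^0(2)\cap\Gamma_0(2)$. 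That set contains $\Gamma(2)$, and since $ad-bc=1$ forces $a,d$ odd, it equals $\Gamma(2)$ here. For $r\in\Gamma(2)$, Theorem \ref{shimura} then yields
\[
\theta^{1/2}(rz)=\left(\tfrac{2c}{d}\right)\epsilon_d^{-1}\sqrt{2c(z/2)+d}\;\theta^{1/2}(z),
\]
with $2c(z/2)+d=cz+d$. This is the first case of $\lambda$, provided the branch of $\sqrt{cz+d}$ is the one used by Shimura (principal branch, argument in $(-\pi/2,\pi/2]$); I will check that the conventions agree.

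\textbf{The case $\Gamma_\theta\setminus\Gamma(2)$.} Here $r$ has $c,b$ odd and $a,d$ even (the parity pattern of $\omega$). Write $r=r_1\omega$ with $r_1=r\omega^{-1}=\begin{pmatrix} b&-a\\ d&-c\end{pmatrix}$. The parity of $r_1$ is $\begin{pmatrix}\text{odd}&\text{even}\\ \text{even}&\text{odd}\end{pmatrix}$, so $r_1\in\Gamma(2)$. The other ingredient is the inversion formula
\[
\theta^{1/2}(-1/z)=\sqrt{z/i}\;\theta^{1/2}(z),
\]
which follows from Poisson summation (or from Theorem \ref{shimura} via the Fricke involution). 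Combining the two gives
\[
\theta^{1/2}(rz)=\left(\tfrac{2d}{-c}\right)\epsilon_{-c}^{-1}\sqrt{d(-1/z)-c}\,\sqrt{z/i}\;\theta^{1/2}(z),
\]
using that $r_1$ has lower row $(d,-c)$.

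\textbf{Simplification (the main obstacle).} The remaining work is to identify this multiplier with $\left(\tfrac{2d}{c}\right)\epsilon_c e^{-i\pi/4}(c,d)_\R$. The argument-chasing goes as follows:
\[
\sqrt{(-c z+d)/z}\,\sqrt{z/i}=\pm e^{-i\pi/4}\sqrt{\cdots}.
\]
One has to track how $\sqrt{-cz+d}$ compares with $\sqrt{cz+d}$; note the sign flip from $-c$ (one may replace $r$ by $-r$ to normalize $c>0$, using $\theta^{1/2}(z)$ invariance under $-I$ and the branch behaviour). Then one applies the Jacobi symbol conventions, namely $\left(\tfrac{m}{-n}\right)$ versus $\left(\tfrac mn\right)$, and $\epsilon_{-c}$ versus $\epsilon_c$. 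Quadratic reciprocity will be needed to swap $\left(\tfrac{2d}{c}\right)$-type symbols, since $d$ is even here while $c$ is odd.

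I expect this sign bookkeeping to be the hardest step: choosing branches, handling the cases $c>0$ and $c<0$ with $d$ of either sign, and showing that the residual sign is exactly the Hilbert symbol $(c,d)_\R$. I would do it case by case on the signs of $c$ and $d$. As a sanity check, taking $r=\omega$ should give $\lambda(\omega)=e^{-i\pi/4}$, since $\sqrt{z/i}=e^{-i\pi/4}\sqrt z$.
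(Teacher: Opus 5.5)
Your argument is correct, and it takes a different route from the paper. You take Shimura's law on $\Gamma_0(4)$ as a black box, conjugate by $\diag(1,2)$ to get the $\Gamma(2)$ case, and reach the coset $\Gamma(2)\omega$ through Jacobi inversion. The paper never uses Shimura. It writes $\theta^{1/2}(g\,i)$ as the value at $0$ of $\theta_{L,X^{\ast}}\bigl(J_{1/2}(g,z_0)\pi_{X^{\ast},\psi}(g)A\bigr)$. It uses that for $r\in\Gamma_\theta$ the lattice-model operator acts by $f(w)\mapsto\epsilon_r f(wr)$ with $\epsilon_r=\widetilde\beta(r)^{-1}$, which is checked on generators. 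The cocycle identity for $J_{1/2}$ then gives the law with $\lambda(r)=m_{X^{\ast}}(r)\widetilde\beta(r)^{-1}$ (Section~\ref{gammatheta}). The explicit shape of $\lambda$ comes from evaluating the symplectic Gauss sums $G(d,c)$ (Lemmas~\ref{chap8betacd} and~\ref{gammar}, and the Remark after the latter).

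Your proof is much shorter, but all its arithmetic content (Gauss sums, reciprocity) sits inside Shimura's theorem. It also produces only the identity, not the structural fact the paper uses afterwards: $\overline\lambda$ is a character of $\overline\Gamma_\theta$ coming from the metaplectic cocycle. That fact is what drives the induction and tensor-induction constructions, the $\chi$-twists, the minus and fermionic variants, and the $\GL_2$ extension.

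The step you call the main obstacle is short, and no reciprocity is needed, because case 1 applied to the matrix with lower row $(d,-c)$ already has numerator $2d$. For $d\neq0$, three facts suffice. First, $\left(\frac{2d}{-c}\right)=\sgn(d)\left(\frac{2d}{c}\right)$. Second, $\epsilon_{-c}^{-1}=-i\,\epsilon_c$ for odd $c$. Third, $\sqrt{-d/z-c}\,\sqrt{z/i}=\pm e^{i\pi/4}\sqrt{cz+d}$, since the radicands multiply to $i(cz+d)$; counting arguments shows the minus sign occurs exactly when $c>0>d$. The total sign $\sgn(d)\cdot(\pm1)$ is then precisely $(c,d)_{\R}$. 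If $d=0$, then $c=\pm1$, and $rz=-1/z+a/c$ with $a/c$ even, so the inversion formula applies directly.

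Two minor points. First, $r\omega^{-1}=\begin{pmatrix}-b&a\\-d&c\end{pmatrix}$, so the matrix you wrote is $r\omega$. This is harmless, since only the M\"obius action enters.

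Second, use Shimura's conventions throughout: $\left(\frac{0}{\pm1}\right)=1$ and $\arg\sqrt{\cdot}\in(-\pi/2,\pi/2]$. The paper later adopts $\Arg\in[-\pi,\pi)$ together with $\left(\frac{0}{-1}\right)=-1$. In the $\Gamma(2)$ line these two changes cancel. In the second line of $\lambda$, however, they give the wrong sign at $(c,d)=(-1,0)$. For $r=-\omega$, the relation $\theta^{1/2}(rz)=\theta^{1/2}(-1/z)$ forces $\lambda(-\omega)=e^{i\pi/4}$, whereas that convention yields $-e^{i\pi/4}$.
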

Note that for $r\in \Gamma(2)$, the above result is comparable with Shimura's Theorem \ref{shimura}.  The function $\lambda(r)$ above  is adopted from \cite[p.42, Prop.2.3.3.]{CoSt}, which coincides with the expression given by Lion-Vergne in \cite[Sect.2.4.9, Thm.]{LiVe}.

Let $N$ be  a positive integer and  $\chi$ is a  primitive Dirichlet character modulo $N$. We say $\chi$  even if $\chi(-1)=1$ and odd if $\chi(-1)=-1$. When  $N=1$, we have  $\chi\equiv1$.  Let $t$ be a positive integer.
 \begin{definition} 
 \begin{itemize}
\item[(1)] If  $\chi$ is even, define $\theta^{+}_{\chi, t}(z) = \sum_{n \in \mathbb{Z}} \chi(n)\, e^{2\pi i tzn^2  }$.
\item[(2)] If $\chi$ is odd, define $\theta^{-}_{\chi, t}(z) = \sum_{n \in \mathbb{Z}} \chi(n)\, n e^{2\pi i t  zn^2 }$.
\end{itemize}
\end{definition}
\begin{theorem}[{\cite[Sect. 2.4.19, Theorem]{LiVe}}]\label{lionver}
Let  $ r=\begin{pmatrix} a& b\\ c& d\end{pmatrix}\in \Gamma_0(4N^2t)$, $z\in \mathbb{H}$. Then:
\begin{itemize}
\item[(1)]  $\theta^{+}_{\chi, t}( \tfrac{az+b}{cz+d}) = \chi(d)  \left(\frac{t}{d}\right)\epsilon^{-1}_d \left(\frac{c}{d}\right) (cz+d)^{1/2}  \,\theta^{+}_{\chi, t}(z)$.
\item[(2)] $\theta^{-}_{\chi, t}( \tfrac{az+b}{cz+d}) = \chi(d)  \left(\frac{t}{d}\right)\epsilon^{-1}_d \left(\frac{c}{d}\right) (cz+d)^{3/2}   \,\theta^{-}_{\chi, t}(z)$.
\end{itemize}
\end{theorem}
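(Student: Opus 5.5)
The plan is to derive both transformation laws from the basic theta transformation by a twisting argument, rather than from the Weil representation machinery. The two series are of the form of Shimura's Theorem \ref{shimura}, twisted by $\chi$ and (in the odd case) carrying the harmonic polynomial $n$.

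\emph{Step 1: partial theta series.} Write $\theta^{\pm}_{\chi,t}(z)=\sum_{h \bmod N}\chi(h)\,\Theta_{h}(z)$, where
\[
\Theta_h(z)=\sum_{n\equiv h \,(N)} P(n)\, e^{2\pi i t z n^2},
\]
with $P(n)=1$ in the even case and $P(n)=n$ in the odd case. Substituting $n=h+Nm$ shows that $\Theta_h(z)$ is a shifted theta series in $m$ for the lattice $N\Z$ with quadratic form $tN^2m^2$. This puts us in the setting of the general theta transformation formula for a positive definite one-dimensional quadratic form, with the pluriharmonic weight $P$. In the Weil representation language of Section \ref{Weilmodel}, this formula is the statement that the Schrödinger-model Weil representation, restricted to $\Gamma_0(4N^2t)$, acts on the finite-dimensional space spanned by the $\Theta_h$ through a finite Weil representation.

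\emph{Step 2: action of the generators of $\Gamma_0(4N^2t)$.} For $r=\begin{pmatrix} a& b\\ c& d\end{pmatrix}\in\Gamma_0(4N^2t)$, I would prove
\[
\Theta_h(rz)=\left(\tfrac{c}{d}\right)\left(\tfrac{t}{d}\right)\epsilon_d^{-1}(cz+d)^{k}\,\Theta_{ah}(z),\qquad k=\tfrac12 \text{ or } \tfrac32 .
\]
For $c=0$ this is immediate: $d=\pm1$, and the translation part acts by $e^{2\pi i t b h^2}$, which is trivial modulo the congruence conditions.

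For $c\neq 0$, I would follow Shimura's method. Apply Poisson summation (the inversion $z\mapsto -1/z$) to $\Theta_h(rz)$, writing $rz=\frac{a}{c}-\frac{1}{c(cz+d)}$. Split the sum over $n$ modulo $c$, invert, and collect the resulting quadratic Gauss sum $\sum_{x \bmod c} e\!\left(\frac{a t x^2}{c}+\cdots\right)$. The constraint $4N^2t\mid c$ ensures that the Gauss sum evaluates to $\left(\frac{c}{d}\right)\left(\frac{t}{d}\right)\epsilon_d^{-1}\sqrt{c}$ times a root of unity that cancels. The factor $(cz+d)^{3/2}$ in the odd case arises because the Fourier transform of $x e^{-\pi x^2 y}$ picks up one extra power of the scaling, since $x$ is an eigenfunction of the harmonic oscillator of degree $1$.

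\emph{Step 3: summing over $h$.} Since $ad\equiv1 \pmod N$, we have $\chi(h)=\chi(d)\chi(ah)$. Summing Step 2 over $h$ and reindexing $h\mapsto ah$ yields the factor $\chi(d)$, which gives (1) and (2).

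The main obstacle is Step 2: getting the Gauss sum evaluation with exactly the correct eighth root of unity and the symbol $\left(\frac{t}{d}\right)$, including the cases $d<0$ and the branch convention for $(cz+d)^{1/2}$. Rather than redoing this by hand, I would reduce to Theorem \ref{shimura}. For $\chi$ trivial and $t=1$, the $\theta$-multiplier is already known there. The case of general $t$ follows by conjugating $r$ with $\diag(t,1)$, which maps $\Gamma_0(4t)$ into $\Gamma_0(4)$ and changes $c$ to $tc$, so that $\left(\frac{tc}{d}\right)=\left(\frac{t}{d}\right)\left(\frac{c}{d}\right)$. The twist by $\chi$ I would handle via the Gauss-sum expansion
\[
\chi(n)=\frac{1}{\tau(\bar\chi)}\sum_{u \bmod N}\bar\chi(u)\,e^{2\pi i un/N},
\]
which expresses $\theta_\chi$ through twists by additive characters. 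This only requires the transformation law of $\theta(z+u/N)$-type series, obtained by conjugating $\Gamma_0(4N^2t)$ by the translations $u(u/N)$. For the weight $3/2$ series, I would treat it as $\frac{1}{4\pi i t}$ times the derivative in an auxiliary elliptic variable of the Jacobi theta function $\sum \chi(n)e^{2\pi i(tzn^2+nw)}$ at $w=0$. The Jacobi form transformation then contributes the extra factor $(cz+d)$.
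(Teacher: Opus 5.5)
Your Steps 1 and 3 are sound. In particular, $\chi(h)=\chi(d)\chi(ah)$ from $ad\equiv 1 \pmod N$ is exactly what produces the factor $\chi(d)$. The formula you state in Step 2 is also the true, standard transformation law for partial theta series. Decomposing by residues $n \bmod N$ would therefore be a legitimate alternative route. (The paper does not reprove this result; it cites Lion--Vergne, and its own analogue, Theorem~\ref{mainthm1}, is obtained in the lattice model.)

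The problem is that Step 2 carries the entire content of the theorem, and you never establish it. Your Poisson-summation sketch only asserts that the Gauss sum equals $\left(\frac{c}{d}\right)\left(\frac{t}{d}\right)\epsilon_d^{-1}\sqrt{c}$ ``times a root of unity that cancels.'' That evaluation, including the cases $c<0$, $d<0$ and the branch of $(cz+d)^{1/2}$, is precisely what has to be proved. You then propose to avoid it by reducing to Theorem~\ref{shimura}, and that reduction does not work.

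The reduction fails at the $\chi$-twist. The expansion $\chi(n)=\tau(\bar\chi)^{-1}\sum_{u}\bar\chi(u)e^{2\pi i un/N}$ produces series $\sum_n e^{2\pi i un/N}e^{2\pi i tzn^2}$ with a \emph{linear} twist in $n$. These are Jacobi theta functions evaluated at the torsion points $w=u/N$ of the elliptic variable. Translating $z$ (conjugating by $u(\cdot)$) only ever produces \emph{quadratic} twists $e^{2\pi i un^2/N}$. Since $\sum_u\bar\chi(u)e^{2\pi i un^2/N}=\tau(\bar\chi)\chi(n)^2$, these combine to the $\chi^2$-twisted series, not $\theta^{+}_{\chi,t}$. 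For $\kappa=3$ the corresponding series $\sum_n n\,e^{2\pi i un^2/N}e^{2\pi i tzn^2}$ vanish identically.

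Your Jacobi-form remark for weight $3/2$ has the same problem: it presupposes the transformation law of $\sum_n\chi(n)e^{2\pi i(tzn^2+nw)}$, which is the statement at issue. The missing ingredient is the transformation law of theta functions with characteristics, that is, the action of the Heisenberg group alongside $\SL_2$, combined with elliptic quasi-periodicity.

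This is exactly what the paper's lattice-model computation in Subsection~\ref{gammatheta} supplies. Write the lower-left entry of $r$ as $cN^2$. Then $r$ carries the evaluation point $\frac{k}{N}e^{\ast}$ to $cNk\,e+\frac{kd}{N}e^{\ast}$. Quasi-periodicity of lattice-model functions removes $cNk\,e$ at the cost of a trivial phase; this is where the factor $N^2$ in the level is used. The multiplier then comes from Lion--Vergne's Gauss sums via \eqref{chapeq8}.

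A minor slip: conjugating by $\diag(t,1)$ replaces $c$ by $c/t$, not $tc$. The Jacobi symbol is the same either way.
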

Using the same framework, and aided by the works of Satake–Takase  on Fock models for Weil representations (see \cite{Sa1, Sa2, Ta1, Ta2}), we extend the above results to the minus theta series and the fermionic theta series. The corresponding multiplier systems are computed explicitly.   Let $\kappa=1$ or $3$, $\mathfrak{w}=\begin{pmatrix} t& 0\\ 0&1\end{pmatrix}$. 
 \begin{definition} 
 \begin{itemize}
 \item If  $\chi$ is even, define:
\begin{itemize}
\item[(1)] $\theta^{1/2}_{\chi,t}(z) = \sum_{n \in \mathbb{Z}} \chi(n)\, e^{i\pi tz n^2}$, 
\item[(2)] $\theta^{M, 1/2}_{\chi,t}( z)= \sum_{n \in \mathbb{Z}} (-1)^{n}\chi(n)\, e^{i\pi tz n^2}$, 
\item[(3)] $\theta^{F, 1/2}_{\chi,t}( z)=\sum_{n\in \Z}  \chi(n+\tfrac{N+1}{2}) e^{i   \pi tz (n+\tfrac{1}{2})^2 }$, for odd $N$.
\end{itemize}
 \item If $\chi$ is odd, define:
\begin{itemize}
\item[(1)] $\theta^{3/2}_{\chi,t}(z) = \sum_{n \in \mathbb{Z}} \chi(n)\, n e^{i\pi tz n^2}$,
\item[(2)] $\theta^{M, 3/2}_{\chi,t}( z)= \sum_{n \in \mathbb{Z}} (-1)^{n}\chi(n)\,n e^{i\pi tz n^2} $,
\item[(3)] $\theta^{F, 3/2}_{\chi,t}( z)=\sum_{n\in \Z}  \chi(n+\tfrac{N+1}{2}) (n+\tfrac{1}{2})e^{i   \pi tz (n+\tfrac{1}{2})^2 } $, for odd $N$.
\end{itemize}
\end{itemize}
\end{definition}
\begin{definition}
\begin{itemize}
\item[(1)]  $\nu_{\theta,t}(r)= \left\{ \begin{array}{lcl}  \left(\frac{2ct}{d}\right)\epsilon^{-1}_d, & & r=\begin{pmatrix} a& tb\\ ct^{-1} & d\end{pmatrix}\in \Gamma(2),\\ \left(\frac{2ct}{d}\right)\epsilon^{-1}_de^{-\tfrac{i \pi}{4}}(-c,d)_{\R}, & & r=\begin{pmatrix} a& tb\\ ct^{-1} & d\end{pmatrix}\begin{pmatrix} 0& -1\\ 1& 0\end{pmatrix}\in  \Gamma(2)\omega.\end{array}\right.$
\item[(2)] $\nu_{\theta^M,t}(r)= \left\{ \begin{array}{ll} \left(\frac{2ct}{d}\right)\epsilon^{-1}_d (\tfrac{-1}{d})^{\tfrac{c}{2t}}(-i)^{\tfrac{c}{2t}}, & r=\begin{pmatrix} a& tb\\ ct^{-1} & d\end{pmatrix}\in \Gamma(2),\\ \left(\frac{2ct}{d}\right)\epsilon^{-1}_d (\tfrac{-1}{d})^{\tfrac{c}{2t}}(-i)^{\tfrac{c}{2t}} e^{\tfrac{i \pi}{4}}(-1,dt-c)_{\R}(-c,dt-c)_{\R},  & r=\begin{pmatrix} a& tb\\ ct^{-1} & d\end{pmatrix}\begin{pmatrix} 1& 0\\ -1& 1\end{pmatrix}\in  \Gamma(2)u_-(-1).\end{array}\right.$
\item[(3)] $\nu_{\theta^F,t}(r)= \left\{ \begin{array}{lcl}\left(\frac{2ct}{d}\right)\epsilon^{-1}_d (\tfrac{-1}{d})^{\tfrac{bt}{2}}i^{\tfrac{bt}{2}}, & & r=\begin{pmatrix} a& tb\\ ct^{-1} & d\end{pmatrix}\in \Gamma(2),\\ \left(\frac{2ct}{d}\right)\epsilon^{-1}_d (\tfrac{-1}{d})^{\tfrac{bt}{2}}i^{\tfrac{bt}{2}}e^{\tfrac{i \pi}{4}}, & & r=\begin{pmatrix} a& tb\\ ct^{-1} & d\end{pmatrix}\begin{pmatrix} 1& 1\\0& 1\end{pmatrix}\in  \Gamma(2)u(1).\end{array}\right.$
    \end{itemize}
    \end{definition}
\begin{proposition*1}
\begin{itemize}
\item[(1)]   $\theta^{\kappa/2}_{\chi,t}( \tfrac{az+b}{cz+d})=\nu_{\theta,t}(r)(cz+d)^{\kappa/2}\chi(d)\theta^{\kappa/2}_{\chi,t}(z)$, for  $r=\begin{pmatrix} a& b\\ c & d\end{pmatrix}\in  \mathfrak{w}^{-1}[ \Gamma_{\theta}\cap \Gamma_0(N^2)]\mathfrak{w}$.
\item[(2)]   $\theta^{M, \kappa/2}_{\chi,t}( \tfrac{az+b}{cz+d})=\nu_{\theta^M,t}(r)(cz+d)^{\kappa/2}\chi(d)\theta^{M, \kappa/2}_{\chi,t}(z)$, for  $r=\begin{pmatrix} a& b\\ c & d\end{pmatrix}\in  \mathfrak{w}^{-1}[\Gamma^0(2)\cap \Gamma_0(N^2)]\mathfrak{w}$. 
    \item[(3)]   $\theta^{F, \kappa/2}_{\chi,t}( \tfrac{az+b}{cz+d})=\nu_{\theta^F,t}(r)(cz+d)^{\kappa/2}\chi(d)\theta^{F, \kappa/2}_{\chi,t}(z)$, for  $r=\begin{pmatrix} a& b\\ c & d\end{pmatrix}\in  \mathfrak{w}^{-1}[  \Gamma_0(2)\cap \Gamma_0(N^2)]\mathfrak{w}$.
\end{itemize}
\end{proposition*1}
\begin{proof}
See Section \ref{examples}, Propositions \ref{examthetaN}, \ref{examthetaMNt},\ref{examthetaFNt}. 
\end{proof}  
The first item aligns with Lion-Vergne's theorem (above Thm.~\ref{lionver}) modulo the slash operator corresponding to $\begin{pmatrix} 2& 0\\ 0&1\end{pmatrix}$. To establish this result, we examine and compare three models of the Weil representation—the Schrödinger model, the lattice model, and the Fock model. These modular forms can be interpreted as intertwining operators, a perspective dating back to \cite{Ma}. The underlying methodology can also be applied to the study of Siegel modular forms for higher-rank metaplectic groups, as demonstrated in \cite{Wa3}. To compute the explicit multiplier systems,  we use the corresponding $2$-cocycles  associated  with   $\GL_2(\R)$ to construct the relevant slash operators, following the approach of Lion-Vergne \cite{LiVe}. One can see Section  \ref{examples} for the details.
\subsection{The results}
In the Langlands program, the theory is formulated not only in terms of modular forms, but also  in the language of automorphic representations (cf. \cite{Gan}, \cite{Ku2} for advanced treatments).  Under  the same spirit,  we recast the result above in the language of representation theory and  generate it by  making use of induced representations and tensor-induced representations.    Our treatment of representations is based on  the works of  Bushnell-Henniart \cite{BuHe},  Curtis-Reiner \cite{CuRe},  Kaniuth-Taylor  \cite{KaTa}, and  Serre \cite{Ser}.

Let: 
$$\overline{\lambda}: \overline{\Gamma}_{\theta} \to \mathbb{T}; \quad [r,\epsilon] \longmapsto \lambda(r)\epsilon,$$
$$\overline{\lambda}_{\chi}:\overline{\Gamma}_{\theta} \cap \overline{\Gamma}_0(N^2) \to \mathbb{T}; \quad [r,\epsilon] \longmapsto \lambda(r)\chi(r)\epsilon,$$
$$\overline{\gamma}_{\chi}=\operatorname{Ind}_{\overline{\Gamma}_{\theta} \cap \overline{\Gamma}_0(N^2)}^{\overline{\Gamma}_0(N^2)} [\overline{\lambda}_{\chi}]^{-1}.$$

By Lemmas \ref{lambdaoverlin} and \ref{thetachichar}, both $\overline{\lambda}$ and $\overline{\lambda}_{\chi}$ are characters. Moreover, by Lemmas \ref{gammachiodd} and \ref{exenchigamma}(1), $\overline{\gamma}_{\chi}$ is irreducible if  $N$ is odd or $N\equiv 2\pmod{4}$; whereas by Lemma \ref{exenchigamma}(2), $\overline{\gamma}_{\chi}$ is reducible if $N\equiv 0\pmod{4}$.

\begin{itemize}
\item For odd $N$, define $\overline{M}_{q_1}=[u(N^2),1]$, $\overline{M}_{q_2}=[1,1]$, and $\overline{M}_{q_3}=[u_-(-N^2),1]$; for even $N$, define $\overline{M}_{q_1}=[u(1),1]$ and $\overline{M}_{q_2}=[1,1]$.

\item For odd $N$, define
$$\Theta^{\kappa/2}_{\chi}: \mathbb{H} \longrightarrow M_{1\times 3}(\mathbb{C}); \quad z\longmapsto \Big([\theta^{\kappa/2}_{\chi}]^{\overline{M}_{q_1}}(z),[\theta^{\kappa/2}_{\chi}]^{\overline{M}_{q_2}}(z),[\theta^{\kappa/2}_{\chi}]^{\overline{M}_{q_3}}(z)\Big).$$

\item For even $N$, define
$$\Theta^{\kappa/2}_{\chi}: \mathbb{H} \longrightarrow M_{1\times 2}(\mathbb{C}); \quad z\longmapsto \Big([\theta^{\kappa/2}_{\chi}]^{\overline{M}_{q_1}}(z),[\theta^{\kappa/2}_{\chi}]^{\overline{M}_{q_2}}(z)\Big).$$
\item  $V_{G\to H}$: the  transfer map from $G$ to $H$.
\end{itemize}
\begin{theorem*1}
Let $g\in G=\overline{\Gamma}_0(N^2)$, $h\in H=\overline{\Gamma}_{\theta} \cap  \overline{\Gamma}_0(N^2 )$, $z\in \mathbb{H}$, $\kappa=1$ or $3$.
\begin{itemize}
\item[\rm(1)] $\theta^{\kappa/2}_{\chi}(hz)=J_{\kappa/2}(h,z)\,\overline{\lambda}_{\chi}(h)\,\theta^{\kappa/2}_{\chi}(z)$.
\item[\rm(2)] $\Theta^{\kappa/2}_{\chi}(gz)=J_{\kappa/2}(g,z)\,\Theta^{\kappa/2}_{\chi}(z)\,\overline{\gamma}_{\chi}(g^{-1})$.
\item[\rm(3)] 
\begin{itemize}
\item[\rm(a)] If $N$ is odd, then $[\theta^{\kappa/2}_{\chi}]^{\otimes}(gz)=J_{\kappa/2}(g,z)^3\,[\theta^{\kappa/2}_{\chi}]^{\otimes}(z)\,\overline{\lambda}_{\chi}(V_{G\to H}(g))$.
\item[\rm(b)] If $N$ is even, then $[\theta^{\kappa/2}_{\chi}]^{\otimes}(gz)=J_{\kappa/2}(g,z)^2\,[\theta^{\kappa/2}_{\chi}]^{\otimes}(z)\,\overline{\lambda}_{\chi}(V_{G\to H}(g))$.
\end{itemize}
\end{itemize}
\end{theorem*1}
\begin{proof}
For part (1), see Theorem \ref{mainthm1'}; for part (2), see Theorems \ref{maintheorem1} and \ref{maintheorem1even}; for part (3), see Theorem \ref{Twistedbypro2}.
\end{proof}
In Section \ref{GammathetaN2PM}, we extend the above results to the similitude group $\GL_2(\R)$. The main result is as follows.
 \begin{theorem*2}
 Let $g\in G= \overline{\Gamma}^{\pm}_0(N^2 )$, $h\in H=\overline{\Gamma}^{\pm}_{\theta} \cap  \overline{\Gamma}^{\pm}_0(N^2 )$, $z\in \mathbb{H}^{\pm}$, $\delta=\delta^+$ or $\delta^-$.
 \begin{itemize}
\item[\rm(1)] $\theta^{\pm,\delta}_{\chi}(hz)=J_{\delta}(h,z)\,\theta^{\pm,\delta}_{\chi}(z)\,\overline{\lambda}^{\pm}_{\chi}(h^{-1})$.
\item[\rm(2)] $\Theta^{\pm,\delta}_{\chi}(gz)=J_{\delta}(g,z)\,\Theta^{\pm,\delta}_{\chi}(z)\,\overline{\gamma}^{\pm}_{\chi}(g^{-1})$.
\end{itemize}
\end{theorem*2}
\begin{proof}
See Theorem  \ref{mainthm1234}  for part (1) and Theorem \ref{maintheoremplus}  for part (2).
\end{proof}
The notation and conventions are similar.  The main difference in the similitude case is that we employ the Weil--Deligne group to define the automorphic factor $J_{\delta}(-,z)$ in Section \ref{dewiaf}. Additionally, both $\theta^{\pm,\delta}_{\chi}(z)$ and $\Theta^{\pm,\delta}_{\chi}(z)$ are matrix-valued functions. In Lemma \ref{gammatwodimplus}, we prove that $\overline{\lambda}^{\pm}_{\chi}$ is irreducible. Similarly, by Lemma \ref{irrdgamachi}, $\overline{\gamma}^{\pm}_{\chi}$ is irreducible if $N$ is odd or $N\equiv 2\pmod{4}$, and reducible if $N\equiv 0\pmod{4}$.
\section{The relevant covering groups: $\SL_2(\R)$ case}\label{TRCGSLR}
 Let $\psi=\psi_0^{\mathfrak{e}}$ be a non-trivial character of $\R$, for some $\mathfrak{e}\in \R^{\times}$. In this section, we use the following notations:  $g_1, g_2, g_3=g_1g_2\in \SL_2(\R)$ with $g_i=\begin{pmatrix}
a_i & b_i\\
c_i& d_i\end{pmatrix}$, $g=\begin{pmatrix} a& b\\ c& d\end{pmatrix}\in \SL_2(\R)$, $y\in \R^{\times}$, $z\in \mathbb{H}$.
\subsection{ The cocycles $\overline{c}_{X^{\ast}}$ and $\widetilde{c}_{X^{\ast}}$}  Let us define:
\begin{itemize}
\item[(1)] $x(g)=\left\{\begin{array}{lr} d\R^{\times 2} & \textrm{ if } c=0,\\
c\R^{\times 2} & \textrm{ if } c\neq 0.\end{array}\right.$
\item[(2)]$\gamma(\psi)=\psi_0( \tfrac{\sgn \mathfrak{e}}{8})=e^{\tfrac{(\sgn \mathfrak{e})\pi i}{4}}.$
\item[(3)]$\gamma(a, \psi)= \tfrac{\gamma(\psi^a)}{\gamma(\psi)}.$
\item[(4)]$m_{X^{\ast},\psi}(g)=\left\{\begin{array}{lr} e^{\tfrac{i \pi (\sgn \mathfrak{e})}{4}[1-\sgn(d)]}& \textrm{ if } c=0,\\e^{\tfrac{i \pi (\sgn \mathfrak{e})}{4}[- \sgn(c)]}& \textrm{ if } c\neq 0.\end{array}\right.$
\end{itemize}
If $\psi=\psi_0$, we write $m_{X^{\ast}} $ for $m_{X^{\ast},\psi}$ for simplicity. 
\begin{lemma}\label{relacto}
\begin{itemize}
\item[(1)] $\overline{c}_{X^{\ast}}(g_1, g_2)=(x(g_1), x(g_2))_\R(-x(g_1)x(g_2), x(g_3))_\R$.
\item[(2)] $\widetilde{c}_{X^{\ast}}(g_1, g_2)= \gamma(\psi^{\tfrac{1}{2}c_1c_2c_3})=e^{\tfrac{i \pi (\sgn \mathfrak{e})}{4} \sgn(c_1c_2c_3)}.$
\item[(3)] $\overline{c}_{X^{\ast}}(g_1, g_2)=m_{X^{\ast},\psi}(g_1g_2)^{-1} m_{X^{\ast},\psi}(g_1) m_{X^{\ast},\psi}(g_2) \widetilde{c}_{ {X^{\ast}}}(g_1,g_2)$.
\end{itemize}
\end{lemma}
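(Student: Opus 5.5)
For parts (1) and (2) I will not compute from scratch. They are the specializations to $\SL_2(\R)$ of the cocycle formulas of Rao \cite{Ra} and Kudla \cite{Ku}; Perrin \cite{Pe} and Lion--Vergne \cite{LiVe} treat the same cocycles.

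For (1), I quote Rao's normalized $\mu_2$-valued cocycle attached to $X^{\ast}$. In rank one the invariant $x(g)\in\R^{\times}/\R^{\times 2}$ is the determinant of the relevant block: $c$ if $c\neq0$, and $d$ if $c=0$. Rao's formula then becomes the stated product of Hilbert symbols.

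For (2), the Leray invariant of the triple $(X^{\ast},X^{\ast}g_2^{-1},X^{\ast}g_1)$ in a $2$-dimensional symplectic space is a quadratic form of rank at most one, namely $t\mapsto\tfrac12 c_1c_2c_3t^2$. Its Weil index with respect to $\psi=\psi_0^{\mathfrak e}$ is $e^{\frac{i\pi}{4}\sgn(\mathfrak e)\sgn(c_1c_2c_3)}$. This value is $1$ when one of the $c_i$ vanishes, because the form is then zero; this matches $\sgn 0=0$. So (2) is Rao's (or Perrin's) formula $\widetilde c_{X^{\ast}}=\gamma(\psi\circ q_{\text{Leray}})$ written out.

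The real content is (3), and I will verify it directly. The first step is to rewrite $m_{X^{\ast},\psi}$ in terms of Weil indices. Since $\gamma(a,\psi)=e^{\frac{i\pi}{4}\sgn(\mathfrak e)(\sgn a-1)}$, a short computation gives
\[
m_{X^{\ast},\psi}(g)=\begin{cases}\gamma(x(g),\psi)^{-1}, & c=0,\\ \gamma(\psi)^{-1}\gamma(x(g),\psi)^{-1}, & c\neq0.\end{cases}
\]
The second step uses the standard Weil-index identity
\[
\gamma(a,\psi)\gamma(b,\psi)=\gamma(ab,\psi)(a,b)_\R .
\]
With it, both sides of (3) reduce to products of factors $\gamma(\psi)^{\pm1}$ and Hilbert symbols in $x(g_1),x(g_2),x(g_3)$.

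The third step is a case split according to which of $c_1,c_2,c_3$ vanish. When $c_1=0$ or $c_2=0$, the element $g_3$ is obtained from the other factor by multiplying by a Borel element. Then $x(g_3)$ is explicitly $d_1^{2}x(g_2)$ or $x(g_1)$ up to squares, or $d_1d_2$ when both vanish, and the identity follows by comparing signs. When $c_1,c_2\neq0$ and $c_3=0$, one uses $c_3=c_1a_2+d_1c_2=0$, which gives $\sgn(d_3)$ in terms of $\sgn(c_1c_2)$.

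The main obstacle is the generic case, where all of $c_1,c_2,c_3$ are nonzero. There the left side of (3) is $(c_1,c_2)_\R(-c_1c_2,c_3)_\R$. The $m$-factors on the right contribute
\[
\gamma(\psi)^{-1}\gamma(c_3,\psi)\gamma(c_1,\psi)^{-1}\gamma(c_2,\psi)^{-1},
\]
and $\widetilde c_{X^{\ast}}(g_1,g_2)$ contributes $\gamma(\psi^{\frac12 c_1c_2c_3})$. I would expand everything with the identity above and reduce to a check of signs over the eight sign patterns of $(c_1,c_2,c_3)$. This check is mechanical but is where errors would appear, so I would tabulate it explicitly.
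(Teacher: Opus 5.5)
Your treatment of (1) and (2) matches the paper, which simply quotes Rao for all three parts (p.~364 Remark, Cor.~4.3, and (5.1)). For (3) you take a different route that works: instead of citing Rao's normalization, you check by hand that the three explicit formulas are mutually consistent.

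Your rewriting $m_{X^{\ast},\psi}(g)=\gamma(\psi)^{-1}\gamma(c,\psi)^{-1}$ for $c\neq0$ and $\gamma(d,\psi)^{-1}$ for $c=0$ is exactly Rao's normalizing factor. In the generic case the right side of (3) is $e^{\frac{i\pi}{4}\sgn(\mathfrak e)E}$ with $E=\sgn c_3-\sgn c_1-\sgn c_2+\sgn(c_1c_2c_3)$. This exponent equals $\pm4$ precisely when $\sgn c_1=\sgn c_2\neq\sgn c_3$ and is $0$ otherwise, which is the same condition under which $(c_1,c_2)_\R(-c_1c_2,c_3)_\R=-1$, so your eight-case table closes. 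The case $c_1,c_2\neq0$, $c_3=0$ also closes: $c_2d_3=-c_1$ gives $x(g_3)=-c_1c_2$, and both sides equal $-1$ exactly when $c_1,c_2>0$.

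What your route buys is a self-contained rank-one check that this paper's particular normalizations are compatible with one another. These are the choice $x(g)=d$ when $c=0$, the sign inside $\gamma(\psi^{\frac12c_1c_2c_3})$, and the factor $\sgn\mathfrak e$; a bare citation does not guarantee they match Rao's conventions. In Rao, (3) is the construction and (1) its outcome, whereas you take (1) as input and verify (3). The paper's citation buys brevity and arbitrary rank.

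One concrete slip needs fixing in the Borel cases. If $c_1=0$, then $c_3=d_1c_2$, so $x(g_3)=d_1\,x(g_2)$, not $d_1^2\,x(g_2)$. If $c_2=0$, then $c_3=c_1a_2$, so $x(g_3)=a_2\,x(g_1)$, not $x(g_1)$. Since $m_{X^{\ast},\psi}(g_3)$ depends on $\sgn c_3$, the values you wrote make the comparison fail whenever $d_1<0$ (resp.\ $d_2<0$), where the right side comes out as $\pm i$.

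With the correct values both cases go through. The exponent of $e^{\frac{i\pi}{4}\sgn(\mathfrak e)(\cdot)}$ becomes $(1-\sgn d_1)(1-\sgn c_2)$, resp.\ $(1-\sgn c_1)(1-\sgn d_2)$. The left side becomes $(d_1,c_2)_\R$, resp.\ $(c_1,d_2)_\R$, using $(-a,a)_\R=1$. The two sides agree.
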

\begin{proof}
See \cite[p.364, Remark]{Ra} for (1), \cite[p.359, Coro.4.3]{Ra} for (2) and  \cite[p.360,(5.1)]{Ra} for (3).
\end{proof}
\begin{lemma}
Let $p\in P_{>0}(\R) $. Then $\widetilde{c}_{X^{\ast}}(p, g)=1=\widetilde{c}_{X^{\ast}}( g,p)$ and $\overline{c}_{X^{\ast}}(p, g)=\overline{c}_{X^{\ast}}( g,p)=1$. 
\end{lemma}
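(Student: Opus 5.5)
We verify both statements directly from the explicit formulas in Lemma \ref{relacto}, treating the $\mu_8$-valued cocycle $\widetilde{c}_{X^{\ast}}$ first and then deducing the $\mu_2$-valued one from part (3) of that lemma.

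\emph{The cocycle $\widetilde{c}_{X^{\ast}}$.} Write $p=\begin{pmatrix} a_p & b_p\\ 0 & a_p^{-1}\end{pmatrix}$ with $a_p>0$. Its lower-left entry is $0$. In Lemma \ref{relacto}(2) the value $\gamma(\psi^{\frac12 c_1c_2c_3})$ is understood to be $1$ whenever the product $c_1c_2c_3$ vanishes, since $\psi^0$ is trivial and Rao's convention gives the Weil index $1$. For $\widetilde{c}_{X^{\ast}}(p,g)$ we have $c_1=0$, and for $\widetilde{c}_{X^{\ast}}(g,p)$ we have $c_2=0$. Hence both values equal $1$.

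\emph{The cocycle $\overline{c}_{X^{\ast}}$.} Here I would use Lemma \ref{relacto}(3) rather than part (1), because it reduces everything to the function $m_{X^{\ast},\psi}$. Given the previous step,
\[
\overline{c}_{X^{\ast}}(p,g)=m_{X^{\ast},\psi}(pg)^{-1}\,m_{X^{\ast},\psi}(p)\,m_{X^{\ast},\psi}(g).
\]
\begin{itemize}
\item Since $p$ has lower-left entry $0$ and $\sgn(a_p^{-1})=1$, we get $m_{X^{\ast},\psi}(p)=e^{0}=1$.
\item The product $pg$ has lower-left entry $a_p^{-1}c$ and lower-right entry $a_p^{-1}d$. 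These have the same signs as $c$ and $d$ respectively, because $a_p>0$.
\item The product $gp$ has lower-left entry $ca_p$ and lower-right entry $cb_p+da_p^{-1}$. If $c\neq 0$, the sign of $ca_p$ equals $\sgn(c)$. If $c=0$, the lower-right entry is $da_p^{-1}$, which has the sign of $d$.
\end{itemize}
In every case $m_{X^{\ast},\psi}(pg)=m_{X^{\ast},\psi}(g)=m_{X^{\ast},\psi}(gp)$, since $m_{X^{\ast},\psi}$ depends only on whether $c=0$ and on the relevant sign. Substituting gives $\overline{c}_{X^{\ast}}(p,g)=\overline{c}_{X^{\ast}}(g,p)=1$.

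As a cross-check one could use the Hilbert-symbol formula in Lemma \ref{relacto}(1). There $x(p)=a_p^{-1}\R^{\times2}=\R^{\times 2}_{>0}$ and $x(pg)=x(g)=x(gp)$. Then $(1,x(g))_\R=1$ and $(-x(g),x(g))_\R=1$, the latter holding because at least one of $-x(g)$ and $x(g)$ is positive.

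The only delicate point is the convention for $\widetilde{c}_{X^{\ast}}$ when some $c_i=0$. Formally $\sgn(0)=0$ gives $e^{0}=1$, but I would cite Rao's definition, which sets the value to $1$ whenever $c_1c_2c_3=0$, to justify this.
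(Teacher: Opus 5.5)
Your proof is correct. For $\widetilde{c}_{X^{\ast}}$ you argue exactly as the paper does: with $\sgn(0)=0$, the formula in Lemma \ref{relacto}(2) gives $e^{0}=1$ as soon as one $c_i$ vanishes.

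For $\overline{c}_{X^{\ast}}$ your main route is different. You go through the comparison identity of Lemma \ref{relacto}(3). This reduces everything to $m_{X^{\ast},\psi}(p)=1$ together with $m_{X^{\ast},\psi}(pg)=m_{X^{\ast},\psi}(g)=m_{X^{\ast},\psi}(gp)$. The paper instead works directly with the Hilbert-symbol formula of Lemma \ref{relacto}(1). Since $x(p)$ is the trivial square class, both symbols collapse to $(-x(p)x(g),x(p)x(g))_{\R}=1$, which is precisely your cross-check.

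The paper's route is shorter. It also keeps the $\mu_2$-statement independent of the $\mu_8$-statement and of Rao's identity (3). Your route makes it depend on both. In return, it isolates the invariance of $m_{X^{\ast},\psi}$ under left and right multiplication by $P_{>0}(\R)$, which the paper later uses tacitly. For instance, the third step of the proof of Lemma \ref{constants2} writes $\widetilde{s}(pk)=\widetilde{s}(p)\widetilde{s}(k)$, and this requires $m_{X^{\ast},\psi}(pk)=m_{X^{\ast},\psi}(k)$.
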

\begin{proof}
The first equalities follow from Lemma (2) above.  
For the second, observe that
\begin{align*}
\overline{c}_{X^{\ast}}(p,g)
&=(x(p),x(g))_{\mathbb R}\,(-x(p)x(g),x(pg))_{\mathbb R}=(-x(p)x(g),x(p)x(g))_{\mathbb R}=1,
\\[2mm]
\overline{c}_{X^{\ast}}(g,p)
&=(x(g),x(p))_{\mathbb R}\,(-x(g)x(p),x(gp))_{\mathbb R}=(-x(g)x(p),x(g)x(p))_{\mathbb R}=1.
\end{align*}
\end{proof} 
\subsubsection{$K=\SO_2(\R)$ } Note that  $\U(\C) \simeq \SO_2(\R)$, through  the mapping $ (\cos t + i\sin t) \longrightarrow \begin{pmatrix} \cos t & -\sin t\\ \sin t & \cos t\end{pmatrix}$. Following \cite[pp.75-77]{LiVe},  let us define   the  function $u: \R \longrightarrow \Z$ by
$$ u(t)=\left\{ \begin{array}{ll}
2k & \textrm{ if } t=k\pi, \\
2k+1  & \textrm{ if } k\pi <t<(k+1)\pi.
\end{array}\right.$$
\begin{lemma}
$\sgn[\sin(t_1)\sin(t_2)\sin(t_1+t_2)]=u(t_1)+u(t_2)-u(t_1+t_2)$.
\end{lemma}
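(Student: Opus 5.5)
The plan is to reduce to a fundamental domain using a periodicity property of $u$, and then check finitely many cases directly. Throughout, $\sgn(0)=0$, so both sides must be checked to vanish exactly when some factor $\sin$ vanishes.

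\emph{Step 1: periodicity.} Directly from the definition of $u$, we have $u(t+\pi)=u(t)+2$ for all $t\in\R$. Indeed, if $t=k\pi$ then $t+\pi=(k+1)\pi$. If $k\pi<t<(k+1)\pi$ then $(k+1)\pi<t+\pi<(k+2)\pi$.

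Now replace $t_1$ by $t_1+\pi$.
\begin{itemize}
\item On the right-hand side, $u(t_1)$ and $u(t_1+t_2)$ each increase by $2$ and $u(t_2)$ is unchanged, so $u(t_1)+u(t_2)-u(t_1+t_2)$ is unchanged.
\item On the left-hand side, $\sin t_1$ and $\sin(t_1+t_2)$ both change sign and $\sin t_2$ is unchanged, so the sign of the product is unchanged.
\end{itemize}
By symmetry in $t_1,t_2$, the same holds for $t_2\mapsto t_2+\pi$. Hence both sides are invariant under $t_j\mapsto t_j+\pi$ for $j=1,2$, and it suffices to treat $t_1,t_2\in[0,\pi)$.

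\emph{Step 2: boundary cases.} Suppose $t_1=0$. Then the left side is $0$ because $\sin t_1=0$. The right side is $u(0)+u(t_2)-u(t_2)=0$, since $u(0)=0$. The case $t_2=0$ is identical.

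\emph{Step 3: interior cases.} Suppose $t_1,t_2\in(0,\pi)$. Then $u(t_1)=u(t_2)=1$ and $\sin t_1\sin t_2>0$. Also $t_1+t_2\in(0,2\pi)$, which splits into three subcases.
\begin{itemize}
\item If $t_1+t_2<\pi$, then $\sin(t_1+t_2)>0$, so the left side is $1$. The right side is $2-1=1$.
\item If $t_1+t_2=\pi$, then $\sin(t_1+t_2)=0$, so the left side is $0$. The right side is $2-u(\pi)=2-2=0$.
\item If $\pi<t_1+t_2<2\pi$, then $\sin(t_1+t_2)<0$, so the left side is $-1$. The right side is $2-3=-1$.
\end{itemize}
This exhausts all cases and proves the identity.

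There is no real obstacle here. The only point needing care is the convention $\sgn(0)=0$: the equality must hold with value $0$ whenever $t_1$, $t_2$ or $t_1+t_2$ is an integer multiple of $\pi$. This is exactly why $u$ takes even values at multiples of $\pi$.
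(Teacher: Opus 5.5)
Your argument is correct and complete. The shift identity $u(t+\pi)=u(t)+2$ makes both sides invariant under $t_j\mapsto t_j\pm\pi$, and your finitely many cases on $[0,\pi)^2$ all check out, including the boundary cases where the convention $\sgn(0)=0$ matters.

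The paper gives no argument of its own here; it simply cites Lion--Vergne (pp.~75--77). Your proof is therefore a self-contained elementary replacement for that citation. The periodicity you establish in Step~1 is exactly the relation $u(t+k\pi)=u(t)+2k$ that the paper later uses without proof in Lemma~\ref{chap6theq}(1), so your write-up also justifies that step.
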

\begin{proof}
See \cite[pp.75-77]{LiVe}.
\end{proof}
Let us modify the above $u$ by a character of $\R$ in the following way:
$$u'(t)=u(t)+ \tfrac{2}{\pi}t.$$
\begin{lemma}\label{chap6theq}
\begin{itemize}
\item[(1)] $u'(t+k\pi)=u'(t)+4k$, for $k\in \Z$.
\item[(2)] $\sgn[\sin(t_1)\sin(t_2)\sin(t_1+t_2)]=u'(t_1)+u'(t_2)-u'(t_1+t_2)$.
\end{itemize}
\end{lemma}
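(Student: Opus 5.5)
Both parts reduce to elementary properties of $u$. For (1), first establish $u(t+k\pi)=u(t)+2k$ for $k\in\Z$ directly from the definition of $u$. If $t=m\pi$, then $t+k\pi=(m+k)\pi$, so $u(t+k\pi)=2(m+k)=u(t)+2k$. If $m\pi<t<(m+1)\pi$, then $(m+k)\pi<t+k\pi<(m+k+1)\pi$, so $u(t+k\pi)=2(m+k)+1=u(t)+2k$. Adding the linear term then gives
\[
u'(t+k\pi)=u(t)+2k+\tfrac{2}{\pi}t+2k=u'(t)+4k,
\]
which is (1).

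For (2), the key observation is that the correction term $t\mapsto \tfrac{2}{\pi}t$ is additive. Hence
\[
u'(t_1)+u'(t_2)-u'(t_1+t_2)=u(t_1)+u(t_2)-u(t_1+t_2)+\tfrac{2}{\pi}\bigl(t_1+t_2-(t_1+t_2)\bigr)=u(t_1)+u(t_2)-u(t_1+t_2).
\]
By the preceding lemma (quoted from \cite[pp.75-77]{LiVe}), the right-hand side equals $\sgn[\sin(t_1)\sin(t_2)\sin(t_1+t_2)]$. This proves (2).

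Nothing here is a genuine obstacle. The only care needed in (1) is the case split on whether $t$ is an integer multiple of $\pi$, and the argument above handles both cases.
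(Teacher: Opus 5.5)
Your proof is correct and follows the paper's argument. Part (1) comes from $u(t+k\pi)=u(t)+2k$ plus the shift of the linear term, and part (2) from the fact that the additive correction $\tfrac{2}{\pi}t$ cancels, reducing to the cited Lion--Vergne identity; your explicit case check of $u(t+k\pi)=u(t)+2k$ just fills in the step the paper states without proof.
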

\begin{proof}
1) Note that $u(t+k\pi)=u(t)+2k$. So $u'(t+k\pi)=u(t+k\pi)+\tfrac{2}{\pi}(t+k\pi)=u(t)+ 2k+ \tfrac{2}{\pi}t+2k=u'(t)+4k$.\\
2) Since $u'$ is different from $u$ by a character of $\R$, the result follows from the above lemma.
\end{proof}
\begin{lemma}\label{chap6thetaf}
 The restriction of $[\widetilde{c}_{X^{\ast}}]$ on $\U(\C)$ is trivial, with an explicit  trivialization:
$$
\widetilde{s}:  \U(\C) \longrightarrow \C^{\ast};
  \cos t+ i\sin t\longmapsto   e^{-\tfrac{i\pi \sgn(\mathfrak{e}) }{4}u'(t)},$$
such that $\widetilde{c}_{X^{\ast}}(g_1, g_2) =\widetilde{s}(g_1)^{-1}\widetilde{s}(g_2)^{-1}\widetilde{s}(g_1g_2)$, for $g_i\in \U(\C)$.
\end{lemma}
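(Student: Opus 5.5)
The plan is a direct comparison of exponents, using the formula for $\widetilde{c}_{X^{\ast}}$ in Lemma \ref{relacto}(2) and the coboundary identity for $u'$ in Lemma \ref{chap6theq}(2).

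\textbf{Step 1: evaluate the cocycle on $\U(\C)$.} Take $g_j=\cos t_j+i\sin t_j$ for $j=1,2$, so that $g_1g_2$ corresponds to the angle $t_1+t_2$. Under the identification with $\SO_2(\R)$, the lower-left entries are
\[
c_1=\sin t_1,\qquad c_2=\sin t_2,\qquad c_3=\sin(t_1+t_2).
\]
Lemma \ref{relacto}(2) then gives
\[
\widetilde{c}_{X^{\ast}}(g_1,g_2)=e^{\tfrac{i\pi\sgn(\mathfrak{e})}{4}\sgn[\sin t_1\sin t_2\sin(t_1+t_2)]}.
\]
The convention $\sgn(0)=0$ matches the statement's implicit convention $\gamma(\psi^0)=1$: when some $c_i$ vanishes, the cocycle equals $1$.

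\textbf{Step 2: compute the coboundary of $\widetilde{s}$.} Directly from the definition,
\[
\widetilde{s}(g_1)^{-1}\widetilde{s}(g_2)^{-1}\widetilde{s}(g_1g_2)=e^{\tfrac{i\pi\sgn(\mathfrak{e})}{4}[u'(t_1)+u'(t_2)-u'(t_1+t_2)]}.
\]

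\textbf{Step 3: well-definedness of $\widetilde{s}$ (the main subtlety).} The parameter $t$ is only determined modulo $2\pi$. In particular, $t_1+t_2$ may fall outside $[-\pi,\pi)$, and then the element $g_1g_2$ is represented by $t_1+t_2\pm 2\pi$. By Lemma \ref{chap6theq}(1), $u'(t+2\pi k)=u'(t)+8k$. Hence
\[
e^{-\tfrac{i\pi\sgn(\mathfrak{e})}{4}u'(t)}
\]
changes by the factor $e^{-2\pi i\sgn(\mathfrak{e})k}=1$. So $\widetilde{s}$ is a well-defined function on $\U(\C)$, and in Step 2 we may use $u'(t_1+t_2)$ without reducing the angle.

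\textbf{Step 4: conclude.} Lemma \ref{chap6theq}(2) states $u'(t_1)+u'(t_2)-u'(t_1+t_2)=\sgn[\sin t_1\sin t_2\sin(t_1+t_2)]$. Therefore the exponents in Steps 1 and 2 coincide, which gives the claimed identity
\[
\widetilde{c}_{X^{\ast}}(g_1,g_2)=\widetilde{s}(g_1)^{-1}\widetilde{s}(g_2)^{-1}\widetilde{s}(g_1g_2).
\]
This shows that the restriction of $[\widetilde{c}_{X^{\ast}}]$ to $\U(\C)$ is trivial.
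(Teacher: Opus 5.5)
Your proof is correct and is the paper's argument, with the steps spelled out in more detail. You check well-definedness of $\widetilde{s}$ via Lemma~\ref{chap6theq}(1), using the shift $u'(t+2\pi k)=u'(t)+8k$. You evaluate $\widetilde{c}_{X^{\ast}}$ on $\U(\C)$ via Lemma~\ref{relacto}(2) with $c_j=\sin t_j$, and you match exponents using Lemma~\ref{chap6theq}(2).
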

\begin{proof}
It is well defined by the above lemma (1).
\begin{align*}
\widetilde{c}_{X^{\ast}}(e^{i t_1}, e^{i t_2}) & =e^{\tfrac{i \pi \sgn(\mathfrak{e})}{4} \sgn[\sin(t_1)\sin(t_2)\sin(t_1+t_2)]}\\
&\stackrel{ \textrm{ Lem. \ref{chap6theq}(2)}}{=} e^{\tfrac{i \pi \sgn(\mathfrak{e})}{4} u'(t_1)}e^{\tfrac{i \pi\sgn(\mathfrak{e})}{4} u'(t_2)} e^{\tfrac{-i \pi\sgn(\mathfrak{e})}{4} u'(t_1+t_2)}.
\end{align*}
\end{proof}
\begin{lemma}\label{chap6trivial}
The restriction of $[\overline{c}_{X^{\ast}}]$ on $\U(\C)$ is  trivial,  with an explicit trivialization:
$\overline{s}(e^{it})=e^{\sgn(\mathfrak{e})\tfrac{it}{2}}$, for $-\pi\leq t< \pi$,
such that $\overline{c}_{X^{\ast}}(g_1, g_2) =\overline{s}(g_1)^{-1}\overline{s}(g_2)^{-1}\overline{s}(g_1g_2)$, for $g_i\in \U(\C)$.
\end{lemma}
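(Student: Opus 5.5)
The plan is to deduce the statement from Lemma \ref{chap6thetaf} and the comparison formula of Lemma \ref{relacto}(3), without computing $\overline{c}_{X^{\ast}}$ directly on $\U(\C)$.

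\textbf{Step 1: a trivialization of $\overline{c}_{X^{\ast}}$.} Put $s(g)=\widetilde{s}(g)\,m_{X^{\ast},\psi}(g)^{-1}$ for $g\in\U(\C)$. For $g_1,g_2\in\U(\C)$, Lemma \ref{relacto}(3) gives
\[
\overline{c}_{X^{\ast}}(g_1,g_2)=\frac{m_{X^{\ast},\psi}(g_1)\,m_{X^{\ast},\psi}(g_2)}{m_{X^{\ast},\psi}(g_1g_2)}\,\widetilde{c}_{X^{\ast}}(g_1,g_2).
\]
Substituting $\widetilde{c}_{X^{\ast}}(g_1,g_2)=\widetilde{s}(g_1)^{-1}\widetilde{s}(g_2)^{-1}\widetilde{s}(g_1g_2)$ from Lemma \ref{chap6thetaf}, this becomes
\[
\overline{c}_{X^{\ast}}(g_1,g_2)=s(g_1)^{-1}s(g_2)^{-1}s(g_1g_2).
\]
So $s$ trivializes $\overline{c}_{X^{\ast}}$ on $\U(\C)$, and the class $[\overline{c}_{X^{\ast}}]$ restricted to $\U(\C)$ is trivial.

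\textbf{Step 2: computing $s$ explicitly.} Write $\varepsilon=\sgn(\mathfrak{e})$ and $g=e^{it}$ with $-\pi\le t<\pi$. The matrix of $g$ has $c=\sin t$ and $d=\cos t$. I split into four cases.
\begin{itemize}
\item $t\in(0,\pi)$: here $u(t)=1$, so $\widetilde{s}(g)=e^{-i\pi\varepsilon/4}e^{-i\varepsilon t/2}$. Since $c>0$, $m_{X^{\ast},\psi}(g)=e^{-i\pi\varepsilon/4}$.
\item $t\in(-\pi,0)$: here $u(t)=-1$ and $c<0$. Both factors $e^{\pm i\pi\varepsilon/4}$ flip sign together.
\item $t=0$: both $\widetilde{s}(g)$ and $m_{X^{\ast},\psi}(g)$ equal $1$.
\item $t=-\pi$: here $u'(-\pi)=-4$, so $\widetilde{s}(g)=e^{i\pi\varepsilon}=-1$. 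Since $c=0$ and $d=-1$, $m_{X^{\ast},\psi}(g)=e^{i\pi\varepsilon/2}$, and the quotient is $-e^{-i\pi\varepsilon/2}=e^{i\pi\varepsilon/2}$.
\end{itemize}
In every case $s(e^{it})=e^{-i\varepsilon t/2}$, that is, $s=\overline{s}^{\,-1}$.

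\textbf{Step 3: passing to $\overline{s}$.} The cocycle $\overline{c}_{X^{\ast}}$ takes values in $\{\pm1\}$, so $\overline{c}_{X^{\ast}}=\overline{c}_{X^{\ast}}^{\,-1}$. Inverting the identity of Step 1 gives
\[
\overline{c}_{X^{\ast}}(g_1,g_2)=s(g_1)\,s(g_2)\,s(g_1g_2)^{-1}=\overline{s}(g_1)^{-1}\,\overline{s}(g_2)^{-1}\,\overline{s}(g_1g_2),
\]
which is the claimed formula.

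\textbf{Main obstacle.} The only delicate point is the boundary case $t=-\pi$ in Step 2, together with the normalization $t\in[-\pi,\pi)$. I would verify it carefully, including the periodicity of $u'$ from Lemma \ref{chap6theq}(1), because a stray factor $-1$ there would break the identity.
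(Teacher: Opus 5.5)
Your proposal is correct and is essentially the paper's own argument: combine Lemma~\ref{relacto}(3) with Lemma~\ref{chap6thetaf} to see that $\widetilde{s}\,m_{X^{\ast},\psi}^{-1}$ trivializes $\overline{c}_{X^{\ast}}$ on $\U(\C)$, check case by case ($t=0$, $t\in(0,\pi)$, $t\in(-\pi,0)$, $t=-\pi$) that this function equals $\overline{s}^{\,-1}$, and then use that $\overline{c}_{X^{\ast}}$ is $\mu_2$-valued, so $\overline{c}_{X^{\ast}}=\overline{c}_{X^{\ast}}^{\,-1}$, to reach the stated form. Your computations in each case agree with the paper's, including the boundary value $u'(-\pi)=-4$.
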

\begin{proof}
By Lem. \ref{relacto}(3) and the above lemma,  we have:
\begin{align*}
\overline{c}_{X^{\ast}}(g_1, g_2)&=m_{X^{\ast},\psi}(g_1g_2)^{-1} m_{X^{\ast},\psi}(g_1) m_{X^{\ast},\psi}(g_2) \widetilde{c}_{ {X^{\ast}}}(g_1,g_2)\\
&=[\widetilde{s}(g_1g_2)m_{X^{\ast},\psi}(g_1g_2)^{-1}] [\widetilde{s}(g_1)m_{X^{\ast},\psi}(g_1)^{-1}]^{-1} [\widetilde{s}(g_2)m_{X^{\ast},\psi}(g_2)^{-1}]^{-1}.
\end{align*}
 \[u(t)=\left\{ \begin{array}{rl}
0 & \textrm{ if } t=0, \\
1  & \textrm{ if } 0 <t<\pi,\\
-1& \textrm{ if } -\pi <t<0,\\
-2 & \textrm{ if } t=-\pi.
\end{array}\right.\]\[ \widetilde{s}(e^{it})=e^{-\sgn(\mathfrak{e})\tfrac{ it}{2}}\left\{ \begin{array}{rl}
1 & \textrm{ if } t=0, \\
e^{-\sgn(\mathfrak{e})\tfrac{i\pi  }{4}} & \textrm{ if } 0 <t<\pi,\\
e^{\sgn(\mathfrak{e})\tfrac{i\pi }{4}}& \textrm{ if } -\pi <t<0,\\
e^{\sgn(\mathfrak{e})\tfrac{i\pi  }{2}} & \textrm{ if } t=-\pi.
\end{array}\right.m_{X^{\ast},\psi}(e^{it})=\left\{ \begin{array}{rl}
1 & \textrm{ if } t=0, \\
e^{-\sgn(\mathfrak{e})\tfrac{i\pi  }{4}} & \textrm{ if } 0 <t<\pi,\\
e^{\sgn(\mathfrak{e})\tfrac{i\pi }{4}}& \textrm{ if } -\pi <t<0,\\
e^{\sgn(\mathfrak{e})\tfrac{i\pi  }{2}}  & \textrm{ if } t=-\pi.
\end{array}\right.\]
Hence:
\[\overline{c}_{X^{\ast}}(g_1, g_2)=\overline{s}(g_1)\overline{s}(g_2)\overline{s}(g_1g_2)^{-1},\]
\[\overline{c}_{X^{\ast}}(g_1, g_2)=\overline{c}_{X^{\ast}}(g_1, g_2)^{-1}=\overline{s}(g_1)^{-1}\overline{s}(g_2)^{-1}\overline{s}(g_1g_2).\]
\end{proof}
\begin{remark}\label{chap6triviall}
\begin{itemize}
\item[(1)] The trivialization map for  $\widetilde{c}_{X^{\ast}}$ can not be valued in $\mu_8$.
\item[(2)] The trivialization map for  $\overline{c}_{X^{\ast}}$ can not be valued in $\mu_2$.
\end{itemize}
\end{remark}
\begin{proof}
Assume, for contradiction, that a second trivialization
\(\widetilde{s}'\colon\U(\C)\to\mu_{8}\) exists.
Then \(\widetilde{s}'=\widetilde{s}\cdot\chi\) for some measurable homomorphism
\(\chi\colon\U(\C)\to\mu_{8}\subseteq\mathbb{T}\).
Because \(\U(\C)\) is separable, \(\chi\) is automatically continuous,
hence of the form \(\chi(e^{it})=e^{int}\) for some \(n\in\mathbb{Z}\).
Thus
\[
e^{int}\cdot e^{-\tfrac{1}{2}\sgn(\mathfrak{e})it}\in\mu_{8}
\quad\text{for all }t\in\mathbb{R};
\]
this is impossible.
Hence no such \(\widetilde{s}'\) exists.
The second statement is proved in the same way.
\end{proof}
 \subsection{The cocycle $\widehat{c}_z$}\label{Thecocyclewcz} Let $z=x+iy\in \C$ with $x>0$. Then $z=re^{it}$ with $-\tfrac{\pi}{2} < t< \tfrac{\pi}{2}$ and   $\sqrt{z}^{-1}=\sqrt{r}^{-1/2}e^{-it/2}$. Let $z,z'\in \mathbb{H}$. Define:
\begin{itemize}
\item  $ \gamma(z',z)= \big(\tfrac{z'-\overline{z}}{2i}\big)^{-1/2} \cdot (\Im z')^{1/4}\cdot (\Im z)^{1/4}$.
\item  $\epsilon(g;z',z)= \frac{\gamma(gz',gz)}{\gamma(z',z)}$. 
\item  $\widehat{c}_z(g_1,g_2)=\epsilon(g_1;z, g_2(z))$. 
\item $\alpha_z(g)=\tfrac{cz+d}{|cz+d|}$.
\item $J(g,z)=cz+d$.
\end{itemize}
According to \cite{Ta1}, $\widehat{c}_z(g_1,g_2)$ defines a real-analytic $2$-cocycle of order $2$.
\begin{lemma}\label{epzz'}
$\epsilon(g;z',z)=e^{i\Big[ \Arg \Big((cz'+d)^{1/2}\Big)-\Arg\Big( (cz+d)^{1/2}\Big)\Big]}$.
\end{lemma}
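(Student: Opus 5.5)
Since $\epsilon(g;z',z)=\gamma(gz',gz)/\gamma(z',z)$, the plan is to compute $\gamma(gz',gz)$ in terms of $\gamma(z',z)$ directly from the definitions. This needs two classical identities for $g=\begin{pmatrix} a&b\\ c&d\end{pmatrix}\in\SL_2(\R)$ and $z,z'\in\mathbb{H}$:
\[
\Im(gz)=\frac{\Im z}{|cz+d|^2},\qquad gz'-\overline{gz}=\frac{z'-\overline{z}}{(cz'+d)(c\overline{z}+d)}.
\]
The second holds because $\overline{gz}=g\overline{z}$ (the entries of $g$ are real) and $gw-gw'=(w-w')/((cw+d)(cw'+d))$ when $\det g=1$.

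Substituting into the definition of $\gamma$, the modulus is handled at once. We get
\[
|\gamma(gz',gz)|=\Big|\tfrac{z'-\overline z}{2i}\Big|^{-1/2}\,|cz'+d|^{1/2}\,|c\overline z+d|^{1/2}\,(\Im z')^{1/4}(\Im z)^{1/4}\,|cz'+d|^{-1/2}|cz+d|^{-1/2}=|\gamma(z',z)|.
\]
Hence $\epsilon(g;z',z)$ is a complex number of modulus one, and only its argument remains to be determined.

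For the argument, write $w=\tfrac{z'-\overline z}{2i}$. This has positive real part $\tfrac{\Im z'+\Im z}{2}$, so $w^{-1/2}$ is the principal branch described in \S\ref{Thecocyclewcz}. Likewise $w_g=\tfrac{gz'-\overline{gz}}{2i}=w\,(cz'+d)^{-1}(c\overline z+d)^{-1}$ also has positive real part. Then
\[
w_g^{-1/2}=w^{-1/2}\,(cz'+d)^{1/2}\,\overline{(cz+d)^{1/2}}\cdot\eta,\qquad \eta\in\{\pm1\},
\]
where $(\cdot)^{1/2}$ is the principal root on $\C\setminus(-\infty,0]$. Note that $cz+d$ and $cz'+d$ lie in $\mathbb{H}$ when $c\neq0$, and in $\R^\times$ when $c=0$. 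Dividing by the moduli gives $\epsilon=\eta\, e^{i[\Arg((cz'+d)^{1/2})-\Arg((cz+d)^{1/2})]}$, and the lemma is exactly the claim $\eta=1$.

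The main obstacle is this sign $\eta$. I would settle it by a continuity argument. Both sides are continuous in $(z,z')\in\mathbb{H}^2$ for fixed $g$: if $c\neq 0$, $cz+d$ stays in a half-plane avoiding the cut, and if $c=0$ everything is constant. The ratio $\eta$ takes values in $\{\pm1\}$, so it is constant on the connected set $\mathbb{H}^2$. Evaluating at $z=z'$ gives $w_g=\Im(gz)>0$ and $w=\Im z>0$, so $\epsilon=1$. At the same point the exponential equals $e^{0}=1$. Hence $\eta=1$.

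One caveat is the case $c=0$, $d<0$. There $\Arg$ takes values in $[-\pi,\pi)$, and $(cz+d)^{1/2}=\sqrt{d}$ has argument $\pm\pi/2$ under the convention fixed by $\Arg$. The same choice is made for $z$ and $z'$, so the difference is $0$ and the formula still gives $1$. This matches the direct computation $\epsilon=1$ in that case.
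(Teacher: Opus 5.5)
Your proof is correct, but it takes a different route from the paper. The paper computes directly. It rewrites $\epsilon(g;z',z)$ as $X^{-1/2}w^{1/2}$, where $w=\frac{z'-\overline z}{2i}$ and $X=w\cdot\frac{|cz'+d|}{cz'+d}\cdot\frac{cz+d}{|cz+d|}$. It then splits into the cases $c=0$, $c>0$, $c<0$ and tracks the angles $\theta=\Arg(z'-\overline z)$, $\theta_1=\Arg(cz+d)$, $\theta_2=\Arg(cz'+d)$. The aim is to show $\Arg X=\Arg w-\theta_2+\theta_1$ with no $2\pi$ wrap-around, so that the principal root splits multiplicatively and the formula follows.

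You instead isolate the only possible discrepancy as a sign $\eta\in\{\pm1\}$. You then remove it by continuity on the connected set $\mathbb{H}^2$, together with $\epsilon(g;z,z)=1$ on the diagonal. This avoids all angle bookkeeping, which is a real advantage here.

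The paper justifies the no-wrap step with the relation $\theta_1-\theta_2=\pi-2\theta$. It obtains this by treating $\Arg(c(z'-\overline z))$ as the average of $\Arg(cz'+d)$ and $\Arg(-(c\overline z+d))$. That relation fails in general, for instance when $c=1$, $d=0$, $z=i$, $z'=1+i$, because the two summands have different moduli. The no-wrap identity is nevertheless true, for a reason close in spirit to yours. First, $X$ is a positive multiple of $w_g$, so $X$ has positive real part. Second, $\Arg w-\theta_2+\theta_1$ lies in $(-3\pi/2,3\pi/2)$. Together these force $\Arg w-\theta_2+\theta_1=\Arg X$. Repaired this way, the direct route identifies the arguments pointwise without topology; your route is shorter and sidesteps that pitfall entirely.

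One small slip: for $c<0$ the points $cz+d$ and $cz'+d$ lie in the lower half-plane, not in $\mathbb{H}$. This is harmless, since, as you note later, all that matters is that they stay in a half-plane avoiding the branch cut.
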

\begin{proof}
\[\Im(gz)=\tfrac{\Im(z)}{|cz+d|^{2}}=\tfrac{\Im(z)}{|J(g,z)|^{2}}.\]
\begin{align}
\epsilon(g;z',z)
&=\Big(\tfrac{g(z')-\overline{g(z)}}{2i}\Big)^{-1/2}  \cdot  \Big(\tfrac{z'-\overline{z}}{2i}\Big)^{1/2} \cdot | J(g, z')|^{-1/2}\cdot  | J(g, z)|^{-1/2}\label{epsilonzz'}\\
&=\Big(\tfrac{z'-\overline{z}}{2i} \tfrac{1}{(cz'+d)(c\overline{z}+d)}\Big)^{-1/2}  \cdot  \Big(\tfrac{z'-\overline{z}}{2i}\Big)^{1/2} \cdot | J(g, z')|^{-1/2}\cdot  | J(g, z)|^{-1/2}\\
&=\Big(\tfrac{z'-\overline{z}}{2i}\cdot \tfrac{(cz'+d)}{|cz'+d|}^{-1}\cdot \tfrac{(cz+d)}{|cz+d|}\Big)^{-1/2} \cdot  \Big(\tfrac{z'-\overline{z}}{2i}\Big)^{1/2}. \label{equ3}
\end{align}
Write $\theta=\Arg(z'-\overline{z})$, $\theta_1=\Arg(cz+d)$, $\theta_2=\Arg(cz'+d)$.  Note:
\[cz'+d-(c\overline{z}+d)=c(z'-\overline{z});\]
\[\Arg(\tfrac{z'-\overline{z}}{2i})=\theta-\tfrac{\pi}{2}.\]
\begin{itemize}
\item If $c=0$, then $\epsilon(g;z',z)=1=e^{ i\Big[ \Arg \Big((cz'+d)^{1/2}\Big)-\Arg\Big( (cz+d)^{1/2}\Big)\Big]}$.
\item If $c>0$, then $\theta$, $\theta_1, \theta_2$ all belong to $(0,\pi)$.
\[\Arg(c\overline{z}+d)=-\Arg(cz+d) \in (-\pi, 0), \quad \Arg[-(c\overline{z}+d)]=\pi+\Arg(c\overline{z}+d)=\pi-\Arg(cz+d).\]
\[\tfrac{\Arg(cz'+d)+\Arg[-(c\overline{z}+d)]}{2}=\Arg[c(z'-\overline{z})]=\Arg(z'-\overline{z}).\]
\[\tfrac{\theta_2+\pi-\theta_1}{2}=\theta, \quad \theta_1-\theta_2=\pi-2\theta.\]
\[\Arg\Big(\tfrac{z'-\overline{z}}{2i}\cdot \tfrac{(cz'+d)}{|cz'+d|}^{-1}\cdot \tfrac{(cz+d)}{|cz+d|}\Big)=\Arg[\exp\big(i(\theta-\tfrac{\pi}{2}+\theta_1-\theta_2)\big)] =\tfrac{\pi}{2}-\theta;\]
\[\Arg\Big(\tfrac{z'-\overline{z}}{2i}\cdot \tfrac{(cz'+d)}{|cz'+d|}^{-1}\cdot \tfrac{(cz+d)}{|cz+d|}\Big)=\Arg\tfrac{z'-\overline{z}}{2i}-\Arg\tfrac{(cz'+d)}{|cz'+d|}+\Arg\tfrac{(cz+d)}{|cz+d|}.\]
\begin{align}
(\ref{equ3})
&=e^{ i\Big[ \Arg \Big((cz'+d)^{1/2}\Big)-\Arg\Big( (cz+d)^{1/2}\Big)\Big]}.
\end{align}
\item If $c<0$, then $\theta\in (0,\pi)$, and $\theta_1, \theta_2$ both  belong to $(-\pi, 0)$.
\[\Arg(c\overline{z}+d)=-\Arg(cz+d) \in(0,\pi), \quad \Arg[-(c\overline{z}+d)]=\Arg(c\overline{z}+d)-\pi=-\Arg(cz+d)-\pi.\]
\[\tfrac{\Arg(cz'+d)+\Arg[-(c\overline{z}+d)]}{2}=\Arg[c(z'-\overline{z})]=\theta-\pi.\]
\[\tfrac{\theta_2-\pi-\theta_1}{2}=\theta-\pi, \quad \theta_1-\theta_2=\pi-2\theta.\]
\[\Arg\Big(\tfrac{z'-\overline{z}}{2i}\cdot \tfrac{(cz'+d)}{|cz'+d|}^{-1}\cdot \tfrac{(cz+d)}{|cz+d|}\Big)=\Arg[\exp\big(i(\theta-\tfrac{\pi}{2}+\theta_1-\theta_2)\big)] =\tfrac{\pi}{2}-\theta;\]
\[\Arg\Big(\tfrac{z'-\overline{z}}{2i}\cdot \tfrac{(cz'+d)}{|cz'+d|}^{-1}\cdot \tfrac{(cz+d)}{|cz+d|}\Big)=\Arg\tfrac{z'-\overline{z}}{2i}-\Arg\tfrac{(cz'+d)}{|cz'+d|}+\Arg\tfrac{(cz+d)}{|cz+d|}.\]
\begin{align}
(\ref{equ3})
&=e^{ i\Big[ \Arg \Big((cz'+d)^{1/2}\Big)-\Arg\Big( (cz+d)^{1/2}\Big)\Big]}.
\end{align}
\end{itemize} 
\end{proof}
\begin{lemma}
$\widehat{c}_z(g_1,g_2)^2= \alpha_z(g_1) \alpha_z(g_2) \alpha_z(g_1g_2)^{-1}$.
\end{lemma}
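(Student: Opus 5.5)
Write $\widehat{c}_z(g_1,g_2)=\epsilon(g_1;z,g_2z)$ and square the formula of Lemma \ref{epzz'}. Since $\Arg\big((cw+d)^{1/2}\big)=\tfrac12\Arg(cw+d)$ (principal branch), squaring removes all branch ambiguity:
\[
\epsilon(g;z',z)^2=e^{i[\Arg(cz'+d)-\Arg(cz+d)]}=\alpha_{z'}(g)\,\alpha_z(g)^{-1},
\qquad \alpha_w(g)=\tfrac{cw+d}{|cw+d|}.
\]
One can also see this directly from (\ref{epsilonzz'})–(\ref{equ3}). Their square is
\[
\Big(\tfrac{z'-\overline z}{2i}\Big)^{-1}\alpha_{z'}(g)\,\alpha_z(g)^{-1}\Big(\tfrac{z'-\overline z}{2i}\Big),
\]
provided the square of the principal square root is the identity, which it is.

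Apply this with $g=g_1$, $z'=z$ and second argument $g_2z$:
\[
\widehat{c}_z(g_1,g_2)^2=\alpha_z(g_1)\,\alpha_{g_2z}(g_1)^{-1}.
\]

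The remaining step is the cocycle identity for $\alpha$. Write $J(g,w)=cw+d$. The classical automorphy relation
\[
J(g_1g_2,z)=J(g_1,g_2z)\,J(g_2,z)
\]
holds for $\SL_2(\R)$ acting on $\mathbb H$; it is a direct computation from $g_1g_2$ and the formula for $g_2z$. Taking the phase $w\mapsto w/|w|$ of both sides, which is multiplicative, gives
\[
\alpha_z(g_1g_2)=\alpha_{g_2z}(g_1)\,\alpha_z(g_2),
\qquad\text{hence}\qquad
\alpha_{g_2z}(g_1)^{-1}=\alpha_z(g_2)\,\alpha_z(g_1g_2)^{-1}.
\]

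Substituting into the previous display yields
\[
\widehat{c}_z(g_1,g_2)^2=\alpha_z(g_1)\,\alpha_z(g_2)\,\alpha_z(g_1g_2)^{-1},
\]
which is the claim.

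The only delicate point is the first step: making sure that squaring $\epsilon$ really gives the ratio of the phases, independent of the branch choices. Lemma \ref{epzz'} already settles those case by case, and squaring kills any residual sign. The case $c=0$ is consistent, since then $\alpha_w(g)=d/|d|$ does not depend on $w$ and both sides are $1$.
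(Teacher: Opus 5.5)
Your proof is correct and follows the same route as the paper: you square the formula of Lemma~\ref{epzz'} for $\epsilon(g_1;z,g_2z)$ to get $\alpha_z(g_1)\alpha_{g_2z}(g_1)^{-1}$, then take phases in $J(g_1g_2,z)=J(g_1,g_2z)J(g_2,z)$. Your observation that squaring makes the branch choices irrelevant (since $e^{2i\Arg u}=u^2/|u|^2$ for any nonzero $u$) is exactly what justifies the paper's middle step, which the paper misprints as a difference rather than a quotient.
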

\begin{proof}
Let $g_2(z)=z'$. 
\begin{align*}
\widehat{c}_z(g_1,g_2)^2&=\epsilon(g_1;z, g_2(z))^2\\
&=\Bigg\{e^{i\Big[ \Arg \Big((c_1z+d_1)^{1/2}\Big)-\Arg\Big( (c_1z'+d_1)^{1/2}\Big)\Big]}\Bigg\}^2\\
&=e^{i \Arg (c_1z+d_1)}-e^{i \Arg (c_1z'+d_1)}\\
&=\tfrac{c_1z+d_1}{|c_1z+d_1|}\cdot \Big[\tfrac{c_1z'+d_1}{|c_1z'+d_1|}\Big]^{-1}\\
&=\tfrac{J(g_1,z)}{|J(g_1,z)|}\cdot \Big[\tfrac{J(g_1, g_2(z))}{|J(g_1, g_2(z))|}\Big]^{-1}\\
&=\tfrac{J(g_1,z)}{|J(g_1,z)|}\cdot  \tfrac{J(g_2,z)}{|J(g_2,z)|} \cdot \Big[ \tfrac{J(g_1g_2,z)}{|J(g_1g_2,z)|}\Big]^{-1}.
\end{align*} 
\end{proof}
\begin{lemma}\label{usopbig}
Let $g_1\in  P_{>0}(\R) $ and  $g_2\in \U(\C)$. Then $\widehat{c}_{z_0}(g_1,g)=1=\widehat{c}_{z_0}(g,g_2)$, for $g\in \SL_{2}(\R)$. 
\end{lemma}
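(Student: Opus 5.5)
Here $z_0$ is presumably $i$, the point fixed by $\U(\C)\simeq\SO_2(\R)$. The plan is to compute directly with Lemma \ref{epzz'}:
\[
\widehat{c}_{z_0}(g_1,g_2)=\epsilon(g_1;z_0,g_2z_0)=e^{i\left[\Arg\left((c_1z_0+d_1)^{1/2}\right)-\Arg\left((c_1g_2z_0+d_1)^{1/2}\right)\right]}.
\]
Both identities will follow from this formula by substituting the special form of $g_1$ or $g_2$.

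\emph{First identity.} Let $g_1=\begin{pmatrix} a & b\\ 0 & a^{-1}\end{pmatrix}\in P_{>0}(\R)$. Then $c_1=0$ and $d_1=a^{-1}>0$, so both arguments equal $\Arg\big((a^{-1})^{1/2}\big)=0$. This is precisely the case $c=0$ of Lemma \ref{epzz'}. Hence $\widehat{c}_{z_0}(g_1,g)=1$ for every $g\in\SL_2(\R)$.

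\emph{Second identity.} Let $g_2\in\U(\C)$. Then $g_2$ fixes $z_0=i$, since $\SO_2(\R)$ is the stabilizer of $i$ in $\SL_2(\R)$. Therefore $g_2z_0=z_0$, and for $g\in\SL_2(\R)$ we get
\[
\widehat{c}_{z_0}(g,g_2)=\epsilon(g;z_0,z_0)=\tfrac{\gamma(gz_0,gz_0)}{\gamma(z_0,z_0)}.
\]
Equivalently, in the formula of Lemma \ref{epzz'} the two exponents coincide, so the value is $1$.

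\emph{Possible obstacle.} The only subtlety is the branch of the square root. I would check that $\gamma(w,w)=\big(\tfrac{w-\overline{w}}{2i}\big)^{-1/2}(\Im w)^{1/2}=(\Im w)^{-1/2}(\Im w)^{1/2}=1$, because $\tfrac{w-\overline w}{2i}=\Im w>0$ and the principal root applies. This gives $\epsilon(g;z_0,z_0)=1/1=1$ directly, so no sign ambiguity arises. If instead $z_0$ denotes a general fixed base point, the same argument goes through provided $\U(\C)$ is understood as the corresponding stabilizer conjugate of $\SO_2(\R)$. I would note this and otherwise take $z_0=i$.
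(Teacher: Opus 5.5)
Your proof is correct and matches the paper's. For the second identity you argue exactly as the paper does: $g_2 i=i$ reduces everything to $\gamma(g(i),g(i))=1$. For the first identity the paper computes directly from the expression (\ref{epsilonzz'}), where the factor $a_1^2$ inside the inverse square root cancels against the two factors $|J(g_1,\cdot)|^{-1/2}=a_1^{1/2}$; this is the same computation that underlies the $c=0$ case of Lemma \ref{epzz'} which you quote.
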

\begin{proof}
1) Write $g_1=\begin{pmatrix} a_1& b_1\\ 0 &a_1^{-1} \end{pmatrix}$, with $a_1>0$. By \cite[p.118]{Ta1} or (\ref{epsilonzz'}), we have:
\begin{align*}
 \widehat{c}_{z_0}(g_1,g)
&=\Big(\tfrac{g_1(i)-\overline{g_1g(i)}}{2i}\Big)^{-1/2}  \cdot  \Big(\tfrac{i-\overline{g(i)}}{2i}\Big)^{1/2} \cdot | J(g_1, i)|^{-1/2}\cdot  | J(g_1, g(i))|^{-1/2}\\
&=\Big(a_1^2\tfrac{i-\overline{g(i)}}{2i}\Big)^{-1/2}  \cdot  \Big(\tfrac{i-\overline{g(i)}}{2i}\Big)^{1/2} \cdot  a_1\\
&=1.
\end{align*}
2) $g_2 (i)=i$ and $\gamma(i,i)=1$. 
\begin{align*}
 \widehat{c}_{z_0}(g,g_2)
&=\gamma(g(i),g(i))\\
&=\big(\tfrac{g(i)-\overline{g(i)}}{2i}\big)^{-1/2} \cdot (\Im g(i))^{1/4}\cdot (\Im g(i))^{1/4}\\
&=(\Im g(i))^{-1/2}\cdot (\Im g(i))^{1/2}\\
&=1.
\end{align*}
\end{proof}
For any $z\in \mathbb{H}$, write $z=p_z(i)$, for some $p_z\in  P_{>0}(\R)$. 
\begin{lemma}\label{ztoz0}
$\widehat{c}_z(g_1,g_2)=\widehat{c}_{z_0}(g_1^{ p_z},  g_2^{p_z})$.
\end{lemma}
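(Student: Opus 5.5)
The plan is to reduce the statement to two elementary properties of the function $\epsilon(g;z',z)$ of Section \ref{Thecocyclewcz}. Throughout, $g^{p}$ means $p^{-1}gp$. This is the convention compatible with $z=p_z(i)$, since then $g^{p_z}$ moves $i$ exactly as $g$ moves $z$. Write $p=p_z$.

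\emph{Step 1 (multiplicativity in $g$).} Because $\epsilon(g;z',z)=\gamma(gz',gz)/\gamma(z',z)$ is a ratio, a telescoping of the definition gives
\[
\epsilon(h_1h_2;z',z)=\epsilon(h_1;h_2z',h_2z)\,\epsilon(h_2;z',z),
\]
for all $h_1,h_2\in\SL_2(\R)$ and $z,z'\in\mathbb{H}$. Applying this twice yields the corresponding three-factor identity for $p^{-1}g_1p$.

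\emph{Step 2 (triviality on $P_{>0}(\R)$).} If $h\in P_{>0}(\R)$, or more generally $h$ has $c=0$, then $\epsilon(h;z',z)=1$ by the case $c=0$ in Lemma \ref{epzz'}. This applies to both $h=p$ and $h=p^{-1}$, since both lie in $P_{>0}(\R)$.

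\emph{Step 3 (conclusion).} Set $w=p^{-1}g_2p(i)=p^{-1}g_2(z)$. Then
\[
\widehat{c}_{z_0}(g_1^{p},g_2^{p})=\epsilon(p^{-1}g_1p;\,i,\,w).
\]
By Step 1 this equals
\[
\epsilon(p^{-1};\,g_1p(i),\,g_1p(w))\;\epsilon(g_1;\,p(i),\,p(w))\;\epsilon(p;\,i,\,w).
\]
The outer two factors are $1$ by Step 2. Since $p(i)=z$ and $p(w)=g_2(z)$, the middle factor is $\epsilon(g_1;z,g_2(z))=\widehat{c}_z(g_1,g_2)$, which is the claim.

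The only delicate point is the convention for $g^{p}$. If the paper instead means $pgp^{-1}$, the same computation still works provided $z$ is written as $p_z^{-1}(i)$; one only has to check which convention matches $z=p_z(i)$. Otherwise the argument is purely formal. A minor additional check is that the branch choice in $(\cdot)^{-1/2}$ plays no role, because Step 1 uses only the ratio definition of $\epsilon$, and Step 2 uses Lemma \ref{epzz'}, which already handles the branches.
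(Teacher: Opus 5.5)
Your proof is correct and is essentially the paper's argument. The paper likewise writes $z=p_z(i)$, telescopes the $\gamma$-ratios, and discards the $p_z$-factor using Lemma~\ref{usopbig}. That lemma encodes the same invariance of $\gamma$ under $P_{>0}(\R)$ that you extract from the case $c=0$ of Lemma~\ref{epzz'}, and your convention $g^{p}=p^{-1}gp$ is also the paper's.

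Your three-factor chain rule for $\epsilon$ has one small advantage: it explicitly justifies the paper's last step $\widehat{c}_{z_0}(g_1p_z,\cdot)=\widehat{c}_{z_0}(p_z^{-1}g_1p_z,\cdot)$. The paper leaves that step implicit, and it needs either the cocycle identity for $\widehat{c}_{z_0}$ or precisely your observation $\epsilon(p_z^{-1};u,v)=1$.
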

\begin{proof}
\begin{align*}
\widehat{c}_z\big(g_1,g_2\big)&=\epsilon\big(g_1;z, g_2(z)\big)=\tfrac{\gamma\big(g_1(z),g_1g_2(z)\big)}{\gamma\big(z,g_2(z)\big)}\\
&=\tfrac{\gamma\big(g_1p_z(i),g_1g_2p_z(i)\big)}{\gamma\big(p_z(i),g_2p_z(i)\big)}\\
&=\tfrac{\gamma\big(g_1p_z(i),g_1p_z[p_z^{-1}g_2p_z](i)\big)}{\gamma\big(i,p_z^{-1}g_2p_z(i)\big)} \cdot \tfrac{\gamma\big(i,p_z^{-1}g_2p_z(i)\big)}{\gamma\big(p_z(i),g_2p_z(i)\big)}\\
&=\widehat{c}_{z_0}\big(g_1p_z,p_z^{-1}g_2p_z\big)\widehat{c}_{z_0}\big(p_z,p_z^{-1}g_2p_z\big)^{-1}\\
&=\widehat{c}_{z_0}\big(g_1p_z,p_z^{-1}g_2p_z\big)\\
&=\widehat{c}_{z_0}\big(p_z^{-1}g_1p_z,p_z^{-1}g_2p_z\big).
\end{align*}
\end{proof}
 Let $\overline{\SL}_2(\mathbb R)$, $\overline{\SL}^{\mu_8}_2(\mathbb R)$, $\widetilde{\SL}_2(\mathbb R)$, and $\widehat{\SL}^z_2(\mathbb R)$ denote the central extensions of $\SL_2(\mathbb R)$ arising from $\overline{c}_{X^{\ast}}$,  $\overline{c}_{X^{\ast}}$, $\widetilde{c}_{X^{\ast}}$, and $\widehat{c}_z$, with centers $\mu_2$,  $\mu_8$, $\mu_8$, and $\mathbb{T}$, respectively. 
\subsection{Link $\widehat{c}_{z_0}$  to $\overline{c}_{X^{\ast}}$ and $\widetilde{c}_{X^{\ast}}$}\label{linkedST}
In Lemma \ref{chap6trivial}, we have defined $\overline{s}$ for $\U(\C)$. Assume $\psi=\psi_0$ there. Let us extend this function   to the group  $\SL_2(\R)$ as follows: 
\begin{equation}\label{depkg}
g=p_gk_g=\begin{pmatrix} \tfrac{1}{ \sqrt{c^2+d^2}}    &  \tfrac{bd+ac}{\sqrt{c^2+d^2}} \\ 0& \sqrt{c^2+d^2}  \end{pmatrix}\cdot \begin{pmatrix} \tfrac{d}{\sqrt{c^2+d^2}}   & -\tfrac{c}{\sqrt{c^2+d^2}}\\ \tfrac{c}{\sqrt{c^2+d^2}}& \tfrac{d}{\sqrt{c^2+d^2}}\end{pmatrix},
\end{equation}
for $p_g\in P_{>0}(\R)$, $k_g\in \U(\C)$. We define:
\begin{align}
 \overline{s}(g)= \overline{s}(k_g)=e^{\tfrac{ it}{2}}=\tfrac{\sqrt{ci+d}}{\sqrt[4]{c^2+d^2}},
\end{align}
 for  $k_g=e^{i t}=\tfrac{ci+d}{\sqrt{c^2+d^2}}$,  with $-\pi \leq t< \pi$. Note that
 \begin{equation}\label{eqalpha}
 \overline{s}(g)^2=\alpha_{z_0}(g).
 \end{equation}
\begin{lemma}\label{twocoz0}
$\widehat{c}_{z_0}(g_1,g_2)=\overline{c}_{X^{\ast}}(g_1,g_2)\overline{s}(g_1)\overline{s}(g_2) \overline{s}(g_1g_2)^{-1}.$
\end{lemma}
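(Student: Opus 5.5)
The identity is unit-circle valued on both sides, so the plan is to compare squares first and then pin down the remaining sign.

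\emph{Step 1 (squares).} By the lemma computing $\widehat{c}_z(g_1,g_2)^2$ with $z=z_0=i$, together with (\ref{eqalpha}), we have
\[
\widehat{c}_{z_0}(g_1,g_2)^2=\alpha_{z_0}(g_1)\alpha_{z_0}(g_2)\alpha_{z_0}(g_1g_2)^{-1}=\bigl(\overline{s}(g_1)\overline{s}(g_2)\overline{s}(g_1g_2)^{-1}\bigr)^2 .
\]
Since $\overline{c}_{X^{\ast}}$ takes values in $\mu_2$, it follows that
\[
\epsilon(g_1,g_2):=\widehat{c}_{z_0}(g_1,g_2)\,\overline{s}(g_1)^{-1}\overline{s}(g_2)^{-1}\overline{s}(g_1g_2)
\]
is $\pm1$-valued. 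The claim is therefore exactly $\epsilon=\overline{c}_{X^{\ast}}$.

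\emph{Step 2 (reduction to $K=\U(\C)$).} Write $g_j=p_jk_j$ as in (\ref{depkg}). Both sides should be invariant under the moves
\[
(g_1,g_2)\mapsto(p g_1,g_2),\qquad (g_1 k,g_2)\ \text{vs.}\ (g_1,k g_2),\qquad (g_1,g_2 k),
\]
with $p\in P_{>0}(\R)$ and $k\in\U(\C)$, after correcting by the appropriate coboundary factors. On the $\widehat{c}_{z_0}$ side, Lemma~\ref{usopbig} together with the cocycle identity gives
\[
\widehat{c}_{z_0}(pg_1,g_2)=\widehat{c}_{z_0}(g_1,g_2),\qquad \widehat{c}_{z_0}(g_1,g_2k)=\widehat{c}_{z_0}(g_1,g_2).
\]
On the $\overline{c}_{X^{\ast}}$ side, the lemma on $P_{>0}(\R)$ gives the corresponding statements for left and right multiplication by $p$. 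Moreover $\overline{s}(pg)=\overline{s}(g)$, because $\overline{s}$ depends only on $k_g$.

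The delicate part is moving $p_2$ past $k_1$: one has $g_1g_2=p_1(k_1p_2)k_2$, and $k_1p_2$ must be re-decomposed. Rather than doing that algebra, I would use a continuity/sign argument, as follows.

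\emph{Step 3 (direct sign computation).} Fix $z'=g_2(i)$. By Lemma~\ref{epzz'},
\[
\widehat{c}_{z_0}(g_1,g_2)=e^{i[\Arg(c_1 i+d_1)^{1/2}-\Arg(c_1 z'+d_1)^{1/2}]} .
\]
The factor $\overline{s}(g_1)$ equals $e^{i\Arg(c_1i+d_1)^{1/2}}$. Using the cocycle relation $J(g_1g_2,i)=J(g_1,z')J(g_2,i)$, the remaining factors reduce $\epsilon$ to
\[
\epsilon(g_1,g_2)=\frac{\sqrt{J(g_1g_2,i)}}{\sqrt{J(g_1,z')}\,\sqrt{J(g_2,i)}} ,
\]
with principal square roots. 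So $\epsilon$ is $-1$ exactly when the principal arguments
\[
\Arg J(g_1,z'),\qquad \Arg J(g_2,i),\qquad \Arg J(g_1g_2,i)
\]
fail to satisfy additivity inside $[-\pi,\pi)$. The identical-looking classical computation of Kubota/Rao expresses this failure via the Hilbert symbols of Lemma~\ref{relacto}(1).

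Concretely, I would split into cases by the signs of $c_1,c_2,c_3$, with $c=0$ handled using the sign of $d$. The case data are:
\begin{itemize}
\item $\Arg J(g,w)\in(0,\pi)$ if $c>0$;
\item $\Arg J(g,w)\in(-\pi,0)$ if $c<0$;
\item $\Arg J(g,w)\in\{0,-\pi\}$ if $c=0$, according as $d>0$ or $d<0$.
\end{itemize}
In each case I would check that the wrap-around occurs exactly when
\[
(x(g_1),x(g_2))_\R\,(-x(g_1)x(g_2),x(g_3))_\R=-1 .
\]
For example, with $c_1,c_2>0$ the sum of the arguments lies in $(0,2\pi)$. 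It wraps iff the sum exceeds $\pi$, which forces $c_3<0$; then the right side is $(1)(-1,-1)_\R=-1$, matching.

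\emph{Main obstacle.} The main obstacle is the boundary bookkeeping in this case analysis. This includes the $c=0$, $d<0$ cases, where the argument is $-\pi$ with the convention $[-\pi,\pi)$, and verifying that the Hilbert-symbol formula matches in every degenerate configuration. For that I would simply enumerate the nine sign patterns for $(\sgn c_1,\sgn c_2)$, using the constraint on $\sgn c_3$ forced by each.
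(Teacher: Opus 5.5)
Your proposal is correct, and it takes a genuinely different route from the paper.

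\textbf{How the paper argues.} The paper never computes the sign globally. It notes that both $\widehat{c}_{z_0}$ and $c'(g_1,g_2)=\overline{c}_{X^{\ast}}(g_1,g_2)\overline{s}(g_1)\overline{s}(g_2)\overline{s}(g_1g_2)^{-1}$ are cocycles that are trivial on pairs $(p,g)$ and $(g,u)$ with $p\in P_{>0}(\R)$, $u\in\U(\C)$. This uses Lemma~\ref{usopbig}, the lemma on $P_{>0}(\R)$, and the trivialization of Lemma~\ref{chap6trivial}. By the cocycle identity everything then reduces to pairs $(u,p)$. There the paper rewrites $up=p'u'$ and reads off $\widehat{c}_{z_0}(u,p)=\overline{s}(u)\overline{s}(u')^{-1}$ from Lemma~\ref{epzz'}. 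That is exactly the re-decomposition you chose to avoid in Step~2.

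\textbf{What your route buys.} Your Step~3 needs only Lemma~\ref{epzz'} and Rao's formula, Lemma~\ref{relacto}(1). It identifies $\overline{c}_{X^{\ast}}$ directly with the failure of additivity of principal arguments of $J$. It is therefore self-contained, and in passing it reproves Lemma~\ref{chap6trivial} (take $g_1,g_2\in\U(\C)$, so $g_2(i)=i$ and $\widehat{c}_{z_0}=1$). It also gives the later identity $\sqrt{c_3z+d_3}=\sqrt{c_1(g_2z)+d_1}\sqrt{c_2z+d_2}\,\overline{c}_{X^{\ast}}(g_1,g_2)$ at $z=i$, and verbatim at any $z\in\mathbb{H}$. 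The paper's proof is shorter given its earlier structural lemmas, but it depends on them. Your Steps~1 and~2 are unnecessary, since Step~3 already shows $\epsilon\in\{\pm1\}$.

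\textbf{The case analysis collapses.} The enumeration you flag as the main obstacle reduces to one observation. Put $\theta_1=\Arg J(g_1,g_2(i))$, $\theta_2=\Arg J(g_2,i)$, $\theta_3=\Arg J(g_1g_2,i)$, so $\theta_3\equiv\theta_1+\theta_2 \pmod{2\pi}$. For any $w\in\mathbb{H}$ one has $\sgn x(g)=+1$ iff $\Arg J(g,w)\in[0,\pi)$, and $\sgn x(g)=-1$ iff $\Arg J(g,w)\in[-\pi,0)$; the convention $\Arg(d)=-\pi$ for $d<0$ is exactly what makes this work when $c=0$. Moreover, $(x(g_1),x(g_2))_\R(-x(g_1)x(g_2),x(g_3))_\R=-1$ iff $\sgn x(g_1)=\sgn x(g_2)=-\sgn x(g_3)$. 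There are three cases:
\begin{itemize}
\item If the signs of $g_1,g_2$ differ, then $\theta_1+\theta_2\in[-\pi,\pi)$ and nothing wraps.
\item If both signs are $+$, the sum lies in $[0,2\pi)$ and wraps iff it is $\ge\pi$, i.e.\ iff $\theta_3\in[-\pi,0)$.
\item If both signs are $-$, the sum lies in $[-2\pi,0)$ and wraps iff it is $<-\pi$, i.e.\ iff $\theta_3\in[0,\pi)$.
\end{itemize}
This settles all configurations at once, including those with $c=0$, $d<0$.

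\textbf{A small slip.} In your example with $c_1,c_2>0$, the wrap also occurs when the sum equals $\pi$, not only when it exceeds $\pi$. That boundary case gives $c_3=0$, $d_3<0$, where the Hilbert-symbol product is again $-1$, so the conclusion still holds.
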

\begin{proof}
Let $p=\begin{pmatrix} a & b \\0& a^{-1}\end{pmatrix} \in P_{>0}(\R)$, $u=\begin{pmatrix} \cos t & -\sin t\\ \sin t & \cos t\end{pmatrix}\in \U(\C), u_1,u_2\in \U(\C)$, $g\in \SL_2(\R)$. 
\begin{itemize}
\item[(1)] $\overline{c}_{X^{\ast}}(p,g)\overline{s}(p)\overline{s}(g) \overline{s}(pg)^{-1}=\overline{c}_{X^{\ast}}(p,g)=1=\widehat{c}_{z_0}(p,g)$.
\item[(2)] $\overline{c}_{X^{\ast}}(u_1,u_2)\overline{s}(u_1)\overline{s}(u_2) \overline{s}(u_1u_2)^{-1}=1=\widehat{c}_{z_0}(u_1,u_2)$.
\item[(3)]$\overline{c}_{X^{\ast}}(pu_1,u_2)=\overline{c}_{X^{\ast}}(p,u_1)\overline{c}_{X^{\ast}}(pu_1,u_2)=
\overline{c}_{X^{\ast}}(p,u_1u_2)\overline{c}_{X^{\ast}}(u_1,u_2)=\overline{c}_{X^{\ast}}(u_1,u_2)$;
$$\overline{c}_{X^{\ast}}(pu_1,u_2)\overline{s}(pu_1)\overline{s}(u_2)\overline{s}(pu_1u_2)^{-1}
=\overline{c}_{X^{\ast}}(u_1,u_2)\overline{s}(u_1)\overline{s}(u_2) \overline{s}(u_1u_2)^{-1}
=1=\widehat{c}_{z_0}(pu_1,u_2).$$
\item[(4)] Write $up=p'u'$, for $p'\in P_{>0}(\R)$, $u'\in \U(\C)$. Then:
\begin{itemize}
\item[(a)] $\overline{c}_{X^{\ast}}(u,p)\overline{s}(u)\overline{s}(p) \overline{s}(up)^{-1}=\overline{s}(u)\overline{s}(up)^{-1}=\overline{s}(u)\overline{s}(u')^{-1}$.
\item[(b)] 
\begin{align*}
\Arg \big(  \big[J(u, p(z_0))\big]^{1/2}\big)
&= \Arg \big(  \big[ J(u, p(z_0)) J(p,z_0)\big]^{1/2} \big)\\
&=\Arg \big(  \big[J(up, z_0)\big]^{1/2}\big)\\
&=\Arg \big(  \big[J(p', u'(z_0))J(u', z_0)\big]^{1/2}\big)\\
&=\Arg \big(   \big[J(p', z_0)\big]^{1/2} \big[ J(u', z_0)\big]^{1/2}\big)\\
&=\Arg \big( J(u', z_0)^{1/2}\big).
\end{align*}
\begin{align*}
\widehat{c}_{z_0}(u,p)&=\epsilon(u;z_0, p(z_0))\\
&=e^{ i\Big[\Arg \big(  \big[ J(u, z_0)\big]^{1/2}\big)-\Arg \big(   \big[  J(u, p(z_0))\big]^{1/2}\big)\Big]}\\
&=\overline{s}(u)\overline{s}(u')^{-1}.
\end{align*}
\end{itemize}
\item[(5)] Write $g_i=p_i u_i$ for $p_i\in P_{>0}(\R)$, $u_i\in \U(\C)$. Note that the right hand side also defines  a cocycle and we write it by $c'$.   Then:
\[\widehat{c}_{z_0}(g_1,g_2)=\widehat{c}_{z_0}(p_1u_1,p_2u_2)=\widehat{c}_{z_0}(u_1,p_2);\]
\[c'(p,g)=1=c'(g,u), \textrm{ for } p\in  P_{>0}(\R), u\in \U(\C).\]
\[c'(g_1,g_2)=c'(p_1u_1,p_2u_2)=c'(u_1,p_2)=\widehat{c}_{z_0}(u_1,p_2).\]
\end{itemize}
\end{proof}
For $g\in \SL_2(\R)$, we define:
\begin{align}
 \widetilde{s}(g)= m_{X^{\ast},\psi}(g)\overline{s}(g).
\end{align}
Note that in Lemma~\ref{chap6thetaf} we have already defined a function $\widetilde s(g)$ for $g\in\U(\C)$.
\begin{itemize}
  \item[(1)] If $\sgn(\mathfrak e)<0$, the restriction of $\widetilde s$ to $\U(\C)$ agrees with the function $\widetilde s$ given in Lemma~\ref{chap6thetaf}.
  \item[(2)] If $\sgn(\mathfrak e)>0$, the restriction of $\widetilde s$ to $\U(\C)$ is obtained from the function in Lemma~\ref{chap6thetaf} by twisting by the character $t\mapsto t$ of $\U(\C)$.
\end{itemize}
\begin{lemma}\label{eightcoz0}
$\widehat{c}_{z_0}(g_1,g_2)=\widetilde{c}_{X^{\ast}}(g_1,g_2) \widetilde{s}(g_1) \widetilde{s}(g_2)  \widetilde{s}(g_1g_2)^{-1}.$
\end{lemma}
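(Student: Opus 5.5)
This follows by combining Lemma~\ref{twocoz0} with the relation in Lemma~\ref{relacto}(3) between $\overline{c}_{X^{\ast}}$ and $\widetilde{c}_{X^{\ast}}$. The relation has to be taken with $\psi=\psi_0$, the normalization fixed in Section~\ref{linkedST} when $\overline{s}$ was extended to $\SL_2(\R)$. First I rewrite Lemma~\ref{twocoz0} by substituting for $\overline{c}_{X^{\ast}}(g_1,g_2)$ the expression from Lemma~\ref{relacto}(3):
\[
\overline{c}_{X^{\ast}}(g_1,g_2)=m_{X^{\ast},\psi}(g_1g_2)^{-1} m_{X^{\ast},\psi}(g_1) m_{X^{\ast},\psi}(g_2)\, \widetilde{c}_{X^{\ast}}(g_1,g_2).
\]
This gives
\[
\widehat{c}_{z_0}(g_1,g_2)=\widetilde{c}_{X^{\ast}}(g_1,g_2)\,\big[m_{X^{\ast},\psi}(g_1)\overline{s}(g_1)\big]\big[m_{X^{\ast},\psi}(g_2)\overline{s}(g_2)\big]\big[m_{X^{\ast},\psi}(g_1g_2)\overline{s}(g_1g_2)\big]^{-1}.
\]
The bracketed factors are exactly $\widetilde{s}(g_1)$, $\widetilde{s}(g_2)$ and $\widetilde{s}(g_1g_2)^{-1}$ by the definition $\widetilde{s}(g)=m_{X^{\ast},\psi}(g)\overline{s}(g)$. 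The identity then follows immediately, since all factors are scalars in $\mathbb{T}$ and commute.

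The only point needing care is consistency of conventions. Lemma~\ref{twocoz0} was proved for $\psi=\psi_0$, i.e.\ $\mathfrak{e}=1$. Lemma~\ref{relacto}(3) must therefore be applied with the same $\psi$, so that $m_{X^{\ast},\psi}=m_{X^{\ast}}$ and the cocycle $\widetilde{c}_{X^{\ast}}$ is the one attached to $\psi_0$. I would check this explicitly.

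If one wants an independent verification not relying on Rao's relation (3), the fallback is to mirror the proof of Lemma~\ref{twocoz0}. The right-hand side is a cocycle cohomologous to $\widetilde{c}_{X^{\ast}}$. One checks that it is trivial on pairs $(p,g)$ and $(g,u)$ with $p\in P_{>0}(\R)$ and $u\in\U(\C)$. This triviality uses the lemma that $\widetilde{c}_{X^{\ast}}(p,g)=1=\widetilde{c}_{X^{\ast}}(g,p)$, together with the invariance of $m_{X^{\ast}}$ and $\overline{s}$ under left multiplication by $P_{>0}(\R)$. One then compares values on pairs $(u,p)$. I expect this comparison on $(u,p)$ to be the main obstacle, because $m_{X^{\ast}}$ jumps with $\sgn(c)$. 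The direct substitution above avoids the issue entirely, so I would present that argument.
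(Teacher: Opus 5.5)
Your proposal is correct and is exactly the paper's argument: substitute Lemma~\ref{relacto}(3) for $\overline{c}_{X^{\ast}}(g_1,g_2)$ in Lemma~\ref{twocoz0} and regroup the scalar factors via $\widetilde{s}(g)=m_{X^{\ast},\psi}(g)\overline{s}(g)$. The fallback is unnecessary, and your $\psi$-caveat is harmless. Since $\overline{c}_{X^{\ast}}$ is given by Hilbert symbols and does not depend on $\psi$, the only real requirement is that $\widetilde{c}_{X^{\ast}}$, relation (3) and $m_{X^{\ast},\psi}$ inside $\widetilde{s}$ all use the same $\psi$.
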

\begin{proof}
\begin{align*}
\widehat{c}_{z_0}(g_1,g_2)&\stackrel{\textrm{ Lem. }\ref{twocoz0}}{=}\overline{c}_{X^{\ast}}(g_1,g_2)\overline{s}(g_1)\overline{s}(g_2) \overline{s}(g_1g_2)^{-1}\\
&=\widetilde{c}_{ {X^{\ast}}}(g_1,g_2)m_{X^{\ast},\psi}(g_1g_2)^{-1} m_{X^{\ast},\psi}(g_1) m_{X^{\ast},\psi}(g_2) \overline{s}(g_1)\overline{s}(g_2) \overline{s}(g_1g_2)^{-1}.
\end{align*}
\end{proof}
As a consequence, we have  the following  group isomorphisms:
 \begin{equation}\label{overwideSL}
 \overline{\SL}^{\mu_8}_2(\mathbb R)\stackrel{\iota}{\to}\widetilde{\SL}_2(\mathbb R);[g,\epsilon] \longmapsto [g,m_{X^{\ast},\psi}(g)\epsilon],
 \end{equation}
  \begin{equation}\label{overhatSL}
  \overline{\SL}^{\mu_8}_2(\mathbb R)\stackrel{\iota'}{\to}\Im(\iota') (\subseteq\widehat{\SL}_2(\mathbb R));[g,\epsilon] \longmapsto [g,\overline{s}(g)^{-1}\epsilon],
   \end{equation}
    \begin{equation}\label{overwidehatSL}
  \widetilde{\SL}_2(\mathbb R)\stackrel{\iota'}{\to}\Im(\iota') (\subseteq\widehat{\SL}_2(\mathbb R));[g,\epsilon] \longmapsto [g,\widetilde{s}(g)^{-1}\epsilon].
   \end{equation}
\begin{lemma}\label{constants1}
If $\psi=\psi_0$, then $ \widetilde{s}(g)=1$, for $g\in P_{>0}(\R)$, $g=\omega$,  and  $g=h(a)$.
\end{lemma}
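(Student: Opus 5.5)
The plan is to verify the claim directly from the definitions, since $\widetilde{s}(g)=m_{X^{\ast},\psi}(g)\,\overline{s}(g)$ and both factors are given by explicit formulas. With $\psi=\psi_0$ we have $\mathfrak{e}=1$ and $\sgn\mathfrak{e}=1$. Hence $m_{X^{\ast}}(g)=e^{\frac{i\pi}{4}[1-\sgn(d)]}$ if $c=0$, and $m_{X^{\ast}}(g)=e^{-\frac{i\pi}{4}\sgn(c)}$ if $c\neq 0$. By the Iwasawa decomposition (\ref{depkg}), $\overline{s}(g)=e^{it/2}$, where $e^{it}=\frac{ci+d}{\sqrt{c^2+d^2}}$ and $-\pi\le t<\pi$. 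In each case the work is to read off the bottom row $(c,d)$, evaluate these two expressions, and check that the product is $1$.

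\emph{Case $g\in P_{>0}(\R)$.} Here $c=0$ and $d=a^{-1}>0$. Then $m_{X^{\ast}}(g)=e^{0}=1$. Also $k_g=1$, so $t=0$ and $\overline{s}(g)=1$. Hence $\widetilde{s}(g)=1$.

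\emph{Case $g=\omega$.} Here $c=1$ and $d=0$. Then $m_{X^{\ast}}(\omega)=e^{-i\pi/4}$. Also $e^{it}=i$, so $t=\pi/2$ and $\overline{s}(\omega)=e^{i\pi/4}$. The product is $1$.

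\emph{Case $g=h(a)$.} Here $c=0$ and $d=a^{-1}$.
\begin{itemize}
\item If $a>0$, then $h(a)\in P_{>0}(\R)$ and the first case applies.
\item If $a<0$, then $m_{X^{\ast}}(h(a))=e^{\frac{i\pi}{4}\cdot 2}=i$. Also $e^{it}=d/|d|=-1$, and the normalization $t\in[-\pi,\pi)$ forces $t=-\pi$. This gives $\overline{s}(h(a))=e^{-i\pi/2}=-i$, so again $\widetilde{s}(h(a))=i\cdot(-i)=1$.
\end{itemize}

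The only delicate point is the last sub-case, $a<0$. There one must use the branch convention $\Arg\in[-\pi,\pi)$ fixed in the Notations; the other choice $t=\pi$ would give $\overline{s}=i$ and $\widetilde{s}=-1$. The answer therefore hinges on that convention, and this is exactly the step I would double-check against the definition of $\overline{s}$ in Lemma~\ref{chap6trivial}. Everything else is immediate substitution.
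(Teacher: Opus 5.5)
Your proposal is correct and is essentially the paper's own proof: a direct case-by-case evaluation of $m_{X^{\ast}}(g)$ and $\overline{s}(g)$ from their definitions. In particular, for $h(a)$ with $a<0$ the paper likewise uses the convention $t=-\pi$, so that $\overline{s}(h(a))=e^{-i\pi/2}$ cancels $m_{X^{\ast}}(h(a))=e^{i\pi/2}$.
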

\begin{proof}
1) If $g\in P_{>0}(\R)$, then $m_{X^{\ast},\psi}(g)=1=\overline{s}(g)$. \\
2) If $g=\omega$, then $m_{X^{\ast},\psi}(g)=e^{\tfrac{-i \pi }{4}}$ and $\overline{s}(g)=e^{\tfrac{i\pi }{4}}$.\\
3)  If $g=h(-1)$, then $m_{X^{\ast},\psi}(g)=e^{\tfrac{i \pi }{2}}$ and $\overline{s}(g)=e^{-\tfrac{i\pi }{2}}$.\\
4) If $g=h(a)$ with $\sgn(a)<0$,  then $m_{X^{\ast},\psi}(g)=e^{\tfrac{i \pi }{2}}$ and $\overline{s}(g)=e^{-\tfrac{i\pi }{2}}$.
\end{proof}
\begin{lemma}\label{constants2}
If $\psi=\psi_0$, then $ \widetilde{s}(g) \widetilde{s}(g^{h_{-1}}) =1$, for any $g\in \SL_2(\R)$.
\end{lemma}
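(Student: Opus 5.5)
The plan is a direct computation from the definition $\widetilde{s}(g)=m_{X^{\ast},\psi}(g)\,\overline{s}(g)$ with $\psi=\psi_0$, so that $\sgn\mathfrak{e}=1$. First I would make the conjugation explicit. For $g=\begin{pmatrix} a& b\\ c& d\end{pmatrix}$, since $h_{-1}=h_{-1}^{-1}=\diag(1,-1)$, we get $g^{h_{-1}}=\begin{pmatrix} a& -b\\ -c& d\end{pmatrix}$. So passing from $g$ to $g^{h_{-1}}$ only changes $c\mapsto -c$ (and $b\mapsto -b$, which plays no role). Both factors $m_{X^{\ast}}$ and $\overline{s}$ depend only on the bottom row $(c,d)$. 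The claim therefore reduces to checking $m_{X^{\ast}}(c,d)\,m_{X^{\ast}}(-c,d)\,\overline{s}(c,d)\,\overline{s}(-c,d)=1$, and I would split into the cases $c\neq 0$ and $c=0$.

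\emph{Case $c\neq 0$.} By the definition of $m_{X^{\ast},\psi}$,
\[
m_{X^{\ast}}(g)\,m_{X^{\ast}}(g^{h_{-1}})=e^{-\frac{i\pi}{4}[\sgn(c)+\sgn(-c)]}=1 .
\]
For $\overline{s}$, write $ci+d=\sqrt{c^2+d^2}\,e^{it}$ with $-\pi\le t<\pi$. Since $c\neq 0$, the point $ci+d$ is off the real axis, so $t\in(-\pi,0)\cup(0,\pi)$. The conjugate point $-ci+d$ then has principal argument exactly $-t$, and $-t$ also lies in the allowed range. Hence
\[
\overline{s}(g)\,\overline{s}(g^{h_{-1}})=e^{\frac{it}{2}}e^{-\frac{it}{2}}=1 .
\]

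\emph{Case $c=0$.} Here $g^{h_{-1}}$ has the same bottom row $(0,d)$ as $g$, so the product is $\widetilde{s}(g)^2=m_{X^{\ast}}(g)^2\,\overline{s}(g)^2$. If $d>0$, then $m_{X^{\ast}}(g)=1$ and $t=0$, so $\overline{s}(g)=1$. If $d<0$, then $m_{X^{\ast}}(g)=e^{\frac{i\pi}{2}}$ and $t=-\pi$, so $\overline{s}(g)=e^{-\frac{i\pi}{2}}$; this matches the computation in Lemma \ref{constants1}. Either way $\widetilde{s}(g)^2=1$.

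The only delicate point is the branch convention $\Arg\in[-\pi,\pi)$. The boundary value $t=-\pi$ is not symmetric under $t\mapsto -t$, so the conjugate-angle argument of the first case fails there. That is exactly why $c=0$ must be handled separately, where the $m_{X^{\ast}}$ factor compensates for the sign $\overline{s}(g)^2=-1$.
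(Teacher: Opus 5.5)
Your proof is correct and is essentially the paper's argument. The paper reduces to $k_g\in\SO_2(\R)$ via $g=p_gk_g$ and splits into the cases $-\pi<t<\pi$ and $t=-\pi$, which is the same as your observation that $m_{X^{\ast}}$ and $\overline{s}$ depend only on the bottom row $(c,d)$, with your cases $c\neq 0$ and $c=0$ corresponding to theirs. Your bottom-row formulation also makes explicit the step $m_{X^{\ast}}(g)=m_{X^{\ast}}(k_g)$, which the paper uses without comment.
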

\begin{proof}
1) If $g\in P_{>0}(\R)$, then $g^{h_{-1}}\in P_{>0}(\R)$. So $\widetilde{s}(g) =\widetilde{s}(g^{h_{-1}}) =1$.\\
2) If $g=\begin{pmatrix} a& -b\\ b & a\end{pmatrix}\in \SO_2(\R)$, then $g^{h_{-1}}=\begin{pmatrix} a& b\\ -b & a\end{pmatrix}\in \SO_2(\R)$. Write $u=a+bi=e^{it}$ with $-\pi \leq t< \pi$. Then  $\overline{u}=\left\{ \begin{array}{cl} e^{-it}, &  -\pi < t< \pi,\\-1, &  t=-\pi.\end{array} \right.$ \\
a)  If $-\pi < t< \pi$, then $\overline{s}(g)\overline{s}(g^{h_{-1}})=e^{it/2}e^{-it/2}=1$.   $m_{X^{\ast},\psi_0}(g)=e^{\tfrac{i \pi [- \sgn(b)] }{4}}$,  $m_{X^{\ast},\psi_0}(g^{h_{-1}})=^{\tfrac{i \pi [\sgn(b)] }{4}}$. Therefore, $\widetilde{s}(g) \widetilde{s}(g^{h_{-1}}) =1$.\\
b) If $t=-\pi$, then $\overline{s}(g)\overline{s}(g^{h_{-1}})= e^{-i\pi/2}e^{-i\pi/2}=-1$. $m_{X^{\ast},\psi_0}(g)=e^{\tfrac{i \pi }{2}}=m_{X^{\ast},\psi_0}(g^{h_{-1}})$. Therefore, $\widetilde{s}(g) \widetilde{s}(g^{h_{-1}}) =-1(i^2)=1$.\\
3) If $g\in \SL_2(\R)$, we write $g=pk$, for $p\in  P_{>0}(\R)$ and $k\in \U(\C)$, then $p^{h_{-1}} \in P_{>0}(\R)$ and $k^{h_{-1}}\in \U(\C)$.  So $\widetilde{s}(g)=\widetilde{s}(p)\widetilde{s}(k)$ and $\widetilde{s}(g^{h_{-1}})=\widetilde{s}(p^{h_{-1}})\widetilde{s}(k^{h_{-1}})$. Hence the result holds.
\end{proof}

 \subsection{The discrete group $\SL_2(\Z)$}
Let
$g=\begin{pmatrix}
a & b\\ c & d\end{pmatrix}\in \SL_2(\Z)$. Recall the definitions  of Dedekind sums from \cite{Po}, \cite{RaWh}:
\begin{itemize}
\item $((x))=\left\{\begin{array}{ll} x-[x]-\tfrac{1}{2} & \textrm{ if } x\in \R\setminus \Z,\\ 0 & \textrm{ if } x\in \Z.\end{array}\right.$
\item $s(d,c)=\sum_{k\bmod(|c|)} ((\tfrac{k}{c}))((\tfrac{kd}{c}))$, for two coprime integers $c,d$.(Dedekind sum)
\item $s(-d, c)=-s(d,c)$, $s(d,-c)=s(d,c)$.
\end{itemize}
In \cite{As}, Asai gave the following function:
$$\mu(g)=\left\{\begin{array}{ll} \tfrac{b}{12d}+\tfrac{1-\sgn(d)}{4} & \textrm{ if } c=0,\\ \tfrac{a+d}{12c}-\sgn(c)\bigg( \tfrac{1}{4}+s(d, |c|)\bigg) & \textrm{ if } c\neq 0.\end{array}\right.$$
Define:
\begin{itemize}
\item $\overline{\beta}_1(g)=e^{-\pi i \mu(g)}$.
\item $\widetilde{\beta}_1(g)=\overline{\beta}_1(g)m_{X^{\ast},\psi}(g).$
\item  $\widehat{\beta}_1(g)=\overline{\beta}_1(g)\overline{s}(g)^{-1}.$
\end{itemize}
 Let $g_i= \begin{pmatrix} a_i& b_i\\c_i& d_i\end{pmatrix} \in \SL_2(\Z)$.
\begin{lemma}\label{chap8csp}
\begin{itemize}
\item[(1)] $\overline{c}_{X^{\ast}}(g_1,g_2)=\overline{\beta}_1(g_1)^{-1}\overline{\beta}_1(g_2)^{-1}\overline{\beta}_1(g_1g_2)$.
\item[(2)] $\widetilde{c}_{X^{\ast}}(g_1,g_2)=\widetilde{\beta}_1(g_1)^{-1}\widetilde{\beta}_1(g_2)^{-1}\widetilde{\beta}_1(g_1g_2)$.
\item[(3)] $\widehat{c}_{z_0}(g_1,g_2)=\widehat{\beta}_1(g_1)^{-1}\widehat{\beta}_1(g_2)^{-1}\widehat{\beta}_1(g_1g_2)$.
\end{itemize}
\end{lemma}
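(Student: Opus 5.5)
Parts (2) and (3) follow from part (1) by formal manipulation, so I would prove (1) first and then transport it.

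\textbf{Reductions.} By definition $\widetilde{\beta}_1=\overline{\beta}_1 m_{X^{\ast},\psi}$. Lemma \ref{relacto}(3) says that $\widetilde{c}_{X^{\ast}}$ and $\overline{c}_{X^{\ast}}$ differ by the coboundary of $m_{X^{\ast},\psi}$, namely
\[
\widetilde{c}_{X^{\ast}}(g_1,g_2)=\overline{c}_{X^{\ast}}(g_1,g_2)\, m_{X^{\ast},\psi}(g_1g_2)\, m_{X^{\ast},\psi}(g_1)^{-1} m_{X^{\ast},\psi}(g_2)^{-1}.
\]
Multiplying this identity by (1) gives (2) immediately.

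Similarly, $\widehat{\beta}_1=\overline{\beta}_1\overline{s}^{-1}$. Lemma \ref{twocoz0} expresses $\widehat{c}_{z_0}$ as $\overline{c}_{X^{\ast}}$ times $\overline{s}(g_1)\overline{s}(g_2)\overline{s}(g_1g_2)^{-1}$. Substituting (1) into that expression gives exactly the coboundary of $\widehat{\beta}_1$, which proves (3). Throughout I take $\psi=\psi_0$, as in Section \ref{linkedST}.

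\textbf{Proof of (1).} I would rely on Asai's theorem \cite{As} rather than recompute Dedekind-sum reciprocity. Asai's result says that Rademacher's $\Phi$-function, normalized as $\mu$, satisfies
\[
\mu(g_1g_2)=\mu(g_1)+\mu(g_2)+\tfrac14\,\sgn(c_1c_2c_3)+(\text{correction}),
\]
where the correction is an even integer or is controlled by the signs of the $c_i$ and $d_i$ when some $c_i=0$. Equivalently, $e^{-\pi i\mu}$ splits $\widetilde{c}_{X^{\ast}}$ or $\overline{c}_{X^{\ast}}$ up to the $\frac{1-\sgn d}{4}$ shift. The plan is to rewrite $\overline{c}_{X^{\ast}}(g_1,g_2)$ with the Hilbert-symbol formula of Lemma \ref{relacto}(1) as a sign depending only on $x(g_1),x(g_2),x(g_3)$. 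This sign is $-1$ exactly when two of these classes are negative and the relevant pairing fails. I would then check that
\[
e^{-\pi i[\mu(g_1g_2)-\mu(g_1)-\mu(g_2)]}
\]
equals this sign in each case.

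\textbf{Case analysis.} The cases are organized by whether $c_1$, $c_2$, $c_3$ vanish. When $c_1c_2c_3\neq 0$, Asai's (equivalently Rademacher's) formula gives
\[
\mu(g_1)+\mu(g_2)-\mu(g_3)=\tfrac14\sgn(c_1c_2c_3)+\tfrac14\bigl(\sgn c_1+\sgn c_2-\sgn c_3\bigr)+2k.
\]
Here the $\frac14\sgn(c_1c_2c_3)$ term is the cocycle $\widetilde{c}_{X^{\ast}}$, and the terms $-\frac14\sgn c_i$ account for $m_{X^{\ast}}$. Hence the left side matches $\overline{c}_{X^{\ast}}$ by Lemma \ref{relacto}(3). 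The degenerate cases, where some $c_i=0$, have $g_i=\pm u(b)$. For these, $\mu$ is explicit and the identity is checked directly using the $\frac{b}{12d}+\frac{1-\sgn d}{4}$ term.

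\textbf{Main obstacle.} The main difficulty is matching conventions: the sign choices in Asai's $\mu$ (using $s(d,|c|)$ and $\sgn(c)$) must be aligned with Rao's normalization of $\widetilde{c}_{X^{\ast}}$ and $m_{X^{\ast}}$. The $\frac{a+d}{12c}$ terms must cancel through the Dedekind-sum reciprocity law, leaving only the $\pm\frac14$ sign terms.

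If Asai's statement is not directly in this form, I would reduce to generators of $\SL_2(\Z)$. Both sides of the identity in (1) are cocycles, so it suffices to check that the coboundary of $\overline{\beta}_1^{-1}$ agrees with $\overline{c}_{X^{\ast}}$ on pairs $(g,\omega)$, $(g,u(1))$ and $(g,-1)$. The pair $(g,u(1))$ is trivial, since $\mu(gu(1))=\mu(g)+\frac{c}{12c}\cdot$ (a shift matching $s(d+c,c)=s(d,c)$). The pair $(g,\omega)$ uses reciprocity $s(d,c)+s(c,d)=\frac{1}{12}\bigl(\frac cd+\frac dc+\frac1{cd}\bigr)-\frac14$.
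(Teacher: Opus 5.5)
Your route is the paper's. The paper proves (1) only by citing Asai \cite{As}, and it derives (2) and (3) from (1) by exactly your two substitutions: Lemma~\ref{relacto}(3) with $\widetilde{\beta}_1=\overline{\beta}_1 m_{X^{\ast},\psi}$, and Lemma~\ref{twocoz0} with $\widehat{\beta}_1=\overline{\beta}_1\overline{s}^{-1}$. Those reductions are correct as you wrote them. Your fallback via generators is also sound in principle:
\begin{itemize}
\item both sides of (1) are normalized cocycles;
\item $\{\omega,u(1)\}$ generates $\SL_2(\Z)$ as a monoid;
\item the identity $c(g_1,g_2s)=c(g_1,g_2)c(g_1g_2,s)c(g_2,s)^{-1}$ propagates agreement from pairs $(g,s)$ with $s$ a generator to all pairs.
\end{itemize}
The $(g,u(1))$ check is as you say: $\mu(gu(1))=\mu(g)+\frac1{12}=\mu(g)+\mu(u(1))$ and $\overline{c}_{X^{\ast}}(g,u(1))=1$. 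The $(g,\omega)$ check is where reciprocity, i.e.\ Asai's content, enters.

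The one concrete error is the explicit formula you display for $c_1c_2c_3\neq 0$. The $\sgn c_i$ terms have the wrong sign, and as written the formula contradicts (1). Since $m_{X^{\ast}}(g)=e^{-\frac{i\pi}{4}\sgn c}$, Lemma~\ref{relacto}(3) gives $\overline{c}_{X^{\ast}}(g_1,g_2)=e^{\frac{i\pi}{4}[\sgn(c_1c_2c_3)-\sgn c_1-\sgn c_2+\sgn c_3]}$. So what is needed, and what holds exactly, is
\[
\mu(g_1)+\mu(g_2)-\mu(g_3)=\tfrac14\sgn(c_1c_2c_3)-\tfrac14\bigl(\sgn c_1+\sgn c_2-\sgn c_3\bigr).
\]
With $\mu=\frac{1}{12}\Phi-\frac14\sgn c$, this is Rademacher's $\Phi(g_1g_2)=\Phi(g_1)+\Phi(g_2)-3\sgn(c_1c_2c_3)$. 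Your right-hand side differs from this by $\frac12(\sgn c_1+\sgn c_2-\sgn c_3)$, an odd multiple of $\frac12$, which no $2k$ can absorb.

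For instance, take $g_1=\omega$, $g_2=u_-(1)$, $g_3=\begin{pmatrix}-1&-1\\1&0\end{pmatrix}$. Since $s(d,1)=0$, we get $\mu(g_1)=-\frac14$, $\mu(g_2)=-\frac1{12}$, $\mu(g_3)=-\frac13$. The left side is then $0$, matching $\overline{c}_{X^{\ast}}(g_1,g_2)=(1,1)_{\R}(-1,1)_{\R}=1$, while your formula predicts $\frac12 \pmod 2$.

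Your sentence attributing the terms $-\frac14\sgn c_i$ to $m_{X^{\ast}}$ has the right sign; the display does not. Your first display should likewise read $\mu(g_1g_2)=\mu(g_1)+\mu(g_2)-\frac14\sgn(c_1c_2c_3)+\frac14(\sgn c_1+\sgn c_2-\sgn c_3)$. With that sign fixed, your matching against Lemma~\ref{relacto}(3) goes through.
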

\begin{proof}
1) See \cite[Sections 2-3]{As}.\\
2) \begin{align*}
\widetilde{c}_{ {X^{\ast}}}(g_1,g_2) &=m_{X^{\ast},\psi}(g_1g_2) m_{X^{\ast},\psi}(g_1)^{-1} m_{X^{\ast},\psi}(g_2)^{-1}\overline{c}_{X^{\ast}}(g_1, g_2) \\
&=[\overline{\beta}_1(g_1g_2)m_{X^{\ast},\psi}(g_1g_2)] [\overline{\beta}_1(g_1)m_{X^{\ast},\psi}(g_1)]^{-1} [\overline{\beta}_1(g_2)m_{X^{\ast},\psi}(g_2)]^{-1}.
\end{align*}
3) \begin{align*}
\widehat{c}_{z_0}(g_1,g_2) &=\overline{c}_{X^{\ast}}(g_1,g_2)\overline{s}(g_1)\overline{s}(g_2) \overline{s}(g_1g_2)^{-1} \\
&=[\overline{s}(g_1)^{-1}\overline{\beta}_1(g_1)]^{-1} \cdot [\overline{s}(g_2)^{-1}\overline{\beta}_1(g_2)]^{-1} \cdot [\overline{s}(g_1g_2)^{-1}\overline{\beta}_1(g_1g_2)].
\end{align*}
\end{proof}
\subsection{The discrete group $\Gamma_{\theta}$}
 Let $c, d$ be two coprime integers with $c d\neq  0$.  Following \cite[p.162]{LiVe}, let us define the symplectic Gauss sum:
$$ G(c,d)\stackrel{\Delta}{=}|d|^{-\tfrac{1}{2}}\sum_{n=0}^{|d|-1} e^{-\tfrac{\pi i c n^2}{d}}.$$
If $c=0$, $d=\pm 1$, we also  define 
$$ G(c,d)\stackrel{\Delta}{=}|d|^{-\tfrac{1}{2}}\sum_{n=0}^{|d|-1} e^{-\tfrac{\pi i c n^2}{d}}=1.$$
\begin{lemma}
If  $cd \neq 0$ is even, $G(d,c)G(c,d)=e^{-\tfrac{\pi i}{4} \sgn(cd)}$.
\end{lemma}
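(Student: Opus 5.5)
The statement is the Landsberg--Schaar reciprocity law for quadratic Gauss sums, normalized as in \cite{LiVe}. I would reduce to the case $c,d>0$, establish there the identities $\overline{G(c,d)}=e^{\pi i/4}G(d,c)$ and $|G(c,d)|=1$, and combine them.

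\emph{Step 1: well-definedness and sign reductions.} Since $cd$ is even, replacing $n$ by $n+d$ changes the exponent $-\pi i c n^2/d$ by $-\pi i(2cn+cd)\in 2\pi i\Z$. Hence the sum depends only on $n \bmod |d|$, and likewise for $G(d,c)$. Directly from the definition, $G(-c,-d)=G(c,d)$ and $G(-c,d)=G(c,-d)=\overline{G(c,d)}$. So if $cd<0$, both factors are conjugated relative to the case $(|c|,|d|)$, and the claim $e^{+\pi i/4}$ for $\sgn(cd)=-1$ follows from the case $cd>0$ by conjugation. We may therefore assume $c,d>0$.

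\emph{Step 2: modulus one.} For coprime $c,d$ with $cd$ even, expand $|\sum_n e^{-\pi i c n^2/d}|^2$ as a double sum over $m,n \bmod d$. Substitute $m=n+k$; this is legitimate by Step 1. The inner sum over $n$ is then $\sum_n e^{-2\pi i c k n/d}$, up to a unit factor depending on $k$. This vanishes unless $d\mid k$, because $\gcd(c,d)=1$. The total is therefore $d$, i.e.\ $|G(c,d)|=1$.

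\emph{Step 3: reciprocity via theta.} Let $\theta(\tau)=\sum_{n\in\Z}e^{\pi i\tau n^2}$, which satisfies $\theta(-1/\tau)=\sqrt{\tau/i}\,\theta(\tau)$ (Poisson summation). Set $\tau=c/d+i\varepsilon$ with $\varepsilon\to0^+$.

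First, split $n$ by residues mod $d$. This is legitimate since $e^{\pi i c n^2/d}$ is $d$-periodic by Step 1. Comparing with a Riemann sum for the Gaussian integral gives
\[
\theta(\tau)\sim \tfrac{1}{d\sqrt{\varepsilon}}\sum_{n=0}^{d-1}e^{\pi i c n^2/d}=\tfrac{1}{\sqrt{d\varepsilon}}\,\overline{G(c,d)}.
\]
Second, $-1/\tau=-d/c+i\,(d^2\varepsilon/c^2)(1+o(1))$. The same analysis with the roles of $c$ and $d$ exchanged gives
\[
\theta(-1/\tau)\sim \tfrac{1}{\sqrt{c}}\cdot\tfrac{c}{d\sqrt{\varepsilon}}\,G(d,c).
\]
Inserting both into the transformation law, and using $\sqrt{\tau/i}\to\sqrt{c/d}\,e^{-\pi i/4}$, yields $\overline{G(c,d)}=e^{\pi i/4}G(d,c)$.

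\emph{Conclusion.} Multiplying by $G(c,d)$ and using Step 2 gives $G(c,d)G(d,c)=e^{-\pi i/4}|G(c,d)|^2=e^{-\pi i/4}$, as claimed for $cd>0$. Step 1 then covers $cd<0$.

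The main obstacle is the asymptotic in Step 3. One must justify uniformly that the residue-class Gaussian sums tend to $\frac{1}{d\sqrt{\varepsilon}}$ times the integral, which is easiest via Poisson summation on each class, with exponentially small error terms. One must also track the branch of $\sqrt{\tau/i}$ and the $(1+o(1))$ perturbation of the imaginary part of $-1/\tau$, since these determine the eighth root of unity.
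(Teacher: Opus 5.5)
Your proof is correct, and it takes a genuinely different route from the paper. The paper gives no argument of its own: it simply cites Lion--Vergne and stays inside the Weil-representation framework.

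In this paper's notation, and given $|G|=1$, the statement is equivalent to the cocycle relation of Lemma~\ref{spgamma12}(1) at the pair $(g,\omega)$. Here $g=\begin{pmatrix}a&b\\c&d\end{pmatrix}\in\Gamma_\theta$, and every coprime pair $(c,d)$ with $cd\neq 0$ even occurs as such a bottom row. Since $g\omega=\begin{pmatrix}b&-a\\d&-c\end{pmatrix}$, one has
\begin{itemize}
\item $\widetilde\beta(g)=G(d,c)$;
\item $\widetilde\beta(\omega)=G(0,1)=1$;
\item $\widetilde\beta(g\omega)=G(-c,d)=\overline{G(c,d)}$;
\item $\widetilde c_{X^{\ast}}(g,\omega)=e^{\frac{\pi i}{4}\sgn(cd)}$.
\end{itemize}
So the relation reads $\overline{G(c,d)}=e^{\frac{\pi i}{4}\sgn(cd)}\,G(d,c)$. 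This is exactly your Step~3 identity, obtained for both signs of $cd$ at once.

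On that route the eighth root of unity is a value of Rao's cocycle, namely the Weil index $\gamma(\psi^{\frac12 cd})$. All the analysis is buried in the construction of the Weil representation and the lattice-model multiplier on $\Gamma_\theta$. Your route is self-contained: it needs only Jacobi's inversion formula, the orthogonality argument for $|G(c,d)|=1$, and a limit at the cusp $c/d$. The root of unity comes from the branch of $\sqrt{\tau/i}$.

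One small point of bookkeeping concerns $-1/\tau=-d/c+i\frac{d^2\varepsilon}{c^2}(1+o(1))$. The $o(1)$ there is complex, since exactly $-1/\tau=-\frac dc+i\frac{d^2\varepsilon}{c^2}\bigl(1+\frac{id\varepsilon}{c}\bigr)^{-1}$. So the real part is also shifted by $O(\varepsilon^2)$, not only the imaginary part. This is harmless, because Poisson summation on each residue class mod $c$ works verbatim for a Gaussian $e^{-\pi w x^2}$ with complex $w$ and $\Re w>0$. Here $\Re(1/w)=c^2/(d^2\varepsilon)\to\infty$, so the $k\neq 0$ terms stay exponentially small and $\frac{1}{c\sqrt{w}}=\frac{1}{d\sqrt{\varepsilon}}(1+O(\varepsilon))$.
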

\begin{proof}
See \cite[p.170, Prop.]{LiVe}.
\end{proof}
\begin{lemma}\label{chap8betacd}
For two non-zero coprime integers $c, d$, with  $2\mid cd$, we have:
$$G(d,c)=\left\{\begin{array}{lr}
 (\tfrac{c'}{|d|}) e^{-\tfrac{\pi i}{4} \sgn(cd)} & \textrm{ if } c=2c', |d|\equiv 1(\bmod 4),\\i  \sgn(d) (\tfrac{c'}{|d|} )   e^{-\tfrac{\pi i}{4} \sgn(cd)}   & \textrm{ if }  c=2c', |d|\equiv 3(\bmod 4),\\
(\tfrac{d'}{|c|})& \textrm{ if } d=2d',  |c| \equiv 1(\bmod 4),  \\
-i \sgn(c)(\tfrac{d'}{|c|})& \textrm{ if } d=2d', | c| \equiv 3(\bmod 4).
\end{array}\right.$$
\end{lemma}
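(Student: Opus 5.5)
The plan is to reduce every case to the classical evaluation of quadratic Gauss sums over an odd modulus, and to use the reciprocity lemma just proved, $G(d,c)G(c,d)=e^{-\frac{\pi i}{4}\sgn(cd)}$, for the cases where $c$ is even. The classical input, which I will quote rather than reprove, is the following. For an odd positive integer $m$ and an integer $a$ coprime to $m$,
\[
\sum_{n=0}^{m-1} e^{\frac{2\pi i a n^2}{m}}=\Big(\tfrac{a}{m}\Big)\,\varepsilon_m\sqrt{m},
\qquad
\varepsilon_m=\begin{cases}1,& m\equiv 1 \ (\bmod 4),\\ i,& m\equiv 3\ (\bmod 4),\end{cases}
\]
where $(\tfrac{a}{m})$ is the Jacobi symbol. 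Two elementary facts about the Jacobi symbol with a signed numerator will also be used. First, $(\tfrac{-1}{m})=1$ if $m\equiv1\ (\bmod 4)$ and $-1$ if $m\equiv 3\ (\bmod 4)$. Second, for $m\equiv 3\ (\bmod 4)$ this gives $(\tfrac{-\sgn(x)}{m})=-\sgn(x)$.

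\textbf{Case $d=2d'$, $c$ odd.} Here $G(d,c)=|c|^{-1/2}\sum_{n=0}^{|c|-1}e^{-\frac{2\pi i d' n^2}{c}}$. The summand is $|c|$-periodic in $n$, so the sum runs over a full residue system mod $|c|$. Writing $-d'/c=(-d'\sgn c)/|c|$ and applying the classical formula with $m=|c|$ and $a=-d'\sgn(c)$ gives
\[
G(d,c)=\Big(\tfrac{-\sgn(c)}{|c|}\Big)\Big(\tfrac{d'}{|c|}\Big)\varepsilon_{|c|}.
\]
If $|c|\equiv 1\ (\bmod 4)$ the first symbol and $\varepsilon_{|c|}$ are both $1$. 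If $|c|\equiv 3\ (\bmod 4)$ the first symbol equals $-\sgn(c)$ and $\varepsilon_{|c|}=i$, giving $-i\sgn(c)(\tfrac{d'}{|c|})$. These are the last two lines of the statement.

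\textbf{Case $c=2c'$, $d$ odd.} The same computation applied to $G(c,d)=|d|^{-1/2}\sum_{n}e^{-\frac{2\pi i c'n^2}{d}}$ gives
\[
G(c,d)=\Big(\tfrac{-\sgn(d)}{|d|}\Big)\Big(\tfrac{c'}{|d|}\Big)\varepsilon_{|d|},
\]
which has absolute value $1$. The reciprocity lemma then gives
\[
G(d,c)=e^{-\frac{\pi i}{4}\sgn(cd)}\,\overline{G(c,d)}
=e^{-\frac{\pi i}{4}\sgn(cd)}\Big(\tfrac{-\sgn(d)}{|d|}\Big)\Big(\tfrac{c'}{|d|}\Big)\overline{\varepsilon_{|d|}}.
\]
If $|d|\equiv1\ (\bmod 4)$ this is $(\tfrac{c'}{|d|})e^{-\frac{\pi i}{4}\sgn(cd)}$. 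If $|d|\equiv 3\ (\bmod 4)$ the factors contribute $(-\sgn d)(-i)=i\sgn(d)$, which gives the second line.

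The main obstacle is not conceptual but lies in the sign bookkeeping. One must check that the Jacobi symbol convention $(\tfrac{\cdot}{|d|})$ used in the paper is multiplicative in the numerator, so that the factor $(\tfrac{-\sgn(\cdot)}{\cdot})$ can be split off cleanly. One must also check the periodicity, and in particular that $e^{-\pi i x n^2/y}$ really has period $|y|$ because the numerator $x$ is even, so that the truncated sums in the definition of $G$ are complete Gauss sums. Since the classical formula for composite odd $m$ with the Jacobi symbol is standard, I would cite it rather than rederive it.
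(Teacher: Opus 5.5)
Your proposal is correct and follows essentially the same route as the paper: evaluate the odd-modulus quadratic Gauss sum classically (the paper cites Berndt--Evans--Williams and handles $d<0$ via $G(c,d)=\overline{G(c,-d)}$, where you absorb the sign into the factor $\bigl(\tfrac{-\sgn(\cdot)}{|\cdot|}\bigr)$), read off the two cases $d=2d'$ directly, and use the reciprocity $G(c,d)G(d,c)=e^{-\frac{\pi i}{4}\sgn(cd)}$ together with $|G(c,d)|=1$ to obtain the two cases $c=2c'$. Your sign bookkeeping checks out in all four cases.
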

\begin{proof}
 By the above lemma,  $G(c,d)G(d,c)=e^{-\tfrac{\pi i}{4} \sgn(cd)}$.  Moreover, by \cite[Thm.1.5.2]{BeEvWi}, for $d>0$, $2\mid c$ and $c=2c'$,  we have: $$G(c,d)=\left\{\begin{array}{cc}
 (\tfrac{c'}{d})  & \textrm{ if } d\equiv 1(\bmod 4),\\ -(\tfrac{c'}{d})i  & \textrm{ if } d\equiv 3(\bmod 4).\end{array}\right.$$
 If  $d<0$, and $c=2c'$, then
 $$ G(c,d)=\overline{G(c,-d)}=\left\{\begin{array}{cc}
 (\tfrac{c'}{|d|})  & \textrm{ if } |d|\equiv 1(\bmod 4),\\ (\tfrac{c'}{|d|})i  & \textrm{ if } |d|\equiv 3(\bmod 4).\end{array}\right.$$ Then the result follows.
\end{proof}

  Following \cite[p.155]{LiVe}, for $g=\begin{pmatrix}
                                        a & b \\
                                        c& d 
                                      \end{pmatrix} \in \Gamma_{\theta}$,  let us define:
\begin{align}
&\widetilde{\beta}(g)=\left\{ \begin{array}{cl} G(d,c) & \textrm{ if } c\neq 0,\\ 1&  \textrm{ if } c=0.\end{array}\right.\label{betag}\\
&\overline{\beta}(g)=\widetilde{\beta}(g)m_{X^{\ast}, \psi_0}(g)^{-1}= \left\{\begin{array}{cc}  G(d,c)e^{\tfrac{\pi i \sgn(c)}{4}} & \textrm{ if } c\neq 0,\\e^{-\tfrac{\pi i[1-\sgn(d)]}{4}} & \textrm{ if } c=0.\end{array}\right.\\
&\widehat{\beta}(g)=\widetilde{\beta}(g)m_{X^{\ast}, \psi_0}(g)^{-1}\overline{s}(g)^{-1}=\left\{\begin{array}{cc}  G(d,c)e^{\tfrac{\pi i \sgn(c)}{4}}\Big[\tfrac{\sqrt{ci+d}}{\sqrt[4]{c^2+d^2}}\Big]^{-1} & \textrm{ if } c\neq 0,\\1& \textrm{ if } c=0.\end{array}\right. 
\end{align}
 If $c=0$, $d=\pm 1$ and  $\widehat{\beta}(g)=\widetilde{\beta}(g)m_{X^{\ast}, \psi_0}(g)^{-1}\overline{s}(g)^{-1}=e^{-\tfrac{\pi i[1-\sgn(d)]}{4}} \Big[\tfrac{\sqrt{d}}{\sqrt{|d|}}\Big]^{-1}=1$.
\begin{lemma}\label{spgamma12}
Let   $g_i=\begin{pmatrix} a_i& b_i\\c_i& d_i\end{pmatrix}\in \Gamma_{\theta}$.
\begin{itemize}
\item[(1)] If $\psi=\psi_0$, $\widetilde{c}_{X^{\ast}}(g_1,g_2)=\widetilde{\beta}(g_1)^{-1}\widetilde{\beta}(g_2)^{-1}\widetilde{\beta}(g_1g_2)$.
\item[(2)] $\overline{c}_{X^{\ast}}(g_1, g_2) =\overline{ \beta}(g_1)^{-1}\overline{ \beta}(g_2)^{-1}\overline{ \beta}(g_1g_2)$.
\item[(3)] $\widehat{c}_{z_0}(g_1, g_2) =\widehat{ \beta}(g_1)^{-1}\widehat{ \beta}(g_2)^{-1}\widehat{ \beta}(g_1g_2)$.
\end{itemize}
\end{lemma}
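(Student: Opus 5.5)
Part (1) carries all the content. Parts (2) and (3) follow from (1) by the formal manipulations the paper already uses for $\SL_2(\Z)$ in Lemma \ref{chap8csp}.

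For (1), I would not verify the cocycle identity for $\widetilde{\beta}$ directly from Gauss sums. Instead I would use the Schrödinger model of the Weil representation restricted to $\Gamma_\theta$, following \cite[pp.~155--170]{LiVe}. The idea is that the distribution $\theta=\sum_{n\in\Z}\delta_n$ is, up to scalars, the unique tempered distribution on $\R$ fixed by the lattice translations $\Z\times\Z\subset \Ha(W)$, with $\psi=\psi_0$. Elements of $\Gamma_\theta$ preserve the character $(m,n)\mapsto (-1)^{mn}$ of $\Z^2$ that defines this invariance. Hence for $g\in\Gamma_\theta$ the operator $\pi(g)$, normalized via Rao's operators $M[g]$ satisfying $M[g_1]M[g_2]=\widetilde{c}_{X^\ast}(g_1,g_2)M[g_1g_2]$, sends $\theta$ to a scalar multiple $\widetilde{\beta}(g)^{-1}\theta$ (or with the paper's sign convention).

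The key step is to compute this scalar. For $c=0$ one has $g\in P(\R)$ with $d=\pm1$, and $M[g]$ acts by translation and multiplication by $\psi_0(\tfrac{1}{2}abx^2)$. Since $ab$ is even on $\Gamma_\theta$, $\theta$ is fixed and the scalar is $1$. For $c\neq 0$ one writes $M[g]f(x)=\int \psi_0\big(\tfrac{ax^2-2xy+dy^2}{2c}\big)f(y)\,dy$ with Rao's normalization $|c|^{-1/2}$ and the $\gamma$-factor. Pairing with $\theta$ and applying Poisson summation reduces the scalar to the quadratic Gauss sum $|c|^{-1/2}\sum_{n\bmod |c|}e^{-\pi i d n^2/c}=G(d,c)$, up to the eighth root of unity that $m_{X^\ast}$ absorbs. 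This is the computation of \cite[Sect.~2.4]{LiVe}.

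Applying $M[g_1]M[g_2]=\widetilde{c}_{X^\ast}(g_1,g_2)M[g_1g_2]$ to $\theta\neq0$ then gives $\widetilde{\beta}(g_1)\widetilde{\beta}(g_2)=\widetilde{c}_{X^\ast}(g_1,g_2)^{-1}\widetilde{\beta}(g_1g_2)$ (inverting consistently with the convention), which is (1). The main obstacle is getting the normalization of the eighth roots of unity right, i.e.\ matching Rao's $\widetilde{c}_{X^\ast}=\gamma(\psi^{\frac12c_1c_2c_3})$ with the exact constant in front of the Gauss sum. As a cross-check I would use the product formula $G(d,c)G(c,d)=e^{-\frac{\pi i}{4}\sgn(cd)}$ together with Lemma \ref{chap8betacd}, and test on generators $u(2)$ and $\omega$ of $\Gamma_\theta$, where $\widetilde{c}_{X^\ast}$ is easily evaluated.

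For (2), substitute $\widetilde{c}_{X^\ast}=m_{X^\ast,\psi_0}(g_1g_2)\,m_{X^\ast,\psi_0}(g_1)^{-1}m_{X^\ast,\psi_0}(g_2)^{-1}\,\overline{c}_{X^\ast}$ from Lemma \ref{relacto}(3) into (1). Since $\overline{\beta}=\widetilde{\beta}\,m_{X^\ast,\psi_0}^{-1}$, the $m$-factors cancel exactly as in the proof of Lemma \ref{chap8csp}(2). For (3), combine (2) with Lemma \ref{twocoz0}, $\widehat{c}_{z_0}=\overline{c}_{X^\ast}\,\overline{s}(g_1)\overline{s}(g_2)\overline{s}(g_1g_2)^{-1}$, and the definition $\widehat{\beta}=\overline{\beta}\,\overline{s}^{-1}$; this mirrors Lemma \ref{chap8csp}(3).
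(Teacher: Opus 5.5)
Your proposal is correct and follows the paper's route. Parts (2) and (3) are exactly the substitutions of Lemma~\ref{relacto}(3) and Lemma~\ref{twocoz0} that the paper makes, and for part (1), where the paper simply cites \cite[pp.~149--150]{LiVe}, you supply the underlying Lion--Vergne argument: the action of $\Gamma_{\theta}$ on the theta functional $f\mapsto\sum_n f(n)$, evaluated by Poisson summation as a Gauss sum. Your hedging on normalizations can be removed, because with the paper's $\widetilde{c}_{X^{\ast}}$-normalized operators no extra eighth root of unity appears: for $c\neq 0$, Poisson summation (using $ac$, $bd$ even and $ad\equiv 1 \pmod{c}$) gives exactly $\sum_n (\pi_{X^{\ast},\psi}(g)f)(n)=|c|^{-1/2}\sum_{r \bmod |c|}e^{\pi i a r^2/c}\sum_n f(n)=\overline{G(d,c)}\sum_n f(n)=\widetilde{\beta}(g)^{-1}\sum_n f(n)$, while for $c=0$ the operator is $f(x)\mapsto\psi_0(\tfrac12 abx^2)f(ax)$ (a reflection, not a translation), and $m_{X^{\ast}}$ enters only when passing to part (2).
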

\begin{proof}
1) See \cite[pp.149--150]{LiVe}.\\
2)
\begin{align*}
\overline{c}_{X^{\ast}}(g_1, g_2)&=m_{X^{\ast},\psi_0}(g_1g_2)^{-1} m_{X^{\ast},\psi_0}(g_1) m_{X^{\ast},\psi_0}(g_2) \widetilde{c}_{ {X^{\ast}}}(g_1,g_2)\\
&=[ \widetilde{\beta}(g_1g_2)m_{X^{\ast},\psi_0}(g_1g_2)^{-1}] [ \widetilde{\beta}(g_1)m_{X^{\ast},\psi_0}(g_1)^{-1}]^{-1} [\widetilde{\beta}(g_2)m_{X^{\ast},\psi_0}(g_2)^{-1}]^{-1}.
\end{align*}
3)\begin{align*}
 &\widehat{c}_{z_0}(g_1,g_2) =\overline{c}_{X^{\ast}}(g_1,g_2)\overline{s}(g_1)\overline{s}(g_2) \overline{s}(g_1g_2)^{-1}\\ 
 &=[ \widetilde{\beta}(g_1g_2)m_{X^{\ast},\psi_0}(g_1g_2)^{-1} \overline{s}(g_1g_2)^{-1} ] [ \widetilde{\beta}(g_1)m_{X^{\ast},\psi_0}(g_1)^{-1}\overline{s}(g_1)^{-1}]^{-1} [\widetilde{\beta}(g_2)m_{X^{\ast},\psi_0}(g_2)^{-1}\overline{s}(g_2)^{-1}]^{-1}.
 \end{align*}
\end{proof}
Note that when $\psi=\psi_0$,  $\widetilde{\beta}$   differs from  $\widetilde{\beta}_1$ by a character $\chi_{\Gamma_{\theta}}$.  
Let $\chi_{\Gamma_{\theta}}(g)=\widetilde{\beta}_1(g)/\widetilde{\beta}(g)$, for $g=\begin{pmatrix} a & b\\ c& d\end{pmatrix}\in \Gamma_{\theta}$. According to A. Putman's answer on the question titled `Generators for congruence group $\Gamma_{\theta}$' on MathOverflow, $\Gamma(2)/\{\pm I\}$ is a free group with two generators  $u(2)=\begin{pmatrix} 1 & 2\\ 0 & 1\end{pmatrix},u_-(2)=\begin{pmatrix} 1 & 0\\ 2 & 1\end{pmatrix}$. Then $\chi_{\Gamma_{\theta}}$ is a character of $\Gamma_{\theta}$,  which is determined by $\chi_{\Gamma_{\theta}}(\omega)$,  $\chi_{\Gamma_{\theta}}(u(2))$, $\chi_{\Gamma_{\theta}}(u_-(2))$.
\begin{lemma}
$\chi_{\Gamma_{\theta}}(\omega)=\chi_{\Gamma_{\theta}}(\pm I)=1$, $\chi_{\Gamma_{\theta}}(u(2))=e^{-\tfrac{\pi i}{6}}$, $\chi_{\Gamma_{\theta}}(u_-(2))=e^{\tfrac{\pi i}{6}}$.
\end{lemma}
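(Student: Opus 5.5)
The plan is to evaluate everything directly from the definitions. First I note why $\chi_{\Gamma_{\theta}}$ is a character at all. By Lemma \ref{chap8csp}(2) and Lemma \ref{spgamma12}(1) (with $\psi=\psi_0$), both $\widetilde{\beta}_1$ and $\widetilde{\beta}$ split the same cocycle $\widetilde{c}_{X^{\ast}}$ on $\Gamma_{\theta}$. Their quotient $\chi_{\Gamma_{\theta}}=\widetilde{\beta}_1/\widetilde{\beta}$ therefore satisfies $\chi_{\Gamma_{\theta}}(g_1g_2)=\chi_{\Gamma_{\theta}}(g_1)\chi_{\Gamma_{\theta}}(g_2)$. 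So it suffices to compute the four values
\[\widetilde{\beta}_1(g)=e^{-\pi i\mu(g)}\,m_{X^{\ast}}(g)\]
using Asai's $\mu$, the constant $m_{X^{\ast}}=m_{X^{\ast},\psi_0}$, and $\widetilde{\beta}(g)$ from (\ref{betag}), at $g=\omega,\ \pm I,\ u(2),\ u_-(2)$.

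\emph{The cases $c=0$.}
\begin{itemize}
\item[(i)] For $g=I$ we have $\mu=0$ and $m_{X^{\ast}}(I)=1$. Also $\widetilde{\beta}(I)=1$, so $\chi_{\Gamma_{\theta}}(I)=1$.
\item[(ii)] For $g=-I$ we have $\mu=\tfrac{1-\sgn(-1)}{4}=\tfrac12$, so $e^{-\pi i\mu}=e^{-\pi i/2}$. On the other side $m_{X^{\ast}}(-I)=e^{\frac{i\pi}{4}\cdot 2}=e^{\pi i/2}$, hence $\widetilde{\beta}_1(-I)=1=\widetilde{\beta}(-I)$.
\item[(iii)] For $g=u(2)$ we have $\mu=\tfrac{2}{12}=\tfrac16$ and $m_{X^{\ast}}=1$. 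Since $\widetilde{\beta}=1$, this gives $\chi_{\Gamma_{\theta}}(u(2))=e^{-\pi i/6}$.
\end{itemize}

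\emph{The cases $c\neq 0$.}
\begin{itemize}
\item[(iv)] For $g=\omega$ we have $a=d=0$, $c=1$, and $s(0,1)=0$. Hence $\mu=-\tfrac14$ and $e^{-\pi i\mu}=e^{\pi i/4}$. Since $m_{X^{\ast}}(\omega)=e^{-\pi i/4}$, we get $\widetilde{\beta}_1(\omega)=1$. On the other side $\widetilde{\beta}(\omega)=G(0,1)=1$ from the definition of the Gauss sum (a single term). So $\chi_{\Gamma_{\theta}}(\omega)=1$.
\item[(v)] For $g=u_-(2)$ we have $a=d=1$, $c=2$. The Dedekind sum is
\[s(1,2)=((\tfrac12))^2+((1))^2=0,\]
since $((\tfrac12))=\tfrac12-0-\tfrac12=0$. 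Thus $\mu=\tfrac{2}{24}-\tfrac14=-\tfrac16$, and $\widetilde{\beta}_1(u_-(2))=e^{\pi i/6}e^{-\pi i/4}$. Meanwhile
\[\widetilde{\beta}(u_-(2))=G(1,2)=2^{-1/2}\bigl(1+e^{-\pi i/2}\bigr)=2^{-1/2}(1-i)=e^{-\pi i/4},\]
so $\chi_{\Gamma_{\theta}}(u_-(2))=e^{\pi i/6}$.
\end{itemize}

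I do not expect a real obstacle; the whole argument is bookkeeping. The points needing care are the convention $((x))=0$ at half-integers and the sign conventions in $m_{X^{\ast}}$ for $c=0$, $d<0$, which make the $-I$ case cancel. A further care point is the Gauss sum convention $e^{-\pi i cn^2/d}$ with the arguments in the order $G(d,c)$. As a sanity check, the values are consistent with $\widetilde{\beta}_1$ and $\widetilde{\beta}$ both trivializing on $\pm I$. Also $\omega u(2)\omega^{-1}=u_-(-2)$, and the computed values satisfy $\chi_{\Gamma_{\theta}}(u_-(-2))=\chi_{\Gamma_{\theta}}(u(2))$ as they must.
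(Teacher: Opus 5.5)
Your proposal is correct and follows the same route as the paper: you evaluate $\widetilde{\beta}_1=e^{-\pi i\mu}m_{X^{\ast}}$ via Asai's $\mu$ and $\widetilde{\beta}$ via the Gauss sum $G(d,c)$ at $\omega$, $u(2)$, $u_-(2)$, and your intermediate values (e.g.\ $\widetilde{\beta}_1(u_-(2))=e^{-\pi i/12}$, $\widetilde{\beta}(u_-(2))=e^{-\pi i/4}$) match the paper's. You also check the case $\pm I$ explicitly, which the paper leaves implicit.
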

\begin{proof}
1) $\widetilde{\beta}_1(\omega)=\overline{\beta}_1(g)m_{X^{\ast},\psi_0}(g)=e^{-\pi i (-\tfrac{1}{4})} \cdot e^{-\tfrac{i \pi }{4}}=1$, $\widetilde{\beta}(\omega)=1$.\\
2) $\widetilde{\beta}_1(u(2))=e^{-\pi i \tfrac{1}{6}}$, $\widetilde{\beta}(u(2))=1$.\\
3)$\widetilde{\beta}_1(u_-(2))=e^{-\pi i \tfrac{1}{12}}$, $\widetilde{\beta}(u_-(2))=e^{-\pi i \tfrac{1}{4}}$.
\end{proof}
\subsection{Automorphic factor I}\label{autoI} Recall the notations $J(g, z)$ and $\alpha_z(g)$ from Sect. \ref{Thecocyclewcz}. Additionally, we define:
\begin{itemize}
\item[(1)] $J'_{1/2}(g, z)=   \epsilon(g; z,z_0) \cdot |J(g, z)|^{1/2}$; (Coming from \cite[p.131, \wasyparagraph 5]{Ta1})
\item[(2)] $J_{1/2}(g, z)=J'_{1/2}(g, z)\cdot\widetilde{s}(g)$;
\item[(3)] $J_{3/2}(g, z)=  J_{1/2}(g, z)J(g, z)$.
\end{itemize}
\begin{lemma}\label{J121}
  $J'_{1/2}(g, z)^2= \alpha_{z_0}(g)^{-1}  J(g, z)$ and  $J'_{1/2}(g_1g_2, z)=J'_{1/2}(g_1, g_2(z))J'_{1/2}(g_2, z)\widehat{c}_{z_0}(g_1,g_2)$.
\end{lemma}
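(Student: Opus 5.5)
The plan is to work directly from the definition $J'_{1/2}(g,z)=\epsilon(g;z,z_0)\,|J(g,z)|^{1/2}$ and to use the closed form of $\epsilon$ from Lemma~\ref{epzz'}:
\[
\epsilon(g;z',z)=e^{i\phi_g(z')}\,e^{-i\phi_g(z)},\qquad \phi_g(w):=\Arg\big((cw+d)^{1/2}\big),\quad z,z'\in\mathbb{H}.
\]
Both assertions then reduce to elementary manipulations of this ``coboundary-like'' form, together with the chain rule $J(g_1g_2,z)=J(g_1,g_2(z))\,J(g_2,z)$.

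For the first identity, I square $\epsilon(g;z,z_0)$ using the formula above. Since $2\phi_g(w)\equiv\Arg(cw+d)\pmod{2\pi}$, this gives
\[
\epsilon(g;z,z_0)^2=e^{i\Arg(cz+d)}e^{-i\Arg(ci+d)}=\frac{J(g,z)}{|J(g,z)|}\cdot\alpha_{z_0}(g)^{-1}.
\]
Multiplying by $|J(g,z)|$ then yields $J'_{1/2}(g,z)^2=\alpha_{z_0}(g)^{-1}J(g,z)$. This is the same computation as in the lemma on $\widehat{c}_z(g_1,g_2)^2$.

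For the cocycle relation, I first record two properties of $\epsilon$, each obtained by inspecting the formula.
\begin{itemize}
\item[(a)] Chain property: $\epsilon(g;a,b)\,\epsilon(g;b,c)=\epsilon(g;a,c)$ for $a,b,c\in\mathbb{H}$. This holds because $\epsilon(g;\cdot,\cdot)$ is a ratio $e^{i\phi_g(a)}/e^{i\phi_g(b)}$.
\item[(b)] Multiplicativity in $g$: $\epsilon(g_1g_2;a,b)=\epsilon(g_1;g_2a,g_2b)\,\epsilon(g_2;a,b)$. This follows from the original definition $\epsilon(g;a,b)=\gamma(ga,gb)/\gamma(a,b)$ as a telescoping quotient.
\end{itemize}
Applying (b) with $a=z$, $b=z_0$ gives
\[
\epsilon(g_1g_2;z,z_0)=\epsilon(g_1;g_2z,g_2z_0)\,\epsilon(g_2;z,z_0).
\]
Applying (a) with the intermediate point $z_0$ splits the first factor:
\[
\epsilon(g_1;g_2z,g_2z_0)=\epsilon(g_1;g_2z,z_0)\,\epsilon(g_1;z_0,g_2z_0)=\epsilon(g_1;g_2z,z_0)\,\widehat{c}_{z_0}(g_1,g_2).
\]
The last equality is the definition of $\widehat{c}_{z_0}$. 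Finally, the absolute values satisfy $|J(g_1g_2,z)|^{1/2}=|J(g_1,g_2z)|^{1/2}\,|J(g_2,z)|^{1/2}$. Combining these gives $J'_{1/2}(g_1g_2,z)=J'_{1/2}(g_1,g_2(z))\,J'_{1/2}(g_2,z)\,\widehat{c}_{z_0}(g_1,g_2)$.

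The only delicate point is to make sure Lemma~\ref{epzz'} is applied with both arguments in $\mathbb{H}$, so that the branch choices in $\phi_g$ are consistent. This is automatic here, since $z$, $z_0$, $g_2z$ and $g_2z_0$ all lie in $\mathbb{H}$. With that, the chain property (a) holds exactly rather than only up to sign, and no further obstacle remains.
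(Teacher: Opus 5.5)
Your proposal is correct and follows the paper's proof essentially step for step. The first identity comes from squaring the closed form of Lemma~\ref{epzz'}. For the cocycle relation, you telescope $\gamma$ to get $\epsilon(g_1g_2;z,z_0)=\epsilon(g_1;g_2z,g_2z_0)\,\epsilon(g_2;z,z_0)$, split the first factor through $z_0$ via Lemma~\ref{epzz'} to produce $\widehat{c}_{z_0}(g_1,g_2)$, and finish with the multiplicativity of $|J|$.
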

\begin{proof}
See \cite[pp.\,131--132]{Ta1}; we provide the details for completeness. \\
1) $J'_{1/2}(g, z)^2= \epsilon(g; z,z_0)^2 \cdot |J(g, z)| \stackrel{\textrm{ Lem.} \ref{epzz'}}{=}e^{i\Big[ \Arg (cz+d)-\Arg (cz_0+d)\Big]}\cdot |J(g, z)|=[\tfrac{cz_0+d}{|cz_0+d|}]^{-1} (cz+d)=\alpha_{z_0}(g)^{-1}  J(g, z)$.\\
2) Put $g_3=g_1g_2$, $z'=g_2z$, $z''=g_2z_0$.  Then
\begin{align*}
\epsilon(g_1; g_2z,g_2z_0)&=e^{i\Big[ \Arg \Big((c_1z'+d_1)^{1/2}\Big)-\Arg\Big( (c_1z''+d_1)^{1/2}\Big)\Big]}\\
&=e^{i\Big[ \Arg \Big((c_1z'+d_1)^{1/2}\Big)-\Arg\Big( (c_1z_0+d_1)^{1/2}\Big)\Big]} e^{i\Big[ \Arg \Big((c_1z_0+d_1)^{1/2}\Big)-\Arg\Big( (c_1z''+d_1)^{1/2}\Big)\Big]} \\
&=\epsilon(g_1; g_2z,z_0) \epsilon(g_1;z_0, g_2z_0).
\end{align*}
Hence
\begin{align*}
J'_{1/2}(g_1g_2, z)=&\epsilon(g_1g_2; z,z_0)\cdot |J(g_1g_2, z)|^{1/2} \\
&=\frac{\gamma(g_1g_2z,g_1g_2z_0)}{\gamma(z,z_0)}\cdot  |J(g_1g_2, z)|^{1/2} \\
&=\epsilon(g_1; g_2z,g_2z_0) \epsilon(g_2; z,z_0)\cdot |J(g_1g_2, z)|^{1/2}\\
&=\epsilon(g_1;z_0, g_2z_0) \epsilon(g_1; g_2z,z_0) \epsilon(g_2; z,z_0) \cdot |J(g_1, g_2z)|^{1/2}|J(g_2, z)|^{1/2}\\
&=\widehat{c}_{z_0}(g_1,g_2) \cdot \epsilon(g_1; g_2z,z_0) |J(g_1, g_2z)|^{1/2}  \cdot  \epsilon(g_2; z,z_0)|J(g_2, z)|^{1/2}\\
&=J'_{1/2}(g_1, g_2(z))J'_{1/2}(g_2, z)\widehat{c}_{z_0}(g_1,g_2).
\end{align*}
\end{proof}
\begin{remark}\label{remakonUC}
$J'_{1/2}(g, z_0)=| J(g, z_0)|^{1/2}=1$ and $J_{1/2}(g, z_0)=\widetilde{s}(g)$, for $g\in \U(\C)$.
\end{remark}
For $z\in \mathbb{H}$, let $p_z$ be the corresponding element in $ P_{>0}(\R)$.
\begin{lemma}
 $J'_{1/2}(g, z)=\widehat{c}_{z_0}(g,p_z)^{-1}\cdot |J(g, z)|^{1/2}$.
\end{lemma}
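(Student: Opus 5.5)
Since $J'_{1/2}(g,z)=\epsilon(g;z,z_0)\,|J(g,z)|^{1/2}$ by definition, the statement reduces to the identity
\[
\epsilon(g;z,z_0)=\widehat{c}_{z_0}(g,p_z)^{-1}.
\]
By definition $\widehat{c}_{z_0}(g,p_z)=\epsilon(g;z_0,p_z(z_0))=\epsilon(g;z_0,z)$. So the plan is to show that $\epsilon(g;z,z_0)\,\epsilon(g;z_0,z)=1$.

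The first route is through Lemma \ref{epzz'}, which gives the closed formula
\[
\epsilon(g;z',z)=e^{i[\Arg((cz'+d)^{1/2})-\Arg((cz+d)^{1/2})]}.
\]
This expression is antisymmetric in $(z',z)$, so swapping the two arguments inverts it. Hence $\epsilon(g;z_0,z)=\epsilon(g;z,z_0)^{-1}$, and the identity follows at once.

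The only care needed is that Lemma \ref{epzz'} was proved for arbitrary $z,z'\in\mathbb{H}$, in the three cases $c=0$, $c>0$ and $c<0$. That is the setting here, with $z'=z_0=i$ in one order and $z'=z$ in the other. There is also a branch question: the Arg of the square root is taken with the principal branch. However, $cz+d$ lies in the upper or lower half-plane according to the sign of $c$, for every $z\in\mathbb{H}$, so the same branch choice applies to both terms and no sign ambiguity arises.

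As a cross-check, I would also give a branch-free argument. Apply Lemma \ref{J121}(2) to the pair $(g,p_z)$ and evaluate at $z_0$:
\[
J'_{1/2}(gp_z,z_0)=J'_{1/2}(g,z)\,J'_{1/2}(p_z,z_0)\,\widehat{c}_{z_0}(g,p_z).
\]
Here $J'_{1/2}(p_z,z_0)=\epsilon(p_z;z_0,z_0)\,|J(p_z,z_0)|^{1/2}=|a|^{-1/2}$, because $\gamma(w,w)=1$ for every $w$. Also
\[
J'_{1/2}(gp_z,z_0)=\epsilon(gp_z;z_0,z_0)\,|J(gp_z,z_0)|^{1/2}=|J(gp_z,z_0)|^{1/2}=|J(g,z)|^{1/2}|a|^{-1/2}.
\]
Comparing the two sides gives the claim directly.

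The main (and minor) obstacle is simply the bookkeeping of the normalisation $\gamma(w,w)=1$: from $\gamma(w,w)=(\Im w)^{-1/2}(\Im w)^{1/2}=1$ it follows that $\epsilon(h;w,w)=1$ for every $h$ and $w$.
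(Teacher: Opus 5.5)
Your main argument is exactly the paper's: identify $\widehat{c}_{z_0}(g,p_z)=\epsilon(g;z_0,z)$, then invert it using the antisymmetry of the closed formula in Lemma~\ref{epzz'}. Your cross-check via Lemma~\ref{J121}(2) and $\gamma(w,w)=1$ is also correct, but it is not genuinely branch-free, since the paper's proof of Lemma~\ref{J121}(2) itself uses the $\Arg$ formula of Lemma~\ref{epzz'}.
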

\begin{proof}
It follows from that $\widehat{c}_{z_0}(g,p_z)= \epsilon(g;z_0,z) \stackrel{ \textrm{ Lem. }\ref{epzz'} }{=}\epsilon(g;z,z_0)^{-1}$.
\end{proof}
\begin{lemma}\label{J121J1}
  $J_{1/2}(g_1g_2, z)=J_{1/2}(g_1, g_2(z))J_{1/2}(g_2, z)\widetilde{c}_{X^{\ast}}(g_1,g_2)$.
\end{lemma}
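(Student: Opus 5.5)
The plan is to combine the cocycle relation for $J'_{1/2}$ from Lemma \ref{J121} with the comparison of $\widehat{c}_{z_0}$ and $\widetilde{c}_{X^{\ast}}$ in Lemma \ref{eightcoz0}. By definition $J_{1/2}(g,z)=J'_{1/2}(g,z)\,\widetilde{s}(g)$, so the right-hand side of the claimed identity is
\[
J_{1/2}(g_1,g_2(z))J_{1/2}(g_2,z)\widetilde{c}_{X^{\ast}}(g_1,g_2)
=J'_{1/2}(g_1,g_2(z))J'_{1/2}(g_2,z)\,\widetilde{s}(g_1)\widetilde{s}(g_2)\,\widetilde{c}_{X^{\ast}}(g_1,g_2).
\]

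First, Lemma \ref{eightcoz0} gives $\widetilde{c}_{X^{\ast}}(g_1,g_2)\widetilde{s}(g_1)\widetilde{s}(g_2)=\widehat{c}_{z_0}(g_1,g_2)\,\widetilde{s}(g_1g_2)$. Substituting, the right-hand side becomes
\[
J'_{1/2}(g_1,g_2(z))J'_{1/2}(g_2,z)\widehat{c}_{z_0}(g_1,g_2)\,\widetilde{s}(g_1g_2).
\]
Second, by the second assertion of Lemma \ref{J121}, the first three factors equal $J'_{1/2}(g_1g_2,z)$. Third, multiplying by $\widetilde{s}(g_1g_2)$ gives $J'_{1/2}(g_1g_2,z)\widetilde{s}(g_1g_2)=J_{1/2}(g_1g_2,z)$, which is the claim.

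The only point needing care is consistency of conventions. Lemma \ref{eightcoz0} and the definition of $\overline{s}$ on $\SL_2(\R)$ are stated under $\psi=\psi_0$, and $\widetilde{s}=m_{X^{\ast},\psi}\overline{s}$. So I will state the lemma with $\psi=\psi_0$, as in Section \ref{linkedST}, and check that the same $m_{X^{\ast},\psi}$ appears both in $J_{1/2}$ and in Lemma \ref{eightcoz0}. I expect no real obstacle here: the argument is a two-line substitution once the conventions are fixed.
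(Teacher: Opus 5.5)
Your proposal is correct and is essentially the paper's proof: the paper likewise combines the cocycle relation for $J'_{1/2}$ from Lemma~\ref{J121} with the identity of Lemma~\ref{eightcoz0}. The only difference is direction: the paper substitutes from the left-hand side toward the right-hand side, whereas you run the same substitution in reverse.
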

\begin{proof}
\begin{align*}
\text{Left side}
&=\widetilde{s}(g_1g_2) J'_{1/2}(g_1, g_2(z))J'_{1/2}(g_2, z)\widehat{c}_{z_0}(g_1,g_2)\\
&=\widetilde{s}(g_1g_2) \widetilde{s}(g_1)^{-1} \widetilde{s}(g_2)^{-1}  J_{1/2}(g_1, g_2(z))J_{1/2}(g_2, z)\widehat{c}_{z_0}(g_1,g_2)\\
&\stackrel{\textrm{ Lem. }\ref{eightcoz0}}{=} \text{Right side}.
\end{align*}
\end{proof}
\begin{lemma}\label{J12squr2}
$[J_{1/2}(g, z)m_{X^{\ast}}(g)^{-1}]^2=cz+d$.
\end{lemma}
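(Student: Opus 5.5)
We unwind the definitions and reduce everything to Lemma \ref{J121} and formula (\ref{eqalpha}). By definition,
\[
J_{1/2}(g,z)\,m_{X^{\ast}}(g)^{-1}=J'_{1/2}(g,z)\,\widetilde{s}(g)\,m_{X^{\ast}}(g)^{-1}.
\]
Here $\widetilde{s}(g)=m_{X^{\ast},\psi}(g)\overline{s}(g)$, and we take $\psi=\psi_0$, as in the definition of $\overline{s}$ on $\SL_2(\R)$ in Section \ref{linkedST}. The $m_{X^{\ast}}$ factors therefore cancel, and
\[
J_{1/2}(g,z)\,m_{X^{\ast}}(g)^{-1}=J'_{1/2}(g,z)\,\overline{s}(g).
\]
The whole claim thus reduces to computing the square of $J'_{1/2}(g,z)\overline{s}(g)$.

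Next we square both factors separately. Lemma \ref{J121} gives $J'_{1/2}(g,z)^2=\alpha_{z_0}(g)^{-1}J(g,z)$. Formula (\ref{eqalpha}) gives $\overline{s}(g)^2=\alpha_{z_0}(g)$. Multiplying,
\[
[J'_{1/2}(g,z)\overline{s}(g)]^2=\alpha_{z_0}(g)^{-1}J(g,z)\,\alpha_{z_0}(g)=J(g,z)=cz+d .
\]

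The only step needing care is (\ref{eqalpha}) itself, since it is only noted in passing. We check it from the definition $\overline{s}(g)=\sqrt{ci+d}/\sqrt[4]{c^2+d^2}$. Squaring gives $(ci+d)/\sqrt{c^2+d^2}=J(g,i)/|J(g,i)|=\alpha_{z_0}(g)$ with $z_0=i$. This holds regardless of the branch chosen for the square root, including the boundary case $t=-\pi$, because only the square is involved.

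Similarly, $J'_{1/2}(g,z)^2$ uses only the square of $\epsilon(g;z,z_0)$ from Lemma \ref{epzz'}, so branch issues disappear there too. We expect no real obstacle beyond confirming that $\psi=\psi_0$ is the normalization in force, so that $m_{X^{\ast}}=m_{X^{\ast},\psi}$ cancels exactly.
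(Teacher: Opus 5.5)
Your proposal is correct and follows the paper's own proof exactly. The paper likewise cancels $m_{X^{\ast}}(g)$ against the factor in $\widetilde{s}(g)=m_{X^{\ast}}(g)\overline{s}(g)$ to reduce to $[J'_{1/2}(g,z)\overline{s}(g)]^2$, and then squares using Lemma~\ref{J121} together with (\ref{eqalpha}).
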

\begin{proof}
$\text{Left side}=[J'_{1/2}(g, z)\widetilde{s}(g)m_{X^{\ast}}(g)^{-1}]^2=[J'_{1/2}(g, z)\overline{s}(g)]^2\stackrel{ \textrm{ Eq. }  \ref{eqalpha}}{=}\alpha_{z_0}(g)^{-1} J(g, z) \cdot\alpha_{z_0}(g)= \text{Right side}$.
\end{proof}

\begin{definition}\label{defczd}
  $\sqrt{cz+d}\stackrel{Def.}{=} J_{1/2}(g, z)m_{X^{\ast}}(g)^{-1}$, for $g\in \SL_2(\R)$.
  \end{definition}
  \begin{remark}\label{remachoArg}
The above definition is compatible with our choice of $\Arg$.
\end{remark}
  \begin{proof}
  $J_{1/2}(g, z)m_{X^{\ast}}(g)^{-1}=J'_{1/2}(g, z)\overline{s}(g)=e^{i\Big[ \Arg \Big((cz+d)^{1/2}\Big)-\Arg\Big( (cz_0+d)^{1/2}\Big)\Big]} |J(g, z)|^{1/2} \Big(\tfrac{J(g,z_0)}{|J(g,z_0)|}\Big)^{1/2}=(cz+d)^{1/2}$.
    \end{proof}
  Let $g_3=g_1g_2$ and $g_i=\begin{pmatrix} a_i& b_i\\ c_i &d_i \end{pmatrix}$. 
    \begin{lemma}
    
    $\sqrt{c_3z+d_3}=\sqrt{c_1(g_2 z)+d_1}\cdot \sqrt{c_2 z+d_2}\cdot \overline{c}_{X^{\ast}}(g_1,g_2)$.
  \end{lemma}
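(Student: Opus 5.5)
By Definition \ref{defczd}, $\sqrt{c_iz+d_i}=J_{1/2}(g_i,z)\,m_{X^{\ast}}(g_i)^{-1}$ for every $g_i\in\SL_2(\R)$ and every $z\in\mathbb{H}$. The plan is to substitute this into the cocycle relation of Lemma \ref{J121J1} and then convert the $\mu_8$-cocycle into the $\mu_2$-cocycle using Lemma \ref{relacto}(3). Throughout we take $\psi=\psi_0$ and write $m$ for $m_{X^{\ast}}$.

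\emph{Step 1.} Apply Lemma \ref{J121J1} to $g_3=g_1g_2$:
\[J_{1/2}(g_3,z)=J_{1/2}(g_1,g_2z)\,J_{1/2}(g_2,z)\,\widetilde{c}_{X^{\ast}}(g_1,g_2).\]
Multiply both sides by $m(g_3)^{-1}$, and on the right insert the identity $m(g_1)m(g_1)^{-1}\,m(g_2)m(g_2)^{-1}$. This gives
\[\sqrt{c_3z+d_3}=\sqrt{c_1(g_2z)+d_1}\cdot\sqrt{c_2z+d_2}\cdot m(g_3)^{-1}m(g_1)m(g_2)\,\widetilde{c}_{X^{\ast}}(g_1,g_2).\]
Here the first factor on the right uses Definition \ref{defczd} at the point $g_2z\in\mathbb{H}$, which is allowed because the definition holds for every $z$.

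\emph{Step 2.} By Lemma \ref{relacto}(3), the scalar factor $m(g_3)^{-1}m(g_1)m(g_2)\,\widetilde{c}_{X^{\ast}}(g_1,g_2)$ is exactly $\overline{c}_{X^{\ast}}(g_1,g_2)$. This yields the claim.

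The main point to check is a consistency one: $\sqrt{c_1(g_2z)+d_1}$ must be read as Definition \ref{defczd} applied to $g_1$ at the point $g_2z$, since that is the quantity produced by Step 1. The definition imposes no restriction on the point, and Remark \ref{remachoArg} confirms it agrees with the principal-branch square root. So there is no branch ambiguity, and the identity holds with the stated sign $\overline{c}_{X^{\ast}}(g_1,g_2)\in\{\pm1\}$.

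As an independent sanity check, square both sides. Since $\overline{c}_{X^{\ast}}^2=1$, the identity reduces to $c_3z+d_3=(c_1g_2z+d_1)(c_2z+d_2)$, which is the usual cocycle relation for $J$.
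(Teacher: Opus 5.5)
Your proposal is correct and follows the paper's proof essentially step for step. The paper likewise writes $\sqrt{c_3z+d_3}=J_{1/2}(g_1g_2,z)\,m_{X^{\ast}}(g_1g_2)^{-1}$, expands it with Lemma \ref{J121J1}, and then uses Lemma \ref{relacto}(3) to absorb $m_{X^{\ast}}(g_1g_2)^{-1}m_{X^{\ast}}(g_1)m_{X^{\ast}}(g_2)\,\widetilde{c}_{X^{\ast}}(g_1,g_2)$ into $\overline{c}_{X^{\ast}}(g_1,g_2)$.
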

  \begin{proof}
 \begin{align*}
 \text{Left side}
 &=J_{1/2}(g_1g_2, z)m_{X^{\ast}}(g_1g_2)^{-1}\\
 &\stackrel{\textrm{Lem. } \ref{J121J1}}{=}J_{1/2}(g_1, g_2(z))J_{1/2}(g_2, z)\widetilde{c}_{X^{\ast}}(g_1,g_2)m_{X^{\ast}}(g_1g_2)^{-1}\\
 &=J_{1/2}(g_1, g_2(z))J_{1/2}(g_2, z) m_{X^{\ast}}(g_1)^{-1} m_{X^{\ast}}(g_2)^{-1}\overline{c}_{X^{\ast}}(g_1, g_2)\\
 &= \text{Right side}. 
 \end{align*}
  \end{proof}
  For $\overline{g}=(g,\epsilon)\in\overline{\SL}_{2}(\mathbb{R})$ or $\overline{\SL}_{2}^{\mu_8}(\mathbb{R})$, define:
\begin{align*}
J_{1/2}(\overline{g},z)&\coloneqq\epsilon^{-1}J_{1/2}(g,z)m_{X^{\ast}}(g)^{-1}=\epsilon^{-1}\sqrt{cz+d},\\[2pt]
J_{3/2}(\overline{g},z)&\coloneqq J_{1/2}(\overline{g},z)\,J(g,z)=\epsilon^{-1}\sqrt{cz+d}(cz+d).
\end{align*}
Let $\kappa=1$ or $3$. 
\begin{corollary}\label{Automorphicfactor2}
For every $\overline{g}_{1},\overline{g}_{2}\in\overline{\SL}_{2}(\mathbb{R})$, $J_{\kappa/2}(\overline{g}_{1}\overline{g}_{2},z)
      =J_{\kappa/2}(\overline{g}_{1},g_{2}z)\,
       J_{\kappa/2}(\overline{g}_{2},z)$.
\end{corollary}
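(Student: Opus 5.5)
The plan is to reduce the statement to the scalar identity in the lemma just proved, namely
\[
\sqrt{c_3z+d_3}=\sqrt{c_1(g_2 z)+d_1}\cdot \sqrt{c_2 z+d_2}\cdot \overline{c}_{X^{\ast}}(g_1,g_2),
\]
together with the group law in $\overline{\SL}_2(\R)$. Write $\overline{g}_1=(g_1,\epsilon_1)$ and $\overline{g}_2=(g_2,\epsilon_2)$. By the definition of the central extension attached to $\overline{c}_{X^{\ast}}$, we have $\overline{g}_1\overline{g}_2=(g_1g_2,\epsilon_1\epsilon_2\overline{c}_{X^{\ast}}(g_1,g_2))$. (If the paper's convention places the cocycle inversely, nothing changes, since $\overline{c}_{X^{\ast}}$ is $\mu_2$-valued and hence $\overline{c}_{X^{\ast}}^{-1}=\overline{c}_{X^{\ast}}$.)

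For $\kappa=1$ we compute
\[
J_{1/2}(\overline{g}_1\overline{g}_2,z)=\epsilon_1^{-1}\epsilon_2^{-1}\,\overline{c}_{X^{\ast}}(g_1,g_2)^{-1}\sqrt{c_3z+d_3}.
\]
Substituting the lemma, the factor $\overline{c}_{X^{\ast}}(g_1,g_2)^{-1}$ cancels the factor $\overline{c}_{X^{\ast}}(g_1,g_2)$ coming from the lemma. What remains is
\[
\epsilon_1^{-1}\sqrt{c_1(g_2z)+d_1}\cdot\epsilon_2^{-1}\sqrt{c_2z+d_2}=J_{1/2}(\overline{g}_1,g_2z)\,J_{1/2}(\overline{g}_2,z).
\]

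For $\kappa=3$, we multiply the $\kappa=1$ identity by the classical cocycle relation $J(g_1g_2,z)=J(g_1,g_2z)\,J(g_2,z)$. This relation follows directly from $c_3z+d_3=(c_1(g_2z)+d_1)(c_2z+d_2)$.

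The only point requiring care is the convention for multiplication in the extension group. I would check it against the trivialization identities used earlier, such as Lemma~\ref{chap8csp}, where $\overline{c}=\overline{\beta}_1(g_1)^{-1}\overline{\beta}_1(g_2)^{-1}\overline{\beta}_1(g_1g_2)$. As noted above, the order-two property makes the conclusion independent of that choice, so I expect no real obstacle.
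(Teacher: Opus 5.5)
Your proposal is correct and matches the paper's argument. The paper derives the corollary directly from the preceding identity $\sqrt{c_3z+d_3}=\sqrt{c_1(g_2z)+d_1}\,\sqrt{c_2z+d_2}\,\overline{c}_{X^{\ast}}(g_1,g_2)$ combined with the group law $[g_1,\epsilon_1][g_2,\epsilon_2]=[g_1g_2,\overline{c}_{X^{\ast}}(g_1,g_2)\epsilon_1\epsilon_2]$, which is the same cancellation the paper writes out in the $\GL_2$ analogue, Lemma~\ref{Automorphicfactor}; as you do, it obtains the $\kappa=3$ case from the cocycle relation for $J(g,z)$.
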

 \begin{corollary}
$J_{\kappa/2}(-,z_0): \overline{\SO}_2(\R) \to \U(\C); \overline{g}\longmapsto J_{\kappa/2}(\overline{g},z_0)$ is  a group homomorphism. Moreover, when $\kappa=1$, it is  an isomorphism.
 \end{corollary}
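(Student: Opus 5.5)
The homomorphism property will follow from Corollary \ref{Automorphicfactor2}, and bijectivity for $\kappa=1$ from an explicit formula on $\overline{\SO}_2(\R)$.

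\textbf{Homomorphism.} For $\overline{g}_1=(g_1,\epsilon_1)$ and $\overline{g}_2=(g_2,\epsilon_2)$ in $\overline{\SO}_2(\R)$ we have $g_2z_0=z_0$. Corollary \ref{Automorphicfactor2} therefore gives
\[
J_{\kappa/2}(\overline{g}_1\overline{g}_2,z_0)=J_{\kappa/2}(\overline{g}_1,z_0)\,J_{\kappa/2}(\overline{g}_2,z_0).
\]
The values lie in $\U(\C)$. Indeed, for $g\in\U(\C)$ Remark \ref{remakonUC} gives $J_{1/2}(g,z_0)=\widetilde{s}(g)$ and $m_{X^{\ast}}(g)\in\mu_8$, while $\epsilon\in\mu_2$. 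Also $J(g,z_0)=ci+d=e^{it}$ has modulus one. So $J_{\kappa/2}(\overline{g},z_0)$ has modulus one.

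\textbf{Explicit formula for $\kappa=1$.} For $g=k=e^{it}$ with $-\pi\le t<\pi$, Definition \ref{defczd} and Remark \ref{remachoArg} give
\[
J_{1/2}(\overline{g},z_0)=\epsilon^{-1}\sqrt{ci+d}=\epsilon^{-1}e^{it/2}=\epsilon^{-1}\overline{s}(g).
\]

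\textbf{Injectivity.} Suppose $\epsilon^{-1}e^{it/2}=1$. Since $t/2\in[-\pi/2,\pi/2)$, the number $e^{it/2}$ has nonnegative real part. If $\epsilon=-1$ we would need $e^{it/2}=-1$, which is impossible in that range. Hence $\epsilon=1$ and $t=0$, so the kernel is trivial.

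\textbf{Surjectivity.} Let $w=e^{i\phi}\in\U(\C)$ with $\phi\in[-\pi,\pi)$.
\begin{itemize}
\item If $\phi\in[-\pi/2,\pi/2)$, take $t=2\phi\in[-\pi,\pi)$ and $\epsilon=1$.
\item Otherwise $-w=e^{i(\phi\pm\pi)}$ has argument in $[-\pi/2,\pi/2)$. Take $t$ to be twice that argument and $\epsilon=-1$.
\end{itemize}

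\textbf{Main obstacle.} The delicate point is the boundary case $t=-\pi$. There $\overline{s}=e^{-i\pi/2}=-i$, and one must check that this is consistent with the choice $\Arg\in[-\pi,\pi)$ used in Remark \ref{remachoArg}. With that convention, $\{\pm e^{it/2}\}$ covers $\U(\C)$ exactly once.
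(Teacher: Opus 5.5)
Your proof is correct and is essentially the argument the paper leaves implicit, since the corollary is stated without proof right after Corollary \ref{Automorphicfactor2}: the cocycle identity with $g_2z_0=z_0$ gives the homomorphism, and the explicit value $J_{1/2}(\overline{g},z_0)=\epsilon^{-1}\overline{s}(g)=\epsilon^{-1}e^{it/2}$ gives bijectivity for $\kappa=1$. The boundary case $t=-\pi$ is not actually delicate, because Remark \ref{remakonUC} together with $\widetilde{s}=m_{X^{\ast}}\overline{s}$ yields $J_{1/2}(g,z_0)m_{X^{\ast}}(g)^{-1}=\overline{s}(g)$ directly from the definition of $\overline{s}$, with no appeal to Remark \ref{remachoArg}.
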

\section{The  relevant covering groups: $\GL_2(\R)$ case}\label{TrcgGLR}
Following Barthel \cite{Ba}, who extended the 2-cocycle $\widetilde{c}_{X^{\ast}}$ from $\Sp$ to $\GSp$ over a $p$-adic field, we carry out the analogous construction over $\R$.  In this section we extend the 2-cocycles $\overline{c}_{X^{\ast}}$, $\widetilde{c}_{X^{\ast}}$, and $\widehat{c}_{z}$ from $\SL_2(\R)$ to the full group $\GL_2(\R)$.
 \subsection{$\mu_8$-covering}
 Let $\alpha$  now be a continuous automorphism of  $\SL_2(\R)$.  By Moore's cohomology theory, there exists a unique automorphism $\alpha^{\ast}$ of   $\widetilde{\SL}_2(\R)$ such that the following diagram
\[
\begin{CD}
1 @>>> \mu_8    @>>>  \widetilde{\SL}_2(\R)@>>>\SL_2(\R) @>>> 1\\
@.     @|    @VV{\alpha^{\ast}}V  @VV{\alpha}V \\
1 @>>> \mu_8    @>>>  \widetilde{\SL}_2(\R)@>>>\SL_2(\R) @>>> 1
\end{CD}
\]
is commutative. Moreover, there exists a unique function $\nu(\alpha,-): \SL_2(\R) \longrightarrow \mu_8$, such that the following equality holds:
\begin{align}\label{chap3equ}
\widetilde{c}_{X^{\ast}}(g_1^{\alpha}, g_2^{\alpha})=\widetilde{c}_{X^{\ast}}(g_1, g_2)\nu(\alpha,g_1)^{-1}\nu(\alpha,g_2)^{-1}\nu(\alpha,g_1g_2).
\end{align}
The map  $\alpha^{\ast}$ is given as:
$$\alpha^{\ast}:  \widetilde{\SL}_2(\R) \longrightarrow \widetilde{\SL}_2(\R); [g, \epsilon] \longmapsto [g^{\alpha}, \nu(\alpha, g) \epsilon].$$
\begin{lemma}\label{chap3twoau}
For two automorphisms $\alpha_1, \alpha_2$,  $\nu(\alpha_1\alpha_2, g)=\nu(\alpha_1,g)\nu(\alpha_2, g^{\alpha_1})$.
\end{lemma}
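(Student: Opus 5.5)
The plan is to compare the two ways of transporting the cocycle identity (\ref{chap3equ}) along the composite $\alpha_1\alpha_2$, and then to use the uniqueness of $\nu$. I read $g^{\alpha_1\alpha_2}$ as $(g^{\alpha_1})^{\alpha_2}$, i.e. first $\alpha_1$, then $\alpha_2$, which is consistent with the exponential notation.

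First, apply (\ref{chap3equ}) for $\alpha_1$ to the pair $g_1,g_2$:
\[
\widetilde{c}_{X^{\ast}}(g_1^{\alpha_1}, g_2^{\alpha_1})=\widetilde{c}_{X^{\ast}}(g_1, g_2)\,\nu(\alpha_1,g_1)^{-1}\nu(\alpha_1,g_2)^{-1}\nu(\alpha_1,g_1g_2).
\]
Next, apply (\ref{chap3equ}) for $\alpha_2$ to the pair $g_1^{\alpha_1},g_2^{\alpha_1}$, using $(g_1g_2)^{\alpha_1}=g_1^{\alpha_1}g_2^{\alpha_1}$:
\[
\widetilde{c}_{X^{\ast}}(g_1^{\alpha_1\alpha_2}, g_2^{\alpha_1\alpha_2})=\widetilde{c}_{X^{\ast}}(g_1^{\alpha_1}, g_2^{\alpha_1})\,\nu(\alpha_2,g_1^{\alpha_1})^{-1}\nu(\alpha_2,g_2^{\alpha_1})^{-1}\nu(\alpha_2,(g_1g_2)^{\alpha_1}).
\]
Substituting the first identity into the second shows that the function $g\mapsto \nu(\alpha_1,g)\nu(\alpha_2,g^{\alpha_1})$ is $\mu_8$-valued and satisfies (\ref{chap3equ}) for the automorphism $\alpha_1\alpha_2$.

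It is cleaner to phrase this at the group level. The maps $\alpha_1^{\ast}$ and $\alpha_2^{\ast}$ compose to
\[
[g,\epsilon]\mapsto [g^{\alpha_1\alpha_2},\,\nu(\alpha_1,g)\nu(\alpha_2,g^{\alpha_1})\epsilon].
\]
This composite is an automorphism of $\widetilde{\SL}_2(\R)$ lifting $\alpha_1\alpha_2$ and acting trivially on $\mu_8$. By the uniqueness statement from Moore's theory quoted above, it must equal $(\alpha_1\alpha_2)^{\ast}$. Comparing second coordinates then gives the claimed formula.

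The only delicate point is uniqueness. Two functions satisfying (\ref{chap3equ}) for the same $\alpha$ differ by a homomorphism $\SL_2(\R)\to\mu_8$. Such a homomorphism is trivial because $\SL_2(\R)$ is perfect: it equals its own commutator subgroup, so it admits no nontrivial abelian quotient. This fact holds even without continuity, so no measurability issues arise. I would invoke this rather than rely solely on the citation to Moore.
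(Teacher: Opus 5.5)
Your proof is correct and follows the paper's route: the paper simply composes the lifts, $(g,\epsilon)^{\alpha_1\alpha_2}=\bigl(g^{\alpha_1},\nu(\alpha_1,g)\epsilon\bigr)^{\alpha_2}$, and reads off the second coordinate, implicitly identifying this composite with $(\alpha_1\alpha_2)^{\ast}$. Your justification of that identification is a useful addition the paper leaves to the citation of Moore's theory: two solutions of (\ref{chap3equ}) differ by a homomorphism $\SL_2(\R)\to\mu_8$, and this homomorphism is trivial because $\SL_2(\R)$ is perfect.
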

\begin{proof}
Because  $(g, \epsilon)^{\alpha_1\alpha_2}=(  g^{\alpha_1}, \nu( \alpha_1,g) \epsilon)^{\alpha_2}=( g^{\alpha_1\alpha_2}, \nu(\alpha_1,g) \nu(\alpha_2,g^{\alpha_1})\epsilon)$.
\end{proof}
\begin{example}\label{chap3example1}
If $\alpha=h\in  \SL_2(\R)$, and  $h$ acts on $\SL_2(\R)$ by  conjugation, i.e., $g^h=h^{-1}gh$, for $g\in \SL_2(\R)$,  then $\nu(\alpha,g)=\widetilde{c}_{X^{\ast}}(h^{-1}, g h)\widetilde{c}_{X^{\ast}}(g,h)$.
\end{example}
\begin{proof}
By the properties of  cocycles, we obtain:
\begin{equation}
\tfrac{\widetilde{c}_{X^{\ast}}(g_1^h, g_2^h)}{\widetilde{c}_{X^{\ast}}(g_1,g_2)}
=[\widetilde{c}_{X^{\ast}}(h^{-1}, g_1g_2h)\widetilde{c}_{X^{\ast}}(g_1g_2,h)][\widetilde{c}_{X^{\ast}}(h^{-1}, g_1h)\widetilde{c}_{X^{\ast}}(g_1,h)]^{-1} [\widetilde{c}_{X^{\ast}}(h^{-1}, g_2h)\widetilde{c}_{X^{\ast}}(g_2,h)]^{-1}.
\end{equation}
So $\nu(\alpha,g)=\widetilde{c}_{X^{\ast}}(h^{-1}, g h)\widetilde{c}_{X^{\ast}}(g,h)$ by the uniqueness.
\end{proof}
\begin{example}[Barthel]\label{chap3example2}
If $\alpha=\begin{pmatrix}
1& 0\\
0& y
\end{pmatrix}\in \GL_2(\R)$ and  $g=\begin{pmatrix} a& b\\ c& d\end{pmatrix}\in \SL_2(\R)$,  then
\begin{align}\label{chap3alpp}
\nu(\alpha,g)=\left\{\begin{array}{lr} (y,  a)_{\R} & c=0,\\ (c, y)_\R \gamma(y, \psi^{\tfrac{1}{2}})^{-1} & c\neq 0.\end{array}\right.
\end{align} 
\end{example}
\begin{proof}
The proof is similar to   the $p$-adic case in  \cite[p.212, Prop.1.2.A]{Ba}.
\end{proof}
From now on,  if $\alpha =\begin{pmatrix}
1& 0\\
0& y
\end{pmatrix}$, we will write $\nu(y, g)$ for $\nu(\alpha, g)$,  and $g^y$ for $g^{\alpha}$. 
There exists an exact sequence: 
$$1 \longrightarrow \SL_2(\R) \longrightarrow \GL_2(\R)\stackrel{\lambda}{\longrightarrow} \R^{\times}\longrightarrow 1.$$
Let us choose a section map
\begin{align}\label{chap3ss}
s: \R^{\times} \longrightarrow  \GL_2(\R); y \longmapsto \begin{pmatrix}
1& 0\\
0& y
\end{pmatrix}.
\end{align}
Then $\GL_2(\R)\simeq \R^{\times} \ltimes \SL_2(\R)$. By lifting the action of $\R^{\times}$ from $\SL_2(\R)$ to $\widetilde{\SL}_2(\R)$,  we   obtain a group $\R^{\times} \ltimes \widetilde{\SL}_2(\R)$; let us denote it by $\widetilde{\GL}_2(\R)$. Then there exists  an exact sequence:
$$ 1\longrightarrow \mu_8 \longrightarrow \widetilde{\GL}_2(\R) \stackrel{ }{\longrightarrow} \GL_2(\R) \longrightarrow 1.$$
 Let $\widetilde{C}_{X^{\ast}}$ denote  the $2$-cocycle  associated to this exact sequence from $\widetilde{c}_{X^{\ast}}$. For  $(y_1, g_1, \epsilon_1), (y_2, g_2, \epsilon_2)\in \R^{\times} \ltimes \widetilde{\SL}_2(\R)$,
\begin{equation}
\begin{split}
(y_1, g_1, \epsilon_1)\cdot (y_2, g_2, \epsilon_2)&=(y_1y_2, [g_1, \epsilon_1]^{y_2}[g_2, \epsilon_2])\\
& =(y_1y_2, [g_1^{y_2}, \nu(y_2, g_1)\epsilon_1][g_2, \epsilon_2])\\
&=(y_1y_2, [g_1^{y_2}g_2, \nu( y_2,g_1)\widetilde{c}_{X^{\ast}}(g_1^{y_2}, g_2)\epsilon_1\epsilon_2]).
\end{split}
\end{equation}
Therefore, we obtain:
\begin{lemma}\label{chap3yg1}
\begin{equation}\label{chap3yg}
\widetilde{C}_{X^{\ast}}([y_1,g_1], [y_2, g_2]) =\nu( y_2,g_1)\widetilde{c}_{X^{\ast}}(g_1^{y_2}, g_2),
\end{equation}
for   $[y_i, g_i]\in \R^{\times} \ltimes \SL_2(\R)$.
\end{lemma}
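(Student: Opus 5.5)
The plan is to read the cocycle directly off the multiplication law in $\R^{\times}\ltimes\widetilde{\SL}_2(\R)$ just computed, using the section of $\widetilde{\GL}_2(\R)\to\GL_2(\R)$ given by $[y,g]\mapsto (y,[g,1])$. Here $\GL_2(\R)$ is identified with $\R^{\times}\ltimes\SL_2(\R)$ through the section $s$ of (\ref{chap3ss}), so that $[y,g]$ corresponds to $s(y)g$. By definition, $\widetilde{C}_{X^{\ast}}$ is the $\mu_8$-valued function measuring the failure of this section to be multiplicative:
\[
(y_1,[g_1,1])\cdot(y_2,[g_2,1])=\bigl(y_1y_2,[g_1^{y_2}g_2,1]\bigr)\cdot\widetilde{C}_{X^{\ast}}([y_1,g_1],[y_2,g_2]),
\]
with $\mu_8$ central.

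First, I check the group law on pairs. In $\R^{\times}\ltimes\SL_2(\R)$, with $g^{y}=s(y)^{-1}gs(y)$, one has
\[
s(y_1)g_1s(y_2)g_2=s(y_1y_2)\,g_1^{y_2}g_2 .
\]
So the product is $[y_1y_2,g_1^{y_2}g_2]$, which matches the first coordinate of the displayed computation. The only subtlety is whether the action is a left or right action. Since $g^{\alpha_1\alpha_2}=(g^{\alpha_1})^{\alpha_2}$, it is a right action, and Lemma \ref{chap3twoau} guarantees that the lifted action $\alpha\mapsto\alpha^{\ast}$ is compatible. Hence the semidirect product $\widetilde{\GL}_2(\R)$ is a genuine group.

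Second, I expand the product on the covering group:
\[
[g_1,1]^{y_2}=[g_1^{y_2},\nu(y_2,g_1)],
\]
and
\[
[g_1^{y_2},\nu(y_2,g_1)]\,[g_2,1]=\bigl[g_1^{y_2}g_2,\ \nu(y_2,g_1)\,\widetilde{c}_{X^{\ast}}(g_1^{y_2},g_2)\bigr],
\]
using the multiplication in $\widetilde{\SL}_2(\R)$ defined by $\widetilde{c}_{X^{\ast}}$. Comparing with the section value $(y_1y_2,[g_1^{y_2}g_2,1])$ gives (\ref{chap3yg}).

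The main point needing care is well-definedness, namely that $\alpha^{\ast}$ is indeed given by $\nu$ satisfying (\ref{chap3equ}). This is assumed from the preceding discussion (Moore's uniqueness). Given that, $\widetilde{C}_{X^{\ast}}$ is automatically a $2$-cocycle, because it comes from an actual group extension, and no further verification is needed.
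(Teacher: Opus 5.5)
Your proposal is correct and is essentially the paper's argument. The paper also expands $(y_1,[g_1,\epsilon_1])\cdot(y_2,[g_2,\epsilon_2])=(y_1y_2,[g_1,\epsilon_1]^{y_2}[g_2,\epsilon_2])$ using $[g,\epsilon]^{y}=[g^{y},\nu(y,g)\epsilon]$ and the $\widetilde{c}_{X^{\ast}}$-multiplication in $\widetilde{\SL}_2(\R)$, then reads off $\widetilde{C}_{X^{\ast}}$. Your two extra checks are consistent with this and only make it more explicit: the identity $s(y_1)g_1s(y_2)g_2=s(y_1y_2)\,g_1^{y_2}g_2$, and the right-action compatibility via Lemma \ref{chap3twoau}.
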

\subsection{$\mu_2$-covering}
Instead of the $8$-degree cover,  let us consider the $2$-degree covering case. For an automorphism $\alpha$ of $\SL_2(\R) $, let us define $\nu_2: \alpha \times \SL_2(\R)  \longrightarrow \mu_2$ such that
$$(g, \epsilon)^{\alpha}=(g^{\alpha}, \nu_2( \alpha,g) \epsilon), \qquad  \qquad (g, \epsilon) \in \overline{\SL}_2(\R).$$
Similarly, $\nu_2$ determines an automorphism of $\overline{\SL}_2(\R)$ iff the following equality holds:
\begin{equation}\label{chap3crao2}
\overline{c}_{X^{\ast}}(g_1, g_2)=\overline{c}_{X^{\ast}}(g_1^{\alpha},g_2^{\alpha})\nu_2(\alpha,g_1) \nu_2(\alpha,g_2) \nu_2(\alpha, g_1g_2)^{-1}.
\end{equation}
\begin{lemma}\label{chap3twocover}
For two automorphisms $\alpha_1, \alpha_2$,  $\nu_2(\alpha_1\alpha_2, g)=\nu_2(\alpha_1,g)\nu_2(\alpha_2, g^{\alpha_1})$.
\end{lemma}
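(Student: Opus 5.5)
The plan is to copy the proof of Lemma \ref{chap3twoau}. The point is that $\nu_2$ is defined precisely by the requirement that $\alpha$ act on $\overline{\SL}_2(\R)$ via
\[
(g,\epsilon)^{\alpha}=(g^{\alpha},\nu_2(\alpha,g)\epsilon).
\]
In particular, the lift of $\alpha$ is the identity on the central $\mu_2$. I would first record that this lift is unique. Two lifts differ by a continuous homomorphism $\SL_2(\R)\to\mu_2$, and since $\SL_2(\R)$ is connected and perfect, such a homomorphism is trivial. Equivalently, by (\ref{chap3crao2}), two admissible functions $\nu_2$ differ by a character of $\SL_2(\R)$ with values in $\mu_2$, and that character must be trivial. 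Hence $\nu_2(\alpha,-)$ is uniquely determined by $\alpha$, exactly as $\nu(\alpha,-)$ is in the $\mu_8$ case. Existence of the lift can be taken for granted here, since the lemma only concerns the composition rule.

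Next I would compute the composite action step by step, using the convention that automorphisms are written on the right, so that $g^{\alpha_1\alpha_2}=(g^{\alpha_1})^{\alpha_2}$:
\[
(g,\epsilon)^{\alpha_1\alpha_2}=\big((g,\epsilon)^{\alpha_1}\big)^{\alpha_2}=(g^{\alpha_1},\nu_2(\alpha_1,g)\epsilon)^{\alpha_2}=(g^{\alpha_1\alpha_2},\nu_2(\alpha_1,g)\,\nu_2(\alpha_2,g^{\alpha_1})\,\epsilon).
\]
This composite is an automorphism of $\overline{\SL}_2(\R)$ lifting $\alpha_1\alpha_2$ and fixing $\mu_2$ pointwise. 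By the uniqueness from the first step, it coincides with $(g,\epsilon)\mapsto(g^{\alpha_1\alpha_2},\nu_2(\alpha_1\alpha_2,g)\epsilon)$. Comparing second coordinates gives the claimed identity.

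The only real obstacle is justifying uniqueness, and it is mild: it reduces to the triviality of $\Hom(\SL_2(\R),\mu_2)$. One could also bypass uniqueness and verify directly that $\nu_2(\alpha_1,g)\nu_2(\alpha_2,g^{\alpha_1})$ satisfies (\ref{chap3crao2}) for $\alpha_1\alpha_2$, by applying (\ref{chap3crao2}) twice. This shows it is an admissible choice, but identifying it with $\nu_2(\alpha_1\alpha_2,-)$ still requires uniqueness. So I would state the uniqueness once and then conclude as above.
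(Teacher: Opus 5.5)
Your proposal is correct and is essentially the paper's proof: the paper refers to Lemma~\ref{chap3twoau}, i.e.\ the one-line computation $(g,\epsilon)^{\alpha_1\alpha_2}=\bigl((g,\epsilon)^{\alpha_1}\bigr)^{\alpha_2}=(g^{\alpha_1\alpha_2},\nu_2(\alpha_1,g)\,\nu_2(\alpha_2,g^{\alpha_1})\,\epsilon)$, followed by comparing second coordinates. Your extra uniqueness step makes explicit what the paper uses tacitly; it rests on the triviality of $\Hom(\SL_2(\R),\mu_2)$, which already follows from the perfectness of $\SL_2(\R)$ without any continuity assumption.
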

\begin{proof}
The argument is similar to that of Lemma \ref{chap3twoau}.
\end{proof}
\begin{lemma}\label{chap3nu23}
$\nu_2( \alpha,g )=\nu(\alpha,g) \tfrac{m_{X^{\ast},\psi}(g)}{m_{X^{\ast},\psi}(g^{\alpha})}$.
\end{lemma}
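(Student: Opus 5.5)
We need to show that the function $\nu'(\alpha,g):=\nu(\alpha,g)\,m_{X^{\ast},\psi}(g)\,m_{X^{\ast},\psi}(g^{\alpha})^{-1}$ satisfies the defining relation (\ref{chap3crao2}). We then argue that this relation pins down $\nu_2$, so that $\nu'=\nu_2$.

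The plan is to transport the automorphism $\alpha^{\ast}$ of $\widetilde{\SL}_2(\R)$ through the isomorphism $\iota$ of (\ref{overwideSL}). Conjugating gives an automorphism of $\overline{\SL}^{\mu_8}_2(\R)$:
\[
[g,\epsilon]\longmapsto \iota^{-1}\big([g^{\alpha},\nu(\alpha,g)m_{X^{\ast},\psi}(g)\epsilon]\big)=[g^{\alpha},\nu(\alpha,g)m_{X^{\ast},\psi}(g)m_{X^{\ast},\psi}(g^{\alpha})^{-1}\epsilon].
\]
Equivalently, we verify the cocycle identity directly. Start from Lemma \ref{relacto}(3) applied to the pair $(g_1^{\alpha},g_2^{\alpha})$, and substitute (\ref{chap3equ}) for $\widetilde{c}_{X^{\ast}}(g_1^{\alpha},g_2^{\alpha})$. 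Then apply Lemma \ref{relacto}(3) once more, to the pair $(g_1,g_2)$, to rewrite $\widetilde{c}_{X^{\ast}}(g_1,g_2)$ in terms of $\overline{c}_{X^{\ast}}(g_1,g_2)$. Collecting the factors of $m_{X^{\ast},\psi}$ at $g_i$ and at $g_i^{\alpha}$, and using $(g_1g_2)^{\alpha}=g_1^{\alpha}g_2^{\alpha}$, gives
\[
\overline{c}_{X^{\ast}}(g_1^{\alpha},g_2^{\alpha})=\overline{c}_{X^{\ast}}(g_1,g_2)\,\nu'(\alpha,g_1)^{-1}\nu'(\alpha,g_2)^{-1}\nu'(\alpha,g_1g_2).
\]
This is exactly (\ref{chap3crao2}) with $\nu'$ in place of $\nu_2$.

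Next comes uniqueness. If two functions both satisfy (\ref{chap3crao2}), their ratio is a homomorphism $\SL_2(\R)\to\mu_8$. This homomorphism is trivial because $\SL_2(\R)$ is perfect; equivalently, it is the uniqueness of the lift of $\alpha$ already used from Moore's theory. Hence $\nu_2=\nu'$, provided $\nu_2$ is the one induced by the unique lift of $\alpha$ to the $\mu_8$-extension $\overline{\SL}^{\mu_8}_2(\R)$.

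The main obstacle is checking that $\nu'$ actually takes values in $\mu_2$, so that it defines an automorphism of the $\mu_2$-cover $\overline{\SL}_2(\R)$ and not just of $\overline{\SL}^{\mu_8}_2(\R)$. For this step I would argue abstractly. Since $\overline{\SL}_2(\R)$ is the $\mu_2$-subcover, $\overline{\SL}^{\mu_8}_2(\R)=\overline{\SL}_2(\R)\times_{\mu_2}\mu_8$. Moreover, the derived subgroup of $\overline{\SL}^{\mu_8}_2(\R)$ is $\overline{\SL}_2(\R)$, because $\overline{\SL}_2(\R)$ is the connected perfect metaplectic group. The derived subgroup is characteristic, so any automorphism preserves it, and this forces $\nu'\in\mu_2$. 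As a fallback, a direct check from Example \ref{chap3example2} and the formula for $m_{X^{\ast},\psi}$ in the diagonal case $\alpha=\begin{pmatrix}1&0\\0&y\end{pmatrix}$ also confirms that $\nu'\in\mu_2$.
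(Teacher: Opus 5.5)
Your core computation is exactly the paper's proof: it applies Lemma~\ref{relacto}(3) to both pairs $(g_1,g_2)$ and $(g_1^{\alpha},g_2^{\alpha})$, inserts (\ref{chap3equ}), and reads off (\ref{chap3crao2}) for $\nu'$. The paper leaves uniqueness and $\mu_2$-valuedness implicit, so your extra steps are correct additions rather than a different route. For the $\mu_2$ point there is also a shorter argument than the characteristic-subgroup one: since $\overline{c}_{X^{\ast}}$ is $\mu_2$-valued, your identity makes $g\mapsto\nu'(\alpha,g)^2$ a homomorphism $\SL_2(\R)\to\mu_8$, which is trivial because $\SL_2(\R)$ is perfect.
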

\begin{proof}
\begin{equation*}
\begin{split}
&\overline{c}_{X^{\ast}}(g_1, g_2)m_{X^{\ast},\psi}(g_1g_2) m_{X^{\ast},\psi}(g_1)^{-1}m_{X^{\ast},\psi}(g_2)^{-1}\\
&\stackrel{\textrm{ Lem. \ref{relacto}}}{=}  \widetilde{c}_{X^{\ast}}(g_1,g_2)\\
&\stackrel{(\ref{chap3equ})}{=}\widetilde{c}_{X^{\ast}}(g_1^{\alpha},g_2^{\alpha})\nu(\alpha,g_1) \nu(\alpha,g_2) \nu(\alpha,g_1g_2)^{-1}\\
&\stackrel{\textrm{ Lem. \ref{relacto}}}{=}\overline{c}_{X^{\ast}}(g_1^{\alpha},g_2^{\alpha}) m_{X^{\ast},\psi}(g_1^{\alpha}g_2^{\alpha}) m_{X^{\ast},\psi}(g_1^{\alpha})^{-1}m_{X^{\ast},\psi}(g_2^{\alpha})^{-1}\nu(\alpha,g_1) \nu(\alpha,g_2) \nu(\alpha,g_1g_2)^{-1}.
\end{split}
\end{equation*}
Hence:
$$\tfrac{\overline{c}_{X^{\ast}}(g_1, g_2)}{\overline{c}_{X^{\ast}}(g_1^{\alpha},g_2^{\alpha})}= [\tfrac{m_{X^{\ast},\psi}(g_1g_2)}{m_{X^{\ast},\psi}(g_1^{\alpha}g_2^{\alpha})}]^{-1} \tfrac{m_{X^{\ast},\psi}(g_1)}{ m_{X^{\ast},\psi}(g_1^{\alpha})}
\tfrac{m_{X^{\ast},\psi}(g_2)}{ m_{X^{\ast},\psi}(g_2^{\alpha})}\nu(\alpha,g_1) \nu(\alpha,g_2) \nu(\alpha,g_1g_2)^{-1}.$$
According to the above (\ref{chap3crao2}), we have $\nu_2( \alpha,g)=\nu(\alpha,g) \tfrac{m_{X^{\ast},\psi}(g)}{m_{X^{\ast},\psi}(g^{\alpha})}$.
\end{proof}
\begin{example}\label{twocoverex}
If $\alpha=h\in  \SL_2(\R)$, and  $h$ acts on $\SL_2(\R)$ by  conjugation, i.e., $g^h=h^{-1}gh$, for $g\in \SL_2(\R)$,  then $\nu_2(\alpha,g)=\overline{c}_{X^{\ast}}(h^{-1}, g h)\overline{c}_{X^{\ast}}(g,h)=\overline{c}_{X^{\ast}}(h^{-1}, g)\overline{c}_{X^{\ast}}(h^{-1}g,h)$. 
\end{example}
\begin{proof}
The argument is similar to that of Lemma \ref{chap3example1}. 
\end{proof}
\begin{example}\label{nu2abcd}
Let $g=\begin{pmatrix} a& b\\ c& d\end{pmatrix}\in \SL_2(\R)$. Then $\nu_2(\omega,g)=\left\{ \begin{array}{cr}  ( d, -bc)_{\R} & \textrm{ if } d\neq 0, c\neq 0, b\neq 0,\\  ( d, ac)_{\R} &\textrm{ if } d\neq 0, c\neq 0, b=0,\\  ( d, b)_{\R} &  \textrm{ if } d\neq 0, c= 0, b\neq 0, \\ 1 & \textrm{ if } d\neq 0, c= 0, b=0,\\ ( -c, -b)_{\R} & \textrm{ if } d= 0, c\neq 0, b\neq 0.  \end{array}\right.$
\end{example}
\begin{proof}
Take $h=\omega$ in the preceding lemma.  Then 
\[g\omega=\begin{pmatrix} b& -a\\ d& -c\end{pmatrix}, \quad \omega^{-1}g\omega=\begin{pmatrix} d& -c\\ -b& a\end{pmatrix},\] 
\[\nu_2(\omega,g)=\overline{c}_{X^{\ast}}(\omega^{-1}, g \omega)\overline{c}_{X^{\ast}}(g,\omega), \]
\[\overline{c}_{X^{\ast}}(g,\omega)= (x(g), x(  \omega))_{\R}(-x(g) x(  \omega), x(g \omega))_{\R},\]
\[\overline{c}_{X^{\ast}}(\omega^{-1}, g \omega)=(x(\omega^{-1}), x( g \omega))_{\R}(-x(\omega^{-1}) x( g \omega), x(\omega^{-1}g \omega))_{\R}.\]
\begin{itemize}
\item[(1)] If $d\neq 0$, $c\neq 0$ and $b\neq 0$, $\overline{c}_{X^{\ast}}(g,\omega)=(c, 1)_{\R} (-c, d)_{\R}=(-c, d)_{\R}$,    $\overline{c}_{X^{\ast}}(\omega^{-1}, g \omega)=(-1, d)_{\R}( d, -b)_{\R}$, $\nu_2(\omega,g)=(-c, d)_{\R}(-1, d)_{\R}( d, -b)_{\R}=( d, -bc)_{\R}$.
\item[(2)] If $d\neq 0$, $c\neq 0$ and $b= 0$, $\overline{c}_{X^{\ast}}(g,\omega)=(c, 1)_{\R} (-c, d)_{\R}=(-c, d)_{\R}$, $\overline{c}_{X^{\ast}}(\omega^{-1}, g \omega)=(-1, d)_{\R}( d, a)_{\R}$, $\nu_2(\omega,g)=(-c, d)_{\R}(-1, d)_{\R}( d, a)_{\R}=( d, ac)_{\R}$.
\item[(3)] If $d\neq 0$, $c= 0$ and $b\neq 0$, $\overline{c}_{X^{\ast}}(g,\omega)=(d, 1)_{\R} (-d, d)_{\R}=1$, $\overline{c}_{X^{\ast}}(\omega^{-1}, g \omega)=(-1, d)_{\R}( d, -b)_{\R}$,$\nu_2(\omega,g)=(-1, d)_{\R}( d, -b)_{\R}=( d, b)_{\R}$.
\item[(4)] If $d\neq 0$, $c= 0$ and $b= 0$, $\overline{c}_{X^{\ast}}(g,\omega)=(d, 1)_{\R} (-d, d)_{\R}=1$,  $\overline{c}_{X^{\ast}}(\omega^{-1}, g \omega)=(-1, d)_{\R}( d, a)_{\R}$, $\nu_2(\omega,g)=(-1, d)_{\R}( d, a)_{\R}=( d, -a)_{\R}=( d, -d^{-1})_{\R}=1$.
\item[(5)] If $d= 0$, $c\neq 0$ and $b\neq 0$, $\overline{c}_{X^{\ast}}(g,\omega)=(c, 1)_{\R} (-c, -c)_{\R}=(-c,-c)_{\R}$,$\overline{c}_{X^{\ast}}(\omega^{-1}, g \omega)=(-1, -c)_{\R}( -c, -b)_{\R}$, $\nu_2(\omega,g)=(-c,-c)_{\R}(-1, -c)_{\R}( -c, -b)_{\R}=( -c, -b)_{\R}$.
\end{itemize}
 
\end{proof}
\begin{lemma}\label{nu2}
For $g=\begin{pmatrix} a& b\\ c& d\end{pmatrix}\in \SL_2(\R)$, $y\in \R^{\times}$, we have:
\begin{equation}\label{chap3alppu2}
\nu_2(y,g)=\left\{\begin{array}{lr}  (y,  a)_\R & c=0,\\ 1 & c\neq 0. \end{array}\right.
\end{equation}
\end{lemma}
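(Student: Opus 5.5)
The plan is to compute $\nu_2(y,g)$ from Lemma \ref{chap3nu23}, which gives
\[
\nu_2(y,g)=\nu(y,g)\,\frac{m_{X^{\ast},\psi}(g)}{m_{X^{\ast},\psi}(g^{y})},
\]
with $\nu(y,g)$ taken from Barthel's formula in Example \ref{chap3example2}. The conjugate $g^{y}$ of $g=\begin{pmatrix} a& b\\ c& d\end{pmatrix}$ by $\alpha=\begin{pmatrix}1&0\\0&y\end{pmatrix}$ is $\begin{pmatrix} a& by\\ c/y& d\end{pmatrix}$; with the other conjugation convention one gets $\begin{pmatrix} a& b/y\\ cy& d\end{pmatrix}$ instead. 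In either case the diagonal entries are unchanged, the lower-left entry is multiplied by a power of $y$ with the same sign as $y$, and $c=0$ if and only if the lower-left entry of $g^y$ vanishes. This is all the argument uses about $g^y$, so it does not depend on the convention. Throughout, write $s=\sgn\mathfrak e$.

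\textbf{Case $c=0$.} Here $m_{X^{\ast},\psi}(g)=e^{\frac{i\pi s}{4}[1-\sgn d]}$ depends only on $d$, and $d$ is the same for $g$ and $g^y$. So the ratio of the $m$'s is $1$, and $\nu_2(y,g)=\nu(y,g)=(y,a)_\R$.

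\textbf{Case $c\neq 0$.} Here $\nu(y,g)=(c,y)_\R\,\gamma(y,\psi^{1/2})^{-1}$. Using the definition $\gamma(a,\psi)=\gamma(\psi^a)/\gamma(\psi)$ and $\gamma(\psi^{t})=e^{\frac{i\pi}{4}\sgn(t\mathfrak e)}$, we get
\[
\gamma(y,\psi^{1/2})=e^{\frac{i\pi s}{4}(\sgn y-1)}.
\]
The ratio of the $m$'s is
\[
\frac{m_{X^{\ast},\psi}(g)}{m_{X^{\ast},\psi}(g^{y})}=e^{\frac{i\pi s}{4}\left(-\sgn c+\sgn(c)\sgn(y)\right)}.
\]
\begin{itemize}
\item[(i)] If $y>0$, all three factors are $1$.
\item[(ii)] If $y<0$ and $c>0$, then $(c,y)_\R=1$ and the remaining product is $e^{\frac{i\pi s}{2}}e^{-\frac{i\pi s}{2}}=1$.
\item[(iii)] If $y<0$ and $c<0$, then $(c,y)_\R=-1$ and the product is $-e^{\frac{i\pi s}{2}}e^{\frac{i\pi s}{2}}=-e^{i\pi s}=1$.
\end{itemize}
Hence $\nu_2(y,g)=1$ whenever $c\neq 0$.

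The only delicate step is the sign bookkeeping in the case $c\neq 0$, $y<0$. There the Hilbert symbol, the Weil index $\gamma(y,\psi^{1/2})^{-1}$, and the jump of $m_{X^{\ast},\psi}$ caused by $\sgn(c/y)=-\sgn c$ must cancel exactly. I will check this subcase by subcase in $\sgn c$, as above, and note that the result holds for both signs of $\mathfrak e$. As a consistency check, the resulting $\nu_2$ takes values in $\mu_2$, as it must for an automorphism of $\overline{\SL}_2(\R)$ satisfying (\ref{chap3crao2}).
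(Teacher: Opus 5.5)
Your proposal is correct and follows the paper's proof: both apply Lemma \ref{chap3nu23} with Barthel's formula for $\nu(y,g)$, observe that $m_{X^{\ast},\psi}(g)=m_{X^{\ast},\psi}(g^{y})$ when $c=0$, and show that the factors cancel when $c\neq 0$. The only difference is presentation. The paper writes $m_{X^{\ast},\psi}(g)=\gamma(\psi^{\frac12 c})^{-1}$ and cancels via the Weil-index identity $\gamma(\psi^{\frac12 cy^{-1}})/\gamma(\psi^{\frac12 c})=\gamma(y^{-1},\psi^{\frac12})(c,y)_\R$, whereas your subcases in $\sgn y$ and $\sgn c$ verify that same identity explicitly.
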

\begin{proof}
1)  If $c=0$, $m_{{X^{\ast},\psi}}(g)=m_{{X^{\ast},\psi}}(g^{y})$, so $\nu_2(y, g)=\nu(y, g)=(y,  a)_{\R}$.\\
2)  If $c\neq 0$, then:
\begin{itemize}
\item[] $m_{{X^{\ast},\psi}}(g)=\gamma(c, \psi^{\tfrac{1}{2}})^{-1} \gamma(\psi^{\tfrac{1}{2}})^{-1}=\gamma({\psi}^{\tfrac{1}{2} c})^{-1}$,
 \item[] $m_{{X^{\ast},\psi}}(g^{y})=\gamma({\psi}^{\tfrac{1}{2} cy^{-1}})^{-1}$,
\item[] $\tfrac{m_{{X^{\ast},\psi}}(g)}{  m_{{X^{\ast},\psi}}(g^{y})}=\gamma(y^{-1}, \psi^{\tfrac{1}{2}}) (c,y)_\R$,
 \item[] $\nu_2(y, g)=\nu(y, g)\tfrac{m_{{X^{\ast},\psi}}(g)}{  m_{{X^{\ast},\psi}}(g^{y})}=( c, y)_\R \gamma(y, \psi^{\tfrac{1}{2}})^{-1}\gamma(y^{-1}, \psi^{\tfrac{1}{2}}) (c,y)_\R=1$.
      \end{itemize}
\end{proof}

Lifting the natural action of $\mathbb R^{\times}$ on $\SL_{2}(\mathbb R)$ to the two–fold cover $\overline{\SL}_{2}(\mathbb R)$ produces the semidirect product  
\[
\overline{\GL}_{2}(\mathbb R)=\mathbb R^{\times}\ltimes\overline{\SL}_{2}(\mathbb R),
\]  
which fits into the exact sequence  
\[
1\longrightarrow \mu_{2}\longrightarrow\overline{\GL}_{2}(\mathbb R)\longrightarrow\mathbb R^{\times}\ltimes\SL_{2}(\mathbb R)\longrightarrow 1.
\]  
We denote the corresponding $2$-cocycle on $\mathbb R^{\times}\ltimes\SL_{2}(\mathbb R)$ by $\overline C_{X^{\ast}}$.  Explicitly,  
\[
\overline C_{X^{\ast}}([y_{1},g_{1}],[y_{2},g_{2}])
=\nu_{2}(y_{2},g_{1})\;\overline c_{X^{\ast}}(g_{1}^{y_{2}},g_{2}),
\qquad [y_{i},g_{i}]\in\mathbb R^{\times}\ltimes\SL_{2}(\mathbb R).
\]  
Inserting the identity from Lemma~\ref{chap3nu23} yields  
\begin{equation}\label{chap3292inter}
\overline C_{X^{\ast}}([y_{1},g_{1}],[y_{2},g_{2}])
=m_{X^{\ast},\psi}(g_{1}^{y_{2}}g_{2})^{-1}\,
m_{X^{\ast},\psi}(g_{1})\,
m_{X^{\ast},\psi}(g_{2})\,
\widetilde C_{X^{\ast}}([y_{1},g_{1}],[y_{2},g_{2}]).
\end{equation}
Abusing notation, we set  
\[
m_{X^{\ast},\psi}[y,g]=m_{X^{\ast},\psi}(g),
\qquad [y,g]\in\mathbb R^{\times}\ltimes\SL_{2}(\mathbb R).
\]  
Then, for all $h_{1},h_{2}\in\GL_{2}(\mathbb R)$,
\begin{equation}\label{chap3293inter}
\overline C_{X^{\ast}}(h_{1},h_{2})
=m_{X^{\ast},\psi}(h_{1}h_{2})^{-1}\,
m_{X^{\ast},\psi}(h_{1})\,
m_{X^{\ast},\psi}(h_{2})\,
\widetilde C_{X^{\ast}}(h_{1},h_{2}).
\end{equation}
\subsection{Link $\widehat{c}_{z}$  to $\overline{c}_{X^{\ast}}$ and $\widetilde{c}_{X^{\ast}}$}\label{linkedST1} Assume $z=p_z(i)$. 
\begin{lemma}\label{beznu2nu}
\begin{itemize}
\item[(1)] $\widehat{c}_{z}(g_1,g_2)=\overline{c}_{X^{\ast}}(g_1,g_2)\nu_2(p_z,g_1)^{-1} \nu_2(p_z,g_2)^{-1} \nu_2(p_z, g_1g_2)\overline{s}(g_1^{p_z})\overline{s}(g_2^{p_z}) \overline{s}(g_1^{p_z}g_2^{p_z})^{-1}$.
\item[(2)]$\widehat{c}_{z}(g_1,g_2)=\widetilde{c}_{X^{\ast}}(g_1,g_2)\nu(p_z,g_1)^{-1} \nu(p_z,g_2)^{-1} \nu(p_z, g_1g_2)\widetilde{s}(g_1^{p_z})\widetilde{s}(g_2^{p_z}) \widetilde{s}(g_1^{p_z}g_2^{p_z})^{-1}$.
\end{itemize}
\end{lemma}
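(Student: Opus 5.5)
The plan is to chain three earlier results: Lemma \ref{ztoz0} reduces $\widehat{c}_z$ to $\widehat{c}_{z_0}$ evaluated at conjugated arguments; Lemmas \ref{twocoz0} and \ref{eightcoz0} express $\widehat{c}_{z_0}$ through $\overline{c}_{X^{\ast}}$ and $\widetilde{c}_{X^{\ast}}$; and the transformation laws (\ref{chap3crao2}) and (\ref{chap3equ}) for $\nu_2$ and $\nu$ let us undo the conjugation by $p_z$. Throughout, $\alpha=p_z$ is the inner automorphism $g\mapsto g^{p_z}=p_z^{-1}gp_z$, as in Example \ref{chap3example1}. Since $p_z\in P_{>0}(\R)\subset \SL_2(\R)$, this is a continuous automorphism of $\SL_2(\R)$, so $\nu(p_z,-)$ and $\nu_2(p_z,-)$ are defined and satisfy those identities.

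For (1), first apply Lemma \ref{ztoz0} to get $\widehat{c}_z(g_1,g_2)=\widehat{c}_{z_0}(g_1^{p_z},g_2^{p_z})$. Next apply Lemma \ref{twocoz0} to the pair $(g_1^{p_z},g_2^{p_z})$, using $g_1^{p_z}g_2^{p_z}=(g_1g_2)^{p_z}$. This gives
\[
\widehat{c}_z(g_1,g_2)=\overline{c}_{X^{\ast}}(g_1^{p_z},g_2^{p_z})\,\overline{s}(g_1^{p_z})\,\overline{s}(g_2^{p_z})\,\overline{s}(g_1^{p_z}g_2^{p_z})^{-1}.
\]
Then solve (\ref{chap3crao2}) for the conjugated cocycle:
\[
\overline{c}_{X^{\ast}}(g_1^{p_z},g_2^{p_z})=\overline{c}_{X^{\ast}}(g_1,g_2)\,\nu_2(p_z,g_1)^{-1}\nu_2(p_z,g_2)^{-1}\nu_2(p_z,g_1g_2).
\]
Substituting this into the previous display yields (1).

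For (2), proceed the same way, replacing Lemma \ref{twocoz0} by Lemma \ref{eightcoz0}, $\overline{s}$ by $\widetilde{s}$, and (\ref{chap3crao2}) by (\ref{chap3equ}). The identity (\ref{chap3equ}) gives directly $\widetilde{c}_{X^{\ast}}(g_1^{p_z},g_2^{p_z})=\widetilde{c}_{X^{\ast}}(g_1,g_2)\nu(p_z,g_1)^{-1}\nu(p_z,g_2)^{-1}\nu(p_z,g_1g_2)$, which is exactly the factor in (2). As a consistency check, (2) can also be derived from (1) via Lemma \ref{chap3nu23}, Lemma \ref{relacto}(3) and $\widetilde{s}=m_{X^{\ast},\psi}\overline{s}$: the $m$-factors cancel because they appear once evaluated at $g_i$ and once at $g_i^{p_z}$.

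The only delicate point is the exponents in (\ref{chap3crao2}). Its $\nu_2$'s are $\mu_2$-valued, so $\nu_2^{\pm1}$ coincide and solving for $\overline{c}_{X^{\ast}}(g_1^{p_z},g_2^{p_z})$ gives the stated pattern without any sign check. In (\ref{chap3equ}) the exponent pattern $(-1,-1,+1)$ is already the one required in (2), so no inversion is needed there either. Beyond this bookkeeping the argument is a substitution.
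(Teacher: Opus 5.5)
Your proposal is correct and matches the paper's proof step for step: Lemma~\ref{ztoz0}, then Lemma~\ref{twocoz0} (resp.\ Lemma~\ref{eightcoz0}), then solving (\ref{chap3crao2}) (resp.\ (\ref{chap3equ})) for the conjugated cocycle. Your appeal to $\mu_2$-valuedness is harmless but unnecessary: solving (\ref{chap3crao2}) for $\overline{c}_{X^{\ast}}(g_1^{p_z},g_2^{p_z})$ already gives the exponent pattern $(-1,-1,+1)$.
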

\begin{proof}
1) 
\begin{align*}
\widehat{c}_{z}(g_1,g_2)&\stackrel{\textrm{ Lem. }\ref{ztoz0}}{=}\widehat{c}_{z_0}(g_1^{ p_z},  g_2^{p_z})\\
&\stackrel{\textrm{ Lem. }\ref{twocoz0}}{=}\overline{c}_{X^{\ast}}(g_1^{ p_z},g_2^{ p_z})\overline{s}(g_1^{ p_z})\overline{s}(g_2^{ p_z}) \overline{s}(g_1^{ p_z}g_2^{ p_z})^{-1}\\
&\stackrel{\textrm{ Eq. }\ref{chap3crao2}}{=}\overline{c}_{X^{\ast}}(g_1,g_2)\nu_2(p_z,g_1)^{-1} \nu_2(p_z,g_2)^{-1} \nu_2(p_z, g_1g_2)\overline{s}(g_1^{ p_z})\overline{s}(g_2^{ p_z}) \overline{s}(g_1^{ p_z}g_2^{ p_z})^{-1}.
\end{align*}
2) 
\begin{align*}
\widehat{c}_{z}(g_1,g_2)&\stackrel{\textrm{ Lem. }\ref{ztoz0}}{=}\widehat{c}_{z_0}(g_1^{ p_z},  g_2^{p_z})\\
&\stackrel{\textrm{ Lem. }\ref{eightcoz0}}{=}\widetilde{c}_{X^{\ast}}(g_1^{ p_z},g_2^{ p_z})\widetilde{s}(g_1^{ p_z})\widetilde{s}(g_2^{ p_z}) \widetilde{s}(g_1^{ p_z}g_2^{ p_z})^{-1}\\
&\stackrel{\textrm{ Eq. }\ref{chap3equ}}{=}\widetilde{c}_{X^{\ast}}(g_1,g_2)\nu(p_z,g_1)^{-1} \nu(p_z,g_2)^{-1} \nu(p_z, g_1g_2)\widetilde{s}(g_1^{ p_z})\widetilde{s}(g_2^{ p_z}) \widetilde{s}(g_1^{ p_z}g_2^{ p_z})^{-1}.
\end{align*}
\end{proof}
\subsection{One Lemma}
\begin{lemma}\label{equah-1}
$\widehat{c}_{z_0}(g_1,g_2) \widehat{c}_{z_0}(g_1^{h_{-1}}, g_2^{h_{-1}})=1$, for $g_i\in \SL_2(\R)$.
\end{lemma}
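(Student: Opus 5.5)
We aim to prove the identity $\widehat{c}_{z_0}(g_1,g_2)\,\widehat{c}_{z_0}(g_1^{h_{-1}},g_2^{h_{-1}})=1$ by combining Lemma \ref{eightcoz0} with Lemma \ref{constants2}. Throughout we take $\psi=\psi_0$, since $\widetilde{s}$ is defined with respect to $\psi_0$ in Lemmas \ref{constants1} and \ref{constants2}. Lemma \ref{eightcoz0} expresses $\widehat{c}_{z_0}$ as $\widetilde{c}_{X^{\ast}}$ times the coboundary of $\widetilde{s}$. Lemma \ref{constants2} says $\widetilde{s}(g)\widetilde{s}(g^{h_{-1}})=1$. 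Hence the coboundary parts of the two factors cancel, as long as conjugation by $h_{-1}$ is compatible with products. It is: $g\mapsto g^{h_{-1}}=h_{-1}^{-1}gh_{-1}$ is an automorphism, so $(g_1g_2)^{h_{-1}}=g_1^{h_{-1}}g_2^{h_{-1}}$. Concretely,
\[
\widetilde{s}(g_1)\widetilde{s}(g_2)\widetilde{s}(g_1g_2)^{-1}\cdot \widetilde{s}(g_1^{h_{-1}})\widetilde{s}(g_2^{h_{-1}})\widetilde{s}\big((g_1g_2)^{h_{-1}}\big)^{-1}=1 .
\]
The statement therefore reduces to showing $\widetilde{c}_{X^{\ast}}(g_1,g_2)\,\widetilde{c}_{X^{\ast}}(g_1^{h_{-1}},g_2^{h_{-1}})=1$.

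For this reduced identity we use the explicit formula in Lemma \ref{relacto}(2): $\widetilde{c}_{X^{\ast}}(g_1,g_2)=e^{\frac{i\pi}{4}\sgn(c_1c_2c_3)}$, with the convention that $\sgn 0=0$ when some $c_j=0$. Write $g=\begin{pmatrix}a&b\\c&d\end{pmatrix}$. Conjugation by $h_{-1}$ gives $g^{h_{-1}}=\begin{pmatrix}a&-b\\-c&d\end{pmatrix}$, so every lower-left entry changes sign, and this holds for $g_1$, $g_2$ and $g_3=g_1g_2$ alike. Then
\[
\sgn\big((-c_1)(-c_2)(-c_3)\big)=-\sgn(c_1c_2c_3),
\]
and the two exponentials multiply to $1$. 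This holds trivially when some $c_j=0$, since both factors are then $1$.

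Alternatively, and as a cross-check, one could work directly from the definition via Lemma \ref{epzz'}. Note that $h_{-1}$ acts on $\mathbb{H}$ anti-holomorphically by $z\mapsto -\overline{z}$ and fixes $z_0=i$. One has $J(g^{h_{-1}},-\overline{z})=\overline{J(g,z)}$. Since $\Arg$ takes values in $[-\pi,\pi)$, conjugation negates the argument of $(cz+d)^{1/2}$, so each $\epsilon$-factor is conjugated. This argument has to handle the branch point $\Arg=-\pi$, where $\overline{u}\neq e^{-it}$. That point is exactly the delicate case b) in Lemma \ref{constants2}.

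The main obstacle is bookkeeping. We must make sure Lemma \ref{constants2} applies to products: it is stated for arbitrary $g\in\SL_2(\R)$, so it applies to $g_1g_2$ directly. We must also confirm that the $\psi=\psi_0$ normalization of $\widetilde{s}$ matches the one used in Lemma \ref{eightcoz0}. Once that is checked, the proof is the two-line cancellation above.
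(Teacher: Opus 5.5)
Your proof is correct, but it takes a different route from the paper's.

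\textbf{The paper's route.} It works with the two-fold data $(\overline{c}_{X^{\ast}},\overline{s})$ of Lemma~\ref{twocoz0}. From the automorphism law \eqref{chap3crao2} and $\overline{c}_{X^{\ast}}^2=1$, it rewrites $\overline{c}_{X^{\ast}}(g_1,g_2)\,\overline{c}_{X^{\ast}}(g_1^{h_{-1}},g_2^{h_{-1}})$ as the coboundary of $\nu_2(-1,\cdot)$. It then reduces to $k_g\in\U(\C)$ via the Iwasawa decomposition, using $\nu_2(-1,g)=\nu_2(-1,k_g)$ and $\overline{s}(g)=\overline{s}(k_g)$. There it checks that $\nu_2(-1,g)\,\overline{s}(g)\,\overline{s}(g^{h_{-1}})=1$. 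The only nontrivial point is $g=-I$, where both $\nu_2(-1,-I)$ and $\overline{s}(-I)^2$ equal $-1$.

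\textbf{Your route.} You use the eight-fold data $(\widetilde{c}_{X^{\ast}},\widetilde{s})$ instead. The same $-I$ discrepancy is already absorbed into $m_{X^{\ast}}$ inside Lemma~\ref{constants2}: there $m_{X^{\ast}}(-I)^2=-1$ cancels $\overline{s}(-I)^2=-1$. So the coboundary parts cancel term by term. The cocycle parts cancel because Rao's formula in Lemma~\ref{relacto}(2) depends on $\sgn(c_1c_2c_3)$ through an odd function, and conjugation by $h_{-1}$ negates every lower-left entry.

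\textbf{What each buys.} Given Lemma~\ref{constants2}, your argument is shorter and more transparent. It does rely on the explicit formula for $\widetilde{c}_{X^{\ast}}$ and on fixing $\psi=\psi_0$. That choice is harmless, since $\widehat{c}_{z_0}$ does not depend on $\psi$. The paper's argument works directly with the correction term $\nu_2(-1,\cdot)$ from Lemma~\ref{nu2}. This is the same object that drives its later construction of $\overline{C}_{X^{\ast}}$ and $\nu_{z_0}$ on $\GL_2(\R)$, so that route fits the surrounding machinery more naturally.
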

\begin{proof}
\begin{align*}
&\widehat{c}_{z_0}(g_1,g_2) \widehat{c}_{z_0}(g_1^{h_{-1}}, g_2^{h_{-1}})\label{betz0zh1}\\
&=[\overline{c}_{X^{\ast}}(g_1,g_2)\overline{c}_{X^{\ast}}(g^{h_{-1}}_1,g^{h_{-1}}_2)][\overline{s}(g_1)\overline{s}(g^{h_{-1}}_1)][\overline{s}(g_2)\overline{s}(g^{h_{-1}}_2)] [\overline{s}(g_1g_2)\overline{s}(g_1^{h_{-1}}g_2^{h_{-1}})]^{-1}\\
&=[\nu_2(-1, g_1)\nu_2(-1, g_2)\nu_2(-1, g_1g_2)^{-1}][\overline{s}(g_1)\overline{s}(g^{h_{-1}}_1)][\overline{s}(g_2)\overline{s}(g^{h_{-1}}_2)] [\overline{s}(g_1g_2)\overline{s}(g_1^{h_{-1}}g_2^{h_{-1}})]^{-1}.
\end{align*}
Let $g=pk_g\in \SL_2(\R)$ with $p\in P_{>0}(\R)$ and  $k_g\in \U(\C)$.  Then
\[p^{h_{-1}}\in P_{>0}(\R), \qquad k_g^{h_{-1}}\in \U(\C), \qquad g^{h_{-1}}=p^{h_{-1}}k_g^{h_{-1}},\]
and  \[ \nu_2(-1, g)=\nu_2(-1, k_g),\qquad \overline{s}(g)=\overline{s}(k_g).\]
 For any $g\in \U(\C)$, we have \[\nu_2(-1, g)=\overline{s}(g)\overline{s}(g^{h_{-1}})=\left\{ \begin{array}{cl} -1 & \textrm{ if } g= \begin{pmatrix} -1& 0\\0 &-1\end{pmatrix},\\ 1 & \textrm{ otherwise}.\end{array}\right.\]
Consequently, we may (and do) assume  $g_i\in \U(\C)$ and  the claimed identity follows.
\end{proof}
\subsection{$\mathbb{T}$-covering} 
Rather than working with the two–fold cover itself, we now focus on the cocycle $\widehat{c}_z$.  
For every automorphism \(\alpha\) of \(\mathrm{SL}_{2}(\mathbb R)\) we introduce a map  
\[
\nu_{z}\colon\alpha\times\mathrm{SL}_{2}(\mathbb R)\longrightarrow \mathbb{T}
\]  
such that  
\[
(g,\epsilon)^{\alpha}=\bigl(g^{\alpha},\nu_{z}(\alpha,g)\,\epsilon\bigr),\qquad (g,\epsilon)\in\widehat{\SL}_2^z(\mathbb R).
\]  
This \(\nu_{z}\) extends to an automorphism of \(\widehat{\SL}_2^z(\mathbb R)\) precisely when  
\begin{equation}\label{chap3crao2beta2}
\widehat{c}_{z}(g_1,g_2)=\widehat{c}_{z}(g_1^{\alpha},g_2^{\alpha})\,\nu_{z}(\alpha,g_1)\,\nu_{z}(\alpha,g_2)\,\nu_{z}(\alpha,g_1g_2)^{-1}.
\end{equation}
Let us determine the explicit form of \(\nu_{z}(\alpha,g)\).
\begin{lemma}\label{znunu}
$\nu_z(\alpha,g)=\nu(\alpha,g)\tfrac{\nu(p_z,g^{\alpha})\widetilde{s}(g^{p_z})}{\nu(p_z,g)\widetilde{s}(g^{{\alpha}p_z})}$, for $g\in \SL_2(\R)$.
\end{lemma}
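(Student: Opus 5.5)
Substitute the expression for $\widehat{c}_z$ from Lemma~\ref{beznu2nu}(2) into both sides of the defining identity (\ref{chap3crao2beta2}), and compare with (\ref{chap3equ}). The map $\nu_z(\alpha,-)$ is then read off as the ratio of the two trivializing coboundaries. This ratio is a candidate solution. It is the unique one once we normalize, as in the $\mu_8$ and $\mu_2$ cases, by requiring the lifted automorphism to be the one compatible with the unique lift $\alpha^{\ast}$, which is transported through the isomorphism (\ref{overwidehatSL}) conjugated by $p_z$.

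Concretely, define
\[
F(g)=\nu(p_z,g)^{-1}\,\widetilde{s}(g^{p_z}),
\]
so that Lemma~\ref{beznu2nu}(2) reads
\[
\widehat{c}_z(g_1,g_2)=\widetilde{c}_{X^{\ast}}(g_1,g_2)\,F(g_1)F(g_2)F(g_1g_2)^{-1}.
\]
Apply this also with $g_i^{\alpha}$ in place of $g_i$, and rewrite $\widetilde{c}_{X^{\ast}}(g_1^{\alpha},g_2^{\alpha})$ via (\ref{chap3equ}). This gives
\[
\widehat{c}_z(g_1^{\alpha},g_2^{\alpha})=\widehat{c}_z(g_1,g_2)\,\prod\bigl[\nu(\alpha,\cdot)\,F(\cdot^{\alpha})F(\cdot)^{-1}\bigr]^{\pm1},
\]
with the same sign pattern as in (\ref{chap3equ}): exponent $-1$ at $g_1,g_2$ and $+1$ at $g_1g_2$. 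Comparing with (\ref{chap3crao2beta2}) shows that
\[
\nu_z(\alpha,g)=\nu(\alpha,g)\,F(g^{\alpha})F(g)^{-1}
\]
satisfies the required identity. Expanding $F$, this is exactly
\[
\nu(\alpha,g)\,\frac{\nu(p_z,g)\,\widetilde{s}(g^{\alpha p_z})}{\nu(p_z,g^{\alpha})\,\widetilde{s}(g^{p_z})}
\]
or its inverse, depending on orientation conventions. I will track carefully whether $\nu_z$ enters (\ref{chap3crao2beta2}) with the opposite exponent pattern to $\nu$ in (\ref{chap3equ}). It does: $\nu_z$ appears on the right side with exponents $(+,+,-)$, while $\nu$ in (\ref{chap3equ}) has $(-,-,+)$ relative to the unconjugated cocycle. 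This inversion turns my formula into the stated one, $\nu(\alpha,g)\,\nu(p_z,g^{\alpha})\widetilde{s}(g^{p_z})/\bigl(\nu(p_z,g)\widetilde{s}(g^{\alpha p_z})\bigr)$, up to checking whether the factor $\nu(\alpha,g)$ itself also needs inverting. Since $\nu$ is $\mu_8$-valued and the cocycle identity for $\nu$ was written in the opposite direction, I expect the signs to line up as stated.

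The main obstacle is bookkeeping rather than conceptual. First, keep the convention $g^{\alpha p_z}=(g^{\alpha})^{p_z}$ consistent with Lemma~\ref{chap3twoau}. Second, settle uniqueness: the solution of (\ref{chap3crao2beta2}) is only determined up to a character of $\SL_2(\R)$, and $\SL_2(\R)$ is perfect, so that character is trivial. This gives uniqueness, and hence the equality of $\nu_z$ with the displayed expression rather than mere agreement up to a coboundary. I would state that perfectness argument explicitly at the end.
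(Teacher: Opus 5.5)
Your overall route is the paper's: substitute Lemma~\ref{beznu2nu}(2) into (\ref{chap3crao2beta2}), eliminate $\widetilde{c}_{X^{\ast}}(g_1^{\alpha},g_2^{\alpha})$ with (\ref{chap3equ}), and read off $\nu_z$. However, the exponent bookkeeping goes wrong in your proposal, and you end up guessing. That bookkeeping is the entire content of this lemma.

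With $F(g)=\nu(p_z,g)^{-1}\widetilde{s}(g^{p_z})$, Lemma~\ref{beznu2nu}(2) reads $\widehat{c}_z(h_1,h_2)=\widetilde{c}_{X^{\ast}}(h_1,h_2)F(h_1)F(h_2)F(h_1h_2)^{-1}$. So the $F$-terms carry the pattern $(+,+,-)$, which is opposite to the pattern $(-,-,+)$ of $\nu(\alpha,\cdot)$ in (\ref{chap3equ}). Dividing the two instances of Lemma~\ref{beznu2nu}(2) therefore gives
\[
\widehat{c}_z(g_1^{\alpha},g_2^{\alpha})=\widehat{c}_z(g_1,g_2)\,H(g_1)^{-1}H(g_2)^{-1}H(g_1g_2),\qquad H(g)=\nu(\alpha,g)\,F(g)\,F(g^{\alpha})^{-1}.
\]
Your display instead uses $\nu(\alpha,g)F(g^{\alpha})F(g)^{-1}$, which is wrong.

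Your second claim is also false. Rewrite (\ref{chap3crao2beta2}) as $\widehat{c}_z(g_1^{\alpha},g_2^{\alpha})=\widehat{c}_z(g_1,g_2)\,\nu_z(\alpha,g_1)^{-1}\nu_z(\alpha,g_2)^{-1}\nu_z(\alpha,g_1g_2)$. This has exactly the same orientation as (\ref{chap3equ}); the two identities only place the conjugated cocycle on opposite sides. So there is no inversion to perform: one gets $\nu_z(\alpha,g)=H(g)$ directly, and this expands to the stated formula.

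The two mistakes do not cancel in a controlled way, and you explicitly leave open ``whether the factor $\nu(\alpha,g)$ itself also needs inverting''. That ambiguity is substantive. Inverting your whole candidate gives $\nu(\alpha,g)^{-1}F(g)F(g^{\alpha})^{-1}$, which differs from the correct answer by $\nu(\alpha,g)^{2}$. This factor is not $1$ in general: for instance $\nu(-1,\omega)=\gamma(-1,\psi^{1/2})^{-1}=i$. So, as written, the proposal does not determine $\nu_z$; redo the division as above.

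Your closing uniqueness argument is correct and worth keeping. Two solutions of (\ref{chap3crao2beta2}) differ by a homomorphism $\SL_2(\R)\to\mathbb{T}$, and this is trivial because $\SL_2(\R)$ is perfect. The paper uses this implicitly when it reads $\nu_z$ off the coboundary identity. It also makes your normalization via $\alpha^{\ast}$ unnecessary.
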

\begin{proof}
\begin{align*}
&\tfrac{\widehat{c}_z(g_1^{\alpha},g_2^{\alpha})}{\widehat{c}_z(g_1, g_2)}=\nu_z(\alpha,g_1)^{-1} \nu_z(\alpha,g_2)^{-1} \nu_z(\alpha, g_1g_2)\\
&\stackrel{\textrm{ Lem.}\ref{beznu2nu}}{=}\tfrac{\widetilde{c}_{X^{\ast}}(g_1^{\alpha},g_2^{\alpha})\nu(p_z,g_1^{\alpha})^{-1} \nu(p_z,g_2^{\alpha})^{-1} \nu(p_z, g_1^{\alpha}g_2^{\alpha})\widetilde{s}(g_1^{{\alpha}p_z})\widetilde{s}(g_2^{{\alpha}p_z}) \widetilde{s}(g_1^{{\alpha}p_z}g_2^{{\alpha}p_z})^{-1}}{\widetilde{c}_{X^{\ast}}(g_1,g_2)\nu(p_z,g_1)^{-1} \nu(p_z,g_2)^{-1} \nu(p_z, g_1g_2)\widetilde{s}(g_1^{p_z})\widetilde{s}(g_2^{p_z}) \widetilde{s}(g_1^{p_z}g_2^{p_z})^{-1}}\\
&=\tfrac{\nu(p_z,g_1^{\alpha})^{-1} \nu(p_z,g_2^{\alpha})^{-1} \nu(p_z, g_1^{\alpha}g_2^{\alpha})\widetilde{s}(g_1^{{\alpha}p_z})\widetilde{s}(g_2^{{\alpha}p_z}) \widetilde{s}(g_1^{{\alpha}p_z}g_2^{{\alpha}p_z})^{-1}}{\nu(p_z,g_1)^{-1} \nu(p_z,g_2)^{-1} \nu(p_z, g_1g_2)\widetilde{s}(g_1^{p_z})\widetilde{s}(g_2^{p_z}) \widetilde{s}(g_1^{p_z}g_2^{p_z})^{-1}}\nu(\alpha,g_1)^{-1}\nu(\alpha,g_2)^{-1}\nu(\alpha,g_1g_2)\\
&=\Big[\tfrac{\nu(p_z,g_1^{\alpha})\widetilde{s}(g_1^{p_z})\nu(\alpha,g_1)}{\nu(p_z,g_1)\widetilde{s}(g_1^{{\alpha}p_z})}\Big]^{-1}
\Big[\tfrac{\nu(p_z,g_2^{\alpha})\widetilde{s}(g_2^{p_z})\nu(\alpha,g_2)}{\nu(p_z,g_2)\widetilde{s}(g_2^{{\alpha}p_z})}\Big]^{-1}
\Big[\tfrac{\nu(p_z,g_1^{\alpha}g_2^{\alpha})\widetilde{s}(g_1^{p_z}g_2^{p_z})\nu(\alpha,g_1g_2)}{\nu(p_z,g_1g_2)\widetilde{s}(g_1^{{\alpha}p_z}g_2^{{\alpha}p_z})}\Big].
\end{align*}
\end{proof}

\begin{example}
 $\nu_{z_0}(\alpha,g)=\nu(\alpha,g)\tfrac{\widetilde{s}(g)}{\widetilde{s}(g^{{\alpha}})}$.
\end{example}
\begin{example}\label{exz0z}
If  $\alpha=\begin{pmatrix} 1& 0 \\ 0 & y\end{pmatrix}$, then 
$\nu_{z_0}(y,g)=\nu(y,g)\tfrac{\widetilde{s}(g)}{\widetilde{s}(g^{y})}=\nu_2( y,g ) \tfrac{\overline{s}(g)}{\overline{s}(g^{y})}$.
\end{example}
Lifting the natural action of \(\mathbb R^{\times}\) on \(\mathrm{SL}_{2}(\mathbb R)\) to \(\widehat{\SL}_2^z(\mathbb R)\) yields the semidirect product  
\[
\widehat{\GL}_2^z(\mathbb R)=\mathbb R^{\times}\ltimes\widehat{\SL}_2^z(\mathbb R)
\]  
fitting into the exact sequence  
\[
1\longrightarrow \mathbb{T}\longrightarrow\widehat{\GL}_2^z(\mathbb R)\longrightarrow\mathbb R^{\times}\ltimes\mathrm{SL}_{2}(\mathbb R)\longrightarrow 1.
\]  
We retain the symbol $\widehat{C}_z$ for the corresponding \(2\)-cocycle on \(\mathbb R^{\times}\ltimes\mathrm{SL}_{2}(\mathbb R)\); explicitly,
\begin{equation}\label{betaz}
\widehat{C}_z\bigl([y_{1},g_{1}],[y_{2},g_{2}]\bigr)=\nu_{z}(y_{2},g_{1})\,\widehat{c}_{z}(g_{1}^{y_{2}},g_{2}),
\qquad [y_{i},g_{i}]\in\mathbb R^{\times}\ltimes\mathrm{SL}_{2}(\mathbb R).
\end{equation}
\begin{remark}
For simplicity, when \( z = z_0 \) we write
\[
\widehat{\GL}_2(\mathbb R)=\widehat{\GL}^{z_0}_2(\mathbb R)
\quad\text{and}\quad
\widehat{\SL}_2(\mathbb R)=\widehat{\SL}^{z_0}_2(\mathbb R).
\]
\end{remark}
For $h=[y,g]\in \R^{\times}\ltimes \SL_2(\R)\simeq \GL_2(\R)$, let us also write:
\[ \widetilde{s}(h)=\widetilde{s}(g), \qquad \overline{s}(h)=\overline{s}(g).\]
\begin{lemma}\label{Cz_0wid}
Let $h_i=[y_i, g_i]\in \R^{\times} \ltimes \SL_2(\R)$. 
\begin{itemize}
\item[(1)] $\widehat{C}_{z_0}(h_1,h_2)=\overline{C}_{X^{\ast}}(h_1,h_2)\overline{s}(h_1)\overline{s}(h_2) \overline{s}(h_1h_2)^{-1}$.
\item[(2)]$\widehat{C}_{z_0}(h_1,h_2)=\widetilde{C}_{X^{\ast}}(h_1,h_2)\widetilde{s}(h_1)\widetilde{s}(h_2) \widetilde{s}(h_1h_2)^{-1}$.
\end{itemize}
\end{lemma}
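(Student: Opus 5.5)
We prove (1) directly and obtain (2) from (1) via the relation (\ref{chap3293inter}) between $\overline{C}_{X^{\ast}}$ and $\widetilde{C}_{X^{\ast}}$.

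For (1), take $h_i=[y_i,g_i]$, so that $h_1h_2=[y_1y_2, g_1^{y_2}g_2]$ and $\overline{s}(h_1h_2)=\overline{s}(g_1^{y_2}g_2)$. By definition (\ref{betaz}) with $z=z_0$,
\[
\widehat{C}_{z_0}(h_1,h_2)=\nu_{z_0}(y_2,g_1)\,\widehat{c}_{z_0}(g_1^{y_2},g_2).
\]
We rewrite the two factors separately.
\begin{itemize}
\item[(a)] Example \ref{exz0z} gives $\nu_{z_0}(y_2,g_1)=\nu_2(y_2,g_1)\,\overline{s}(g_1)\,\overline{s}(g_1^{y_2})^{-1}$.
\item[(b)] Lemma \ref{twocoz0}, applied to the pair $(g_1^{y_2},g_2)$, gives $\widehat{c}_{z_0}(g_1^{y_2},g_2)=\overline{c}_{X^{\ast}}(g_1^{y_2},g_2)\,\overline{s}(g_1^{y_2})\,\overline{s}(g_2)\,\overline{s}(g_1^{y_2}g_2)^{-1}$.
\end{itemize}
Multiplying (a) and (b), the factor $\overline{s}(g_1^{y_2})$ cancels. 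What remains is
\[
\nu_2(y_2,g_1)\,\overline{c}_{X^{\ast}}(g_1^{y_2},g_2)\,\overline{s}(g_1)\,\overline{s}(g_2)\,\overline{s}(g_1^{y_2}g_2)^{-1}.
\]
By the explicit formula for $\overline{C}_{X^{\ast}}$, the first two factors equal $\overline{C}_{X^{\ast}}(h_1,h_2)$, and the last three are $\overline{s}(h_1)\overline{s}(h_2)\overline{s}(h_1h_2)^{-1}$. This is exactly (1).

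For (2), substitute (\ref{chap3293inter}) into (1) and use $\widetilde{s}=m_{X^{\ast},\psi}\overline{s}$, which is extended to $h=[y,g]$ through the $g$-component. The $m$-factors combine with the $\overline{s}$-factors into $\widetilde{s}(h_1)\widetilde{s}(h_2)\widetilde{s}(h_1h_2)^{-1}$, with $m_{X^{\ast},\psi}(h_1h_2)=m_{X^{\ast},\psi}(g_1^{y_2}g_2)$. Alternatively, one can repeat the argument for (1) using the $\mu_8$ form of Example \ref{exz0z}, namely $\nu(y,g)\widetilde{s}(g)/\widetilde{s}(g^y)$, together with Lemma \ref{eightcoz0} and Lemma \ref{chap3yg1}.

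The one point needing care is the justification of Example \ref{exz0z}. It is stated without proof, so it should be checked against Lemma \ref{znunu} with $p_{z_0}=1$. Its second equality follows from Lemma \ref{chap3nu23} together with $\widetilde{s}=m_{X^{\ast},\psi}\overline{s}$. Everything beyond that is bookkeeping of the semidirect-product conventions, in particular that $h_1h_2=[y_1y_2,g_1^{y_2}g_2]$ matches the multiplication rule used to define $\widetilde{\GL}_2(\R)$.
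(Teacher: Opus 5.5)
Your argument for (1) is exactly the paper's. You expand $\widehat{C}_{z_0}$ via (\ref{betaz}), rewrite $\nu_{z_0}(y_2,g_1)$ by Example \ref{exz0z} and $\widehat{c}_{z_0}(g_1^{y_2},g_2)$ by Lemma \ref{twocoz0}, then cancel $\overline{s}(g_1^{y_2})$; the paper cites Lemma \ref{beznu2nu}(1), which reduces to Lemma \ref{twocoz0} at $z=z_0$. For (2) the paper uses your ``alternative'' route (the $\mu_8$ form of Example \ref{exz0z} with Lemma \ref{beznu2nu}(2)), while your primary deduction of (2) from (1) via (\ref{chap3293inter}) and $\widetilde{s}=m_{X^{\ast},\psi}\overline{s}$ is equally valid and slightly shorter.
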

\begin{proof}
1) 
\begin{align*}
\widehat{C}_{z_0}(h_1,h_2)&\stackrel{(\ref{betaz})}{=}\nu_{z_0}(y_2,g_1)\widehat{c}_{z_0}(g_1^{y_2}, g_2)\\
&\stackrel{\textrm{ Ex.} \ref{exz0z}}{=}\nu_2( y_2,g ) \tfrac{\overline{s}(g)}{\overline{s}(g^{y_2})}\widehat{c}_{z_0}(g_1^{y_2}, g_2)\\
&\stackrel{\textrm{ Lem.} \ref{beznu2nu}(1)}{=}\nu_2( y_2,g ) \tfrac{\overline{s}(g)}{\overline{s}(g^{y_2})}\overline{c}_{X^{\ast}}(g_1^{y_2},g_2)\overline{s}(g_1^{y_2})\overline{s}(g_2) \overline{s}(g_1^{y_2}g_2)^{-1}\\
&=\overline{C}_{X^{\ast}}(h_1,h_2)\overline{s}(h_1)\overline{s}(h_2) \overline{s}(h_1h_2)^{-1}.
\end{align*}
2) 
\begin{align*}
\widehat{C}_{z_0}(h_1,h_2)&\stackrel{(\ref{betaz})}{=}\nu_{z_0}(y_2,g_1)\widehat{c}_{z_0}(g_1^{y_2}, g_2)\\
&\stackrel{\textrm{ Ex.} \ref{exz0z}}{=}\nu(y_2,g_1)\tfrac{\widetilde{s}(g_1)}{\widetilde{s}(g_1^{y_2})}\widehat{c}_{z_0}(g_1^{y_2}, g_2)\\
&\stackrel{\textrm{ Lem.} \ref{beznu2nu}(2)}{=}\nu(y_2,g_1)\tfrac{\widetilde{s}(g_1)}{\widetilde{s}(g_1^{y_2})}\widetilde{c}_{X^{\ast}}(g_1^{y_2},g_2)\widetilde{s}(g_1^{y_2})\widetilde{s}(g_2) \widetilde{s}(g_1^{y_2}g_2)^{-1}\\
&=\widetilde{C}_{X^{\ast}}(h_1,h_2)\widetilde{s}(h_1)\widetilde{s}(h_2) \widetilde{s}(h_1h_2)^{-1}.
\end{align*}
\end{proof}
 Let $\overline{\GL}_2(\mathbb R)$, $\overline{\GL}^{\mu_8}_2(\mathbb R)$, $\widetilde{\GL}_2(\mathbb R)$, and $\widehat{\GL}^z_2(\mathbb R)$ denote the central extensions of $\GL_2(\mathbb R)$ arising from $\overline{C}_{X^{\ast}}$,  $\overline{C}_{X^{\ast}}$, $\widetilde{C}_{X^{\ast}}$, and $\widehat{C}_z$, with centers $\mu_2$,  $\mu_8$, $\mu_8$, and $\mathbb{T}$, respectively. As a consequence, we have  the following  group isomorphisms:
 \begin{equation}\label{overwideGL}
 \overline{\GL}^{\mu_8}_2(\mathbb R)\stackrel{\iota}{\to}\widetilde{\GL}_2(\mathbb R);[h,\epsilon] \longmapsto [h,m_{X^{\ast},\psi}(h)\epsilon],
 \end{equation}
  \begin{equation}\label{overhatGL}
  \overline{\GL}^{\mu_8}_2(\mathbb R)\stackrel{\iota'}{\to}\Im(\iota') (\subseteq\widehat{\GL}_2(\mathbb R));[h,\epsilon] \longmapsto [h,\overline{s}(h)^{-1}\epsilon],
   \end{equation}
    \begin{equation}\label{overwidehatGL}
  \widetilde{\GL}_2(\mathbb R)\stackrel{\iota'}{\to}\Im(\iota') (\subseteq\widehat{\GL}_2(\mathbb R));[h,\epsilon] \longmapsto [h,\widetilde{s}(h)^{-1}\epsilon].
   \end{equation}
\subsection{The decomposition}
 For any $h\in \begin{pmatrix}
a & b\\
c& d\end{pmatrix}\in \GL_2(\R)$, we  write: $$h=\begin{pmatrix}
\sqrt{ |\det(h)|} & 0\\
0& \sqrt{ |\det(h)|}\end{pmatrix}\cdot \Big[\begin{pmatrix}
\sqrt{|\det h|}^{-1} & \\
& \sqrt{|\det h|}^{-1}\end{pmatrix}\begin{pmatrix}
a & b\\
c& d\end{pmatrix} \Big] .$$ So $\GL_2(\R)\simeq \R^{\times}_{>0} \cdot \SL_2^{\pm}(\R) $.  Let  $\overline{\SL}^{\pm}_2(\R)$, $\widetilde{\SL}^{\pm}_2(\R)$, $\widehat{\SL}^{\pm}_2(\R)$ denote the corresponding subgroups of $\overline{\GL}_2(\R)$, $\widetilde{\GL}_2(\R)$ and $\widehat{\GL}_2(\R)$ respectively.
\begin{lemma}\label{isoGLR}
\begin{itemize}
\item[(1)] $\overline{\GL}_2(\R) \simeq \R^{\times}_{>0}\times\overline{\SL}^{\pm}_2(\R)  $.
\item[(2)] $\widetilde{\GL}_2(\R) \simeq \R^{\times}_{>0}\times\widetilde{\SL}^{\pm}_2(\R)$.
\item[(3)] $\widehat{\GL}_2(\R)\simeq\R^{\times}_{>0} \times\widehat{\SL}^{\pm}_2(\R)$.
\end{itemize}
\end{lemma}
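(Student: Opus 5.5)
The plan is to show that the central elements $\mathfrak{z}(r)=\begin{pmatrix} r&0\\0&r\end{pmatrix}$, $r>0$, of $\GL_2(\R)$ lift canonically to each cover: the lifts $[\mathfrak{z}(r),1]$ should form a central subgroup isomorphic to $\R^{\times}_{>0}$ and meet the preimage of $\SL_2^{\pm}(\R)$ trivially. Since $\GL_2(\R)=\R^{\times}_{>0}\cdot\SL_2^{\pm}(\R)$ is a direct product of groups, with the scalars central and meeting $\SL_2^{\pm}(\R)$ only in $I$, the three isomorphisms then follow at once. For each cover the map will be $(r,\overline{h})\mapsto [\mathfrak{z}(r),1]\cdot\overline{h}$.

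\textbf{Step 1: the cocycle is trivial on pairs involving $\mathfrak{z}(r)$.} In the coordinates $\R^{\times}\ltimes\SL_2(\R)$ given by the section $s$ of (\ref{chap3ss}), we have $\mathfrak{z}(r)=[r^2,\,g_r]$ with $g_r=s(r^2)^{-1}\mathfrak{z}(r)=\diag(r,r^{-1})=h(r)\in P_{>0}(\R)$. I will show that
\[
\widetilde{C}_{X^{\ast}}(\mathfrak{z}(r),h)=1=\widetilde{C}_{X^{\ast}}(h,\mathfrak{z}(r))
\]
for every $h\in\GL_2(\R)$. By Lemma \ref{chap3yg1}, $\widetilde{C}_{X^{\ast}}([r^2,h(r)],[y,g])=\nu(y,h(r))\,\widetilde{c}_{X^{\ast}}(h(r)^y,g)$. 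Here $h(r)^y\in P_{>0}(\R)$, so the factor $\widetilde{c}_{X^{\ast}}(h(r)^y,g)$ is $1$ by the lemma on $P_{>0}(\R)$. The factor $\nu(y,h(r))=(y,r)_{\R}=1$ by Example \ref{chap3example2}, since $r>0$. In the other order we get $\nu(r^2,g)\,\widetilde{c}_{X^{\ast}}(g^{r^2},h(r))$. The second factor is $1$ again. The first factor, $\nu(r^2,g)$, equals $(r^2,a)_{\R}=1$ if $c=0$, and $(c,r^2)_{\R}\gamma(r^2,\psi^{1/2})^{-1}=1$ if $c\neq0$, because $\gamma(\psi^{a})$ depends only on $\sgn a$.

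For $\overline{C}_{X^{\ast}}$ I will use (\ref{chap3293inter}) together with $m_{X^{\ast},\psi}(\mathfrak{z}(r))=m_{X^{\ast},\psi}(h(r))=1$. I also need $m_{X^{\ast},\psi}(\mathfrak{z}(r)h)=m_{X^{\ast},\psi}(h)$, which I will check by comparing the $\SL_2$-components. For $\widehat{C}_{z_0}$ I will use Lemma \ref{Cz_0wid}(2) together with $\widetilde{s}(\mathfrak{z}(r))=1$ (Lemma \ref{constants1}) and $\widetilde{s}(\mathfrak{z}(r)h)=\widetilde{s}(h)$.

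\textbf{Step 2: the main obstacle.} The main technical point is the invariance of $m_{X^{\ast},\psi}$ and $\widetilde{s}$ under multiplication by $\mathfrak{z}(r)$. Writing $h=[y,g]$, a direct computation gives $\mathfrak{z}(r)h=[r^2y,\,g']$ with $g'=h(r)^{y}g$ (up to the precise semidirect convention). Since $h(r)^y\in P_{>0}(\R)$ is diagonal with positive entries, the bottom row of $g'$ is a positive multiple of the bottom row $(c,d)$ of $g$. Both $m_{X^{\ast},\psi}$ and $\overline{s}$ depend only on the signs of $c,d$, respectively on $(ci+d)/|ci+d|$, so both are unchanged. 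I will carry this out carefully with the multiplication law of $\R^{\times}\ltimes\SL_2(\R)$ as written before Lemma \ref{chap3yg1}.

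\textbf{Step 3: assemble.} By Step 1, $r\mapsto[\mathfrak{z}(r),1]$ is a homomorphism, and its image is central: it commutes with every $[h,\epsilon]$ because $\mathfrak{z}(r)$ is central and the cocycle is $1$ in both orders. The image meets the preimage of $\SL_2^{\pm}(\R)$ trivially, since $\mathfrak{z}(r)\in\SL_2^{\pm}(\R)$ forces $r=1$. Every element $[h,\epsilon]$ factors as $[\mathfrak{z}(\sqrt{|\det h|}),1]\cdot[\mathfrak{z}(\sqrt{|\det h|})^{-1}h,\epsilon]$. Hence the multiplication map $\R^{\times}_{>0}\times\overline{\SL}^{\pm}_2(\R)\to\overline{\GL}_2(\R)$ is an isomorphism, and likewise for the covers $\widetilde{\phantom{x}}$ and $\widehat{\phantom{x}}$.
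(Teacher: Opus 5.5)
Your proposal is correct and follows essentially the same route as the paper: write the positive scalar as $[t^2,\diag(t,t^{-1})]$ in $\R^{\times}\ltimes\SL_2(\R)$ and check that $\widetilde C_{X^\ast}$, $\overline C_{X^\ast}$ and $\widehat C_{z_0}$ all equal $1$ on the pairs (scalar, $h$) and ($h$, scalar). Like the paper, you use $\nu(t^2,\cdot)=1$, $\nu(y,\diag(t,t^{-1}))=1$, and the triviality of $\widetilde c_{X^\ast}$ against $P_{>0}(\R)$.

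The differences are minor. The paper computes $\overline C_{X^\ast}$ directly via $\nu_2$ and gets $\widehat C_{z_0}$ from $\overline s$, using $g_1^{y_2}g_2=g_2g_1$. You instead transfer from $\widetilde C_{X^\ast}$ via $m_{X^\ast,\psi}$ and $\widetilde s$, and you also spell out the final assembly (central lift, trivial intersection, factorization), which the paper leaves implicit.
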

\begin{proof}
For $h_1=\begin{pmatrix}
a & b\\
c& d\end{pmatrix}\in \GL_2(\R)$ and $h_2=t\in \R^{\times}_{>0}\subseteq  \GL_2(\R)$, write $y_1=\det h_1$,  $g_1=\begin{pmatrix}
a & b\\
y_1^{-1}c& y_1^{-1}d\end{pmatrix}$, $y_2=t^2$, $g_2=\begin{pmatrix}
t& 0\\
0& t^{-1}\end{pmatrix}$, $g_1^{y_2} g_2=\begin{pmatrix}
at & bt\\
y_1^{-1}t^{-1}c& y_1^{-1}t^{-1}d\end{pmatrix}=g_2g_1$.  Then:
 $$\overline{C}_{X^{\ast}}(h_1, h_2)=\nu_2(y_2,g_1)\overline{c}_{X^{\ast}}(g_1^{y_2}, g_2)=1 \cdot \overline{c}_{X^{\ast}}(g_1^{y_2}, g_2)=1, $$
  $$\overline{C}_{X^{\ast}}(h_2, h_1)=\nu_2(y_1,g_2)\overline{c}_{X^{\ast}}(g_2^{y_1}, g_1)=1\cdot \overline{c}_{X^{\ast}}(g_2, g_1)=1, $$
  $$\widetilde{C}_{X^{\ast}}(h_1, h_2) =\nu(y_2,g_1)\widetilde{c}_{X^{\ast}}(g_1^{y_2}, g_2)=\nu(y_2,g_1)=1,$$
   $$\widetilde{C}_{X^{\ast}}(h_2, h_1) =\nu(y_1,g_2)\widetilde{c}_{X^{\ast}}(g_2^{y_1}, g_1)=\nu(y_1,g_2)=1,$$
\begin{align*}
\widehat{C}_{z_0}(h_1,h_2)&=\overline{C}_{X^{\ast}}(h_1,h_2)\overline{s}(h_1)\overline{s}(h_2) \overline{s}(h_1h_2)^{-1}\\
&=\overline{s}(h_1)\overline{s}(h_2) \overline{s}(h_1h_2)^{-1}\\
&=\overline{s}(g_1)\overline{s}(g_2) \overline{s}(g_1^{y_2} g_2)^{-1}\\
&=\overline{s}(g_1)\overline{s}(g_2) \overline{s}(g_2 g_1)^{-1}\\
&=\overline{s}(g_1)\overline{s}( g_1)^{-1}=1,
\end{align*}
\begin{align*}
\widehat{C}_{z_0}(h_2,h_1)&=\overline{C}_{X^{\ast}}(h_2,h_1)\overline{s}(h_2)\overline{s}(h_1) \overline{s}(h_2h_1)^{-1}\\
&=\overline{s}(h_1)\overline{s}(h_2) \overline{s}(h_2h_1)^{-1}\\
&=\overline{s}(h_1)\overline{s}(h_2) \overline{s}(h_1h_2)^{-1}\\
&=\widehat{C}_{z_0}(h_1,h_2)=1.
\end{align*}
\end{proof}
Under the above isomorphisms, $\overline{\GL}_2(\Q) \nsimeq \Q^{\times}_{>0} \times\overline{\SL}^{\pm}_2(\Q) $,  $\widetilde{\GL}_2(\Q) \nsimeq \Q^{\times}_{>0}\times\widetilde{\SL}^{\pm}_2(\Q)  $ and $\widehat{\GL}_2(\Q) \nsimeq  \Q^{\times}_{>0}\times\widehat{\SL}^{\pm}_2(\Q) $. For any $h=\begin{pmatrix}
a & b\\
c& d\end{pmatrix} \in \GL_2(\Q)$, we write:
\[h= \underbrace{\begin{pmatrix}
\sqrt{ |\det(h)|} & 0\\
0& \sqrt{ |\det(h)|}\end{pmatrix}}_{h_1}\cdot  \underbrace{\begin{pmatrix}
\sqrt{ |\det(h)|}^{-1} & 0\\
0& \sqrt{ |\det(h)|}\end{pmatrix}}_{h_2}  \cdot\underbrace{\begin{pmatrix}
a & b\\
c |\det(h)|^{-1}& d|\det(h)|^{-1}\end{pmatrix}}_{h_3}    .\]
\[y_3=\det h_3=\frac{\det h}{|\det h|}, g_3=\begin{pmatrix}
a & b\\
c\det(h)^{-1}& d\det(h)^{-1}\end{pmatrix}, y_2=\det h_2=1, g_2=h_2\]
\[\widetilde{C}_{X^{\ast}}(h_2, h_3) =\nu(y_3,g_2)\widetilde{c}_{X^{\ast}}(g_2^{y_3}, g_3)=\nu(y_3,g_2)\stackrel{(\ref{chap3alpp})}{=}1\]
\[\overline{C}_{X^{\ast}}(h_2, h_3)=\nu_2(y_3,g_2)\overline{c}_{X^{\ast}}(g_2^{y_3}, g_3)\stackrel{(\ref{chap3alppu2})}{=}1 \cdot \overline{c}_{X^{\ast}}(g_2^{y_3}, g_3)=1.\]
\[\widehat{C}_{z_0}(h_2, h_3)=\nu_{z_0}(y_3,g_2)\widehat{c}_{z_0}(g_2^{y_3}, g_3)=\nu_{z_0}(y_3,g_2)=\nu_2(y_3,g_2)=1.\]
Thus, for $\overline{h}=[h,t_1]\in \overline{\GL}_2(\Q)$,  $\widetilde{h}=[h,t_2]\in \widetilde{\GL}_2(\Q) $ and $\widehat{h}=[h,t_3]\in \widehat{\GL}_2(\Q) $,  we can write:
\[\overline{h}=h_1 \cdot h_2 \cdot \underbrace{[h_3,t_1]}_{\overline{h}_3} , \qquad \widetilde{h}= h_1\cdot h_2 \cdot\underbrace{[h_3,t_2]}_{\widetilde{h}_3},  \qquad \widehat{h}= h_1\cdot h_2 \cdot\underbrace{[h_3,t_3]}_{\widehat{h}_3}.\]
Note that if  $\det h\in \Q^{\times2}$, then all $h_1, h_2, h_3$ belong to $\GL_2(\Q)$.

\subsection{The parabolic group}
\begin{lemma}\label{prbc}
For $h=\begin{pmatrix}
a& b\\
c& d\end{pmatrix}\in  \SL^{\pm}_2(\R)$, $p=\begin{pmatrix}
a_1& b_1\\
0&   a_1^{-1} \det p \end{pmatrix}\in P^{\pm}_{>0}(\R)$, we have:
\begin{itemize}
\item $\widetilde{C}_{X^{\ast}}(p, h)=1$;
\item $\widetilde{C}_{X^{\ast}}(h, p)=\left\{\begin{array}{cl}  (\det p, a)_{\R}  & \textrm{ if } c=0,\\(c\det h, \det p)_{\R} \gamma(\det p, \psi^{\tfrac{1}{2}})^{-1}  & \textrm{ if } c\neq 0. \end{array}\right.$
\end{itemize}
\end{lemma}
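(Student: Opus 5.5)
The plan is to reduce everything to the explicit formula $\widetilde{C}_{X^{\ast}}([y_1,g_1],[y_2,g_2])=\nu(y_2,g_1)\,\widetilde{c}_{X^{\ast}}(g_1^{y_2},g_2)$ of Lemma~\ref{chap3yg1}. Two facts will be used: the vanishing of $\widetilde{c}_{X^{\ast}}$ whenever one argument lies in $P_{>0}(\R)$, and Barthel's formula (\ref{chap3alpp}) for $\nu(y,g)$.

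First I make the identification $\GL_2(\R)\simeq\R^{\times}\ltimes\SL_2(\R)$ explicit. The multiplication rule $[y_1,g_1][y_2,g_2]=[y_1y_2,g_1^{y_2}g_2]$, with $g^{y}=s(y)^{-1}gs(y)$, shows that $[y,g]$ corresponds to $s(y)g$ for the section $s$ of (\ref{chap3ss}). Hence $h\in\SL^{\pm}_2(\R)$ corresponds to $[y_h,g_h]$ with $y_h=\det h=\pm1$ and
\[g_h=s(\det h)^{-1}h=\begin{pmatrix} a& b\\ c\det h^{-1} & d\det h^{-1}\end{pmatrix}.\]
Since $\det h=\pm1$, the lower-left entry of $g_h$ is $c\det h$. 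Similarly, $p$ corresponds to $[\det p,g_p]$ with
\[g_p=\begin{pmatrix} a_1& b_1\\ 0& a_1^{-1}\end{pmatrix}\in P_{>0}(\R),\]
using $a_1>0$.

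For $\widetilde{C}_{X^{\ast}}(p,h)=\nu(y_h,g_p)\,\widetilde{c}_{X^{\ast}}(g_p^{y_h},g_h)$, conjugating by $s(y_h)$ gives
\[g_p^{y_h}=\begin{pmatrix} a_1& b_1y_h\\ 0& a_1^{-1}\end{pmatrix},\]
which is still in $P_{>0}(\R)$. So the lemma following Lemma~\ref{relacto} kills the cocycle factor. Since $g_p$ is upper triangular, (\ref{chap3alpp}) gives $\nu(y_h,g_p)=(y_h,a_1)_\R$, and this equals $1$ because $a_1>0$.

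For $\widetilde{C}_{X^{\ast}}(h,p)=\nu(y_p,g_h)\,\widetilde{c}_{X^{\ast}}(g_h^{y_p},g_p)$, the cocycle factor is $1$ again because its second argument $g_p$ lies in $P_{>0}(\R)$. What remains is $\nu(\det p,g_h)$, which I read off from (\ref{chap3alpp}). If $c=0$, then $g_h$ is upper triangular with upper-left entry $a$, giving $(\det p,a)_\R$. If $c\neq0$, the lower-left entry of $g_h$ is $c\det h$, giving $(c\det h,\det p)_\R\,\gamma(\det p,\psi^{1/2})^{-1}$.

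The only delicate step is getting the semidirect-product conventions right: checking that $[y,g]\leftrightarrow s(y)g$ rather than $g\,s(y)$, and that the correct entry of $g_h$ is fed into Barthel's formula. I would verify this first on the product rule before doing the two computations above.
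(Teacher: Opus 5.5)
Your proposal is correct and follows the paper's proof essentially verbatim. Like the paper, you write $h=[\det h,g_h]$ and $p=[\det p,g_p]$, apply Lemma~\ref{chap3yg1}, discard the $\widetilde{c}_{X^{\ast}}$ factor because one argument lies in $P_{>0}(\R)$, and read off $\nu$ from Barthel's formula (\ref{chap3alpp}); your preliminary check that $[y,g]$ corresponds to $s(y)g$ is exactly what justifies the paper's choice of $g_h$ with its lower row divided by $\det h$.
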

\begin{proof}
 Let   $g_h=\begin{pmatrix}
a& b\\
c(\det h)^{-1}& d(\det h)^{-1}\end{pmatrix}$, and $g_p=\begin{pmatrix}
a_1& b_1\\
0&  a_1^{-1}\end{pmatrix}$. Then:
$$\widetilde{C}_{X^{\ast}}(p, h)=\nu(\det h,g_p)\widetilde{c}_{X^{\ast}}((g_p)^{\det h}, g_h)=(\det h, a_1)_{\R}=1;$$
$$\widetilde{C}_{X^{\ast}}( h,p)=\nu(\det p,g_h)\widetilde{c}_{X^{\ast}}((g_h)^{\det p}, g_p)=\nu(\det p,g_h)=\left\{\begin{array}{cl} (\det p, a)_{\R} & \textrm{ if } c=0,\\(c\det h, \det p)_{\R} \gamma(\det p, \psi^{\tfrac{1}{2}})^{-1}  & \textrm{ if } c\neq 0. \end{array}\right.$$
\end{proof}
\begin{lemma}\label{chap6Cover}
For $h=\begin{pmatrix}
a& b\\
c& d\end{pmatrix}\in  \SL^{\pm}_2(\R)$, $p=\begin{pmatrix}
a_1 & b_1\\
0& a_1^{-1} \det p  \end{pmatrix}\in P^{\pm}_{>0}(\R)$, we have:
\begin{itemize}
\item $\overline{C}_{X^{\ast}}(p, h)=1$;
\item $\overline{C}_{X^{\ast}}(h, p)=\left\{\begin{array}{cl}  (\det p, a)_{\R}  & \textrm{ if } c=0,\\1  & \textrm{ if } c\neq 0. \end{array}\right.$
\end{itemize}
\end{lemma}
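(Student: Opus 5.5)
The most direct route is through the relation (\ref{chap3293inter}), which expresses $\overline{C}_{X^{\ast}}$ in terms of $\widetilde{C}_{X^{\ast}}$:
\[
\overline{C}_{X^{\ast}}(h_1,h_2)=m_{X^{\ast},\psi}(h_1h_2)^{-1}m_{X^{\ast},\psi}(h_1)m_{X^{\ast},\psi}(h_2)\widetilde{C}_{X^{\ast}}(h_1,h_2).
\]
Combined with Lemma \ref{prbc}, this reduces the statement to comparing values of $m_{X^{\ast},\psi}$. Alternatively, and more cleanly, I would compute directly from the explicit formula $\overline{C}_{X^{\ast}}([y_1,g_1],[y_2,g_2])=\nu_2(y_2,g_1)\overline{c}_{X^{\ast}}(g_1^{y_2},g_2)$. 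This mirrors the proof of Lemma \ref{prbc}, with $\nu$ replaced by $\nu_2$ from Lemma \ref{nu2}.

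\textbf{Setup.} As in Lemma \ref{prbc}, set
\[
g_h=\begin{pmatrix} a& b\\ c(\det h)^{-1}& d(\det h)^{-1}\end{pmatrix},\qquad g_p=\begin{pmatrix} a_1& b_1\\ 0& a_1^{-1}\end{pmatrix}\in P_{>0}(\R).
\]
Then $h=[\det h,g_h]$ and $p=[\det p,g_p]$ under $\GL_2(\R)\simeq\R^{\times}\ltimes\SL_2(\R)$.

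\textbf{First identity.} We have $\overline{C}_{X^{\ast}}(p,h)=\nu_2(\det h,g_p)\,\overline{c}_{X^{\ast}}(g_p^{\det h},g_h)$. Since $g_p$ is upper triangular with positive diagonal entry $a_1$, Lemma \ref{nu2} gives $\nu_2(\det h,g_p)=(\det h,a_1)_\R=1$. Conjugation by $\diag(1,y)$ keeps $g_p^{\det h}$ in $P_{>0}(\R)$, since it only rescales the off-diagonal entry. The earlier lemma that $\overline{c}_{X^{\ast}}(p,g)=1$ for $p\in P_{>0}(\R)$ then kills the second factor.

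\textbf{Second identity.} We have $\overline{C}_{X^{\ast}}(h,p)=\nu_2(\det p,g_h)\,\overline{c}_{X^{\ast}}(g_h^{\det p},g_p)$. The cocycle factor is $1$ by the same lemma, now with $g_p\in P_{>0}(\R)$ on the right. By Lemma \ref{nu2}, $\nu_2(\det p,g_h)$ equals $(\det p,a)_\R$ when the lower-left entry $c(\det h)^{-1}$ of $g_h$ vanishes, i.e.\ when $c=0$, and equals $1$ otherwise. This is exactly the claim.

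\textbf{Main obstacle.} The only points needing care are bookkeeping:
\begin{itemize}
\item confirming that the semidirect-product identification $h=[\det h,g_h]$ matches the convention $g^y=s(y)^{-1}gs(y)$ used to define $\overline{C}_{X^{\ast}}$;
\item checking that conjugating $g_p$ by $s(y)$ does not change the sign of its $(1,1)$ entry.
\end{itemize}
Both are immediate from the explicit matrices. As a cross-check, one can verify consistency with Lemma \ref{prbc} via (\ref{chap3293inter}): for $c\neq0$, $m_{X^{\ast},\psi}(g_h)/m_{X^{\ast},\psi}(g_h^{\det p})$ cancels the factor $(c\det h,\det p)_\R\,\gamma(\det p,\psi^{1/2})^{-1}$, exactly as in the proof of Lemma \ref{nu2}.
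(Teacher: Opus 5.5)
Your proposal is correct and is essentially the paper's own argument: write $h=[\det h,g_h]$ and $p=[\det p,g_p]$, expand $\overline{C}_{X^{\ast}}$ as $\nu_2(y_2,g_1)\,\overline{c}_{X^{\ast}}(g_1^{y_2},g_2)$, kill the $\overline{c}_{X^{\ast}}$ factor because one argument lies in $P_{>0}(\R)$, and read off $\nu_2$ from Lemma \ref{nu2}. Your cross-check via (\ref{chap3293inter}) and Lemma \ref{prbc} is a valid consistency check but is not needed for the proof.
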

\begin{proof}
 Let $g_h$ and $g_p$ be as in the proof of the above lemma. Then:
\begin{align*}
\overline{C}_{X^{\ast}}(p, h)&=\nu_2(\det h,g_p)\overline{c}_{X^{\ast}}((g_p)^{\det h}, g_h)\\
&=(\det h, a_1)_{\R}(x(g_p^{\det h}), x(g_h))_\R(-x(g_p^{\det h})x(g_h), x(g_p^{\det h}g_h))_\R\\
&=1;
\end{align*}
\begin{align*}
\overline{C}_{X^{\ast}}( h,p)&=\nu_2(\det p,g_h)\overline{c}_{X^{\ast}}((g_h)^{\det p}, g_p)=\nu_2(\det p,g_h)=\left\{\begin{array}{cl} (\det p, a)_{\R} & \textrm{ if } c=0,\\1  & \textrm{ if } c\neq 0. \end{array}\right.
\end{align*}
\end{proof}
\begin{lemma}\label{chap6Coverhat}
For $h=\begin{pmatrix}
a& b\\
c& d\end{pmatrix}\in  \SL^{\pm}_2(\R)$, $p=\begin{pmatrix}
a_1 & b_1\\
0& a_1^{-1} \det p  \end{pmatrix}\in P^{\pm}_{>0}(\R)$, we have:
 $\widehat{C}_{z_0}(p, h)=1$.
\end{lemma}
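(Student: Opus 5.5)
The plan is to combine Lemma \ref{Cz_0wid}(1) with Lemma \ref{chap6Cover}. By Lemma \ref{Cz_0wid}(1),
\[
\widehat{C}_{z_0}(p,h)=\overline{C}_{X^{\ast}}(p,h)\,\overline{s}(p)\,\overline{s}(h)\,\overline{s}(ph)^{-1},
\]
and Lemma \ref{chap6Cover} gives $\overline{C}_{X^{\ast}}(p,h)=1$. So everything reduces to showing $\overline{s}(p)\overline{s}(h)=\overline{s}(ph)$. Here $\overline{s}$ on $\GL_2(\R)\simeq\R^{\times}\ltimes\SL_2(\R)$ is defined by $\overline{s}([y,g])=\overline{s}(g)$.

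First write $p=[\det p, g_p]$ and $h=[\det h, g_h]$ as in the proof of Lemma \ref{prbc}. Here $g_p=\begin{pmatrix} a_1& b_1\\ 0& a_1^{-1}\end{pmatrix}\in P_{>0}(\R)$, so $\overline{s}(p)=\overline{s}(g_p)=1$, because $k_{g_p}$ is trivial in the decomposition (\ref{depkg}). The product in the semidirect product is $ph=[\det p\det h,\ g_p^{\det h}g_h]$, and $g_p^{\det h}=\begin{pmatrix} a_1 & b_1(\det h)^{\pm1}\\ 0 & a_1^{-1}\end{pmatrix}$ still lies in $P_{>0}(\R)$, since conjugation by $\diag(1,y)$ preserves the positive upper-triangular group. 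Hence $\overline{s}(ph)=\overline{s}(p'g_h)$ with $p'\in P_{>0}(\R)$.

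It remains to check $\overline{s}(p'g)=\overline{s}(g)$ for $p'\in P_{>0}(\R)$ and $g\in\SL_2(\R)$. Write $g=p_gk_g$. Then $p'g=(p'p_g)k_g$, and by uniqueness of the Iwasawa decomposition $k_{p'g}=k_g$, so $\overline{s}(p'g)=\overline{s}(k_g)=\overline{s}(g)$. The same fact can be read off the bottom row: left multiplication by $p'$ scales $(c,d)$ by $a_1^{-1}>0$, and $\overline{s}(g)=\sqrt{ci+d}/\sqrt[4]{c^2+d^2}$ is invariant under that scaling. Putting these together gives $\widehat{C}_{z_0}(p,h)=1\cdot 1\cdot\overline{s}(g_h)\overline{s}(g_h)^{-1}=1$.

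The only point needing care is the semidirect-product bookkeeping: the exact form of $g_p^{\det h}$ and that it stays in $P_{>0}(\R)$, including when $\det h=-1$. That verification is routine. Alternatively, one could use (\ref{betaz}) directly, computing $\nu_{z_0}(\det h,g_p)=\nu_2(\det h,g_p)\,\overline{s}(g_p)/\overline{s}(g_p^{\det h})=(\det h,a_1)_\R=1$ via Example \ref{exz0z} and Lemma \ref{nu2}, and then $\widehat{c}_{z_0}(g_p^{\det h},g_h)=1$ via Lemma \ref{usopbig}. This serves as a cross-check.
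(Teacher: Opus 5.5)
Your proposal is correct and follows the paper's proof. Both reduce, via Lemma \ref{Cz_0wid}(1) and Lemma \ref{chap6Cover}, to $\overline{s}(p)=1$ and $\overline{s}(ph)=\overline{s}(h)$, with the latter coming from uniqueness of the decomposition $g=p_gk_g$; your semidirect-product bookkeeping (checking that $g_p^{\det h}\in P_{>0}(\R)$) spells out what the paper compresses into $\overline{s}(pp_hk_h)=\overline{s}(k_h)$.
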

\begin{proof}
\[\overline{s}(p)=1,  \overline{s}(ph)=\overline{s}(pp_h k_h)=\overline{s}(k_h)=\overline{s}(h);\]
\[\widehat{C}_{z_0}(p,h)=\overline{C}_{X^{\ast}}(p,h)\overline{s}(p)\overline{s}(h) \overline{s}(ph)^{-1}=1.\]
\end{proof}
 
\begin{lemma}\label{prog0}
Let $p=\begin{pmatrix} a& b\\ 0& d\end{pmatrix} \in \GL_{2}(\R)$, with $ a>0, d>0$ and $h\in \GL_2(\R)$. Then  $\overline{C}_{X^{\ast}}( h,p)=\overline{C}_{X^{\ast}}(p, h)=\widetilde{C}_{X^{\ast}}( h,p)=\widetilde{C}_{X^{\ast}}( p,h)=1$.
\end{lemma}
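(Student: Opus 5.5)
Since $a>0$ and $d>0$, we have $\det p=ad>0$, so $p$ lies in $\GL_2^+(\R)$ and has $c$-entry $0$. I would reduce to the previous lemmas via the decomposition $\GL_2(\R)\simeq \R^{\times}_{>0}\times \SL^{\pm}_2(\R)$ of Lemma~\ref{isoGLR}. Write $p=t\cdot p'$ with $t=\sqrt{\det p}\,I$ central and $p'=t^{-1}p=\begin{pmatrix} a/t& b/t\\ 0& d/t\end{pmatrix}\in P^{\pm}_{>0}(\R)$, noting $\det p'=1$, so in fact $p'\in P_{>0}(\R)$. Similarly write $h=s\cdot h'$ with $s=\sqrt{|\det h|}\,I$ and $h'\in\SL_2^{\pm}(\R)$. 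Lemma~\ref{isoGLR} shows that the cocycles $\overline{C}_{X^{\ast}}$ and $\widetilde{C}_{X^{\ast}}$ are trivial on pairs involving the scalar factor $\R^{\times}_{>0}$ and that the extension splits as a direct product. Hence $\overline{C}_{X^{\ast}}(h,p)=\overline{C}_{X^{\ast}}(h',p')$, and likewise for the other three quantities.

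More carefully, I would justify this reduction with the cocycle identity. Since the scalars are central and the cocycle vanishes whenever one argument is a positive scalar (both orders, as computed in the proof of Lemma~\ref{isoGLR}), the identity
\[C(x,y)C(xy,z)=C(x,yz)C(y,z)\]
lets us strip $t$ and $s$ from either argument. For example, $C(h's,p)=C(h',sp)$ because $C(h',s)=1$ and $C(s,p)=1$. Then $C(h',sp)=C(h',ps)=C(h'p,s)^{-1}\cdots$ and so on, using that $s$ commutes with everything.

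Now apply Lemma~\ref{prbc} and Lemma~\ref{chap6Cover} with $h'\in\SL_2^{\pm}(\R)$ and $p'\in P^{\pm}_{>0}(\R)$ with $\det p'=1$. These give $\widetilde{C}_{X^{\ast}}(p',h')=1$ and $\overline{C}_{X^{\ast}}(p',h')=1$ directly. For the other order, the formulas involve $(\det p',a)_{\R}=(1,a)_{\R}=1$ when $c=0$. When $c\neq0$, they involve $(c\det h',1)_{\R}\,\gamma(1,\psi^{1/2})^{-1}=1$ in the $\mu_8$ case and $1$ in the $\mu_2$ case. So all four values are $1$.

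The main obstacle is making the reduction step rigorous: one must check that the cocycle with one argument equal to a positive scalar matrix is $1$ in both orders for arbitrary $h\in\GL_2(\R)$, not only $h\in\SL_2^{\pm}(\R)$. I would handle this directly from Lemma~\ref{chap3yg1} and the formula for $\overline{C}_{X^{\ast}}$. A positive scalar $t$ corresponds to $[t^2,h(t)]$, and $\nu(t^2,\cdot)=\nu_2(t^2,\cdot)=1$ since $t^2>0$. Also $\widetilde{c}_{X^{\ast}}$ and $\overline{c}_{X^{\ast}}$ are trivial on pairs involving $h(t)\in P_{>0}(\R)$ by the lemma following Lemma~\ref{relacto}. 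With this in hand, the result follows.
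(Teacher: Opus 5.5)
Your proof is correct and follows the paper's route: you strip the positive scalar part using the triviality of the cocycles against $\R^{\times}_{>0}$ (Lemma~\ref{isoGLR}), then apply Lemmas~\ref{prbc} and~\ref{chap6Cover} to the remaining parabolic factor. The only difference is cosmetic. The paper factors $p=\begin{pmatrix} a&0\\0&a\end{pmatrix}\begin{pmatrix}1&b/a\\0&d/a\end{pmatrix}$ and leaves $h$ untouched, whereas you normalize both $p$ and $h$ to determinant $\pm1$; this makes the cited lemmas, which are stated for $P^{\pm}_{>0}(\R)$ and $h\in\SL_2^{\pm}(\R)$, apply literally.
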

\begin{proof}
Decompose $p=\begin{pmatrix} a& b\\ 0& d\end{pmatrix} = \begin{pmatrix} a& 0\\ 0& a\end{pmatrix}\begin{pmatrix} 1& b/a\\ 0& d/a\end{pmatrix}$.
By Lemma \ref{isoGLR} the results hold for the first factor; by Lemmas \ref{prbc} and \ref{chap6Cover} they hold for the second. Hence the result follows.
\end{proof}
\begin{corollary}
The centers of $\overline{\GL}_2(\R)$, $\widetilde{\GL}_2(\R)$, and $\widehat{\GL}_2(\R)$ are $\R_{>0}^{\times}\times \mu_2$, $\R_{>0}^{\times}\times \mu_8$, and $\R_{>0}^{\times}\times \T$, respectively.
\end{corollary}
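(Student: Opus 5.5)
To prove the corollary, I will determine the centers in two steps. First I locate the center inside $\GL_2(\R)$. Then I check that the whole preimage of that central subgroup is central in each cover.

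\emph{Step 1: projection of the center.} The projection $\widetilde{\GL}_2(\R)\to \GL_2(\R)$ (and similarly for the other two covers) is surjective. Hence the image of the center of the cover lies in the center of $\GL_2(\R)$, which is $\R^{\times}\cdot I$. So I must decide which scalars $tI$ have central lifts.

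For $t>0$ the lift is central by Lemma~\ref{isoGLR}: it gives $\overline{\GL}_2(\R)\simeq \R^{\times}_{>0}\times\overline{\SL}^{\pm}_2(\R)$, and likewise for the $\widetilde{\ }$ and $\widehat{\ }$ versions. The $\R^{\times}_{>0}$ factor is embedded via $t\mapsto [tI,1]$. By Lemma~\ref{prog0}, together with the $\widehat{C}_{z_0}$ computation in the proof of Lemma~\ref{isoGLR}, this element has trivial cocycle with every $h$ on both sides, so it commutes with everything.

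For $t<0$ I reduce to $t=-1$ by multiplying by the positive scalar $|t|I$. The element $-I$ lies in $\SL_2(\R)$. I will show that no lift $[-I,\epsilon]$ commutes with every element of the cover. The natural test element is the lift of $h_{-1}=s(-1)$, i.e.\ $[y,g]=[-1,I]$ in $\R^{\times}\ltimes \SL_2(\R)$. In the semidirect product, $[-1,I]\cdot[-I,\epsilon]\cdot[-1,I]^{-1}$ equals $[(-I)^{h_{-1}},\nu(-1,-I)\epsilon]=[-I,\nu(-1,-I)\epsilon]$. By Example~\ref{chap3example2} with $c=0$ and $a=-1$, we get $\nu(-1,-I)=(-1,-1)_\R=-1$. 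Lemma~\ref{nu2} gives the same value $\nu_2(-1,-I)=-1$ for the $\mu_2$-cover. For $\widehat{C}_{z_0}$, Example~\ref{exz0z} gives $\nu_{z_0}(-1,-I)=\nu_2(-1,-I)\,\overline{s}(-I)/\overline{s}(-I)=-1$. So conjugation by this lift negates the central coordinate of every lift of $-I$. Hence no lift of $-I$, and therefore no lift of any $tI$ with $t<0$, is central.

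\emph{Step 2: the fibre.} The kernels $\mu_2$, $\mu_8$ and $\T$ are central by construction. Consequently each center is exactly the full preimage of $\R^{\times}_{>0}I$. Via the splitting $t\mapsto[tI,1]$, that preimage is the direct product $\R^{\times}_{>0}\times\mu_2$, $\R^{\times}_{>0}\times\mu_8$, or $\R^{\times}_{>0}\times\T$ respectively.

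The main obstacle is the sign bookkeeping in Step 1 for $t<0$, specifically the identification of the conjugation action of $[-1,I]$ on $[-I,\epsilon]$ with the lifted automorphism $\nu(-1,\cdot)$ in the semidirect product convention. I would double-check it by computing the commutator directly with Lemma~\ref{chap3yg1}, i.e.\ comparing $\widetilde{C}_{X^{\ast}}([-1,I],[1,-I])$ with $\widetilde{C}_{X^{\ast}}([1,-I],[-1,I])$. The ratio of these two values should come out to $(-1,-1)_\R=-1$.
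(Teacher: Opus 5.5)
Your proposal is correct and follows essentially the paper's route: positive scalars are central by Lemma~\ref{isoGLR}, and $-I$ is ruled out because none of its lifts commutes with the lift of $h_{-1}$. Your value $\nu(-1,-I)=(-1,-1)_{\R}=-1$ is the same commutator check the paper does by comparing $C(-I,h_{-1})=-1$ with $C(h_{-1},-I)=1$ for each of the three cocycles; you additionally spell out the reduction from $t<0$ to $t=-1$ and the identification of the full fibre over $\R^{\times}_{>0}I$, which the paper leaves implicit.
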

\begin{proof}
Since  $\R^{\times}$ is the center of $\GL_2(\R)$, it follows from Lemma \ref{isoGLR} that the respective sets are contained in the corresponding centers. For $-I=\diag(-1,-1), h_{-1}=\diag(1,-1) \in \GL_2(\R)$, we have:
\[
\widehat{C}_{z_0}(h_{-1},-I)=\overline{C}_{X^{\ast}}(h_{-1},-I)=1=\widetilde{C}_{X^{\ast}}(h_{-1},-I), \qquad \overline{C}_{X^{\ast}}(-I,h_{-1})=-1=\widetilde{C}_{X^{\ast}}(-I,h_{-1}),
\]
\[
\overline{s}(h_{-1})=1, \quad \overline{s}(-h_{-1})=\overline{s}(-I)=-i,
\]
\[
\widehat{C}_{z_0}(-I,h_{-1})=\overline{C}_{X^{\ast}}(-I,h_{-1})\,\overline{s}(-I)\,\overline{s}(h_{-1})\,\overline{s}(-h_{-1})^{-1}=-1.
\]
Hence $-I$ does not lie in the center in any of the three cases, and the result follows.
\end{proof}
 \subsection{The maximal compact subgroup $K$}
We consider the maximal compact subgroup $K=\Oa_2(\R)$ in $ \GL_2(\R)$ or $\SL_2^{\pm}(\R)$. 
\begin{lemma}\label{chap6thetaf3}
 The restriction of $[\overline{C}_{X^{\ast}}]$ on $\Oa_2(\R)$ is  not trivial.
\end{lemma}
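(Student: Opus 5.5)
To show that $[\overline{C}_{X^{\ast}}]$ restricted to $\Oa_2(\R)$ is nontrivial, I will find an abelian subgroup of $\Oa_2(\R)$ on which the cocycle is not symmetric. A cocycle on an abelian group whose class is trivial must satisfy $C(a,b)=C(b,a)$, because it is the coboundary of a function $f$ and $f(a)f(b)f(ab)^{-1}$ is symmetric in $a,b$ when $ab=ba$. So an asymmetric value on commuting elements rules out any trivialization of $[\overline{C}_{X^{\ast}}]$ on $\Oa_2(\R)$, with values in $\mu_2$ or even in $\C^{\times}$.

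The natural candidates are the commuting elements $-I=\diag(-1,-1)$ and $h_{-1}=\diag(1,-1)$, both of which lie in $\Oa_2(\R)$. The proof of the corollary on the centers has already recorded the values
\[
\overline{C}_{X^{\ast}}(h_{-1},-I)=1,\qquad \overline{C}_{X^{\ast}}(-I,h_{-1})=-1 .
\]
I would re-verify these directly from Lemma \ref{chap3yg1} and its $\mu_2$-analogue $\overline{C}_{X^{\ast}}([y_1,g_1],[y_2,g_2])=\nu_2(y_2,g_1)\,\overline{c}_{X^{\ast}}(g_1^{y_2},g_2)$.

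For the first value, write $h_{-1}=[-1,I]$ and $-I=[1,-I]$. Then $\nu_2(1,I)=1$ and $\overline{c}_{X^{\ast}}(I,-I)=1$.

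For the second value, $\overline{C}_{X^{\ast}}(-I,h_{-1})=\nu_2(-1,-I)\,\overline{c}_{X^{\ast}}((-I)^{-1},I)=\nu_2(-1,-I)$. By Lemma \ref{nu2} with $c=0$ and $a=-1$, this equals $(-1,-1)_\R=-1$.

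Since $-I$ and $h_{-1}$ commute, the asymmetry $\overline{C}_{X^{\ast}}(-I,h_{-1})\neq \overline{C}_{X^{\ast}}(h_{-1},-I)$ gives the contradiction: if $\overline{C}_{X^{\ast}}(k_1,k_2)=f(k_1)f(k_2)f(k_1k_2)^{-1}$ on $\Oa_2(\R)$, both values would equal $f(-I)f(h_{-1})f(-h_{-1})^{-1}$. This also shows the commutator of the lifts of $-I$ and $h_{-1}$ in $\overline{\GL}_2(\R)$ is $-1$.

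The only delicate step is fixing the conventions for identifying $h_{-1}$ and $-I$ with pairs $[y,g]$ under the section $s(y)=\diag(1,y)$, together with the direction of the twist $g^{y}$. Everything else follows from the already established Lemma \ref{nu2}.
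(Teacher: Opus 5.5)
Your proposal is correct and follows essentially the same argument as the paper. The commuting elements $-I$ and $h_{-1}$ of $\Oa_2(\R)$ satisfy $\overline{C}_{X^{\ast}}(-I,h_{-1})=-1\neq 1=\overline{C}_{X^{\ast}}(h_{-1},-I)$, which no coboundary can produce.

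The only difference is in how the two values are obtained. The paper reads them off Lemma~\ref{chap6Cover}, taking $p=h_{-1}\in P^{\pm}_{>0}(\R)$. You instead recompute them from $\overline{C}_{X^{\ast}}([y_1,g_1],[y_2,g_2])=\nu_2(y_2,g_1)\,\overline{c}_{X^{\ast}}(g_1^{y_2},g_2)$ and Lemma~\ref{nu2}, which is exactly how that lemma is proved.
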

\begin{proof}
Because $-I\in\Oa_2(\R)$ commutes with $h_{-1}\in \Oa_2(\R)$, any trivialization would satisfy
\[
\overline{C}_{X^{\ast}}(-I,h_{-1})
=f(-I)f(h_{-1})f((-I)h_{-1})^{-1}
=f(-I)f(h_{-1})f(h_{-1}(-I))^{-1}
=\overline{C}_{X^{\ast}}(h_{-1},-I).
\]
However, Lemma~\ref{chap6Cover} gives
\[
\overline{C}_{X^{\ast}}(-I,h_{-1})=-1\neq 1=\overline{C}_{X^{\ast}}(h_{-1},-I),
\]
so no such function $f$ can exist.
\end{proof}
\begin{corollary}\label{chap6nonsp222}
The restrictions of both $[\overline{C}_{X^{\ast}}]$ and $[\widehat{C}_{z_0}]$ to $\Oa_2(\mathbb R)$ are non-trivial.
\end{corollary}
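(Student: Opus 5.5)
The statement is Corollary \ref{chap6nonsp222}. The case of $[\overline{C}_{X^{\ast}}]$ is exactly Lemma \ref{chap6thetaf3}, so only $[\widehat{C}_{z_0}]$ needs an argument. I would reuse the commutator obstruction from the proof of Lemma \ref{chap6thetaf3}. If a cocycle $C$ on $\Oa_2(\R)$ is a coboundary, $C(h_1,h_2)=f(h_1)f(h_2)f(h_1h_2)^{-1}$, then for any two commuting elements $h_1,h_2\in\Oa_2(\R)$ we get $C(h_1,h_2)=C(h_2,h_1)$. Thus it suffices to exhibit one commuting pair on which $\widehat{C}_{z_0}$ is not symmetric. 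The natural pair is again $-I$ and $h_{-1}=\diag(1,-1)$, both of which lie in $\Oa_2(\R)$ and commute.

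The values needed were computed in the proof of the corollary on centers, just before this section. There we have $\widehat{C}_{z_0}(h_{-1},-I)=1$ and $\widehat{C}_{z_0}(-I,h_{-1})=-1$.

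To be safe, I would re-derive these from Lemma \ref{Cz_0wid}(1):
\[\widehat{C}_{z_0}(h_1,h_2)=\overline{C}_{X^{\ast}}(h_1,h_2)\,\overline{s}(h_1)\,\overline{s}(h_2)\,\overline{s}(h_1h_2)^{-1}.\]
The $\overline{s}$-factor is the same for both orderings, because $h_{-1}(-I)=(-I)h_{-1}$. Hence
\[\widehat{C}_{z_0}(-I,h_{-1})\,\widehat{C}_{z_0}(h_{-1},-I)^{-1}=\overline{C}_{X^{\ast}}(-I,h_{-1})\,\overline{C}_{X^{\ast}}(h_{-1},-I)^{-1}=-1,\]
using the values $-1$ and $1$ recorded in the proof of Lemma \ref{chap6thetaf3} (coming from Lemma \ref{chap6Cover}). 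The asymmetry of $\widehat{C}_{z_0}$ on this commuting pair then gives non-triviality exactly as in Lemma \ref{chap6thetaf3}.

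An even shorter route would be to observe that $\widehat{C}_{z_0}$ and $\overline{C}_{X^{\ast}}$ differ by the coboundary of $\overline{s}$ (Lemma \ref{Cz_0wid}(1)). One must be careful here: the two classes live in different coefficient groups, $\mu_2$ versus $\mathbb{T}$, so "cohomologous" only makes sense after pushing $\overline{C}_{X^{\ast}}$ into $\mathbb{T}$. Non-triviality over $\mathbb{T}$ is not formally implied by non-triviality over $\mu_2$. For this reason I prefer the direct commutator argument, which works over any abelian coefficient group.

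The only potential obstacle is checking the bookkeeping: that $-I$ and $h_{-1}$ are in $\Oa_2(\R)$ with the paper's parametrization, and that the $\overline{s}$-terms really cancel. Both are immediate.
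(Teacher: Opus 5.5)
Your proof is correct and is essentially the paper's argument: the corollary is stated without a separate proof, and its justification is precisely the commutator obstruction of Lemma~\ref{chap6thetaf3} applied to the commuting pair $-I,\,h_{-1}\in\Oa_2(\R)$, using $\widehat{C}_{z_0}(-I,h_{-1})=-1\neq 1=\widehat{C}_{z_0}(h_{-1},-I)$ from the computation in the corollary on centers (your re-derivation via Lemma~\ref{Cz_0wid}(1) and the cancellation of the symmetric $\overline{s}$-factor is right). Your caution about not transporting non-triviality from $\mu_2$ to $\mathbb{T}$ through the coboundary relation alone is well founded---the paper itself exhibits this phenomenon with $[\overline{c}_{X^{\ast}}]$ on $\U(\C)$, which admits no $\mu_2$-valued trivialization (Remark~\ref{chap6triviall}) yet is trivial over $\mathbb{T}$ (Lemma~\ref{chap6trivial})---whereas the commutator invariant on a commuting pair survives any enlargement of the coefficient group.
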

\begin{lemma}
$\widehat{C}_{z_0}(h, k)=1$, for $h\in \GL_2(\R)$, $k\in \SO_2(\R)$.
\end{lemma}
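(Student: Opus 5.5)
We prove $\widehat{C}_{z_0}(h,k)=1$ by writing $h=[y,g]$ and $k=[1,k]$ in $\R^{\times}\ltimes\SL_2(\R)$ and applying the defining formula (\ref{betaz}). Since $k\in\SO_2(\R)\subseteq\SL_2(\R)$, its $\R^{\times}$-component is $y_2=1$. By Lemma \ref{chap3yg1}, $\nu(1,-)\equiv 1$, and Example \ref{exz0z} then gives $\nu_{z_0}(1,g)=\nu(1,g)\widetilde{s}(g)/\widetilde{s}(g)=1$. The conjugation $g^{y_2}$ is trivial when $y_2=1$. Hence
\[
\widehat{C}_{z_0}(h,k)=\nu_{z_0}(1,g)\,\widehat{c}_{z_0}(g^{1},k)=\widehat{c}_{z_0}(g,k).
\]

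It remains to show $\widehat{c}_{z_0}(g,k)=1$ for $g\in\SL_2(\R)$ and $k\in\U(\C)\simeq\SO_2(\R)$. This is exactly the second assertion of Lemma \ref{usopbig}, which rests on $k(i)=i$ and $\gamma(w,w)=1$ for every $w\in\mathbb{H}$.

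The only point that needs care is the bookkeeping in the identification $\GL_2(\R)\simeq\R^{\times}\ltimes\SL_2(\R)$ through the section $s(y)=\diag(1,y)$. We must confirm that an element $k\in\SO_2(\R)$ corresponds to $[1,k]$, which holds because $\det k=1$. We must also confirm that the product $[y,g][1,k]=[y,gk]$ is compatible with the multiplication rule written before Lemma \ref{chap3yg1}. With the convention $(y_1,g_1)(y_2,g_2)=(y_1y_2,g_1^{y_2}g_2)$ and $y_2=1$, this is immediate.

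As a cross-check, we can use Lemma \ref{Cz_0wid}(1), which expresses
\[
\widehat{C}_{z_0}(h,k)=\overline{C}_{X^{\ast}}(h,k)\,\overline{s}(h)\,\overline{s}(k)\,\overline{s}(hk)^{-1}.
\]
We would compute $\overline{C}_{X^{\ast}}(h,k)=\nu_2(1,g)\,\overline{c}_{X^{\ast}}(g,k)=\overline{c}_{X^{\ast}}(g,k)$ and then invoke Lemma \ref{twocoz0} to recover $\widehat{c}_{z_0}(g,k)$. The direct route via (\ref{betaz}) is shorter, so that is the one we will use.
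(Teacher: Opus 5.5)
Your proof is correct and follows the paper's argument: apply (\ref{betaz}) with second coordinate $y_2=1$, so that $\widehat{C}_{z_0}(h,k)=\nu_{z_0}(1,g)\,\widehat{c}_{z_0}(g,k)$, and conclude with the second half of Lemma \ref{usopbig}. The paper first reduces to $h\in\SL_2^{\pm}(\R)$ via Lemma \ref{isoGLR} and splits on $\det h=\pm1$; your direct use of (\ref{betaz}) shows that reduction is unnecessary. One citation should change: $\nu(1,-)\equiv 1$ follows from Example \ref{chap3example2} (or from the uniqueness of $\nu$), not from Lemma \ref{chap3yg1}.
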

\begin{proof}
By Lem. \ref{isoGLR}, it suffices to show this for $h\in \SL^{\pm}_2(\R)$. If $\det h=1$, the result follows from Lem. \ref{usopbig}. If $\det h=-1$, then $h=h_{\det h}[h_{\det h}h]$, and
$
\widehat{C}_{z_0}(h, k)=\nu_{z_0}(1,h_{\det h}h)\,\widehat{c}_{z}(h_{\det h}h,k)=1.\qedhere$
\end{proof}
 \subsection{The discrete subgroups}\label{chap6nonspI}

\begin{lemma}\label{chap6nonsp}
The restriction of\/ $[\overline{C}_{X^{\ast}}]$ to\/ $\Gamma^{\pm}(2)$ is non-trivial.
\end{lemma}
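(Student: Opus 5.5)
The plan is to reuse the commutator argument from Lemma~\ref{chap6thetaf3}. The group $\Gamma^{\pm}(2)$, the matrices in $\SL_2^{\pm}(\Z)$ congruent to $I$ modulo $2$, contains both $-I$ and $h_{-1}=\diag(1,-1)$, since $-1\equiv 1 \pmod 2$. These two elements commute in $\GL_2(\R)$.

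Suppose, for contradiction, that the class of $\overline{C}_{X^{\ast}}$ restricted to $\Gamma^{\pm}(2)$ is trivial. Then there is a function $f\colon \Gamma^{\pm}(2)\to \C^{\times}$ with
\[
\overline{C}_{X^{\ast}}(h_1,h_2)=f(h_1)f(h_2)f(h_1h_2)^{-1}
\]
for all $h_1,h_2\in\Gamma^{\pm}(2)$. We may allow $\C^{\times}$-valued $f$, or any abelian group containing $\mu_2$, so the argument covers the strongest reading of ``trivial''. Apply this to the pairs $(-I,h_{-1})$ and $(h_{-1},-I)$. Since $(-I)h_{-1}=h_{-1}(-I)$, the right-hand sides coincide. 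Hence
\[
\overline{C}_{X^{\ast}}(-I,h_{-1})=\overline{C}_{X^{\ast}}(h_{-1},-I).
\]
In other words, the commutator of the lifts of these two commuting elements in the cover is independent of the choice of cocycle representative. It must therefore be trivial whenever the cover splits over $\Gamma^{\pm}(2)$.

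It remains to check that the two values differ. This is the only computational step, and it was already carried out in the proof of the corollary on centers: $\overline{C}_{X^{\ast}}(h_{-1},-I)=1$ and $\overline{C}_{X^{\ast}}(-I,h_{-1})=-1$. I would re-verify these from Lemma~\ref{chap6Cover} and the formula $\overline{C}_{X^{\ast}}([y_1,g_1],[y_2,g_2])=\nu_2(y_2,g_1)\overline{c}_{X^{\ast}}(g_1^{y_2},g_2)$.

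\begin{itemize}
\item Writing $h_{-1}=[-1,I]$ and $-I=[1,-I]$, the first value is $\nu_2(1,I)\,\overline{c}_{X^{\ast}}(I,-I)=1$.
\item The second value is $\nu_2(-1,-I)\,\overline{c}_{X^{\ast}}((-I)^{-1},I)$. By Lemma~\ref{nu2} with $c=0$ and $a=-1$, this equals $(-1,-1)_{\R}=-1$.
\end{itemize}

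This yields the contradiction $-1=1$, so the restriction is non-trivial.

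The main point to watch is bookkeeping: one must confirm that $h_{-1}$ and $-I$ really lie in $\Gamma^{\pm}(2)$ under the paper's definition, and identify each element correctly in the semidirect-product coordinates $\R^{\times}\ltimes\SL_2(\R)$ given by the section $s$. Once that is settled, no further input is needed.
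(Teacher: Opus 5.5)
Your proof is correct and follows the paper's own argument. The paper proves this lemma by referring back to Lemma~\ref{chap6thetaf3}, i.e.\ the commuting pair $-I,h_{-1}\in\Gamma^{\pm}(2)$ with $\overline{C}_{X^{\ast}}(-I,h_{-1})=-1\neq 1=\overline{C}_{X^{\ast}}(h_{-1},-I)$, and your explicit evaluation via $\nu_2(-1,-I)=(-1,-1)_{\R}=-1$ from Lemma~\ref{nu2} checks out.
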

\begin{proof}
The proof is similar to that of Lemma \ref{chap6thetaf3}.
\end{proof}
\begin{corollary}\label{chap6nonsp2}
The restrictions of both $[\overline{C}_{X^{\ast}}]$ and $[\widehat{C}_{z_0}]$ to $\Gamma_{\theta}^{\pm}$, $\SL^{\pm}_2(\mathbb Z)$ are non-trivial.
\end{corollary}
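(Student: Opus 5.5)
The plan is to reduce to Lemma \ref{chap6nonsp} and to argue exactly as in Lemma \ref{chap6thetaf3}, using the two commuting elements $-I$ and $h_{-1}=\diag(1,-1)$. Both lie in $\Gamma^{\pm}(2)$, and $\Gamma^{\pm}(2)\subseteq \Gamma^{\pm}_{\theta}\subseteq \SL^{\pm}_2(\Z)$. Triviality of a cohomology class is inherited by restriction to subgroups. So if $[\overline{C}_{X^{\ast}}]$ were trivial on $\Gamma^{\pm}_{\theta}$ or on $\SL^{\pm}_2(\Z)$, it would be trivial on $\Gamma^{\pm}(2)$, which contradicts Lemma \ref{chap6nonsp}.

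To stay self-contained, I would apply the commutator argument directly. Suppose $f$ is a function with $\overline{C}_{X^{\ast}}(h_1,h_2)=f(h_1)f(h_2)f(h_1h_2)^{-1}$ on the subgroup. Since $-I$ and $h_{-1}$ commute, this forces $\overline{C}_{X^{\ast}}(-I,h_{-1})=\overline{C}_{X^{\ast}}(h_{-1},-I)$. However, the corollary on the centers (via Lemma \ref{chap6Cover}) gives $\overline{C}_{X^{\ast}}(-I,h_{-1})=-1$ and $\overline{C}_{X^{\ast}}(h_{-1},-I)=1$. The argument works for $\T$-valued $f$ as well as $\mu_2$-valued $f$, so there is no issue about which coefficient group the trivialization lives in.

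For $[\widehat{C}_{z_0}]$ the same commutator test applies. The values $\widehat{C}_{z_0}(h_{-1},-I)=1$ and $\widehat{C}_{z_0}(-I,h_{-1})=-1$ were already computed in the proof of the corollary on the centers. Alternatively, Lemma \ref{Cz_0wid}(1) shows that $\widehat{C}_{z_0}$ and $\overline{C}_{X^{\ast}}$ differ by the coboundary of $\overline{s}$, so their classes with $\T$-coefficients agree on any subgroup. Either route gives non-triviality.

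The only step needing care is checking membership $-I,h_{-1}\in\Gamma^{\pm}_{\theta}$. Here $\Gamma^{\pm}_{\theta}$ should be read as the natural extension of $\Gamma_{\theta}$ inside $\SL^{\pm}_2(\Z)$, with the same parity conditions. For both diagonal matrices the off-diagonal entries are $0$, so $ac\equiv bd\equiv 0 \pmod 2$ holds. I expect no real obstacle beyond this bookkeeping.
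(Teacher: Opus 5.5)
Your proposal is correct and matches the paper's intended argument. The paper states this as an immediate corollary of Lemma~\ref{chap6nonsp}, which is the $-I$/$h_{-1}$ commutator test of Lemma~\ref{chap6thetaf3}; non-triviality then passes from $\Gamma^{\pm}(2)$ to the larger groups $\Gamma^{\pm}_{\theta}$ and $\SL^{\pm}_2(\Z)$, and $\widehat{C}_{z_0}$ is handled because it is cohomologous to $\overline{C}_{X^{\ast}}$ by Lemma~\ref{Cz_0wid}(1). Your observation that the commutator test rules out $\T$-valued trivializations, and not only $\mu_2$-valued ones, is exactly what makes the transfer to $[\widehat{C}_{z_0}]$ legitimate.
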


\subsection{The projective group $\PGL_2^{\pm}(\R)$}
 Let $\GL_2^+(\R)=\R^{\times}_{>0}\SL_2(\R)$, $\PGL_2(\R)=\GL_2(\R)/\R^{\times}$, $\PGL_2^{\pm}(\R)=\GL_2(\R)/\R_{>0}^{\times}$.
Then:
\begin{itemize}
\item $\R_{>0}^{\times}=\R^{\times 2}$ and $\R^{\times}/\R^{\times 2} \simeq \mu_2$.
\item  There exists an exact sequence: $1 \longrightarrow \SL_2(\R)/\{\pm 1\} \longrightarrow \PGL_2(\R) \stackrel{\dot{\lambda}}{\longrightarrow} \R^{\times}/\R^{\times 2} \longrightarrow 1.$
\item  There exists an exact sequence: $1 \longrightarrow \SL_2(\R) \longrightarrow \GL_2(\R) \stackrel{\lambda}{\longrightarrow} \R^{\times}\longrightarrow 1.$
\item  There exists an exact sequence: $1\longrightarrow \SL_2(\R) \longrightarrow \PGL_2^{\pm}(\R)\stackrel{\dot{\lambda}}{\longrightarrow}   \R^{\times}/\R^{\times 2} \longrightarrow 1.$
\end{itemize}
By Lemma~\ref{isoGLR} we set
\[
\overline{\PGL}_2^{\pm}(\mathbb R)=\overline{\GL}_2(\mathbb R)/\mathbb R_{>0}^{\times},\qquad
\widetilde{\PGL}_2^{\pm}(\mathbb R)=\widetilde{\GL}_2(\mathbb R)/\mathbb R_{>0}^{\times},\qquad
\widehat{\PGL}_2^{\pm}(\mathbb R)=\widehat{\GL}_2(\mathbb R)/\mathbb R_{>0}^{\times},
\]
and obtain the exact sequences
\[
1\to \overline{\SL}_2(\mathbb R)\to \overline{\PGL}_2^{\pm}(\mathbb R)
\stackrel{\dot{\overline{\lambda}}}{\longrightarrow}\mu_2\to 1,
\]
\[
1\to \widetilde{\SL}_2(\mathbb R)\to \widetilde{\PGL}_2^{\pm}(\mathbb R)
\stackrel{\dot{\widetilde{\lambda}}}{\longrightarrow}\mu_2\to 1,
\]
\[
1\to \widehat{\SL}_2(\mathbb R)\to \widehat{\PGL}_2^{\pm}(\mathbb R)
\stackrel{\dot{\widehat{\lambda}}}{\longrightarrow}\mu_2\to 1.
\]
Fix $r\in\mathbb R_{>0}^{\times}$.  Then $\mu_2\cong\langle -r\rangle/\langle r^{2}\rangle$, and the map
\[
\dot{\iota}_{r}\colon \langle -r\rangle/\langle r^{2}\rangle\to\PGL_2^{\pm}(\mathbb R),\qquad
-r\longmapsto\Bigl[\begin{pmatrix}0&1\\ r&0\end{pmatrix}\Bigr],
\]
is a group homomorphism.  Consequently
\[
\PGL_2^{\pm}(\mathbb R)\cong
\bigl(\langle -r\rangle/\langle r^{2}\rangle\bigr)\ltimes\SL_2(\mathbb R).
\]
We extend the construction to the covering groups via the explicit lifts

\[
\overline{\dot{\iota}}_{r}\colon
\langle -r\rangle/\langle r^{2}\rangle\to\overline{\PGL}_2^{\pm}(\mathbb R),\qquad
-r\longmapsto\Bigl[\Bigl(\begin{pmatrix}0&1\\ r&0\end{pmatrix},1\Bigr)\Bigr],
\]

\[
\widetilde{\dot{\iota}}_{r}\colon
\langle -r\rangle/\langle r^{2}\rangle\to\widetilde{\PGL}_2^{\pm}(\mathbb R),\qquad
-r\longmapsto\Bigl[\Bigl(\begin{pmatrix}0&1\\ r&0\end{pmatrix},
e^{\tfrac{\pi i\sgn(\mathfrak{e})}{4}}\Bigr)\Bigr],
\]

\[
\widehat{\dot{\iota}}_{r}\colon
\langle -r\rangle/\langle r^{2}\rangle\to\widehat{\PGL}_2^{\pm}(\mathbb R),\qquad
-r\longmapsto\Bigl[\Bigl(\begin{pmatrix}0&1\\ r&0\end{pmatrix},
e^{\tfrac{\pi i}{4}}\Bigr)\Bigr].
\]
\begin{lemma}
$\overline{\dot{\iota}}_{r}$, $\widetilde{\dot{\iota}}_{r}$, $\widehat{\dot{\iota}}_{r}$ all are group homomorphisms.
\end{lemma}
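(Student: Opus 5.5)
Since $\langle -r\rangle/\langle r^{2}\rangle\cong\mu_2$ is cyclic of order two, each map is a homomorphism as soon as the square of the chosen lift of the generator is trivial in the quotient by $\mathbb R^{\times}_{>0}$. Put $w_r=\begin{pmatrix}0&1\\ r&0\end{pmatrix}$, so $w_r^2=rI$ with $rI\in\mathbb R^{\times}_{>0}$ central. In each covering group, $(w_r,\epsilon)^2=(rI,\epsilon^2 C(w_r,w_r))$, where $C$ is $\overline C_{X^{\ast}}$, $\widetilde C_{X^{\ast}}$ or $\widehat C_{z_0}$. The lift $rI\in\mathbb R^{\times}_{>0}$ used in Lemma \ref{isoGLR} is $(rI,1)$. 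So the proof reduces to three identities:
\[
\overline C_{X^{\ast}}(w_r,w_r)=1,\qquad e^{\frac{\pi i\sgn(\mathfrak e)}{2}}\widetilde C_{X^{\ast}}(w_r,w_r)=1,\qquad e^{\frac{\pi i}{2}}\widehat C_{z_0}(w_r,w_r)=1 .
\]

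To compute these, write $w_r=[y,g]$ in $\mathbb R^{\times}\ltimes\SL_2(\mathbb R)$ via the section $s(y)=\diag(1,y)$. Here $y=\det w_r=-r$ and $g=w_r\,s(y)^{-1}=\begin{pmatrix}0&-1/r\\ r&0\end{pmatrix}$. Note $g^{y}=s(y)^{-1}gs(y)=\begin{pmatrix}0&1\\ -1&0\end{pmatrix}$ (I will check the conjugation convention carefully), and $g^{y}g=\diag(r,r^{-1})\in P_{>0}(\mathbb R)$.

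By Lemma \ref{chap3yg1}, $\widetilde C_{X^{\ast}}(w_r,w_r)=\nu(-r,g)\,\widetilde c_{X^{\ast}}(g^{-r},g)$. The second factor equals $1$ by Lemma \ref{relacto}(2), because the product has $c_3=0$. By Example \ref{chap3example2}, with $c=r>0$, the first factor is $(r,-r)_{\mathbb R}\gamma(-r,\psi^{1/2})^{-1}=\gamma(\psi^{1/2})/\gamma(\psi^{-r/2})=e^{\frac{\pi i\sgn\mathfrak e}{2}}$. Since $\epsilon^2=e^{\frac{\pi i\sgn\mathfrak e}{2}}$, the total is $e^{\pi i\sgn\mathfrak e}=-1$. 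This conflicts with what I want, so at this step I must recheck signs: the $\gamma$ ratio, and whether $\nu(y,\cdot)$ in (\ref{chap3alpp}) carries an inverse. Most likely the correct value is $\nu=e^{-\frac{\pi i\sgn\mathfrak e}{2}}$, and the chosen $\epsilon$ is designed to cancel it.

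In the $\mu_2$ case, Lemma \ref{nu2} gives $\nu_2(-r,g)=1$ since $c\ne0$. Moreover $\overline c_{X^{\ast}}(g^{-r},g)=(x_1,x_2)_{\mathbb R}(-x_1x_2,x_3)_{\mathbb R}$ with $x_1=-1$, $x_2=r$, and $x_3=r^{-1}$ (the $d$ entry). This yields $(-1,r)(r,r^{-1})=1$, as required. Alternatively, relation (\ref{chap3293inter}) gives it with the $m$-factors: $m(g)=e^{-\pi i/4}$, $m(g)=m(g^{y}$'s partner$)$, $m(g^yg)=1$. This consistency also pins down the correct sign in the $\mu_8$ case, where $\widetilde C=\overline C\cdot m(g^yg)m(g)^{-2}=e^{\pi i/2}$ for $\psi_0$. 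Then $\epsilon^2\widetilde C$ would be $e^{\pi i}$ unless $\epsilon=e^{-\pi i/4}$. Resolving this sign/convention issue (in $m_{X^\ast,\psi}$, in $\nu$, and in the statement's choice of $\epsilon$) is the main obstacle, and I would settle it via (\ref{chap3293inter}) rather than Barthel's formula.

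For the $\mathbb T$ case, Lemma \ref{Cz_0wid}(1) gives $\widehat C_{z_0}(w_r,w_r)=\overline C_{X^{\ast}}(w_r,w_r)\,\overline s(g)^2\,\overline s(g^{y}g)^{-1}=\overline s(g)^2$. Here $\overline s(g^yg)=1$ since $g^yg\in P_{>0}$, and by (\ref{eqalpha}) $\overline s(g)^2=\alpha_{z_0}(g)=\frac{ri}{r}=i$. Then $e^{\pi i/2}\cdot i=-1$, so again the needed identity is $\epsilon^2=\widehat C^{-1}$, i.e. $\epsilon=e^{-\pi i/4}$ up to sign. I would therefore recompute $\overline s(g)$ with the paper's $\Arg$ convention and the order $h=[y,g]$ versus $[g,y]$ in (\ref{betaz}). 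I expect the paper's conventions make the signs come out as stated, and will adjust the verification accordingly.
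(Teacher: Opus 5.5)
\textbf{There is a genuine gap: the factorization of $w_r$ is wrong.} Your reduction is right and is the one the paper uses. Since $\langle -r\rangle/\langle r^{2}\rangle$ has order two, it suffices to show that the chosen lift of $w_r=\begin{pmatrix}0&1\\ r&0\end{pmatrix}$ squares to $[rI,1]$, i.e.\ $C(w_r,w_r)=\epsilon^{-2}$. That cocycle is computed via $C([y_1,g_1],[y_2,g_2])=\nu(y_2,g_1)\,c(g_1^{y_2},g_2)$.

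The paper's product is $(y_1,g_1)(y_2,g_2)=(y_1y_2,g_1^{y_2}g_2)$ with $g^{y}=s(y)^{-1}gs(y)$. This forces the identification $[y,g]\leftrightarrow s(y)g$, which is also how the paper writes $g_h=s(\det h)^{-1}h$ in Lemmas \ref{isoGLR} and \ref{prbc}. Hence $g=s(-r)^{-1}w_r=\omega^{-1}$, not $w_r\,s(-r)^{-1}$.

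Your pair $[-r,\begin{pmatrix}0&-1/r\\ r&0\end{pmatrix}]$ represents $\begin{pmatrix}0&-1/r\\ -r^{2}&0\end{pmatrix}=w_r\cdot\bigl(-\diag(r,r^{-1})\bigr)$. That is a different element, and its extra factor $-I$ is exactly the sign you could not account for. Your $\mu_2$ value $1$ is a coincidence for that other element. So your suspicion that $\epsilon$ should be $e^{-\pi i/4}$, or that the conventions for $m_{X^{\ast},\psi}$, $\nu$ or $\gamma$ need changing, is misplaced. As written, the proposal actually derives $\epsilon^{2}C(w_r,w_r)=-1$ in two of the three cases and leaves the lemma unproved.

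\textbf{The fix.} With the correct $g=\omega^{-1}$ and $y=-r$, one has $g^{y}=\begin{pmatrix}0&-r\\ r^{-1}&0\end{pmatrix}$ and $g^{y}g=\diag(r,r^{-1})\in P_{>0}(\R)$.

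For $\widetilde C_{X^{\ast}}$: now the lower-left entry is $c=-1<0$, so Example \ref{chap3example2} gives
$\nu(-r,\omega^{-1})=(-1,-r)_{\R}\,\gamma(-r,\psi^{1/2})^{-1}=-e^{\pi i\sgn(\mathfrak e)/2}=e^{-\pi i\sgn(\mathfrak e)/2}$.
Also $\widetilde c_{X^{\ast}}(g^{y},g)=1$ because $c_3=0$. Hence $\widetilde C_{X^{\ast}}(w_r,w_r)=\bigl(e^{\pi i\sgn(\mathfrak e)/4}\bigr)^{-2}$, exactly as the statement requires. Equivalently, via (\ref{chap3293inter}) with $m_{X^{\ast}}(\omega^{-1})=e^{\pi i/4}$, you get $\widetilde C=m_{X^{\ast}}(\omega^{-1})^{-2}=e^{-\pi i/2}$ for $\psi_0$.

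For $\overline C_{X^{\ast}}$: $\nu_2(-r,\omega^{-1})=1$, and $\overline c_{X^{\ast}}(g^{y},g)=(r^{-1},-1)_{\R}(r^{-1},r^{-1})_{\R}=1$.

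For $\widehat C_{z_0}$: Lemma \ref{Cz_0wid}(1) gives $\widehat C_{z_0}(w_r,w_r)=\overline s(\omega^{-1})^{2}\,\overline s(\diag(r,r^{-1}))^{-1}=\alpha_{z_0}(\omega^{-1})=-i$. Then $e^{\pi i/2}\cdot(-i)=1$.

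This is precisely the decomposition $w_r=h_{-r}\omega^{-1}$ used in the paper's proof.
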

\begin{proof}
 \begin{align*}
\overline{C}_{X^{\ast}}(\begin{pmatrix}0&1\\ r&0\end{pmatrix}, \begin{pmatrix}0&1\\ r&0\end{pmatrix})&=\overline{C}_{X^{\ast}}(h_{-r}\omega^{-1}, h_{-r}\omega^{-1})\\
&=\nu_2(-r,\omega^{-1})\overline{c}_{X^{\ast}}((\omega^{-1})^{h_{-r}},\omega^{-1})\\
&\stackrel{\textrm{ Lem.}\ref{nu2}}{=}\overline{c}_{X^{\ast}}(\begin{pmatrix}0&-r\\ r^{-1}&0\end{pmatrix},\omega^{-1})=1.
\end{align*}
 \begin{align*}
\widetilde{C}_{X^{\ast}}(\begin{pmatrix}0&1\\ r&0\end{pmatrix}, \begin{pmatrix}0&1\\ r&0\end{pmatrix})&=\widetilde{C}_{X^{\ast}}(h_{-r}\omega^{-1}, h_{-r}\omega^{-1})\\
&=\nu(-r,\omega^{-1})\widetilde{c}_{X^{\ast}}((\omega^{-1})^{h_{-r}},\omega^{-1})\\
&=\nu(-r,\omega^{-1})\\
&\stackrel{\textrm{ Ex.}\ref{chap3example2}}{=}(-1, -r)_\R \gamma(-r, \psi^{\tfrac{1}{2}})^{-1}=-e^{\tfrac{\pi i \sgn(\mathfrak{e})}{2}}=(e^{-\tfrac{\pi i \sgn(\mathfrak{e})}{4}})^2.
\end{align*}
 \begin{align*}
\widehat{C}_{z_0}(\begin{pmatrix}0&1\\ r&0\end{pmatrix}, \begin{pmatrix}0&1\\ r&0\end{pmatrix})&=\nu_{z_0}(-r,\omega^{-1})\widehat{c}_{z_0}((\omega^{-1})^{h_{-r}},\omega^{-1})\\
&\stackrel{\textrm{ Ex.}\ref{exz0z}}{=}\nu_2( -r,\omega^{-1} ) \tfrac{\overline{s}(\omega^{-1})}{\overline{s}((\omega^{-1})^{h_{-r}})}\widehat{c}_{z_0}(\begin{pmatrix}0&-r\\ r^{-1}&0\end{pmatrix},\omega^{-1})\\
&=1 \cdot (-i)\cdot 1=(e^{-\tfrac{\pi i}{4}})^2.
\end{align*}
\end{proof}
Consequently we obtain semi-direct-product decompositions:
\[
\overline{\PGL}_2^{\pm}(\mathbb R)\cong
\bigl(\langle -r\rangle/\langle r^{2}\rangle\bigr)\ltimes\overline{\SL}_2(\mathbb R), \quad \widetilde{\PGL}_2^{\pm}(\mathbb R)\cong
\bigl(\langle -r\rangle/\langle r^{2}\rangle\bigr)\ltimes\widetilde{\SL}_2(\mathbb R), \quad \widehat{\PGL}_2^{\pm}(\mathbb R)\cong
\bigl(\langle -r\rangle/\langle r^{2}\rangle\bigr)\ltimes\widehat{\SL}_2(\mathbb R).
\]
If $r\in\mathbb Q_{>0}^{\times}$ we may work over $\mathbb Q$ and set
$$\overline{\PGL}^{\pm r}_2(\Q)=\langle -r\rangle/\langle r^2\rangle \ltimes  \overline{\SL}_2(\Q),\quad  \widetilde{\PGL}^{\pm r}_2(\Q) = \langle -r\rangle/\langle r^2\rangle \ltimes \widetilde{\SL}_2(\Q), \quad \widehat{\PGL}^{\pm r}_2(\Q) = \langle -r\rangle/\langle r^2\rangle \ltimes \widehat{\SL}_2(\Q) .$$

Let \(h\in\GL_2(\R)\).

\begin{itemize}
  \item If \(\det h>0\), write
  \[
    h=\sqrt{\det h}\;\bigl[\tfrac{1}{\sqrt{\det h}}h\bigr]
    \quad\text{with}\quad \tfrac{1}{\sqrt{\det h}}h\in\SL_2(\R).
  \]

  \item If \(\det h<0\), write 
  \[h= \underbrace{\sqrt{-(\det h) r^{-1}}}_{h_1} \cdot \underbrace{\begin{pmatrix}0&1\\ r&0\end{pmatrix}}_{h_2}\cdot  \underbrace{\Big[\tfrac{1}{ \sqrt{-(\det h) r^{-1}}}\begin{pmatrix}0&r^{-1}\\ 1&0\end{pmatrix}  h\Big]}_{h_3} \quad\text{with}\quad h_3\in\SL_2(\R).\]
      \end{itemize}
 Now consider $h \in \mathrm{GL}_2(\mathbb{Q})$. If $r \in \mathbb{Q}^{\times}_{>0}$ and $\det h \in -r \mathbb{Q}^{\times 2}$, then in the above decomposition, all factors $h_1$, $h_2$, and $h_3$ belong to $\mathrm{GL}_2(\mathbb{Q})$. \\
    
 Let $ \overline{h}=(h,t_1)\in\overline{\GL}_2(\R)$, $ \widetilde{h}=(h,t_2)\in\widetilde{\GL}_2(\R)$, $ \widehat{h}=(h,t_3)\in\widehat{\GL}_2(\R)$.
  \begin{itemize}
  \item If \(\det h>0\), write
  \[
    \overline{h}=[\sqrt{\det h},1] \cdot\bigl[\tfrac{1}{\sqrt{\det h}}h,t_1\bigr], \quad  \widetilde{h}=[\sqrt{\det h},1] \cdot\bigl[\tfrac{1}{\sqrt{\det h}}h,t_2\bigr],\quad 
    \widehat{h}=[\sqrt{\det h},1] \cdot\bigl[\tfrac{1}{\sqrt{\det h}}h,t_3\bigr].
  \]

  \item If \(\det h<0\), write: 
  \[\overline{h}=[h_1,1] \cdot [h_2,1] \cdot [h_3, \overline{C}_{X^{\ast}}(h_2,h_3)^{-1}t_1],\]
    \[\widetilde{h}=[h_1,1] \cdot [h_2,e^{\tfrac{\pi i\sgn(\mathfrak{e})}{4}}] \cdot [h_3, e^{-\tfrac{\pi i\sgn(\mathfrak{e})}{4}}\widetilde{C}_{X^{\ast}}(h_2,h_3)^{-1}t_2],\]
    \[\widehat{h}=[h_1,1] \cdot [h_2,e^{\tfrac{\pi i}{4}}] \cdot [h_3, e^{-\tfrac{\pi i}{4}}\widehat{C}_{X^{\ast}}(h_2,h_3)^{-1}t_3].\]
      \end{itemize}
     \subsection{Automorphic factor II}\label{autoII}
     Consider the left  action of $\GL_2(\R)$ on $\mathbb{H}^{\pm}$ by $h(z)=\tfrac{az+b}{cz+d}$, for $h=\begin{pmatrix} a   & b \\ c& d\end{pmatrix}\in \GL_2(\R)$, $z\in \mathbb{H}^{\pm}$.  This action is well-defined and  transitive. 
     Restricting to the subgroup  $\SL^{\pm}_2(\R)$,  the stabilizer of $i$ is just the group $\SO_2(\R)$, and  there exists a bijective map:
$$i:\SL_2^{\pm}(\R)/\SO_2(\R) \longrightarrow\mathbb{H}^{\pm}; [g] \longmapsto gi.$$
 Note that  $\SL_2^{\pm}(\R)=P_{>0}^{\pm}(\R)\SO_2(\R)$,  $ P_{>0}^{\pm}(\R) \cap \SO_2(\R)=1$.
So there exists a bijective map:
\begin{align}\label{chap6ph}
i^{\pm}: P^{\pm}_{>0}(\R) \longrightarrow \mathbb{H}^{\pm} ;  g=\begin{pmatrix} a& b \\ 0 &(\det g)   a^{-1}\end{pmatrix} \longmapsto ab\det g + ia^2 \det g.
\end{align}
For $g=\begin{pmatrix} a   & b \\ c& d\end{pmatrix}\in \SL^{\pm}_2(\R)$, let us write $g=p_g k_{g}$, for $p_g\in P^{\pm}_{>0}(\R) $ and $k_g\in \SO_2(\R)$. Then:
\begin{lemma}\label{decomgl2}
$p_g=\begin{pmatrix} \tfrac{1}{ \sqrt{c^2+d^2}}    & \det g \tfrac{bd+ac}{\sqrt{c^2+d^2}} \\ 0& \det g\sqrt{c^2+d^2}  \end{pmatrix}$, $k_{g}=\det g \cdot \begin{pmatrix} \tfrac{d}{\sqrt{c^2+d^2}}   & -\tfrac{c}{\sqrt{c^2+d^2}}\\ \tfrac{c}{\sqrt{c^2+d^2}}& \tfrac{d}{\sqrt{c^2+d^2}}\end{pmatrix}$.
\end{lemma}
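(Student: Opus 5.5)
The plan is to verify the formula directly, since the decomposition $\SL^{\pm}_2(\R)=P^{\pm}_{>0}(\R)\SO_2(\R)$ with $P^{\pm}_{>0}(\R)\cap\SO_2(\R)=1$ (noted just before the statement) means $p_g,k_g$ are unique. It therefore suffices to check three things: the proposed $k_g$ lies in $\SO_2(\R)$, the proposed $p_g$ lies in $P^{\pm}_{>0}(\R)$, and $p_gk_g=g$.

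Set $r=\sqrt{c^2+d^2}>0$; note $(c,d)\neq(0,0)$ because $g$ is invertible. Let $R=\frac{1}{r}\begin{pmatrix} d&-c\\ c&d\end{pmatrix}$, which is a rotation matrix, so $R\in\SO_2(\R)$.

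\emph{Step 1: $k_g\in\SO_2(\R)$.} We have $k_g=(\det g)R$ with $\det g=\pm1$. Since $-I\in\SO_2(\R)$, it follows that $k_g\in\SO_2(\R)$, and $k_g^{-1}=(\det g)R^{T}$.

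\emph{Step 2: compute $gk_g^{-1}$.} A direct multiplication gives
\[
gR^{T}=\frac{1}{r}\begin{pmatrix} ad-bc & ac+bd\\ cd-dc & c^2+d^2\end{pmatrix}=\begin{pmatrix} \det g/r & (ac+bd)/r\\ 0 & r\end{pmatrix}.
\]
Multiplying by $\det g$ and using $(\det g)^2=1$, we get
\[
gk_g^{-1}=\begin{pmatrix} 1/r & \det g\,(ac+bd)/r\\ 0 & \det g\, r\end{pmatrix},
\]
which is exactly the stated $p_g$. Hence $p_gk_g=g$.

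\emph{Step 3: $p_g\in P^{\pm}_{>0}(\R)$.} Its upper-left entry is $a_1=1/r>0$. Also $\det p_g=\det g\cdot\det k_g^{-1}=\det g$, so the lower-right entry $\det g\, r$ equals $a_1^{-1}\det p_g$, which is the defining shape of $P^{\pm}_{>0}(\R)$.

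Uniqueness of the decomposition then identifies these matrices as the $p_g,k_g$ of the statement. The only point needing care is the sign bookkeeping for $\det g=-1$, specifically that $-R$ is still a rotation. The remaining steps are a single $2\times2$ matrix product.
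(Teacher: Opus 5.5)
Your proposal is correct and follows essentially the paper's approach, a direct $2\times 2$ matrix verification. The only differences are that you compute $gk_g^{-1}=p_g$ rather than multiplying out $p_gk_g=g$ entrywise, and you make explicit the membership of the factors in $P^{\pm}_{>0}(\R)$ and $\SO_2(\R)$ together with uniqueness, which the paper leaves implicit.
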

\begin{proof}
\begin{align*}
\tfrac{1}{ \sqrt{c^2+d^2}} \tfrac{d\det g}{\sqrt{c^2+d^2}} +(\tfrac{bd+ac}{\sqrt{c^2+d^2}} )\tfrac{c}{\sqrt{c^2+d^2}}&= \tfrac{d\det g}{ c^2+d^2}+ \tfrac{cbd+ac^2}{c^2+d^2}\\
&= \tfrac{dad-dbc+cbd+ac^2}{c^2+d^2}=a;
\end{align*}
\begin{align*}
(\tfrac{1}{ \sqrt{c^2+d^2}} )\tfrac{-c\det g }{\sqrt{c^2+d^2}} +(\tfrac{bd+ac}{\sqrt{c^2+d^2}} )\tfrac{d}{\sqrt{c^2+d^2}}&= \tfrac{-c\det g}{ c^2+d^2}- \tfrac{bd^2+acd}{c^2+d^2}\\
&= \tfrac{-cad+bc^2+bd^2+acd}{c^2+d^2}=b.
\end{align*}
\end{proof}
For $z\in \mathbb{H}^{\pm}$, let us write $p_z$ for the corresponding element in $P_{>0}^{\pm}(\R)$ via the map $i^{\pm}$. Until the end of this subsection, we let  $h=\begin{pmatrix} a   & b \\ c& d\end{pmatrix}\in \GL_2(\R)$, $h_i=\begin{pmatrix} a_i   & b_i \\ c_i& d_i\end{pmatrix}\in\GL_2(\R)$, $z\in \mathbb{H}^{\pm}$. 
Define:
\begin{itemize}
\item[(1)] $J(h, z)=cz+d$;
\item[(2)] $\alpha_z(h)=\tfrac{cz+d}{|cz+d|}$;
\item[(3)] $J'_{1/2}(h, z)=  \widehat{C}_{z_0}(h, p_z)^{-1}\cdot |J(h, z)|^{1/2}$; 
\item[(4)] $J_{1/2}(h, z)=J'_{1/2}(h, z)\cdot\widetilde{s}(h)$;
\item[(5)] $J_{3/2}(h, z)=  J_{1/2}(h, z)J(h, z)$.
\item[(6)] $\sgn z= \left\{\begin{array}{cr}  1 & \textrm{ if } z\in \mathbb{H},\\-1  & \textrm{ if } z\in \mathbb{H}^-. \end{array}\right.$
\item[(7)] $\sgn(h,z)=\overline{C}_{X^{\ast}}(h, p_z)$.
\end{itemize}
The restriction of the above notions to $\SL_2(\R)$  is comparable to that given in Section \ref{autoI} above. It can be checked that $J(h_1h_2, z)=J(h_1, h_2(z))J(h_2, z)$.
\begin{lemma}\label{sgnalpha}
\begin{itemize}
\item[(1)] $\overline{s}(h)^2= \sgn(\det h)\alpha_{z_0}(h)$.
\item[(2)] $[\widehat{C}_{z_0}(h_1,h_2)]^2=\alpha_{z_0}(h_1)\alpha_{z_0}(h_2)\alpha_{z_0}(h_1h_2)^{-1}$.
\item[(3)] $\sgn(h,z)=\left\{\begin{array}{cl}  (\sgn(z), a)_{\R}  & \textrm{ if } c=0,\\1  & \textrm{ if } c\neq 0. \end{array}\right.$
\end{itemize}
\end{lemma}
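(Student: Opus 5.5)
We treat the three items separately, in each case reducing to $\SL_2(\R)$ via the decompositions of $\GL_2(\R)$ established above.

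\textbf{Item (1).} By Lemma~\ref{isoGLR}, the factor $\R^{\times}_{>0}$ splits off, and scalars contribute trivially on both sides. So we may assume $h\in\SL^{\pm}_2(\R)$, and $h$ acts only through $g=p_gk_g$ as in Lemma~\ref{decomgl2}. Recall $\overline{s}(h)$ is defined as $\overline{s}(g)$, where $h=[y,g]$ with $y=\det h$ and $g=\begin{pmatrix} a& b\\ c/y& d/y\end{pmatrix}$. Then (\ref{eqalpha}) gives
\[\overline{s}(g)^2=\alpha_{z_0}(g)=\tfrac{(c/y)i+d/y}{|(c/y)i+d/y|}=\sgn(y)\,\alpha_{z_0}(h).\]
This is exactly (1).

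\textbf{Item (2).} We use Lemma~\ref{Cz_0wid}(1): $\widehat{C}_{z_0}(h_1,h_2)=\overline{C}_{X^{\ast}}(h_1,h_2)\,\overline{s}(h_1)\overline{s}(h_2)\overline{s}(h_1h_2)^{-1}$. Since $\overline{C}_{X^{\ast}}$ is $\mu_2$-valued, squaring kills it, and item (1) gives
\[[\widehat{C}_{z_0}(h_1,h_2)]^2=\sgn(\det h_1)\sgn(\det h_2)\sgn(\det h_1h_2)\,\alpha_{z_0}(h_1)\alpha_{z_0}(h_2)\alpha_{z_0}(h_1h_2)^{-1}.\]
The sign factor is $1$ because $\det$ is multiplicative. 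The scalar parts $\R_{>0}^{\times}$ cause no issue, since $\alpha_{z_0}$ is invariant under positive scalars and $\overline{s}$ is insensitive to them.

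\textbf{Item (3).} We have $\sgn(h,z)=\overline{C}_{X^{\ast}}(h,p_z)$ with $p_z\in P^{\pm}_{>0}(\R)$ and $\det p_z=\sgn z$, by (\ref{chap6ph}). First strip off the positive scalar of $h$ using Lemma~\ref{isoGLR}: write $h=t h'$ with $t>0$ and $h'\in\SL^{\pm}_2(\R)$. Then $\overline{C}_{X^{\ast}}(th',p_z)=\overline{C}_{X^{\ast}}(h',p_z)$ by the cocycle identity together with the triviality of $\overline{C}_{X^{\ast}}$ on pairs involving $\R_{>0}^{\times}$. Next apply Lemma~\ref{chap6Cover} to $h'$, whose entries are $a/\sqrt{|\det h|}$ and so on. For $c\neq0$ it gives $1$. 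For $c=0$ it gives $(\det p_z, a/\sqrt{|\det h|})_\R=(\sgn z, a)_\R$, since the Hilbert symbol only depends on signs.

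\textbf{Main obstacle.} The main difficulty is bookkeeping. In item (1) one must carefully match the identification $h\leftrightarrow[y,g]$ with the decomposition $\GL_2\simeq\R_{>0}^{\times}\SL^{\pm}_2$ when $\det h<0$, in order to get the sign $\sgn(\det h)$ exactly. In item (3) one must justify the scalar reduction via the cocycle relation.
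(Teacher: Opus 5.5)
Your proof is correct and follows essentially the same route as the paper. Item (1) comes from $\overline{s}(h)=\overline{s}(g)$ and (\ref{eqalpha}). Item (2) comes from squaring Lemma~\ref{Cz_0wid}(1) and using (1) with multiplicativity of $\det$. Item (3) comes from reducing to $\SL^{\pm}_2(\R)$ and applying Lemma~\ref{chap6Cover} with $\det p_z=\sgn z$.

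The only differences are cosmetic. In (3) you peel off the positive scalar $\sqrt{|\det h|}$ via Lemma~\ref{isoGLR}, whereas the paper peels off $\diag(1,|\det h|)$; both reductions are valid. Your preliminary reduction in (1) is unnecessary, since the computation with $g=\begin{pmatrix} a& b\\ c/y& d/y\end{pmatrix}$ already works for every $h\in\GL_2(\R)$.
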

\begin{proof}
1) Note that  $\overline{s}(h)=\overline{s}(h_{\det h}h)$. So $[\overline{s}(h)]^2=[\overline{s}(h_{\det h}h)]^2\stackrel{(\ref{eqalpha})}{=}\alpha_{z_0}(h_{\det h}h)=\tfrac{(\det h)ci+(\det h)d}{|(\det h)ci+(\det h)d|}=\sgn(\det h)\alpha_{z_0}(h)$. \\
2) 
\begin{align*}
\text{Left side}
&\stackrel{\textrm{ Lem.}\ref{Cz_0wid}}{=}[\overline{s}(h_1)]^2[\overline{s}(h_2)]^2 [\overline{s}(h_1h_2)]^{-2}\\
&=\alpha_{z_0}(h_1)\alpha_{z_0}(h_2)\alpha_{z_0}(h_1h_2)^{-1}= \text{Right side}.
\end{align*}
3) Write $h=h_{|\det h|^{-1}} g_h$, for $h_{|\det h|}=\diag(1, |\det h|)$ and  $g_h=\begin{pmatrix} a   & b \\ |\det h|^{-1}c& d|\det h|^{-1}\end{pmatrix}\in \SL_2^{\pm}(\R)$. Then
\[\sgn(h,z)=\overline{C}_{X^{\ast}}(h, p_z)=\overline{C}_{X^{\ast}}(g_h, p_z)\stackrel{\textrm{ Lem. } \ref{chap6Cover}}{=}\left\{\begin{array}{cl}  (\det p_z, a)_{\R}  & \textrm{ if } c=0,\\1  & \textrm{ if } c\neq 0. \end{array}\right.\]\end{proof}
\begin{lemma}\label{tildehatover}
\begin{itemize}
\item[(1)] $J'_{1/2}(h_1h_2, z)=J'_{1/2}(h_1, h_2(z))J'_{1/2}(h_2, z)\widehat{C}_{z_0}(h_1,h_2)$.
\item[(2)] $J_{1/2}(h_1h_2, z)=J_{1/2}(h_1, h_2(z))J_{1/2}(h_2, z)\widetilde{C}_{X^{\ast}}(h_1,h_2)$.
\item[(3)]  $J_{1/2}(h_1h_2, z)m_{X^{\ast}}(h_1h_2)^{-1}=J_{1/2}(h_1, h_2(z))J_{1/2}(h_2, z)m_{X^{\ast}}(h_1)^{-1}m_{X^{\ast}}(h_2)^{-1}\overline{C}_{X^{\ast}}(h_1,h_2)$.
\end{itemize}
\end{lemma}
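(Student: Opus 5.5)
Part (1) follows the pattern of the proof of Lemma \ref{J121}(2), with the base point $z_0$ replaced by the section $z\mapsto p_z$. By definition $J'_{1/2}(h,z)=\widehat{C}_{z_0}(h,p_z)^{-1}|J(h,z)|^{1/2}$. The absolute values are already multiplicative: $|J(h_1h_2,z)|=|J(h_1,h_2z)|\,|J(h_2,z)|$.

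So (1) reduces to the cocycle identity
\[
\widehat{C}_{z_0}(h_1h_2,p_z)^{-1}=\widehat{C}_{z_0}(h_1,p_{h_2z})^{-1}\widehat{C}_{z_0}(h_2,p_z)^{-1}\widehat{C}_{z_0}(h_1,h_2).
\]
Since $h_2p_z(i)=h_2z$, we can write $h_2p_z=p_{h_2z}\,k$ with $k\in\Oa_2(\R)$ stabilizing $i$, up to a positive scalar. The positive scalar is harmless by Lemma \ref{isoGLR}, and $k\in\SO_2(\R)$ after absorbing the sign into the $P^{\pm}_{>0}$-part.

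Apply the 2-cocycle relation for $\widehat{C}_{z_0}$ to the triple $(h_1,h_2,p_z)$:
\[
\widehat{C}_{z_0}(h_1,h_2)\widehat{C}_{z_0}(h_1h_2,p_z)=\widehat{C}_{z_0}(h_1,h_2p_z)\widehat{C}_{z_0}(h_2,p_z).
\]
It remains to show $\widehat{C}_{z_0}(h_1,h_2p_z)=\widehat{C}_{z_0}(h_1,p_{h_2z})$. Apply the cocycle relation again to $(h_1,p_{h_2z},k)$, using that $\widehat{C}_{z_0}(\cdot,k)=1$ for $k\in\SO_2(\R)$ (the preceding lemma on $\SO_2(\R)$):
\[
\widehat{C}_{z_0}(h_1,p_{h_2z}k)=\widehat{C}_{z_0}(h_1,p_{h_2z}).
\]
The main obstacle is the bookkeeping when $\det h_2<0$: then $h_2z\in\mathbb H^{\mp}$ and $p_{h_2z}\in P^{\pm}_{>0}(\R)$ has negative determinant. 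I would check, using the explicit decomposition of Lemma \ref{decomgl2}, that $k$ indeed lies in $\SO_2(\R)$ rather than in the other coset of $\Oa_2(\R)$. If it does not, I would use Lemma \ref{chap6Coverhat} together with an explicit evaluation of $\widehat{C}_{z_0}(\cdot,h_{-1})$ to correct the sign.

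For (2), note that $J_{1/2}=J'_{1/2}\widetilde{s}$. Then (1) together with Lemma \ref{Cz_0wid}(2), namely $\widehat{C}_{z_0}(h_1,h_2)=\widetilde{C}_{X^{\ast}}(h_1,h_2)\widetilde{s}(h_1)\widetilde{s}(h_2)\widetilde{s}(h_1h_2)^{-1}$, gives
\[
J_{1/2}(h_1h_2,z)=\widetilde{s}(h_1h_2)J'_{1/2}(h_1,h_2z)J'_{1/2}(h_2,z)\widehat{C}_{z_0}(h_1,h_2)=J_{1/2}(h_1,h_2z)J_{1/2}(h_2,z)\widetilde{C}_{X^{\ast}}(h_1,h_2),
\]
exactly as in Lemma \ref{J121J1}.

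For (3), multiply (2) by $m_{X^{\ast}}(h_1h_2)^{-1}$ and use (\ref{chap3293inter}):
\[
\widetilde{C}_{X^{\ast}}(h_1,h_2)m_{X^{\ast}}(h_1h_2)^{-1}=\overline{C}_{X^{\ast}}(h_1,h_2)m_{X^{\ast}}(h_1)^{-1}m_{X^{\ast}}(h_2)^{-1}.
\]
This is the identity required in (3).
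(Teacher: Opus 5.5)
Your proposal is correct and is essentially the paper's proof. Part (1) uses the cocycle relation on $(h_1,h_2,p_z)$, the decomposition $h_2p_z=p_{h_2z}k$ with $\widehat{C}_{z_0}(\cdot,k)=1$ for $k\in\SO_2(\R)$, and Lemma~\ref{isoGLR} for positive scalars; parts (2) and (3) then follow from Lemma~\ref{Cz_0wid}(2) and (\ref{chap3293inter}), exactly as you say. The obstacle you flag for $\det h_2<0$ does not arise: $\SL_2^{\pm}(\R)=P^{\pm}_{>0}(\R)\SO_2(\R)$ with $\det p_g=\det g$ (Lemma~\ref{decomgl2}), and equivalently elements of $\Oa_2(\R)\setminus\SO_2(\R)$ send $i$ to $-i$, so $k$ always lies in $\SO_2(\R)$ and no $h_{-1}$-correction is needed.
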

\begin{proof}
1) \[|J(h_1h_2, z)|^{1/2}=|J(h_1, h_2(z))|^{1/2} |J(h_2, z)|^{1/2};\]
\[\widehat{C}_{z_0}(h_1, h_2) \widehat{C}_{z_0}(h_1h_2,  p_z)=  \widehat{C}_{z_0}(h_1, h_2 p_z)  \widehat{C}_{z_0}(h_2,  p_z).\] 
 If $h_i\in \SL_2^{\pm}(\R)$, we have:
\begin{align*}
 \widehat{C}_{z_0}(h_1, h_2)&=  \widehat{C}_{z_0}(h_1, h_2 p_z)  \widehat{C}_{z_0}(h_2,  p_z) \widehat{C}_{z_0}(h_1h_2,  p_z)^{-1}\\
&= \widehat{C}_{z_0}(h_1, p_{h_2 p_z}k_{h_2 p_z})  \widehat{C}_{z_0}(h_2,  p_z) \widehat{C}_{z_0}(h_1h_2,  p_z)^{-1}\\
&= \widehat{C}_{z_0}(h_1, p_{h_2 p_z}k_{h_2 p_z})  \widehat{C}_{z_0}( p_{h_2 p_z},k_{h_2 p_z}) \widehat{C}_{z_0}(h_2,  p_z) \widehat{C}_{z_0}(h_1h_2,  p_z)^{-1}\\
&= \widehat{C}_{z_0}(h_1, p_{h_2 p_z})  \widehat{C}_{z_0}( h_1p_{h_2 p_z},k_{h_2 p_z}) \widehat{C}_{z_0}(h_2,  p_z) \widehat{C}_{z_0}(h_1h_2,  p_z)^{-1}\\
&= \widehat{C}_{z_0}(h_1, p_{h_2 (z)})  \widehat{C}_{z_0}(h_2,  p_z) \widehat{C}_{z_0}(h_1h_2,  p_z)^{-1};
\end{align*}

\begin{align*}
\text{Right side}
&=\widehat{C}_{z_0}(h_1, p_{ h_2(z)})^{-1}\cdot |J(h_1,h_2(z))|^{1/2}\widehat{C}_{z_0}(h_2, p_z)^{-1}\cdot |J(h_2, z)|^{1/2}\widehat{C}_{z_0}(h_1,h_2)\\
&=\widehat{C}_{z_0}(h_1h_2,  p_z)^{-1}|J(h_1h_2, z)|^{1/2}\\
&= \text{Left side}.
\end{align*}
In general, write $h_i=|\det(h_i)|^{1/2}g_i$, with $g_i=|\det(h_i)|^{-1/2}h_i\in \SL_2^{\pm}(\R)$. Then $h_2(z)=g_2(z)$. 
\[J'_{1/2}(h_i, z) =\widehat{C}_{z_0}(g_i, p_z)^{-1}\cdot |J(g_i, z)|^{1/2} \cdot |\det(h_i)|^{1/4}.\]
\begin{align*}
\text{Right side}
&=\widehat{C}_{z_0}(g_1, p_{h_2(z)})^{-1}\cdot |J(g_1, h_2(z))|^{1/2} \cdot |\det(h_1)|^{1/4}\widehat{C}_{z_0}(g_2, p_z)^{-1}\cdot |J(g_2, z)|^{1/2} \cdot |\det(h_2)|^{1/4}\widehat{C}_{z_0}(h_1,h_2)\\
&=\widehat{C}_{z_0}(g_1, p_{g_2(z)})^{-1}\cdot |J(g_1, g_2(z))|^{1/2} \widehat{C}_{z_0}(g_2, p_z)^{-1}\cdot |J(g_2, z)|^{1/2} \cdot |\det(h_1h_2)|^{1/4}\widehat{C}_{z_0}(g_1,g_2)\\
&=J'_{1/2}(g_1g_2, z) \cdot |\det(h_1h_2)|^{1/4}=\text{Left side}.
\end{align*}
2) It follows from Lemma \ref{Cz_0wid}. \\
3) It follows from the equality (\ref{chap3293inter}).
\end{proof}
As a consequence, we have:
\begin{lemma}
\begin{itemize}
\item[(1)] $J'_{3/2}(h_1h_2, z)=J'_{3/2}(h_1, h_2(z))J'_{3/2}(h_2, z)\widehat{C}_{z_0}(h_1,h_2)$.
\item[(2)] $J_{3/2}(h_1h_2, z)=J_{3/2}(h_1, h_2(z))J_{3/2}(h_2, z)\widetilde{C}_{X^{\ast}}(h_1,h_2)$.
\item[(3)]  $J_{3/2}(h_1h_2, z)m_{X^{\ast}}(h_1h_2)^{-1}=J_{3/2}(h_1, h_2(z))J_{3/2}(h_2, z)m_{X^{\ast}}(h_1)^{-1}m_{X^{\ast}}(h_2)^{-1}\overline{C}_{X^{\ast}}(h_1,h_2)$.
\end{itemize}
\end{lemma}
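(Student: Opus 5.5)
The three identities will follow from Lemma \ref{tildehatover} once we note how the weight $3/2$ factors are built. We read $J'_{3/2}(h,z)$ as $J'_{1/2}(h,z)J(h,z)$, in analogy with item (5) of the list of definitions, where $J_{3/2}(h,z)=J_{1/2}(h,z)J(h,z)$. The only additional input is the classical cocycle relation
\[J(h_1h_2,z)=J(h_1,h_2(z))\,J(h_2,z),\]
stated just before Lemma \ref{sgnalpha}. This relation holds for all $h_1,h_2\in\GL_2(\R)$ and $z\in\mathbb{H}^{\pm}$: the second row of $h_1h_2$ is $(c_1a_2+d_1c_2,\ c_1b_2+d_1d_2)$, so clearing the denominator $c_2z+d_2$ in $c_1h_2(z)+d_1$ gives it directly.

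For part (1), we multiply the identity of Lemma \ref{tildehatover}(1) by the identity above. The left-hand sides combine to $J'_{1/2}(h_1h_2,z)J(h_1h_2,z)=J'_{3/2}(h_1h_2,z)$. The right-hand sides regroup as
\[[J'_{1/2}(h_1,h_2(z))J(h_1,h_2(z))]\cdot[J'_{1/2}(h_2,z)J(h_2,z)]\cdot\widehat{C}_{z_0}(h_1,h_2),\]
which is exactly the claimed right side. Part (2) goes the same way: multiply Lemma \ref{tildehatover}(2) by the $J$-cocycle relation and use item (5). For part (3), multiply Lemma \ref{tildehatover}(3) by the same relation. The scalars $m_{X^{\ast}}(h_i)^{-1}$, $m_{X^{\ast}}(h_1h_2)^{-1}$ and $\overline{C}_{X^{\ast}}(h_1,h_2)$ are constants, so they commute through and the factors regroup as before.

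The only real obstacle is the cocycle relation for $J$ on all of $\GL_2(\R)$ acting on $\mathbb{H}^{\pm}$, including elements of negative determinant. The verification is a purely algebraic identity of rational functions. It needs only that $c_2z+d_2\neq0$, which holds since $z\notin\R$ and $(c_2,d_2)\neq(0,0)$. So I expect no genuine difficulty there, and no further case analysis of the cocycles is needed.
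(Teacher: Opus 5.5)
Your proposal is correct and matches the paper, which states this lemma without proof as a direct consequence of Lemma \ref{tildehatover} combined with the relation $J(h_1h_2,z)=J(h_1,h_2(z))J(h_2,z)$. The paper never defines $J'_{3/2}$ explicitly, and your reading $J'_{3/2}(h,z)=J'_{1/2}(h,z)J(h,z)$ is the intended one.
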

\begin{lemma}\label{J121H}
  $J'_{1/2}(h, z)^2= \alpha_{z_0}(h)^{-1}  J(h, z)$.
\end{lemma}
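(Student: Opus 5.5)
The plan is to compute $J'_{1/2}(h,z)^2$ directly from its definition and then evaluate the square of the cocycle factor using Lemma~\ref{sgnalpha}(2). By definition (3) of Section~\ref{autoII},
\[
J'_{1/2}(h,z)^2=\widehat{C}_{z_0}(h,p_z)^{-2}\,|J(h,z)|.
\]
So the whole task is to show
\[
\widehat{C}_{z_0}(h,p_z)^{2}=\alpha_{z_0}(h)\,\alpha_z(h)^{-1}.
\]
Once this holds, we get $J'_{1/2}(h,z)^2=\alpha_{z_0}(h)^{-1}\alpha_z(h)|J(h,z)|=\alpha_{z_0}(h)^{-1}J(h,z)$, since $\alpha_z(h)|J(h,z)|=cz+d=J(h,z)$.

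First, apply Lemma~\ref{sgnalpha}(2) with $h_1=h$ and $h_2=p_z$:
\[
\widehat{C}_{z_0}(h,p_z)^2=\alpha_{z_0}(h)\,\alpha_{z_0}(p_z)\,\alpha_{z_0}(hp_z)^{-1}.
\]
Next, compute $\alpha_{z_0}(p_z)$. Write $p_z=\begin{pmatrix} a_1& b_1\\ 0& (\det p_z)a_1^{-1}\end{pmatrix}$ with $a_1>0$. Its lower-left entry is $0$, so $J(p_z,i)=(\det p_z)a_1^{-1}$. Hence $\alpha_{z_0}(p_z)=\sgn(\det p_z)$, and by the map (\ref{chap6ph}) this equals $\sgn z$.

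Then compute $\alpha_{z_0}(hp_z)$ using the cocycle relation $J(h_1h_2,z)=J(h_1,h_2(z))J(h_2,z)$ noted before Lemma~\ref{sgnalpha}. Since $p_z(i)=z$, we have
\[
J(hp_z,i)=J(h,z)\,J(p_z,i)=J(h,z)\,(\det p_z)a_1^{-1}.
\]
Normalizing by the absolute value gives $\alpha_{z_0}(hp_z)=\alpha_z(h)\,\sgn z$.

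Substituting both values, the two factors of $\sgn z$ cancel (as $(\sgn z)^2=1$). This leaves $\widehat{C}_{z_0}(h,p_z)^2=\alpha_{z_0}(h)\alpha_z(h)^{-1}$, which finishes the argument.

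The main point to check is the case $z\in\mathbb{H}^-$. There $p_z\in P^{\pm}_{>0}(\R)$ has negative determinant, so the sign $\sgn(\det p_z)$ is genuinely $-1$ and must cancel correctly. The cancellation works because the same $\sgn z$ appears once in $\alpha_{z_0}(p_z)$ and once in $\alpha_{z_0}(hp_z)^{-1}$. We also need the chain rule for $J$ to hold on all of $\GL_2(\R)$ acting on $\mathbb{H}^{\pm}$, which the paper states.
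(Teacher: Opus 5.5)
Your proof is correct and follows essentially the same route as the paper: apply Lemma~\ref{sgnalpha}(2) to the pair $(h,p_z)$, then use the chain rule $J(hp_z,i)=J(h,z)\,J(p_z,i)$ so that the phase of $J(p_z,i)$ cancels and only $J(h,z)/|J(h,z)|$ survives. Your explicit identification $\alpha_{z_0}(p_z)=\sgn z$ simply makes visible the cancellation that the paper performs implicitly.
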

\begin{proof}
\begin{align*}
\text{Left side}
&=\widehat{C}_{z_0}(h, p_z)^{-2}\cdot |J(h, z)|\\
&=\alpha_{z_0}(h)^{-1}\alpha_{z_0}(p_z)^{-1}\alpha_{z_0}(hp_z)\cdot |J(h, z)|\\
&=\alpha_{z_0}(h)^{-1}|J(h, z)|\Big[\tfrac{J(p_z, i)}{|J(p_z, i)|}\Big]^{-1}\tfrac{J(hp_z, i)}{|J(hp_z, i)|}\\
&=\alpha_{z_0}(h)^{-1}|J(h, z) |\tfrac{J(h, z)}{|J(h, z)|}\\
&= \text{Right side}.
\end{align*}
\end{proof}
\begin{lemma}\label{J12squr}
$[J_{1/2}(h, z)m_{X^{\ast}}(h)^{-1}]^2=\sgn(\det h)\cdot (cz+d)$.
\end{lemma}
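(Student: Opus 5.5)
This is the $\GL_2$ analogue of Lemma~\ref{J12squr2}, and I would prove it by the same direct computation. By definition,
\[
J_{1/2}(h,z)\,m_{X^{\ast}}(h)^{-1}=J'_{1/2}(h,z)\,\widetilde{s}(h)\,m_{X^{\ast}}(h)^{-1}.
\]
Here $\widetilde{s}(h)=\widetilde{s}(g)=m_{X^{\ast},\psi}(g)\overline{s}(g)$ for $h=[y,g]$. With the convention $m_{X^{\ast},\psi}[y,g]=m_{X^{\ast},\psi}(g)$ (taking $\psi=\psi_0$), the factor $\widetilde{s}(h)m_{X^{\ast}}(h)^{-1}$ collapses to $\overline{s}(h)$. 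The quantity to square is therefore $J'_{1/2}(h,z)\overline{s}(h)$.

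Squaring, I use Lemma~\ref{J121H}, which gives $J'_{1/2}(h,z)^2=\alpha_{z_0}(h)^{-1}J(h,z)$. I then use Lemma~\ref{sgnalpha}(1), which gives $\overline{s}(h)^2=\sgn(\det h)\,\alpha_{z_0}(h)$. Multiplying the two, the $\alpha_{z_0}(h)$ factors cancel and leave $\sgn(\det h)\,J(h,z)=\sgn(\det h)(cz+d)$, which is the claim.

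The only step needing care is the identification $\widetilde{s}(h)m_{X^{\ast}}(h)^{-1}=\overline{s}(h)$. This requires that $\widetilde{s}(h)$, $m_{X^{\ast}}(h)$ and $\overline{s}(h)$ are all defined through the same $\SL_2(\R)$-component $g$ of $h=[y,g]\in\R^{\times}\ltimes\SL_2(\R)$. I would state this identification explicitly from the definitions.

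Everything else is a direct substitution. One point is worth noting: the sign $\sgn(\det h)$ comes entirely from Lemma~\ref{sgnalpha}(1). It reflects the fact that $\overline{s}(h)$ is computed from $h_{\det h}h$ while $J(h,z)$ uses $h$ itself. As a consistency check, when $\det h=1$ the formula reduces to Lemma~\ref{J12squr2}.
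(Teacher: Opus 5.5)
Your proposal is correct and is the paper's argument. You collapse $\widetilde{s}(h)m_{X^{\ast}}(h)^{-1}$ to $\overline{s}(h)$, then multiply $J'_{1/2}(h,z)^2=\alpha_{z_0}(h)^{-1}J(h,z)$ (Lemma~\ref{J121H}) by $\overline{s}(h)^2=\sgn(\det h)\,\alpha_{z_0}(h)$ (Lemma~\ref{sgnalpha}(1)). The paper explicitly cites only the second lemma, so your version is slightly more complete.
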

\begin{proof}
$\text{Left side}=[J'_{1/2}(h, z)\widetilde{s}(h)m_{X^{\ast}}(h)^{-1}]^2=[J'_{1/2}(h, z)\overline{s}(h)]^2\stackrel{ \textrm{ Lem.} \ref{sgnalpha}}{=}\alpha_{z_0}(h)^{-1} J(h, z) \cdot\sgn(\det h)\alpha_{z_0}(h)= \text{Right side}$.
\end{proof}
\begin{lemma}\label{jpz}
\begin{itemize}
\item[(1)] For $p=\begin{pmatrix} a& b\\ 0& \det(p) a^{-1}\end{pmatrix}\in P^{\pm}_{>0}(\R)$ and $p'=\begin{pmatrix} a& 0\\ 0& a\end{pmatrix}$ with $a>0$, $J_{1/2}(p, z)=a^{-1/2}$ and $J_{1/2}(p', z)=a^{1/2}$ .
\item[(2)] For  $k=\begin{pmatrix} a& -b\\ b& a\end{pmatrix}\in \SO_2(\R)$, $J_{1/2}(k, z_0)=\widetilde{s}(k)$ and $J_{1/2}(k, \overline{z_0})=\widetilde{s}(k) (-bi+a)$.  
\end{itemize}
\end{lemma}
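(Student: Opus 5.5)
The plan is to unwind the definition $J_{1/2}(h,z)=\widehat{C}_{z_0}(h,p_z)^{-1}\,|J(h,z)|^{1/2}\,\widetilde{s}(h)$ from Section \ref{autoII} in each case. For each of the three factors I will invoke a lemma already proved, and I take $\psi=\psi_0$ so that Lemma \ref{constants1} is available.

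For (1), let $p\in P^{\pm}_{>0}(\R)$ and write $p_z\in P^{\pm}_{>0}(\R)\subseteq \SL^{\pm}_2(\R)$.
\begin{itemize}
\item[(a)] Lemma \ref{chap6Coverhat}, applied with $h=p_z$, gives $\widehat{C}_{z_0}(p,p_z)=1$.
\item[(b)] Since the lower-left entry of $p$ is $0$, we get $|J(p,z)|=|\det(p)a^{-1}|=a^{-1}$.
\item[(c)] Under $\GL_2(\R)\simeq\R^{\times}\ltimes\SL_2(\R)$ we have $p=[\det p,g]$ with $g=\begin{pmatrix} a& b\\ 0& a^{-1}\end{pmatrix}\in P_{>0}(\R)$. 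By convention $\widetilde{s}(p)=\widetilde{s}(g)$, and Lemma \ref{constants1} gives $\widetilde{s}(g)=1$.
\end{itemize}
Hence $J_{1/2}(p,z)=a^{-1/2}$.

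For $p'=aI$ the argument has the same shape.
\begin{itemize}
\item[(a)] The proof of Lemma \ref{isoGLR} shows $\widehat{C}_{z_0}(h_2,h_1)=1$ for $h_2\in\R^{\times}_{>0}$ (a positive scalar) and any $h_1$. Applied with $h_2=p'$ and $h_1=p_z$, this gives $\widehat{C}_{z_0}(p',p_z)=1$.
\item[(b)] $|J(p',z)|=a$.
\item[(c)] $p'=[a^2,\diag(a,a^{-1})]$ with $\diag(a,a^{-1})\in P_{>0}(\R)$, so $\widetilde{s}(p')=1$.
\end{itemize}
Hence $J_{1/2}(p',z)=a^{1/2}$.

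For (2) with $z=z_0=i$, the map $i^{\pm}$ gives $p_{z_0}=I$. Then $\widehat{C}_{z_0}(k,I)=1$ because the cocycle is normalized, and $|J(k,i)|=|bi+a|=1$. This yields $J_{1/2}(k,z_0)=\widetilde{s}(k)$, consistent with Remark \ref{remakonUC}.

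For $z=\overline{z_0}=-i$, solving $ab\det g+ia^2\det g=-i$ in (\ref{chap6ph}) gives $p_{\overline{z_0}}=h_{-1}=\diag(1,-1)$. Again $|J(k,-i)|=|a-bi|=1$, so the whole content is the identity $\widehat{C}_{z_0}(k,h_{-1})=a+bi$. To prove it I will use Lemma \ref{Cz_0wid}(1):
\[
\widehat{C}_{z_0}(k,h_{-1})=\overline{C}_{X^{\ast}}(k,h_{-1})\,\overline{s}(k)\,\overline{s}(h_{-1})\,\overline{s}(kh_{-1})^{-1}.
\]
\begin{itemize}
\item[(i)] $\overline{s}(h_{-1})=1$.
\item[(ii)] Lemma \ref{sgnalpha}(1) shows that $\overline{s}(h)=\overline{s}(h_{\det h}h)$, so $\overline{s}(kh_{-1})=\overline{s}(h_{-1}kh_{-1})=\overline{s}(k^{h_{-1}})$.
\item[(iii)] $\overline{C}_{X^{\ast}}(k,h_{-1})$ comes from Lemma \ref{chap6Cover}: it equals $1$ when $b\neq0$, and $(-1,a)_{\R}$ when $b=0$.
\end{itemize}

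In the generic case $k=e^{it}$ with $-\pi<t<\pi$, $t\neq0$, we have $\overline{s}(k^{h_{-1}})=e^{-it/2}$. The product is then $e^{it/2}\cdot e^{it/2}=e^{it}=a+bi$. Inverting gives $\widehat{C}_{z_0}(k,h_{-1})^{-1}=a-bi$, which is the claimed formula.

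The main obstacle is the boundary cases $k=\pm I$. For $k=I$ everything is trivially $1$. For $k=-I$ (that is, $t=-\pi$) the naive half-angle computation fails. Here $\overline{s}(-I)^2=(-i)^2=-1$ must be combined with $\overline{C}_{X^{\ast}}(-I,h_{-1})=(-1,-1)_{\R}=-1$. This sign is exactly the one computed in the corollary on centers, and it produces $\widehat{C}_{z_0}(-I,h_{-1})=(-1)\cdot(-i)\cdot(-i)^{-1}=-1=a+bi$. I will check this case separately, taking care with the convention $\Arg\in[-\pi,\pi)$ and with the identification of $h_{-1}$ inside $\R^{\times}\ltimes\SL_2(\R)$.
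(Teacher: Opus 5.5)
Your proof is correct. Part (1) and the value $J_{1/2}(k,z_0)=\widetilde{s}(k)$ are handled as in the paper: the paper converts $\widehat{C}_{z_0}(p,p_z)$ through Lemma~\ref{Cz_0wid}(1), which is exactly the content of Lemma~\ref{chap6Coverhat} that you cite.

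For $J_{1/2}(k,\overline{z_0})$ your route is different. The paper never identifies $p_{\overline{z_0}}$. It applies the cocycle identity of Lemma~\ref{tildehatover}(2) to $h_{-1}kh_{-1}=k^{-1}$, using $J_{1/2}(h_{-1},\cdot)=1$ and $\widetilde{C}_{X^{\ast}}(h_{-1},\cdot)=1$. It then converts $\widetilde{C}_{X^{\ast}}(k,h_{-1})$ into $\overline{C}_{X^{\ast}}(k,h_{-1})$ via (\ref{chap3293inter}). This reduces everything to the known value $J_{1/2}(k^{-1},z_0)=\widetilde{s}(k^{-1})$, and the paper finishes with $\overline{s}(k^{-1})=\overline{s}(k)(a-bi)$ plus some $m_{X^{\ast}}$ bookkeeping.

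You instead read off $p_{\overline{z_0}}=h_{-1}$ from (\ref{chap6ph}) and evaluate a single cocycle value. This is shorter and shows what the factor is. By (\ref{betaz}),
\[
\widehat{C}_{z_0}(k,h_{-1})=\nu_{z_0}(-1,k)\,\widehat{c}_{z_0}(k^{h_{-1}},I)=\nu_{z_0}(-1,k),
\]
so $a-bi=\nu_{z_0}(-1,k)^{-1}$ is the twist of $\widehat{\SL}_2(\R)$ under conjugation by $h_{-1}$ (Example~\ref{exz0z}). The paper's version instead presents the formula as a formal consequence of the automorphy relation and the $z_0$ case. Both arguments need the same separate treatment of $b=0$, governed by the same value of $\overline{C}_{X^{\ast}}(k,h_{-1})$ from Lemma~\ref{chap6Cover}.

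One wording fix in the case $k=-I$. There the $\overline{s}$-factors cancel, since $\overline{s}(-I)\,\overline{s}(-h_{-1})^{-1}=\overline{s}(-I)\,\overline{s}(-I)^{-1}=1$, and the entire sign comes from $\overline{C}_{X^{\ast}}(-I,h_{-1})=-1$. Your displayed product $(-1)\cdot(-i)\cdot(-i)^{-1}=-1$ is right. However, the sentence about combining $\overline{s}(-I)^2=-1$ with $\overline{C}_{X^{\ast}}(-I,h_{-1})=-1$ would give $+1$, so it should be removed.
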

\begin{proof}
1) \begin{align*}
J_{1/2}(p, z)&=J'_{1/2}(p, z)\cdot\widetilde{s}(p)\\
&=\widehat{C}_{z_0}(p, p_z)^{-1}\cdot |J(p, z)|^{1/2}\cdot\widetilde{s}(p)\\
&=\overline{C}_{X^{\ast}}(p, p_{z})^{-1}\overline{s}( p)^{-1}\overline{s}(p_{z})^{-1} \overline{s}(pp_{z})|J(p, z)|^{1/2}\cdot\widetilde{s}(p)\\
&=|J(p, z)|^{1/2}\cdot\overline{s}(p) m_{X^{\ast}}(p)\\
&=|\det(p) a^{-1}|^{1/2}.
\end{align*}
 \begin{align*}
J_{1/2}(p', z)&=J'_{1/2}(p', z)\cdot\widetilde{s}(p')\\
&=\widehat{C}_{z_0}(p', p_z)^{-1}\cdot |J(p', z)|^{1/2}\cdot\widetilde{s}(p')\\
&=\overline{C}_{X^{\ast}}(p', p_{z})^{-1}\overline{s}( p')^{-1}\overline{s}(p'_{z})^{-1} \overline{s}(p'p_{z})|J(p', z)|^{1/2}\cdot\widetilde{s}(p')\\
&=|J(p', z)|^{1/2}\cdot\overline{s}(p') m_{X^{\ast}}(p')\\
&= a^{1/2}.
\end{align*}
2) By Remark \ref{remakonUC}, $J_{1/2}(k, z_0)=\widetilde{s}(k)$.  
 \begin{align*}
J_{1/2}(h_{-1}kh_{-1}, z_0)&=J_{1/2}(h_{-1}, kh_{-1} (z_0))J_{1/2}(kh_{-1}, z_0)\widetilde{C}_{X^{\ast}}(h_{-1},kh_{-1} )\\
&=J_{1/2}(h_{-1}, kh_{-1} (z_0))J_{1/2}(k, h_{-1}(z_0))J_{1/2}( h_{-1}, z_0)\widetilde{C}_{X^{\ast}}(h_{-1},kh_{-1} )\widetilde{C}_{X^{\ast}}(k, h_{-1} )\\
&=J_{1/2}(k, -z_0)\widetilde{C}_{X^{\ast}}(k, h_{-1} )\\
&=J_{1/2}(k, -z_0)\overline{C}_{X^{\ast}}(k, h_{-1} )m_{X^{\ast}}(kh_{-1})
m_{X^{\ast}}(k)^{-1}
m_{X^{\ast}}(h_{-1})^{-1}\\
&=J_{1/2}(k, -z_0)\overline{C}_{X^{\ast}}(k, h_{-1} )m_{X^{\ast}}(k^{-1})
m_{X^{\ast}}(k)^{-1}.
\end{align*}
\[J_{1/2}(k, -z_0)=J_{1/2}(k^{-1}, z_0)\overline{C}_{X^{\ast}}(k, h_{-1} )^{-1}m_{X^{\ast}}(k^{-1})^{-1}
m_{X^{\ast}}(k).\]
If $b\neq 0$, then $\overline{C}_{X^{\ast}}(k, h_{-1} )=1$, $J_{1/2}(k^{-1}, z_0)= \widetilde{s}(k^{-1})= m_{X^{\ast}}(k^{-1})\overline{s}(k^{-1})=m_{X^{\ast}}(k^{-1})\overline{s}(k)(-bi+a)$.
Therefore,
\[J_{1/2}(k, -z_0)=m_{X^{\ast}}(k^{-1})\overline{s}(k)(-bi+a)m_{X^{\ast}}(k^{-1})^{-1}m_{X^{\ast}}(k)=\overline{s}(k)m_{X^{\ast}}(k)(-bi+a)=\widetilde{s}(k)(-bi+a).\]
If $b=0$, $a=-1$, then $k=-I$ and
\[J_{1/2}(k, -z_0)=J_{1/2}(-I, z_0)\overline{C}_{X^{\ast}}(-I, h_{-1} )^{-1}=\widetilde{s}(-I)\cdot (-1)=-\widetilde{s}(-I).\] 
If $b=0$, $a=1$, then $k=I$ and
\[J_{1/2}(k, -z_0)=J_{1/2}(I, z_0)\overline{C}_{X^{\ast}}(I, h_{-1} )^{-1}=\widetilde{s}(I).\] 
\end{proof}
\begin{lemma}
  $\sgn(h,z)\sqrt{\sgn(\det h) \cdot (cz+d)}= J_{1/2}(h, z)m_{X^{\ast}}(h)^{-1}$.
  \end{lemma}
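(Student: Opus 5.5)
By Lemma \ref{J12squr}, $[J_{1/2}(h, z)m_{X^{\ast}}(h)^{-1}]^2=\sgn(\det h)(cz+d)$. Both sides of the claimed identity therefore square to the same quantity, so they agree up to a sign, and the whole task is to pin down that sign. Here $\sqrt{\cdot}$ means the principal branch for our choice of $\Arg$, as in Remark \ref{remachoArg}. I would compute $J_{1/2}(h,z)m_{X^{\ast}}(h)^{-1}$ explicitly and compare its argument with the principal one.

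First, unwind the definitions. We have $J_{1/2}(h,z)m_{X^{\ast}}(h)^{-1}=J'_{1/2}(h,z)\overline{s}(h)=\widehat{C}_{z_0}(h,p_z)^{-1}\overline{s}(h)|J(h,z)|^{1/2}$. Lemma \ref{Cz_0wid}(1) gives
\[\widehat{C}_{z_0}(h,p_z)=\overline{C}_{X^{\ast}}(h,p_z)\,\overline{s}(h)\,\overline{s}(p_z)\,\overline{s}(hp_z)^{-1}.\]
Since $\overline{s}(p_z)=1$ (it lies in $P^{\pm}_{>0}(\R)$, where $\overline{s}$ depends only on the $k$-part), the quantity becomes
\[\sgn(h,z)^{-1}\,\overline{s}(hp_z)\,|J(h,z)|^{1/2}.\]
Because $\sgn(h,z)=\pm1$, the claim reduces to
\[\overline{s}(hp_z)\,|J(h,z)|^{1/2}=\sqrt{\sgn(\det h)(cz+d)}.\]

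Second, evaluate $\overline{s}(hp_z)$. Both $\overline{s}$ and the right side are unchanged under scaling $h$ by a positive scalar ($J$ scales by $t$ and $|J|^{1/2}$ by $t^{1/2}$, and the two sides are consistent after normalizing), so we may assume $h\in\SL^{\pm}_2(\R)$. Put $g=hp_z$, with bottom row $(c',d')$, so that $c'i+d'=J(hp_z,i)=J(h,z)J(p_z,i)$. Here $J(p_z,i)=\det(p_z)a_z^{-1}$, a real number with sign $\sgn z$. By Lemma \ref{decomgl2}, $\overline{s}(g)$ is defined through $k_g=\det g\cdot(\text{rotation with entry } (d'+ic')/|c'i+d'|)$, i.e. $\overline{s}(g)=\overline{s}(h_{\det g}g)$ as in Lemma \ref{sgnalpha}(1). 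This gives
\[\overline{s}(g)=\sqrt{\det g\,(c'i+d')}\,/\,|c'i+d'|^{1/2},\]
using the principal square root exactly as in (\ref{depkg}). Substituting,
\[\det g\,(c'i+d')=\det h\,\sgn(z)\,(cz+d)\cdot\bigl|J(p_z,i)\bigr|.\]

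Third, compare branches. The result is $\overline{s}(g)|J|^{1/2}=\sqrt{\sgn(\det h)\sgn(z)(cz+d)}$. When $\sgn z=1$ this is the desired identity. When $z\in\mathbb{H}^-$ an extra factor of $-1$ sits inside the root, and this must be absorbed by $\sgn(h,z)$ together with the branch discrepancy between $\sqrt{-w}$ and $\sqrt{w}$. This branch bookkeeping is the main obstacle. I would handle it by cases, using Lemma \ref{sgnalpha}(3):
\begin{itemize}
\item If $c\neq0$, then $\sgn(h,z)=1$. For $z\in\mathbb{H}^-$ one checks that $w=\sgn(\det h)(cz+d)$ and $-w$ lie in opposite half-planes, so that $\sqrt{-w}$ and $\sqrt w$ agree up to the needed factor. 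This can be tested on the imaginary parts, with $\Im(cz+d)=c\Im z$.
\item If $c=0$, then $\sgn(h,z)=(\sgn z,a)_\R$. The quantity $w=\pm d$ is real, and the principal-branch convention $\Arg\in[-\pi,\pi)$ (so that $\sqrt{-1}=-i$) produces exactly the Hilbert-symbol sign.
\end{itemize}
Carefully tracking the half-open convention $[-\pi,\pi)$ at the negative real axis is where an error is most likely, so I would verify both cases numerically on $h=\pm I$, $h=h_{-1}$ and $h=\omega$ with $z=\pm i$ before finalizing.
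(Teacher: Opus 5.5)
Your Step 1 is correct. It shows $J_{1/2}(h,z)m_{X^{\ast}}(h)^{-1}=\sgn(h,z)\,\overline{s}(hp_z)\,|J(h,z)|^{1/2}$, so $\sgn(h,z)$ cancels from both sides and the lemma reduces to
\[
\overline{s}(hp_z)\,|J(h,z)|^{1/2}=\sqrt{\sgn(\det h)(cz+d)}.
\]

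\textbf{The gap is a dropped sign in Step 2.} You have $\det(hp_z)=\det h\,\sgn(z)$ and $J(hp_z,i)=J(h,z)J(p_z,i)$ with $J(p_z,i)=\sgn(z)\,|J(p_z,i)|$. The two factors $\sgn(z)$ cancel:
\[
\det(hp_z)\,J(hp_z,i)=\det h\,(cz+d)\,|J(p_z,i)|,
\]
not $\det h\,\sgn(z)(cz+d)|J(p_z,i)|$; you kept one $\sgn(z)$ and replaced the other by an absolute value. Your intermediate formula $\overline{s}(hp_z)|J(h,z)|^{1/2}=\sqrt{\sgn(\det h)\sgn(z)(cz+d)}$ is therefore false on $\mathbb{H}^-$. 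Take $h=\omega$, $z=-i$: then $p_z=h_{-1}$ and $hp_z=\begin{pmatrix}0&1\\1&0\end{pmatrix}$. This gives $\overline{s}(hp_z)=e^{-\pi i/4}=\sqrt{-i}$, whereas your formula gives $\sqrt{i}=e^{\pi i/4}$.

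\textbf{Step 3 cannot repair this.} By your own Step 1, $\sgn(h,z)$ has already been cancelled, so it is not available to absorb anything. In any case $\sqrt{-w}/\sqrt{w}\in\{\pm i\}$ for every $w\neq 0$, so no factor $\pm1$ reconciles the two roots. The proposed case check would fail for every $z\in\mathbb{H}^-$.

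\textbf{The fix is to keep the sign.} For $h\in\SL_2^{\pm}(\R)$, pull the positive number $|J(p_z,i)|$ out of the principal root. This gives $\overline{s}(hp_z)=\sqrt{\det h\,(cz+d)}\,/\,|cz+d|^{1/2}$, hence
\[
\overline{s}(hp_z)|J(h,z)|^{1/2}=\sqrt{\sgn(\det h)(cz+d)}\qquad\text{for all } z\in\mathbb{H}^{\pm}.
\]
No branch analysis is needed. The same computation, with $\det(hp_z)$ replaced by its sign, covers general $h\in\GL_2(\R)$.

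With this correction your argument is a uniform proof and is shorter than the paper's. The paper instead splits into cases: $z\in\mathbb{H}$ (by definition); $z\in\mathbb{H}^-$ with $c\neq0$, handled by conjugating by $h_{-1}$ to pass to $-z\in\mathbb{H}$; the two $c=0$ subcases by direct computation; and then $\det h>0$ and $\det h<0$ separately.
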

  \begin{proof}
 (1) Let $h\in \SL_2(\R)$. If  $z\in \mathbb H$, then the result holds by Definition \ref{defczd} and  Remark \ref{remachoArg}. \\
 If $z\in \mathbb H^-$, and $c\neq 0$, then $\sgn(h,z)=1$. 
 \[\overline{C}_{X^{\ast}}(h,h_{-1})=1=\overline{C}_{X^{\ast}}(h_{-1}, hh_{-1})=\overline{C}_{X^{\ast}}(h_{-1},h_{-1}p_{z})= \overline{C}_{X^{\ast}}(h_{-1}, hp_{z})=1;\]
 \begin{align*}
 \overline{C}_{X^{\ast}}(h^{h_{-1}}, h_{-1}p_{z})&=\overline{C}_{X^{\ast}}(h,h_{-1}) \overline{C}_{X^{\ast}}(h_{-1}, hh_{-1}) \overline{C}_{X^{\ast}}(h^{h_{-1}}, h_{-1}p_{z})\\
 &=\overline{C}_{X^{\ast}}(h,h_{-1})\overline{C}_{X^{\ast}}(hh_{-1},h_{-1}p_{z}) \overline{C}_{X^{\ast}}(h_{-1}, hp_{z})\\
 &=\overline{C}_{X^{\ast}}(h,p_{z})\overline{C}_{X^{\ast}}(h_{-1},h_{-1}p_{z}) \overline{C}_{X^{\ast}}(h_{-1}, hp_{z})=\overline{C}_{X^{\ast}}(h,p_{z}).
\end{align*}
\begin{align*}
\sqrt{cz+d}&=\sqrt{(-c)(-z)+d}\\
&= J_{1/2}(h^{h_{-1}}, -z)m_{X^{\ast}}(h^{h_{-1}})^{-1}\\
&=\widetilde{s}(h^{h_{-1}}) \widehat{C}_{z_0}(h^{h_{-1}}, p_{-z})^{-1}\cdot |J(h^{h_{-1}}, -z)|^{1/2}m_{X^{\ast}}(h^{h_{-1}})^{-1}\\
&=\widetilde{s}(h^{h_{-1}}) \widehat{C}_{z_0}(h^{h_{-1}}, h_{-1}p_{z})^{-1}\cdot |J(h^{h_{-1}}, -z)|^{1/2}m_{X^{\ast}}(h^{h_{-1}})^{-1}\\
&=\overline{s}(h^{h_{-1}})\widehat{C}_{z_0}(h^{h_{-1}}, h_{-1}p_{z})^{-1} \cdot   |J(h^{h_{-1}}, -z)|^{1/2}\\
&=\overline{s}(h^{h_{-1}})\overline{C}_{X^{\ast}}(h^{h_{-1}}, h_{-1}p_{z})^{-1}\overline{s}( h^{h_{-1}})^{-1}\overline{s}(h_{-1}p_{z})^{-1} \overline{s}(h_{-1}hp_{z})\cdot   |J(h^{h_{-1}}, -z)|^{1/2}\\
&=\overline{C}_{X^{\ast}}(h, p_{z})^{-1}\overline{s}(p_{z})^{-1} \overline{s}(hp_{z}) |J(h, z)|^{1/2}\\
&=\overline{s}(h)\widehat{C}_{z_0}(h, p_{z})^{-1} \cdot   |J(h, z)|^{1/2}\\
&=J_{1/2}(h, z)m_{X^{\ast}}(h)^{-1}.
\end{align*}
If $z\in \mathbb H^-$,  and $c= 0$,  $a>0, d>0$,  then $h\in P_{>0}(\R)$, $\sgn(h,z)=1$ and  $\widehat{C}_{z_0}(h, p_z)^{-1}\stackrel{ \textrm{Lem. }\ref{chap6Coverhat} }{=}1$.
\[J_{1/2}(h, z)m_{X^{\ast}}(h)^{-1}=\overline{s}(h)\widehat{C}_{z_0}(h, p_{z})^{-1} \cdot   |J(h, z)|^{1/2}=d^{1/2}=\sgn(h,z)\sqrt{\sgn(\det h)(cz+d)}.\]
If $z\in \mathbb H^-$,  and $c= 0$,  $a<0, d<0$, then $-h\in P_{>0}(\R)$ and $\sgn(h,z)=-1$. 
\[\overline{C}_{X^{\ast}}(h, p_{z})^{-1}=-1, \quad \overline{s}( h)^{-1}=e^{\tfrac{\pi i}{2}}, \quad \overline{s}(p_{z})^{-1} =1, \quad  \overline{s}(hp_{z})=e^{-\tfrac{\pi i}{2}};\]
\[\widehat{C}_{z_0}(h, p_z)^{-1}=\overline{C}_{X^{\ast}}(h, p_{z})^{-1}\overline{s}( h)^{-1}\overline{s}(p_{z})^{-1} \overline{s}(hp_{z})=-1;\]
\[J_{1/2}(h, z)m_{X^{\ast}}(h)^{-1}=\overline{s}(h)\widehat{C}_{z_0}(h, p_{z})^{-1} \cdot   |J(h, z)|^{1/2}=-e^{-\tfrac{\pi i}{2}}|d|^{1/2}=-\sqrt{d}=\sgn(h,z)\sqrt{\sgn(\det h)(cz+d)}.\]
(2) If $h\in \GL_2(\R)$ with  $\det h>0$, then $h=(\det h)^{1/2} ((\det h)^{-1/2} h)$ and $\sgn(h,z)=\sgn((\det h)^{-1/2} h,z)$.
\begin{align*}
\text{Right side}&=J_{1/2}((\det h)^{1/2} , (\det h)^{-1/2} hz) J_{1/2}( (\det h)^{-1/2} h,z) m_{X^{\ast}}(h)^{-1}\widetilde{C}_{X^{\ast}}((\det h)^{1/2} , (\det h)^{-1/2} h)\\
&=J_{1/2}((\det h)^{1/2} , (\det h)^{-1/2} hz) J_{1/2}( (\det h)^{-1/2} h,z) m_{X^{\ast}}((\det h)^{-1/2} h)^{-1}\\
&=(\det h)^{1/4}\sgn((\det h)^{-1/2} h,z)\sqrt{(\det h)^{-1/2} \cdot (cz+d)}\\
&=\text{Left side}.
\end{align*}
(3) If $h\in \GL_2(\R)$ with $\det h<0$, then $h=h_{-1} [h_{-1}h]$. 
\[\sgn(h,z)=\sgn(h_{-1}h,z).\] 
\begin{align*}
\text{Right side}&=J_{1/2}(h_{-1}, h_{-1} h(z)) J_{1/2}( h_{-1}h,z) m_{X^{\ast}}(h)^{-1}\widetilde{C}_{X^{\ast}}(h_{-1} ,h_{-1}h)\\
&=J_{1/2}(h_{-1}, h_{-1} hz) J_{1/2}( h_{-1}h,z) m_{X^{\ast}}(h_{-1}h)^{-1}\\
&=1^{-1/2}\cdot\sgn(h_{-1}h,z) \sqrt{-cz-d}\\
&=\text{Left side}.
\end{align*}
    \end{proof}
  Let $h_3=h_1h_2$. 
    \begin{lemma}
    $\sgn(h_3,z)\sqrt{\sgn(\det h_3)(c_3z+d_3)}=\sgn(h_1,h_2(z))\sqrt{\sgn(\det h_1)(c_1(h_2 z)+d_1)}\cdot \sgn(h_2,z) \sqrt{\sgn(\det h_2)(c_2 z+d_2)}\cdot \overline{C}_{X^{\ast}}(h_1,h_2)$.
  \end{lemma}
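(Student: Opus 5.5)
The plan is to deduce the identity directly from the two preceding results: the lemma just proved, which identifies $\sgn(h,z)\sqrt{\sgn(\det h)(cz+d)}$ with $J_{1/2}(h,z)m_{X^{\ast}}(h)^{-1}$ for every $h\in\GL_2(\R)$ and $z\in\mathbb{H}^{\pm}$, and Lemma~\ref{tildehatover}(3), which is the cocycle relation for $J_{1/2}(\cdot,z)m_{X^{\ast}}(\cdot)^{-1}$ with respect to $\overline{C}_{X^{\ast}}$. No new computation should be needed beyond checking that both results apply at the relevant points.

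First, I apply the preceding lemma three times:
\begin{itemize}
\item to $h_3$ at $z$, giving $\sgn(h_3,z)\sqrt{\sgn(\det h_3)(c_3z+d_3)}=J_{1/2}(h_1h_2,z)m_{X^{\ast}}(h_1h_2)^{-1}$;
\item to $h_1$ at the point $h_2(z)\in\mathbb{H}^{\pm}$, which is legitimate because $\GL_2(\R)$ acts on $\mathbb{H}^{\pm}$;
\item to $h_2$ at $z$.
\end{itemize}
Substituting these three expressions into Lemma~\ref{tildehatover}(3),
\[
J_{1/2}(h_1h_2, z)m_{X^{\ast}}(h_1h_2)^{-1}=J_{1/2}(h_1, h_2(z))m_{X^{\ast}}(h_1)^{-1}\,J_{1/2}(h_2, z)m_{X^{\ast}}(h_2)^{-1}\,\overline{C}_{X^{\ast}}(h_1,h_2),
\]
turns it term by term into the claimed identity.

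The only point requiring care is that Lemma~\ref{tildehatover}(3) really holds for arbitrary $h_1,h_2\in\GL_2(\R)$ and $z\in\mathbb{H}^{\pm}$. It was derived from part (2) together with equation (\ref{chap3293inter}) relating $\overline{C}_{X^{\ast}}$, $\widetilde{C}_{X^{\ast}}$ and $m_{X^{\ast}}$. That relation holds on all of $\GL_2(\R)$, given the convention $m_{X^{\ast}}[y,g]=m_{X^{\ast}}(g)$ and the fact that $\psi=\psi_0$ here. Part (2) in turn rests on part (1), which was proved for general $h_i$ via the reduction to $\SL_2^{\pm}(\R)$.

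I expect this bookkeeping to be the main obstacle: confirming that the $m_{X^{\ast}}$ convention used in the preceding lemma, $m_{X^{\ast}}(h)$ for $h\in\GL_2(\R)$, matches the one in (\ref{chap3293inter}). If a mismatch appeared, for instance in the treatment of $\det h<0$ through $h_{-1}$, I would instead verify the identity by reducing to $h_i\in\SL_2^{\pm}(\R)$ via Lemma~\ref{isoGLR}, which shows that $\R^{\times}_{>0}$ splits off with trivial cocycle.
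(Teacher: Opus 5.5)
Your proposal is correct and follows the paper's proof. The paper also rewrites each factor as $J_{1/2}(\cdot,\cdot)\,m_{X^{\ast}}(\cdot)^{-1}$ using the preceding lemma, then applies Lemma~\ref{tildehatover}(2) together with (\ref{chap3293inter}), which amounts to the Lemma~\ref{tildehatover}(3) you cite; the fallback reduction via Lemma~\ref{isoGLR} is never needed.
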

  \begin{proof}
 \begin{align*}
 \text{Left side}
 &=J_{1/2}(h_1h_2, z)m_{X^{\ast}}(h_1h_2)^{-1}\\
 &\stackrel{\textrm{Lem. } \ref{tildehatover}(2)}{=}J_{1/2}(h_1, h_2(z))J_{1/2}(h_2, z)\widetilde{C}_{X^{\ast}}(h_1,h_2)m_{X^{\ast}}(h_1h_2)^{-1}\\
 &=J_{1/2}(h_1, h_2(z))J_{1/2}(h_2, z) m_{X^{\ast}}(h_1)^{-1} m_{X^{\ast}}(h_2)^{-1}\overline{C}_{X^{\ast}}(h_1, h_2)\\
 &= \text{Right side}. 
 \end{align*}
  \end{proof}
  \subsection{Automorphic factor plus}\label{autoIII}
 
  For $\overline{h}=(h,\epsilon)\in\overline{\GL}_{2}(\mathbb{R})$ or $\overline{\GL}_{2}^{\mu_8}(\mathbb{R})$, define:
\begin{align*}
J_{1/2}(\overline{h},z)&\coloneqq\epsilon^{-1}J_{1/2}(h,z)m_{X^{\ast}}(h)^{-1}=\epsilon^{-1}\sgn(h,z)\sqrt{\sgn(\det h)(cz+d)},\\[2pt]
J_{3/2}(\overline{h},z)&\coloneqq J_{1/2}(\overline{h},z)\,J(h,z)=\epsilon^{-1}\sgn(h,z)\sqrt{\sgn(\det h)(cz+d)}(cz+d).
\end{align*}
\begin{lemma}\label{Automorphicfactor}
For  $\overline{h}_{1},\overline{h}_{2}\in\overline{\GL}_{2}(\mathbb{R})$, we have:
\begin{enumerate}
\item $J_{1/2}(\overline{h}_{1}\overline{h}_{2},z)
      =J_{1/2}(\overline{h}_{1},h_{2}z)\,
       J_{1/2}(\overline{h}_{2},z)$;
\item $J_{3/2}(\overline{h}_{1}\overline{h}_{2},z)
      =J_{3/2}(\overline{h}_{1},h_{2}z)\,
       J_{3/2}(\overline{h}_{2},z)$.
\end{enumerate}
\end{lemma}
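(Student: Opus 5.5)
The plan is to unwind the definition of $J_{1/2}(\overline{h},z)$ and use Lemma \ref{tildehatover}(3) together with the multiplication law in $\overline{\GL}_2(\R)$. Write $\overline{h}_i=(h_i,\epsilon_i)$. The group $\overline{\GL}_2(\R)$ is the central extension defined by the cocycle $\overline{C}_{X^{\ast}}$, so I will use the convention already used in Corollary \ref{Automorphicfactor2}:
\[
\overline{h}_1\overline{h}_2=(h_1h_2,\ \overline{C}_{X^{\ast}}(h_1,h_2)\epsilon_1\epsilon_2).
\]
Consequently
\[
J_{1/2}(\overline{h}_1\overline{h}_2,z)=\epsilon_1^{-1}\epsilon_2^{-1}\,\overline{C}_{X^{\ast}}(h_1,h_2)^{-1}\,J_{1/2}(h_1h_2,z)\,m_{X^{\ast}}(h_1h_2)^{-1}.
\]

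Next I substitute Lemma \ref{tildehatover}(3) for the product $J_{1/2}(h_1h_2,z)\,m_{X^{\ast}}(h_1h_2)^{-1}$. It equals
\[
J_{1/2}(h_1,h_2(z))\,m_{X^{\ast}}(h_1)^{-1}\cdot J_{1/2}(h_2,z)\,m_{X^{\ast}}(h_2)^{-1}\cdot\overline{C}_{X^{\ast}}(h_1,h_2).
\]
The factor $\overline{C}_{X^{\ast}}(h_1,h_2)$ cancels against its inverse coming from the group law. This leaves
\[
\epsilon_1^{-1}J_{1/2}(h_1,h_2z)\,m_{X^{\ast}}(h_1)^{-1}\cdot\epsilon_2^{-1}J_{1/2}(h_2,z)\,m_{X^{\ast}}(h_2)^{-1},
\]
which is exactly $J_{1/2}(\overline{h}_1,h_2z)\,J_{1/2}(\overline{h}_2,z)$. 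This proves part (1).

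For part (2), I multiply the identity from part (1) by the ordinary cocycle relation $J(h_1h_2,z)=J(h_1,h_2(z))J(h_2,z)$, which was noted in Section \ref{autoII}. Since $J_{3/2}(\overline{h},z)=J_{1/2}(\overline{h},z)J(h,z)$, this gives part (2) immediately.

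The only delicate point is the convention for the group law: which side the cocycle sits on, and whether it enters as $\overline{C}_{X^{\ast}}$ or its inverse. Because $\overline{C}_{X^{\ast}}$ is $\mu_2$-valued, it equals its own inverse, so either convention gives the same cancellation. The $\mu_8$-version $\overline{\GL}_2^{\mu_8}(\R)$ is built from the same $\mu_2$-valued cocycle, so the same argument covers it verbatim.
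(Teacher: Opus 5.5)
Your proof is correct and follows essentially the same route as the paper: unwind the group law $\overline{h}_1\overline{h}_2=[h_1h_2,\overline{C}_{X^{\ast}}(h_1,h_2)\epsilon_1\epsilon_2]$, then cancel $\overline{C}_{X^{\ast}}(h_1,h_2)$ using the cocycle relation for $J_{1/2}(h,z)m_{X^{\ast}}(h)^{-1}$, and deduce part (2) from part (1). The only cosmetic difference is that the paper combines Lemma~\ref{tildehatover}(2) with (\ref{chap3293inter}) explicitly, whereas you cite Lemma~\ref{tildehatover}(3), which is exactly that combination.
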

\begin{proof}
Note that $\overline{h}_{1}\overline{h}_2=[h_1h_2,\overline{C}_{X^{\ast}}(h_1,h_2)\epsilon_1\epsilon_2]$.
 \begin{align*}
1) \quad \text{Left side}
&=\overline{C}_{X^{\ast}}(h_1,h_2)^{-1}\epsilon_1^{-1}\epsilon_2^{-1}J_{1/2}(h_1h_2,z)m_{X^{\ast}}(h_1h_2)^{-1}\\
&\stackrel{\textrm{ Lem. }\ref{tildehatover}(2)}{=}\overline{C}_{X^{\ast}}(h_1,h_2)^{-1}\epsilon_1^{-1}\epsilon_2^{-1}J_{1/2}(h_1, h_2(z))J_{1/2}(h_2, z)\widetilde{C}_{X^{\ast}}(h_1,h_2)m_{X^{\ast}}(h_1h_2)^{-1}\\
&\stackrel{(\ref{chap3293inter})}{=}\overline{C}_{X^{\ast}}(h_1,h_2)^{-1}\epsilon_1^{-1}\epsilon_2^{-1}J_{1/2}(h_1, h_2(z))J_{1/2}(h_2, z) m_{X^{\ast}}(h_1)^{-1} m_{X^{\ast}}(h_2)^{-1}\overline{C}_{X^{\ast}}(h_1, h_2)\\
&= \text{Right side}.
\end{align*}
2) It is a consequence of Part (1).
\end{proof}
\subsection{The Weil-Deligne group}\label{theWdg1}
Recall the Weil group or the Weil-Deligne group  $\mathbb{W}_{\R}$  for  $\R$ from \cite{La} or \cite{Tate}. It can be seen as a subgroup of $\SU(2)$ generated by
\[ \{ \begin{pmatrix} z & 0\\ 0 &\overline{z}\end{pmatrix}   \mid z\in \C^{\times}\},  \qquad  \qquad \omega_{\sigma}=\begin{pmatrix}0& -1\\ 1 &0\end{pmatrix}.\] 
We identify $\C^{\times}$ as a subgroup of $\mathbb{W}_{\R}$.  We decompose $\Oa_2(\R)$ as  $\langle h_{-1}\rangle \ltimes \SO_2(\R)$.
\begin{lemma}\label{weilDelignegroup}
There exists a group morphism $\iota$ from $\overline{\Oa}^{\mu_8}_2(\R)$ to $\mathbb{W}_{\R}$ by sending $[h_{-1},i]$ to  $\omega_{\sigma}$, $[1,\epsilon]$ to  $\begin{pmatrix}\epsilon^{-1}& 0\\0 &\overline{\epsilon^{-1}}\end{pmatrix}$ and $g=\begin{pmatrix}a & -b\\ b& a\end{pmatrix}\in \SO_2(\R)$ to  $\begin{pmatrix} \sqrt{a+bi} & 0\\ 0 &\overline{\sqrt{ a+bi}}\end{pmatrix} $.
  \end{lemma}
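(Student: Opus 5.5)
The group $\overline{\Oa}^{\mu_8}_2(\R)$ is the central $\mu_8$-extension of $\Oa_2(\R)=\langle h_{-1}\rangle\ltimes\SO_2(\R)$ defined by $\overline{C}_{X^{\ast}}$. So a homomorphism out of it is determined by three compatible homomorphisms. The first is on the center $\mu_8$. The second is on the preimage $\overline{\SO}^{\mu_8}_2(\R)$ of $\SO_2(\R)$. The third is on a lift of $h_{-1}$. The plan is to define $\iota$ on each piece and verify the relations that present the whole group.

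\emph{Step 1: the subgroup $\overline{\SO}^{\mu_8}_2(\R)$.} Elements are $[g,\epsilon]$ with $g\in\SO_2(\R)$, and we set
\[
\iota([g,\epsilon])=\begin{pmatrix}\epsilon^{-1}\sqrt{a+bi}&0\\0&\overline{\epsilon^{-1}\sqrt{a+bi}}\end{pmatrix}.
\]
This is consistent with the prescribed values, since $[g,\epsilon]=[g,1][1,\epsilon]$ and $\overline{C}_{X^{\ast}}(g,1)=1$. Its diagonal entry is exactly $J_{1/2}([g,\epsilon],z_0)=\epsilon^{-1}\sqrt{ci+d}$, which by Definition \ref{defczd} and Remark \ref{remachoArg} equals $\epsilon^{-1}\overline{s}(g)$. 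The Corollary after Corollary \ref{Automorphicfactor2} says $J_{1/2}(-,z_0)$ is a homomorphism on $\overline{\SO}_2$, and the same argument works for $\mu_8$ in place of $\mu_2$. Hence Step 1 gives a homomorphism into the torus $\C^\times\subset\mathbb{W}_\R$. Equivalently, it follows from Lemma \ref{chap6trivial}: $\overline{c}_{X^\ast}(g_1,g_2)=\overline{s}(g_1)^{-1}\overline{s}(g_2)^{-1}\overline{s}(g_1g_2)$, together with $\overline{s}(g)=\sqrt{a+bi}$ for $g$ corresponding to $a+bi=e^{it}$.

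\emph{Step 2: the order of the lift of $h_{-1}$.} Put $\tilde h=[h_{-1},i]$. Then
\[
\tilde h^2=[I,\overline{C}_{X^{\ast}}(h_{-1},h_{-1})\,i^2].
\]
We have $\overline{C}_{X^{\ast}}(h_{-1},h_{-1})=\nu_2(-1,1)\overline{c}_{X^\ast}(1,1)=1$ by Lemma \ref{nu2}, so $\tilde h^2=[I,-1]$. Its image under Step 1 is $\diag(-1,-1)$, which equals $\omega_\sigma^2$. So the relation $\iota(\tilde h)^2=\iota(\tilde h^2)$ holds.

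\emph{Step 3: the conjugation relation.} We must check $\iota(\tilde h)\,\iota(x)\,\iota(\tilde h)^{-1}=\iota(\tilde h x\tilde h^{-1})$ for $x=[g,\epsilon]$. In $\mathbb{W}_\R$, conjugation by $\omega_\sigma$ sends $\diag(w,\overline w)$ to $\diag(\overline w,w)$, i.e. it complex-conjugates $w$. So we need, inside the cover,
\[
\tilde h[g,\epsilon]\tilde h^{-1}=[g^{h_{-1}},\epsilon']\quad\text{with}\quad \epsilon'^{-1}\overline{s}(g^{h_{-1}})=\overline{\epsilon^{-1}\overline{s}(g)}.
\]
Computing the left side gives $\epsilon'=\epsilon\,\nu_2(-1,g)$ up to the cocycle bookkeeping; the central element $i$ cancels because it is central. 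Lemma \ref{equah-1}'s proof records $\nu_2(-1,g)=\overline{s}(g)\overline{s}(g^{h_{-1}})$ for $g\in\U(\C)$, and $|\overline{s}(g)|=1$. So the requirement becomes
\[
\epsilon^{-1}\,\overline{s}(g)^{-1}=\overline{\epsilon^{-1}}\;\overline{\overline{s}(g)},
\]
which holds because $\epsilon,\overline{s}(g)\in\T$. The delicate case is $g=-I$ ($t=-\pi$), where $\nu_2=-1$. It must be checked separately, together with the exact normalization of conjugation $h^{-1}gh$ versus $hgh^{-1}$ used for $\nu_2$.

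Since $\overline{\Oa}^{\mu_8}_2(\R)$ is generated by $\overline{\SO}^{\mu_8}_2(\R)$ and $\tilde h$ subject to exactly these relations, Steps 1–3 give the homomorphism. Step 3 is the main obstacle, in particular the sign conventions and the $-I$ case. I would first recompute $\overline{C}_{X^\ast}(h_{-1},g)$ and $\overline{C}_{X^\ast}(g^{h_{-1}},h_{-1})$ directly from Lemma \ref{chap6Cover} and the formula for $\overline{C}_{X^\ast}$, and only then compare with $\nu_2$.
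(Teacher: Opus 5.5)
\textbf{Gap in Step 3.} Your Step 3 fails, and the problem is not bookkeeping. With $\epsilon'=\epsilon\,\nu_2(-1,g)$ and $\nu_2(-1,g)=\overline{s}(g)\overline{s}(g^{h_{-1}})$, the two sides of the required identity are:
\begin{itemize}
\item left side: $\epsilon'^{-1}\overline{s}(g^{h_{-1}})=\epsilon^{-1}\overline{s}(g)^{-1}$;
\item right side: $\overline{\epsilon^{-1}\overline{s}(g)}=\epsilon\,\overline{s}(g)^{-1}$, because $\overline{\epsilon^{-1}}=\epsilon$ (not $\epsilon^{-1}$) for $\epsilon\in\T$.
\end{itemize}
So the identity you need is $\epsilon^{2}=1$. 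This fails for every $\epsilon\in\mu_8\setminus\mu_2$.

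No rearrangement can repair it. The cocycle $\overline{C}_{X^{\ast}}$ is normalized, so $[1,\epsilon]$ is central in $\overline{\Oa}^{\mu_8}_2(\R)$. In particular $[h_{-1},i][1,i]=[1,i][h_{-1},i]$. On the other side, $\omega_{\sigma}\diag(-i,i)\omega_{\sigma}^{-1}=\diag(i,-i)\neq\diag(-i,i)$. Hence, with $\epsilon$ ranging over $\mu_8$, the prescribed assignment is not a homomorphism.

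Your closing sentence says the group is generated by $\overline{\SO}^{\mu_8}_2(\R)$ and $[h_{-1},i]$ subject to the relations of Steps 1--3. That list omits the centrality of $\mu_8$, and centrality is exactly the relation that breaks. The prescription does define a homomorphism on the subgroup generated by $[h_{-1},i]$ and the two-fold cover $\overline{\SO}_2(\R)=\{[g,\pm1]\}$. You should state and prove the lemma on that subgroup.

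\textbf{Relation to the paper.} That subgroup is also all the paper's argument actually covers. The paper does three things:
\begin{itemize}
\item it checks $[h_{-1},i]^2=[1,-1]$;
\item it uses that $\overline{g}\mapsto J_{1/2}(\overline{g},z_0)$ and its complex conjugate are isomorphisms on the $\mu_2$-cover $\overline{\SO}_2(\R)$;
\item it conjugates only $[g,1]$, obtaining $[g^{h_{-1}},\nu_2(-1,g)]$, and then verifies $\sqrt{a-bi}\,\nu_2(-1,g)=\overline{\sqrt{a+bi}}$ case by case.
\end{itemize}
If you restrict to $\epsilon=\pm1$, your Steps 1--3 coincide with this. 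Your Step 3 computation is then correct, since $\overline{\epsilon^{-1}}=\epsilon^{-1}$ for $\epsilon=\pm1$.

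The two points you flagged as delicate need no separate work. First, $h_{-1}$ is an involution, so the convention $h^{-1}gh$ versus $hgh^{-1}$ does not matter. Second, the case $g=-I$ is already covered: the identity $\nu_2(-1,g)=\overline{s}(g)\overline{s}(g^{h_{-1}})$ recorded in the proof of Lemma \ref{equah-1} holds there too, since $\overline{s}(-I)^2=(-i)^2=-1=\nu_2(-1,-I)$. This is the same check as the paper's $\tau_u=-\pi$ case.
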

\begin{proof}
1) $[h_{-1},i] [h_{-1},i]=[1, -\overline{C}_{X^{\ast}}(h_{-1}, h_{-1})]=[1, -\nu_2( -1,I)\overline{c}_{X^{\ast}}(I,I)]=[1, -1]$.  So the restriction of  $\iota$ to $\langle [h_{-1},i]\rangle$ is a group morphism. \\
2) Note that the maps $f_1: \overline{\SO}_2(\R) \to \U(\C);\overline{g} \longmapsto J_{1/2}(\overline{g},z_0) $ and   $f_2: \overline{\SO}_2(\R) \to \U(\C); [g,\epsilon] \longmapsto \overline{J_{1/2}(\overline{g},z_0)} $ both are group isomorphisms. \\
3)  \begin{align*}
[h_{-1},i] [g,1][h_{-1},-i]&=[g^{h_{-1}}, \overline{C}_{X^{\ast}}(h_{-1}, g) \overline{C}_{X^{\ast}}( h_{-1}g, h_{-1})]\\
&=[g^{h_{-1}}, \nu_2(1, 1)\overline{c}_{X^{\ast}}(1, g)\nu_2(-1, g)\overline{c}_{X^{\ast}}(g^{h_{-1}}, 1)]\\
&=[g^{h_{-1}},\nu_2(-1, g)].
\end{align*} We write $u=a+bi=e^{i \tau_u}$ with $-\pi \leq \tau_u < \pi$.  If $b\neq0 $, we know that $\nu_2(-1, g)=1$. If $b=0$ and $a=1$, then $\nu_2(-1, g)=1$.  If $b=0$ and $a=-1$, then $\nu_2(-1, g)=-1$.  
\[J_{1/2}([g^{h_{-1}},\nu_2(-1, g)],z_0)=\sqrt{a-bi} \nu_2(-1, g)=\left\{ \begin{array}{lr} e^{-i \tfrac{\tau_u}{2}} &  -\pi < \tau_u < \pi,\\ -e^{i \tfrac{\tau_u}{2}} &  \tau_u = -\pi,\end{array}\right.= e^{-i \tfrac{\tau_u}{2}}=\overline{\sqrt{a+bi}}.\]
\end{proof}
By restriction, we can embed $\overline{\Oa}_2(\R)$ into $\mathbb{W}_{\R}$. In particular, on $\overline{\SO}_2(\R)$,  the embedding sends $[g,\epsilon]$ to $\epsilon^{-1}\overline{s}(g)=\epsilon\overline{s}(g)$.
\section{Congruence subgroups} 
In this section, we write $p$ for an odd prime number and assume $\psi=\psi_0$. Let $N$ be a positive integer. 
\subsection{   Congruence subgroups of $\SL_2(\Z)$}\label{Congruencesubgroupsof}    
In $\SL_2(\Z)$, we consider the following subgroups:
$$\Gamma(N)=\{ g=\begin{pmatrix} a& b\\ c& d\end{pmatrix} \in \SL_2(\Z) \mid  \begin{pmatrix} a& b\\ c& d\end{pmatrix}\equiv \begin{pmatrix} 1& 0\\0& 1\end{pmatrix}(\bmod N)\},$$
$$\Gamma(2N,N)=\{ g\in \Gamma(N) \mid a\equiv d\equiv 1(\bmod 2N), b \equiv c \equiv 0(\bmod N)\}\quad (\,N \textrm{ even}), $$
$$\Gamma(p,p)=\{ g\in \SL_2(\Z)\mid a\equiv d(\bmod p), b \equiv c \equiv 0(\bmod p)\}, $$
$$\Gamma_0(N)=\{ g=\begin{pmatrix} a& b\\ c& d\end{pmatrix} \in \SL_2(\Z) \mid  \begin{pmatrix} a& b\\ c& d\end{pmatrix}\equiv \begin{pmatrix} \ast & \ast\\ 0 & \ast \end{pmatrix}(\bmod N)\},$$
$$\Gamma^0(N)=\{ g=\begin{pmatrix} a& b\\ c& d\end{pmatrix} \in \SL_2(\Z) \mid  \begin{pmatrix} a& b\\ c& d\end{pmatrix}\equiv \begin{pmatrix} \ast & 0 \\ \ast & \ast \end{pmatrix}(\bmod N)\}.$$
Let us choose $c, d\in \Z$ such that $-pc+4d=1$. Then:
      \[ \begin{pmatrix} pc&d\\-4p&-p\end{pmatrix}=\begin{pmatrix} 0& 1\\-p&0\end{pmatrix}  \begin{pmatrix}4& 1\\ pc& d\end{pmatrix} = \begin{pmatrix} c&d\\ -4& -p\end{pmatrix} \begin{pmatrix} p&0\\0&1\end{pmatrix}; \]
      \[\Gamma_0(4)\begin{pmatrix} p&0\\0&1\end{pmatrix}=\Gamma_0(4)\begin{pmatrix} c&d\\ -4& -p\end{pmatrix} \begin{pmatrix} p&0\\0&1\end{pmatrix}=\Gamma_0(4)\begin{pmatrix} 0& 1\\-p&0\end{pmatrix}  \begin{pmatrix}4& 1\\ pc& d\end{pmatrix}.\]

 For elements in $\GL_2(\Z)$, we use the following notations:
\[ g_{-p}=\begin{pmatrix} 0 & 1\\ p& 0 \end{pmatrix}, g_p=\begin{pmatrix} 0 & 1\\ -p& 0 \end{pmatrix}, r_x= \begin{pmatrix} 0 & -1 \\ 1 &x 
\end{pmatrix} \textrm{ for } 1\leq x\leq p,  r_0=\begin{pmatrix}4& 1\\ pc& d\end{pmatrix}.\]
Then the following results can be checked:
\begin{itemize}
\item[(1)] 
$ g_p \SL_2(\Z) g_p^{-1}=g_p^{-1}\SL_2(\Z)g_p$, $ [g_p^{-1}\SL_2(\Z)g_p] \cap \SL_2(\Z)=\Gamma_0(p)$, $g_p^{-1} \Gamma_{0}(p) g_p=\Gamma_{0}(p)$.
\item[(2)] 
$ g_{-p} \SL_2(\Z) (g_{-p})^{-1}=(g_{-p})^{-1}\SL_2(\Z)g_{-p}$, $ [(g_{-p})^{-1}\SL_2(\Z)g_{-p}] \cap \SL_2(\Z)=\Gamma_0(p)$, $(g_{-p})^{-1} \Gamma_{0}(p) g_{-p}=\Gamma_{0}(p)$.
\item[(3)] $\SL_2(\Z)=\mathop{\sqcup}_{0\leq x\leq p} \Gamma_0(p) r_x$, $  g_{p}^{-1} \SL_2(\Z)g_{p} =\mathop{\sqcup}_{0\leq x\leq p} \Gamma_0(p) g_{p}^{-1}r_xg_{ p}$ and $  (g_{- p})^{-1} \SL_2(\Z)g_{-p} =\mathop{\sqcup}_{0\leq x\leq p} \Gamma_0(p) (g_{- p})^{-1}r_xg_{- p}$.
\item[(4)] $\SL_2(\Z)\setminus [\SL_2(\Z)g_p \SL_2(\Z)] =\mathop{\sqcup}_{0\leq x\leq p} \SL_2(\Z) g_p r_x$.
\item[(5)] If $N$ is odd, $  \Gamma_0(N) \cap \Gamma_{\theta}=\{ \begin{pmatrix} a & b \\ c &d 
\end{pmatrix} \in \SL_2(\Z) \mid   c \equiv 0(\bmod N), ab \equiv 0(\bmod 2),  cd \equiv 0(\bmod 2)\}$.
\item[(6)]  If $N$ is even, $  \Gamma_0(N) \cap \Gamma_{\theta}=\{ \begin{pmatrix} a & b \\ c &d 
\end{pmatrix} \in \SL_2(\Z) \mid   c \equiv 0(\bmod N), a \equiv d  \equiv 1(\bmod 2),  b\equiv 0(\bmod 2)\}$.
\end{itemize}
\begin{lemma}
$\bigcap_{1 \le x \le p} r_x^{-1} g_p^{-1} \SL_2(\mathbb{Z}) \, g_p r_x 
= r_x^{-1} \Gamma(p,p)r_x.$
\end{lemma}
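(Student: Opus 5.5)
The plan is to read the statement via lattices and the action on $\mathbb P^1(\mathbb F_p)$, and to use one preliminary observation. Since $r_x\in\SL_2(\Z)$ and $\Gamma(p,p)$ is the preimage of the center $\{\pm I\}$ of $\SL_2(\mathbb F_p)$ under reduction mod $p$, the group $\Gamma(p,p)$ is normal in $\SL_2(\Z)$. Hence $r_x^{-1}\Gamma(p,p)r_x=\Gamma(p,p)$ for every $x$, so the right-hand side does not depend on $x$. It therefore suffices to prove
\[
\bigcap_{1\le x\le p} r_x^{-1}g_p^{-1}\SL_2(\Z)g_pr_x=\Gamma(p,p).
\]

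\textbf{Step 1.} Compute $g_p^{-1}\SL_2(\Z)g_p$ explicitly. For $M=\begin{pmatrix}a&b\\c&d\end{pmatrix}$ one finds $g_p^{-1}Mg_p=\begin{pmatrix}d&-c/p\\-pb&a\end{pmatrix}$. This group is exactly the determinant-one stabilizer of the lattice $L_0=g_p^{-1}\Z^2$. Consequently $r_x^{-1}g_p^{-1}\SL_2(\Z)g_pr_x$ is the determinant-one stabilizer of $L_x=r_x^{-1}g_p^{-1}\Z^2$. Up to homothety, $L_x$ is a lattice containing $\Z^2$ with index $p$, equivalently (after scaling by $p$) an index-$p$ sublattice of $\Z^2$. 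So $L_x$ corresponds to a point $\ell_x\in\mathbb P^1(\mathbb F_p)$, namely a neighbour of the vertex $[\Z^2]$ in the Bruhat--Tits tree of $\SL_2(\Q_p)$. I will check directly that $x\mapsto\ell_x$ is injective on $1\le x\le p$. Concretely, $\ell_x$ is the image of a fixed line under $r_x^{-1}$ mod $p$, and the $r_x$ are distinct coset representatives of $\Gamma_0(p)$ by item (3). This gives $p\ge 3$ distinct neighbours.

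\textbf{Step 2 (inclusion $\supseteq$).} An element $\gamma\in\Gamma(p,p)$ lies in $\SL_2(\Z)$, so it preserves $\Z^2$. Its reduction mod $p$ is $\pm I$, so it fixes every line in $\mathbb F_p^2$ and hence preserves every index-$p$ sublattice of $\Z^2$. In particular it stabilizes each $L_x$, so it lies in every conjugate.

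\textbf{Step 3 (inclusion $\subseteq$), the main obstacle.} Let $\gamma$ lie in the intersection, a priori only in $\SL_2(\Q)$ with $p$-power denominators. I must first show $\gamma\in\SL_2(\Z)$. The tree argument runs as follows. $\gamma$ acts on the tree of $\SL_2(\Q_p)$ without inversions, since it has determinant one, and it fixes the three distinct vertices $[L_1],[L_2],[L_3]$, all at distance $1$ from $v=[\Z^2]$. The fixed-point set of a tree automorphism is convex, so $\gamma$ fixes the geodesic from $[L_1]$ to $[L_2]$, and this geodesic passes through $v$. Hence $\gamma$ stabilizes $\Z^2\otimes\Z_p$. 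At primes $\ell\ne p$ all the $L_x$ coincide with $\Z^2$ locally, so $\gamma$ is already $\ell$-integral there. Therefore $\gamma\in\SL_2(\Z)$.

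If I want to avoid the tree, the fallback is an elementary computation. I would write $\gamma=g_p^{-1}Mg_p$ with $M\in\SL_2(\Z)$, impose the conditions from the conjugates for $x=1$ and $x=2$, and check that they force $p\mid c$, that is, integrality.

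Once $\gamma\in\SL_2(\Z)$, it preserves the $p$ lines $\ell_x$ in $\mathbb F_p^2$, and $p\ge 3$. An element of $\GL_2(\mathbb F_p)$ fixing three distinct points of $\mathbb P^1(\mathbb F_p)$ is scalar. Since $\det\gamma=1$, its reduction is $\pm I$, so $\gamma\in\Gamma(p,p)$. Together with Step 2 and the normality of $\Gamma(p,p)$, this gives the claimed equality.
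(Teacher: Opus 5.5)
Your proof is correct, but it takes a different route from the paper's.

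The paper argues by direct matrix computation. With $g_pr_x=\begin{pmatrix}1&x\\0&p\end{pmatrix}$, it writes out $(g_pr_x)^{-1}g\,(g_pr_x)$ for $g=\begin{pmatrix}a&b\\c&d\end{pmatrix}\in\SL_2(\Z)$. It then determines when this also equals $(g_pr_{x'})^{-1}g'(g_pr_{x'})$ with $g'\in\SL_2(\Z)$ for every $x'$. Integrality of $(a'd'-1)/c$ as $x'$ varies forces $a\equiv d\pmod p$ and $p^2\mid c$, and conversely the paper writes down $g'$ explicitly. Finally it recognizes $\{g\in\SL_2(\Z): p^2\mid c,\ a\equiv d \pmod p\}$ as $g_p\Gamma(p,p)g_p^{-1}$, using the formula for $g_p^{-1}gg_p$.

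You instead view each conjugate as the determinant-one stabilizer of an index-$p$ superlattice $L_x\supset\Z^2$. You get integrality from two distinct neighbours of $[\Z^2]$; more directly still, $\gamma$ stabilizes $L_1+L_2=\tfrac1p\Z^2$, hence $\Z^2$. You then conclude with the fact that an element of $\SL_2(\mathbb F_p)$ fixing three points of $\mathbb P^1(\mathbb F_p)$ is $\pm I$.

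What each route buys:
\begin{itemize}
\item Your argument is shorter and avoids the paper's case split $c=0$ versus $c\neq0$.
\item It makes transparent why the right-hand side does not depend on $x$, namely normality of $\Gamma(p,p)$ in $\SL_2(\Z)$.
\item It shows that any three values of $x$ already cut out the whole intersection.
\item The paper's computation is more pedestrian, but completely explicit: it exhibits the matching element in each conjugate.
\end{itemize}

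One step deserves an extra line. When you pass from ``$\gamma$ fixes the vertex $[\Z_p^2]$'' to ``$\gamma$ stabilizes $\Z_p^2$'', say that $\det\gamma=1$ rules out a nontrivial homothety.
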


\begin{proof}
Recall that
\[
g_p r_x = \begin{pmatrix} 1 & x \\ 0 & p \end{pmatrix}, \quad 
(g_p r_x)^{-1} = \frac{1}{p} \begin{pmatrix} p & -x \\ 0 & 1 \end{pmatrix}.
\]
A direct computation shows:
\[
\begin{pmatrix} 1 & x \\ 0 & p \end{pmatrix}^{-1}
\begin{pmatrix} a & b \\ c & d \end{pmatrix}
\begin{pmatrix} 1 & x \\ 0 & p \end{pmatrix}
=
\begin{pmatrix}
a - \dfrac{cx}{p} & ax + bp - \dfrac{cx^2}{p} - dx \\[6pt]
\dfrac{c}{p} & \dfrac{cx}{p} + d
\end{pmatrix}.
\]

Let \( g = \begin{pmatrix} a & b \\ c & d \end{pmatrix} \) and \( g' = \begin{pmatrix} a' & b' \\ c' & d' \end{pmatrix} \) be elements of \( \SL_2(\mathbb{Z}) \), and let \( 1 \le x \ne x' \le p \). Then:
\[
(g_p r_x)^{-1} g \, g_p r_x = (g_p r_{x'})^{-1} g' \, g_p r_{x'}
\]
if and only if the following two cases hold:

\noindent \textbf{Case (1): \( c \ne 0 \).}  
The equality is equivalent to the system:
\[
\begin{cases}
a + d = a' + d', & \text{(1)} \\[2pt]
a - \dfrac{cx}{p} = a' - \dfrac{c'x'}{p}, & \text{(2)} \\[2pt]
c = c', & \text{(3)} \\[2pt]
\dfrac{ad - 1}{c} \in \mathbb{Z}, & \text{(4)} \\[2pt]
\dfrac{a'd' - 1}{c} \in \mathbb{Z}, & \text{(5)} \\[2pt]
a, c, d, a', d' \in \mathbb{Z}. & \text{(6)}
\end{cases}
\]

\noindent \textbf{Case (2): \( c = 0 \).}  
Then:
\[
\begin{cases}
a = d = a' = d' \in \{ \pm 1 \}, \\
b = b' \in \mathbb{Z}.
\end{cases}
\]
We now analyze \textbf{Case (1)} in detail. From (2) and (3), write \( c = p c_1 \). Then (2) becomes:
\[
a - c_1 x = a' - c_1 x' \quad \Rightarrow \quad a' = a - c_1(x - x').
\]
From (1), \( d' = d + c_1(x - x') \).  
Now consider the expression:
\[
\frac{a'd' - 1}{c} = \frac{ad - 1}{c} + \frac{\big[(a - d) - c_1(x - x')\big](x - x')}{p}.
\]

Suppose \( (g_p r_x)^{-1} g \, g_p r_x \in \bigcap_{1 \le y \le p} r_y^{-1} g_p^{-1} \SL_2(\mathbb{Z}) \, g_p r_y \).  
Then for all \( 1 \le x' \le p \), the above expression must be an integer.  
This implies:
\[
\frac{\big[(a - d) - c_1(x - x')\big](x - x')}{p} \in \mathbb{Z} \quad \text{for all } x'.
\]
In particular, for varying \( x' \), this forces:
\[
\begin{cases}
a \equiv d \pmod{p}, \\
p \mid c_1.
\end{cases}
\]

Conversely, suppose \( a \equiv d \pmod{p} \) and \( p^2 \mid c \).  
Define:
\[
\begin{cases}
a' = a - \dfrac{c}{p}(x - x'), \\
c' = c, \\
d' = d + \dfrac{c}{p}(x - x'), \\
b' = b + \left[ \dfrac{a - d}{p} - \dfrac{c(x - x')}{p^2} \right](x - x').
\end{cases}
\]
Then one verifies that \( g' \in \SL_2(\mathbb{Z}) \) and
\[
(g_p r_x)^{-1} g \, g_p r_x = (g_p r_{x'})^{-1} g' \, g_p r_{x'}.
\]

Finally, observe that
\[
g_p^{-1} \begin{pmatrix} a & b \\ c & d \end{pmatrix} g_p
= \begin{pmatrix} d &-\dfrac{c}{p}  \\[2pt] -bp & a \end{pmatrix}.
\]
Hence,
\[
\bigcap_{1 \le x \le p} r_x^{-1} g_p^{-1} \SL_2(\mathbb{Z}) \, g_p r_x
= r_x^{-1} \big[\Gamma(p,p) \big] r_x,
\]
since the intersection is independent of the choice of \( x \in \{1, \dots, p\} \).
\end{proof}

\begin{lemma}
\begin{itemize}
\item[(1)] $\SL_2(\Z)=\Gamma_0(p) \sqcup \Gamma_0(p)s\Gamma_0(p)$, for any $s\in \SL_2(\Z)\setminus \Gamma_0(p)$.
\item[(2)] $g_p^{-1}\SL_2(\Z)g_p=\Gamma_0(p) \sqcup \Gamma_0(p)s\Gamma_0(p)$, for any $s\in g_p^{-1}\SL_2(\Z)g_p\setminus \Gamma_0(p)$.
\end{itemize}
\end{lemma}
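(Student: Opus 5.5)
The plan is to use the transitive action of $\SL_2(\Z)$ on the projective line over $\mathbb{F}_p$. The key fact is that $\Gamma_0(p)$ is the stabilizer of the point $\infty=[1:0]$, or equivalently of the line spanned by $e_1$ modulo $p$. Reduction modulo $p$ maps $\SL_2(\Z)$ onto $\SL_2(\mathbb F_p)$. The image of $\Gamma_0(p)$ is the Borel subgroup $B$ of upper triangular matrices. Moreover, $\Gamma_0(p)$ contains $\Gamma(p)$, the kernel of reduction. So the double cosets $\Gamma_0(p)\backslash \SL_2(\Z)/\Gamma_0(p)$ are in bijection with the double cosets $B\backslash \SL_2(\mathbb F_p)/B$. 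By the Bruhat decomposition there are exactly two of the latter: $B$ and $B\bar\omega B$.

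To keep things self-contained, I would argue directly that $\Gamma_0(p)$ acts transitively on $\mathbb P^1(\mathbb F_p)\setminus\{\infty\}$. The $p$ points $[x:1]$ are permuted transitively by the images of $u(b)$, which send $[x:1]\mapsto[x+b:1]$. Then take any $s\in\SL_2(\Z)\setminus\Gamma_0(p)$ and any $g\in \SL_2(\Z)\setminus\Gamma_0(p)$. The points $s\infty$ and $g\infty$ are both different from $\infty$. Hence there is $\gamma\in\Gamma_0(p)$ with $\gamma s\infty=g\infty$, so $g^{-1}\gamma s\in\Gamma_0(p)$. This gives $g\in\Gamma_0(p)s\Gamma_0(p)$. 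Disjointness is immediate, because the double coset of $s$ contains no element of $\Gamma_0(p)$. This proves (1).

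Alternatively, one could use the coset decomposition $\SL_2(\Z)=\bigsqcup_{0\le x\le p}\Gamma_0(p)r_x$ from item (3) above. The right action of $\Gamma_0(p)$ on the cosets $\Gamma_0(p)r_x$, $1\le x\le p$, is transitive by the same translation argument.

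For (2), I would conjugate by $g_p$. By item (1) of the list above, $g_p^{-1}\Gamma_0(p)g_p=\Gamma_0(p)$. Conjugation by $g_p$ is therefore a group isomorphism $\SL_2(\Z)\to g_p^{-1}\SL_2(\Z)g_p$ carrying $\Gamma_0(p)$ onto itself. Applying it to the decomposition in (1) gives
\[g_p^{-1}\SL_2(\Z)g_p=\Gamma_0(p)\sqcup\Gamma_0(p)\,(g_p^{-1}s_0g_p)\,\Gamma_0(p)\]
for any $s_0\notin\Gamma_0(p)$. Any $s\in g_p^{-1}\SL_2(\Z)g_p\setminus\Gamma_0(p)$ has the form $g_p^{-1}s_0g_p$ with $s_0\notin\Gamma_0(p)$, so (2) follows.

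The main point needing care is the verification that $g_p^{-1}\Gamma_0(p)g_p=\Gamma_0(p)$. This follows from the explicit formula $g_p^{-1}\begin{pmatrix}a&b\\c&d\end{pmatrix}g_p=\begin{pmatrix}d&-c/p\\-bp&a\end{pmatrix}$ computed earlier. Beyond that, the transitivity step is the only substantive input.
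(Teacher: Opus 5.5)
Your proof is correct and is essentially the paper's argument. The paper writes $r_x=\omega u(x)$ with $u(x)\in\Gamma_0(p)$, so every nontrivial coset $\Gamma_0(p)r_x$ lies in $\Gamma_0(p)\omega\Gamma_0(p)$; this is your translation/transitivity step expressed through explicit coset representatives, and part (2) is likewise deduced from (1) by conjugating with $g_p$, using $g_p^{-1}\Gamma_0(p)g_p=\Gamma_0(p)$.
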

\begin{proof}
1) $\Gamma_0(p) r_0\Gamma_0(p)=\Gamma_0(p)$. For $1\leq x\leq p$, we have: 
\[\Gamma_0(p) r_x \Gamma_0(p)=\Gamma_0(p)   \begin{pmatrix} 0 & -1\\ 1& 0 \end{pmatrix} \begin{pmatrix} 1 &x\\ 0& 1 \end{pmatrix}\Gamma_0(p)=\Gamma_0(p)   \begin{pmatrix} 0 & -1\\ 1& 0 \end{pmatrix} \Gamma_0(p).\]
2) It is a consequence of Part (1).
\end{proof}

\begin{lemma}\label{coniso}
Let $N$ be an odd positive integer  number. 
\begin{itemize}
\item[(1)] The canonical  map $\iota:\bigl(\Gamma_0(N)\cap\Gamma_{\theta}\bigr)\backslash\Gamma_0(N)\longrightarrow\Gamma_{\theta}\backslash\mathrm{SL}_2(\mathbb Z)$
is bijective.
\item[(2)] The canonical  map $\iota:\bigl(\Gamma(N)\cap\Gamma_{\theta}\bigr)\backslash\Gamma(N)\longrightarrow\Gamma_{\theta}\backslash\mathrm{SL}_2(\mathbb Z)$
is bijective.
\end{itemize}
\end{lemma}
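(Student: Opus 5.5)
The plan is to reduce both statements to reduction modulo $2$. The key observation is that $\Gamma_{\theta}$ contains $\Gamma(2)$. Indeed, for $g\in\Gamma(2)$ we have $b\equiv c\equiv 0 \pmod 2$, so $ac\equiv bd\equiv 0\pmod 2$. Hence $\Gamma_\theta$ is the full preimage of its image $\overline{\Gamma}_\theta\subseteq \SL_2(\Z/2\Z)$ under the reduction map $\rho:\SL_2(\Z)\to\SL_2(\Z/2\Z)$. We therefore get a bijection $\Gamma_\theta\backslash\SL_2(\Z)\simeq \overline{\Gamma}_\theta\backslash \SL_2(\Z/2\Z)$, and this coset space has $3$ elements.

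For any subgroup $H\subseteq\SL_2(\Z)$, the canonical map $(H\cap\Gamma_\theta)\backslash H\to\Gamma_\theta\backslash\SL_2(\Z)$ is always injective: $\Gamma_\theta h=\Gamma_\theta h'$ with $h,h'\in H$ gives $h'h^{-1}\in H\cap\Gamma_\theta$. Composing with the bijection above, the map becomes $(H\cap\Gamma_\theta)\backslash H\to \overline{\Gamma}_\theta\backslash\SL_2(\Z/2\Z)$, induced by $\rho|_H$. So surjectivity holds as soon as $\rho(H)=\SL_2(\Z/2\Z)$. It thus suffices to show that $\Gamma(N)$, and hence a fortiori $\Gamma_0(N)\supseteq\Gamma(N)$, surjects onto $\SL_2(\Z/2\Z)$ when $N$ is odd. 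This gives (1) and (2) simultaneously.

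The main step is this surjectivity, and I would get it from strong approximation. Since $\gcd(N,2)=1$, the Chinese remainder theorem identifies $\SL_2(\Z/2N\Z)\simeq\SL_2(\Z/2\Z)\times\SL_2(\Z/N\Z)$. The reduction map $\SL_2(\Z)\to\SL_2(\Z/2N\Z)$ is surjective (standard). Given $\bar g\in\SL_2(\Z/2\Z)$, lift the element $(\bar g, I)$ to some $g\in\SL_2(\Z)$; then $g\in\Gamma(N)$ and $\rho(g)=\bar g$.

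For a more hands-on version fitting the paper's style, I would instead exhibit explicit elements. The group $\SL_2(\Z/2\Z)$ is generated by the images of $u(1)$ and $u_-(1)$, and $u(N)\in\Gamma(N)$, $u_-(N)\in\Gamma(N)$ reduce to $u(1)$, $u_-(1)$ modulo $2$ because $N$ is odd. The image $\rho(\Gamma(N))$ is a subgroup containing both generators, so it is everything. This explicit route avoids citing strong approximation entirely, so I would use it. The only point needing care is the generation of $\SL_2(\Z/2\Z)\simeq S_3$ by two transvections of orders $2$ whose product has order $3$, which is immediate.
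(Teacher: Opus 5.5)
Your proof is correct, and it reaches surjectivity by a somewhat different route from the paper. Injectivity is handled identically in both. For surjectivity, the paper quotes $|\Gamma_\theta\backslash\SL_2(\Z)|=3$ and exhibits three elements of the subgroup: $I$, $u(1)$, $u_-(-N)$ for $\Gamma_0(N)$, and $I$, $u(N)$, $u_-(-N)$ for $\Gamma(N)$. It then asserts that these lie in distinct $\Gamma_\theta$-cosets, a mod-$2$ verification it leaves implicit. You instead prove $\Gamma_\theta H=\SL_2(\Z)$ structurally. Since $\ker\rho=\Gamma(2)\subseteq\Gamma_\theta$, it is enough that $\rho(\Gamma(N))=\SL_2(\Z/2\Z)$. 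This holds because $u(N),u_-(N)\in\Gamma(N)$ reduce to the two generating transvections. (The bijection $\Gamma_\theta\backslash\SL_2(\Z)\simeq\rho(\Gamma_\theta)\backslash\SL_2(\Z/2\Z)$ is not even needed. Given $g$, pick $h\in H$ with $\rho(h)=\rho(g)$; then $gh^{-1}\in\Gamma(2)\subseteq\Gamma_\theta$.) Your version needs neither the index nor the pairwise distinctness of representatives. It handles (1) and (2) at once via $\Gamma(N)\subseteq\Gamma_0(N)$, and it works for any subgroup whose mod-$2$ image is all of $\SL_2(\Z/2\Z)$. The paper's version has the advantage of producing explicit coset representatives, which it needs later: these are the $M_{q_i}$ of Definition~\ref{Mq123}, used with $N^2$ in place of $N$. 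Your argument recovers these too. Since $\rho(\Gamma_\theta)=\{I,\rho(\omega)\}$, any elements of $H$ whose reductions represent its three right cosets in $\SL_2(\Z/2\Z)$ will do, for instance $I$, $u(N)$, $u_-(-N)$.
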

\begin{proof}
1) Write $G=\Gamma_0(N)\cap\Gamma_{\theta}$.
For $x,y\in\Gamma_0(N)$, the equality $Gx=Gy$ gives $xy^{-1}\in G\subseteq\Gamma_{\theta}$, hence $\Gamma_{\theta}x=\Gamma_{\theta}y$; thus $\iota$ is well-defined.
Conversely, if $\iota(Gx)=\iota(Gy)$, then $\Gamma_{\theta}x=\Gamma_{\theta}y$, so $xy^{-1}\in\Gamma_{\theta}$.
Because $x,y\in\Gamma_0(p)$, we also have $xy^{-1}\in\Gamma_0(N)$, hence $xy^{-1}\in G$ and $Gx=Gy$; therefore $\iota$ is injective.

To see surjectivity, recall that $|\Gamma_{\theta}\backslash\mathrm{SL}_2(\mathbb Z)|=3$.
The three cosets are represented by
\[
\begin{pmatrix}1&0\\0&1\end{pmatrix},\quad
\begin{pmatrix}1&1\\0&1\end{pmatrix},\quad
\begin{pmatrix}1&0\\-N&1\end{pmatrix}.
\]
Each of these matrices lies in $\Gamma_0(N)$, so their $G$-cosets map under $\iota$ to the three distinct cosets of $\Gamma_{\theta}$.
Consequently $\iota$ is surjective and hence bijective.\\
2) The proof is analogous, using the coset representatives:
\[
\begin{pmatrix}1&0\\0&1\end{pmatrix},\quad
\begin{pmatrix}1&N\\0&1\end{pmatrix},\quad
\begin{pmatrix}1&0\\-N&1\end{pmatrix}.
\]
\end{proof}
\begin{lemma}\label{conisoeve}
Let $N$ be an even positive integer  number. Then $\Gamma_0(N)=\bigl(\Gamma_0(N)\cap\Gamma_{\theta}\bigr) \sqcup \bigl(\Gamma_0(N)\cap\Gamma_{\theta}\bigr)u(1)$.
\end{lemma}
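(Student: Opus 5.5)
To prove Lemma~\ref{conisoeve}, I would show that $H:=\Gamma_0(N)\cap\Gamma_{\theta}$ has index exactly $2$ in $\Gamma_0(N)$ and that $u(1)$ represents the nontrivial coset. The tool is item (6) of the list above. Since $N$ is even, it describes $H$ as the set of $g=\begin{pmatrix} a&b\\ c&d\end{pmatrix}\in\Gamma_0(N)$ with $a\equiv d\equiv 1 \pmod 2$ and $b\equiv 0\pmod 2$.

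First I reduce modulo $2$. For $g\in\Gamma_0(N)$ with $N$ even we have $c\equiv 0\pmod 2$. Then $ad-bc=1$ forces $ad\equiv 1\pmod 2$, so $a$ and $d$ are automatically odd. Hence membership in $H$ reduces to the single condition $b\equiv 0 \pmod 2$. Consider the reduction map $\rho:\Gamma_0(N)\to \SL_2(\Z/2\Z)$. Its image lies in the upper unipotent group $\{I,\ \bar u(1)\}\cong \Z/2\Z$, and $H=\ker\rho$. Therefore $H$ is a normal subgroup of index at most $2$.

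Next I check that the index is exactly $2$ and identify the coset. The element $u(1)=\begin{pmatrix}1&1\\0&1\end{pmatrix}$ lies in $\Gamma_0(N)$ and has $b=1$ odd, so $u(1)\notin H$ and $\rho$ is surjective. This gives $[\Gamma_0(N):H]=2$ and the disjoint decomposition $\Gamma_0(N)=H\sqcup Hu(1)$. One can also see it directly. If $b$ is odd, then $g\,u(1)^{-1}=\begin{pmatrix} a& b-a\\ c& d-c\end{pmatrix}$ has $b-a$ even and $d-c$ odd, so it lies in $H$. Disjointness holds because $Hu(1)$ consists of matrices with odd upper-right entry.

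There is no real obstacle here. The only point needing care is to use item (6) correctly, namely that for even $N$ the parity conditions on $a$ and $d$ are automatic. Everything else is a two-line parity computation.
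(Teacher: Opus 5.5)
Your proposal is correct and follows essentially the same route as the paper, which also splits on the parity of $b$: if $b$ is even then $g\in\Gamma_0(N)\cap\Gamma_{\theta}$, and if $b$ is odd then $gu(-1)=\begin{pmatrix} a& b-a\\ c& d-c\end{pmatrix}$ lies in $\Gamma_0(N)\cap\Gamma_{\theta}$. Your framing of $H$ as the kernel of reduction modulo $2$ is only a repackaging, but it makes disjointness explicit (the paper leaves this implicit) and directly gives the normality $H\unlhd\Gamma_0(N)$ that the paper asserts later when inducing to $\overline{\Gamma}_0(N^2)$ for even $N$.
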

\begin{proof}
Let $g=\begin{pmatrix} a& b\\ c& d\end{pmatrix} \in \Gamma_0(N)$. Note that $2\nmid a$ and $2\nmid d$.  If $2\mid b$, then $g\in \bigl(\Gamma_0(N)\cap\Gamma_{\theta}\bigr)$. If $2\nmid b$, then $gu(-1)=\begin{pmatrix} a& b-a\\ c& d-c\end{pmatrix}$; $2\mid (b-a)$ and $gu(-1)\in \bigl(\Gamma_0(N)\cap\Gamma_{\theta}\bigr)$.
\end{proof}
\subsection{One-dimensional representations of $\Gamma_{\theta}, \widetilde{\Gamma}_{\theta}, \overline{\Gamma}_{\theta}$}\label{onedim}
 For  $r\in \Gamma_{\theta}$, we  define
   \begin{align}\label{lambda}
   \lambda(r)= m_{X^{\ast}}(r)\widetilde{\beta}^{-1}(r).
    \end{align}
  \begin{lemma}\label{mutirr12}
  $ \lambda(r_1)\lambda(r_2)=\lambda(r_1r_2)\overline{c}_{X^{\ast}}(r_1,r_2)$, for $r_i\in \Gamma_{\theta}$.
\end{lemma}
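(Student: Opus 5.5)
This is a direct algebraic manipulation combining two earlier results. First, by Lemma \ref{spgamma12}(1) (with $\psi=\psi_0$, as assumed in this section), the function $\widetilde{\beta}$ splits $\widetilde{c}_{X^{\ast}}$ on $\Gamma_{\theta}$:
\[\widetilde{c}_{X^{\ast}}(r_1,r_2)=\widetilde{\beta}(r_1)^{-1}\widetilde{\beta}(r_2)^{-1}\widetilde{\beta}(r_1r_2).\]
Second, Lemma \ref{relacto}(3) relates the two cocycles:
\[\overline{c}_{X^{\ast}}(r_1,r_2)=m_{X^{\ast}}(r_1r_2)^{-1}m_{X^{\ast}}(r_1)m_{X^{\ast}}(r_2)\widetilde{c}_{X^{\ast}}(r_1,r_2).\]

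I substitute the first identity into the second. Regrouping the factors gives
\[\overline{c}_{X^{\ast}}(r_1,r_2)=\frac{m_{X^{\ast}}(r_1)\widetilde{\beta}(r_1)^{-1}\cdot m_{X^{\ast}}(r_2)\widetilde{\beta}(r_2)^{-1}}{m_{X^{\ast}}(r_1r_2)\widetilde{\beta}(r_1r_2)^{-1}}=\frac{\lambda(r_1)\lambda(r_2)}{\lambda(r_1r_2)},\]
using the definition (\ref{lambda}). Multiplying through by $\lambda(r_1r_2)$ yields exactly $\lambda(r_1)\lambda(r_2)=\lambda(r_1r_2)\overline{c}_{X^{\ast}}(r_1,r_2)$.

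Equivalently, $\lambda=\overline{\beta}^{-1}$, where $\overline{\beta}=\widetilde{\beta}\,m_{X^{\ast},\psi_0}^{-1}$. The claim can then be read off from Lemma \ref{spgamma12}(2) by inverting. Since $\overline{c}_{X^{\ast}}$ is $\mu_2$-valued, it equals its own inverse, so the order of the factors does not matter.

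I expect no real obstacle. The only point needing care is the direction of inversion. The paper's convention for splitting is $c=\beta(r_1)^{-1}\beta(r_2)^{-1}\beta(r_1r_2)$, so the splitting function is $\lambda^{-1}$ rather than $\lambda$. Because $\overline{c}_{X^{\ast}}=\overline{c}_{X^{\ast}}^{-1}$, this sign issue is harmless.
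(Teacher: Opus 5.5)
Your proof is correct and is essentially the paper's argument: substitute the splitting $\widetilde{c}_{X^{\ast}}=\widetilde{\beta}(r_1)^{-1}\widetilde{\beta}(r_2)^{-1}\widetilde{\beta}(r_1r_2)$ from Lemma~\ref{spgamma12}(1) into Lemma~\ref{relacto}(3), then regroup the factors as $\lambda=m_{X^{\ast}}\widetilde{\beta}^{-1}$. Your worry about the direction of inversion, and the appeal to $\overline{c}_{X^{\ast}}$ being $\mu_2$-valued, are unnecessary. The direct substitution, and equally Lemma~\ref{spgamma12}(2) with $\overline{\beta}=\lambda^{-1}$, give the identity exactly as stated without inverting $\overline{c}_{X^{\ast}}$.
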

\begin{proof}
By Lemma \ref{spgamma12}(1), $\widetilde{\beta}(r_1)^{-1}\widetilde{\beta}(r_2)^{-1}\widetilde{\beta}(r_1r_2)=\widetilde{c}_{X^{\ast}}(r_1,r_2)$. Consequently,
\begin{align*}
& m_{X^{\ast}}(r_1)\widetilde{\beta}(r_1)^{-1} m_{X^{\ast}}(r_2)\widetilde{\beta}(r_2)^{-1}\\
&= m_{X^{\ast}}(r_1)m_{X^{\ast}}(r_2)\widetilde{\beta}(r_1r_2)^{-1}\widetilde{c}_{X^{\ast}}(r_1,r_2)\\
&= \widetilde{\beta}(r_1r_2)^{-1} m_{X^{\ast}}(r_1r_2)\overline{c}_{X^{\ast}}(r_1, r_2).
\end{align*}
\end{proof}
 For $\widetilde{r}=(r,t_r)\in \widetilde{\Gamma}_{\theta}$, we  define:
\[\widetilde{\lambda}: \widetilde{\Gamma}_{\theta} \longrightarrow \mathbb{T}; \widetilde{r}\longmapsto t_r \widetilde{\beta}^{-1}(r).\]
\begin{lemma}
$\widetilde{\lambda}$ is a character of $\widetilde{\Gamma}_{\theta}$.
\end{lemma}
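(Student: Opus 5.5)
To prove that $\widetilde{\lambda}$ is a character, I would check multiplicativity directly on $\widetilde{\Gamma}_{\theta}$, using the group law of the $\mu_8$-cover $\widetilde{\SL}_2(\R)$ defined by $\widetilde{c}_{X^{\ast}}$.

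\emph{Step 1 (the product).} For $\widetilde{r}_1=(r_1,t_1)$ and $\widetilde{r}_2=(r_2,t_2)$ in $\widetilde{\Gamma}_{\theta}$ we have
\[
\widetilde{r}_1\widetilde{r}_2=\bigl(r_1r_2,\ \widetilde{c}_{X^{\ast}}(r_1,r_2)\,t_1t_2\bigr).
\]
By definition, therefore,
\[
\widetilde{\lambda}(\widetilde{r}_1\widetilde{r}_2)=\widetilde{c}_{X^{\ast}}(r_1,r_2)\,t_1t_2\,\widetilde{\beta}(r_1r_2)^{-1}.
\]

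\emph{Step 2 (the splitting).} Since $\psi=\psi_0$ throughout this section, Lemma \ref{spgamma12}(1) applies and gives
\[
\widetilde{c}_{X^{\ast}}(r_1,r_2)=\widetilde{\beta}(r_1)^{-1}\widetilde{\beta}(r_2)^{-1}\widetilde{\beta}(r_1r_2).
\]
Substituting this into Step 1, the factor $\widetilde{\beta}(r_1r_2)$ cancels and we obtain
\[
t_1\widetilde{\beta}(r_1)^{-1}\cdot t_2\widetilde{\beta}(r_2)^{-1}=\widetilde{\lambda}(\widetilde{r}_1)\widetilde{\lambda}(\widetilde{r}_2).
\]

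\emph{Step 3 (values in $\mathbb{T}$).} It remains to check that $\widetilde{\lambda}$ takes values in $\mathbb{T}$. We have $t_r\in\mu_8\subset\mathbb{T}$, and $|\widetilde{\beta}(r)|=1$. The latter holds because either $\widetilde{\beta}(r)=1$ when $c=0$, or $\widetilde{\beta}(r)=G(d,c)$ when $c\neq 0$, which has modulus one by Lemma \ref{chap8betacd} (each case there is a root of unity times a Legendre symbol). It follows that $\widetilde{\lambda}(1,1)=\widetilde{\beta}(I)^{-1}=1$, so $\widetilde{\lambda}$ is a homomorphism into $\mathbb{T}$.

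\emph{Main obstacle.} The only nontrivial input is the splitting of Lemma \ref{spgamma12}(1) from Lion--Vergne, which we take as given. The one point needing care is that the convention for the group law, with the cocycle multiplying $t_1t_2$, matches the direction of the coboundary in that lemma. That direction is exactly what makes $t\,\widetilde{\beta}^{-1}$ (and not $t\,\widetilde{\beta}$) the character.
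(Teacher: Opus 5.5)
Your proof is correct and is essentially the paper's argument: expand $\widetilde{r}_1\widetilde{r}_2$ using the cocycle, then apply Lemma~\ref{spgamma12}(1) to cancel $\widetilde{\beta}(r_1r_2)$; you also correctly write $\widetilde{c}_{X^{\ast}}(r_1,r_2)$ where the paper's proof has the typo $\widetilde{c}_{X^{\ast}}(t_r,t_s)$. Your Step~3 on values in $\mathbb{T}$ is an extra check the paper leaves implicit, but the case $d=0$ (so $c=\pm1$) is covered by the convention $G(0,\pm1)=1$, not by Lemma~\ref{chap8betacd}, which assumes $c$ and $d$ are both nonzero.
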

\begin{proof}
Let $\widetilde{s}=(s,t_s)\in \widetilde{\Gamma}_{\theta}$. Then $\widetilde{r}\widetilde{s}=[rs, t_rt_s \widetilde{c}_{X^{\ast}}(t_r,t_s)]$. 
$$\widetilde{\lambda}(\widetilde{r}\widetilde{s})=t_rt_s \widetilde{c}_{X^{\ast}}(r,s)\widetilde{\beta}^{-1}(rs)\stackrel{\textrm{Lem. } \ref{spgamma12}}{=}t_rt_s \widetilde{\beta}^{-1}(r)\widetilde{\beta}^{-1}(s)=\widetilde{\lambda}(\widetilde{r})\widetilde{\lambda}(\widetilde{s}).$$
\end{proof}
Recall the isomorphism: $$\overline{\Gamma}_{\theta}\stackrel{\iota}{\to} \Im (\iota) (\subseteq\widetilde{\Gamma}_{\theta});[r,\epsilon] \longmapsto [r,m_{X^{\ast}}(r)\epsilon].$$   Let $\widetilde{r}=(r,t_r)\in \overline{\Gamma}_{\theta}$.  Let us define:
\begin{align}\label{overlinewidegamma}
\overline{\lambda}=\widetilde{\lambda} \circ \iota: \overline{\Gamma}_{\theta} \longrightarrow \mathbb{T}; [r,\epsilon]\longmapsto \epsilon m_{X^{\ast}}(r)\widetilde{\beta}^{-1}(r).
\end{align}
\begin{lemma}\label{lambdaoverlin}
$\overline{\lambda}(r,\epsilon)=\lambda(r) \epsilon$, for $(r,\epsilon)\in \overline{\Gamma}_{\theta}$.
\end{lemma}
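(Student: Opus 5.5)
This identity is a direct unfolding of definitions, so I will simply compare the two expressions. By (\ref{overlinewidegamma}), for $(r,\epsilon)\in\overline{\Gamma}_{\theta}$ we have
\[
\overline{\lambda}(r,\epsilon)=\widetilde{\lambda}(\iota(r,\epsilon))=\widetilde{\lambda}\bigl([r,m_{X^{\ast}}(r)\epsilon]\bigr)=m_{X^{\ast}}(r)\,\epsilon\,\widetilde{\beta}^{-1}(r).
\]
Here I use the definition of $\widetilde{\lambda}$ with $t_r=m_{X^{\ast}}(r)\epsilon$. On the other side, the definition (\ref{lambda}) gives $\lambda(r)=m_{X^{\ast}}(r)\widetilde{\beta}^{-1}(r)$. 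The two expressions coincide because the values lie in the commutative group $\mathbb{T}$, so the factors may be reordered.

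The only point needing care is that $\iota$ is well defined on $\overline{\Gamma}_{\theta}$ and lands in $\widetilde{\Gamma}_{\theta}$. This is the restriction of the isomorphism (\ref{overwideSL}) with $\psi=\psi_0$, and it is a homomorphism by Lemma \ref{relacto}(3). I will also check that the convention $m_{X^{\ast}}=m_{X^{\ast},\psi_0}$ in (\ref{lambda}) matches the one used in $\iota$; this holds because $\psi=\psi_0$ is fixed throughout this section.

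As a consistency check, I will note that $\overline{\lambda}$ is therefore a character, being the composite of the homomorphism $\iota$ with the character $\widetilde{\lambda}$. This agrees with Lemma \ref{mutirr12}: computing
\[
\lambda(r_1)\epsilon_1\,\lambda(r_2)\epsilon_2=\lambda(r_1r_2)\,\overline{c}_{X^{\ast}}(r_1,r_2)\,\epsilon_1\epsilon_2,
\]
the right-hand side is exactly $\lambda(\cdot)\epsilon$ evaluated on the product $[r_1r_2,\overline{c}_{X^{\ast}}(r_1,r_2)\epsilon_1\epsilon_2]$. I do not expect any real obstacle in this argument.
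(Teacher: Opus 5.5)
Your proposal is correct and is essentially the paper's proof: unfold $\overline{\lambda}=\widetilde{\lambda}\circ\iota$ to get $\epsilon\, m_{X^{\ast}}(r)\widetilde{\beta}^{-1}(r)$ and compare with the definition $\lambda(r)=m_{X^{\ast}}(r)\widetilde{\beta}^{-1}(r)$. The remarks about $\iota$ being a homomorphism (via Lemma \ref{relacto}(3)) and the consistency check against Lemma \ref{mutirr12} are accurate but not needed for the identity itself.
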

\begin{proof}
$\overline{\lambda}(r,\epsilon)=m_{X^{\ast}}(r)\epsilon \widetilde{\beta}^{-1}(r) =\epsilon \lambda(r)$.
\end{proof}
Note:
\[ \Gamma_{\theta}=\Gamma(2) \sqcup \Gamma(2) \omega.\]
If $a,b \in \Z\setminus \{0\}$, we let $\left(\frac{a}{b}\right)$ denote the Kronecker  symbol,  and  define $\left(\frac{0}{1}\right)=1$, $\left(\frac{0}{-1}\right)=-1$.\footnote{Our definition differs from Shimura's \cite{Sh} in that we take $\Arg: \C^{\times} \to [ -\pi, \pi)$ instead of $ ( -\pi, \pi]$ in \cite{Sh}.} Following \cite{Sh}, let $\epsilon_d=\left\{ \begin{array}{cl}1 & \textrm{ if } d\equiv 1(\bmod 4),\\ i & \textrm{ if } d\equiv 3(\bmod 4).\end{array}\right.$
\begin{lemma}\label{gammar}
 $\overline{\lambda}([r,1])=\left\{ \begin{array}{ll} \left(\frac{2c}{d}\right)\epsilon^{-1}_d, & \textrm{ if } r=\begin{pmatrix} a& b\\ c& d\end{pmatrix} \in \Gamma(2),\\ \left(\frac{2c}{d}\right)\epsilon^{-1}_de^{-\tfrac{i \pi}{4}}(-c,d)_{\R},& \textrm{ if } r=\begin{pmatrix} a& b\\ c& d\end{pmatrix}\begin{pmatrix} 0& -1\\ 1& 0\end{pmatrix} \in \Gamma(2)\omega.\end{array} \right.$
\end{lemma}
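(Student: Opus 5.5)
By Lemma \ref{lambdaoverlin}, $\overline{\lambda}([r,1])=\lambda(r)=m_{X^{\ast}}(r)\widetilde{\beta}(r)^{-1}$. The plan is to evaluate this directly from the definitions of $m_{X^{\ast}}=m_{X^{\ast},\psi_0}$ and of $\widetilde{\beta}$ in (\ref{betag}), using the explicit Gauss-sum values in Lemma \ref{chap8betacd}. One convention point first: in the second case the entries $a,b,c,d$ are those of the $\Gamma(2)$-factor $r_0$ with $r=r_0\omega=\begin{pmatrix} b& -a\\ d& -c\end{pmatrix}$. So the lower-left entry of $r$ is $d$ (odd) and the lower-right entry is $-c$ (even).

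\emph{Case $r\in\Gamma(2)$.}
\begin{itemize}
\item If $c=0$, then $d=\pm1$ and $\widetilde{\beta}(r)=1$. Hence $\lambda(r)=m_{X^{\ast}}(r)=e^{\frac{i\pi}{4}(1-\sgn d)}$, which is $1$ for $d=1$ and $i$ for $d=-1$.
\item This should match $\left(\frac{0}{d}\right)\epsilon_d^{-1}$ under the stated conventions $\left(\frac{0}{1}\right)=1$, $\left(\frac{0}{-1}\right)=-1$, $\epsilon_{-1}=i$. For $d=-1$ this gives $-i^{-1}=i$, as required.
\item If $c\neq0$, write $c=2c'$ with $d$ odd. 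Then $\widetilde{\beta}(r)=G(d,c)$ is given by the first two lines of Lemma \ref{chap8betacd}. Also $m_{X^{\ast}}(r)=e^{-\frac{i\pi}{4}\sgn c}$, so the factors $e^{\mp\frac{i\pi}{4}\sgn(cd)}$ partly cancel.
\item One is left with $\left(\frac{c'}{|d|}\right)^{-1}$ times a unit depending on $|d|\bmod 4$, $\sgn c$ and $\sgn d$.
\item The remaining task is to rewrite $\left(\frac{c'}{|d|}\right)$ as $\left(\frac{2c}{d}\right)=\left(\frac{4c'}{d}\right)$. This uses the Kronecker-symbol sign rule $\left(\frac{c'}{d}\right)=\left(\frac{c'}{|d|}\right)(c',d)_{\R}$, together with $\epsilon_d$ for negative $d$ (note $d\equiv 1\pmod 4$ iff $|d|\equiv 3\pmod 4$ when $d<0$).
\item I would run through the four sign combinations of $(c,d)$ and the two residues of $|d|\bmod 4$ and check each one.
\end{itemize}

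\emph{Case $r=r_0\omega$.}
\begin{itemize}
\item The lower-left entry of $r$ is $d\neq0$, so $m_{X^{\ast}}(r)=e^{-\frac{i\pi}{4}\sgn d}$.
\item We have $\widetilde{\beta}(r)=G(-c,d)$. If $c\neq0$, this is given by the last two lines of Lemma \ref{chap8betacd} with even second argument $-c=2(-c/2)$. Since $G(-c,d)=\overline{G(c,d)}$, one can equally start from the classical evaluation quoted in its proof.
\item If $c=0$, then $d=\pm1$ and $\widetilde{\beta}(r)=G(0,\pm1)=1$.
\item In each subcase the aim is to reduce $\lambda(r)$ to $\left(\frac{2c}{d}\right)\epsilon_d^{-1}e^{-\frac{i\pi}{4}}(-c,d)_{\R}$.
\item The quadratic-reciprocity-type sign $(-c,d)_{\R}$ should come out of comparing $e^{-\frac{i\pi}{4}\sgn d}$ with $e^{-\frac{i\pi}{4}}$, and of converting $\left(\frac{-c/2}{|d|}\right)$ into $\left(\frac{2c}{d}\right)$.
\end{itemize}

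A cleaner alternative for the second case is multiplicativity. By Lemma \ref{mutirr12}, $\lambda(r_0\omega)=\lambda(r_0)\lambda(\omega)\overline{c}_{X^{\ast}}(r_0,\omega)^{-1}$. Here $\lambda(\omega)=m_{X^{\ast}}(\omega)=e^{-\frac{i\pi}{4}}$, and by Lemma \ref{relacto}(1), $\overline{c}_{X^{\ast}}(r_0,\omega)=(x(r_0),1)_{\R}(-x(r_0),x(r_0\omega))_{\R}$.
\begin{itemize}
\item For $c\neq0$ this is $(-c,d)_{\R}$, exactly the computation in Example \ref{nu2abcd}(1).
\item For $c=0$ it is $(-d,d)_{\R}=1$, while the formula's factor $(-c,d)_{\R}=(0,d)_{\R}=1$ by the extended Hilbert symbol. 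So the formula again holds.
\end{itemize}
This reduces the second case entirely to the first, and I would present it this way.

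The main obstacle is the sign bookkeeping in the first case: reconciling the Gauss-sum normalisation (with $\Arg$ valued in $[-\pi,\pi)$) against the Kronecker-symbol and $\epsilon_d$ conventions for negative $d$, including the boundary case $c=0$, $d=-1$. I would handle this by an exhaustive table over $\sgn c$, $\sgn d$ and $|d|\bmod 4$, checking each entry against a small numerical example such as $r=\begin{pmatrix} 1&0\\ 2&1\end{pmatrix}$ or $-I$.
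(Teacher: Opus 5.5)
Your proposal is correct and matches the paper's proof. On $\Gamma(2)$ the paper likewise evaluates $\lambda(r)=m_{X^{\ast}}(r)\widetilde{\beta}(r)^{-1}$ directly from Lemma~\ref{chap8betacd}, treating $c=0$ separately and then splitting by $\sgn d$ and $d\bmod 4$. On $\Gamma(2)\omega$ it uses exactly your multiplicativity argument, with $\overline{\lambda}([\omega,1])=e^{-\frac{i\pi}{4}}$ and $\overline{c}_{X^{\ast}}(r_0,\omega)=(-c,d)_{\R}$ (which equals $1$ when $c=0$).

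There is one cosmetic slip in your aside on the direct route for the second case: the even entry $-c$ is the first argument of $G(-c,d)$, not the second. Since you present that case via Lemma~\ref{mutirr12}, this plays no role.
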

\begin{proof}a) Let $r\in \Gamma(2)$. 
\begin{itemize}
\item[(1)] If $c=0$, $m_{X^{\ast}}(r)=e^{\tfrac{\pi i}{4}(1-\sgn(d))}$ and $\widetilde{\beta}^{-1}(r)=1$, then $\overline{\lambda}([r,1])= m_{X^{\ast}}(r)\widetilde{\beta}(r)^{-1}=\left(\frac{2c}{d}\right)\epsilon_d^{-1}$.
\item[(2 i)] If $c\neq 0$, $\sgn d>0$ and $d\equiv 1(\bmod 4)$, then $m_{X^{\ast}}(r)=e^{-\tfrac{\pi i \sgn(c)}{4}}$, $\widetilde{\beta}(r)=\left(\frac{2c}{d}\right) e^{-\tfrac{\pi i \sgn(c)}{4}}$ and $\overline{\lambda}([r,1])= m_{X^{\ast}}(r)\widetilde{\beta}(r)^{-1}=\left(\frac{2c}{d}\right)=\left(\frac{2c}{d}\right)\epsilon_d^{-1}$.
\item[(2 ii)] If $c\neq 0$, $\sgn d<0$ and $d\equiv 1(\bmod 4)$, then $m_{X^{\ast}}(r)=e^{-\tfrac{\pi i \sgn(c)}{4}}$, $\widetilde{\beta}(r)=(-i)\left(\frac{2c}{-d}\right) e^{\tfrac{\pi i \sgn(c)}{4}}$ and  \[\overline{\lambda}([r,1])=e^{-\tfrac{\pi i \sgn(c)}{4}}i\left(\frac{2c}{d}\right) \sgn(c) e^{-\tfrac{\pi i \sgn(c)}{4}}=\left(\frac{2c}{d}\right)= \left(\frac{2c}{d}\right)\epsilon_d^{-1}.\]
\item[(2 iii)]  If $c\neq 0$, $\sgn d>0$ and $d\equiv 3(\bmod 4)$, then $m_{X^{\ast}}(r)=e^{-\tfrac{\pi i \sgn(c)}{4}}$, $\widetilde{\beta}(r)=i\left(\frac{2c}{d}\right) e^{-\tfrac{\pi i \sgn(c)}{4}}$ and \[\overline{\lambda}([r,1])=e^{-\tfrac{\pi i \sgn(c)}{4}}(-i)\left(\frac{2c}{d}\right)  e^{\tfrac{\pi i \sgn(c)}{4}}=(-i)\left(\frac{2c}{d}\right) = \left(\frac{2c}{d}\right)\epsilon_d^{-1}.\]
    \item[(2 iv)]  If $c\neq 0$, $\sgn d<0$ and $d\equiv 3(\bmod 4)$, then $m_{X^{\ast}}(r)=e^{-\tfrac{\pi i \sgn(c)}{4}}$, $\widetilde{\beta}(r)=\left(\frac{2c}{-d}\right) e^{\tfrac{\pi i \sgn(c)}{4}}$ and \[\overline{\lambda}([r,1])=e^{-\tfrac{\pi i \sgn(c)}{4}}\left(\frac{2c}{d}\right) \sgn(c)  e^{-\tfrac{\pi i \sgn(c)}{4}}=(-i)\left(\frac{2c}{d}\right) = \left(\frac{2c}{d}\right)\epsilon_d^{-1}.\]
        \end{itemize}
b)  $m_{X^{\ast}}(\omega)=e^{-\tfrac{i \pi}{4}}$,  $\widetilde{\beta}^{-1}(\omega)=1$,  $\overline{\lambda}([\omega,1])= e^{-\tfrac{i \pi}{4}}$. Let $r=r_1\omega$, with $r_1=\begin{pmatrix} a& b\\ c& d\end{pmatrix}\in \Gamma(2)$.
\begin{align*}
\overline{c}_{X^{\ast}}(r_1,\omega)&=(x(r_1),x(\omega))_{\mathbb R}\,(-x(r_1)x(\omega),x(r_1\omega))_{\mathbb R}=(-x(r_1)x(\omega),x(r_1\omega))_{\mathbb R}\\
&=\left\{\begin{array}{ll} (-c,d)_{\R}&   \textrm{ if } c\neq  0, \\ (-d,d)_{\R}=1 & \textrm{ if } c= 0,\end{array}\right.\\
&=(-c,d)_{\R}.
\end{align*}
\begin{align*}
\overline{\lambda}([r_1\omega,1])&=\overline{\lambda}([r_1,1])\overline{\lambda}([\omega,1])\overline{c}_{X^{\ast}}(r_1,\omega)^{-1}\\
&=\overline{\lambda}([r_1,1])\overline{\lambda}([\omega,1])\overline{c}_{X^{\ast}}(r_1,\omega)\\
&=\left(\frac{2c}{d}\right)\epsilon_d^{-1}e^{-\tfrac{i \pi}{4}}\overline{c}_{X^{\ast}}(r_1,\omega)\\
&=\left(\frac{2c}{d}\right)\epsilon_d^{-1}e^{-\tfrac{i \pi}{4}}(-c,d)_{\R}.
\end{align*}
\end{proof}
\begin{remark}
The function defined above coincides with the multiplier system  $\nu$ appearing in  \cite[Prop.2.3.3(b)]{CoSt}.
\end{remark}
\begin{proof}
By obversation, the two expressions agree when $r\in \Gamma(2)$.  It therefore suffices to verify the remaining case where $r=\begin{pmatrix} a& b\\ c& d\end{pmatrix} \in \Gamma(2)\omega$. Write  $r=r_1\omega$, with  $$r_1=r\omega^{-1}=\begin{pmatrix} a& b\\ c& d\end{pmatrix}\begin{pmatrix} 0& 1\\ -1& 0\end{pmatrix}=\begin{pmatrix} -b& a\\ -d& c\end{pmatrix}.$$
Consequently, we obtain:
\[\overline{\lambda}([r,1])=\left(\frac{-2d}{c}\right)\epsilon^{-1}_ce^{-\tfrac{i \pi}{4}}(d,c)_{\R}=\left(\frac{2d}{c}\right)[\left(\frac{-1}{c}\right)\epsilon^{-1}_c]e^{-\tfrac{i \pi}{4}}(d,c)_{\R}=\left(\frac{2d}{c}\right)\epsilon_ce^{-\tfrac{i \pi}{4}}(c,d)_{\R},\]
which recovers precisely the formula given in \cite[Prop.~2.3.3(b)]{CoSt}.
\end{proof}
\subsection{Two-dimensional representations of $ \widetilde{\Gamma}^{\pm}_{\theta}, \overline{\Gamma}^{\pm}_{\theta}$}\label{twodim}
Let $\widetilde{\Gamma}^{\pm}_{\theta}, \overline{\Gamma}^{\pm}_{\theta}$ denote the corresponding covering subgroups of $\widetilde{\SL}^{\pm}_2(\Z), \overline{\SL}_2^{\pm}(\Z)$ respectively. Let us define:
\[\widetilde{\lambda}^{\pm}\stackrel{Def.}{=}\Ind_{\widetilde{\Gamma}_{\theta}}^{\widetilde{\Gamma}^{\pm}_{\theta}} \widetilde{\lambda}, \qquad\quad
\overline{\lambda}^{\pm}\stackrel{Def.}{=}\Ind_{\overline{\Gamma}_{\theta}}^{\overline{\Gamma}^{\pm}_{\theta}} \overline{\lambda}.\]
\begin{lemma}\label{gammatwodim}
Both $\widetilde{\lambda}^{\pm}$, $\overline{\lambda}^{\pm}$ are irreducible representations of two dimension.
\end{lemma}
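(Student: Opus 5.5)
The plan is Mackey's irreducibility criterion for an index-two subgroup. First, $\overline{\Gamma}_{\theta}$ has index $2$ in $\overline{\Gamma}^{\pm}_{\theta}$ and is normal there. Indeed, $\Gamma^{\pm}_{\theta}=\Gamma_{\theta}\sqcup \Gamma_{\theta}h_{-1}$, since $h_{-1}=\diag(1,-1)$ normalizes $\Gamma_{\theta}$: conjugation by $h_{-1}$ only changes the signs of $b$ and $c$, which preserves the parity conditions $ac\equiv bd\equiv 0 \pmod 2$. The same decomposition holds for the covers $\widetilde{\Gamma}^{\pm}_{\theta}$. Hence each induced representation has dimension $[\overline{\Gamma}^{\pm}_{\theta}:\overline{\Gamma}_{\theta}]=2$.

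By Mackey's criterion, $\Ind$ of the character $\overline{\lambda}$ from a normal subgroup of index $2$ is irreducible if and only if the conjugate character $\overline{\lambda}^{h}:x\mapsto \overline{\lambda}(h^{-1}xh)$ differs from $\overline{\lambda}$, where $h=[h_{-1},1]$. So it suffices to exhibit one element $x\in \overline{\Gamma}_{\theta}$ with $\overline{\lambda}(h^{-1}xh)\neq \overline{\lambda}(x)$, and likewise for $\widetilde{\lambda}$.

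For the test element, take $x=[\omega,1]$, or $[u(2),1]$ if $\omega$ does not work. Conjugating in the cover, $[h_{-1},1]^{-1}[g,1][h_{-1},1]=[g^{h_{-1}},\nu_2(-1,g)\cdot(\text{cocycle corrections})]$, and by Lemma \ref{nu2} the twist is $\nu_2(-1,g)=1$ whenever $c\neq 0$. With $g=\omega$ we get $\omega^{h_{-1}}=\omega^{-1}=-\omega$. I then compare $\overline{\lambda}([\omega,1])=e^{-i\pi/4}$ (computed in Lemma \ref{gammar}) with $\overline{\lambda}$ at the conjugate. The latter I evaluate via Lemma \ref{gammar}, writing $\omega^{-1}=(-I)\omega$ with $-I\in\Gamma(2)$, together with the cocycle value $\overline{c}_{X^{\ast}}(-I,\omega)$ from Lemma \ref{relacto}(1). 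Alternatively, use $u(2)$: its conjugate is $u(-2)=u(2)^{-1}$. Since $\overline{\lambda}([u(2),1])=\lambda(u(2))=1$ while $\lambda(u_-(2))$ and $\lambda(u_-(-2))$ pick up $\left(\frac{\pm 4}{1}\right)$-type factors, $u_-(2)$ is the better candidate. Indeed $\widetilde{\beta}(u_-(2))=G(1,2)=e^{-\pi i/4}$, while its conjugate $u_-(-2)$ has $\widetilde{\beta}=G(1,-2)=e^{\pi i/4}$, and the $m_{X^{\ast}}$ factors are $e^{\mp i\pi/4}$. This gives $\lambda(u_-(2))=1$ but $\lambda(u_-(-2))$ equal to $1$ only if the signs cancel, so this must be checked. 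The cleanest criterion is that the conjugate of $\overline{\lambda}$ is $x\mapsto \overline{\lambda}(\overline{x^{h_{-1}}})$, and Lemma \ref{constants2} ($\widetilde{s}(g)\widetilde{s}(g^{h_{-1}})=1$) suggests that $\overline{\lambda}^{h}$ behaves like $\overline{\lambda}^{-1}$, i.e.\ complex conjugation. Irreducibility then follows once $\overline{\lambda}$ is shown not to be real-valued. For $x=[\omega,1]$ with value $e^{-i\pi/4}$, the conjugate value would be $e^{i\pi/4}\neq e^{-i\pi/4}$, which is what I expect to verify.

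The $\widetilde{\lambda}$ case then follows by transport through the isomorphism $\iota:[r,\epsilon]\mapsto[r,m_{X^{\ast}}(r)\epsilon]$ of (\ref{overwideGL}), which is compatible with the $h_{-1}$-action, since $\overline{\lambda}=\widetilde{\lambda}\circ\iota$. Alternatively, I can compute directly with $\nu(-1,\omega)$ from Example \ref{chap3example2}.

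The main obstacle is computing the conjugation action of $[h_{-1},\cdot]$ on $[\omega,1]$ exactly inside the cover, that is, the correct sign or $\mu_8$ factor coming from $\overline{C}_{X^{\ast}}$ or $\widetilde{C}_{X^{\ast}}$ and $\nu_2,\nu$. A sign error there could make the two characters appear equal. I would cross-check the computation on a second element such as $u_-(2)$.
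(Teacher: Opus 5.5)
Your argument is correct and is essentially the paper's. Both use Mackey's criterion for the index-two normal subgroup, pick a test element on which the character and its $[h_{-1},1]$-conjugate differ, and transfer between the $\mu_2$- and $\mu_8$-covers. The paper tests $\begin{pmatrix}3&4\\2&3\end{pmatrix}$ in $\widetilde{\Gamma}_{\theta}$ and descends, while your $[\omega,1]$ works directly: $[h_{-1},1]^{-1}[\omega,1][h_{-1},1]=[\omega^{-1},1]$ because $\nu_2(-1,\omega)=1$ (Lemma~\ref{nu2}), and $\lambda(\omega)=e^{-i\pi/4}\neq e^{i\pi/4}=\lambda(\omega^{-1})$, in agreement with Lemma~\ref{hminustoh} since $\chi_{\omega}(\omega)=i$.

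Drop the fallback candidates, though. The elements $u(\pm 2)$ and $u_-(\pm 2)$ lie in $\Gamma(4,2)$, where $\chi_{\omega}$ is trivial, and indeed $\lambda(u(\pm 2))=\lambda(u_-(\pm 2))=1$. So they can neither serve as test elements nor cross-check the discrepancy.
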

\begin{proof}
1)  Take the element \(r=\begin{pmatrix}3 & 4\\ 2& 3\end{pmatrix}\in\Gamma(2)\subseteq\Gamma_{\theta}\) and set \(\widetilde{r}=[r,1]\).    Then:
\[[h_{-1},1]^{-1}=[h_{-1}, \widetilde{C}_{X^{\ast}}([-1,1], [-1, 1])^{-1}]=[h_{-1},1],\]
\[\widetilde{C}_{X^{\ast}}(h_{-1}, r)\widetilde{C}_{X^{\ast}}(h_{-1} r, h_{-1} )=\nu( 1,1)\widetilde{c}_{X^{\ast}}(1, r)\nu(- 1,r)\widetilde{c}_{X^{\ast}}(r^{h_{-1}}, 1)= \gamma(-1, \psi^{\tfrac{1}{2}})^{-1}=i,\]
\[[h_{-1},1]\widetilde{r}[h_{-1},1]=[h_{-1}rh_{-1},\widetilde{C}_{X^{\ast}}(h_{-1}, r)\widetilde{C}_{X^{\ast}}(h_{-1} r, h_{-1} )]=[h_{-1}rh_{-1},i].\]
Consequently,
  \[ \widetilde{\beta}(r)\stackrel{\textrm{Lem. \ref{chap8betacd}}}{=}i \Bigl(\frac{4}{3}\Bigr) e^{-\tfrac{ \pi i}{4}}=i e^{-\tfrac{ \pi i}{4}}, \qquad
    \widetilde{\lambda}(\widetilde{r})
    = \widetilde{\beta}(r)^{-1}=(-i) e^{\tfrac{ \pi i}{4}},
  \]
  whereas
  \[ \widetilde{\beta}(r^{h_{-1}})\stackrel{\textrm{Lem. \ref{chap8betacd}}}{=}i \Bigl(\frac{-4}{3}\Bigr) e^{\tfrac{ \pi i}{4}}=(-i) e^{\tfrac{ \pi i}{4}},\qquad
    \widetilde{\lambda}^{h_{-1}}(\widetilde{r})
    =\widetilde{\lambda}\!\left([r^{h_{-1}}, i]\right)
    =i\widetilde{\beta}(r^{h_{-1}})^{-1}=- e^{-\tfrac{ \pi i}{4}}.
  \]
  Hence \(\widetilde{\lambda}\neq\widetilde{\lambda}^{h_{-1}}\), proving that \(\widetilde{\lambda}^{\pm}\) is irreducible.\\
   2)  Note that $\overline{\Gamma}^{\pm}_{\theta}$  embeds into $\widetilde{\Gamma}^{\pm}_{\theta}$ with image differing on the center, hence the result follows. 
\end{proof}
\subsection{Finite-dimensional representations of $\widetilde{\SL}_{2}(\Z)$ and $\overline{\SL}_{2}(\Z)$}\label{FindimSLwide} Let $N^2$ be an odd positive integer number. 
Recall:  
$$  u(N^2)= \begin{pmatrix} 1& N^2\\ 0& 1\end{pmatrix}, \quad  u_-(-N^2)=\begin{pmatrix} 1& 0\\-N^2& 1\end{pmatrix}, \quad  1= \begin{pmatrix} 1& 0\\0&1\end{pmatrix}.$$ 

\begin{definition}\label{Mq123}
\begin{itemize}
\item[(1)] In $\SL_2(\Z)$, define  $ M_{q_1}=u(N^2)$, $M_{q_2}=1$,  $M_{q_3}=u_-(-N^2)$, and set $\mathcal{M}_2=\{M_{q_1},M_{q_2},M_{q_3}\}$.
\item[(2)] In $\widetilde{\SL}_2(\Z)$, define $ \widetilde{M}_{q_1}=[u(N^2),1], \widetilde{M}_{q_2}=[1,1], \widetilde{M}_{q_3}=[u_-(-N^2),1],$ and set  $\widetilde{\mathcal{M}}_2=\{\widetilde{M}_{q_1},\widetilde{M}_{q_2},\widetilde{M}_{q_3}\}$.
\item[(3)]  In $\overline{\SL}_2(\Z)$, define  $ \overline{M}_{q_1}=[u(N^2),1], \overline{M}_{q_2}=[1,1], \overline{M}_{q_3}=[u_-(-N^2),1],$ and set  $\overline{\mathcal{M}}_2=\{\overline{M}_{q_1},\overline{M}_{q_2},\overline{M}_{q_3}\}$.
\end{itemize}
\end{definition}
Then:
\[\SL_2(\Z)=\sqcup^3_{i=1} \Gamma_{\theta}  M_{q_i}=\sqcup^3_{i=1}  M_{q_i}  \Gamma_{\theta}.\]  
\[\widetilde{\SL}_{2}(\Z)=\sqcup^3_{i=1} \widetilde{\Gamma}_{\theta}  \widetilde{M}_{q_i}=\sqcup^3_{i=1} \widetilde{M}_{q_i} \widetilde{\Gamma}_{\theta} .\] 
\[\overline{\SL}_{2}(\Z)=\sqcup^3_{i=1} \overline{\Gamma}_{\theta}  \overline{M}_{q_i}=\sqcup^3_{i=1} \overline{M}_{q_i} \overline{\Gamma}_{\theta} .\] 
\begin{definition}
\begin{itemize}
\item[(1)]  $\widetilde{\gamma}=\Ind_{\widetilde{\Gamma}_{\theta}}^{\widetilde{\SL}_{2}(\Z)} \widetilde{\lambda}^{-1}, \widetilde{M}=\Ind_{\widetilde{\Gamma}_{\theta}}^{\widetilde{\SL}_{2}(\Z)} \C.$
\item[(2)]$\overline{\gamma}=\Ind_{\overline{\Gamma}_{\theta}}^{\overline{\SL}_{2}(\Z)} \overline{\lambda}^{-1}, \overline{M}=\Ind_{\overline{\Gamma}_{\theta}}^{\overline{\SL}_{2}(\Z)} \C.$
\end{itemize}
\end{definition}
\begin{proposition}\label{irrdgamma}
\begin{itemize}
\item[(1)]  $\widetilde{\gamma}$ is an irreducible representation of $\widetilde{\SL}_{2}(\Z)$. 
\item[(2)] $\overline{\gamma}$ is an irreducible representation of $\overline{\SL}_{2}(\Z)$. 
\end{itemize}
\end{proposition}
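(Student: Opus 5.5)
We would prove irreducibility with Mackey's criterion for induced representations, since $\widetilde{\Gamma}_{\theta}$ has index $3$ in $\widetilde{\SL}_2(\Z)$. Part (2) then follows from part (1) through the isomorphism $\iota$ of (\ref{overwideSL}): it identifies $\overline{\SL}_2(\Z)$ with a subgroup of $\widetilde{\SL}_2(\Z)$ containing the image of $\overline{\Gamma}_{\theta}$, and $\overline{\lambda}=\widetilde{\lambda}\circ\iota$ by (\ref{overlinewidegamma}). The same Mackey computation therefore transports verbatim, because it only involves elements of the form $[r,1]$. So it suffices to treat $\widetilde{\gamma}$.

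\textbf{Setup.} By Mackey, $\widetilde{\gamma}=\Ind_{\widetilde{\Gamma}_{\theta}}^{\widetilde{\SL}_2(\Z)}\widetilde{\lambda}^{-1}$ is irreducible if and only if, for each double coset representative $s\notin\widetilde{\Gamma}_{\theta}$, the characters $\widetilde{\lambda}^{-1}$ and $(\widetilde{\lambda}^{-1})^{s}$ differ on $\widetilde{\Gamma}_{\theta}\cap s\widetilde{\Gamma}_{\theta}s^{-1}$.

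First we determine the double cosets $\Gamma_{\theta}\backslash\SL_2(\Z)/\Gamma_{\theta}$. The three right cosets are represented by $1$, $u(1)$ and $u_-(1)$; the double cosets are the orbits of $\Gamma_{\theta}$ acting on them. We claim there are exactly two double cosets, $\Gamma_{\theta}$ and $\Gamma_{\theta}u(1)\Gamma_{\theta}$. The reason is that $\omega\in\Gamma_{\theta}$ and $\omega u(1)\omega^{-1}=u_-(-1)\in\Gamma_{\theta}u_-(1)$, so $\omega$ swaps the two nontrivial cosets. Hence a single test element $s=[u(1),1]$ suffices, with intersection group $H=\widetilde{\Gamma}_{\theta}\cap s\widetilde{\Gamma}_{\theta}s^{-1}$, which lies over $\Gamma_{\theta}\cap u(1)\Gamma_{\theta}u(-1)$. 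This intersection contains $\Gamma(2)$, since $\Gamma(2)$ is normal in $\SL_2(\Z)$.

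\textbf{Key step.} Exhibit one explicit $r\in\Gamma(2)$ with $\widetilde{\lambda}([r,1])\neq\widetilde{\lambda}(s^{-1}[r,1]s)$. We compute $s^{-1}[r,1]s=[u(-1)ru(1),\epsilon]$, where
\[
\epsilon=\widetilde{c}_{X^{\ast}}(u(-1),r\,u(1))\,\widetilde{c}_{X^{\ast}}(r,u(1)).
\]
Both factors equal $1$ by the lemma on $P_{>0}(\R)$, since $u(\pm1)\in P_{>0}(\R)$. So we only need $\widetilde{\beta}(r)\neq\widetilde{\beta}(u(-1)ru(1))$, which we check using Lemma \ref{chap8betacd}.

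Take for instance $r=u_-(2)$. Then $\widetilde{\beta}(r)=G(1,2)=e^{-\pi i/4}$, as computed in the lemma on $\chi_{\Gamma_{\theta}}$. On the other hand,
\[
u(-1)\,u_-(2)\,u(1)=\begin{pmatrix}-1&-2\\2&3\end{pmatrix},
\]
whose $\widetilde{\beta}$ is $G(3,2)$. By Lemma \ref{chap8betacd} (case $c=2$, $|d|=3\equiv 3\bmod 4$), this equals $i\left(\tfrac{1}{3}\right)e^{-\pi i/4}=ie^{-\pi i/4}$, which differs from $e^{-\pi i/4}$. Hence the two characters differ and $\widetilde{\gamma}$ is irreducible.

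\textbf{Main obstacle.} The delicate point is the bookkeeping of cocycle factors when conjugating inside the covering group, and matching the Mackey double coset count. This is why we conjugate by the unipotent $u(1)$, where the cocycle visibly vanishes, rather than by $\omega$. If the value of the Gauss sum under our conventions turned out to coincide, we would instead try $r=u_-(2)^{k}$ or $u(2)$ conjugated by $u_-(1)$, using the representative $u_-(1)$ of the same double coset.
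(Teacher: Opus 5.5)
Your proposal is correct and follows essentially the same route as the paper. The paper also uses Mackey's criterion with the single nontrivial double coset represented by $s=[u(1),1]$, a test element of $\Gamma(2)$ on which the cocycle contributions vanish because $u(\pm1)\in P_{>0}(\R)$, and a transfer to $\overline{\gamma}$ via $\iota$. Your choice $u_-(2)$, with $\widetilde{\beta}(u_-(2))=e^{-\pi i/4}\neq ie^{-\pi i/4}=\widetilde{\beta}\bigl(\begin{smallmatrix}-1&-2\\2&3\end{smallmatrix}\bigr)$, checks out; the paper uses $u_-(-2)$ instead.

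The only imprecision is your justification in part (2). The map $\iota$ sends $[r,1]$ to $[r,m_{X^{\ast}}(r)]$, not to $[r,1]$. However, this differs from $[r,1]$ by the central element $[1,m_{X^{\ast}}(r)]$, which contributes the same factor to both sides of your comparison, so the computation does carry over.
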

\begin{proof}
1) Let us write $G=\widetilde{\SL}_{2}(\Z)$, $H=\widetilde{\Gamma}_{\theta}$. 
Then $G= H \sqcup [H s H] $, for any $s\notin H $.  For such $s$, write $H_s =s^{-1}Hs$, and set $(\widetilde{\lambda}^{-1})^s(\widetilde{r})=\widetilde{\lambda}^{-1}(s\widetilde{r}s^{-1})$, for $\widetilde{r} \in H_s \cap H$. Then:
$$\Hom_{G}(\widetilde{\gamma},\widetilde{\gamma})\simeq \Hom_{H}(\widetilde{\lambda}^{-1},\Ind_{H}^{G} \widetilde{\lambda}^{-1})\simeq \Hom_{H}(\widetilde{\lambda}^{-1},\widetilde{\lambda}^{-1} \oplus \Ind_{H_s \cap H}^H (\widetilde{\lambda}^{-1})^s)$$
$$\simeq  \C \oplus \Hom_{H_s\cap H}(\widetilde{\lambda}^{-1}, (\widetilde{\lambda}^{-1})^s)\simeq \C \oplus \Hom_{H_s\cap H}(\widetilde{\lambda}, \widetilde{\lambda}^s).$$
In particular, we consider $s=[u(1),1]$ and $s_0=u(1)$.  
\[\widetilde{\lambda}:\widetilde{r}=(r,t_r)\longmapsto t_r \widetilde{\beta}^{-1}(r),\]
\[\widetilde{\lambda}^s:\widetilde{r}^{s^{-1}}=(r^{s_0^{-1}},t_r)\longmapsto t_r \widetilde{\beta}^{-1}(r^{s_0^{-1}}).\]
Let us check the element $u_-(-2)=\begin{pmatrix} 1& 0\\ -2& 1\end{pmatrix}\in \Gamma(2)\subseteq \Gamma_{\theta} \subseteq \SL_2(\Z)$. Then
\[r= s_0^{-1}u_-(-2) s_0=\begin{pmatrix} 1&-1\\ 0& 1\end{pmatrix}\begin{pmatrix} 1& 0\\ -2& 1\end{pmatrix} \begin{pmatrix} 1& 1\\ 0& 1\end{pmatrix}=\begin{pmatrix} 3 & 2 \\ -2 & -1 \end{pmatrix}, \quad [r,1]\in H_s\cap H.\]
\[\widetilde{\beta}^{-1}(r^{s_0^{-1}})\stackrel{\textrm{Lem. \ref{chap8betacd}}}{=}e^{-\tfrac{\pi i}{4}}, \quad \widetilde{\beta}^{-1}(r)\stackrel{\textrm{Lem. \ref{chap8betacd}}}{=}e^{\tfrac{\pi i}{4}}.\]
Hence $\Hom_{G}(\widetilde{\gamma},\widetilde{\gamma})\simeq \C$.\\
2) 
 Recall the isomorphism $\overline{\SL}^{\mu_8}_2(\mathbb R)\stackrel{\iota}{\to}\widetilde{\SL}_2(\mathbb R)$  from (\ref{overwideSL}). It follows from that $\overline{\gamma}=\widetilde{\gamma}\circ \iota$, so the result holds.
\end{proof}
\begin{corollary}\label{irrdgammaodd}
For an odd positive $N$, the restriction of  $\widetilde{\gamma}$ (resp. $\overline{\gamma}$) to  $\widetilde{\Gamma}_0(N^2)$  (resp. $\overline{\Gamma}_0(N^2)$ )  is also irreducible. 
\end{corollary}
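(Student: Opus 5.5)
The plan is to deduce the corollary from Proposition~\ref{irrdgamma} by Mackey's restriction formula, using Lemma~\ref{coniso}(1) to control the double cosets. Write $G=\widetilde{\SL}_2(\Z)$, $H=\widetilde{\Gamma}_{\theta}$ and $K=\widetilde{\Gamma}_0(N^2)$ (the preimage of $\Gamma_0(N^2)$). Lemma~\ref{coniso}(1), applied with the odd integer $N^2$ in place of $N$, says that $\bigl(\Gamma_0(N^2)\cap\Gamma_{\theta}\bigr)\backslash\Gamma_0(N^2)\to\Gamma_{\theta}\backslash\SL_2(\Z)$ is bijective. Since all three groups contain the full kernel $\mu_8$ of the covering map, the same bijection holds for the preimages:
\[
(K\cap H)\backslash K\;\xrightarrow{\ \sim\ }\;H\backslash G .
\]
Equivalently, $G=HK$ is a single $(H,K)$-double coset. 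The representatives $\widetilde{M}_{q_1},\widetilde{M}_{q_2},\widetilde{M}_{q_3}$ of Definition~\ref{Mq123} already lie in $K$, which confirms this concretely.

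Mackey's formula with a single double coset then gives
\[
\Res^G_K\Ind_H^G\widetilde{\lambda}^{-1}\;\simeq\;\Ind_{H\cap K}^{K}\bigl(\widetilde{\lambda}^{-1}|_{H\cap K}\bigr).
\]
To be safe, I would check this directly. The induced space is spanned by functions supported on $H\widetilde{M}_{q_i}$ with $\widetilde{M}_{q_i}\in K$. The $K$-action permutes these cosets exactly as $K$ permutes $(H\cap K)\backslash K$. This identifies the restriction of $\widetilde{\gamma}$ with the induced representation from $H\cap K$ to $K$ of the restricted character.

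The remaining step is irreducibility of $\Ind_{H\cap K}^K\widetilde{\lambda}^{-1}$, which is the only point needing more than bookkeeping. I would not argue via the restriction alone, since a restriction of an irreducible representation can be reducible. Instead, I would rerun the Mackey intertwining computation from the proof of Proposition~\ref{irrdgamma} inside $K$. By the bijection above, $K=(H\cap K)\sqcup (H\cap K)s(H\cap K)$ for any $s\in K\setminus H$, because $K$ acts doubly transitively on the three cosets just as $G$ does. Hence
\[
\dim\Hom_K(\cdot,\cdot)=1+\dim\Hom_{H_s\cap H\cap K}\bigl(\widetilde{\lambda},\widetilde{\lambda}^s\bigr).
\]
Here one cannot take $s=[u(1),1]$, because $u(1)\notin\Gamma_0(N^2)$ fails only in the sense that it does lie in $\Gamma_0(N^2)$ (its lower-left entry is $0$). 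So $s=[u(1),1]$ is still admissible. The test element $u_-(-2)$ is not in $\Gamma_0(N^2)$, however, so I would replace it by $u_-(-2N^2)\in\Gamma(2)\cap\Gamma_0(N^2)$. Its conjugate by $u(1)$ is
\[
r=\begin{pmatrix}1-2N^2&-2N^2\\ -2N^2&1+2N^2\end{pmatrix}\in\Gamma_{\theta}\cap\Gamma_0(N^2).
\]
I would then compute $\widetilde{\beta}(r)$ and $\widetilde{\beta}(u_-(-2N^2))$ via Lemma~\ref{chap8betacd}, and check that they differ.

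The main obstacle is this Gauss-sum computation: verifying $\widetilde{\lambda}\neq\widetilde{\lambda}^s$ on the chosen element for every odd $N$. The relevant Jacobi symbols have the form $\bigl(\tfrac{-N^2}{1\pm 2N^2}\bigr)$, and $\sgn(c)$ together with $d \bmod 4$ must be tracked. Since $1+2N^2\equiv 3\pmod 4$ and $1\equiv 1\pmod 4$, the two values should differ by a factor like $\pm i$ or $e^{\pm\pi i/2}$, exactly as in the proof of Proposition~\ref{irrdgamma}. If that particular element fails, I would try $u(2N^2)$ conjugated by $[u_-(-N^2),1]$ instead.

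Finally, the statement for $\overline{\gamma}$ follows by composing with the isomorphism $\iota$ of (\ref{overwideSL}), since $\overline{\gamma}=\widetilde{\gamma}\circ\iota$ and $\iota$ maps $\overline{\Gamma}_0(N^2)$ onto the corresponding subgroup of $\widetilde{\Gamma}_0(N^2)$ up to the center.
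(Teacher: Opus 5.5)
Your proposal is essentially the paper's proof: Mackey's irreducibility criterion run inside $\widetilde{\Gamma}_0(N^2)$ with $H\cap K$, $s=[u(1),1]$ and test element $u_-(-2N^2)$, followed by transfer to $\overline{\gamma}$ via $\iota$. Your single-double-coset step from Lemma~\ref{coniso} simply makes explicit the identification of the restriction with $\Ind_{H\cap K}^{K}$, which the paper uses tacitly.

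One small fix: $u(1)u_-(-2N^2)u(-1)$ has upper-right entry $+2N^2$, and the matrix as you wrote it has determinant $1-8N^4$. Since $\widetilde{\beta}$ depends only on $(c,d)$, the check goes through unchanged: $\widetilde{\beta}(u_-(-2N^2))=e^{\pi i/4}\neq e^{-\pi i/4}=\widetilde{\beta}(u(1)u_-(-2N^2)u(-1))$, so no fallback element is needed.
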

\begin{proof}
For $\widetilde{\gamma}$, the proof is similar as above by using  $G=\widetilde{\Gamma}_0(N^2)$, $H=\widetilde{\Gamma}_0(N^2)\cap \widetilde{\Gamma}_{\theta}$, $s=[u(1),1]$, $s_0=u(1)$. However, we consider the element  $u_-(-2N^2)=\begin{pmatrix} 1& 0\\ -2N^2& 1\end{pmatrix}\in \Gamma(2)\cap \Gamma_0(N^2)\subseteq \Gamma_{\theta} \cap\Gamma_0(N^2) \subseteq \SL_2(\Z)$. 
\[r= s_0^{-1}u_-(-2N^2) s_0=\begin{pmatrix} 1&-1\\ 0& 1\end{pmatrix}\begin{pmatrix} 1& 0\\ -2N^2& 1\end{pmatrix} \begin{pmatrix} 1& 1\\ 0& 1\end{pmatrix}=\begin{pmatrix} 1+2N^2 & 2N^2 \\ -2N^2 & 1-2N^2 \end{pmatrix}, \quad [r,1]\in H_s\cap H.\]
\[\widetilde{\beta}^{-1}(r^{s_0^{-1}})\stackrel{\textrm{Lem. \ref{chap8betacd}}}{=}e^{-\tfrac{\pi i}{4}}, \quad \widetilde{\beta}^{-1}(r)\stackrel{\textrm{Lem. \ref{chap8betacd}}}{=}e^{\tfrac{\pi i}{4}}.\]
Hence $\Hom_{G}(\widetilde{\gamma},\widetilde{\gamma})\simeq \C$. For $\overline{\gamma}$, the reason is similar.
\end{proof}
\subsection{Finite-dimensional representations of $\widetilde{\SL}_{2}(\Z)$ II}\label{FindimSLwideII}
For  $g\in \SL_2(\Z)$, write $g=r M_{q}$, for  unique  $r\in \Gamma_{\theta}$ and  $M_{q}\in \mathcal{M}_2$. Define the function:
$$f: \SL_2(\Z) \longmapsto \mu_8; \quad g \longmapsto \widetilde{\beta}(r) \widetilde{c}_{X^{\ast}}(r, M_q).$$
We  modify the cocycle $\widetilde{c}_{X^{\ast}}$ by $f$  to obtain a new cocycle: 
$$\widetilde{c}'_{X^{\ast}}(g_1,g_2)=\widetilde{c}_{X^{\ast}}(g_1,g_2) f(g_1)f(g_2) f(g_1g_2)^{-1}.$$
\begin{lemma}\label{crg}
Let $r, r_1,r_2\in \Gamma_{\theta}$ and $g\in \SL_2(\Z)$. Then $\widetilde{c}'_{X^{\ast}}(r,g)=1=\widetilde{c}'_{X^{\ast}}(r_1,r_2)$.
\end{lemma}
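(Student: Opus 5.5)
We compute directly from the definition of $f$ and the cocycle identity for $\widetilde{c}_{X^{\ast}}$. First record the values of $f$ on $\Gamma_{\theta}$ and on $\Gamma_\theta$-translates. For $r\in\Gamma_{\theta}$ the decomposition $r=r\cdot M_{q_2}$ with $M_{q_2}=1$ gives
\[
f(r)=\widetilde{\beta}(r)\,\widetilde{c}_{X^{\ast}}(r,1)=\widetilde{\beta}(r),
\]
since $\widetilde{c}_{X^{\ast}}(r,1)=1$ by Lemma \ref{relacto}(2) ($c_2=0$). For $g=r'M_q$ with $r'\in\Gamma_\theta$, the product $rg=(rr')M_q$ is again the unique decomposition of $rg$, because $\SL_2(\Z)=\sqcup\Gamma_\theta M_{q_i}$. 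Hence
\[
f(rg)=\widetilde{\beta}(rr')\,\widetilde{c}_{X^{\ast}}(rr',M_q).
\]

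\textbf{The case $(r_1,r_2)$.} Here we only need $f|_{\Gamma_\theta}=\widetilde{\beta}$. Lemma \ref{spgamma12}(1) (with $\psi=\psi_0$, as assumed in this section) says
\[
\widetilde{c}_{X^{\ast}}(r_1,r_2)=\widetilde{\beta}(r_1)^{-1}\widetilde{\beta}(r_2)^{-1}\widetilde{\beta}(r_1r_2).
\]
Substituting this into the definition of $\widetilde{c}'_{X^{\ast}}$ gives $\widetilde{c}'_{X^{\ast}}(r_1,r_2)=1$ immediately.

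\textbf{The case $(r,g)$.} Write $g=r'M_q$. Then
\[
\widetilde{c}'_{X^{\ast}}(r,g)=\widetilde{c}_{X^{\ast}}(r,r'M_q)\,\widetilde{\beta}(r)\,\widetilde{\beta}(r')\,\widetilde{c}_{X^{\ast}}(r',M_q)\,\widetilde{\beta}(rr')^{-1}\,\widetilde{c}_{X^{\ast}}(rr',M_q)^{-1}.
\]
Apply the 2-cocycle identity to $(r,r',M_q)$:
\[
\widetilde{c}_{X^{\ast}}(r,r')\,\widetilde{c}_{X^{\ast}}(rr',M_q)=\widetilde{c}_{X^{\ast}}(r,r'M_q)\,\widetilde{c}_{X^{\ast}}(r',M_q).
\]
This lets us replace $\widetilde{c}_{X^{\ast}}(r,r'M_q)\,\widetilde{c}_{X^{\ast}}(r',M_q)\,\widetilde{c}_{X^{\ast}}(rr',M_q)^{-1}$ by $\widetilde{c}_{X^{\ast}}(r,r')$. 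The expression therefore reduces to
\[
\widetilde{c}_{X^{\ast}}(r,r')\,\widetilde{\beta}(r)\,\widetilde{\beta}(r')\,\widetilde{\beta}(rr')^{-1},
\]
which equals $1$ by Lemma \ref{spgamma12}(1) again.

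\textbf{Main obstacle.} The only delicate point is bookkeeping: the uniqueness of the coset decomposition must be invoked correctly, so that $f(rg)$ uses the same representative $M_q$ as $f(g)$, and the normalization $\widetilde{c}_{X^{\ast}}(r,1)=1$ must hold. The cocycle is normalized in the usual sense by Lemma \ref{relacto}(2), so no further difficulty is expected.
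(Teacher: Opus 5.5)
Your proof is correct and is essentially the paper's argument. The paper first checks $\widetilde{c}'_{X^{\ast}}(r,M_q)=1$ and then applies the cocycle identity for $\widetilde{c}'_{X^{\ast}}$ to the triple $(r,r_g,M_q)$. You instead expand $f$ and apply the cocycle identity for $\widetilde{c}_{X^{\ast}}$ to the same triple, using the same ingredients: uniqueness of the decomposition $\SL_2(\Z)=\sqcup\,\Gamma_{\theta}M_{q_i}$, the fact that $f|_{\Gamma_{\theta}}=\widetilde{\beta}$, and Lemma \ref{spgamma12}(1).
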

\begin{proof}
1) By Lem. \ref{spgamma12}, $\widetilde{c}'_{X^{\ast}}(r_1,r_2)=1$.\\
2) If $g=M_q$, then $$\widetilde{c}'_{X^{\ast}}(r,g)=\widetilde{c}_{X^{\ast}}(r,M_q) f(r)f(M_q) f(rM_q)^{-1}=\widetilde{c}_{X^{\ast}}(r,M_q)\widetilde{\beta}(r)1 \widetilde{\beta}(r)^{-1} \widetilde{c}_{X^{\ast}}(r, M_q)^{-1}=1.$$
3) For  general $g$,  write $g=r_gM_q$. Then: 
\begin{equation*}
\widetilde{c}'_{X^{\ast}}(r,g)=\widetilde{c}'_{X^{\ast}}(r,r_gM_q)=\widetilde{c}'_{X^{\ast}}(r,r_gM_q)\widetilde{c}'_{X^{\ast}}(r_g,M_q)
=\widetilde{c}'_{X^{\ast}}(r,r_g)\widetilde{c}'_{X^{\ast}}(rr_g,M_q)=1.
\end{equation*}
\end{proof}
\begin{lemma}\label{ep}
$\widetilde{c}'_{X^{\ast}}(g,-)$ defines a character of $\Gamma(2)$. 
\end{lemma}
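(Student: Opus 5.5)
The plan is to show, for fixed $g\in\SL_2(\Z)$, that the map $\Gamma(2)\to\mu_8$, $r\mapsto \widetilde{c}'_{X^{\ast}}(g,r)$, is multiplicative. The argument will use only three ingredients: the $2$-cocycle identity for $\widetilde{c}'_{X^{\ast}}$, which holds because $\widetilde{c}'_{X^{\ast}}$ differs from $\widetilde{c}_{X^{\ast}}$ by the coboundary of $f$; Lemma~\ref{crg}; and the normality of $\Gamma(2)$ in $\SL_2(\Z)$.

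\emph{Step 1: left $\Gamma_{\theta}$-invariance.} We show that $\widetilde{c}'_{X^{\ast}}$ is invariant under left multiplication by $\Gamma_{\theta}$ in its first variable. For $r\in\Gamma_{\theta}$ and $g,h\in\SL_2(\Z)$, the cocycle identity gives
\[
\widetilde{c}'_{X^{\ast}}(r,g)\,\widetilde{c}'_{X^{\ast}}(rg,h)=\widetilde{c}'_{X^{\ast}}(r,gh)\,\widetilde{c}'_{X^{\ast}}(g,h).
\]
By Lemma~\ref{crg}, both $\widetilde{c}'_{X^{\ast}}(r,g)$ and $\widetilde{c}'_{X^{\ast}}(r,gh)$ equal $1$. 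Hence $\widetilde{c}'_{X^{\ast}}(rg,h)=\widetilde{c}'_{X^{\ast}}(g,h)$.

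\emph{Step 2: the splitting of the product.} Let $r_1,r_2\in\Gamma(2)$. The cocycle identity for the triple $(g,r_1,r_2)$ gives
\[
\widetilde{c}'_{X^{\ast}}(g,r_1)\,\widetilde{c}'_{X^{\ast}}(gr_1,r_2)=\widetilde{c}'_{X^{\ast}}(g,r_1r_2)\,\widetilde{c}'_{X^{\ast}}(r_1,r_2).
\]
Since $\Gamma(2)\subseteq\Gamma_{\theta}$, Lemma~\ref{crg} gives $\widetilde{c}'_{X^{\ast}}(r_1,r_2)=1$. It remains to show that $\widetilde{c}'_{X^{\ast}}(gr_1,r_2)=\widetilde{c}'_{X^{\ast}}(g,r_2)$. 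This is the only place where restricting the second variable to $\Gamma(2)$, rather than all of $\Gamma_{\theta}$, matters.

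To handle it, write $gr_1=(gr_1g^{-1})\,g$. Because $\Gamma(2)$ is the kernel of reduction mod $2$, it is normal in $\SL_2(\Z)$, so $gr_1g^{-1}\in\Gamma(2)\subseteq\Gamma_{\theta}$. Step~1 then gives $\widetilde{c}'_{X^{\ast}}(gr_1,r_2)=\widetilde{c}'_{X^{\ast}}(g,r_2)$. Substituting this into the identity above yields
\[
\widetilde{c}'_{X^{\ast}}(g,r_1r_2)=\widetilde{c}'_{X^{\ast}}(g,r_1)\,\widetilde{c}'_{X^{\ast}}(g,r_2).
\]

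\emph{Step 3: conclusion.} The values lie in $\mu_8\subseteq\mathbb{T}$, since $\widetilde{c}_{X^{\ast}}$ and $f$ are $\mu_8$-valued. Multiplicativity then forces $\widetilde{c}'_{X^{\ast}}(g,1)=1$ automatically. Therefore $\widetilde{c}'_{X^{\ast}}(g,-)|_{\Gamma(2)}$ is a character. I expect no real obstacle: the only non-formal step is the normality trick in Step~2.
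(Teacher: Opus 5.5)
Your proof is correct and follows the paper's argument essentially verbatim. You first get left $\Gamma_{\theta}$-invariance of $\widetilde{c}'_{X^{\ast}}$ in its first variable from the cocycle identity and Lemma~\ref{crg}, then apply the cocycle identity to $(g,r_1,r_2)$ together with $gr_1=(gr_1g^{-1})g$ and normality of $\Gamma(2)$. The only differences are that you state Step~1 for $r\in\Gamma_{\theta}$ rather than $r\in\Gamma(2)$, a harmless generalization, and that Step~3 spells out what the paper leaves implicit.
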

\begin{proof}
Let $g_1,g_2,g \in \SL_2(\Z)$ and $r,r_1,r_2\in \Gamma(2)$.
\begin{align*}
\widetilde{c}'_{X^{\ast}}(rg_1,g_2)
&=\widetilde{c}'_{X^{\ast}}(r,g_1)\widetilde{c}'_{X^{\ast}}(rg_1,g_2)\\
&=\widetilde{c}'_{X^{\ast}}(r,g_1g_2)\widetilde{c}'_{X^{\ast}}(g_1,g_2)\\
&=\widetilde{c}'_{X^{\ast}}(g_1,g_2).
\end{align*}
\begin{align*}
\widetilde{c}'_{X^{\ast}}(g,r_1r_2)
&=\widetilde{c}'_{X^{\ast}}(g,r_1r_2)\widetilde{c}'_{X^{\ast}}(r_1,r_2)\\
&=\widetilde{c}'_{X^{\ast}}(g,r_1)\widetilde{c}'_{X^{\ast}}(gr_1, r_2)\\
&=\widetilde{c}'_{X^{\ast}}(g,r_1)\widetilde{c}'_{X^{\ast}}(gr_1g^{-1} g, r_2)\\
&=\widetilde{c}'_{X^{\ast}}(g,r_1)\widetilde{c}'_{X^{\ast}}( g, r_2).
\end{align*}
\end{proof}
\begin{lemma}
$\widetilde{c}'_{X^{\ast}}(g,r)=-1$  for some $g\in  \SL_2(\Z)$ and some  $r\in \Gamma(4) $.
\end{lemma}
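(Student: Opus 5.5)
The plan is to choose $g$ to be one of the coset representatives $M_q\in\mathcal{M}_2$ and $r$ a lower unipotent element of $\Gamma(4)$. With these choices every term of
\[
\widetilde{c}'_{X^{\ast}}(g,r)=\widetilde{c}_{X^{\ast}}(g,r)\,f(g)\,f(r)\,f(gr)^{-1}
\]
reduces to a symplectic Gauss sum, which Lemma \ref{chap8betacd} evaluates.

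\textbf{Reducing to Gauss sums.} First, for any $M_q\in\mathcal{M}_2$ we have $f(M_q)=\widetilde{\beta}(1)\widetilde{c}_{X^{\ast}}(1,M_q)=1$. For $r\in\Gamma(2)\subseteq\Gamma_\theta$ the decomposition is $r=r\cdot M_{q_2}$ with $M_{q_2}=1$, so $f(r)=\widetilde{\beta}(r)$.

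Since $\Gamma(2)$ is normal in $\SL_2(\Z)$, we can write
\[
M_qr=r'M_q,\qquad r'=M_qrM_q^{-1}\in\Gamma(2).
\]
This gives $f(M_qr)=\widetilde{\beta}(r')\,\widetilde{c}_{X^{\ast}}(r',M_q)$.

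Now take $g=M_{q_1}=u(N^2)$, which has lower-left entry $0$. In Lemma \ref{relacto}(2) the factor $\sgn(c_1c_2c_3)$ then vanishes for both products $u(N^2)\cdot r$ and $r'\cdot u(N^2)$. Hence $\widetilde{c}_{X^{\ast}}(u(N^2),r)=1=\widetilde{c}_{X^{\ast}}(r',u(N^2))$, and the whole expression collapses to
\[
\widetilde{c}'_{X^{\ast}}(u(N^2),r)=\widetilde{\beta}(r)\,\widetilde{\beta}(r')^{-1}.
\]

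\textbf{Choice of $r$.} Take $r=u_-(4)=\begin{pmatrix}1&0\\4&1\end{pmatrix}\in\Gamma(4)$. A direct multiplication gives
\[
r'=u(N^2)\,u_-(4)\,u(-N^2)=\begin{pmatrix}1+4N^2&-4N^4\\4&1-4N^2\end{pmatrix}.
\]
Both matrices have $c=4=2c'$ with $c'=2$, so the first two cases of Lemma \ref{chap8betacd} apply to $\widetilde{\beta}=G(d,c)$.

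For $r$, we have $d=1\equiv1\pmod 4$, so $\widetilde{\beta}(r)=\left(\frac{2}{1}\right)e^{-\pi i/4}=e^{-\pi i/4}$.

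For $r'$, we have $d=1-4N^2<0$ and $|d|=4N^2-1\equiv3\pmod 4$. The second case of Lemma \ref{chap8betacd} gives
\[
\widetilde{\beta}(r')=i\,\sgn(d)\left(\tfrac{2}{4N^2-1}\right)e^{\pi i/4}.
\]
Since $N$ is odd, $N^2\equiv1\pmod 8$, so $4N^2-1\equiv3\pmod 8$ and $\left(\frac{2}{4N^2-1}\right)=-1$. Together with $\sgn(d)=-1$ this gives $\widetilde{\beta}(r')=i\,e^{\pi i/4}$.

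\textbf{Conclusion.} Therefore
\[
\widetilde{c}'_{X^{\ast}}(u(N^2),u_-(4))=\frac{e^{-\pi i/4}}{i\,e^{\pi i/4}}=\frac{-i}{i}=-1,
\]
which is the claim.

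\textbf{Main obstacle.} The delicate step is getting the signs right in Lemma \ref{chap8betacd} for the entry with negative $d$: the factor $\sgn(d)$, the phase $e^{-\frac{\pi i}{4}\sgn(cd)}$, and the Kronecker symbol modulo $8$. I would double-check this evaluation directly from the definition of $G(d,c)$, or from the reciprocity $G(c,d)G(d,c)=e^{-\frac{\pi i}{4}\sgn(cd)}$ applied to $G(4,1-4N^2)$.

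If the value turned out to be $+1$ after all, the fallback is to vary the choice. One can replace $u(N^2)$ by $M_{q_3}=u_-(-N^2)$, or take $r=u(4)$ with $g=u_-(-N^2)$. The same reduction applies, with the $\widetilde{c}_{X^{\ast}}$ factors now computed from $\sgn(c_1c_2c_3)$ in Lemma \ref{relacto}(2).
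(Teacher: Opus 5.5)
Your proposal is correct and follows the paper's own proof. The paper also takes $g$ to be an upper-unipotent coset representative, $u(N)$, and $r$ a lower-unipotent element of $\Gamma(4)$, $u_-(-4)$; both $\widetilde{c}_{X^{\ast}}$ factors die because a lower-left entry vanishes, and the computation reduces to $\widetilde{\beta}(r)\widetilde{\beta}(grg^{-1})^{-1}=-1$ via Lemma~\ref{chap8betacd}. Your value $\widetilde{\beta}(r')=ie^{\pi i/4}=-e^{-\pi i/4}$ is right: it also follows directly from the definition of $G(1-4N^2,4)$, using $e^{\pi i N^2 n^2}=(-1)^n$, so the fallback is not needed.
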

\begin{proof}
Consider $r=u_-(-4)$ and $g=M_q=u(N)$. Then:
\[M_qr=\begin{pmatrix} 1& N\\ 0& 1\end{pmatrix}\begin{pmatrix} 1& 0\\-4& 1\end{pmatrix}=\begin{pmatrix} 1-4N& N\\-4& 1\end{pmatrix}=\begin{pmatrix} 1-4N & 4N^2\\ -4& 1+4N\end{pmatrix}\cdot \begin{pmatrix}1 & N\\ 0& 1\end{pmatrix};\]
\[f(M_q)=1, \quad\quad f(r)= \widetilde{\beta}(r) =  \widetilde{\beta}(u_-(-4))\stackrel{\textrm{Lem. \ref{chap8betacd}}}{=}e^{\tfrac{\pi i}{4}},\]
 \begin{align*}
 f(M_qr)&=f(\begin{pmatrix} 1-4N& N\\-4& 1\end{pmatrix})=f(\begin{pmatrix} 1-4N & 4N^2\\ -4& 1+4N\end{pmatrix}\cdot \begin{pmatrix}1 & N\\ 0& 1\end{pmatrix})\\
 &=\widetilde{\beta}(\begin{pmatrix} 1-4N & 4N^2\\ -4& 1+4N\end{pmatrix}) \widetilde{c}_{X^{\ast}}(\begin{pmatrix} 1-4N & 4N^2\\ -4& 1+4N\end{pmatrix}, \begin{pmatrix}1 & N\\ 0& 1\end{pmatrix})\\
 &=\widetilde{\beta}(\begin{pmatrix} 1-4N & 4N^2\\ -4& 1+4N\end{pmatrix})\stackrel{\textrm{Lem. \ref{chap8betacd}}}{=}\big(\tfrac{-2}{1+4N}\big)e^{\tfrac{\pi i}{4}}=\big(\tfrac{-1}{1+4N}\big)\big(\tfrac{2}{1+4N}\big)e^{\tfrac{\pi i}{4}}=-e^{\tfrac{\pi i}{4}}.
 \end{align*}
 \begin{align*}
\widetilde{c}'_{X^{\ast}}(g,r)
&=\widetilde{c}_{X^{\ast}}(M_q,r) f(M_q)f(r)f(M_qr)^{-1}\\
&=f(M_q)f(r)f(M_qr)^{-1}=-1.
\end{align*}
\end{proof}
\subsection{Finite-dimensional representations of $\widetilde{\SL}_{2}^{\pm}(\Z)$ and $\overline{\SL}_{2}^{\pm}(\Z)$}\label{FindimSLwidesign}
\begin{definition}
\begin{itemize}
\item[(1)]  $\widetilde{\gamma}^{\pm}=\Ind_{\widetilde{\Gamma}_{\theta}}^{\widetilde{\SL}^{\pm}_{2}(\Z)} \widetilde{\lambda}^{-1}, \widetilde{M}^{\pm}=\Ind_{\widetilde{\Gamma}_{\theta}}^{\widetilde{\SL}^{\pm}_{2}(\Z)} \C.$
\item[(2)]$\overline{\gamma}^{\pm}=\Ind_{\overline{\Gamma}_{\theta}}^{\overline{\SL}^{\pm}_{2}(\Z)} \overline{\lambda}^{-1}, \overline{M}^{\pm}=\Ind_{\overline{\Gamma}_{\theta}}^{\overline{\SL}^{\pm}_{2}(\Z)} \C.$
\end{itemize}
\end{definition}
\begin{proposition}\label{irrdgammapm}
\begin{itemize}
\item[(1)]  $\widetilde{\gamma}^{\pm}$ is an irreducible representation of $\widetilde{\SL}^{\pm}_{2}(\Z)$. 
\item[(2)] $\overline{\gamma}^{\pm}$ is an irreducible representation of $\overline{\SL}^{\pm}_{2}(\Z)$. 
\end{itemize}
\end{proposition}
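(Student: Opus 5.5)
We prove irreducibility of $\widetilde{\gamma}^{\pm}$ with Mackey's criterion, using transitivity of induction. Set $G=\widetilde{\SL}^{\pm}_2(\Z)$, $G_0=\widetilde{\SL}_2(\Z)$, $H=\widetilde{\Gamma}_{\theta}$. Then
\[
\widetilde{\gamma}^{\pm}=\Ind_{G_0}^{G}\Ind_H^{G_0}\widetilde{\lambda}^{-1}=\Ind_{G_0}^{G}\widetilde{\gamma}.
\]
By Proposition \ref{irrdgamma}, $\widetilde{\gamma}$ is irreducible. Since $G_0$ has index $2$ in $G$, with $G=G_0\sqcup G_0[h_{-1},1]$, Mackey's criterion says $\widetilde{\gamma}^{\pm}$ is irreducible if and only if the conjugate $\widetilde{\gamma}^{h}$, where $h=[h_{-1},1]$, is not isomorphic to $\widetilde{\gamma}$.

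\textbf{Computing the conjugate.} Conjugation by $h_{-1}$ preserves $\Gamma_{\theta}$, because $a c$ and $b d$ only change sign. Hence $h$ normalizes $H$, and
\[
\widetilde{\gamma}^{h}\simeq\Ind_H^{G_0}\bigl(\widetilde{\lambda}^{-1}\bigr)^{h}.
\]
So it suffices to show $\Ind_H^{G_0}\widetilde{\lambda}^{-1}\not\simeq\Ind_H^{G_0}(\widetilde{\lambda}^{-1})^{h}$. By Frobenius reciprocity and the Mackey decomposition $G_0=H\sqcup HsH$ used in Proposition \ref{irrdgamma}, this reduces to:
\[
\widetilde{\lambda}^{h}\neq\widetilde{\lambda}\ \text{on } H,
\qquad
\widetilde{\lambda}^{h}\neq\widetilde{\lambda}^{s}\ \text{on } H\cap s^{-1}Hs,\quad s=[u(1),1].
\]
A cleaner route is to compare the restrictions to the normal subgroup $\widetilde{\Gamma}(2)$. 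Since $\Gamma(2)$ is normal in $\SL_2(\Z)$, $\widetilde{\gamma}|_{\widetilde{\Gamma}(2)}$ is the sum of the conjugates $\widetilde{\lambda}^{-1}\circ\mathrm{Ad}(M_{q_i})$ over the coset representatives of Definition \ref{Mq123} (restricted to $\widetilde{\Gamma}(2)$), and similarly for $\widetilde{\gamma}^h$ with $\widetilde{\lambda}$ replaced by $\widetilde{\lambda}^h$. It then suffices to exhibit one element on which the traces differ, or a character occurring in one restriction but not the other.

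\textbf{Test element.} I would take $r=\begin{pmatrix}3&4\\2&3\end{pmatrix}$ from Lemma \ref{gammatwodim}, where $\widetilde{\lambda}(\widetilde r)=-ie^{\pi i/4}$ and $\widetilde{\lambda}^{h}(\widetilde r)=-e^{-\pi i/4}$. I would also compute its conjugates by $u(1)$ and $u_-(-1)$ and their $\widetilde{\beta}$-values via Lemma \ref{chap8betacd}, then compare
\[
\operatorname{tr}\widetilde{\gamma}(\widetilde r)=\sum_i\widetilde{\lambda}^{-1}(M_{q_i}\widetilde r M_{q_i}^{-1})
\]
with the analogous trace for $\widetilde{\gamma}^h$. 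These are sums of three eighth roots of unity; if they differ, we are done. If this element fails, I would try a simpler one such as $u(2)$ or $u_-(2)$. For $u(2)$, $\widetilde{\beta}=1$, and the conjugate by $h_{-1}$ is $u(-2)$, with a twist coming from $\nu(-1,\cdot)$ computed as in Example \ref{chap3example2}.

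\textbf{The $\mu_2$ case.} Part (2) follows from part (1) by the same argument as in Proposition \ref{irrdgamma}(2): compose with the embedding $\overline{\GL}^{\mu_8}_2\to\widetilde{\GL}_2$ of (\ref{overwideGL}), which identifies $\overline{\gamma}^{\pm}$ with $\widetilde{\gamma}^{\pm}\circ\iota$.

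\textbf{Main obstacle.} The hard step is the explicit trace comparison, because the conjugation by $h$ inside the cover carries the cocycle twist $\nu(-1,\cdot)$ and must be tracked exactly. I expect the differing value already seen in Lemma \ref{gammatwodim} to persist after summing over the three cosets, but this has to be checked.
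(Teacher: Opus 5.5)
Your reduction is the same as the paper's: transitivity of induction, Mackey's criterion for the index-two normal subgroup $\widetilde{\SL}_2(\Z)$, the fact that $h=[h_{-1},1]$ normalizes $H=\widetilde{\Gamma}_{\theta}$, and the decomposition $\widetilde{\SL}_2(\Z)=H\sqcup HsH$. So everything rests on showing $\widetilde{\gamma}\not\simeq\widetilde{\gamma}^{h}$, and that is exactly the step you leave unproved. You state the second double-coset inequality without checking it, then switch to a trace comparison that you only expect to succeed. For the element you name, it fails.

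Take coset representatives $[u(1),1],[1,1],[u_-(-1),1]$ and $\widetilde r=[r,1]$ with $r=\begin{pmatrix}3&4\\2&3\end{pmatrix}$. Its conjugates are
\[
\Bigl[\begin{pmatrix}5&2\\2&1\end{pmatrix},1\Bigr],\qquad [r,1],\qquad \Bigl[\begin{pmatrix}7&4\\-2&-1\end{pmatrix},i\Bigr].
\]
The twist $i$ comes from two factors $\widetilde{c}_{X^{\ast}}=e^{\pi i/4}$. On these elements $\widetilde\lambda^{-1}$ takes the values $e^{-\pi i/4},\,e^{\pi i/4},\,e^{-3\pi i/4}$, while $(\widetilde\lambda^{h})^{-1}$ takes $e^{-\pi i/4},\,-e^{\pi i/4},\,-e^{-3\pi i/4}$. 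These are the same three numbers, so the diagonal matrices $\widetilde\gamma(\widetilde r)$ and $\widetilde\gamma^{h}(\widetilde r)$ are conjugate, and both traces equal $e^{-\pi i/4}$. The discrepancy from Lemma~\ref{gammatwodim} cancels against the one at $u_-(-1)\,r\,u_-(1)$, so no test at $\widetilde r$ can separate the two representations. Your first formulation would have worked with this element: on $H\cap s^{-1}Hs=\widetilde\Gamma(2)$ one has $\widetilde\lambda^{h}(\widetilde r)=e^{3\pi i/4}\neq e^{\pi i/4}=\widetilde\lambda^{s}(\widetilde r)$.

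The simplest way to close the gap is to use $[-I,1]$. Since $\widetilde c_{X^{\ast}}(g_1,g_2)=1$ whenever one of $c_1,c_2,c_3$ vanishes, $[-I,1]$ is central in $\widetilde{\SL}_2(\Z)$. Also $\widetilde\beta(-I)=1$, so $\widetilde\gamma([-I,1])=\mathrm{Id}$. On the other hand, $h[-I,1]h^{-1}=[-I,\nu(-1,-I)]=[-I,(-1,-1)_{\R}]=[-I,-1]$, so $\widetilde\gamma^{h}([-I,1])=-\mathrm{Id}$. The central characters differ, hence $\widetilde\gamma\not\simeq\widetilde\gamma^{h}$ and $\widetilde\gamma^{\pm}$ is irreducible. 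This element also disposes of both of your double-coset conditions at once.

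The paper instead checks the second double-coset condition with $s=u_-(1)$ and the element $\begin{pmatrix}-1&0\\2&-1\end{pmatrix}\in\Gamma(2)$, which is fixed by conjugation by $u_-(1)$. Your deduction of part (2) from part (1) agrees with the paper.
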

\begin{proof}
(1) Note that \(\widetilde{\gamma}^{\pm}|_{\widetilde{\SL}_{2}(\Z)} \simeq \widetilde{\gamma} \oplus \widetilde{\gamma}^{h_{-1}}\). Using the notations from the proof of Prop.~\ref{irrdgamma}, we have
\[
\Hom_{G}(\widetilde{\gamma}, \widetilde{\gamma}^{h_{-1}}) 
\simeq \Hom_{H}\big(\widetilde{\lambda}^{-1}, [\widetilde{\lambda}^{-1}]^{h_{-1}}\big) 
\oplus \Hom_{H_s\cap H}\Big(\widetilde{\lambda}^{-1}, \big([\widetilde{\lambda}^{-1}]^{h_{-1}}\big)^s\Big) 
\simeq \Hom_{H_s\cap H}\Big(\widetilde{\lambda}, [\widetilde{\lambda}^{h_{-1}}]^s\Big).
\]
We now take  $s=u_-(1)$. Then
\[[s,1] [r,t_r] [s^{-1},1]=[r^{s^{-1}}, \widetilde{c}_{X^{\ast}}(s,r)  \widetilde{c}_{X^{\ast}}(sr,s^{-1}) t_r]=[r^{s^{-1}},t_r],\]
\[
\widetilde{\lambda} : \widetilde{r} = (r, t_r) \mapsto t_r \widetilde{\beta}^{-1}(r),
\]
\[
[\widetilde{\lambda}^{h_{-1}}]^s : \widetilde{r}^{s^{-1}} = (r^{s^{-1}},   t_r) \mapsto t_r \widetilde{\beta}^{-1}(h_{-1} r^{s^{-1}} h_{-1}).
\]
Consider the element
\[
r = s^{-1}\begin{pmatrix}-1 & 0\\ 2 & -1\end{pmatrix}s 
= \begin{pmatrix}1 & 0\\ -1 & 1\end{pmatrix}\begin{pmatrix}-1 & 0\\ 2 & -1\end{pmatrix}\begin{pmatrix}1 & 0\\ 1 & 1\end{pmatrix} 
= \begin{pmatrix}-1 & 0\\ 2 & -1\end{pmatrix} \in \Gamma(2).
\]
Then
\[
\widetilde{\beta}^{-1}(h_{-1} r^{s^{-1}} h_{-1}) 
= \widetilde{\beta}^{-1}(\begin{pmatrix}-1 & 0\\ -2 & -1\end{pmatrix})
\stackrel{\text{Lem.~\ref{chap8betacd}}}{=} \left[\left(\frac{-1}{1}\right)e^{-\frac{\pi i}{4}\sgn(2)}\right]^{-1} 
= e^{\frac{\pi i}{4}},
\]
and
\[
\widetilde{\beta}^{-1}(\begin{pmatrix}-1 & 0\\ 2 & -1\end{pmatrix} )
\stackrel{\text{Lem.~\ref{chap8betacd}}}{=} \left[\left(\frac{N^2}{1}\right)e^{-\frac{\pi i}{4}\sgn(-2)}\right]^{-1} 
= e^{-\frac{\pi i}{4}}.
\]
Hence \(\Hom_{G}(\widetilde{\gamma}, \widetilde{\gamma}^{h_{-1}}) = 0\), and so \(\widetilde{\gamma}^{\pm}\) is irreducible.

(2) Recall the isomorphism \(\overline{\SL}^{\mu_8}_2(\mathbb{R}) \xrightarrow{\iota} \widetilde{\SL}_2(\mathbb{R})\) from \eqref{overwideSL}. Since \(\overline{\gamma}^{\pm} = \widetilde{\gamma}^{\pm} \circ \iota\), the irreducibility of \(\widetilde{\gamma}^{\pm}\) implies that of \(\overline{\gamma}^{\pm}\).
\end{proof}
\begin{corollary}\label{irrdgammapm2nn}
For an odd positive $N$, the restriction of  $\widetilde{\gamma}^{\pm}$ (resp. $\overline{\gamma}^{\pm}$) to  $\widetilde{\Gamma}^{\pm}_0(N^2)$  (resp. $\overline{\Gamma}^{\pm}_0(N^2)$ )  is also irreducible. 
\end{corollary}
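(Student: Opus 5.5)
We argue as in Corollary \ref{irrdgammaodd}, combined with the Mackey argument of Proposition \ref{irrdgammapm}. Put $G=\widetilde{\Gamma}^{\pm}_0(N^2)$, $G^+=\widetilde{\Gamma}_0(N^2)$ and $H=\widetilde{\Gamma}_0(N^2)\cap\widetilde{\Gamma}_{\theta}$. By Lemma \ref{coniso}(1), the three representatives $\widetilde M_{q_i}$ of $\widetilde{\Gamma}_\theta\backslash\widetilde{\SL}_2(\Z)$ lie in $G^+$ and also represent $H\backslash G^+$. The element $h_{-1}$ lies in $\Gamma^{\pm}_0(N^2)$ and normalizes both $\Gamma_0(N^2)$ and $\Gamma_\theta$. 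By Mackey restriction we therefore get
\[
\widetilde{\gamma}|_{G^+}\simeq\Ind_H^{G^+}\widetilde{\lambda}^{-1},\qquad
\widetilde{\gamma}^{\pm}|_{G}\simeq\Ind_{G^+}^{G}\bigl(\Ind_H^{G^+}\widetilde{\lambda}^{-1}\bigr).
\]
Corollary \ref{irrdgammaodd} says that $\sigma:=\Ind_H^{G^+}\widetilde{\lambda}^{-1}$ is irreducible. Since $[G:G^+]=2$ with nontrivial coset represented by $[h_{-1},1]$, Clifford/Mackey theory reduces irreducibility of $\Ind_{G^+}^G\sigma$ to showing $\sigma\not\simeq\sigma^{h_{-1}}$, that is, $\Hom_{G^+}(\sigma,\sigma^{h_{-1}})=0$.

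To compute this Hom space we use $\sigma^{h_{-1}}=\Ind_H^{G^+}(\widetilde{\lambda}^{-1})^{h_{-1}}$ (as $h_{-1}$ normalizes $H$), together with Frobenius reciprocity and the double coset decomposition $G^+=H\sqcup HsH$. This decomposition holds because $H\backslash G^+$ has three elements and $G^+$ acts doubly transitively on them, which is inherited from the $\SL_2(\Z)$ case via Lemma \ref{coniso}. Exactly as in Proposition \ref{irrdgammapm}, we obtain
\[
\Hom_{G^+}(\sigma,\sigma^{h_{-1}})\simeq \Hom_H\bigl(\widetilde\lambda,\widetilde\lambda^{h_{-1}}\bigr)\oplus\Hom_{H_s\cap H}\bigl(\widetilde\lambda,[\widetilde\lambda^{h_{-1}}]^s\bigr).
\]
It suffices to exhibit one element in each relevant group on which the two characters differ.

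For the first summand we need an element of $H$ on which $\widetilde\lambda\neq\widetilde\lambda^{h_{-1}}$. We take $r=u_-(2N^2)$ or $u_-(-2N^2)$, which lies in $\Gamma(2)\cap\Gamma_0(N^2)$. As in Lemma \ref{gammatwodim}, conjugation gives $[h_{-1},1][r,1][h_{-1},1]=[r^{h_{-1}},\nu(-1,r)]$ with $\nu(-1,r)=\gamma(-1,\psi^{1/2})^{-1}=i$ when $c\neq0$ (Example \ref{chap3example2}). Lemma \ref{chap8betacd} gives $\widetilde\beta(u_-(\mp2N^2))=(\tfrac{\mp N^2}{1})e^{\pm\pi i/4}$-type values, so that $\widetilde\lambda([r,1])=e^{\mp\pi i/4}$ while $\widetilde\lambda^{h_{-1}}([r,1])=i\,e^{\pm\pi i/4}$. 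We check that these differ; if they happen to coincide, we switch to $\begin{pmatrix}1+2N^2&2N^2\\-2N^2&1-2N^2\end{pmatrix}$ or an analogue of $\begin{pmatrix}3&4\\2&3\end{pmatrix}$ with $c$ divisible by $N^2$.

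For the second summand we take $s=[u_-(N^2),1]$ or $[u(1),1]$, mirroring the choice $s=u_-(1)$ in Proposition \ref{irrdgammapm}. We then test the element $r=\begin{pmatrix}-1&0\\2N^2&-1\end{pmatrix}$, whose $s$-conjugate stays in $\Gamma(2)\cap\Gamma_0(N^2)$. Using Lemma \ref{chap8betacd}, the two values come out as $e^{\pi i/4}$ versus $e^{-\pi i/4}$, exactly as in the $N=1$ computation. We also verify that the cocycle twists $\widetilde c_{X^*}(s,r)\widetilde c_{X^*}(sr,s^{-1})$ are trivial, which holds because $s$ is unipotent lower-triangular and the relevant $c$-entries have compatible signs (Lemma \ref{relacto}(2)).

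The $\overline{\gamma}^{\pm}$ case then follows from $\overline\gamma^{\pm}=\widetilde\gamma^{\pm}\circ\iota$ via \eqref{overwideSL}. The main obstacle is the bookkeeping of the Kronecker symbols and signs $\sgn(c)$ in Lemma \ref{chap8betacd} for the chosen test elements. In particular we must make sure the $N^2$ factors contribute only the trivial symbol $(\tfrac{N^2}{\cdot})=1$, so that the discrepancy $e^{\pi i/4}\neq e^{-\pi i/4}$ survives.
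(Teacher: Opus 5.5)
Your reduction is the same Clifford--Mackey argument the paper uses: index two, so it suffices that $\sigma\not\simeq\sigma^{h_{-1}}$, followed by Frobenius reciprocity over $G^+=H\sqcup HsH$. By keeping the summand $\Hom_H(\widetilde\lambda,\widetilde\lambda^{h_{-1}})$ you are more careful than the paper's proof, which drops it without comment. The gap is your witness for that summand.

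Example~\ref{chap3example2} gives $\nu(-1,r)=(c,-1)_{\R}\,\gamma(-1,\psi^{1/2})^{-1}=\sgn(c)\,i$ for $c\neq 0$, not $i$. With the correct sign, both candidates fail to distinguish the characters:
\begin{itemize}
\item For $r=u_-(2N^2)$ you get $\widetilde\lambda([r,1])=e^{\pi i/4}$ and $\widetilde\lambda^{h_{-1}}([r,1])=i\,\widetilde\beta(u_-(-2N^2))^{-1}=i\,e^{-\pi i/4}=e^{\pi i/4}$.
\item For $r=u_-(-2N^2)$ you get $e^{-\pi i/4}$ on both sides, since $\nu(-1,r)=-i$.
\end{itemize}
This is structural. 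The ratio $\widetilde\lambda^{h_{-1}}/\widetilde\lambda$ is a character of $\Gamma_\theta$ that is trivial on $\Gamma(4,2)$ (cf.\ Lemma~\ref{hminustoh}), and $u_-(\pm 2N^2)\in\Gamma(4,2)$. You therefore need an element of $H$ lying in $-\Gamma(4,2)$.

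The simplest choice is $[-I,1]$. On one side $\widetilde\lambda([-I,1])=\widetilde\beta(-I)^{-1}=1$. On the other, $[h_{-1},1][-I,1][h_{-1},1]=[-I,\nu(-1,-I)]=[-I,(-1,-1)_{\R}]=[-I,-1]$, so $\widetilde\lambda^{h_{-1}}([-I,1])=-1$. Your fallback $\bigl(\begin{smallmatrix}1+2N^2&2N^2\\-2N^2&1-2N^2\end{smallmatrix}\bigr)$ also works, with values $e^{\pi i/4}$ and $-i\,e^{-\pi i/4}=e^{-3\pi i/4}$. So does $\bigl(\begin{smallmatrix}-1&0\\2N^2&-1\end{smallmatrix}\bigr)$.

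Your second-summand computation is correct for $s=[u(1),1]$. Since $u(1)\in P_{>0}(\R)$, the cocycle twists are trivial and $s[r,1]s^{-1}=[x,1]$ with $x=\bigl(\begin{smallmatrix}2N^2-1&-2N^2\\2N^2&-1-2N^2\end{smallmatrix}\bigr)$. Lemma~\ref{chap8betacd} then gives $\widetilde\lambda^{h_{-1}}([x,1])=i\,\widetilde\beta(x^{h_{-1}})^{-1}=i\,e^{-\pi i/4}=e^{\pi i/4}$, against $\widetilde\lambda([r,1])=e^{-\pi i/4}$, as you claimed.

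For $s=[u_-(N^2),1]$ the situation is different. Here $s$ commutes with $r$, and the two cocycle values $e^{\pm\pi i/4}$ only cancel in their product. The test therefore collapses to comparing $\widetilde\lambda$ and $\widetilde\lambda^{h_{-1}}$ at $r$ itself. It still succeeds ($e^{-\pi i/4}$ versus $e^{3\pi i/4}$, because $r\in-\Gamma(4,2)$), but not with the values you state.

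With the first-summand witness replaced and $\nu(-1,\cdot)$ taken with its sign $(c,-1)_{\R}$ throughout, the argument is complete. The passage to $\overline{\gamma}^{\pm}$ via $\iota$ is as in the paper.
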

\begin{proof}
For $\widetilde{\gamma}^{\pm}$, the proof is similar as above.  Note that \(\widetilde{\gamma}^{\pm}|_{\widetilde{\Gamma}^{\pm}_0(N^2)} \simeq \widetilde{\gamma} \oplus \widetilde{\gamma}^{h_{-1}}\). Using the notations from the proof of Coro.~\ref{irrdgammaodd}, we have
\[
\Hom_{G}(\widetilde{\gamma}, \widetilde{\gamma}^{h_{-1}}) 
\simeq \Hom_{H_s\cap H}\Big(\widetilde{\lambda}, [\widetilde{\lambda}^{h_{-1}}]^s\Big).
\]
We also  take  $s=u_-(1)$. Consider the element 
$$r= s^{-1}[-u_-(-2N^2)]s=\begin{pmatrix} 1&-1\\ 0& 1\end{pmatrix}\begin{pmatrix} -1& 0\\ 2N^2& -1\end{pmatrix} \begin{pmatrix} 1& 1\\ 0& 1\end{pmatrix}=\begin{pmatrix} -1-2N^2 & -2N^2 \\ 2N^2 & -1+2N^2 \end{pmatrix}\in \Gamma(2N^2).$$
Then
\[
\widetilde{\beta}^{-1}(h_{-1} r^{s^{-1}} h_{-1}) 
= \widetilde{\beta}^{-1}(\begin{pmatrix} -1& 0\\ -2N^2& -1\end{pmatrix} )
\stackrel{\text{Lem.~\ref{chap8betacd}}}{=} \left[\left(\frac{-N^2}{1}\right)e^{-\frac{\pi i}{4}\sgn(2N^2)}\right]^{-1} 
= e^{\frac{\pi i}{4}},
\]
and
\[
\widetilde{\beta}^{-1}(\begin{pmatrix}-1 & 0\\ 2N^2 & -1\end{pmatrix} )
\stackrel{\text{Lem.~\ref{chap8betacd}}}{=} \left[\left(\frac{1}{1}\right)e^{-\frac{\pi i}{4}\sgn(-2N^2)}\right]^{-1} 
= e^{-\frac{\pi i}{4}}.
\]
Hence \(\Hom_{G}(\widetilde{\gamma}, \widetilde{\gamma}^{h_{-1}}) = 0\), and so $\widetilde{\gamma}^{\pm}|_{\widetilde{\Gamma}^{\pm}_0(N^2)}$ is irreducible. Moreover, the irreducibility of \(\widetilde{\gamma}^{\pm}|_{\widetilde{\Gamma}^{\pm}_0(N^2)}\) implies that of \(\overline{\gamma}^{\pm}|_{\overline{\Gamma}^{\pm}_0(N^2)}\).
\end{proof}
\subsection{Twisted by  $\overline{\mathfrak{w}}$}\label{Twistedby}
Let $\overline{\mathfrak{w}}\in \GL_2(\Q)$, and $\overline{\mathfrak{w}}=(\mathfrak{w}, t_{\mathfrak{w}})\in \overline{\GL}_2(\Q)$.  Let $\overline{r}=(r,t_r)=\overline{\mathfrak{w}}^{-1}\overline{s}\overline{\mathfrak{w}}\in \overline{\mathfrak{w}}^{-1}\overline{\Gamma}_{\theta}\overline{\mathfrak{w}}$.  Let us define:
\[\overline{\lambda}^{\overline{\mathfrak{w}}}: \overline{\mathfrak{w}}^{-1}\overline{\Gamma}_{\theta}\overline{\mathfrak{w}}\longrightarrow \mathbb{T}; \qquad \overline{r}\longmapsto \overline{\lambda}(\overline{s}).\]
Then $\overline{\lambda}^{\overline{\mathfrak{w}}}$ is a character of $\overline{\mathfrak{w}}^{-1}\overline{\Gamma}_{\theta}\overline{\mathfrak{w}}$. 
\begin{definition}
$\overline{\gamma}^{\overline{\mathfrak{w}}}=\Ind_{\overline{\mathfrak{w}}^{-1}\overline{\Gamma}_{\theta}\overline{\mathfrak{w}}}^{\overline{\mathfrak{w}}^{-1}\overline{\SL}_{2}(\Z)\overline{\mathfrak{w}}} [\overline{\lambda}^{\overline{\mathfrak{w}}}]^{-1}, \overline{M}^{\overline{\mathfrak{w}}}=\Ind_{\overline{\mathfrak{w}}^{-1}\overline{\Gamma}_{\theta}\overline{\mathfrak{w}}}^{\overline{\mathfrak{w}}^{-1}\overline{\SL}_{2}(\Z)\overline{\mathfrak{w}}} \C.$
\end{definition}
By Prop. \ref{irrdgamma}, $\overline{\gamma}^{\overline{\mathfrak{w}}}$ is an irreducible representation. Similarly, the vector space  $\overline{M}^{\overline{\mathfrak{w}}}$ consists of the functions $f: \overline{\mathfrak{w}}^{-1}\overline{\SL}_{2}(\Z)\overline{\mathfrak{w}} \longrightarrow \C$ such that $f(\overline{r}\overline{g})=\overline{\lambda}^{\overline{\mathfrak{w}}}(\overline{r})^{-1}f(\overline{g})$, for $\overline{r}\in\overline{\mathfrak{w}}^{-1}\overline{\Gamma}_{\theta}\overline{\mathfrak{w}}$ and $\overline{g}\in \overline{\mathfrak{w}}^{-1}\overline{\SL}_{2}(\Z)\overline{\mathfrak{w}}$.  Let $\overline{e}_i^{\overline{\mathfrak{w}}}$ denote the function of $\overline{M}^{\overline{\mathfrak{w}}}$, supported on $\overline{\mathfrak{w}}^{-1}\overline{\Gamma}_{\theta}\overline{M}_{q_i}\overline{\mathfrak{w}}$ and $\overline{e}^{\overline{\mathfrak{w}}}_i(\overline{\mathfrak{w}}^{-1}\overline{M}_{q_i}\overline{\mathfrak{w}})=1$. Then $\overline{M}^{\overline{\mathfrak{w}}}=\oplus_{i=1}^3 \C \overline{e}^{\overline{\mathfrak{w}}}_i$.   Write 
    $$\overline{\gamma}^{\overline{\mathfrak{w}}}(\overline{g})(\overline{e}^{\overline{\mathfrak{w}}}_1, \overline{e}^{\overline{\mathfrak{w}}}_2, \overline{e}^{\overline{\mathfrak{w}}}_3)=(\overline{e}^{\overline{\mathfrak{w}}}_1, \overline{e}^{\overline{\mathfrak{w}}}_2, \overline{e}^{\overline{\mathfrak{w}}}_3)M^{\overline{\mathfrak{w}}}(\overline{g}),$$ for some $3\times 3$-matrix $M^{\overline{\mathfrak{w}}}(\overline{g})$. Then 
    \[\begin{array}{rcll}
    \overline{\gamma}^{\overline{\mathfrak{w}}}: &  \overline{\mathfrak{w}}^{-1}\overline{\SL}_{2}(\Z)\overline{\mathfrak{w}} & \longrightarrow &  M_3(\C);\\
                        &\overline{g}                    & \longmapsto     & M^{\overline{\mathfrak{w}}}(\overline{g}),
    \end{array}\] 
    gives a matrix representation. More precisely, we have:
\begin{itemize}
 \item \begin{itemize}
\item[(a1)]   $\overline{M}_{q_i} \overline{s}=\overline{s}'\overline{M}_{q_j}$, for some $\overline{s}, \overline{s}' \in \overline{\Gamma}_{\theta}$ iff   $[\overline{\mathfrak{w}}^{-1}\overline{M}_{q_i}\overline{\mathfrak{w}}] \overline{r}=\overline{r}'  [\overline{\mathfrak{w}}^{-1}\overline{M}_{q_j}\overline{\mathfrak{w}}]$, for $\overline{r}=[\overline{\mathfrak{w}}^{-1} \overline{s}\overline{\mathfrak{w}}]$, $\overline{r}'=[\overline{\mathfrak{w}}^{-1} \overline{s}'\overline{\mathfrak{w}}]$ .
\item[(b1)] If the above condition holds, then $ [\overline{\gamma}^{\overline{\mathfrak{w}}}(\overline{r})(\overline{e}^{\overline{\mathfrak{w}}}_j)]([\overline{\mathfrak{w}}^{-1}\overline{M}_{q_i}\overline{\mathfrak{w}}])
    =\overline{\lambda}^{\overline{\mathfrak{w}}}(\overline{r}')^{-1}$, and $ \supp [\overline{\gamma}^{\overline{\mathfrak{w}}}(\overline{r})](\overline{e}^{\overline{\mathfrak{w}}}_j)=\overline{\mathfrak{w}}^{-1}\overline{\Gamma}_{\theta} \overline{M}_{q_i}\overline{\mathfrak{w}}$. So $[\overline{\gamma}^{\overline{\mathfrak{w}}}(\overline{r})](\overline{e}^{\overline{\mathfrak{w}}}_j)=
    \overline{\lambda}^{\overline{\mathfrak{w}}}(\overline{r}')^{-1}\overline{e}^{\overline{\mathfrak{w}}}_i$, and $[\overline{\gamma}^{\overline{\mathfrak{w}}}(\overline{r}^{-1})](\overline{e}^{\overline{\mathfrak{w}}}_i)=
    \overline{\lambda}^{\overline{\mathfrak{w}}}(\overline{r}^{\overline{\mathfrak{w}}})\overline{e}^{\overline{\mathfrak{w}}}_j$.
\end{itemize}
 \item   
 \begin{itemize}
\item[(c1)] $\overline{M}_{q'_i}\overline{M}_{q'_j}=\overline{s}\overline{M}_{q'_r}$, for some $\overline{s}\in \overline{\Gamma}_{\theta}$ iff $\overline{\mathfrak{w}}^{-1}\overline{M}_{q'_i}\overline{\mathfrak{w}}\overline{\mathfrak{w}}^{-1}\overline{M}_{q'_j}\overline{\mathfrak{w}}=\overline{r}'\overline{\mathfrak{w}}^{-1}\overline{M}_{q'_r}\overline{\mathfrak{w}}$, for the element $\overline{r}'=\overline{\mathfrak{w}}^{-1}\overline{s}\overline{\mathfrak{w}} \in \overline{\mathfrak{w}}^{-1}\overline{\Gamma}_{\theta}\overline{\mathfrak{w}}$.
    \item [(d1)] If the above condition holds, then $[\overline{\gamma}^{\overline{\mathfrak{w}}}(\overline{\mathfrak{w}}^{-1}\overline{M}_{q'_j}\overline{\mathfrak{w}})](\overline{e}^{\overline{\mathfrak{w}}}_r)
        =\overline{\lambda}^{\overline{\mathfrak{w}}}(\overline{r}')^{-1}\overline{e}^{\overline{\mathfrak{w}}}_i$, and  $\overline{\gamma}^{\overline{\mathfrak{w}}}([\overline{\mathfrak{w}}^{-1}\overline{M}_{q'_j}\overline{\mathfrak{w}}^{-1}]^{-1})(\overline{e}^{\overline{\mathfrak{w}}}_i)
        =\overline{\lambda}^{\overline{\mathfrak{w}}}(\overline{r}')\overline{e}^{\overline{\mathfrak{w}}}_r$.
    \end{itemize} 
\end{itemize}

\subsection{Two induced  representations}
In the next two subsections, we let  $G$ be a discrete  group, $H\le G$ is a subgroup of a finite index $m$, and we fix once and for all a coset decomposition $G=\bigsqcup_{i=1}^{m}x_{i}H$.
Let $(\sigma,W)$ be a representation of the subgroup $H$ of finite dimension. Following \cite{KaTa}, there exist two induction from $H$ to $G$.
Write $B=\mathbb{C}[H]$ and $A=\mathbb{C}[G]$.

\begin{itemize}
\item \emph{Hom-induction}
\[
\Ind_{H}^{G}W
=\{f:G\to W\mid f(hg)=\sigma(h)f(g),\;\forall h\in H,\,g\in G\}
\]
with $G$-action $(g\cdot f)(x)=f(xg)$.

\item \emph{Tensor-induction}
\[
\ind_{H}^{G}W=A\otimes_{B}W,
\]
where $A$ is viewed as a right $B$-module and $G$ acts by left multiplication on the first factor.
\end{itemize}

\begin{lemma}\label{EQInd}
There is an $A$-module isomorphism
\[
\mathcal{F}:\ind_{H}^{G}W
\longrightarrow\Ind_{H}^{G}W,
\qquad
f\otimes w\longmapsto F_{f\otimes w}
\]
with
\[
F_{f\otimes w}(g)
=\sum_{h\in H}f(g^{-1}h)\,hw.
\]
\end{lemma}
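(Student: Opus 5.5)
We will show directly that $\mathcal{F}$ is well defined on the balanced tensor product, that it is $G$-equivariant, and that it is bijective. Here $f\in A=\mathbb{C}[G]$ is viewed as a finitely supported function on $G$, so $f=\sum_g f(g)\,g$. The sum $\sum_{h\in H} f(g^{-1}h)\,hw$ is then finite. Note also that as written the formula $hw$ uses the $H$-action on $W$, but it is the version in which one evaluates $F_{f\otimes w}(g)$ through the coefficient of $f$ at $g^{-1}h$.

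\textbf{Step 1: the map lands in $\Ind_H^GW$.} For $h_0\in H$ we substitute $h=h_0h'$:
\[
F_{f\otimes w}(h_0g)=\sum_{h\in H}f(g^{-1}h_0^{-1}h)\,hw=\sum_{h'\in H}f(g^{-1}h')\,h_0h'w=\sigma(h_0)F_{f\otimes w}(g).
\]

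\textbf{Step 2: balancedness.} We check that $F_{fb\otimes w}=F_{f\otimes bw}$ for $b\in B$. By linearity it suffices to take $b=h_1\in H$. Then $(fh_1)(x)=f(xh_1^{-1})$, so $F_{fh_1\otimes w}(g)=\sum_h f(g^{-1}hh_1^{-1})\,hw$. Reindexing with $h=h'h_1$ turns this into $\sum_{h'}f(g^{-1}h')\,h'h_1w=F_{f\otimes h_1w}(g)$. Bilinearity is clear, so $\mathcal{F}$ is well defined on $A\otimes_BW$.

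\textbf{Step 3: $A$-linearity.} Take $g_0\in G$. Since $(g_0f)(x)=f(g_0^{-1}x)$, we get $F_{g_0f\otimes w}(g)=\sum_h f(g_0^{-1}g^{-1}h)\,hw=F_{f\otimes w}(gg_0)=(g_0\cdot F_{f\otimes w})(g)$. This matches the right-translation action on $\Ind$.

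\textbf{Step 4: bijectivity.} Since $A$ is a free right $B$-module with basis the coset representatives $x_1,\dots,x_m$, every element of $\ind_H^GW$ can be written uniquely as $\sum_i x_i\otimes w_i$, and $\dim\ind_H^GW=m\dim W=\dim\Ind_H^GW$. It therefore suffices to prove injectivity. We compute $F_{x_i\otimes w_i}(g)=\sum_h \delta_{x_i}(g^{-1}h)\,hw_i$. This is nonzero only if $g^{-1}h=x_i$, i.e.\ $g\in Hx_i^{-1}$, and for $g=x_i^{-1}$ it equals $w_i$. The sets $Hx_i^{-1}$ are the distinct right cosets (the inverses of the left cosets $x_iH$), so the functions $F_{x_i\otimes w_i}$ have disjoint supports. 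Moreover each $F_{x_i\otimes w_i}$ is determined by its value $w_i$ at $x_i^{-1}$. Hence $\sum_i F_{x_i\otimes w_i}=0$ forces all $w_i=0$.

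The main obstacle is purely bookkeeping: keeping the left/right conventions consistent. That means fixing the left-regular action on $A$, writing $G=\bigsqcup x_iH$ as left cosets versus the right cosets $Hx_i^{-1}$ that index the supports in $\Ind$, and choosing the correct reindexings in Steps 2 and 3. Once those conventions are fixed, each verification is a one-line substitution.
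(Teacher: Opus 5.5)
Your proposal is correct and matches the paper's proof on the first three points (the $H$-equivariance $F_{f\otimes w}(bg)=\sigma(b)F_{f\otimes w}(g)$, balancedness $F_{fb\otimes w}=F_{f\otimes bw}$, and $G$-equivariance via $F_{g_0f\otimes w}(g)=F_{f\otimes w}(gg_0)$), with the same reindexings. The only difference is in the bijectivity step. The paper writes down the inverse $\mathcal{G}(\varphi)=\sum_i x_i\otimes\varphi(x_i^{-1})$ and checks that $\mathcal{F}(x_i\otimes f_j)=e_{ij}$ and $\mathcal{G}(e_{ij})=x_i\otimes f_j$. You instead combine the same support computation ($F_{x_i\otimes w}$ is supported on $Hx_i^{-1}$ with value $w$ at $x_i^{-1}$) with the dimension count $m\dim W$. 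That count is legitimate because the paper assumes $\dim W<\infty$, whereas the explicit inverse gives surjectivity without that hypothesis.
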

\begin{proof}
$\mathcal{F}$ is well-defined: for $b\in H$, $[fb](g)=f(gb^{-1})$.
\begin{align*}
F_{fb\otimes w}(g)
&=\sum_{h\in H}[fb](g^{-1}h)\,hw
=\sum_{h\in H}f(g^{-1}hb^{-1})\,hw
=\sum_{h_1\in H}f(g^{-1}h_1)\,h_1bw
=F_{f\otimes bw}(g).\\
F_{f\otimes w}(bg)
&=\sum_{h\in H}f(g^{-1}b^{-1}h)\,hw
=\sum_{h_1\in H}f(g^{-1}h_1)\,bh_1w
=\sigma(b) F_{f\otimes w}(g).
\end{align*}
$G$-equivariance:
\[
F_{g_{0}(f\otimes w)}(g)
=F_{g_{0}f\otimes w}(g)
=\sum_{h\in H}f(g_{0}^{-1}g^{-1}h)\,hw
=F_{f\otimes w}(g g_{0})
=(g_{0}\cdot F_{f\otimes w})(g).
\]
Finally, $\mathcal{F}$ is bijective: an inverse is given by
\[
\mathcal{G}: \Ind_{H}^{G}W\to A\otimes_{B}W,\qquad
\varphi\longmapsto\sum_{x\in H\backslash G}x\otimes\varphi(x^{-1}).
\]
$G$-equivariance:  $x_i^{-1}g=h_{g,x_i} x_j$ and $g x_j^{-1}=x_ih_{g,x_i}$
\[
\sum_{i=1}^mx_i\otimes [g\varphi](x_i^{-1})
=\sum_{i=1}^mx_i\otimes \varphi(x_i^{-1}g)
=\sum_{i=1}^mx_i\otimes \varphi(h_{g,x_i} x_j)
=\sum_{i=1}^mx_ih_{g,x_i}\otimes \varphi( x_j)
=g\sum_{j=1}^mx_j^{-1}\otimes \varphi( x_j).
\qedhere\]
\end{proof}
Write:
\[G=\sqcup_{i=1}^m  x_i H=\sqcup_{i=1}^m   H x_i^{-1}.\]
Let $\{ f_j \mid j\in J\}$ be a basis of $W$.  We choose  the basis of $\ind_{H}^{G}W$ as $\{ e'_{ij}= x_i \otimes f_j \mid 1\leq i\leq m, j\in J\}$ and the basis of $\Ind_{H}^{G}W$ as $\{ e_{ij }\mid 1\leq i\leq m, j\in J,  \supp  e_{ij} \subseteq   H x_i^{-1} \textrm{ and }  e_{ ij} (x_i^{-1})=f_j\}$. It can be checked that $\mathcal{F}(e_{ij}')=e_{ij}$ and $\mathcal{G}(e_{ij})=e_{ij}'$.
\subsection{ Tensor induction}\label{tensorind}
Let us first recall some results on the tensor induction  and the transfer from  \cite[\wasyparagraph 13A]{CuRe} and \cite[Chap. 7]{Ser}. For  $g\in G$, write 
\begin{equation}\label{eqgx}
gx_{i}=x_{\tau_{g}(i)}h_{g,x_{i}},\qquad \tau_{g}\in S_{m}, h_{g,x_{i}}\in H.
\end{equation}  The \emph{transfer} is the map
\[\begin{array}{lccl}
V_{G\to H}:& G &\longrightarrow             & H^{ab}=\tfrac{H}{D(H)};\\ 
           & g &\longmapsto                 & (\prod_{i=1}^m h_{g,x_i}) \bmod D(H).
\end{array}\]
\begin{theorem}
The transfer map $V_{G\to H}$ is a group homomorphism and is independent of the choice of coset representatives  $\{ x_i\}$.
\end{theorem}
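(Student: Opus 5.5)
The plan is the standard argument, as in Serre \cite[Chap.~7]{Ser}. First I check that $V_{G\to H}$ is well defined as a map of sets. By (\ref{eqgx}), for each $g$ and each $i$ there is exactly one index $\tau_g(i)$ with $g x_i\in x_{\tau_g(i)}H$, because the cosets $x_jH$ partition $G$. Hence $h_{g,x_i}=x_{\tau_g(i)}^{-1}gx_i$ is uniquely determined, and so $\tau_g$ is a permutation. Indeed, it is the permutation by which $g$ acts on the left cosets $G/H$.

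For the homomorphism property, take $g,g'\in G$. Then
\[
gg'x_i=g\,x_{\tau_{g'}(i)}h_{g',x_i}=x_{\tau_g\tau_{g'}(i)}\,h_{g,x_{\tau_{g'}(i)}}\,h_{g',x_i},
\]
and uniqueness of the decomposition gives $h_{gg',x_i}=h_{g,x_{\tau_{g'}(i)}}h_{g',x_i}$. Take the product over $i$ and reduce modulo $D(H)$, where the factors commute. As $i$ runs over $1,\dots,m$, the index $\tau_{g'}(i)$ runs over all of $1,\dots,m$. So the product of the $h_{gg',x_i}$ equals the product of the $h_{g,x_j}$ times the product of the $h_{g',x_i}$, which is $V_{G\to H}(gg')=V_{G\to H}(g)V_{G\to H}(g')$.

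For independence of the representatives, replace $x_i$ by $x_i'=x_iu_i$ with $u_i\in H$; any other set of representatives, possibly reindexed, has this form. Then
\[
gx_i'=x_{\tau_g(i)}h_{g,x_i}u_i=x'_{\tau_g(i)}\,u_{\tau_g(i)}^{-1}h_{g,x_i}u_i,
\]
so $\tau_g$ is unchanged and $h_{g,x_i'}=u_{\tau_g(i)}^{-1}h_{g,x_i}u_i$. Modulo $D(H)$, the product over $i$ becomes $\prod_i h_{g,x_i}\cdot\prod_i u_i\cdot\prod_i u_{\tau_g(i)}^{-1}$. The last two factors cancel because $\tau_g$ is a bijection. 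Reindexing the representatives only permutes the factors, and this is harmless in the abelian group $H^{ab}$.

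The only point that needs care is to do every rearrangement after passing to $H^{ab}$, never in $H$ itself. Apart from that, the argument is routine bookkeeping with the permutation $\tau_g$.
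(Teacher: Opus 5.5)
Your proposal is correct and is exactly the standard argument. The paper gives no argument of its own for this theorem and simply cites Serre's Theorem~7.1; your three steps are the verification that citation stands for: the uniqueness of $gx_i=x_{\tau_g(i)}h_{g,x_i}$, the identity $h_{gg',x_i}=h_{g,x_{\tau_{g'}(i)}}h_{g',x_i}$, and the cancellation of $\prod_i u_i\prod_i u_{\tau_g(i)}^{-1}$ in $H^{ab}$.
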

\begin{proof}
See \cite[Theorem 7.1]{Ser}.
\end{proof}
Let  $H \wr S_m= H^m \rtimes S_m$. Formula \eqref{eqgx} yields a map:
\begin{equation}\label{eq:psi}
\psi\colon G\longrightarrow H\wr S_{m},\qquad
g\longmapsto \bigl([h_{g,x_{1}},\dots,h_{g,x_{m}}],\tau_{g}\bigr).
\end{equation}
\begin{lemma}
$\psi$ is a group monomorphism. If another set of representatives $x_i'= x_i h_i$ is chosen. The corresponding map $\psi'$ satisfies
$\psi'(g)= s^{-1} \psi(g) s$, with $s=(h_1, \cdots, h_m)$.
\end{lemma}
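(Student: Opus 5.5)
We need to prove that $\psi$ is a group homomorphism, that it is injective, and the change-of-representatives formula. First fix the multiplication in $H\wr S_m=H^m\rtimes S_m$ so that it matches \eqref{eqgx}. Elements $([h_1,\dots,h_m],\tau)$ are thought of as monomial operators sending the slot $x_i$ to $x_{\tau(i)}$ with coefficient $h_i$. The product is then
\[
([h_1,\dots,h_m],\tau)\cdot([h'_1,\dots,h'_m],\tau')=\bigl([h_{\tau'(1)}h'_1,\dots,h_{\tau'(m)}h'_m],\tau\tau'\bigr).
\]
The plan is to check that this is the convention for which $\psi$ is multiplicative, and then verify each of the three claims directly from \eqref{eqgx}.

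\emph{Homomorphism.} Take $g,g'\in G$ and compute $gg'x_i$ in two ways. On one hand, by definition,
\[
gg'x_i=x_{\tau_{gg'}(i)}h_{gg',x_i}.
\]
On the other hand,
\[
gg'x_i=g\,x_{\tau_{g'}(i)}h_{g',x_i}=x_{\tau_g\tau_{g'}(i)}\,h_{g,x_{\tau_{g'}(i)}}h_{g',x_i}.
\]
Since the cosets $x_jH$ are disjoint and the $H$-component is unique, we get $\tau_{gg'}=\tau_g\tau_{g'}$ and $h_{gg',x_i}=h_{g,x_{\tau_{g'}(i)}}h_{g',x_i}$. This is exactly the product rule above, so $\psi(gg')=\psi(g)\psi(g')$.

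\emph{Injectivity.} Suppose $\psi(g)=1$. Then $\tau_g=\mathrm{id}$ and $h_{g,x_i}=1$ for all $i$, so $gx_i=x_i$ for every $i$. Taking any $i$ gives $g=1$.

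\emph{Change of representatives.} Let $x_i'=x_ih_i$. Then
\[
gx_i'=gx_ih_i=x_{\tau_g(i)}h_{g,x_i}h_i=x'_{\tau_g(i)}\,h_{\tau_g(i)}^{-1}h_{g,x_i}h_i,
\]
so $\tau'_g=\tau_g$ and $h'_{g,x'_i}=h_{\tau_g(i)}^{-1}h_{g,x_i}h_i$. Put $s=([h_1,\dots,h_m],\mathrm{id})$. Using the product rule, $s^{-1}\psi(g)s$ has permutation $\tau_g$ and $i$-th entry $h_{\tau_g(i)}^{-1}h_{g,x_i}h_i$. This equals $\psi'(g)$.

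The only real obstacle is bookkeeping: the semidirect-product convention for $H\wr S_m$ must be chosen consistently with \eqref{eqgx}. With the opposite convention one gets an anti-homomorphism or $s\psi s^{-1}$ instead of $s^{-1}\psi s$. I would therefore state the product rule explicitly at the start, following \cite[\wasyparagraph 13A]{CuRe}.
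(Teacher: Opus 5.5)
Your proof is correct. Your product rule on $H\wr S_m$ is associative. Computing $gg'x_i$ in two ways gives $\psi(gg')=\psi(g)\psi(g')$. Injectivity follows from $gx_1=x_1$. Finally, $s^{-1}\psi(g)s$ has $i$-th entry $h_{\tau_g(i)}^{-1}h_{g,x_i}h_i=h'_{g,x'_i}$.

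The paper gives no argument of its own and simply cites Lemma 13.3 of Curtis--Reiner, whose proof is this same direct computation. Your convention is also the one compatible with the paper's later action of $([h_1,\dots,h_m],\tau)$ on $W^{\otimes m}$: apply the $\sigma(h_i)$ slotwise, then permute so that slot $j$ receives the entry of slot $\tau^{-1}(j)$.
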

\begin{proof}
See \cite[p.332, Lemma 13.3]{CuRe}.
\end{proof}
   Extend $\sigma^{\otimes m}$ to $H\wr S_{m}$ by letting $\tau\in S_{m}$ act as
\[
\tau(w_{1}\otimes\dots\otimes w_{m})
=w_{\tau^{-1}(1)}\otimes\dots\otimes w_{\tau^{-1}(m)}.
\]
The \emph{tensor induction} $\otimes\!\operatorname{ind}_{H}^{G}\sigma$ is the representation of $G$ obtained by restricting this extension via $\psi$.  The lemma above shows that the equivalence class of $\otimes\!\operatorname{ind}_{H}^{G}\sigma$ is well defined. 
\begin{proposition}\label{otimescorr}
If $\sigma$ is a character(i.e.\ $\dim W=1$), then $\otimes\ind_{H}^G \sigma$ is again a character and satisfies  $\otimes\ind_{H}^G \sigma=\sigma \circ V_{G\to H}$. 
\end{proposition}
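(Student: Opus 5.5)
When $\dim W=1$ the claim reduces to a direct computation with the monomorphism $\psi\colon G\to H\wr S_m$ of \eqref{eq:psi}. Write $W=\mathbb{C}w$ and $\sigma(h)w=\sigma(h)\cdot w$ with $\sigma(h)\in\mathbb{C}^\times$. Then $W^{\otimes m}$ is spanned by the single vector $w^{\otimes m}=w\otimes\cdots\otimes w$, so $\otimes\ind_H^G\sigma$ is a one-dimensional representation, i.e.\ a character of $G$. It remains to identify which character it is.

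First, the extended action of $H\wr S_m$ on $W^{\otimes m}$ is computed on the two kinds of elements. A permutation $\tau\in S_m$ sends $w_1\otimes\cdots\otimes w_m$ to $w_{\tau^{-1}(1)}\otimes\cdots\otimes w_{\tau^{-1}(m)}$. Since all factors equal $w$, this gives $\tau(w^{\otimes m})=w^{\otimes m}$, so $\tau$ acts trivially. An element $(h_1,\dots,h_m)\in H^m$ acts factorwise by $\sigma(h_1)\otimes\cdots\otimes\sigma(h_m)$, so it multiplies $w^{\otimes m}$ by $\prod_i\sigma(h_i)$.

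Next, for $g\in G$ we use $\psi(g)=([h_{g,x_1},\dots,h_{g,x_m}],\tau_g)$. Whatever the ordering convention in the semidirect product $H^m\rtimes S_m$, $\psi(g)$ is a product of an element of $H^m$ and a permutation. Each of the factors $h_{g,x_i}$ appears exactly once, possibly in permuted position. Hence
\[
(\otimes\ind_H^G\sigma)(g)\,w^{\otimes m}=\Bigl(\prod_{i=1}^m\sigma(h_{g,x_i})\Bigr)w^{\otimes m}.
\]

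Finally, $\sigma$ is a character of $H$ with values in the abelian group $\mathbb{C}^\times$. It is therefore trivial on $D(H)$ and factors through $H^{ab}=H/D(H)$. Consequently
\[
\prod_i\sigma(h_{g,x_i})=\sigma\Bigl(\prod_i h_{g,x_i}\bmod D(H)\Bigr)=\sigma(V_{G\to H}(g)),
\]
and the product does not depend on the order of the factors. This yields $\otimes\ind_H^G\sigma=\sigma\circ V_{G\to H}$.

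By the theorem on the transfer, $V_{G\to H}$ is a homomorphism independent of the choice of the $x_i$. This matches the fact that the equivalence class of the tensor induction is independent of that choice. For a character, equivalence means equality, since conjugating by $s=(h_1,\dots,h_m)$ does not change a one-dimensional representation.

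The only delicate point is the bookkeeping convention in $H\wr S_m$: one must make sure the permutation part contributes no scalar. That is settled by the observation above that $\tau$ fixes $w^{\otimes m}$ exactly, with no sign, because the $S_m$-action is a plain permutation of tensor factors. Everything else is routine.
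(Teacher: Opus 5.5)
Your proof is correct. The paper gives no argument of its own and simply cites Curtis--Reiner, Prop.~13.12. Your computation is the standard argument behind that reference, made self-contained: permutations fix $w^{\otimes m}$ with no scalar, the $H^m$-component of $\psi(g)$ contributes $\prod_{i=1}^m\sigma(h_{g,x_i})$, and $\sigma$ factors through $H/D(H)$. It is also what one reads off directly from the paper's explicit formula for $\pi(g)$ in the tensor-induction subsection when $\dim W=1$.
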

\begin{proof}
See \cite[Pro. 13.12]{CuRe}.
\end{proof} 

Following \cite[\wasyparagraph 13A]{CuRe}, we now give an explicit realization of the tensor induced representation  $\otimes\!\operatorname{ind}_{H}^{G}\sigma$. Denote $\pi=\otimes\ind_{H}^G \sigma$. Let  $V= (x_1\otimes W) \otimes  \cdots   \otimes  (x_m \otimes W)$. For any $g\in G$, the generator $v=(x_1\otimes w_1) \otimes  \cdots   \otimes  (x_m \otimes w_m)\in V$, 
 \begin{align*}
  \pi(g)v&=  (gx_1\otimes w_1)\otimes \cdots \otimes (g x_m \otimes w_m)\\
  &=(x_{\tau_g{(1)}}\otimes \sigma( h_{g,x_1})w_1)  \otimes\cdots \otimes (x_{\tau_g{(m)}} \otimes\sigma( h_{g,x_m})w_m)\\
 &\stackrel{ \textrm{ Def.}}{=}x_{1} \otimes \sigma( h_{g,x_{\tau_g^{-1}(1)}} )w_{\tau_g^{-1}(1)} \otimes  \cdots \otimes x_{m} \otimes \sigma( h_{g,x_{\tau_g^{-1}(m)}})w_{\tau_g^{-1}(m)}.
 \end{align*}

  Following  Lemma \ref{EQInd},  we give another realization of this  representation. For each coset representative $x_i$ ($i=1,\dots,m$) and $w\in W$ set
\[
F_{x_i\otimes w}\coloneqq\mathcal{F}(x_i\otimes w)\in\Ind_{H}^{G}W,
\]
the unique function supported on the coset $Hx_i^{-1}$ and satisfying
\[
F_{x_i\otimes w}(x_i^{-1})=w.
\]
 Let $V'=\{ \sum c_{w_1\otimes \cdots\otimes w_m}F_{x_1\otimes w_1} \otimes \cdots \otimes F_{x_m\otimes w_m}  \}.$ For $g\in G$,  write $gx_i=x_{\tau_g(i)}h_{g,x_i}$ ($h_{g,x_i}\in H$).
Then
\[
x_i^{-1}g^{-1}=h_{g,x_i}^{-1}x_{\tau_g(i)}^{-1}.
\]
Hence the left translate ${}^{g}F_{x_i\otimes w}\coloneqq g\cdot F_{x_i\otimes w}$ is supported on
\[
\supp{}^{g}F_{x_i\otimes w}\subseteq Hx_i^{-1}g^{-1}=Hx_{\tau_g(i)}^{-1},
\]
and its value at the representative $x_{\tau_g(i)}^{-1}$ is
\[
{}^{g}F_{x_i\otimes w}(x_{\tau_g(i)}^{-1})
=F_{x_i\otimes w}(x_{\tau_g(i)}^{-1}g)
=F_{x_i\otimes w}(h_{g,x_i}x_i^{-1})
=\sigma(h_{g,x_i})F_{x_i\otimes w}(x_i^{-1})
=\sigma(h_{g,x_i})w.
\]
We define the action of $G$ on $V'$ as follows:
\begin{align*}
   &\pi(g)F_{x_1\otimes w_1} \otimes \cdots \otimes F_{x_m\otimes w_m} \\
   &= F_{x_{\tau_g{(1)}}\otimes \sigma( h_{g,x_1})w_1}  \otimes\cdots \otimes F_{x_{\tau_g{(m)}} \otimes\sigma( h_{g,x_m})w_m}\\
   &\stackrel{ \textrm{ Def.}}{=}F_{x_{1} \otimes \sigma( h_{g,x_{\tau_g^{-1}(1)}} )w_{\tau_g^{-1}(1)}} \otimes  \cdots \otimes F_{x_{m} \otimes \sigma( h_{g,x_{\tau_g^{-1}(m)}})w_{\tau_g^{-1}(m)}}.
   \end{align*}

\subsection{Example }\label{exampI}
Let the above $G=\overline{\mathfrak{w}}^{-1}\overline{\SL}_2(\Z)\overline{\mathfrak{w}}$,  $H=\overline{\mathfrak{w}}^{-1}\overline{\Gamma}_{\theta}\overline{\mathfrak{w}}$. 
 Let us define:
$$ \sigma= [\overline{\lambda}^{\overline{\mathfrak{w}}}]^{-1}, \qquad \delta=\otimes\ind_{H}^G \sigma.$$
By Prop. \ref{otimescorr},  $\delta$ is a character of $G$. Put
\[ x_1=\overline{\mathfrak{w}}^{-1}\overline{M}_{q_1}^{-1}\overline{\mathfrak{w}}, \quad x_2=\overline{\mathfrak{w}}^{-1}\overline{M}_{q_2}^{-1}\overline{\mathfrak{w}}, \quad x_3=\overline{\mathfrak{w}}^{-1}\overline{M}_{q_3}^{-1}\overline{\mathfrak{w}}.\]
Recall the notations $\{\overline{e}_1^{\overline{\mathfrak{w}}},\overline{e}^{\overline{\mathfrak{w}}}_2, \overline{e}^{\overline{\mathfrak{w}}}_3 \} $ from Section \ref{Twistedby}. Then  $\overline{e}^{\overline{\mathfrak{w}}}_i=F_{x_i \otimes 1}$. 
\begin{align*}
   &\delta(g)F_{x_1\otimes 1} \otimes F_{x_2\otimes 1} \otimes F_{x_3\otimes 1} \\
   &=F_{x_{1} \otimes \sigma( h_{g,x_{\tau_g^{-1}(1)}} )1} \otimes  F_{x_{2} \otimes \sigma( h_{g,x_{\tau_g^{-1}(2)}} )1}  \otimes F_{x_{m} \otimes \sigma( h_{g,x_{\tau_g^{-1}(m)}})1}\\
   &=F_{x_1\otimes 1} \otimes F_{x_2\otimes 1} \otimes F_{x_3\otimes 1} \sigma(V_{G\to H}(g)).\\
& \\
&\delta(g)F_{x_1\otimes 1} \otimes F_{x_2\otimes 1} \otimes F_{x_3\otimes 1} \\
   &=\overline{\gamma}^{\overline{\mathfrak{w}}} (g)F_{x_1\otimes 1} \otimes \overline{\gamma}^{\overline{\mathfrak{w}}}  (g) F_{x_2\otimes 1} \otimes \overline{\gamma}^{\overline{\mathfrak{w}}}  (g) F_{x_3\otimes 1}\\
   &=F_{x_1\otimes 1} \otimes F_{x_2\otimes 1} \otimes F_{x_3\otimes 1} \sigma(V_{G\to H}(g)).
   \end{align*}
\section{Classical Weil representations}\label{Weilmodel}

The center of $\Ha(W)$ is precisely  $\R$.  Let $\psi= \psi_{0}^{\mathfrak{e}}$.   According to the Stone-von Neumann's theorem, there exists a unique unitary irreducible complex representation with central character  $\psi$, up to unitary equivalence.  Let us call it the Heisenberg representation, and denote it  by $\pi_{\psi}$. The  group $\SL_2(\R)$ can then act on $\Ha(W) $ and leave the center $\R$ pointwise unchanged. Then the Heisenberg representation yields a projective representation of  $\SL_2(\R)$, leading to an actual   representation of a $\C^{\times}$-covering group over $\SL_2(\R)$.  This $\C^{\times}$-covering  group is commonly called  the Metaplectic group, and the true representation is known as the Weil representation.  This central covering  can be reduced to an $8$-fold or $2$-fold covering over $\SL_2(\R)$, as shown in \cite{Pe,Ra,We} and others.  Our main references are Rao~\cite{Ra}, Kudla~\cite{Ku}, and Lion--Vergne~\cite{LiVe}.  
\subsection{Schr\"odinger model}\label{chap2Sch}
 Note that $X^{\ast}\times \R$ is a closed subgroup of $\Ha(W)$. Let $\psi_{X^{\ast}}$ denote the one--dimensional unitary representation of $X^{\ast}\times\R$ obtained by extending $\psi$ trivially along $X^{\ast}$.  Define
$$ \pi_{\psi}=\Ind_{X^{\ast}\times \R}^{\Ha(W)}\psi_{X^{\ast}}, V_{\psi}=\Ind_{X^{\ast} \times \R}^{\Ha(W)} \C.$$
 Then $\pi_{\psi}$ is the Heisenberg representation of   $\Ha(W)$ with central character $\psi$. By Weil's theorem it extends uniquely to a unitary representation of $\widetilde{\SL}_{2}(\R)\ltimes\Ha(W)$, provided the central subgroup $\mu_{8}$ of $\widetilde{\SL}_{2}(\R)$ acts trivially; uniqueness follows because $\SL_{2}(\R)$ is perfect.

With respect to the self-dual Haar measure \(d\mu(y)\) on \(\R\) determined by \(\psi\), the Heisenberg representation \(\pi_{\psi}\) acts on \(L^{2}(\R)\) by the following formulas, valid for all \(f\in\mathcal S(\R)\):
\begin{equation}\label{chap2representationsp11}
\pi_{X^{\ast}, \psi}([x,0]\cdot [x^{\ast},0]\cdot [0,k])f(y)=\psi(k+\langle x+ y,x^{\ast}\rangle) f(x+y),
\end{equation}
\begin{equation}\label{chap2representationsp2}
\pi_{X^{\ast}, \psi}( u(b))f(y)=\psi(\tfrac{1}{2}\langle y,yb\rangle) f(y),
\end{equation}
\begin{equation}\label{chap2representationsp3}
\pi_{X^{\ast}, \psi}( h(a))f(y)=|\det(a)|^{1/2} f(ya),
\end{equation}
\begin{equation}\label{chap2representationsp4}
\pi_{X^{\ast}, \psi}([\omega,t])f(y)=\int_{X^{\ast}} t\psi(\langle y, y^{\ast}\rangle) f(y^{\ast}\omega^{-1}) d\mu(y^{\ast}),
\end{equation}
where
\[
u(b)=\begin{pmatrix}1&b\\0&1\end{pmatrix},\quad
h(a)=\begin{pmatrix}a&0\\0&a^{-1}\end{pmatrix},\quad
\omega=\begin{pmatrix}0&-1\\1&0\end{pmatrix}\in\SL_{2}(\R).
\]
Moreover, for all $g_{1},g_{2}\in\SL_{2}(\R)$,
\[
\pi_{\psi}(g_{1})\pi_{\psi}(g_{2})
=\widetilde{c}_{X^{\ast}}(g_{1},g_{2})\,\pi_{\psi}(g_{1}g_{2}),
\]
where $\widetilde{c}_{X^{\ast}}(\,\cdot\,,\,\cdot\,)$ is Perrin--Rao's $2$-cocycle attached to $(\psi,X^{\ast})$; see \cite{Pe, Ra}.
\subsection{The Lattice model}\label{chap2latticemodel}
Let $L\subset W$ be a lattice (cf.\ \cite[p.\,138]{LiVe}).  Define its orthogonal complement with respect to $\psi$ by
\[
L^{\perp}= \bigl\{ w\in W\bigm| \psi\!\bigl(\langle w,l\rangle\bigr)=1
\text{ for all }l\in L\bigr\}.
\]
A lattice $L$ is called \emph{self-dual} (with respect to $\psi$) if $L=L^{\perp}$.  
Write $\Ha(L)=L\times\R$ for the corresponding subgroup of $\Ha(W)$ and set
\[
\pi'_{\psi}=\Ind_{\Ha(L)}^{\Ha(W)}\psi_L,
\]
where $\psi_L$ denotes the extension of $\psi$ to $\Ha(L)$ .  
Then $\pi'_{\psi}$ is a Heisenberg representation of $\Ha(W)$ with central character $\psi$.

\begin{example}\label{chap2laex}
Take $\psi=\psi_0$ and $L=\Z e\oplus\Z e^{\ast}$.  
Let $\mathcal{H}_{\psi}(L)$ be the space of measurable functions $f\colon W\to\C$ satisfying
\begin{enumerate}
\item[(i)] $f(l+w)=\psi\!\bigl(-\tfrac12\langle x_l,x_l^{\ast}\rangle
            -\tfrac12\langle l,w\rangle\bigr)f(w)$
            for all $l=x_l+x_l^{\ast}\in L=(L\cap X)\oplus(L\cap X^{\ast})$
            and almost all $w\in W$;
\item[(ii)] $\displaystyle\int_{L\backslash W}|f(w)|^{2}\,dw<+\infty$,
\end{enumerate}
with the  natural $W$-invariant measure on $L\backslash W$ (from the self-dual  measure on  $W$ with respect to $\psi$).  
Then $\pi'_{\psi}$ is realized on $\mathcal{H}_{\psi}(L)$ by
\[
\pi'_{\psi}\bigl[(w',t)\bigr]f(w)
=\psi\!\bigl(t+\tfrac12\langle w,w'\rangle\bigr)f(w+w'),
\qquad w,w'\in W,\; t\in\R.
\]
\end{example}
\subsection{The Fock model }\label{FOCKMODEL1}
In this section we fix $\psi=\psi_0$. Let $V=\C$ and equip it with the Hermitian form as follows:
$$(z, z')_V= \overline{z}z', \quad\quad z,z'\in \C.$$
Let $\Ha(V)=\C\oplus \R$ be the group of elements $(v, t)$,  with the
multiplication law given by
$$(z, t)(z',t')=(v+v', t+t'+ \tfrac{\Im(z,z')_V}{2}),   \quad\quad z, z'\in \C, t,t'\in \R.$$
Then there exists a group isomorphism:
\begin{align}
\phi\colon\Ha(V)\longrightarrow\Ha(W),\qquad
(a+ib,t)\mapsto(ae+be^\ast,t).
 \end{align}
 We now recall the Fock model of the Heisenberg representation.
\begin{example}[Fock model]\label{ex3}
Let $\mathcal H_F$ be the Fock space of entire functions $f\colon\C\to\C$ satisfying
\[
\|f\|^2=\int_\C|f(z)|^2e^{-\pi|z|^2}\,d(z)<+\infty.
\]
The Heisenberg representation of $\Ha(V)$ is realized on $\mathcal H_F$ by
\[
\pi_F[(z',t)]f(z)=\psi_0(t)\,e^{-\frac\pi2|z'|^2-\pi z\overline{z'}}f(z+z'),
\qquad z,z'\in\C,\ t\in\R.
\]
\end{example}
\begin{proof}
See \cite[p.\,43, (1.71)]{Fo}.
\end{proof}
Following Satake–Takase \cite{Sa1,Sa2,Ta1,Ta2} we attach the following objects to each \(z=P_{z}+iQ_{z}\in\mathbb H\).

\begin{itemize}
\item  Complex conjugate on \(\mathbb H\):
      \(\displaystyle \widehat z:=-\overline z\).

\item  Measures on \(V\):
      \(\displaystyle d_{z}(v):=Q_{z}^{-1}\,d(v)=d_{\widehat z}(v)\).

\item  Gaussian factor:
      \(\displaystyle q_{z}(v):=e^{-\pi i z^{-1}v^{2}}\).

\item  Kernel functions:
      \[
        \kappa(z',v';z,v)
        :=e^{\pi i(z'-\overline z)^{-1}(v'-\overline v)^{2}},
        \qquad
        \kappa_{z}(v',v)
        :=\kappa(z,v';z,v)
        =e^{\frac{\pi}{2}Q_{z}^{-1}(v'-\overline v)^{2}}.
      \]

\item  2-cocycle:
      \(\displaystyle\alpha_{z}(g_{1},g_{2})
        :=\epsilon\!\bigl(g_{1}^{h_{-1}};z,g_{2}^{h_{-1}}(z)\bigr)\),
      \((g_{i}\in\SL_{2}(\mathbb R))\).

\item  Action of \(\SL_{2}(\mathbb R)\ltimes\Ha(W)\) on \(\mathbb H\times V\):
      \[
        [g,h]\cdot[z,v]
        :=\Bigl[g(z),\,(v+xz+x^{\ast})J(g,z)^{-1}\Bigr],
        \qquad h=(x,x^{\ast};t)\in\Ha(W).
      \]

\item  Automorphy factor
      \(\eta\colon[\SL_{2}(\mathbb R)\ltimes\Ha(W)]\times[\mathbb H\times V]\to\mathbb C^{\times}\):
      \begin{itemize}
        \item[(a)]  For \(h=[x,x^{\ast};t]\in\Ha(W)\):
              \[
                \eta(h;[z,v])
                :=e^{2\pi i\bigl[t+\tfrac12(xz+x^{\ast})x+vx\bigr]}.
              \]

        \item[(b)]  For \(g=\bigl(\begin{smallmatrix}a&b\\c&d\end{smallmatrix}\bigr)\in\SL_{2}(\mathbb R)\):
              \[
                \eta(g;[z,v])
                :=e^{2\pi i\bigl[-\tfrac12(cz+d)^{-1}cv^{2}\bigr]}.
              \]

        \item[(c)]  Mixed case:
              \[
                \eta\bigl([g,h];[z,v]\bigr)
                :=\eta\!\bigl(h^{g^{-1}};[g(z),vJ(g,z)^{-1}]\bigr)\;
                   \eta(g;[z,v]),\qquad
                (g,h)=(1,h^{g^{-1}})(g,0).
              \]
      \end{itemize}
\end{itemize}
\begin{lemma}
For \(\widetilde g_1=[g_1,h_1]\), \(\widetilde g_2=[g_2,h_2]\in\SL_2(\mathbb R)\ltimes\Ha(W)\) and \(\widetilde z=[z,v]\in\mathbb H\times V\) we have
\[
\eta\!\bigl(\widetilde g_1\widetilde g_2,\widetilde z\bigr)
=\eta\!\bigl(\widetilde g_1,\widetilde g_2(\widetilde z)\bigr)\,
  \eta\!\bigl(\widetilde g_2,\widetilde z\bigr).
\]
\end{lemma}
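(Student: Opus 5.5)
The plan is to split the identity into three pieces, one for each part of the semidirect product $\SL_2(\mathbb R)\ltimes\Ha(W)$, and then glue them using the mixed definition (c). Write $\widetilde g=[g,h]=(1,h^{g^{-1}})(g,0)$, where $h^{g^{-1}}=ghg^{-1}$, as in (c). The identity for general $\widetilde g_1,\widetilde g_2$ then follows from three facts.

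\begin{itemize}
\item[(A)] The restriction of $\eta$ to $\Ha(W)$ satisfies the cocycle identity.
\item[(B)] The restriction of $\eta$ to $\SL_2(\mathbb R)$ satisfies the cocycle identity.
\item[(C)] For $g\in\SL_2(\mathbb R)$, $h\in\Ha(W)$ and $\widetilde z\in\mathbb H\times V$,
\[
\eta\bigl(g;h(\widetilde z)\bigr)\,\eta(h;\widetilde z)=\eta\bigl(ghg^{-1};g(\widetilde z)\bigr)\,\eta(g;\widetilde z).
\]
\end{itemize}

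Given (A)--(C), the general case is formal. One expands $\widetilde g_1\widetilde g_2=(1,h_1^{g_1^{-1}}\,(g_1h_2g_1^{-1}))(g_1g_2,0)$ and uses (c) on each side. One then moves $g_1$ past $h_2$ with (C), combines the two Heisenberg factors with (A), and combines $g_1,g_2$ with (B).

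For (A), I will compute directly. Take $h=[x,x^{\ast};t]$, $h'=[x',x'^{\ast};t']$ and $[z,v]$; then $h'$ sends $[z,v]$ to $[z,v+x'z+x'^{\ast}]$. Expand
\[
t+t'+\tfrac12\langle(x,x^{\ast}),(x',x'^{\ast})\rangle+\tfrac12\bigl((x+x')z+x^{\ast}+x'^{\ast}\bigr)(x+x')+v(x+x').
\]
Compare this with the sum of the two exponents, $t+\tfrac12(xz+x^{\ast})x+(v+x'z+x'^{\ast})x$ and $t'+\tfrac12(x'z+x'^{\ast})x'+vx'$. The difference reduces to $\tfrac12(xx'^{\ast}-x'x^{\ast})$ minus the symplectic term, which vanishes by the Heisenberg multiplication law.

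For (B), the exponent is $-\tfrac12\,c\,v^2/J(g,z)$. With $z'=g_2z$ and $v'=vJ(g_2,z)^{-1}$, the identity becomes
\[
\frac{c_3}{J(g_3,z)}=\frac{c_1}{J(g_1,z')\,J(g_2,z)^2}+\frac{c_2}{J(g_2,z)}.
\]
This follows from the cocycle relation $J(g_3,z)=J(g_1,z')J(g_2,z)$ together with $c_1z'+d_1=(c_3z+d_3)/(c_2z+d_2)$. Clearing denominators leaves $c_3(c_2z+d_2)-c_2(c_3z+d_3)=c_1$, and this holds because $c_3d_2-c_2d_3=c_1$, which one reads off from $g_1=g_3g_2^{-1}$.

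The main obstacle is (C), since here the $g$-action on the $v$-coordinate interacts with the quadratic exponents. Both sides of (C) are quadratic expressions in $(v,x,x^{\ast})$. I will first show that, once (B) is known, the validity of (C) for fixed $h$ is multiplicative in $g$: if (C) holds for $g$ and $g'$ (for all $h$), then it holds for $gg'$. Hence it suffices to check (C) on the generators $u(b)$, $h(a)$ and $\omega$ of $\SL_2(\mathbb R)$.

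\begin{itemize}
\item For $u(b)$ and $h(a)$ we have $c=0$, so $\eta(g;\cdot)=1$. The check is then that $\eta(ghg^{-1};g\widetilde z)=\eta(h;\widetilde z)$ after substituting $g(z)=z+b$, respectively $a^2z$, and $v\mapsto v$, respectively $va^{-1}$.
\item For $\omega$, we have $ghg^{-1}=[-x^{\ast},x;t]$ and $\omega(z)=-1/z$. One compares $-\tfrac12(v+xz+x^{\ast})^2/z+\tfrac12(xz+x^{\ast})x+vx$ with $-\tfrac12 v^2/z$ plus the Heisenberg exponent evaluated at $[-1/z,\,v/z]$, and verifies equality by expanding.
\end{itemize}

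If the reduction to generators proves delicate, the fallback is to verify (C) directly for a general $g=\begin{pmatrix}a&b\\c&d\end{pmatrix}$, using $ad-bc=1$ to cancel the cross terms.
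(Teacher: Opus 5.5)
Your proof is correct, but it takes a different route from the paper, which gives no argument of its own and only refers to Satake and Takase for this identity. You replace that citation with a self-contained verification in the paper's own normalizations. The three ingredients are the Heisenberg cocycle identity (A), the $\SL_2(\mathbb R)$ identity (B) and the commutation rule (C), glued through the normal form $(1,k)(g,0)$ underlying definition (c). The citation buys brevity. Your argument actually checks that the specific formulas (a)--(c) as written here are mutually consistent: the convention $h^{g^{-1}}=ghg^{-1}$, the term $vx$, and the sign in $-\tfrac12(cz+d)^{-1}cv^2$. That is worth having, since normalizations vary between sources.

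Your reduction of (C) to generators is unnecessary; your fallback is the better option. For arbitrary $g$, put $J=J(g,z)$, $u=xz+x^{\ast}$ and $(x',x'^{\ast})=(x,x^{\ast})g^{-1}=(dx-cx^{\ast},\,-bx+ax^{\ast})$. Then $x'\,g(z)+x'^{\ast}=u/J$. The difference of the two exponents in (C), divided by $2\pi i$, is $\frac{2v+u}{2J}\bigl(xJ-cu-dx+cx^{\ast}\bigr)$, and the bracket vanishes identically.

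There are three slips to correct when you write it up.

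First, in the expansion of $\widetilde g_1\widetilde g_2$ the second Heisenberg factor is $g_1(g_2h_2g_2^{-1})g_1^{-1}$, not $g_1h_2g_1^{-1}$. With $k_i=g_ih_ig_i^{-1}$ one has $\widetilde g_1\widetilde g_2=(1,k_1\,g_1k_2g_1^{-1})(g_1g_2,0)$. The gluing then works with $k_2$ in place of $h_2$, applying (C) to $g_1$ and $k_2$ at the point $g_2(\widetilde z)$.

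Second, $J(h(a),z)=a^{-1}$, so $h(a)$ sends $v\mapsto va$, not $va^{-1}$. With $va^{-1}$ the check fails unless $a^2=1$.

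Third, both the gluing and your multiplicativity argument use that the formula for $[g,h]\cdot[z,v]$ is a genuine action of the semidirect product. That is, $ghg^{-1}$ must act by $v\mapsto v+x'z+x'^{\ast}$ with $(x',x'^{\ast})=(x,x^{\ast})g^{-1}$. This holds, as one sees by computing $g\circ h\circ g^{-1}$ directly. It is the one structural input beyond (A)--(C) and should be stated explicitly.
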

\begin{proof}
See \cite[p.\,395]{Sa2} or \cite[p.\,121]{Ta1}.
\end{proof}

\begin{lemma}
For \(\widetilde g=[g,h]\in\SL_2(\mathbb R)\ltimes\Ha(W)\) and \(\widetilde z=[z,v]\), \(\widetilde z'=[z',v']\in\mathbb H\times V\) we have
\[
\kappa\!\bigl(\widetilde g(\widetilde z);\widetilde g(\widetilde z')\bigr)
=\eta\!\bigl(\widetilde g,\widetilde z\bigr)\,
  \kappa(\widetilde z;\widetilde z')\,
  \overline{\eta\!\bigl(\widetilde g,\widetilde z'\bigr)}.
\]
\end{lemma}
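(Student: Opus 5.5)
We prove the identity on generators and then propagate it to all of $\SL_2(\mathbb R)\ltimes\Ha(W)$ using the cocycle property of $\eta$ from the preceding lemma. Throughout, $\kappa(\widetilde z;\widetilde z')$ means $\kappa(z,v;z',v')=e^{\pi i(z-\overline{z'})^{-1}(v-\overline{v'})^{2}}$. So the first argument enters holomorphically and the second antiholomorphically, matching $\eta(\widetilde g,\widetilde z)$ and $\overline{\eta(\widetilde g,\widetilde z')}$.

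\emph{Step 1: reduction to generators.} Suppose the identity holds for $\widetilde g_1$ at every pair of points and for $\widetilde g_2$ at every pair of points. Apply it first to $\widetilde g_1$ at $\widetilde g_2(\widetilde z),\widetilde g_2(\widetilde z')$, and then to $\widetilde g_2$ at $\widetilde z,\widetilde z'$. This gives
\[
\kappa(\widetilde g_1\widetilde g_2\widetilde z;\widetilde g_1\widetilde g_2\widetilde z')
=\eta(\widetilde g_1,\widetilde g_2\widetilde z)\,\eta(\widetilde g_2,\widetilde z)\,\kappa(\widetilde z;\widetilde z')\,\overline{\eta(\widetilde g_1,\widetilde g_2\widetilde z')\,\eta(\widetilde g_2,\widetilde z')}.
\]
By the cocycle lemma, the two products of $\eta$'s collapse to $\eta(\widetilde g_1\widetilde g_2,\widetilde z)$ and $\overline{\eta(\widetilde g_1\widetilde g_2,\widetilde z')}$. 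Every element factors as $[g,h]=(1,h^{g^{-1}})(g,0)$, consistently with the definition (c) of $\eta$. Hence it suffices to treat the two cases $\widetilde g=h\in\Ha(W)$ and $\widetilde g=g\in\SL_2(\mathbb R)$.

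\emph{Step 2: Heisenberg elements.} Take $h=(x,x^{\ast};t)$. Then $z$ is fixed and $v\mapsto v+xz+x^{\ast}$, so the difference $v-\overline{v'}$ becomes $(v-\overline{v'})+x(z-\overline{z'})$, because $x,x^{\ast}$ are real. Expanding the square and dividing by $z-\overline{z'}$ gives
\[
\frac{(v-\overline{v'})^2}{z-\overline{z'}}+2x(v-\overline{v'})+x^2(z-\overline{z'}).
\]
On the other side, $\eta(h,[z,v])\,\overline{\eta(h,[z',v'])}$ equals $\exp\bigl(2\pi i[\tfrac12x^2(z-\overline{z'})+x(v-\overline{v'})]\bigr)$: the $t$ and $x^{\ast}x$ terms cancel. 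Multiplying the expansion by $\pi i$ shows the two sides agree.

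\emph{Step 3: $g\in\SL_2(\mathbb R)$; this is the main computational point.} Put $A=J(g,z)$ and $B=\overline{J(g,z')}=c\overline{z'}+d$. Then
\[
g(z)-\overline{g(z')}=\frac{z-\overline{z'}}{AB},\qquad A-B=c(z-\overline{z'}).
\]
The left side has exponent $\pi i\,AB\,(v/A-\overline{v'}/B)^2/(z-\overline{z'})=\pi i\,(vB-\overline{v'}A)^2/\bigl(AB(z-\overline{z'})\bigr)$. The right side has exponent
\[
\pi i\Bigl[\frac{(v-\overline{v'})^2}{z-\overline{z'}}-\frac{cv^2}{A}+\frac{c\overline{v'}^2}{B}\Bigr].
\]
We verify that the two exponents agree by clearing the denominator $AB(z-\overline{z'})$ and substituting $c(z-\overline{z'})=A-B$ in the last two terms. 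The right-hand numerator becomes
\[
(v-\overline{v'})^2AB-v^2B(A-B)+\overline{v'}^2A(A-B)=v^2B^2-2v\overline{v'}AB+\overline{v'}^2A^2=(vB-\overline{v'}A)^2.
\]
This closes the proof. The only point needing care is the bookkeeping of which argument is conjugated. Branch issues never arise, since $\kappa$ and $\eta$ are honest exponentials with no square roots.
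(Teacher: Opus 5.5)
Your proof is correct. Steps 2 and 3 check out exactly: in both cases the two exponents agree identically, not merely modulo $2\pi i\mathbb Z$. The key inputs in Step 3 are $g(z)-\overline{g(z')}=(z-\overline{z'})/(AB)$ and $A-B=c(z-\overline{z'})$. The reduction through $[g,h]=(1,h^{g^{-1}})(g,0)$ is also sound.

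The paper gives no argument of its own for this lemma; it simply refers to Satake and Takase, where the formula is established within a general Fock-model framework. Your route trades that generality for a short, self-contained verification. It is carried out in exactly the conventions used here: which argument of $\kappa$ is conjugated, the sign in $\eta(g;[z,v])$, and the meaning of $h^{g^{-1}}$. These are precisely the points where mistakes tend to enter when such identities are imported from the literature.

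One small economy: you do not need the full cocycle lemma in Step 1. For the specific factorization $[g,h]=(1,h^{g^{-1}})(g,0)$, the collapse $\eta(h^{g^{-1}},g\widetilde z)\,\eta(g,\widetilde z)=\eta([g,h],\widetilde z)$ is literally definition (c). So Steps 2 and 3, together with (c) and the directly checkable relation $[g,h]\cdot\widetilde z=h^{g^{-1}}\cdot(g\cdot\widetilde z)$, already complete the proof. This makes your argument independent of the cocycle lemma, which the paper also only cites.
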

\begin{proof}
See \cite[p.\,396]{Sa2} or \cite[p.\,121]{Ta1}.
\end{proof}

\begin{example}[Generalized Fock model]\label{ex4}
Let \(\mathcal H_{F_{\widehat z}}\) be the space of entire functions \(f\colon\mathbb C\to\mathbb C\) satisfying
\[
\|f\|^2
=\int_{\mathbb C}|f(v)|^{2}\,\kappa_z(v,v)\,d_z(v)
<\infty.
\]
The Heisenberg representation of \(\Ha(W)\) is realized on \(\mathcal H_{F_{\widehat z}}\) by
\[
\pi_{F_{\widehat z}}(h)f(v)
=\psi_0(t)\,
  e^{-2\pi i\!\left[\frac12(x\overline z+x^{*})\,x+vx\right]}
  f\!\bigl(v+x\overline z+x^{*}\bigr),
\]
where \(h=(w,t)\in\Ha(W)\) with \(w=x+x^{*}\in W\) and \(t\in\mathbb R\).
\end{example}
\begin{proof}
See \cite[pp.\,125--126]{Ta1}.
\end{proof}
\begin{remark}\label{uneqi}
Take $z=\widehat{z}=z_0=i$. Following  \cite[p.397]{Sa1}, there exists a unitary equivalence $\mathcal{A}$ from $\mathcal{H}_{F_{z_0}}$ to $\mathcal{H}_F$, defined by
$$\mathcal{A}: \mathcal{H}_{F_{z_0}} \longrightarrow \mathcal{H}_F; f \longmapsto \phi(v)=  e^{-\tfrac{\pi}{2}v^2} f(-vi).$$ Furthermore, $\mathcal{A}$ is an $\Ha(W)$-intertwining  operator. 
\end{remark}
\begin{proof}
1) $v=v_x+i v_y$, $w=-vi=v_y-v_x i$, $w_x=v_y,w_y=-v_x$.
\begin{align*}
 &\int_{\C} \mid \phi(v)\mid^2 e^{-\pi | v|^2} d(v)\\
 &=\int_{\C} \mid e^{-\tfrac{\pi}{2} v^2} f(-vi)\mid^2 e^{-\pi | v|^2} d(v) \\
 &=\int_{\C}  e^{-2\pi  v_x^2} \mid f(-vi)\mid^2 d(v)\\
  &\stackrel{w=-vi }{=}\int_{\C}  e^{-2\pi w_y^2} \mid f(w)\mid^2 d(w)<+\infty.
 \end{align*}
2) 
\begin{align*}
&\mathcal{A}(\pi_{ F_{z_0}, \psi}(h)f)(v)= e^{-\tfrac{\pi}{2} v^2} [\pi_{F_{z_0}, \psi}(h)f](-vi)\\
&=e^{-\tfrac{\pi}{2}v^2}\psi_0(t)e^{-2\pi i[\frac{1}{2}(-xi+x^{*})x-vix]}f(-vi-xi+x^{\ast});
\end{align*}
\begin{align*}
&\pi_{F, \psi}(h)\mathcal{A}(f)(v)=\psi_0(t)e^{-\tfrac{\pi}{2}(x^2+x^{\ast 2})-\pi v(x-x^{\ast} i)}\mathcal{A}(f)(v+x+x^{\ast}i)\\
&=e^{-\tfrac{\pi}{2}(v+x+x^{\ast}i)^2}\psi_0(t)e^{-\tfrac{\pi}{2}(x^2+x^{\ast 2})-\pi v(x-x^{\ast} i)}f(-vi-xi+x^{\ast})\\
&=e^{-\tfrac{\pi}{2} v^2}e^{-2\pi vx}\psi_0(t)e^{-\pi(x^2+ix^{\ast}x)}f(-vi-xi+x^{\ast})\\
&=\mathcal{A}(\pi_{F_{z_0}, \psi_0}(h)f)(v).
\end{align*}
\end{proof}
\begin{lemma}
For $\widehat{z},\widehat{z'}\in \mathbb{H}$, there exists a unitary equivalence operator $U_{\widehat{z},\widehat{z'}}$ mapping from $\mathcal{H}_{F_{\widehat{z}}}$ to $\mathcal{H}_{F_{\widehat{z'}}}$, defined by
$$U_{\widehat{z'},\widehat{z}}: \mathcal{H}_{F_{\widehat{z}}} \longrightarrow \mathcal{H}_{F_{\widehat{z'}}}; \phi \longmapsto f(v')= \gamma(\widehat{z'}, \widehat{z})\int_{\C} \kappa(\widehat{z'},v'; \widehat{z},v)^{-1}\phi(v) \kappa_{\widehat{z}}(v,v) d_{\widehat{z}}(v).$$ Furthermore, $U_{\widehat{z'},\widehat{z}}$ is an $\Ha(W)$-intertwining  operator. 
\end{lemma}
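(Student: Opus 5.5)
The plan has three parts: unitarity of $U_{\widehat{z'},\widehat{z}}$ (well-definedness and isometry), surjectivity, and the intertwining property. The natural tool throughout is the reproducing kernel structure of the generalized Fock spaces. The expected structure is that $\mathcal H_{F_{\widehat z}}$ is a reproducing kernel Hilbert space with kernel $\kappa_{\widehat z}(v',v)^{-1}$ (up to the normalization built into $d_{\widehat z}$). I would verify this first by transporting the classical statement for $\mathcal H_F$ through the equivalence $\mathcal A$ of Remark \ref{uneqi} at $z=z_0$. For general $z$, I would use the action of $P_{>0}(\R)$ on $\mathbb H\times V$: the element $p_z$ maps $[z_0,v]$ to $[z,\cdot]$, and the kernel transformation lemma $\kappa(\widetilde g(\widetilde z);\widetilde g(\widetilde z'))=\eta\,\kappa\,\overline{\eta}$ shows how the kernel changes. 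Once this is in place, $U_{\widehat{z'},\widehat z}\phi(v')$ is the inner product of $\phi$ against the function $v\mapsto \overline{\kappa(\widehat{z'},v';\widehat z,v)^{-1}}$. That function is holomorphic in $\overline v$, and in the right variable it lies in $\mathcal H_{F_{\widehat z}}$ by a Gaussian computation. Hence the integral converges absolutely and defines an entire function of $v'$.

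\emph{Isometry.} I would compute $\|U_{\widehat{z'},\widehat z}\phi\|^2$ by Fubini, which reduces it to the Gaussian integral
\[
\int_{\C}\overline{\kappa(\widehat{z'},v';\widehat z,v_1)^{-1}}\,\kappa(\widehat{z'},v';\widehat z,v_2)^{-1}\,\kappa_{\widehat{z'}}(v',v')\,d_{\widehat{z'}}(v').
\]
The plan is to show this equals $|\gamma(\widehat{z'},\widehat z)|^{-2}\kappa_{\widehat z}(v_1,v_2)^{-1}$ (with the appropriate conjugation). The reproducing property in $\mathcal H_{F_{\widehat z}}$ then returns $\|\phi\|^2$. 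This explicit complex Gaussian evaluation, which must produce exactly the factor $\gamma(\widehat{z'},\widehat z)^{-1}$ with the branch of the square root fixed in Section \ref{Thecocyclewcz}, is the main obstacle. I would handle it by completing the square in $v'=x+iy$, integrating in $x$ then $y$, and checking the branch choice by continuity, taking $\widehat{z'}=\widehat z$ as the base point where the answer is $1$ because $\gamma(\widehat z,\widehat z)=1$.

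\emph{Surjectivity.} I would show the composition law $U_{\widehat z,\widehat{z'}}U_{\widehat{z'},\widehat z}=\mathrm{id}$ by the same Gaussian identity. Together with the isometry, this gives unitarity. Alternatively, since the image is a nonzero closed subspace and (by the next step) $\Ha(W)$-stable, irreducibility from Stone--von Neumann forces it to be everything.

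\emph{Intertwining.} For $h=(x,x^{*};t)$, I would substitute the formula of Example \ref{ex4} into the integral and change variables $v\mapsto v-(x\widehat z+x^{*})$. The needed identity is then the kernel transformation lemma applied with $\widetilde g=h\in\Ha(W)$: it converts $\kappa(\widehat{z'},v'+x\widehat{z'}+x^*;\widehat z,v+x\widehat z+x^*)$ into $\eta$-factors times $\kappa$. Combined with the quasi-invariance of $\kappa_{\widehat z}(v,v)\,d_{\widehat z}(v)$ under the translation, this produces exactly the factor $\psi_0(t)e^{-2\pi i[\cdots]}$ attached to $\widehat{z'}$. This step is routine bookkeeping once the lemma is invoked.
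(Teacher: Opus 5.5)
Your proposal is correct. The paper gives no argument of its own here and defers to \cite[p.125]{Ta1} and \cite[Lemma 3]{Sa1}; your route (the reproducing kernel $\kappa_{\widehat z}(v',v)^{-1}$, a Gaussian composition identity, and the transformation law of $\kappa$) is the argument of those references. The branch issue you single out as the main obstacle is in fact harmless for this lemma: since $\overline{\gamma(z_1,z_2)}=\gamma(z_2,z_1)$ and the inner integral is positive on the diagonal, only $|\gamma(\widehat{z'},\widehat z)|^{2}$ enters, both in the isometry and in $U_{\widehat z,\widehat{z'}}U_{\widehat{z'},\widehat z}=\mathrm{id}$.

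One bookkeeping fix is needed in the intertwining step. By \eqref{hwMpz}, the action of Example~\ref{ex4} is $\pi_{F_{\widehat z}}(h)f(v)=\eta\bigl((h^{h_{-1}})^{-1};[\widehat z,v]\bigr)f(v-x\widehat z+x^{\ast})$. So the kernel law should be applied with $\widetilde g=(h^{h_{-1}})^{-1}=(-x,x^{\ast};t)$ rather than with $h$. Its diagonal case $\widetilde z=\widetilde z'$ is then exactly the quasi-invariance of $\kappa_{\widehat z}(v,v)\,d_{\widehat z}(v)$ that you need.
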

\begin{proof}
See \cite[p.125]{Ta1} or \cite[Lemma 3]{Sa1}.
\end{proof}
\begin{lemma}
For $z,z',z''\in \mathbb{H}$,  $U_{z,z'}\circ U_{z',z''}=  U_{z,z''}$.
\end{lemma}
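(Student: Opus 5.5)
The plan is to compute $U_{z,z'}\circ U_{z',z''}$ directly as an integral operator and to reduce the result to the kernel of $U_{z,z''}$. Two facts from the preceding material will be used: the reproducing property of the kernel $\kappa$ on the generalized Fock spaces, and the multiplicative behaviour of the constants $\gamma(\cdot,\cdot)$.

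\emph{Step 1: writing out the composition.} For $\phi\in\mathcal H_{F_{z''}}$ the composite is
\[
(U_{z,z'}U_{z',z''}\phi)(v)=\gamma(z,z')\gamma(z',z'')\int_{\C}\int_{\C}\kappa(z,v;z',v')^{-1}\kappa(z',v';z'',u)^{-1}\kappa_{z'}(v',v')\,\phi(u)\,\kappa_{z''}(u,u)\,d_{z'}(v')\,d_{z''}(u).
\]
Since the integrands are Gaussian-type functions, Fubini applies on a dense subspace, for instance finite combinations of the kernel functions $u\mapsto\kappa(z'',u;w,\cdot)$. So it suffices to evaluate the inner integral in $v'$.

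\emph{Step 2: the inner Gaussian integral.} The functions $v'\mapsto\kappa(z',v';z'',u)^{-1}$ lie in $\mathcal H_{F_{z'}}$, and $\overline{\kappa(z,v;z',v')^{-1}}$ plays the role of a reproducing kernel. From the formula $\kappa(z',v';z,v)=e^{\pi i(z'-\overline z)^{-1}(v'-\overline v)^2}$, the $v'$-integral is a complex Gaussian integral over $\C\simeq\R^2$ with quadratic form built from $(z-\overline{z'})^{-1}$ and $(z'-\overline{z''})^{-1}$. It can be evaluated in closed form. The result should be
\[
C(z,z',z'')\,\kappa(z,v;z'',u)^{-1}\quad\text{with}\quad C(z,z',z'')=\Bigl(\tfrac{z-\overline{z'}}{2i}\Bigr)^{1/2}\Bigl(\tfrac{z'-\overline{z''}}{2i}\Bigr)^{1/2}\Bigl(\tfrac{z-\overline{z''}}{2i}\Bigr)^{-1/2}(\Im z')^{-1/2},
\]
up to branch choices. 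Equivalently, one completes the square, using the identity $(z-\overline{z'})^{-1}+(z'-\overline{z''})^{-1}$-type relations, which collapse to a single exponent $(z-\overline{z''})^{-1}(v-\overline u)^2$.

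\emph{Step 3: matching constants.} By the definition of $\gamma(z',z)$ in Section~\ref{Thecocyclewcz}, multiplying $\gamma(z,z')\gamma(z',z'')$ by $C(z,z',z'')$ gives exactly $\gamma(z,z'')$. The powers of $\Im z'$ cancel, and the square-root factors telescope. This yields $U_{z,z'}U_{z',z''}=U_{z,z''}$ on the dense subspace, and hence everywhere by unitarity (boundedness).

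\emph{Main obstacle.} I expect the hardest step to be the branch of the square root. The Gaussian integral produces $\bigl(\tfrac{z-\overline{z'}}{2i}\cdot\tfrac{z'-\overline{z''}}{2i}/\tfrac{z-\overline{z''}}{2i}\bigr)^{1/2}$, and this must be identified with the product of the principal roots. To handle it, I would argue by continuity and connectedness. All three quantities $\tfrac{z-\overline{w}}{2i}$ with $z,w\in\mathbb H$ have positive real part, so their principal roots are continuous on $\mathbb H^3$. The ratio of the two candidate expressions is continuous and $\pm1$-valued on the connected set $\mathbb H^3$. At $z=z'=z''$ it equals $1$, where $U_{z,z}=\mathrm{id}$ by the reproducing property. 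Hence it is identically $1$.

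Alternatively, one can cite Satake \cite[Lemma 3]{Sa1} or Takase \cite[p.125]{Ta1}, where this transitivity is established.
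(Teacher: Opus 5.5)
Your proposal is correct, and it takes a genuinely different route from the paper. The paper gives no argument for this lemma and only refers to Satake \cite[pp.\,399--400]{Sa1}.

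The closed form you assert in Step~2, without computing it, is indeed what comes out. Write $v'=x+iy$. The exponent of the $v'$-integrand is $-\pi$ times a complex quadratic polynomial in $(x,y)$. The matrix of its quadratic part has determinant $\tfrac{z-\overline{z''}}{2i}\big/\bigl(\tfrac{z-\overline{z'}}{2i}\cdot\tfrac{z'-\overline{z''}}{2i}\cdot\Im z'\bigr)$. Its value at the critical point is $-\pi i(z-\overline{z''})^{-1}(v-\overline u)^2$. Both facts follow from the single relation $(z-\overline{z'})+(z'-\overline{z''})-(z'-\overline{z'})=z-\overline{z''}$. The factor $(\Im z')^{-1}$ in $d_{z'}$ then gives exactly your $C(z,z',z'')$, so the constants telescope to $\gamma(z,z'')$ as you claim.

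Compared with the citation, your computation checks the composition law in this paper's own normalizations: the factor $Q_z^{-1}$ in $d_z$, the factors $(\Im\,\cdot)^{1/4}$ in $\gamma$, the kernel $\kappa^{-1}$, and principal square roots. It shows no stray unimodular constant appears. That is exactly what the last step in the proof of Lemma~\ref{paiFz}, and hence Remark~\ref{equalityonab}, relies on. The citation is shorter but leaves the reader to match Satake's conventions to these.

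Two small remarks:
\begin{itemize}
\item Your connectedness argument does not need $U_{z,z}=\mathrm{id}$, which the paper never states. On the diagonal $z=z'=z''$ the quadratic form is $(\Im z)^{-1}(x^2+y^2)$. So the Gaussian prefactor is real and positive, and equals $1$ after the measure factor, matching $C=1$ there.
\item Your comment that $\overline{\kappa(z,v;z',v')^{-1}}$ plays the role of a reproducing kernel is inaccurate for $z\neq z'$. It equals $\kappa(z',v';z,v)^{-1}$, which is a vector of $\mathcal H_{F_{z'}}$ but not its reproducing kernel. Your argument never actually uses this.
\end{itemize}
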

\begin{proof}
See \cite[pp.399-400]{Sa1}.
\end{proof}
\begin{lemma}
For every \(g=\bigl(\begin{smallmatrix}a&b\\c&d\end{smallmatrix}\bigr)\in\SL_{2}(\mathbb R)\) the map
\begin{align*}
T^{\widehat z}_{g^{h_{-1}}}\colon\mathcal H_{F_{\widehat z}}
&\longrightarrow\mathcal H_{F_{g^{h_{-1}}(\widehat z)}},\\
\phi&\longmapsto\eta\!\bigl((g^{h_{-1}})^{-1},g^{h_{-1}}(\widehat z),v\bigr)\,
\phi\!\bigl(vJ(g^{h_{-1}},\widehat z)\bigr)
=e^{-\pi i(c\overline z+d)cv^{2}}\phi\!\bigl(v(c\overline z+d)\bigr)
\end{align*}
is unitary.  Moreover, for every \(h=(x,x^{\ast};t)\in\Ha(W)\) the diagram
\[
\begin{CD}
\mathcal H_{F_{\widehat z}}
@>T^{\widehat z}_{g^{h_{-1}}}>>
\mathcal H_{F_{g^{h_{-1}}(\widehat z)}}\\
@V\pi_{F_{\widehat z}}(h)VV
@VV\pi_{F_{g^{h_{-1}}(\widehat z)}}(h^{g^{-1}})V\\
\mathcal H_{F_{\widehat z}}
@>>T^{\widehat z}_{g^{h_{-1}}}>
\mathcal H_{F_{g^{h_{-1}}(\widehat z)}}
\end{CD}
\]
commutes.
\end{lemma}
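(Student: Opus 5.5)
To prove the lemma I will write $g' := g^{h_{-1}} = \begin{pmatrix} a & -b\\ -c & d\end{pmatrix}$, so that $J(g',\widehat z) = -c\widehat z + d = c\overline z + d$. I will treat the explicit formula, the unitarity and the intertwining property separately.

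\emph{Step 1: the explicit formula.} I first check the displayed expression against the definition of $\eta$ in case (b). Since $(g')^{-1} = \begin{pmatrix} d & b\\ c & a\end{pmatrix}$, evaluating $\eta((g')^{-1};[g'(\widehat z),\,\cdot\,])$ gives
\[
\exp\!\bigl(-\pi i\,(c\,g'(\widehat z)+a)^{-1} c\,v^2\bigr),
\]
where $v$ is the variable actually fed in. The cocycle identity for $J$ gives $c\,g'(\widehat z)+a = J(g',\widehat z)^{-1}$. After the usual rescaling of the argument, the exponent becomes $-\pi i(c\overline z+d)c v^2$, which is the displayed formula. This step is bookkeeping only, using the multiplicativity lemma for $\eta$ already quoted from Satake–Takase.

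\emph{Step 2: unitarity.} I will use the norm on $\mathcal H_{F_{\widehat z}}$ in the form $\int |f(v)|^2 \kappa_{\widehat z}(v,v)\, d_{\widehat z}(v)$ and substitute $u = v(c\overline z+d)$. Two quantities transform under this change of variables:
\begin{itemize}
\item the measure changes by $|c\overline z+d|^{-2}$, which matches $Q_{g'(\widehat z)}^{-1} = |c\overline z+d|^2 Q_{\widehat z}^{-1}$;
\item the weight $|\eta|^2\,\kappa_{g'(\widehat z)}(v,v)$ must become $\kappa_{\widehat z}(u,u)$.
\end{itemize}
The second point follows directly from the kernel transformation lemma
\[
\kappa(\widetilde g\widetilde z;\widetilde g\widetilde z') = \eta(\widetilde g,\widetilde z)\,\kappa(\widetilde z;\widetilde z')\,\overline{\eta(\widetilde g,\widetilde z')},
\]
applied with $\widetilde z = \widetilde z' = [\widehat z, u]$ and $\widetilde g = g'$, together with $\eta(g'^{-1}, g'\widetilde z) = \eta(g',\widetilde z)^{-1}$. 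Surjectivity follows because $T_{g'^{-1}}^{g'(\widehat z)}$ is an inverse, again by the cocycle property of $\eta$. Holomorphy of the image is clear.

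\emph{Step 3: the intertwining diagram.} Given the formula for $\pi_{F_{\widehat z}}(h)$ in Example~\ref{ex4}, I will compute both compositions on $\phi$. The key identity is
\[
(v + x\,g'(\widehat z) + x^{*})\,J(g',\widehat z) = vJ + x'\widehat z + x'^{*},
\]
where $(x',x'^{*})$ is the image of $(x,x^{*})$ under $h\mapsto h^{g^{-1}}$. So the shifts agree, and it remains to match the scalar exponentials. The cleanest route is to note that $\pi_{F_{\widehat z}}(h)\phi(v) = \eta(h^{-1}\text{-type};[\widehat z,v])\,\phi(h\cdot v)$ for the Heisenberg part (a) of $\eta$. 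Both sides are then of the form $\eta(\cdot)\,\phi(\cdot)$, and their equality reduces to the cocycle identity for $\eta$ on $\SL_2(\R)\ltimes\Ha(W)$, applied to the relation $g'^{-1}\,h^{g^{-1}} = (\text{conjugate of } h)\,g'^{-1}$ and the mixed-case definition (c).

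\emph{Main obstacle.} I expect the sign and conjugation conventions to be the hard part. The action uses $h_{-1}$-twisted elements, the space is attached to $\widehat z = -\overline z$, and it is easy to mismatch $h^{g^{-1}}$ with $h^{g'^{-1}}$. If the abstract cocycle argument fails because of such a mismatch, I will fall back to a direct check on the generators $u(b)$, $h(a)$ and $\omega$ of $\SL_2(\R)$. This reduction is legitimate because both sides of the diagram are multiplicative in $g$, by the cocycle property of $T$ from Step 2.
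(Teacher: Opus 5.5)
Steps 1 and 2 are sound. In Step 1 no rescaling is involved: $(c\,g'(\widehat z)+a)^{-1}=J(g',\widehat z)=c\overline z+d$ gives the displayed formula directly. Step 2 goes beyond the paper, whose proof never addresses unitarity. Your kernel identity at $\widetilde z=\widetilde z'=[\widehat z,u]$, together with $\eta((g')^{-1};g'\widetilde z)=\eta(g';\widetilde z)^{-1}$ and $Q_{g'(\widehat z)}^{-1}\,d(v)=Q_{\widehat z}^{-1}\,d(u)$, does give the isometry. Since $\kappa_w(v,v)=e^{-2\pi v_y^2/\Im w}$ depends only on $\Im w$, using $\kappa_{\widehat z}$ instead of the paper's $\kappa_z$ is harmless.

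Step 3, however, fails as written, for exactly the reason you anticipated. The operator $\pi_{F_{\widehat w}}(x,x^{\ast};t)$ shifts $v\mapsto v+x\overline w+x^{\ast}=v-x\widehat w+x^{\ast}$. In the diagram the source carries $h=(x,x^{\ast};t)$, while the target carries $h^{g^{-1}}=(x_1,x_1^{\ast};t)$ with $(x_1,x_1^{\ast})=(x,x^{\ast})g^{-1}$. So the identity actually needed is
\[
\bigl(v-x_1\,g'(\widehat z)+x_1^{\ast}\bigr)J(g',\widehat z)=vJ(g',\widehat z)-x\widehat z+x^{\ast}.
\]
Your identity has the opposite sign on the $x$-terms and puts the two Heisenberg elements on the wrong sides. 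Read literally, it holds only for $(x',x'^{\ast})=(x,x^{\ast})g'$, which equals $(x,x^{\ast})g^{-1}$ only when $a=d$; it fails, for instance, for $g=h(a)$ with $a\neq\pm1$.

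The underlying slip is your description of $\pi_{F_{\widehat z}}(h)$. By \eqref{hwMpz} it is $\phi\mapsto\eta(\tilde h;[\widehat z,\cdot\,])\,\phi(\tilde h\cdot\,\cdot\,)$, with the same element $\tilde h=(h^{h_{-1}})^{-1}=(-x,x^{\ast};t)$ in both places, not ``$\eta(h^{-1}\text{-type})\,\phi(h\cdot v)$''. Likewise, the relation $(g')^{-1}h^{g^{-1}}=(\text{conjugate of }h)\,(g')^{-1}$ holds trivially and does not identify the element that is needed.

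The repair is short. Put $\tilde h_1=((h^{g^{-1}})^{h_{-1}})^{-1}$ and $\widetilde z=[g'(\widehat z),v]$, and let $(\cdot)_V$ denote the $V$-component. By the cocycle identity for $\eta$, the composites $T\circ\pi_{F_{\widehat z}}(h)$ and $\pi_{F_{g'(\widehat z)}}(h^{g^{-1}})\circ T$ send $\phi$ to $v\mapsto\eta(\gamma;\widetilde z)\,\phi\bigl((\gamma\cdot\widetilde z)_V\bigr)$. Here $\gamma=\tilde h\,(g')^{-1}$ for the first and $\gamma=(g')^{-1}\tilde h_1$ for the second. These coincide because $\tilde h_1=\tilde h^{\,(g')^{-1}}$, which is equivalent to $g^{-1}h_{-1}=h_{-1}(g')^{-1}$, i.e.\ to $(g')^{-1}=h_{-1}g^{-1}h_{-1}$.

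This one relation gives the shift and the scalar at once, and it is precisely the paper's argument. The paper expands $\eta([g_1,h^{h_{-1}}]^{-1};g_1(z_1),v)$ in two ways, using $h^{g^{-1}}=h_1^{g_1^{-1}h_{-1}}$.

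Your fallback to the generators would also be legitimate. Writing $g_i'=g_i^{h_{-1}}$, one has $T^{g_2'(\widehat z)}_{g_1'}\circ T^{\widehat z}_{g_2'}=T^{\widehat z}_{g_1'g_2'}$ and $(h^{g_2^{-1}})^{g_1^{-1}}=h^{(g_1g_2)^{-1}}$. It becomes unnecessary once the $h_{-1}$-twist is tracked correctly.
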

\begin{proof}
1) \[g^{-1}= \begin{pmatrix}
d & -b \\
-c & a
\end{pmatrix}, \quad g^{h_{-1}}= \begin{pmatrix} a& -b\\-c& d\end{pmatrix}, \quad (g^{h_{-1}})^{-1}= \begin{pmatrix} d& b\\c& a\end{pmatrix},\]
\[ g^{h_{-1}}(\widehat{z})=(-a\overline{z} -b)(c\overline{z}+d)^{-1},  J(g^{h_{-1}}, \widehat{z})=c\overline{z}+d;\]
 \begin{align*}
\eta((g^{h_{-1}})^{-1}; g^{h_{-1}}(\widehat{z}), v)
 &=e^{2\pi i\Big[\tfrac{1}{2}\langle vg^{h_{-1}}, vJ((g^{h_{-1}})^{-1},g^{h_{-1}}(\widehat{z}))^{-1}\rangle\Big]}\\
&= e^{2\pi i\Big[\tfrac{1}{2} \langle v(-c), vJ(g^{h_{-1}}, \widehat{z})\rangle\Big]}\\
&=e^{2\pi i\Big[-\tfrac{1}{2} v(\overline{z}+d) cv^2\Big]}.
\end{align*}
2) Write  $z_1=\widehat{z}$,  $z_2=g^{h_{-1}}(\widehat{z})$, $h=(x,x^{\ast};t)$,  $g_1=g^{h_{-1}}$, $h_1=h^{h_{-1}}$, $h^{g^{-1}}=(x_1,x_1^{\ast};t) $, $w_1=(x_1,x_1^{\ast})$. Then:
 \[h^{g^{-1}}=h^{h_{-1}g_1^{-1}h_{-1}}=h_1^{g_1^{-1}h_{-1}};\]
 \[ (h_1^{g_1^{-1}})^{-1}=[h^{g^{-1}h_{-1}}]^{-1}=[x_1,-x_1^{\ast}; -t]^{-1}=[-x_1,x_1^{\ast};t];\]
 \begin{align*}
 &\pi_{F_{z_2}}(h^{g^{-1}}) \circ T^{z_1}_{g_1} f(v)\\
 &=\eta((h_1^{g_1^{-1}})^{-1}; g_1(z_1), v)  [T^{z_1}_{g_1} f](v+w_1 \begin{pmatrix} -g_1(z_1)\\ 1\end{pmatrix}) \\
  &=\eta\big((h_1^{g_1^{-1}})^{-1}; g_1(z_1), v\big) \eta\big(g_1^{-1}, g_1(z_1), v+w_1 \begin{pmatrix} -g_1(z_1)\\ 1\end{pmatrix}\big) f\bigg([v+w_1 \begin{pmatrix} -g_1(z_1)\\ 1\end{pmatrix}]J(g_1,z_1)\bigg) ;
 & \\
 &T^{z_1}_{g_1}\circ \pi_{F_{z_1}}(h) f(v)\\
 &=\eta(g_1^{-1}; g_1(z_1), v) [\pi_{F_{z_1}}(h) f](vJ(g_1,z_1))\\
 &=\eta(g_1^{-1}; g_1(z_1), v) \eta((h^{h_{-1}})^{-1}; z_1, vJ(g_1,z_1)) f(vJ(g_1,z_1)+w \begin{pmatrix} -z_1\\ 1\end{pmatrix});
  \end{align*}
 i)  \begin{align*}
 &(w_1 \begin{pmatrix} -g_1(z_1)\\ 1\end{pmatrix})J(g_1,z_1)\\
 &=w g^{-1} \begin{pmatrix} -g_1(z_1)\\ 1\end{pmatrix}J(g_1,z_1)\\
 &=wh_{-1} g_1^{-1} \begin{pmatrix} -g_1(z_1)\\ -1\end{pmatrix}J(g_1,z_1)\\
 &=w\begin{pmatrix} -z_1\\ 1\end{pmatrix};
  \end{align*}
 ii) 
  \begin{align*}
  &\eta([g_1, h^{h_{-1}}]^{-1}; g_1(z_1), v)\\
  &=\eta\big([1,  h^{h_{-1}}]^{-1} [g_1,  0]^{-1} ; g_1(z_1), v\big)\\
  &=\eta\Big((h^{h_{-1}})^{-1}; z_1, vJ\big(g_1^{-1}, g_1(z_1)\big)^{-1}\Big)\eta\big(g_1^{-1}; g_1(z_1), v\big)\\
  &=\eta\big((h^{h_{-1}})^{-1}; z_1, vJ(g_1,z_1)\big)\eta\big(g_1^{-1}; g_1(z_1), v\big);
   \end{align*}
  \begin{align*}
  &\eta([g_1, h^{h_{-1}}]^{-1}; g_1(z_1), v)\\
  &=\eta\big( [g_1,  0]^{-1} [1,  h_1^{g_1^{-1}}]^{-1} ; g_1(z_1), v\big)\\
  &=\eta\big(g_1^{-1}; g_1(z_1), v+(-x_1,x_1^{\ast}) \cdot \begin{pmatrix} g_1(z_1)\\ 1\end{pmatrix}\big) \eta\big(( h_1^{g_1^{-1}})^{-1}; g_1(z_1), v\big).
   \end{align*}
\end{proof}
\begin{lemma}
\begin{itemize}
\item[(1)] $T^{g_2^{h_{-1}}(\widehat{z})}_{g_1^{h_{-1}}} \circ T^{\widehat{z}}_{g_2^{h_{-1}}}=T^{\widehat{z}}_{g_1^{h_{-1}}g_2^{h_{-1}}}$.
\item[(2)] For $g\in \SL_{2}(\R)$, $U_{ g^{h_{-1}}(\widehat{z'}), g^{h_{-1}}(\widehat{z})} \circ T^{\widehat{z}}_{g^{h_{-1}}}=\epsilon(g^{h_{-1}};\widehat{z'},\widehat{z}) T^{\widehat{z'}}_{g^{h_{-1}}} \circ U_{\widehat{z'},\widehat{z}}$.
\end{itemize}
\end{lemma}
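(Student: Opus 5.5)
Both parts will be proved by direct computation with the explicit formula
\[
T^{\widehat z}_{g^{h_{-1}}}\phi(v)=e^{-\pi i(c\overline z+d)cv^{2}}\phi\bigl(v(c\overline z+d)\bigr).
\]
In every step we use that $g\mapsto g^{h_{-1}}$ is an automorphism, so $(g_1g_2)^{h_{-1}}=g_1^{h_{-1}}g_2^{h_{-1}}$.

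\textbf{Part (1).} Put $g_1'=g_1^{h_{-1}}$, $g_2'=g_2^{h_{-1}}$, $z_1=\widehat z$ and $z_2=g_2'(z_1)$. The definition of $T$ is $T^{z_1}_{g'}\phi(v)=\eta\bigl(g'^{-1};g'(z_1),v\bigr)\phi\bigl(vJ(g',z_1)\bigr)$. Composing the two operators gives
\[
\eta\bigl(g_1'^{-1};g_1'z_2,v\bigr)\,\eta\bigl(g_2'^{-1};z_2,vJ(g_1',z_2)\bigr)\,\phi\bigl(vJ(g_1',z_2)J(g_2',z_1)\bigr).
\]
The argument of $\phi$ becomes $vJ(g_1'g_2',z_1)$ by the cocycle identity for $J$. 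The action on $\mathbb H\times V$ gives $g_1'^{-1}\cdot[g_1'z_2,v]=[z_2,vJ(g_1'^{-1},g_1'z_2)^{-1}]=[z_2,vJ(g_1',z_2)]$. With this, the product of the two $\eta$ factors is $\eta\bigl((g_1'g_2')^{-1};g_1'g_2'z_1,v\bigr)$ by the cocycle property of $\eta$ (the lemma from \cite{Sa2,Ta1} stated above). This proves (1).

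\textbf{Part (2).} Both sides map $\mathcal H_{F_{\widehat z}}$ to $\mathcal H_{F_{g^{h_{-1}}(\widehat{z'})}}$. Each is a composite of an $\Ha(W)$-intertwiner for $\pi_{F_{\widehat z}}(h)$ and $\pi_{F_{g^{h_{-1}}(\widehat{z'})}}(h^{g^{-1}})$; this uses the commuting diagram of the preceding lemma together with the intertwining property of $U$. Since the Heisenberg representation is irreducible, Schur's lemma makes the two sides proportional. Both sides are unitary, so the constant has modulus one. The remaining task is to identify the constant as $\epsilon(g^{h_{-1}};\widehat{z'},\widehat z)$.

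To do this I would write out the kernels. The left side is the integral operator with kernel
\[
\gamma\bigl(g'\widehat{z'},g'\widehat z\bigr)\,\kappa\bigl(g'\widehat{z'},v';g'\widehat z,u\bigr)^{-1}\,\eta\bigl(g'^{-1};g'\widehat z,u\bigr)\,\kappa_{g'\widehat z}(u,u)\,d_{g'\widehat z}(u),
\]
where $g'=g^{h_{-1}}$, and we then substitute $u\mapsto$ the preimage under $v\mapsto vJ(g',\widehat z)^{-1}$. On the right side the $T$ prefactor $\eta\bigl(g'^{-1};g'\widehat{z'},v'\bigr)$ multiplies the kernel of $U_{\widehat{z'},\widehat z}$. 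The kernel transformation lemma
\[
\kappa\bigl(\widetilde g(\widetilde z);\widetilde g(\widetilde z')\bigr)=\eta(\widetilde g,\widetilde z)\,\kappa(\widetilde z;\widetilde z')\,\overline{\eta(\widetilde g,\widetilde z')}
\]
should turn the $\kappa^{-1}$ factors into one another after this change of variables. The measure factors should also agree, since $Q_{g'\widehat z}=Q_{\widehat z}|J|^{-2}$ and the Jacobian of $v\mapsto vJ$ on $\C$ is $|J|^{2}$. What is left over is exactly the ratio $\gamma(g'\widehat{z'},g'\widehat z)/\gamma(\widehat{z'},\widehat z)=\epsilon(g';\widehat{z'},\widehat z)$.

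The main obstacle is the bookkeeping of the $\eta$ factors: one must check that $\eta(g'^{-1};\cdot)$ evaluated at the two base points combines, through the kernel lemma applied to $\widetilde g=g'^{-1}$, with the conjugation convention $\overline{\eta}$ and the weight $\kappa_{\widehat z}(u,u)$. If this becomes messy, I will use the fallback: compute the Schur constant by testing both sides on a single vector, for instance the vacuum $\phi\equiv1$, or on the reproducing kernel at $0$, and read off $\gamma$ through Lemma~\ref{epzz'}.
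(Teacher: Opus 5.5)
Your argument is correct. The paper gives no proof of its own here; it only refers to \cite[p.~125]{Ta1}. Your direct verification is therefore a self-contained replacement rather than a competing route. Part (1) is exactly the cocycle identity for $\eta$ combined with $J(g_1'^{-1},g_1'z_2)^{-1}=J(g_1',z_2)$, as you state.

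In part (2) the kernel computation closes completely, so the Schur-lemma step is unnecessary. Put $g'=g^{h_{-1}}$ and $v=uJ(g',\widehat z)$. Apply the transformation law with $\widetilde g=g'^{-1}$, $\widetilde z=[g'\widehat{z'},v']$ and $\widetilde z'=[g'\widehat z,u]$ to get
\[
\kappa\bigl(\widehat{z'},v'J(g',\widehat{z'});\widehat z,v\bigr)=\eta\bigl(g'^{-1};g'\widehat{z'},v'\bigr)\,\kappa\bigl(g'\widehat{z'},v';g'\widehat z,u\bigr)\,\overline{\eta\bigl(g'^{-1};g'\widehat z,u\bigr)}.
\]
Applying it once more on the diagonal gives $\kappa_{\widehat z}(v,v)=\bigl|\eta(g'^{-1};g'\widehat z,u)\bigr|^{2}\kappa_{g'\widehat z}(u,u)$. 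The factor $\overline{\eta}^{-1}|\eta|^{2}$ reduces to $\eta(g'^{-1};g'\widehat z,u)$, which is precisely the factor that $T$ contributes on the left. You also have $d_{g'\widehat z}(u)=d_{\widehat z}(v)$. Hence the two integrands agree pointwise; each side is a single integral, so no Fubini argument is required. What remains are the prefactors $\gamma(g'\widehat{z'},g'\widehat z)$ on the left and $\epsilon(g';\widehat{z'},\widehat z)\,\gamma(\widehat{z'},\widehat z)$ on the right, and these are equal by the definition of $\epsilon$.

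Your fallback contains an error. The function $\phi\equiv 1$ does not lie in $\mathcal H_{F_{\widehat z}}$: the weight $\kappa_z(v,v)\,d_z(v)=Q_z^{-1}e^{-2\pi Q_z^{-1}v_y^{2}}\,dv_x\,dv_y$ does not decay in $v_x$. The vacuum vectors in these spaces are Gaussians, for example $e^{-\frac{\pi}{2}v^{2}}$ when $\widehat z=z_0$ (see Example~\ref{exm}). A test of the Schur constant would have to use such a vector or a reproducing kernel. With the computation above, however, no test vector is needed.
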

\begin{proof}
See \cite[p.125]{Ta1}.
\end{proof}
The Heisenberg representation \(\pi_{F_{\widehat z}}\) of \(\Ha(W)\) from Example~\ref{ex4} extends to a projective representation of \(\SL_2(\R)\ltimes\Ha(W)\) on \(\mathcal H_{F_{\widehat z}}\) by
\begin{equation}\label{hwMpz}
\left\{ \begin{array}{l}
 \pi_{F_{\widehat{z}}}(h)f(v)=\eta((h^{h_{-1}})^{-1}, \widehat{z}, v) f(v+x\overline{z}+x^{\ast}),\\
 \pi_{F_{\widehat{z}}}(g) f(v)=U_{\widehat{z}, g^{h_{-1}}(\widehat{z})} \circ T^{\widehat{z}}_{g^{h_{-1}}} f(v),
\end{array}\right.
\end{equation}
for \(g\in\SL_2(\R)\) and \(h=(x,x^{\ast};t)\in\Ha(W)\).
\begin{lemma}\label{paiFz}
\begin{itemize}
\item[(1)] The formulas \eqref{hwMpz} are well-defined.
\item[(2)] For \(g_1,g_2\in\SL_2(\R)\),
$
   \pi_{F_{\widehat z}}(g_1)\pi_{F_{\widehat z}}(g_2)
   =\alpha_{\widehat z}(g_1,g_2)^{-1}\,\pi_{F_{\widehat z}}(g_1g_2).
$
\end{itemize}
\end{lemma}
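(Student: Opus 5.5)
For part (1), I will show that each operator in \eqref{hwMpz} is a bounded unitary map of $\mathcal H_{F_{\widehat z}}$ into itself. For $\pi_{F_{\widehat z}}(g)$ this follows by composing two earlier facts. $T^{\widehat z}_{g^{h_{-1}}}$ is a unitary map $\mathcal H_{F_{\widehat z}}\to\mathcal H_{F_{g^{h_{-1}}(\widehat z)}}$ by the lemma defining it. $U_{\widehat z,g^{h_{-1}}(\widehat z)}$ is a unitary map $\mathcal H_{F_{g^{h_{-1}}(\widehat z)}}\to\mathcal H_{F_{\widehat z}}$ by the lemma on $U$. Hence the composite lands in $\mathcal H_{F_{\widehat z}}$ and is unitary. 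For $\pi_{F_{\widehat z}}(h)$, well-definedness is Example~\ref{ex4}, rewritten via the automorphy factor $\eta$.

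I will also check that the two formulas together define a projective representation of the semidirect product $\SL_2(\R)\ltimes\Ha(W)$. This needs the relation $\pi(g)\pi(h)\pi(g)^{-1}=\pi(h^{g^{-1}})$, possibly up to the convention for the action. Both $U_{\widehat z,\ast}$ and $T^{\widehat z}_{g^{h_{-1}}}$ intertwine the Heisenberg actions, with $h$ replaced by $h^{g^{-1}}$ for $T$ (the commutative diagram) and $h$ unchanged for $U$. Composing the two intertwining statements gives the relation directly.

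For part (2), I will compute $\pi(g_1)\pi(g_2)=U_{\widehat z,g_1'(\widehat z)}T^{\widehat z}_{g_1'}U_{\widehat z,g_2'(\widehat z)}T^{\widehat z}_{g_2'}$, where $g_i'=g_i^{h_{-1}}$. The key step is to move $T^{\widehat z}_{g_1'}$ past $U_{\widehat z,g_2'(\widehat z)}$, using part (2) of the lemma on $T$ and $U$ with $g=g_1$, $\widehat{z'}=\widehat z$, and the second point equal to $g_2'(\widehat z)$. This gives
\[
U_{g_1'(\widehat z),g_1'g_2'(\widehat z)}\circ T^{g_2'(\widehat z)}_{g_1'}=\epsilon(g_1';\widehat z,g_2'(\widehat z))\,T^{\widehat z}_{g_1'}\circ U_{\widehat z,g_2'(\widehat z)},
\]
so that
\[
T^{\widehat z}_{g_1'}U_{\widehat z,g_2'(\widehat z)}=\epsilon(g_1';\widehat z,g_2'(\widehat z))^{-1}\,U_{g_1'(\widehat z),g_1'g_2'(\widehat z)}\,T^{g_2'(\widehat z)}_{g_1'}.
\]
Substituting this, the transitivity $U_{z,z'}U_{z',z''}=U_{z,z''}$ merges $U_{\widehat z,g_1'(\widehat z)}U_{g_1'(\widehat z),g_1'g_2'(\widehat z)}$ into $U_{\widehat z,(g_1g_2)'(\widehat z)}$. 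The cocycle property of $T$ merges $T^{g_2'(\widehat z)}_{g_1'}T^{\widehat z}_{g_2'}$ into $T^{\widehat z}_{g_1'g_2'}$, and $g_1'g_2'=(g_1g_2)^{h_{-1}}$. This yields $\pi(g_1g_2)$ times the scalar $\epsilon(g_1^{h_{-1}};\widehat z,g_2^{h_{-1}}(\widehat z))^{-1}$, which is $\alpha_{\widehat z}(g_1,g_2)^{-1}$ by the definition of $\alpha$.

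The main obstacle is bookkeeping: the lemma on $T$ and $U$ must be applied with the base points in exactly the right slots. Its statement is written with $g^{h_{-1}}(\widehat{z'})$ and $g^{h_{-1}}(\widehat z)$, and I need to specialize the unprimed point to $g_2'(\widehat z)$. Applying it this way requires $T^{g_2'(\widehat z)}_{g_1'}$ rather than $T^{\widehat z}_{g_1'}$, so I must check which version the lemma actually produces. If the orientation turns out reversed, I will instead apply the lemma to $U_{g_1'(\widehat z),g_1'g_2'(\widehat z)}T^{g_2'(\widehat z)}_{g_1'}$ and solve for the product in the other direction. Either way the constant is $\epsilon(g_1';\cdot,\cdot)$ evaluated at the pair $(\widehat z,g_2'(\widehat z))$, and I would confirm the sign of its exponent against \cite[p.125]{Ta1}.
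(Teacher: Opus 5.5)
Your proposal is correct and follows the paper's proof. For (1), the paper likewise derives $\pi_{F_{\widehat z}}(g)\pi_{F_{\widehat z}}(h)=\pi_{F_{\widehat z}}(h^{g^{-1}})\pi_{F_{\widehat z}}(g)$ from the intertwining properties of $T$ and $U$. For (2), it uses exactly your specialization of the $T$--$U$ commutation lemma, followed by the transitivity of $U$ and the cocycle property of $T$. Your orientation worry is unfounded: taking $g=g_1$, $\widehat{z'}=\widehat z$ and the unprimed point $g_2^{h_{-1}}(\widehat z)$ gives exactly the displayed identity, with $T^{g_2^{h_{-1}}(\widehat z)}_{g_1^{h_{-1}}}$ on the left as required.
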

\begin{proof}
\[[g,h]=[g,0]\cdot [1,h]=[1,h^{g^{-1}}]\cdot[g,0].\]
\begin{align*}
&\pi_{F_{\widehat{z}}}(g)  \pi_{F_{\widehat{z}},  \psi}(h)\\
&=U_{\widehat{z}, g^{h_{-1}}(\widehat{z})} \circ T^{\widehat{z}}_{g^{h_{-1}}}\circ \pi_{ F_{\widehat{z}}, \psi}(h)\\
&\xlongequal{\widehat{z}_1=g^{h_{-1}}(\widehat{z})}U_{\widehat{z}, \widehat{z}_1} \circ \pi_{F_{\widehat{z}_1}}(h^{g^{-1}})\circ T^{\widehat{z}}_{g^{h_{-1}}}\\
&= \pi_{F_{\widehat{z}}}(h^{g^{-1}})\circ  U_{\widehat{z}, \widehat{z}_1} \circ T^{\widehat{z}}_{g^{h_{-1}}}\\  
&=\pi_{F_{\widehat{z}}}(h^{g^{-1}}) \pi_{F_{\widehat{z}}}(g).
  \end{align*}
  By the above lemma, 
  \[U_{ g_1^{h_{-1}}(\widehat{z}), g_1^{h_{-1}}g_2^{h_{-1}}(\widehat{z})} \circ T^{g_2^{h_{-1}}(\widehat{z})}_{g_1^{h_{-1}}}=\epsilon(g_1^{h_{-1}};\widehat{z},g_2^{h_{-1}}(\widehat{z})) T^{\widehat{z}}_{g_1^{h_{-1}}} \circ U_{\widehat{z},g_2^{h_{-1}}(\widehat{z})};\]
  \begin{align*}
&\pi_{F_{\widehat{z}}}(g_1)  \pi_{F_{\widehat{z}}, \psi}(g_2)\\
&=U_{\widehat{z}, g_1^{h_{-1}}(\widehat{z})} \circ T^{\widehat{z}}_{g_1^{h_{-1}}}\circ U_{\widehat{z}, g_2^{h_{-1}}(\widehat{z})} \circ T^{\widehat{z}}_{g_2^{h_{-1}}}\\
&=\epsilon(g_1^{h_{-1}};\widehat{z},g_2^{h_{-1}}(\widehat{z}))^{-1}U_{\widehat{z}, g_1^{h_{-1}}(\widehat{z})} \circ U_{ g_1^{h_{-1}}(\widehat{z}), g_1^{h_{-1}}g_2^{h_{-1}}(\widehat{z})} \circ T^{g_2^{h_{-1}}(\widehat{z})}_{g_1^{h_{-1}}}  \circ T^{\widehat{z}}_{g_2^{h_{-1}}}\\
&=\alpha_{\widehat{z}}(g_1,g_2)^{-1} \pi_{F_{\widehat{z}}}(g_1g_2). 
  \end{align*}
\end{proof}
\begin{remark}\label{equalityonab}
  When $z=\widehat z=z_0$, the identities  
  \[
    \alpha_{z_0}(g_1,g_2)^{-1}
    =\widehat{c}_{z_0}(g_1^{h_{-1}},g_2^{h_{-1}})^{-1}
    =\widehat{c}_{z_0}(g_1,g_2)
  \]
  hold, and the extended formulas \eqref{hwMpz}  give  the representation
  $\pi_{F_{z_0}}$ of $\widehat{\SL}_{2}(\mathbb R)\ltimes\Ha(W)$.
\end{remark}
\begin{example}\label{exm}
Take $z=z_0=i$. Let   $g=\begin{pmatrix}a & -b\\ b& a\end{pmatrix}\in \U(\C)$. Then $g^{h_{-1}}=\begin{pmatrix}a & b\\ -b& a\end{pmatrix}$. 
Additionally, the representation $\pi_{F_{z_0}}(g)$ acting on a function $f(v)$ is defined as:
\begin{equation}\label{unitact}
 \pi_{F_{z_0}}(g) f(v)=e^{-\pi i (-bi+a) bv^2}f(v(-bi+a)).
 \end{equation}
\begin{itemize}
\item[(1)] If we take $f(v)=e^{-\tfrac{\pi}{2}v^2}$, then $f(v)\in \mathcal{H}_{F_{z_0}}$. Moreover, $\pi_{F_{z_0}}(g) f(v)=f(v)$.
\begin{align*}
&\int_{\C}  e^{-2\pi  v_y^2} \mid f(v)\mid^2 d_{z_0}(v)\\
&=\int_{\C}  e^{-2\pi  v_y^2} e^{-\pi ( v_x^2-v_y^2)} d_{z_0}(v)\\
&=\int_{\R^2 }  e^{-\pi (v_x^2+v_y^2)} dv_xdv_y<+\infty.
\end{align*}
\begin{align*}
\pi_{F_{z_0}}(g) f(v)&=e^{-\pi i (-bi+a) bv^2}e^{-\tfrac{\pi}{2}(-bi+a)^2v^2}\\
&=e^{-\pi ia bv^2-\pi b^2v^2}e^{\tfrac{\pi}{2}(-a^2+b^2+2abi)v^2}\\
&=e^{-\tfrac{\pi}{2}v^2}=f(v).
\end{align*}
\item[(2)] If we take $g(v)=ive^{-\tfrac{\pi}{2}v^2}$ for $v\in V$, then $g(v)\in \mathcal{H}_{F_{z_0}}$. 
\begin{align*}
\pi_{F_{z_0}}(g) g(v)
&=e^{-\pi i (-bi+a) bv^2}e^{-\tfrac{\pi}{2}(-bi+a)^2v^2}(iv(-bi+a))\\
&=g(v)(-bi+a).
\end{align*}
\end{itemize}
\end{example}
\begin{example}\label{example232} 
Take $z=\widehat{z}=z_0=i$.  Let us give the explicit actions of $ u(b), h(a)$, for  $u(b)=\begin{pmatrix}
  1&b\\
  0 & 1
\end{pmatrix}$,  $h(a)=\begin{pmatrix}
  a&0\\
  0 & a^{-1}
\end{pmatrix} \in \SL_{2}(\R)$. 
\end{example}

1) If $g=h(a)$, then $g^{h_{-1}}=h(a)$,  $\widehat{z'}=g^{h_{-1}}(\widehat{z})=\widehat{z} a^{2}=ia^2$. Write $v=v_x+iv_y$ and $v'=v'_x+iv'_y$.
\begin{align*}
 \gamma(\widehat{z}, \widehat{z'})&=\big(\tfrac{\widehat{z}-\overline{\widehat{z'}}}{2i}\big)^{-1/2} \cdot (\Im \widehat{z})^{1/4}\cdot (\Im \widehat{z'})^{1/4}\\
 &=\big(\tfrac{1+a^2}{2}\big)^{-1/2} \cdot |a|^{1/2};\\
& \\
 \kappa_{\widehat{z'}}(v'a,v'a) & =e^{-2\pi v'_ya[a^2]^{-1} av'_y}\\
 &=e^{-2\pi (v'_y)^2}\\
 &=\kappa_{z_0}(v',v');\\
&\\
 d_{\widehat{z'}}(v')=&a^{-2} d_{z_0}(v');
 \end{align*}
  \begin{align*}
 \kappa(\widehat{z},v; \widehat{z'},v'a)^{-1} 
 &=e^{-\pi i\cdot (\widehat{z}-\overline{\widehat{z'}})^{-1}\cdot (v-\overline{v'}a)^2}\\
  &=e^{-\pi (1+a^2)^{-1}(v-\overline{v'}a)^2}.
 \end{align*}
\begin{align*}
& \pi_{F_z}(g) f(v)\\
&=U_{\widehat{z}, g^{h_{-1}}(\widehat{z})} \circ T^{\widehat{z}}_{g^{h_{-1}}} f(v)\\
&= \gamma(\widehat{z}, \widehat{z'})\int_{\C} \kappa(\widehat{z},v; \widehat{z'},v'')^{-1}\big[T^{\widehat{z}}_{g^{h_{-1}}} f\big](v'') \kappa_{\widehat{z'}}(v'',v'') d_{\widehat{z'}}(v'')\\
&= \gamma(\widehat{z}, \widehat{z'})\int_{\C} \kappa(\widehat{z},v; \widehat{z'},v'')^{-1}f(v''a^{-1})\kappa_{\widehat{z'}}(v'',v'') d_{\widehat{z'}}(v'')\\
& \stackrel{v'=v''\cdot a^{-1}}{=}|a|^2\gamma(\widehat{z}, \widehat{z'})\int_{\C} \kappa(\widehat{z},v; \widehat{z'},v'a)^{-1}f(v')\kappa_{\widehat{z'}}(v'a,v'a) d_{\widehat{z'}}(v')\\
&=|a|^{1/2}\big(\tfrac{2}{1+a^2}\big)^{1/2}\int_{\R^2} e^{-\pi (1+a^2)^{-1}(v-\overline{v'}a)^2}f(v')e^{-2\pi (v'_y)^2}d_{z_0}(v').
\end{align*}
2) If $g=u(b)$, then $g^{h_{-1}}=u(-b)$,  $\widehat{z'}=g^{h_{-1}}(\widehat{z})=i-b$.
\begin{align*}
\gamma(\widehat{z}, \widehat{z'})
 &=\big(\tfrac{\widehat{z}-\overline{\widehat{z'}}}{2i}\big)^{-1/2} \cdot (\Im \widehat{z})^{1/4}\cdot (\Im \widehat{z'})^{1/4}\\
 &=\big(\tfrac{2i +b}{2i}\big)^{-1/2}=\tfrac{1}{\sqrt{1-\tfrac{b}{2}i}}.
 \end{align*}
 \begin{align*}
\kappa_{\widehat{z'}}(v',v')= \kappa_z(v',v')&\stackrel{v'=v'_x+iv'_y}{=}e^{-2\pi (v'_y)^2};\\
 & \\
  \kappa(\widehat{z},v; \widehat{z'},v')^{-1}
 &=e^{-\pi i (\widehat{z}-\overline{\widehat{z'}})^{-1}  (v-\overline{v'})^2}\\
 &=e^{-\pi i(2i+b)^{-1} (v-\overline{v'})^2};
 \end{align*}
\begin{align*}
& \pi_{F_z}(g) f(v)=U_{\widehat{z}, g^{h_{-1}}(\widehat{z})} \circ T^{\widehat{z}}_{g^{h_{-1}}} f(v)\\
&= \gamma(\widehat{z}, \widehat{z'})\int_{\C} \kappa(\widehat{z},v; \widehat{z'},v')^{-1}\big[T^{\widehat{z}}_{g^{h_{-1}}} f\big](v') \kappa_z(v',v') d_z(v')\\
&= \gamma(\widehat{z}, \widehat{z'})\int_{\C} \kappa(\widehat{z},v; \widehat{z'},v')^{-1}f(v')\kappa_z(v',v') d_z(v')\\
&=\tfrac{1}{\sqrt{1-\tfrac{b}{2}i}}\int_{\R^2} e^{-\pi i(2i+b)^{-1} (v-\overline{v'})^2} e^{-2\pi (v'_y)^2} f(v') dv'_x dv'_y.
\end{align*}
\section{Extended Weil representations: $\SL_2^{\pm}(\R)$ case}\label{chap4exWe}
In this section,   we will generate the Weil representation to $\widetilde{\GL}_2(\R)$.   
\subsection{}\label{chap4EWR}
Recall that $\pi_{\psi}$ is  Weil representation of $\widetilde{\SL}_2(\R)\ltimes \Ha(W)$ associated to $\psi$, and $\omega_{\psi}=\pi_{\psi}|_{\widetilde{\SL}_2(\R)}$. Let $\widetilde{\GL}_2(\R)$ act on $\Ha(W)$ via $\GL_2(\R)$.
\begin{lemma}
$\omega_{\psi} \simeq \omega_{\psi^{h^2}}$, for any $h\in \R^{\times}$, and $\omega_{\psi} \ncong \omega_{\psi^{-1}}$.
\end{lemma}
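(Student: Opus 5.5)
To prove $\omega_{\psi}\simeq\omega_{\psi^{h^2}}$, I would realize the isomorphism by an explicit dilation operator in the Schrödinger model of Section~\ref{chap2Sch}. The plan is to use conjugation by $h(h)=\diag(h,h^{-1})\in\SL_2(\R)$ as an automorphism of $\Ha(W)$, namely $(x,x^{\ast};t)\mapsto(xh,x^{\ast}h^{-1};t)$. This does not change the central character. Instead I would use the similitude $\diag(1,h^2)$, or equivalently $\diag(h,h)$, which scales the symplectic form by $h^2$. The map $\alpha:(v,t)\mapsto(hv,h^2t)$ is an isomorphism $\Ha(W)\to\Ha(W)$ that commutes with the action of $\SL_2(\R)$, since scalars commute with $\SL_2(\R)$. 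Consequently $\pi_{\psi}\circ\alpha$ is a Heisenberg representation with central character $\psi^{h^2}$. By Stone--von Neumann it is equivalent to $\pi_{\psi^{h^2}}$ through an operator $A$, and concretely $A$ is $f(y)\mapsto|h|^{1/2}f(hy)$.

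Because $\alpha$ commutes with $\SL_2(\R)$, the operators $A^{-1}\omega_{\psi}(g)A$ and $\omega_{\psi^{h^2}}(g)$ both intertwine the same Heisenberg action $\pi_{\psi^{h^2}}$ with its $g$-twist. By Schur's lemma they therefore differ by a scalar $\chi(g)$. I would check that $\chi$ is trivial directly on the generators $u(b)$, $h(a)$, $\omega$ of $\SL_2(\R)$ using \eqref{chap2representationsp2}--\eqref{chap2representationsp4}. For $u(b)$ and $h(a)$ this is a one-line substitution. For $\omega$ one also has to track the self-dual measure $d\mu$ attached to $\psi^{h^2}$, which gets rescaled by $|h|$, together with the constant $t=\gamma$-factor. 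The key observation is that $\gamma(\psi^{h^2})=\gamma(\psi)$ and $\gamma(h^2,\psi)=1$, so the Weil indices match. In addition, the cocycle $\widetilde c_{X^{\ast}}$ of Lemma~\ref{relacto}(2) depends only on $\sgn\mathfrak e$, which is unchanged under $\mathfrak e\mapsto h^2\mathfrak e$. Hence the same covering group $\widetilde{\SL}_2(\R)$ and the same lift serve for both representations, and $\chi$ becomes a genuine character of $\widetilde{\SL}_2(\R)$ that is trivial on $\mu_8$ and therefore factors through $\SL_2(\R)$. Since $\SL_2(\R)$ is perfect, $\chi=1$, and this also removes the need to compute on $\omega$.

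To prove $\omega_{\psi}\ncong\omega_{\psi^{-1}}$, I would compare their $K=\SO_2(\R)$-types by passing to the Fock model via Lemmas~\ref{eightcoz0} and~\ref{twocoz0} and Example~\ref{exm}. For $\psi=\psi_0$, the function $e^{-\pi v^2/2}$ spans the lowest $K$-type, on which $\overline{\SO}_2(\R)$ acts through $J_{1/2}(-,z_0)$, that is, by the character $e^{it/2}$ up to the trivialization. All other $K$-types are $e^{i(1/2+n)t}$ with $n\ge0$, and $\sgn\mathfrak e$ enters through $\widetilde s$ in Lemma~\ref{chap6thetaf}. The argument I would use is that $\omega_{\psi^{-1}}$ is the complex conjugate (contragredient) of $\omega_{\psi}$, so its $K$-types are $e^{-i(1/2+n)t}$. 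The two sets of $K$-types are disjoint, so the representations cannot be equivalent, and in fact are not equivalent even after restricting to the covering of $\SO_2(\R)$.

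The main obstacle will be making the $K$-type comparison rigorous on a common covering group. The genuine characters of $\widetilde{\SO}_2$ must be identified consistently for $\psi$ and $\psi^{-1}$, whose trivializations $\widetilde s$ differ by the character $t\mapsto t$ (Remark after Lemma~\ref{eightcoz0}). My first step would be to work on $\overline{\SL}_2(\R)$ through $\iota$ of \eqref{overwideSL}, where $\overline{s}(e^{it})=e^{\sgn(\mathfrak e)it/2}$ makes the sign flip completely explicit.
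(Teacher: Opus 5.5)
Your proof of $\omega_{\psi}\simeq\omega_{\psi^{h^2}}$ is the paper's argument. Both twist by the scalar similitude $\diag(h,h)$, which commutes with $\SL_2(\R)$ (for $h>0$ it is even central in $\widetilde{\GL}_2(\R)$, Lemma~\ref{isoGLR}) and rescales the centre of $\Ha(W)$ by $h^2$. Your Schur-plus-perfectness step, together with the remark that $\widetilde c_{X^{\ast}}$ depends only on $\sgn\mathfrak e$, just makes explicit the uniqueness of the Weil extension that the paper uses tacitly.

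For $\omega_{\psi}\ncong\omega_{\psi^{-1}}$ the routes genuinely differ. The paper only appeals to the character formulas of Thomas and Adams; you give a self-contained $K$-type argument, and it works. Passing to $\overline{\SL}_2(\R)$ is the right way to get a common group. There $\overline c_{X^{\ast}}$ is independent of $\psi$, $m_{X^{\ast},\psi^{-1}}=\overline{m_{X^{\ast},\psi}}$, and the Schr\"odinger formulas for $\psi^{-1}$ are the conjugates of those for $\psi$. Hence $\overline{\omega}_{\psi^{-1}}$ is literally the complex conjugate of $\overline{\omega}_{\psi}$. Checking $u(b)$, $h(a)$, $\omega$ against the $\epsilon=-1$ components of \eqref{chap4representationsp23}--\eqref{chap4representationsp25}, multiplied by $m_{X^{\ast}}$, shows this coincides with the realization $\omega_{\psi}^{h_{-1}}$ used in Lemma~\ref{chap4mainlm}. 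So you prove the statement in the form the paper actually needs.

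Two small corrections. First, by Lemma~\ref{ABM} (and its $\overline{\SL}_2$ form in Section~\ref{GammathetaN2PM}), $[k_t,\epsilon]$, with $k_t$ the rotation by $t$, acts on $A$ by $\epsilon\,\overline{s}(k_t)^{-1}=\epsilon e^{-it/2}$. That is, it acts through $J_{1/2}(-,z_0)^{-1}$, so for $\psi_0$ the weights are $-(\tfrac12+n)$, not $+(\tfrac12+n)$; this is harmless since disjointness is symmetric. Second, to exclude $\epsilon e^{it/2}$ from $\omega_{\psi_0}$ you need the full $K$-spectrum, hence the completeness of the vectors $v^ne^{-\pi v^2/2}$ in $\mathcal{H}_{F_{z_0}}$. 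The paper does not state this, although the substitution in Example~\ref{exm} gives eigenvalue $(a-bi)^n$ for every $n$.

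In exchange, your route proves slightly more (inequivalence already on $\overline{\SO}_2(\R)$) using only the paper's own Fock-model computations. The paper's route is a one-line citation resting on external character theory.
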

\begin{proof}
We closely follow the argument of \cite[p.~36]{MoViWa} in the $p$\nobreakdash-adic setting.  
Let $\widetilde h=[h,1]\in\widetilde{\GL}_2(\mathbb R)$, $\widetilde g=[g,\epsilon]\in\widetilde{\SL}_2(\mathbb R)$, and write $w_t=(w,t)\in\Ha(W)$.  
Then
\[
\pi_\psi\!\bigl(w_t^{\widetilde h}\bigr)
=\pi_\psi\!\bigl(wh,\,t\lambda_h\bigr)
\simeq\pi_{\psi^{\lambda_h}}\!(w_t),
\]
whence
\[
\pi_\psi\!\bigl([\widetilde g,w_t]^{\widetilde h}\bigr)
=\pi_\psi\!\bigl([\widetilde g^{\widetilde h},\,(wh,t\lambda_h)]\bigr)
\simeq\pi_{\psi^{\lambda_h}}\!\bigl([\widetilde g,w_t]\bigr).
\]
Taking $h\in\mathbb R_{+}^{\times}$ gives $\widetilde g^{\widetilde h}=\widetilde g$ and consequently
\[
\omega_\psi(\widetilde g)=\pi_\psi(\widetilde g)\simeq\pi_{\psi^{h^{2}}}(\widetilde g)=\omega_{\psi^{h^{2}}}(\widetilde g).
\]
The last relation can also be derived from the character computations for $\omega_\psi$ and $\omega_{\psi^{-1}}$ given in \cite[Thm.~1C]{Th} and \cite{Ad}.
\end{proof}
Let  
\[
\Pi_{\psi}
=\Ind_{\,\widetilde{\SL}_2(\mathbb R)\ltimes \Ha(W)}^{\,\widetilde{\SL}_2^{\pm}(\mathbb R)\ltimes \Ha(W)}\pi_{\psi}, \quad V^{\pm}_{\psi}
=\Ind_{\,\widetilde{\SL}_2(\mathbb R)\ltimes \Ha(W)}^{\,\widetilde{\SL}_2^{\pm}(\mathbb R)\ltimes \Ha(W)}V_{\psi},
\qquad
\Omega_{\psi}
=\Res_{\,\widetilde{\SL}_2^{\pm}(\mathbb R)}^{\,\widetilde{\SL}_2^{\pm}(\mathbb R)\ltimes \Ha(W)}\Pi_{\psi}.
\]
By Clifford--Mackey theory,  
\[
\Omega_{\psi}\simeq\Ind_{\,\widetilde{\SL}_2(\mathbb R)}^{\,\widetilde{\SL}_2^{\pm}(\mathbb R)}\omega_{\psi}.
\]

\begin{lemma}\label{chap4mainlm}
The restriction of $\Omega_{\psi}$ to $\widetilde{\SL}_2(\mathbb R)$ decomposes as the direct sum of the two Weil representations  
$\omega_{\psi}\oplus \omega_{\psi^{-1}}$.
\end{lemma}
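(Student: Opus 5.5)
The plan is to apply Mackey's restriction formula to the induced representation $\Omega_{\psi}\simeq\Ind_{\widetilde{\SL}_2(\R)}^{\widetilde{\SL}_2^{\pm}(\R)}\omega_{\psi}$. Since $\widetilde{\SL}_2(\R)$ is a normal subgroup of index $2$ in $\widetilde{\SL}_2^{\pm}(\R)$, we may take coset representatives $1$ and $\widetilde h=[h_{-1},1]$. Mackey's formula then gives
\[
\Res_{\widetilde{\SL}_2(\R)}\Omega_{\psi}\simeq \omega_{\psi}\oplus \omega_{\psi}^{\widetilde h},
\qquad \omega_{\psi}^{\widetilde h}(\widetilde g)=\omega_{\psi}(\widetilde h\,\widetilde g\,\widetilde h^{-1}).
\]
This reduces the lemma to identifying the conjugate representation $\omega_{\psi}^{\widetilde h}$ with $\omega_{\psi^{-1}}$.

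To make this identification, I will use the same computation as in the proof of the preceding lemma, now with $h=h_{-1}$ and $\lambda_h=\det h_{-1}=-1$. The conjugation $w_t\mapsto (wh_{-1},-t)$ is an automorphism of $\Ha(W)$ that negates the center. Hence $\pi_{\psi}\circ\mathrm{Ad}(\widetilde h)$, restricted to $\Ha(W)$, is an irreducible unitary representation with central character $\psi^{-1}$. By Stone--von Neumann it is therefore equivalent to the Heisenberg representation $\pi_{\psi^{-1}}$.

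Next I extend this equivalence to the whole semidirect product. The extension of $\pi_{\psi^{-1}}|_{\Ha(W)}$ to $\widetilde{\SL}_2(\R)\ltimes\Ha(W)$ is unique, because $\SL_2(\R)$ is perfect, as noted in Section~\ref{chap2Sch}. Consequently $\pi_{\psi}^{\widetilde h}\simeq\pi_{\psi^{-1}}$ on all of $\widetilde{\SL}_2(\R)\ltimes\Ha(W)$, and restricting to $\widetilde{\SL}_2(\R)$ yields $\omega_{\psi}^{\widetilde h}\simeq\omega_{\psi^{-1}}$. Substituting this into the Mackey decomposition gives $\Res\,\Omega_{\psi}\simeq\omega_{\psi}\oplus\omega_{\psi^{-1}}$.

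The main obstacle is the meaning of $\omega_{\psi^{-1}}$ as a representation of the same covering group $\widetilde{\SL}_2(\R)$. The cover $\widetilde{\SL}_2(\R)$ is built from the cocycle $\widetilde{c}_{X^{\ast}}$ attached to $\psi$, whereas $\pi_{\psi^{-1}}$ naturally uses the cocycle attached to $\psi^{-1}$, which is its complex conjugate since $\sgn\mathfrak e$ flips. To reconcile the two, I will use the lift of $\mathrm{Ad}(h_{-1})$ to $\widetilde{\SL}_2(\R)$ via $\nu(-1,\cdot)$ from Example~\ref{chap3example2} and Lemma~\ref{chap3yg1}. This lift satisfies
\[
\widetilde{c}_{X^{\ast}}(g_1^{-1},g_2^{-1})=\widetilde{c}_{X^{\ast}}(g_1,g_2)\,\nu(-1,g_1)^{-1}\nu(-1,g_2)^{-1}\nu(-1,g_1g_2).
\]
Composing with this lift, $\omega_{\psi^{-1}}$ becomes a genuine representation of $\widetilde{\SL}_2(\R)$, and the $\mu_8$-center acts by the same character on both summands. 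With $\omega_{\psi^{-1}}$ defined through this twist, the Clifford--Mackey argument above applies verbatim.
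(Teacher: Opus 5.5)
Your proposal is correct and matches the argument the paper has in mind. The paper gives no separate proof: it combines the Clifford--Mackey identification $\Omega_{\psi}\simeq\Ind_{\widetilde{\SL}_2(\R)}^{\widetilde{\SL}_2^{\pm}(\R)}\omega_{\psi}$ with the preceding lemma's computation $\pi_{\psi}\circ\mathrm{Ad}(\widetilde h)\simeq\pi_{\psi^{\lambda_h}}$, taken at $h=h_{-1}$ with $\lambda_h=-1$, and it leaves implicit the point your last paragraph addresses: $\omega_{\psi^{-1}}$ can be regarded as a representation of the same cover, because $\widetilde{c}_{X^{\ast}}(g_1^{h_{-1}},g_2^{h_{-1}})$ is the cocycle attached to $\psi^{-1}$ (Lemma~\ref{relacto}(2)), so (\ref{chap3equ}) shows the two covers are isomorphic over $\SL_2(\R)$ via $\nu(-1,\cdot)$.
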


Consequently, $\Omega_{\psi}$ is independent of the choice of $\psi$; we denote it simply by $\Omega$ and call it the Weil representation of $\widetilde{\SL}_2^{\pm}(\mathbb R)$. As a representation of $\Ha(W)$, we have:
\[
\Pi_{\psi}\simeq\pi_{\psi}\oplus\pi_{\psi^{-1}},
\]
which is reducible.  
Set $\Ha^{\pm}(W)=F_{2}\ltimes \Ha(W)$.  Then $\Pi_{\psi}$ is an irreducible representation of $\Ha^{\pm}(W)$.  Observe that $\mathbb R$ is a normal subgroup of $\Ha^{\pm}(W)$ and sits in the exact sequence  
\[
1\longrightarrow\mathbb R\longrightarrow\Ha^{\pm}(W)\longrightarrow F_{2}\ltimes W\longrightarrow 1.
\]

\subsection{The Schr\"odinger model}\label{chap4sch}
Retain the notations of Section~\ref{chap2Sch}.  Let us consider $F\in V^{\pm}_{\psi}$. Then $$F([\epsilon, h])=F([1, h^{s(\epsilon)}] [\epsilon,0])=\pi_{\psi}(h^{s(\epsilon)})F( [\epsilon,0]).$$  We identify $F( [\epsilon,0])(x)=f([\epsilon,x])$, for $x\in X$. The representation  $\Pi_{\psi}$ of $\widetilde{\GL}_2(\mathbb R)\ltimes \Ha(W)$  can be realized on  
\(L^{2}(\mu_{2}\times X)\cong L^{2}(\mu_{2}\times\R)\). To distinguish the different models, we write it by $ \Pi_{X^{\ast},\psi}$.  It is given   by the following formulas:
\begin{equation}\label{chap4representationsp21}
\Pi_{X^{\ast},\psi}[(x,0)\cdot (x^{\ast},0)\cdot (0,k)]f([\epsilon,y])=\psi(\epsilon k+\epsilon ( x+ y)x^{\ast}) f([\epsilon, x+y]),
\end{equation}
\begin{equation}\label{chap4representationsp22}
\Pi_{X^{\ast},\psi}[ h_{-1}]f([\epsilon,y])=f([-\epsilon, y]),
\end{equation}
\begin{equation}\label{chap4representationsp23}
\Pi_{X^{\ast},\psi}[u(b)]f([\epsilon, y])=\psi(\tfrac{1}{2}\epsilon b y^2) f([\epsilon,y]),
\end{equation}
\begin{equation}\label{chap4representationsp24}
\Pi_{X^{\ast},\psi}[h(a)]f([\epsilon, y])=|a|^{1/2} (a,\epsilon)_\R f([\epsilon,ya]),
\end{equation}
\begin{equation}\label{chap4representationsp25}
\Pi_{X^{\ast},\psi}[\omega]f([\epsilon, y])=\nu(\epsilon,\omega)\int_{X^{\ast}} \psi(\epsilon y y^{\ast}) f([\epsilon,-y^{\ast}]) d\mu(y^{\ast}).
\end{equation}
\begin{equation}\label{chap4representationsp26}
\Pi_{X^{\ast},\psi}[\diag(r,r)]f([\epsilon, y])=f([\epsilon, y]).
\end{equation}
where  $x,y ,  x^{\ast}, y^{\ast},k\in    \R$, $r\in \R^{\times}_{>0}$, $t\in \mu_8$, $\epsilon\in \mu_2$,  and 
\begin{equation*}
h_{-1}= \begin{pmatrix}
  1& 0\\
  0 &-1
\end{pmatrix}, h(a)=\begin{pmatrix}
  a& 0\\
  0 &a^{-1}\end{pmatrix}, u(b)=\begin{pmatrix}
  1&b\\
  0 & 1
\end{pmatrix}, \omega=\begin{pmatrix}
  0& -1\\
 1 &0
\end{pmatrix}, \diag(r,r)=\begin{pmatrix}
 r& 0\\
0 &r
\end{pmatrix}\in \GL_2(\R).
\end{equation*}
Moreover, under such actions, for $h_1, h_2\in \GL_2(\R)$,
$$\Pi_{X^{\ast},\psi}(h_1)\Pi_{X^{\ast},\psi}(h_2) =\widetilde{C}_{X^{\ast}}( h_1,h_2)\Pi_{X^{\ast},\psi}(h_1h_2).$$ 
Let us verify  these formulas:\\

1) $$(x,0)\cdot (x^{\ast},0)\cdot (0,k)=(x, x^{\ast}; k+\tfrac{\langle x, x^{\ast}\rangle}{2})\in \Ha(W);$$
\begin{align*}
&\Pi_{X^{\ast},\psi}[(x,0)\cdot (x^{\ast},0)\cdot (0,k)]f(\epsilon, y)\\
&=\Pi_{X^{\ast},\psi}[(x,0)\cdot (x^{\ast},0)\cdot (0,k)]F( [\epsilon,0])(y)\\
&=F( [\epsilon,0] \cdot [1, (x, x^{\ast}; k+\tfrac{\langle x, x^{\ast}\rangle}{2})])(y)\\
&=F( [\epsilon, (x, x^{\ast}; k+\tfrac{\langle x, x^{\ast}\rangle}{2})])(y)\\
&=F(  [1, (x, x^{\ast}; k+\tfrac{\langle x, x^{\ast}\rangle}{2})^{s(\epsilon)}] [\epsilon,0])(y)\\
&=\pi_{\psi}((x, x^{\ast}; k+\tfrac{\langle x, x^{\ast}\rangle}{2})^{s(\epsilon)})F( [\epsilon,0])(y)\\
&=F( [\epsilon,0])((y,0) \cdot  (x, \epsilon x^{\ast}; \epsilon k+\epsilon \tfrac{\langle x, x^{\ast}\rangle}{2})\\
&=F( [\epsilon,0])\big(  (x+y, \epsilon x^{\ast}; \tfrac{\langle y,\epsilon x^{\ast}\rangle}{2} +\epsilon k+\epsilon \tfrac{\langle x, x^{\ast}\rangle}{2})\big)\\
&=F( [\epsilon,0])\big((\epsilon x^{\ast};  \epsilon k+\epsilon \langle x+y, x^{\ast}\rangle)\big)  (x+y, 0)\\
&=\psi(\epsilon k+\epsilon \langle x+y, x^{\ast}\rangle) F( [\epsilon,0])  (x+y, 0)\\
     &=\psi(\epsilon k+\epsilon (x+ y)x^{\ast}) f([\epsilon, x+y]).
     \end{align*}
Let: 
\[([\epsilon, 1], 1),  ([1, g],1) \in \widetilde{\SL}_2^{\pm}(\mathbb R).\] 
\begin{align*}
([\epsilon, 1], 1) ([1, g],1) &=([\epsilon, g], \widetilde{C}_{X^{\ast}}([\epsilon, 1],[1, g]))\\
&=([\epsilon, g], 1)\\
&=\Big([1, g^{s(\epsilon)}], \widetilde{C}_{X^{\ast}}([1, g^{s(\epsilon)}],[\epsilon, 1])^{-1}\Big)\Big([\epsilon, 1], 1\Big) \\
&=\Big([1, g^{s(\epsilon)}], \nu( \epsilon, g^{s(\epsilon)})^{-1}\widetilde{c}_{X^{\ast}}(g, 1)^{-1}\Big)\Big([\epsilon, 1], 1\Big)\\
&=\Big([1, g^{s(\epsilon)}], \nu( \epsilon, g^{s(\epsilon)})^{-1}\Big)\Big([\epsilon, 1], 1\Big).
\end{align*}
2) If $g= u(b)$, then $\nu( 1,g)=1=\nu( -1,g)=\nu( -1,g^{s(-1)}) $.
\begin{align}\label{22}
\Pi_{X^{\ast},\psi}(g)f([\epsilon, y])&=\Pi_{X^{\ast},\psi}(g)F( [\epsilon,0])(y)\\
&=F( [\epsilon,0] \cdot [g,0])(y)\\
&=F(\Big[ \big( [\epsilon, g], \widetilde{C}_{X^{\ast}}([\epsilon, 1], [1,g])\big),0\Big])(y)\\
&=F(\Big(\big([1, g^{s(\epsilon)}], \nu( \epsilon, g^{s(\epsilon)})^{-1}\big), 0\Big)\Big(\big([\epsilon, 1], 1\big), 0\Big))(y)\\
&=\pi_{\psi}( g^{s(\epsilon)} )\nu( \epsilon, g^{s(\epsilon)})^{-1} F(\Big(\big([\epsilon, 1], 1\big), 0\Big))(y)\\
&=\psi(\epsilon \tfrac{1}{2}\langle y,yb\rangle) F([\epsilon, 0])(y)\\
&=\psi(\epsilon \tfrac{ b y^2}{2}) f([\epsilon,y]).
\end{align}
3) If $g=h(a)$, then $\nu( 1,g)=1$, $\nu( -1,g^{s(\epsilon)})=\nu( -1,g)=(a, -1)_\R$.
\begin{align}\label{23}
\Pi_{X^{\ast},\psi}(g)f([\epsilon, y])&=\pi_{\psi}( g^{s(\epsilon)} )\nu( \epsilon, g^{s(\epsilon)})^{-1} F([\epsilon, 0])(y)\\
&=(a, \epsilon)_\R |a|^{1/2} F([\epsilon, 0])(ya)\\
&=(a, \epsilon)_\R |a|^{1/2} f([\epsilon,ya]).
\end{align}
4) If $g= \omega$,  then $\nu( 1,g)=1$, $\nu( -1,g)=\gamma(-1, \psi^{\tfrac{1}{2}})^{-1}=e^{\tfrac{(\sgn \mathfrak{e})\pi i}{2}}$, $\nu( -1,g^{s(-1)})=e^{-\tfrac{(\sgn \mathfrak{e})\pi i}{2}} $. 
\begin{align}\label{235}
\Pi_{X^{\ast},\psi}(g)f([1, y])&=\pi_{\psi}( g )\nu( 1, g)^{-1}  F([1, 0])(y)\\
&=\int_{X^{\ast}} \psi(\langle y, y^{\ast}\rangle) F([1, 0])(y^{\ast}\omega^{-1}) d\mu(y^{\ast})\\
&=\int_{X^{\ast}} \psi( yy^{\ast}) f([1,-y^{\ast}]) d\mu(y^{\ast}).
\end{align}
\begin{align}\label{24}
\Pi_{X^{\ast},\psi}(g)f([-1, y])&=\pi_{\psi}( g^{s(-1)} )\nu( -1, g^{s(-1)} )^{-1} F([-1, 0])(y)\\
&=\pi_{\psi}(-g)\nu( -1, g^{s(-1)} )^{-1} F([-1, 0])(y)\\
&=\int_{X^{\ast}} \nu( -1, g^{s(-1)} )^{-1} \psi(\langle y, y^{\ast}\rangle)F([-1, 0]) (-y^{\ast}\omega^{-1}) d\mu(y^{\ast})\\
&=\nu( -1,g)\int_{X^{\ast}} \psi( y y^{\ast}) f([-1,y^{\ast}]) d\mu(y^{\ast})\\
&=\nu( -1,g)\int_{X^{\ast}} \psi( -y y^{\ast}) f([-1,-y^{\ast}]) d\mu(y^{\ast}).
\end{align}
\subsection{The Lattice model}\label{chap4la}
Retain notations from Section \ref{chap2latticemodel}.  Then $\Pi_{\psi}\simeq \Pi_{L, \psi} =\Ind_{\Ha(L)}^{\Ha^{\pm}(W)} \psi_L$. Let $\mathcal{H}_{\psi}^{\pm}(L)$ be the set of  measurable functions $f: \mu_2  \times W \longrightarrow \C$ such that
\begin{itemize}
\item[(i)] $f(\epsilon, l+w)=\psi(-\epsilon\tfrac{\langle x_{l}, x^{\ast}_l\rangle}{2}-\epsilon\tfrac{\langle l, w\rangle}{2}) f(\epsilon, w)$, for  all $l=x_l+x_{l}^{\ast}\in L=(X\cap L) \oplus (X^{\ast}\cap L)$, almost all $w\in W$,
\item[(ii)] $\int_{L\setminus W} ||f(\epsilon, w)||^2 dw<+\infty$,
\end{itemize}
where $w\in W$, $\epsilon \in \mu_2$, and $dw$ is a $W$-right invariant measure on $L\setminus W$.  Then $\Pi_{L,\psi}$ can be realized on $\mathcal{H}_{\psi}^{\pm}(L)$ by the following formulas:
\begin{equation}\label{chap4www1}
\Pi_{L,\psi}([1,w'_t])f( \epsilon, w)=\psi(\epsilon t+\epsilon\tfrac{\langle w, w'\rangle}{2})f( \epsilon, w+w'),
 \end{equation}
\begin{equation}\label{chap4www2}
\Pi_{L,\psi}([-1,0])f( \epsilon, w)=f(- \epsilon, x-x^{\ast}),
 \end{equation}
for $w=x+x^{\ast},w'=x'+x^{'\ast}\in W$, $w'_t=(w',t) \in \Ha(W)$, $\epsilon \in \mu_2$, $t\in \R$.
\subsection{The Fock model}\label{FM}
Retain notations from Section \ref{FOCKMODEL1}, and consider  $z=z_0=i$. Let:
\[ \Pi_{F_{z_0}}=\Ind_{\widehat{\SL}_2(\R) \ltimes \Ha(W)}^{\widehat{\SL}^{\pm}_2(\R) \ltimes \Ha(W)}\pi_{F_{z_0}}, \quad \mathcal{H}^{\pm}_{F_{z_0}}=\Ind_{\widehat{\SL}_2(\R) \ltimes \Ha(W)}^{\widehat{\SL}^{\pm}_2(\R) \ltimes \Ha(W)} \mathcal{H}_{F_{z_0}}.\]
 Let $F\in \mathcal{H}^{\pm}_{F_{z_0}}$. Then $$F([\epsilon, h])=F([1, h^{s(\epsilon)}] [\epsilon,0])=\pi_{F_{z_0}}(h^{s(\epsilon)})F( [\epsilon,0]).$$ We identify $F( [\epsilon,0])(v)=f([\epsilon,v])$. So $ \mathcal{H}^{\pm}_{F_{z_0}}$ can be taken as  the   space   of holomorphic  functions $f$ on $\mu_2 \times \C$ such that 
 \[\Vert f\Vert^2= \Big[\int_{\C} \mid f([1,v])\mid^2\kappa_{z_0}(v,v)d_{z_0}(v)+ \int_{\C} \mid f([-1,v])\mid^2\kappa_{z_0}(v,v)d_{z_0}(v)\Big]^{1/2}< +\infty.\]
 The representation $ \Pi_{F_{z_0}}$   of $\widehat{\GL}_2(\R) \ltimes \Ha^{\pm}(W)$  can be realized on $\mathcal{H}^{\pm}_{F_{z_0}}$  by the following formulas:
\begin{equation}\label{z_00}
\Pi_{F_{z_0}}(h)f([\epsilon,v])=\psi_0(\epsilon t)e^{-2\pi i[\frac{1}{2}(x\overline{z}_0+\epsilon x^{*})x+vx]}f([\epsilon,v+x\overline{z}_0+\epsilon x^{\ast}]),
\end{equation}
\begin{equation}\label{z_01}
\Pi_{F_{z_0}}[ h_{-1}]f([\epsilon,v])=f([-\epsilon, v]),
\end{equation}
\begin{equation}\label{z_02}
\Pi_{F_{z_0}}[u]f([1, v])= e^{-\pi i \overline{u} \Im(u)v^2} f([1,v\overline{u}]),
\end{equation}
\begin{equation}\label{z_03}
\Pi_{F_{z_0}}[u]f([-1, v])=ue^{-\pi i u \Im(\overline{u})v^2} f([-1,vu]),
\end{equation}
\begin{equation}\label{z_04}
\Pi_{F_{z_0}}[u(b)]f([\epsilon, v])=\tfrac{1}{\sqrt{1-\epsilon\tfrac{b}{2}i}}\int_{\R^2} e^{-\pi i(2i+\epsilon b)^{-1} (v-\overline{v'})^2} e^{-2\pi (v'_y)^2} f([\epsilon,v']) dv'_x dv'_y,
\end{equation}
\begin{equation}\label{z_05}
\Pi_{F_{z_0}}[h(a)]f([\epsilon, v])=(\epsilon,a)_{\R}\big(\tfrac{2|a|}{1+a^2}\big)^{1/2}\int_{\R^2} e^{-\pi (1+a^2)^{-1}(v-\overline{v'}a)^2}f([\epsilon,v'])e^{-2\pi (v'_y)^2}dv'_xdv'_y,
\end{equation}
\begin{equation}\label{z_06}
\Pi_{F_{z_0}}[\diag(r,r)]f([\epsilon, v])=f([\epsilon, v]),
\end{equation}
where $w=x+x^{\ast}\in W$,  $t\in \R$,  $h=(w,t)\in \Ha(W)$, $r>0$, $u(b)=\begin{pmatrix}1&b\\0&1\end{pmatrix},\quad
h(a)=\begin{pmatrix}a&0\\0&a^{-1}\end{pmatrix},\quad
\omega=\begin{pmatrix}0&-1\\1&0\end{pmatrix}\in\SL_{2}(\R)$, and  $u=\begin{pmatrix}a&-b\\b&a\end{pmatrix} \in \SO_2(\R)$ is identified  to  $u=bi+a$.  Let us verify  these formulas:\\
1) \begin{align*}
\Pi_{F_{z_0}}(h)f([\epsilon,v])&= \Pi_{F_{z_0}}([1,h]) F( [\epsilon,0])(v)\\
&= F( [\epsilon,0][1,h])(v)\\
&=F([1, h^{s(\epsilon)}] [\epsilon,0])(v)\\
&=\pi_{F_{z_0}}(h^{s(\epsilon)})F( [\epsilon,0])(v)\\
&=\psi_0(\epsilon t)\,
  e^{-2\pi i\!\left[\frac12(x\overline{z}_0+\epsilon x^{*})\,x+vx\right]}
  F( [\epsilon,0])\!\bigl(v+x\overline{z}_0+\epsilon x^{*}\bigr)\\
  &=\psi_0(\epsilon t)\,
  e^{-2\pi i\!\left[\frac12(x\overline{z}_0+\epsilon x^{*})\,x+vx\right]}
  f( \epsilon,v+x\overline{z}_0+\epsilon x^{*}]).
  \end{align*}
  2) \begin{align*}
\Pi_{F_{z_0}}(h_{-1})f([\epsilon,v]) &= \Pi_{F_{z_0}}([-1,0]) F( [\epsilon,0])(v)\\
&= F( [\epsilon,0][-1,0])(v)\\
&= F( [-\epsilon,0])(v)\\
&=f([-\epsilon,v]).
\end{align*}
Let: 
\[((\epsilon, 1), 1),  ((1, g),1) \in \widehat{\SL}_2^{\pm}(\mathbb R).\] 
\begin{align*}
((\epsilon, 1), 1) ((1, g),1) &=((\epsilon, g), \widehat{C}_{z_0}((\epsilon, 1),(1, g)))\\
&=((\epsilon, g), 1)\\
&=\Big((1, g^{s(\epsilon)}),  \widehat{C}_{z_0}((1, g^{s(\epsilon)}),(\epsilon, 1))^{-1}\Big)\Big((\epsilon, 1), 1\Big) \\
&=\Big((1, g^{s(\epsilon)}), \nu_{z_0}( \epsilon, g^{s(\epsilon)})^{-1} \widehat{c}_{z_0}(g, 1)^{-1}\Big)\Big((\epsilon, 1), 1\Big)\\
&=\Big((1, g^{s(\epsilon)}), \nu_{z_0}( \epsilon,g^{s(\epsilon)})^{-1}\Big)\Big((\epsilon, 1), 1\Big).
\end{align*}
3) If   $g=\begin{pmatrix}a & -b\\ b& a\end{pmatrix}\in \SO_2(\R)$ corresponding to  $u=a+bi \in  \U(\C)$ , we have: $\nu_{z_0}(1,g)=1$. 
 \[\nu_2(-1, g)=\left\{ \begin{array}{cl} -1 & \textrm{ if } g= \begin{pmatrix} -1& 0\\0 &-1\end{pmatrix},\\ 1 & \textrm{ otherwise},\end{array}\right. \qquad\qquad \tfrac{\overline{s}(g^{h_{-1}})}{\overline{s}(g)}=\left\{ \begin{array}{cl} 1 & \textrm{ if } g= \begin{pmatrix} -1& 0\\0 &-1\end{pmatrix},\\ a-bi & \textrm{ otherwise},\end{array}\right.\] \[\nu_{z_0}(-1,g^{h_{-1}})=\nu_2( -1,g^{h_{-1}} ) \tfrac{\overline{s}(g^{h_{-1}})}{\overline{s}(g)}=a-bi.\] 
 
 \begin{align*}
\Pi_{F_{z_0}}(g)f([\epsilon,v]) &= \Pi_{F_{z_0}}([g,0]) F( [\epsilon,0])(v)\\
&= F( [\epsilon,0][g,0])(v)\\
&= F([ ((\epsilon,g),1),0])(v)\\
&= F([\Big((1, g^{s(\epsilon)}), \nu_{z_0}( \epsilon, g)^{-1}\Big)\Big((\epsilon, 1), 1\Big),0])(v)\\
&=\nu_{z_0}( \epsilon, g^{s(\epsilon)})^{-1}\pi_{F_{z_0}}(1, g^{s(\epsilon)})F([\Big((\epsilon, 1), 1\Big),0])(v)\\
&\stackrel{(\ref{unitact})}{=}\left\{ \begin{array}{cl}e^{-\pi i (-bi+a) bv^2}f([1,v(-bi+a)]), &  \textrm{ if } \epsilon=1, \\
(bi+a)e^{\pi i (bi+a) bv^2} f([-1,v(bi+a)]), &  \textrm{ if } \epsilon=-1. \end{array}\right.
\end{align*}
4) If   $g=u(b)\in \SL_2(\R)$, $\nu_{z_0}(1,g)=1$ and 
\[\nu_{z_0}(-1,g)=\nu_2( -1,g ) \tfrac{\overline{s}(g)}{\overline{s}(g^{h_{-1}})}=\nu_2( -1,g )=1=\nu_2( -1, g^{s(-1)} ).\] 
 \begin{align*}
\Pi_{F_{z_0}}(g)f([\epsilon,v]) &=\nu_{z_0}( \epsilon, g^{s(-1)})^{-1}\pi_{F_{z_0}}(1, g^{s(\epsilon)})F([\Big((\epsilon, 1), 1\Big),0])(v).
\end{align*}
By Example \ref{example232}, we have:
\begin{equation}
 \Pi_{F_{z_0}}(g) f([1,v])=\tfrac{1}{\sqrt{1-\tfrac{b}{2}i}}\int_{\R^2} e^{-\pi i(2i+b)^{-1} (v-\overline{v'})^2} e^{-2\pi (v'_y)^2} f([1,v']) dv'_x dv'_y,
 \end{equation}
 \begin{align}
 \Pi_{F_{z_0}}(g) f([-1,v])
 &=\tfrac{1}{\sqrt{1+\tfrac{b}{2}i}}\int_{\R^2} e^{-\pi i(2i-b)^{-1} (v-\overline{v'})^2} e^{-2\pi (v'_y)^2} f([-1,v']) dv'_x dv'_y.
 \end{align}
 5) If   $g=h(a)\in \SL_2(\R)$, $\nu_{z_0}(1,g)=1$ and $\nu_{z_0}(-1,g^{s(-1)})=\nu_{z_0}(-1,g)=\nu_2( -1,g ) \tfrac{\overline{s}(g)}{\overline{s}(g^{h_{-1}})}=\nu_2( -1,g )=(-1,a)_{\R}$. 
 \begin{align*}
\Pi_{F_{z_0}}(g)f([\epsilon,v]) &=\nu_{z_0}( \epsilon, g^{s(\epsilon)})^{-1}\pi_{F_{z_0}}(1, g^{s(\epsilon)})F([\Big((\epsilon, 1), 1\Big),0])(v).
\end{align*}
 By Example \ref{example232}, we have:
 \begin{equation}
 \Pi_{F_{z_0}}(g) f([1,v])=|a|^{1/2}\big(\tfrac{2}{1+a^2}\big)^{1/2}\int_{\R^2} e^{-\pi (1+a^2)^{-1}(v-\overline{v'}a)^2}f([1,v'])e^{-2\pi (v'_y)^2}dv'_xdv'_y,
 \end{equation}
 \begin{align}
 \Pi_{F_{z_0}}(g) f([-1,v])
 &=(-1,a)_{\R}|a|^{1/2}\big(\tfrac{2}{1+a^2}\big)^{1/2}\int_{\R^2} e^{-\pi (1+a^2)^{-1}(v-\overline{v'}a)^2}f([-1,v'])e^{-2\pi (v'_y)^2}dv'_xdv'_y.
 \end{align}
 \begin{example}\label{exm2}
 Let   $g=\begin{pmatrix}a & -b\\ b& a\end{pmatrix}\in \SO_2(\R)$, corresponding to $u=bi+a\in \U(\C)$. Define, for $\epsilon\in\{\pm1\}$ and $v\in\C$:
\[
f^{1/2}_+([\epsilon,v])=\delta_{\epsilon,1}\,e^{-\frac{\pi}{2}v^2},\quad
f^{1/2}_-([\epsilon,v])=\delta_{\epsilon,-1}\,e^{-\frac{\pi}{2}v^2},
\]
\[
f^{3/2}_+([\epsilon,v])=\delta_{\epsilon,1}\,\tfrac{i}{2}v\,e^{-\frac{\pi}{2}v^2},\quad
f^{3/2}_-([\epsilon,v])=\delta_{\epsilon,-1}\,\tfrac{i}{2}v\,e^{-\frac{\pi}{2}v^2}.
\]
  Then:
\begin{itemize}
\item[(1)] $\Pi_{F_{z_0}}(g)f^{1/2}_+=f^{1/2}_+$, $\Pi_{F_{z_0}}(g)f^{3/2}_+=\overline{u}f^{3/2}_+$,  $\Pi_{F_{z_0}}(g)f^{1/2}_-=uf^{1/2}_-$,  $\Pi_{F_{z_0}}(g)f^{3/2}_-=u^2f^{3/2}_-$.
\item[(2)] $\Pi_{F_{z_0}}(h_{-1}) f^{1/2}_+=f^{1/2}_-$ and $\Pi_{F_{z_0}}(h_{-1})f^{3/2}_+=f^{3/2}_-$.
\end{itemize}
\end{example}
\begin{proof}
Let $f_1(v)=e^{-\tfrac{\pi}{2}v^2}$ and $f_2(v)=\tfrac{1}{2}ive^{-\tfrac{\pi}{2}v^2}$.\\
\[ \nu_{z_0}(-1, g^{h_{-1}})^{-1}=u.\]
1) \begin{align*}
\Pi_{F_{z_0}}(g) f^{1/2}_+([1,v])&=\pi_{F_{z_0}}(g)f_1(v)=f_1(v)=f^{1/2}_+([1,v]).
\end{align*}
\begin{align*}
\Pi_{F_{z_0}}(g)f^{1/2}_-([-1,v])&=\Pi_{F_{z_0}}([-1,1]) \pi_{F_{z_0}}(g^{h_{-1}}) \nu_{z_0}(-1, g^{h_{-1}})^{-1}f_1([1,v])\\
&=\Pi_{F_{z_0}}([-1,1]) u f_1([1,v])=uf^{1/2}_-([-1,v]).
\end{align*}
 \begin{align*}
\Pi_{F_{z_0}}(g) f^{3/2}_+([1,v])&=\pi_{F_{z_0}}(g)f_2(v)=\overline{u}f_2(v)=\overline{u}f^{3/2}_+([1,v]).
\end{align*}
\begin{align*}
\Pi_{F_{z_0}}(g)f^{3/2}_-([-1,v])&=\Pi_{F_{z_0}}([-1,1]) \pi_{F_{z_0}}(g^{h_{-1}}) uf_2(v)= \Pi_{F_{z_0}}([-1,1]) u^2 f_2(v)=u^2f^{3/2}_-([-1,v]).
\end{align*}
2) It follows from Eq. \ref{z_01}.
\end{proof}
Let $\mathcal{D}^+=\Span_{\C}\{f^{1/2}_+, f^{1/2}_-\}$ and  $\mathcal{D}^-=\Span_{\C}\{f^{3/2}_+, f^{3/2}_-\}$.  Let $\delta^+=\Ind_{\widehat{\SO}_2(\R)}^{\widehat{\Oa}_2(\R)} (1\cdot 1_{\mathbb{T}})$ and $\delta^-=\Ind_{\widehat{\SO}_2(\R)}^{\widehat{\Oa}_2(\R)} (\overline{u}\cdot 1_{\mathbb{T}})$.
\begin{lemma}\label{deltapm2}
The actions of $\Pi_{F_{z_0}}(\widehat{\Oa}_2(\R))$ on $\mathcal{D}^+$ and $\mathcal{D}^-$ are isomorphic to $\delta^+$ and $\delta^-$,  respectively.
\end{lemma}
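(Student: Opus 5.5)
The plan is to exhibit, on each of $\mathcal{D}^+$ and $\mathcal{D}^-$, an explicit basis in which $\Pi_{F_{z_0}}(\widehat{\Oa}_2(\R))$ acts by the same matrices as the standard realization of the induced representation. I will use the decomposition $\widehat{\Oa}_2(\R)=\widehat{\SO}_2(\R)\sqcup [h_{-1},1]\widehat{\SO}_2(\R)$, where $[h_{-1},1]$ is a coset representative. By Frobenius reciprocity, or directly from the Mackey description of a representation induced from an index-two subgroup, $\Ind_{\widehat{\SO}_2(\R)}^{\widehat{\Oa}_2(\R)}\chi$ can be realized on $\C e\oplus\C e'$. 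Here $e$ spans a line on which $\widehat{\SO}_2(\R)$ acts by $\chi$, and $e'=[h_{-1},1]e$. On $e'$ the subgroup acts by the conjugate character $\chi^{h_{-1}}(\widehat k)=\chi([h_{-1},1]^{-1}\widehat k[h_{-1},1])$, and $[h_{-1},1]$ interchanges $e$ and $e'$ up to the scalar $\chi([h_{-1},1]^2)$.

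\textbf{Step 1.} I take $e=f^{1/2}_+$ for $\delta^+$ and $e=f^{3/2}_+$ for $\delta^-$. Example~\ref{exm2}(1) gives the $\widehat{\SO}_2(\R)$-action on these vectors: trivially on $f^{1/2}_+$ and by $\overline u$ on $f^{3/2}_+$. I also have to account for the central $\mathbb{T}$. A central element $[1,t]$ acts in the Fock model by a scalar, namely $t$ or $t^{-1}$ depending on the convention of Remark~\ref{equalityonab}. I will match this with the factor $1_{\mathbb{T}}$ in the definition of $\delta^\pm$, where $1_{\mathbb{T}}$ is read as the identity character of the centre. So $\C f^{1/2}_+\cong 1\cdot 1_{\mathbb T}$ and $\C f^{3/2}_+\cong \overline u\cdot 1_{\mathbb T}$ as $\widehat{\SO}_2(\R)$-modules.

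\textbf{Step 2.} By Example~\ref{exm2}(2), $\Pi_{F_{z_0}}(h_{-1})$ sends $f_+$ to $f_-$, so $\mathcal D^\pm$ is generated by $e$ under $\widehat{\Oa}_2(\R)$. Frobenius reciprocity then gives a nonzero $\widehat{\Oa}_2(\R)$-map $\delta^\pm\to\mathcal D^\pm$, sending the function supported on $\widehat{\SO}_2(\R)$ to $e$. This map is surjective because $e$ generates. Both spaces are $2$-dimensional, so the map is an isomorphism. This step uses only that $f_+$ and $f_-$ are linearly independent, which holds because their supports in $\mu_2\times\C$ are disjoint.

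\textbf{Main obstacle.} The delicate point is checking that the lift of $h_{-1}$ and the central twists are compatible. Concretely, $\Pi_{F_{z_0}}([h_{-1},1])^2$ must act on $e$ by the scalar that $\chi$ assigns to $[h_{-1},1]^2=[1,\widehat C_{z_0}(h_{-1},h_{-1})]$. By Lemma~\ref{isoGLR}-type computations, $\widehat C_{z_0}(h_{-1},h_{-1})=\nu_{z_0}(-1,I)=1$, which is consistent with the formula $\Pi_{F_{z_0}}(h_{-1})^2=\mathrm{id}$ read off from (\ref{z_01}). I will also cross-check the conjugate characters on $f_-$ against Example~\ref{exm2}(1), which gives $u$ and $u^2$ there. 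The factor $u=\nu_{z_0}(-1,g^{h_{-1}})^{-1}$ arises from the nontrivial cocycle. This shows that on $\widehat{\SO}_2(\R)$ the conjugate of $\chi$ is $\chi\cdot u$ rather than $\overline\chi$ (since $\overline{\overline u}\cdot u=u^2$). This is exactly what the induced representation predicts once the $\widehat{\ }$-cover conjugation is used, so no further discrepancy should arise.
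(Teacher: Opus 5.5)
Your argument is correct, but it takes a different route from the paper.

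The paper realizes $\delta^{\pm}$ concretely on the two functions supported on $\widehat{\SO}_2(\R)$ and $\widehat{\SO}_2(\R)\widehat h_{-1}$. It uses the conjugation formula $\widehat h_{-1}^{-1}[g,1]\widehat h_{-1}=[g^{h_{-1}},u]$ to get the matrices $\diag(1,u)$ and $\diag(\overline u,u^2)$, together with the swap for $h_{-1}$. It then checks by hand that $f_+\mapsto e_1$, $f_-\mapsto e_2$ intertwines on the generators $\SO_2(\R)$ and $h_{-1}$, comparing with Example~\ref{exm2}.

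You instead apply Frobenius reciprocity to the index-two subgroup. You need only three facts: the character by which $\widehat{\SO}_2(\R)$ acts on $\C f_+$; that $f_+$ and $\Pi_{F_{z_0}}(h_{-1})f_+=f_-$ span a $2$-dimensional space; and a dimension count. This is more economical. Since $\Pi_{F_{z_0}}$ is a genuine representation of $\widehat{\SL}^{\pm}_2(\R)$, equivariance of the Frobenius map is automatic. So the check in your ``main obstacle'' paragraph (the value of $\widehat C_{z_0}(h_{-1},h_{-1})$ and the factors $u$, $u^2$ on $f_-$) is only a consistency check, not a step the proof needs. What the paper's route buys is the explicit matrices in the basis $\{f_+,f_-\}$, which the paper uses later when it writes $\overline{\delta}^{\pm}$ and $J_{\delta^{\pm}}$ as explicit $2\times 2$ matrices.

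Two small points to tighten.

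First, resolve your hedge about the centre. By Lemma~\ref{paiFz}(2) and Remark~\ref{equalityonab}, $\pi_{F_{z_0}}(g_1)\pi_{F_{z_0}}(g_2)=\widehat c_{z_0}(g_1,g_2)\,\pi_{F_{z_0}}(g_1g_2)$. This forces $[g,t]\mapsto t\,\pi_{F_{z_0}}(g)$, so $[1,t]$ acts by $t$. That matches the paper's reading of $1\cdot 1_{\mathbb T}$ via $e_1([g,\epsilon])=\epsilon$.

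Second, the conjugate character is not ``$\chi\cdot u$''. It is $\widehat g\mapsto \chi([g^{h_{-1}},u])=\chi([g^{h_{-1}},1])\,u$, i.e.\ the naive conjugate $\overline\chi$ times $u$. Your parenthetical $\overline{\overline u}\cdot u=u^2$ shows this is what you intended.
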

\begin{proof}
$\widehat{\Oa}_2(\R)=\widehat{\SO}_2(\R) \sqcup \widehat{\SO}_2(\R) \widehat{h}_{-1}$, for $\widehat{h}_{-1}= [h_{-1},1]$. Let $g=\begin{pmatrix}a & -b\\ b& a\end{pmatrix}\in \SO_2(\R)$,  $\widehat{g}=[g, \epsilon]\in \widehat{\SO}_2(\R)$ and $u=bi+a$.\\
a) Let $e_1$ and $e_2$ denote  the complex  functions on $\widehat{\Oa}_2(\R)$ such that (1) $\supp e_1=\widehat{\SO}_2(\R)$ and $\supp e_2=\widehat{\SO}_2(\R)\widehat{h}_{-1}$, (2) $e_1(1)=1=e_2(\widehat{h}_{-1})$, (3) $ e_1(\widehat{g})=\epsilon=e_2(\widehat{g}\widehat{h}_{-1})$. Let $V^+$ be the vector space spanned by $e_1,e_2$. The representation $\delta^+$ can be realized  on $V^+$. Let $ \mathcal{A}$ be the $\C$-linear map from $ \mathcal{D}^+$ to $V^+$ by sending $ f^{1/2}_+ $ to $e_1$ and $ f^{1/2}_-$ to $e_2$. Let us check that it  indeed defines an  intertwining operator.    
\begin{align*}
\delta^+(g) e_2(\widehat{h}_{-1})&=e_2(\widehat{h}_{-1}[g,1])\\
&=e_2([g^{h_{-1}},\widehat{C}_{z_0}(h_{-1},g) \widehat{C}_{z_0}(g^{h_{-1}}, h_{-1})^{-1} ]\widehat{h}_{-1})\\
&=e_2([g^{h_{-1}}, \nu_{z_0}(-1,g^{s(-1)})^{-1}]\widehat{h}_{-1})=\nu_{z_0}( -1,g^{s(-1)})^{-1}=u;
\end{align*}
\[\widehat{h}_{-1}\widehat{h}_{-1}=[h_{-1},1][h_{-1},1]=[1, \widehat{C}_{z_0}(h_{-1},h_{-1})]=[1,1];\]
\[\delta^+(g) e_1=e_1, \delta^+(g) e_2=u e_2,  \delta^+(h_{-1}) e_1=e_2,  \delta^+(h_{-1})e_2=e_1;\]
\[\mathcal{A}(\Pi_{F_{z_0}}(g) f^{1/2}_+)=\mathcal{A}(f^{1/2}_+)=e_1=\delta^+(g) e_1;\]
\[\mathcal{A}(\Pi_{F_{z_0}}(g) f^{1/2}_-)=\mathcal{A}(uf^{1/2}_-)=ue_2=\delta^+(g) e_2;\] 
\[\mathcal{A}(\Pi_{F_{z_0}}(h_{-1}) f^{1/2}_+)=\mathcal{A}(f^{1/2}_-)=e_2=\delta^+(h_{-1}) e_1;\]   
\[\mathcal{A}(\Pi_{F_{z_0}}(h_{-1}) f^{1/2}_-)=\mathcal{A}(f^{1/2}_+)=e_1=\delta^+(h_{-1}) e_2.\]  
b) Let $e_3$ and $e_4$ denote  the complex  functions on $\widehat{\Oa}_2(\R)$ such that (1) $\supp e_3=\widehat{\SO}_2(\R)$ and $\supp e_4=\widehat{\SO}_2(\R)\widehat{h}_{-1}$, (2) $e_3(1)=1=e_4(\widehat{h}_{-1})$, (3) $ e_3(\widehat{g})=\epsilon \overline{u}=e_4(\widehat{g}\widehat{h}_{-1})$. Let $V^-$ be the vector space spanned by $e_3,e_4$. The representation $\delta^-$ can be realized  on $V^-$. Let $ \mathcal{B}$ be the $\C$-linear map from $ \mathcal{D}^-$ to $V^-$ by sending $ f^{3/2}_+ $ to $e_3$ and $ f^{3/2}_- $ to $e_4$. Let us check that it  indeed defines an  intertwining operator.    
\begin{align*}
\delta^-(g) e_4(\widehat{h}_{-1})&=e_4(\widehat{h}_{-1}[g,1])\\
&=e_4([g^{h_{-1}},\widehat{C}_{z_0}(h_{-1},g) \widehat{C}_{z_0}(g^{h_{-1}}, h_{-1})^{-1} ]\widehat{h}_{-1})\\
&=e_4([g^{h_{-1}}, \nu_{z_0}(-1,g^{s(-1)})^{-1}]\widehat{h}_{-1})=u\nu_{z_0}( -1,g^{s(-1)})^{-1}=u^2;
\end{align*}
\[\delta^-(g) e_3=\overline{u}e_3, \delta^-(g) e_4=u^2 e_4,  \delta^-(h_{-1}) e_3=e_4,  \delta^-(h_{-1})e_4=e_3;\]
\[\mathcal{B}(\Pi_{F_{z_0}}(g) f^{3/2}_+)=\mathcal{B}(\overline{u}f^{3/2}_+)=\overline{u}e_3=\delta^-(g) e_3;\]
\[\mathcal{B}(\Pi_{F_{z_0}}(g) f^{3/2}_-)=\mathcal{B}(u^2f^{3/2}_-)=u^2e_4=\delta^-(g) e_4;\] 
\[\mathcal{B}(\Pi_{F_{z_0}}(h_{-1}) f^{3/2}_+)=\mathcal{B}(f^{3/2}_-)=e_4=\delta^-(h_{-1}) e_3;\]   
\[\mathcal{B}(\Pi_{F_{z_0}}(h_{-1}) f^{3/2}_-)=\mathcal{B}(f^{3/2}_+)=e_3=\delta^-(h_{-1}) e_4.\]  
\end{proof}
\begin{lemma}\label{deltapm}
Both $\delta^+$ and $\delta^-$ are irreducible representations.
\end{lemma}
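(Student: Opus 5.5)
We prove irreducibility of $\delta^{\pm}$ by Mackey's criterion for induction from the index-two normal subgroup $\widehat{\SO}_2(\R)$ of $\widehat{\Oa}_2(\R)=\widehat{\SO}_2(\R)\sqcup\widehat{\SO}_2(\R)\widehat{h}_{-1}$. Since $\widehat{\SO}_2(\R)$ is normal of index two, $\Ind_{\widehat{\SO}_2(\R)}^{\widehat{\Oa}_2(\R)}\chi$ is irreducible if and only if the character $\chi$ differs from its conjugate $\chi^{\widehat{h}_{-1}}\colon \widehat{g}\mapsto \chi(\widehat{h}_{-1}\widehat{g}\widehat{h}_{-1}^{-1})$. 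This follows from Frobenius reciprocity, as in the proof of Proposition \ref{irrdgamma}:
\[
\Hom(\delta,\delta)\simeq \C\oplus \Hom_{\widehat{\SO}_2(\R)}(\chi,\chi^{\widehat{h}_{-1}}).
\]

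The key step is to compute the conjugated character. The realization in the proof of Lemma \ref{deltapm2} already supplies this. With $\widehat{g}=[g,1]$ and $u=a+bi$, the computation $\widehat{h}_{-1}[g,1]=[g^{h_{-1}},\nu_{z_0}(-1,g^{s(-1)})^{-1}]\widehat{h}_{-1}$ gives $\delta^{+}(g)e_2=u\,e_2$ while $\delta^{+}(g)e_1=e_1$. Similarly, $\delta^{-}(g)e_3=\overline{u}e_3$ and $\delta^{-}(g)e_4=u^2e_4$. Thus the restriction of $\delta^{+}$ to $\widehat{\SO}_2(\R)$ is $\chi_1\oplus\chi_2$, where $\chi_1([g,\epsilon])=\epsilon$ and $\chi_2([g,\epsilon])=\epsilon u$. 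The restriction of $\delta^{-}$ is $\chi_3\oplus\chi_4$, where $\chi_3=\epsilon\overline{u}$ and $\chi_4=\epsilon u^2$. The central $\mathbb{T}$ acts by the same scalar in both summands.

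It remains to check that the two characters in each pair differ. For $\delta^{+}$, take any $g$ with $b\neq 0$, say $u=i$. Then $\chi_1=1\neq i=\chi_2$, so the pair is distinct. For $\delta^{-}$, take $u=i$. Then $\chi_3=-i$ and $\chi_4=-1$, again distinct. Hence $\Hom_{\widehat{\SO}_2(\R)}(\chi,\chi^{\widehat{h}_{-1}})=0$ in both cases, and $\Hom(\delta^{\pm},\delta^{\pm})\simeq\C$, which gives irreducibility.

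The main obstacle is justifying that $e_2$, respectively $e_4$, really realizes the conjugate character $\chi^{\widehat{h}_{-1}}$. This requires the precise value of the cocycle term $\nu_{z_0}(-1,g^{s(-1)})^{-1}=u$, including the boundary case $g=-I$, where $\nu_2(-1,-I)=-1$ compensates the $\overline{s}$-ratio. I would take this value from the computation in Section \ref{FM}, item 3), which is already established. An alternative check avoids cocycles altogether: via Lemma \ref{deltapm2}, any nonzero $\widehat{\Oa}_2(\R)$-stable subspace of $\mathcal{D}^{\pm}$ is a sum of $\widehat{\SO}_2(\R)$-eigenlines, and the eigenvalues are distinct. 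So it must contain $f_+$ or $f_-$, and then $\Pi_{F_{z_0}}(h_{-1})$ swaps the two, so the subspace is everything.
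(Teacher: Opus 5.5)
Your proof is correct and follows essentially the paper's argument. Both apply the index-two Mackey criterion for induction from $\widehat{\SO}_2(\R)$ and check that the inducing character differs from its $\widehat{h}_{-1}$-conjugate, using $\widehat{h}_{-1}^{-1}[g,\epsilon]\widehat{h}_{-1}=[g^{h_{-1}},u\epsilon]$, which you read off from the realization in Lemma~\ref{deltapm2}. Your explicit witness $u=i$ is slightly cleaner than the paper's unqualified ``$u^2\neq\overline{u}$'', which fails at cube roots of unity even though a single witness is all that is needed.
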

\begin{proof}
$( \widehat{h}_{-1})^{-1}\widehat{g}\widehat{h}_{-1}=[g^{h_{-1}}, u]$. Hence:
\[(1\cdot 1_{\mathbb{T}})^{\widehat{h}_{-1}}(\widehat{g})=(1\cdot 1_{\mathbb{T}})([g^{h_{-1}}, u])=u\neq (1\cdot 1_{\mathbb{T}})(\widehat{g}),\]
\[(\overline{u}\cdot 1_{\mathbb{T}})^{\widehat{h}_{-1}}(\widehat{g})=(\overline{u}\cdot 1_{\mathbb{T}})([g^{h_{-1}}, u])=u^2\neq (\overline{u}\cdot 1_{\mathbb{T}})(\widehat{g}).\]
\end{proof}
\section{Intertwining operators for $\SL_2^{\pm}(\R)$ case: Schrödinger model to Lattice model}
In this section, $\psi=\psi_0$ and $z=z_0$.
\subsection{Lattice}(cf. \cite{De, DeSe})
Let $w_1=ae_1+be_1^{\ast}$, $w_2=ce_1+de_1^{\ast}$  be two elements   of $W$. Then $\Za w_1+\Za w_2$ forms a  lattice in $W$ iff $w_1$ and $w_2$ are $\R$-linear independence iff $\begin{pmatrix}a & b\\ c & d \end{pmatrix}\in \GL_2(\R)$. Moreover, it is a $\psi$-self dual lattice iff $\langle w_1, w_2\rangle=1$ or $-1$ iff $\begin{pmatrix}a & b\\ c & d \end{pmatrix} \in \SL_2^{\pm}(\R)$. Let $\mathcal{L}_{\psi}$ denote the set of all $\psi$-self dual lattices in $W$. We can define a right $\SL_2^{\pm}(\R)$-action on $\mathcal{L}_{\psi}$ as follows:
$$\begin{array}{rcl}
  \mathcal{L}_{\psi} \times \SL_2^{\pm}(\R)  &\longrightarrow &\mathcal{L}_{\psi} ; \\
  ( L= \Za w_1  \oplus \Za w_2, g)& \longmapsto & Lg=\Za  (w_1g)  \oplus \Za (w_2g).
\end{array}$$
Let  $L_0=\Za e_1  \oplus \Za e_1^{\ast} \in \mathcal{L}_{\psi}$. Then:
\begin{itemize}
\item  $\mathcal{L}_{\psi}=\{L_0 g \mid g\in \SL_2^{\pm}(\R)\}$.
\item $L_0g=L_0$ iff $g\in \SL_2^{\pm }(\Z)$.
\item There exists a bijection: $  \SL_2^{\pm }(\Z) \setminus \SL_2^{\pm}(\R) \longrightarrow  \mathcal{L}_{\psi}; g \longmapsto L_0 g$.
\end{itemize}
\subsection{Lagrangian Grassmanian}(cf.\cite{De})
By Section \ref{FOCKMODEL1}, we can identity $\C$ with $W$ by $a+ib \longrightarrow ae_1+be_1^{\ast}$, and define  $\langle z,z'\rangle_{\C}=\Im(\overline{z}z')$. A Lagrangian plane   of $(\C, \langle, \rangle_{\C})$ is a set $\R z$, for some $z \in \C^{\times}$. Let $\varLambda $ denote the set of all  Lagrangians in $\C$. Then there exists a
left $\C^{\times}$-action on $\Lambda$ given as follows:
 $$\begin{array}{rcl}
  \C^{\times}\times \Lambda &\longrightarrow &\Lambda ; \\
  ( t, \R z)& \longmapsto & t(\R z)=\R t(z).
\end{array}$$
Then $\C^{\times}$ acts transitively on $\Lambda$, and there exists a bijective map:
$$\C^{\times}/\R^{\times} \longrightarrow \Lambda; z \longmapsto \R z.$$
Note that $\C^{\times}/\R^{\times} \simeq \U(\C)/\mu_2$. 
\subsection{Two models} Let $\Pi_{\psi}$ denote the Heisenberg representation of $\Ha^{\pm}(W)$ defined in Sections ~\ref{chap4sch}, and ~\ref{chap4la}.  
It can be realized on  
\[
\mathcal{H}^{\pm}(Y)=\Ind_{Y\times\R}^{\Ha^{\pm}(W)}\psi_Y
\quad\text{and}\quad
\mathcal{H}^{\pm}(L)=\Ind_{\Ha(L)}^{\Ha^{\pm}(W)}\psi_L,
\]  
for any $Y\in\Lambda$ and $L\in\mathcal{L}_{\psi}$, where $\psi_Y$ and $\psi_L$ extend the character $\psi$ on $\R$.  In this section, we only consider the following case:

\begin{itemize}
 \item  $Y^{\ast}=X^{\ast}$, $Y=X$, and $L=\Z e_1\oplus \Z e_1^{\ast}$.
\end{itemize}

  Note that the actions of $\Ha^{\pm}(W)$ on $\mathcal{H}^{\pm}(X^{\ast})$ and $\mathcal{H}^{\pm}(L)$ are given by the formulas (\ref{chap4representationsp21}) (\ref{chap4representationsp22}) and (\ref{chap4www1}) (\ref{chap4www2}) respectively. By \cite[pp.164-165]{We}, or \cite[pp.142-145]{LiVe}, there exists  a pair of explicit isomorphisms between   $\mathcal{H}^{\pm}(X^{\ast})\simeq L^2(\mu_2\times X)$ and  $\mathcal{H}^{\pm}(L)$, given as follows:
 \begin{equation}\label{chap6eq7}
 \begin{split}
\theta_{L, X^{\ast}}(f')([\epsilon, w])&=\sum_{l\in L/L\cap X^{\ast}} f'([\epsilon,  w+l])\psi(\epsilon\tfrac{\langle l, w\rangle}{2}+\epsilon\tfrac{\langle x_{l}, x^{\ast}_l\rangle}{2})\\
&= \sum_{l\in L\cap X}f'([\epsilon,  w+l])\psi(\epsilon\tfrac{\langle l, w\rangle}{2})\\
&=\sum_{l\in L\cap X}f'([\epsilon, x+l])\psi(\epsilon\langle l, w\rangle) \psi(\epsilon\tfrac{\langle x, x^{\ast}\rangle}{2}), 
 \end{split}
 \end{equation}
 \begin{equation}\label{chap6eq8}
 \begin{split}
 \theta_{X^{\ast}, L}(f)([\epsilon,y])&=  \int_{X^{\ast}/X^{\ast}\cap L} f([1, (\dot{y}^{\ast},0)]\cdot [\epsilon,(y, 0)]) d\dot{y}^{\ast}\\
 &= \int_{X^{\ast}/X^{\ast}\cap L} \psi(\epsilon\tfrac{\langle\dot{y}^{\ast}, y\rangle}{2})f([\epsilon,y+\epsilon\dot{y}^{\ast}]) d\dot{y}^{\ast},
 \end{split}
      \end{equation}
for $ w=x+ x^{\ast} \in W$, $y\in X$,  $f'\in \mathcal{H}^{\pm}(X^{\ast})$ with $f'|_{\mu_2 \times X}\in S(\mu_2\times X)\subseteq L^2(\mu_2 \times X)$,  $f\in L^1(W)\cap \mathcal{H}^{\pm}(L)$. 
\begin{lemma}\label{chap6thein}
$\theta_{L, X^{\ast}}$ and $\theta_{X^{\ast}, L}$ define a pair of inverse intertwining operators between $ \Ind_{\Ha(L)}^{\Ha^{\pm}(W)} \psi_L$ and  $\Ind_{X^{\ast} \times \R}^{\Ha^{\pm}(W)} \psi_{X^{\ast}}$.
\end{lemma}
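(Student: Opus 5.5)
The plan is to reduce to the classical $\SL_2$ statement, checked sheet by sheet in the $\mu_2$-coordinate, using the explicit formulas already established. Both spaces are induced from $\Ha(W)$ to $\Ha^{\pm}(W)=F_2\ltimes \Ha(W)$. A function on $\mu_2\times W$, respectively $\mu_2\times X$, is therefore a pair of functions, the $\epsilon=1$ sheet and the $\epsilon=-1$ sheet.

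First we show that the formulas (\ref{chap6eq7}) and (\ref{chap6eq8}) are well defined, i.e. that they land in the correct spaces. For $\theta_{L,X^{\ast}}(f')$ we check condition (i) of $\mathcal{H}^{\pm}_\psi(L)$: replacing $w$ by $w+l_0$ with $l_0=x_{l_0}+x^{\ast}_{l_0}\in L$ and reindexing the sum over $L\cap X$ produces exactly the factor $\psi(-\epsilon\tfrac{\langle x_{l_0},x^{\ast}_{l_0}\rangle}{2}-\epsilon\tfrac{\langle l_0,w\rangle}{2})$. This uses $\psi(\langle L,L\rangle)=1$ (self-duality) and the equivariance of $f'$ under $X^{\ast}\times\R$ with character $\psi^{\epsilon}$ on the $\epsilon$-sheet, which is formula (\ref{chap4representationsp21}). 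The computation on the $\epsilon$-sheet is literally the classical one with $\psi$ replaced by $\psi^{\epsilon}=\psi_0^{\epsilon}$. Since $L=\Z e_1\oplus\Z e_1^{\ast}$ is $\psi^{-1}$-self-dual as well, each sheet falls under the $\SL_2$ statement of \cite[pp.142--145]{LiVe} (or \cite{We}) for the character $\psi^{\epsilon}$. Convergence of the sum for Schwartz $f'$ and of the integral over the compact quotient $X^{\ast}/X^{\ast}\cap L$ for $f\in L^1$ is standard.

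Next we check intertwining. Since $\Ha^{\pm}(W)$ is generated by $\Ha(W)$ and $[h_{-1},0]$, it suffices to treat these two kinds of elements.

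For $h=(w',t)\in\Ha(W)$, compare (\ref{chap4representationsp21}) with (\ref{chap4www1}). On the $\epsilon$-sheet both are the Heisenberg actions attached to $\psi^{\epsilon}$ (the $\epsilon x^{\ast}$ twist in (\ref{chap4representationsp21}) is absorbed by $h\mapsto h^{s(\epsilon)}$). Intertwining then follows sheetwise from the classical case. Alternatively, one can substitute directly: the translation $x\mapsto x+x'$ commutes with the sum over $L\cap X$, and the phase $\psi(\epsilon t+\epsilon\tfrac{\langle w,w'\rangle}{2})$ comes out after using $\langle l,x'\rangle=0$ for $l,x'\in X$.

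For $h_{-1}$, the Schrödinger side (\ref{chap4representationsp22}) sends $f'([\epsilon,y])\mapsto f'([-\epsilon,y])$, while the lattice side (\ref{chap4www2}) sends $f([\epsilon,x+x^{\ast}])\mapsto f([-\epsilon,x-x^{\ast}])$. Plug $w=x-x^{\ast}$ into (\ref{chap6eq7}) on the $(-\epsilon)$-sheet. The phase $\psi(-\epsilon\langle l,x-x^{\ast}\rangle)\psi(-\epsilon\tfrac{\langle x,-x^{\ast}\rangle}{2})$ equals $\psi(\epsilon\langle l,w\rangle)\psi(\epsilon\tfrac{\langle x,x^{\ast}\rangle}{2})$, because $\langle l,x^{\ast}\rangle$ changes sign together with $\epsilon$. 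So the two sides agree. A similar substitution $\dot y^{\ast}\mapsto-\dot y^{\ast}$ handles $\theta_{X^{\ast},L}$.

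Finally we show the two maps are mutually inverse. On each sheet this is the classical Poisson-summation/unfolding identity for the character $\psi^{\epsilon}$: $\theta_{X^{\ast},L}\circ\theta_{L,X^{\ast}}f'=f'$ by unfolding $\int_{X^{\ast}/X^{\ast}\cap L}\sum_{L\cap X}$, and the reverse composition is the identity by Fourier inversion on $\R/\Z$. Both then extend by density and unitarity to $L^2$.

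The main obstacle I expect is bookkeeping of the $\epsilon$-signs. The formula (\ref{chap6eq8}) has $f([\epsilon,y+\epsilon\dot y^{\ast}])$ with the phase $\psi(\epsilon\tfrac{\langle \dot y^{\ast},y\rangle}{2})$, and the $h_{-1}$-compatibility must be checked carefully so that no stray sign arises. I would first do the $\epsilon=-1$ sheet explicitly via the substitution $\dot y^{\ast}\mapsto-\dot y^{\ast}$, verifying that it matches the $\psi^{-1}$ classical formula.
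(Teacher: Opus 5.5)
Your route differs from the paper's. You split the spaces into the two $\mu_2$-sheets, invoke the classical theorem for $\psi^{\epsilon}$ on each, and check $h_{-1}$ separately. The paper instead verifies intertwining in one computation for an arbitrary $(\epsilon',w',t')\in\Ha^{\pm}(W)$. It uses only the group-element forms of the two maps, in particular $\int f([1,(\dot y^{\ast},0)]\cdot[\epsilon,(y,0)])\,d\dot y^{\ast}$, so intertwining amounts to ``left averaging commutes with right translation''; the inverse property is quoted from Weil and Lion--Vergne. Everything you say about $\theta_{L,X^{\ast}}$ is correct, including your $h_{-1}$ computation.

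The $\theta_{X^{\ast},L}$ half fails at exactly the point you flagged, and your claim that ``a similar substitution $\dot y^{\ast}\mapsto-\dot y^{\ast}$ handles $\theta_{X^{\ast},L}$'' is false for the formula you quote. Take the phase $\psi(\epsilon\tfrac{\langle\dot y^{\ast},y\rangle}{2})$ together with $f([\epsilon,y+\epsilon\dot y^{\ast}])$.
\begin{itemize}
\item In the $h_{-1}$-identity, both sides are integrals of the same function $f([-\epsilon,y-\epsilon\dot y^{\ast}])$, by \eqref{chap4www2} and \eqref{chap4representationsp22}, but against the opposite phases $\psi(\pm\tfrac{\epsilon}{2}\langle\dot y^{\ast},y\rangle)$. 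The substitution $\dot y^{\ast}\mapsto-\dot y^{\ast}$ only moves the argument of $f$, so it cannot reconcile them.
\item On the $(-1)$-sheet this formula is not the classical $\psi^{-1}$ map: after $\dot y^{\ast}\mapsto-\dot y^{\ast}$ the phase has the wrong sign.
\item It is not even well defined on $X^{\ast}/X^{\ast}\cap L$: by condition (i), shifting $\dot y^{\ast}$ by $m\in\Z$ multiplies the integrand by $\psi(my)$.
\end{itemize}
Hence neither your sheetwise appeal to the classical theorem nor your unfolding argument for $\theta_{X^{\ast},L}\circ\theta_{L,X^{\ast}}=1$ applies on that sheet as written.

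The missing step is to derive the sheet formula from the intrinsic definition, not from the second displayed expression in \eqref{chap6eq8}. One has $[1,(\dot y^{\ast},0)]\cdot[\epsilon,(y,0)]=[\epsilon,(y+\epsilon\dot y^{\ast},\,-\tfrac{\epsilon}{2}y\dot y^{\ast})]$. The centre acts on the $\epsilon$-sheet through $\psi^{\epsilon}$, i.e.\ $f([\epsilon,(w,t)])=\psi(\epsilon t)f([\epsilon,w])$, as in \eqref{chap4www1}. So the correct phase is $\psi(\tfrac12\langle\dot y^{\ast},y\rangle)$ with no $\epsilon$. For $\epsilon=-1$ the displayed expression is a sign slip, and the paper's own proof never uses it. With the corrected phase:
\begin{itemize}
\item the integrand is $X^{\ast}\cap L$-periodic;
\item the $(-1)$-sheet becomes the classical $\psi^{-1}$ map after $\dot y^{\ast}\mapsto-\dot y^{\ast}$;
\item the $h_{-1}$-check is a tautology, since both sides equal $\int\psi(\tfrac12\langle\dot y^{\ast},y\rangle)\,f([-\epsilon,y-\epsilon\dot y^{\ast}])\,d\dot y^{\ast}$.
\end{itemize}
The rest of your argument then goes through, including the Poisson-summation proof of the inverse property, which the paper only cites.
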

\begin{proof}
Let $g=(\epsilon', w',t')\in \Ha^{\pm}(W)$, with $w'=x'+x^{\ast'} \in X\oplus X^{\ast}$. 
\begin{align*}
&\theta_{L, X^{\ast}}(\Pi_{X^{\ast},\psi}(g)f')([\epsilon,w])\\
&=\sum_{l\in L\cap X}\Pi_{X^{\ast},\psi}(g)(f')([\epsilon, x+l])\psi(\epsilon\langle l, w\rangle) \psi(\epsilon\tfrac{\langle x, x^{\ast}\rangle}{2}) \\
&=\sum_{l\in L\cap X}f'([\epsilon,  x+l][\epsilon', w',t'])\psi(\epsilon\langle l, w\rangle) \psi(\epsilon\tfrac{\langle x, x^{\ast}\rangle}{2})\\
&=\sum_{l\in L\cap X}f'([\epsilon\epsilon', x+l+w', t'+\tfrac{\langle x+l, w'\rangle}{2}] )\psi(\epsilon\langle l, w\rangle) \psi(\epsilon\tfrac{\langle x, x^{\ast}\rangle}{2})\\
&=\sum_{l\in L\cap X}\psi(\epsilon  \epsilon' t'+\epsilon \epsilon' \langle x+l, w'\rangle)\psi(\epsilon \epsilon'  \tfrac{\langle x', w'\rangle}{2})f'([\epsilon \epsilon',x+l+x'])\psi(\epsilon\langle l, w\rangle) \psi(\epsilon\tfrac{\langle x, x^{\ast}\rangle}{2})\\
&=\sum_{l\in L\cap X}\psi(\epsilon  \epsilon' t'+\epsilon \epsilon' \langle x, w'\rangle)\psi(\epsilon \epsilon'  \tfrac{\langle x', w'\rangle}{2})f'([\epsilon \epsilon', x+l+x'])\psi(\epsilon\epsilon'\langle l, \epsilon'x^{\ast}+x^{\ast'} \rangle) \psi(\epsilon\epsilon'\tfrac{\langle x, \epsilon'x^{\ast}\rangle}{2})\\
&=\sum_{l\in L\cap X}\psi(\epsilon  \epsilon' t'+\epsilon \epsilon' [\tfrac{\langle x, x^{\ast'}\rangle}{2}-\tfrac{\langle x', \epsilon' x^{\ast}\rangle}{2}])f'([\epsilon \epsilon',x+l+x'])\psi(\epsilon\epsilon'\langle l, \epsilon'x^{\ast}+x^{\ast'} \rangle) \psi(\epsilon\epsilon'\tfrac{\langle x+x', x^{\ast'}+\epsilon'x^{\ast}\rangle}{2})\\
&=\psi(\epsilon  \epsilon' t'+\epsilon \epsilon' \tfrac{\langle w^{s(\epsilon')},w'\rangle}{2})\theta_{L, X^{\ast}}(f')([\epsilon \epsilon', w^{s(\epsilon')}+w'])\\
&=\theta_{L, X^{\ast}}(f')([\epsilon,w][\epsilon', w',t'])\\
&=\Pi_{L, \psi}(g)[\theta_{L, X^{\ast}}(f')]([\epsilon,w]).
\end{align*}
\begin{align*}
&\theta_{X^{\ast}, L}(\Pi_{L, \psi}(g)f)([\epsilon,y])\\
&=  \int_{X^{\ast}/X^{\ast}\cap L} f([1, (\dot{y}^{\ast},0)]\cdot [\epsilon,(y, 0)] \cdot [\epsilon',( w',t')]) d\dot{y}^{\ast}\\
&= \int_{X^{\ast}/X^{\ast}\cap L} f([1, (\dot{y}^{\ast},0)]\cdot [\epsilon\epsilon',(y+w', t'+\tfrac{\langle y, w'\rangle}{2})]) d\dot{y}^{\ast}\\
&= \int_{X^{\ast}/X^{\ast}\cap L} f\Big([1, (\dot{y}^{\ast},0)] \cdot [1,(\epsilon\epsilon' x^{\ast'}, \epsilon\epsilon' (t'+  \langle y, x^{\ast'}\rangle + \tfrac{\langle x', x^{\ast'}\rangle}{2}))]\cdot [\epsilon\epsilon',y+x']\Big) d\dot{y}^{\ast}\\
&=\psi( \epsilon\epsilon' (t'+  \langle y, x^{\ast'}\rangle + \tfrac{\langle x', x^{\ast'}\rangle}{2})) \theta_{X^{\ast}, L}(f)([\epsilon\epsilon',y+x'])\\
&=\Pi_{X^{\ast}, \psi}(g)\theta_{X^{\ast}, L}(f)([\epsilon,y]).
\end{align*}
\end{proof}

For $g\in  \SL_2^{\pm}(\R)$, we can define
     \begin{equation}
     \Pi_{L,\psi}( g)f=\theta_{L, X^{\ast}}[\Pi_{X^{\ast},\psi}(g)(f')]=\theta_{L, X^{\ast}}[\Pi_{X^{\ast},\psi}(g)\theta_{X^{\ast},L}(f)].
     \end{equation}
Under such action, for $g_1, g_2\in \SL_2^{\pm}(\R)$,
     \begin{equation}
     \begin{split}
     \Pi_{L, \psi}(g_1)[\Pi_{L, \psi}(g_2)f]&=\theta_{L, X^{\ast}}[\Pi_{X^{\ast}, \psi}(g_1)\theta_{X^{\ast},L}]([\Pi_{L, \psi}(g_2)f])\\
     &=\theta_{L, X^{\ast}}([\Pi_{X^{\ast}, \psi}(g_1)\theta_{X^{\ast},L}\theta_{L, X^{\ast}}
     [\Pi_{X^{\ast}, \psi}(g_2)\theta_{X^{\ast},L}(f)]])\\
     &=\theta_{L, X^{\ast}}[\Pi_{X^{\ast}, \psi}(g_1)\Pi_{X^{\ast}, \psi}(g_2)\theta_{X^{\ast},L}(f)]\\
     &=\widetilde{C}_{X^{\ast}}(g_1, g_2)\Pi_{L, \psi}(g_1g_2)f.
     \end{split}
     \end{equation}
\subsection{Explicit elements I}\label{Gamma2I1}
Let us consider  $g\in \Sp^{\pm}(L)$. Under the basis $\{ e_1,  e_1^{\ast}\}$ of $\mathcal{L}$, $\Sp^{\pm}(L)\simeq \SL_2^{\pm}(\Z)$.  Let $w=x+x^{\ast} \in W$. \\
Case $1$: $g=u(b)\in \Gamma^{\pm}(2)$, and $2\mid b$.
\begin{equation*}
\begin{split}
\Pi_{L, \psi}(g)f([\epsilon, w])&=\sum_{l\in L\cap X}[\Pi_{X^{\ast},\psi}(g)f']([\epsilon, x+l])\psi(\epsilon\langle l, w\rangle) \psi(\epsilon\tfrac{\langle x, x^{\ast}\rangle}{2}) \\
&=\sum_{l\in L\cap X}\psi(\tfrac{1}{2}\langle (x+l),\epsilon (x+l)b\rangle) f'([\epsilon,x+l])\psi(\epsilon\langle l, w\rangle)\psi(\epsilon\tfrac{\langle x, x^{\ast}\rangle}{2})\\
&= \sum_{l\in L\cap X} \int_{X^{\ast}/X^{\ast}\cap L}  \psi(\tfrac{1}{2}\langle (x+l),\epsilon (x+l)b\rangle) \psi(\epsilon \tfrac{\langle\dot{y}^{\ast}, x+l\rangle}{2})f([\epsilon,x+l+\epsilon\dot{y}^{\ast}])  \psi(\epsilon\langle l, w\rangle)\psi(\epsilon\tfrac{\langle x, x^{\ast}\rangle}{2}) d\dot{y}^{\ast}\\
&=  \sum_{l\in L\cap X} \int_{X^{\ast}/X^{\ast}\cap L}   \psi(\epsilon \tfrac{\langle\dot{y}^{\ast}, x+l\rangle}{2})f([\epsilon,x+l+\epsilon\dot{y}^{\ast}])  \psi(\epsilon\langle l, x^{\ast}+xb\rangle)\psi(\epsilon\tfrac{\langle x, x^{\ast}+ xb\rangle}{2}) d\dot{y}^{\ast}\\
&= f([\epsilon, wg])=f([\epsilon, w]g).
\end{split}
\end{equation*}
Case $2$: $g=h(a)\in \Gamma^{\pm}(2)$, $a=\pm 1$.
\begin{equation}
\begin{split}
&\Pi_{L, \psi}(g)f([\epsilon, w])\\
&=\sum_{l\in L\cap X}[\Pi_{X^{\ast},\psi}(g)f']([\epsilon, x+l])\psi(\epsilon\langle l, w\rangle) \psi(\epsilon\tfrac{\langle x, x^{\ast}\rangle}{2}) \\
&=\sum_{l\in L\cap X}|a|^{1/2}(a,\epsilon)_{\R}f'([\epsilon, (x+l)a])\psi(\epsilon\langle l, w\rangle) \psi(\epsilon\tfrac{\langle x, x^{\ast}\rangle}{2})\\
&= \sum_{l\in L\cap X}\int_{X^{\ast}/X^{\ast}\cap L} \psi(\epsilon\tfrac{\langle\dot{y}^{\ast}, (x+l)a\rangle}{2})f([\epsilon,xa+la+\epsilon\dot{y}^{\ast}]) (a,\epsilon)_\R \psi(\epsilon\langle l, w\rangle) \psi(\epsilon\tfrac{\langle x, x^{\ast}\rangle}{2})d\dot{y}^{\ast}\\
&= \sum_{l\in L\cap X}\int_{X^{\ast}/X^{\ast}\cap L} \psi(\epsilon \tfrac{\langle\dot{y}^{\ast}, (x+l)a\rangle}{2})f([\epsilon,xa+la+\epsilon\dot{y}^{\ast}]) (a,\epsilon)_\R \psi(\epsilon\langle la, wa\rangle) \psi(\epsilon\tfrac{\langle xa, x^{\ast}a\rangle}{2}) d\dot{y}^{\ast}\\
&= \sum_{l\in L\cap X}\int_{X^{\ast}/X^{\ast}\cap L} \psi(\epsilon \tfrac{\langle\dot{y}^{\ast}, xa+l\rangle}{2})f([\epsilon,xa+l+\epsilon\dot{y}^{\ast}]) (a,\epsilon)_\R \psi(\epsilon\langle l, wa\rangle) \psi(\epsilon\tfrac{\langle xa, x^{\ast}a\rangle}{2}) d\dot{y}^{\ast}\\
&=(a,\epsilon)_\R f(\epsilon, wa)=(a,\epsilon)_\R f([\epsilon,w]g).
\end{split}
\end{equation}
Case $3$: $g=h_{-1} \in  \Gamma^{\pm}(2)$. 
\begin{equation}
\begin{split}
\Pi_{L, \psi}(g)f([\epsilon, w])&=\sum_{l\in L\cap X}[\Pi_{X^{\ast},\psi}(g)f']([\epsilon, x+l])\psi(\epsilon\langle l, w\rangle) \psi(\epsilon\tfrac{\langle x, x^{\ast}\rangle}{2}) \\
&=\sum_{l\in L\cap X}f'([-\epsilon, x+l])\psi(\epsilon\langle l, w\rangle) \psi(\epsilon\tfrac{\langle x, x^{\ast}\rangle}{2}) \\
&= \sum_{l\in L\cap X}\int_{X^{\ast}/X^{\ast}\cap L} \psi(-\epsilon\tfrac{\langle\dot{y}^{\ast}, x+l\rangle}{2})f([-\epsilon,x+l-\epsilon\dot{y}^{\ast}]) \psi(\epsilon\langle l, w\rangle) \psi(\epsilon\tfrac{\langle x, x^{\ast}\rangle}{2})d\dot{y}^{\ast}\\
&= \sum_{l\in L\cap X}\int_{X^{\ast}/X^{\ast}\cap L} \psi(-\epsilon\tfrac{\langle\dot{y}^{\ast}, x+l\rangle}{2})f([-\epsilon,x+l-\epsilon\dot{y}^{\ast}]) \psi(-\epsilon\langle l, -x^{\ast}\rangle) \psi(-\epsilon\tfrac{\langle x, -x^{\ast}\rangle}{2})d\dot{y}^{\ast}\\
&=f([-\epsilon, x-x^{\ast}])\\
&= f([\epsilon, w]g).
\end{split}
\end{equation}
Case $4$: $g=\omega\in  \SL_2(\Z)$. 
\[\nu(-1,g)=(1, -1)_\R \gamma(-1, \psi^{\tfrac{1}{2}})^{-1}=\gamma(-1, \psi^{\tfrac{1}{2}})^{-1}=i, \quad\quad \nu(-1,g)^{-1}=-i.\]
\begin{equation}\label{chap6eq3}
\begin{split}
&\Pi_{L, \psi}(g)f([\epsilon, w])\\
&=\sum_{l\in L\cap X}[\Pi_{X^{\ast},\psi}(g)f']([\epsilon, x+l])\psi(\epsilon\langle l, w\rangle)\psi(\epsilon\tfrac{\langle x, x^{\ast}\rangle}{2}) \\
&=\sum_{l\in L\cap X} \int_{X^{\ast}} \nu(\epsilon,g) f'([\epsilon, y^{\ast}\omega^{-1}]) \psi(\langle \epsilon(x+l), y^{\ast}\rangle)\psi(\epsilon\langle l, w\rangle)\psi(\epsilon\tfrac{\langle x, x^{\ast}\rangle}{2})dy^{\ast}\\
&=\sum_{l\in L\cap X} \int_{X^{\ast}}  \nu(\epsilon,g) f'([\epsilon, y^{\ast}\omega^{-1}]) \psi(\epsilon\langle x, y^{\ast}\rangle) \psi(\epsilon\langle l, x^{\ast}+y^{\ast}\rangle)\psi(\epsilon\tfrac{\langle x, x^{\ast}\rangle}{2})dy^{\ast}\\
&=\sum_{l\in L\cap X} \int_{X^{\ast}}  \int_{X^{\ast}/X^{\ast}\cap L} \nu(\epsilon,g) f([\epsilon, y^{\ast}\omega^{-1}+\epsilon\dot{z}^{\ast}])\psi(\epsilon\tfrac{\langle  \dot{z}^{\ast}, y^{\ast}\omega^{-1}\rangle}{2}) \psi(\epsilon\langle x, y^{\ast}\rangle) \psi(\epsilon\langle l, x^{\ast}+y^{\ast}\rangle)\psi(\epsilon\tfrac{\langle x, x^{\ast}\rangle}{2})dy^{\ast}d\dot{z}^{\ast}\\
&= \sum_{l^{\ast}\in L\cap X^{\ast}}\int_{X^{\ast}/X^{\ast}\cap L} \nu(\epsilon,g)f([\epsilon, (-x^{\ast}+l^{\ast})\omega^{-1}+\epsilon\dot{z}^{\ast}])\psi(\epsilon\tfrac{\langle \dot{z}^{\ast}, (-x^{\ast}+l^{\ast})\omega^{-1} \rangle}{2}) \psi(\epsilon\langle x, -x^{\ast}+l^{\ast}\rangle) \psi(\epsilon\tfrac{\langle x, x^{\ast}\rangle}{2})d\dot{z}^{\ast}\\
&= \sum_{l\in L\cap X}\int_{X^{\ast}/X^{\ast}\cap L} \nu(\epsilon,g)f([\epsilon,-x^{\ast}\omega^{-1}-l+\epsilon\dot{z}^{\ast}])\psi(\epsilon\tfrac{\langle \dot{z}^{\ast}, -x^{\ast}\omega^{-1}-l \rangle}{2}) \psi(\epsilon\langle x, -x^{\ast}+l\omega^{-1}\rangle) \psi(\epsilon\tfrac{\langle x, x^{\ast}\rangle}{2})d\dot{z}^{\ast}\\
&= \sum_{l\in L\cap X}\int_{X^{\ast}/X^{\ast}\cap L} \nu(\epsilon,g)f([\epsilon,x^{\ast}\omega+l+\epsilon\dot{z}^{\ast}])\psi(\epsilon\tfrac{\langle\dot{z}^{\ast},  x^{\ast}\omega+l\rangle}{2}) \psi(\epsilon\langle l, x\omega\rangle) \psi(-\epsilon\tfrac{\langle x, x^{\ast}\rangle}{2})d\dot{z}^{\ast}\\
&= \sum_{l\in L\cap X}\int_{X^{\ast}/X^{\ast}\cap L}\nu(\epsilon,g) f([\epsilon,x^{\ast}\omega+l+\epsilon\dot{z}^{\ast}])\psi(\epsilon\tfrac{\langle \dot{z}^{\ast}, x^{\ast}\omega+l\rangle}{2}) \psi(\epsilon\langle l, x\omega\rangle) \psi(\epsilon\tfrac{\langle  x^{\ast}\omega, x\omega\rangle}{2})d\dot{z}^{\ast}\\
&=\nu(\epsilon,g)f([\epsilon,wg])=\nu(\epsilon,g)f([\epsilon,w]g).
\end{split}
\end{equation}
Case $5$: $g=\omega^{-1}\in  \SL_2(\Z)$.
\[ \nu(-1,\omega)=i, \qquad \nu(-1,\omega^{-1})=(-1, -1)_\R \gamma(-1, \psi^{\tfrac{1}{2}})^{-1}=-i.\]
\begin{equation}\label{chap6eq9}
\begin{split}
\Pi_{L,\psi}(g)f([\epsilon, w])
&=\Pi_{L,\psi}(-\omega)f([\epsilon, w])\\
&=\widetilde{C}_{X^{\ast}}( \omega,-1)^{-1}\Pi_{L,\psi}(\omega)\Big(\Pi_{L,\psi}(-1)f\Big)([\epsilon, w])\\
&=\nu(\epsilon,\omega)\Big(\Pi_{L,\psi}(-1)f\Big) ([\epsilon,w]\omega)\\
&=\nu(\epsilon,\omega)(-1, \epsilon)_{\R} f ([\epsilon,w]g)\\
&=\nu(\epsilon,g)f ([\epsilon,w]g).
\end{split}
\end{equation}
Case $6$: $g=u_-(c) =\omega^{-1} u(-c) \omega\in  \Gamma^{\pm}(2)$ with $2\mid c$. 
\begin{equation}\label{chap6eq4}
\begin{split}
\Pi_{L,\psi}(g)f([\epsilon,w])&=\Pi_{L,\psi}(\omega^{-1} u(-c) \omega) f([\epsilon,w])\\
&=\widetilde{C}_{X^{\ast}}(\omega^{-1}, u(-c))^{-1}\widetilde{C}_{X^{\ast}}(\omega^{-1}u(-c), \omega)^{-1}\Pi_{L,\psi}(\omega^{-1})\Pi_{L,\psi}(u(-c)) \Pi_{L,\psi}(\omega) f([\epsilon,w])\\
&=\widetilde{C}_{X^{\ast}}(\omega^{-1}u(-c), \omega)^{-1}\Pi_{L,\psi}(\omega^{-1})\Pi_{L,\psi}(u(-c)) \Pi_{L,\psi}(\omega) f([\epsilon,w])\\
&=\widetilde{C}_{X^{\ast}}(\omega^{-1}u(-c), \omega)^{-1} \nu(\epsilon, \omega^{-1})\nu(\epsilon, \omega)f([\epsilon,wg])\\
&=\widetilde{c}_{X^{\ast}}(\omega^{-1}u(-c), \omega)^{-1} f([\epsilon,wg])\\
&=e^{\tfrac{\pi i \sgn(c)}{4}}f([\epsilon,wg]).
\end{split}
\end{equation}
\subsection{Explicit elements II }\label{Gamma2I2}
 We focus on the other elements 
$g\in \GL_{2}(\Q) \setminus \Gamma(2)$. Let $w=x e+x^{\ast} e^{\ast}\in W$.

Case 7: $g=u(b)=u(1) \in \SL_{2}(\Z)$. 
\begin{equation*}\label{ui}
\begin{split}
& \Pi_{L,\psi}(g)f([\epsilon, w])\\
&=\sum_{l\in L\cap X} \int_{X^{\ast}/X^{\ast}\cap L} \psi(\tfrac{1}{2}\langle (x+l),\epsilon (x+l)b\rangle) \psi(\epsilon \tfrac{\langle\dot{y}^{\ast}, x+l\rangle}{2})f([\epsilon,x+l+\epsilon\dot{y}^{\ast}])  \psi(\epsilon\langle l, w\rangle)\psi(\epsilon\tfrac{\langle x, x^{\ast}\rangle}{2}) d\dot{y}^{\ast}\\
&=\sum_{l\in L\cap X} \int_{X^{\ast}/X^{\ast}\cap L} \psi(\epsilon\tfrac{1}{2}\langle l,lb\rangle) f([\epsilon,x+l+\epsilon\dot{y}^{\ast}]) 
\psi(\epsilon\tfrac{\langle\dot{y}^{\ast}, x+l\rangle}{2}) \psi(\epsilon\langle l, x^{\ast}+xb)\psi(\epsilon\tfrac{\langle x, x^{\ast}+xb\rangle}{2})d \dot{y}^{\ast}\\
&=\psi(-\tfrac{1}{4}\epsilon x)\sum_{l\in L\cap X} \int_{X^{\ast}/X^{\ast}\cap L} f([\epsilon,x+l+\epsilon\dot{y}^{\ast}]) \psi(\epsilon\tfrac{\langle\dot{y}^{\ast}, x+l\rangle}{2})  \psi(\epsilon\langle l, x^{\ast}+xb+\tfrac{1}{2} e^{\ast}\rangle)\psi(\epsilon\tfrac{\langle x, x^{\ast}+xb+\tfrac{1}{2} e^{\ast}\rangle}{2})d \dot{y}^{\ast}\\
&=\psi(-\tfrac{1}{4}\epsilon x)f([\epsilon, x+x^{\ast}+xb+\tfrac{1}{2} e^{\ast}] )\\
&=\psi(-\tfrac{1}{4}\epsilon x)f([\epsilon, wg+\tfrac{1}{2} e^{\ast}])\\
&=\psi(\tfrac{1}{4}\epsilon x)f([\epsilon, wg-\tfrac{1}{2} e^{\ast}]).
\end{split}
\end{equation*} 
Case 8: $g=u_-(-1)=\omega^{-1} u(1) \omega\in \SL_{2}(\Z)$, $w=(x,x^{\ast})$, $w\omega^{-1}=(x,x^{\ast})\begin{pmatrix}0 & 1 \\ -1 & 0\end{pmatrix}=(-x^{\ast}, x)$.
\begin{equation*}\label{ui-}
\begin{split}
 \Pi_{L,\psi}(g)f( [\epsilon, w])
 &=\Pi_{L,\psi}(\omega^{-1} u(1) \omega)f( [\epsilon, w])\\
 &=\widetilde{c}_{X^{\ast}}(\omega^{-1}, u(1))^{-1}\widetilde{c}_{X^{\ast}}(\omega^{-1}u(1), \omega)^{-1}\Pi_{L,\psi}(\omega^{-1})\Pi_{L,\psi}(u(1)) \Pi_{L,\psi}(\omega) f([\epsilon, w])\\
&=\widetilde{c}_{X^{\ast}}(\omega^{-1}u(1), \omega)^{-1}\Pi_{L,\psi}(\omega^{-1})\Pi_{L,\psi}(u(1)) \Pi_{L,\psi}(\omega) f([\epsilon, w])\\
&=\widetilde{c}_{X^{\ast}}(\omega^{-1}u(1), \omega)^{-1}[\Pi_{L,\psi}(u(1)) \Pi_{L,\psi}(\omega)] f([\epsilon, w\omega^{-1}])\nu(\epsilon,\omega^{-1})\\
&=\widetilde{c}_{X^{\ast}}(\omega^{-1}u(1), \omega)^{-1}\psi(\tfrac{1}{4}\epsilon x^{\ast})[ \Pi_{L,\psi}(\omega)] f([\epsilon, w\omega^{-1}u(1)+\tfrac{1}{2} e^{\ast}])\nu(\epsilon,\omega^{-1})\\
&=\widetilde{c}_{X^{\ast}}(\omega^{-1}u(1), \omega)^{-1}\psi(\tfrac{1}{4}\epsilon x^{\ast}) f([\epsilon,wg+\tfrac{1}{2} e])\nu(\epsilon,\omega)\nu(\epsilon,\omega^{-1})\\
&=e^{-\tfrac{\pi i}{4}}\psi(\tfrac{1}{4}\epsilon x^{\ast}) f([\epsilon, wg+\tfrac{1}{2} e])\\
&=m_{X^{\ast}}(g)^{-1} \psi(\tfrac{1}{4}\epsilon x^{\ast}) f([\epsilon, wg+\tfrac{1}{2} e]).
 \end{split}
\end{equation*}
Case $8'$: $g=u_-(-N^2)$, for $(N,2)=1$. Then $wu_-(1-N^2)=(x,x^{\ast})\begin{pmatrix}1 & 0\\ 1-N^2 &1\end{pmatrix}=(x+x^{\ast}-N^2 x^{\ast}, x^{\ast})$.
\begin{equation*}\label{ui'-}
\begin{split}
\Pi_{L,\psi}(g)f( [\epsilon, w])&=\widetilde{c}_{X^{\ast}}(u_-(1-N^2), u_-(-1))^{-1}\cdot \Pi_{L,\psi}\big(u_-(1-N^2)\big)\Pi_{L,\psi}\big(u_-(-1)\big)f( [\epsilon, w])\\
&=e^{\tfrac{\pi i}{4}} \cdot e^{-\tfrac{\pi i}{4}} \Pi_{L,\psi}(u_-(-1))f( [\epsilon, wu_-(1-N^2)])\\
&=e^{-\tfrac{\pi i}{4}}\psi(\tfrac{1}{4}\epsilon x^{\ast}) f([\epsilon, wg+\tfrac{1}{2} e]).
 \end{split}
\end{equation*}
Case 9: $g=h(a)=\begin{pmatrix} \tfrac{1}{n} &  0\\ 0& n\end{pmatrix} \in \SL_{2}(\Q)$, for $n\in \N$ and $a=\tfrac{1}{n}$.
\begin{equation*}
\begin{split}
&\Pi_{L,\psi}(g)f([\epsilon, w])\\
&=\sum_{l\in L\cap X}\Pi_{X^{\ast},\psi}(g)(f')([\epsilon, x+l])\psi(\epsilon\langle l, w\rangle) \psi(\epsilon\tfrac{\langle x, x^{\ast}\rangle}{2}) \\
&=\sum_{l\in L\cap X}|a|^{1/2}(a,\epsilon)_{\R}f'([\epsilon, (x+l)a])\psi(\epsilon\langle l, w\rangle) \psi(\epsilon\tfrac{\langle x, x^{\ast}\rangle}{2})\\
&= \sum_{l\in L\cap X}\int_{X^{\ast}/X^{\ast}\cap L} |a|^{1/2}\psi(\epsilon \tfrac{\langle\dot{y}^{\ast}, (x+l)a\rangle}{2})f([\epsilon,xa+la+\epsilon\dot{y}^{\ast}]) (a,\epsilon)_\R \psi(\epsilon\langle l, w\rangle) \psi(\epsilon\tfrac{\langle x, x^{\ast}\rangle}{2})d\dot{y}^{\ast}\\
&= \sum_{l\in L\cap X}\int_{X^{\ast}/X^{\ast}\cap L}|a|^{1/2} \psi(\epsilon \tfrac{\langle\dot{y}^{\ast}, (x+l)a\rangle}{2})f([\epsilon,xa+la+\epsilon\dot{y}^{\ast}]) (a,\epsilon)_\R \psi(\epsilon\langle la, wa^{-1}\rangle) \psi(\epsilon\tfrac{\langle xa^{-1}, x^{\ast}a\rangle}{2}) d\dot{y}^{\ast}\\
&= \sum_{k=1}^{n}\sum_{l\in L\cap X}\int_{X^{\ast}/X^{\ast}\cap L} |a|^{1/2} \psi(\epsilon \tfrac{\langle\dot{y}^{\ast}, \tfrac{1}{n}x+l+\tfrac{k}{n}\rangle}{2})f([\epsilon,\tfrac{1}{n}x+l+\tfrac{k}{n}+\epsilon\dot{y}^{\ast}]) (\tfrac{1}{n},\epsilon)_\R \psi(\epsilon\langle l+\tfrac{k}{n}, nw\rangle) \psi(\epsilon\tfrac{\langle  \tfrac{1}{n}  x, nx^{\ast}\rangle}{2}) d\dot{y}^{\ast}\\
&= \sum_{k=1}^{n}\sum_{l\in L\cap X}\int_{X^{\ast}/X^{\ast}\cap L} |a|^{1/2} \psi(\epsilon \tfrac{\langle\dot{y}^{\ast}, \tfrac{1}{n}x+l+\tfrac{k}{n}\rangle}{2})f([\epsilon,\tfrac{1}{n}x+l+\tfrac{k}{n}+\epsilon\dot{y}^{\ast}]) \psi(\epsilon \tfrac{\langle \tfrac{k}{n}, nw\rangle}{2})  \psi(\epsilon\langle l, nw\rangle) \psi(\epsilon\tfrac{\langle  \tfrac{1}{n}  x +\tfrac{k}{n}, nx^{\ast}\rangle}{2}) d\dot{y}^{\ast}\\
&=|n|^{-1/2} \sum_{k=1}^{n}f([\epsilon, w]g+\tfrac{k}{n}e)\psi(\tfrac{1}{2} \epsilon kx^{\ast}).
\end{split}
\end{equation*} 
Case 10: $g=h(m)=\begin{pmatrix} m&  0\\ 0& \tfrac{1}{m}\end{pmatrix} \in \SL_{2}(\Q)$, for  $m\in \N$. Then $g=\omega^{-1} h(m^{-1}) \omega$.
\begin{equation}
\begin{split}
&\Pi_{L,\psi}(g)f(\epsilon, w)\\
&=\Pi_{L,\psi}(\omega^{-1} h(m^{-1}) \omega)f([\epsilon, w])\\
&=\widetilde{C}_{X^{\ast}}(\omega^{-1}, h(m^{-1})\omega)^{-1}\widetilde{C}_{X^{\ast}}(h(m^{-1}), \omega)^{-1}\Pi_{L,\psi}(\omega^{-1})\Pi_{L,\psi}( h(m^{-1})\Pi_{L,\psi}( \omega)f([\epsilon, w])\\
&=\nu(\epsilon,\omega^{-1})\Pi_{L,\psi}( h(m^{-1}))\Pi_{L,\psi}( \omega)f([\epsilon,w\omega^{-1}])\\
&=\nu(\epsilon,\omega^{-1})|m|^{-1/2} \sum_{k=1}^{m}\Pi_{L,\psi}( \omega)f([\epsilon,w]\omega^{-1}h(m^{-1}) +\tfrac{k}{m}e)\psi(\tfrac{1}{2} \epsilon kx)
\\
&=\nu(\epsilon,\omega^{-1})\nu(\epsilon,\omega)|m|^{-1/2} \sum_{k=1}^{m} f([\epsilon,w]g -\tfrac{k}{m}e^{\ast})\psi(\tfrac{1}{2} \epsilon kx)\\
&=|m|^{-1/2} \sum_{k=1}^{m} f([\epsilon,w]g -\tfrac{k}{m}e^{\ast})\psi(\tfrac{1}{2} \epsilon kx).
\end{split}
\end{equation} 
Case 11: $g=\begin{pmatrix}\tfrac{m}{n}&  0\\ 0& \tfrac{n}{m}\end{pmatrix} \in \SL_{2}(\Q)$, for  $m,n\in \N$, and $(n,m)=1$.
\begin{equation}
\begin{split}
&\Pi_{L,\psi}(g)f([\epsilon, w])\\
&=\widetilde{C}_{X^{\ast}}(h(m), h(n^{-1}))^{-1}\Pi_{L,\psi}(h(m) )\Pi_{L,\psi}(h(n^{-1}) )f([\epsilon, w])\\
&=|m|^{-1/2} \sum_{k=1}^{m}\Big[ \Pi_{L,\psi}(h(n^{-1}) )f\Big]([\epsilon,w]h(m) -\tfrac{k}{m}e^{\ast})\psi(\tfrac{1}{2} \epsilon kx)\\
&=|mn|^{-1/2} \sum_{k=1}^{m}\sum_{l=1}^{n}f([\epsilon, w]g-\tfrac{kn}{m}e^{\ast}+\tfrac{l}{n}e)\psi\Big(\tfrac{1}{2} \epsilon l(\tfrac{x^{\ast}}{m}-\tfrac{k}{m})\Big)\psi(\tfrac{1}{2} \epsilon kx)\\
&=|mn|^{-1/2} \sum_{k=1}^{m}\sum_{l=1}^{n}f([\epsilon, w]g-\tfrac{k}{m}e^{\ast}+\tfrac{l}{n}e)\psi\Big(\tfrac{1}{2} \epsilon \tfrac{l}{m}x^{\ast} +\tfrac{1}{2} \epsilon \tfrac{k}{n}x-\tfrac{1}{2} \epsilon\tfrac{kl}{nm}\Big).
\end{split}
\end{equation} 
Case 12: $g=u(\tfrac{2q}{N})=\begin{pmatrix}1&  \tfrac{2q}{N}\\ 0& 1\end{pmatrix} \in \SL_{2}(\Q)$, for $(q,N)=1$ and a positive integer number $N$.\\
\[u(\tfrac{2q}{N})=h(N^{-1}) u(2qN) h(N).\]
\begin{equation}
\begin{split}
&\Pi_{L,\psi}(g)f([\epsilon, w])\\
&=\widetilde{C}_{X^{\ast}}(h(N^{-1}), u(2qN) h(N))^{-1}\widetilde{C}_{X^{\ast}}(u(2qN) , h(N))^{-1}\Pi_{L,\psi}(h(N^{-1}))\Pi_{L,\psi}(u(2qN))\Pi_{L,\psi}(h(N))f([\epsilon, w])\\
&=\Pi_{L,\psi}(h(N^{-1}))\Pi_{L,\psi}(u(2qN))\Pi_{L,\psi}(h(N))f([\epsilon, w])\\
&=|N|^{-1/2} \sum_{k=1}^{N}\Pi_{L,\psi}(u(2qN))\Pi_{L,\psi}(h(N))f([\epsilon, w]h(N^{-1})+\tfrac{k}{N}e)\psi(\tfrac{1}{2} \epsilon kx^{\ast})\\
&=|N|^{-1/2} \sum_{k=1}^{N}\Pi_{L,\psi}(h(N))f\Big([\epsilon, w]h(N^{-1})u(2qN)+\tfrac{k}{N}e+2qk e^{\ast}\Big)\psi(\tfrac{1}{2} \epsilon kx^{\ast})\\
&=|N|^{-1}\sum_{k=1}^{N} \sum_{l=1}^{N} f([\epsilon, w]h(N^{-1})u(2qN)h(N)+ke+2qk\tfrac{1}{N} e^{\ast} -\tfrac{l}{N}e^{\ast})\psi(\tfrac{1}{2} \epsilon l\tfrac{x}{N})\psi(\tfrac{1}{2} \epsilon l\tfrac{k}{N})\psi(\tfrac{1}{2} \epsilon kx^{\ast})\\
&=|N|^{-1}\sum_{k=1}^{N} \sum_{l=1}^{N} f([\epsilon, w]g+\tfrac{2qk}{N} e^{\ast} -\tfrac{l}{N}e^{\ast})\psi(-\epsilon  \tfrac{qkx}{N})\psi(-\epsilon  \tfrac{qk^2}{N})\psi(\epsilon \tfrac{lx}{2N})\psi( \epsilon \tfrac{lk}{N}).
\end{split}
\end{equation} 
Case 13: $g=\begin{pmatrix}a&  0\\ 0& a^{-1}\end{pmatrix} \in P_{>0}(\R)$. In this case, we choose positive  rational numbers $\tfrac{m_i}{n_i}$ approximating $a$. Note that the restriction of $\Pi_{L,\psi}$ on $P_{>0}(\R)$ is  continuous.  (cf. Casselman' paper \cite{Ca})
\begin{equation}
\Pi_{L,\psi}(g)f([\epsilon, w])=\lim_{i} \Pi_{L,\psi}(\tfrac{m_i}{n_i}) f([\epsilon, w]).
\end{equation} 
Case 14: $g_p=\begin{pmatrix}0&  1\\-p& 0\end{pmatrix} \in \GL_{2}(\Q)$, for  a positive prime $p$. Let $\sqrt{p}=\lim_{i} \tfrac{n_i}{m_i}$, for coprime natural number pair $(n_i,m_i)$. Then:
$$g= \begin{pmatrix}\sqrt{p}&  0\\0& \sqrt{p}\end{pmatrix} \cdot  \begin{pmatrix}0& 1\\-1& 0\end{pmatrix}  \cdot \begin{pmatrix} \sqrt{p}  & 0\\0&\sqrt{p}^{-1} \end{pmatrix}.$$
\begin{equation}
\begin{split}
&\Pi_{L,\psi}(g)f([\epsilon, w])\\
&=\widetilde{C}_{X^{\ast}}( \begin{pmatrix}0& 1\\-1& 0\end{pmatrix} , \begin{pmatrix} \sqrt{p}  & 0\\0&\sqrt{p}^{-1} \end{pmatrix})^{-1}\Pi_{L,\psi}( \begin{pmatrix}0& 1\\-1& 0\end{pmatrix}  )\Pi_{L,\psi}( \begin{pmatrix} \sqrt{p}  & 0\\0&\sqrt{p}^{-1} \end{pmatrix})f([\epsilon, w])\\
&=\nu(\epsilon,\omega^{-1})\Pi_{L,\psi}( \begin{pmatrix} \sqrt{p}  & 0\\0&\sqrt{p}^{-1} \end{pmatrix})f([\epsilon,w]\omega^{-1}).
\end{split}
\end{equation} 
\subsection{Application}
For $f\in \mathcal{H}^{\pm}(L)$, we write $f(1, w)=f_1(w)$ and $f(-1, w)=f_2(w)$. Let: $$\Gamma(4,2)=\{ \begin{pmatrix} a & b \\c& d \end{pmatrix} \mid  a\equiv 1\equiv d (\bmod  4), b\equiv 0\equiv c(\bmod 2)\}.$$  
Then $\Gamma_{\theta}=\langle \omega\rangle\Gamma(4,2)\simeq \langle \omega\rangle  \ltimes \Gamma(4,2)$.
\subsubsection{} Note that every element  of $\Gamma_{\theta}$ can be expressed as 
$r $ or $r \omega$, for some $r \in \Gamma(2)$.  By the above cases (1)---(6), we have:
\begin{equation}\label{chapeq8}
 \Pi_{L, \psi}(r)f_1(w)= \epsilon_r f_1(wr).
 \end{equation}
for some $\epsilon_r\in \mu_8$.  Hence:
\begin{lemma}\label{ramma1SP}
The restriction of the cocycle $[\widetilde{c}_{X^{\ast}}]$ to the subgroup $\Gamma_{\theta}$ is trivial, with an explicit trivialization
 $$\epsilon: \Gamma_{\theta} \to \mu_8; r\longmapsto \epsilon_r,$$ 
 such that $\widetilde{c}_{X^{\ast}}(r_1, r_2) = \epsilon(r_1) \epsilon(r_2) \epsilon(r_1 r_2)^{-1}$, for  all $r_i \in \Gamma_{\theta}$.
\end{lemma}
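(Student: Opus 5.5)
The idea is to read the trivialization off the lattice model. For every $r\in\Gamma_{\theta}$, the computations of Cases (1)--(6), extended along products, show that $\Pi_{L,\psi}(r)$ acts on the $\epsilon=1$ component by pure translation, $\Pi_{L,\psi}(r)f_1(w)=\epsilon_r f_1(wr)$, with a scalar $\epsilon_r\in\mu_8$ that depends only on $r$. Once this is in hand, the cocycle identity falls out of the composition law for $\Pi_{L,\psi}$.

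First I need (\ref{chapeq8}) for all of $\Gamma_{\theta}$, not just for the generators. Cases (1), (2) and (6) handle $u(b)$ and $u_-(c)$ with $b,c$ even, together with $\pm I$. Cases (4) and (5) handle $\omega^{\pm1}$. Since $\Gamma(2)$ is generated by $u(2)$, $u_-(2)$ and $-I$, and $\Gamma_{\theta}=\Gamma(2)\sqcup\Gamma(2)\omega$, these elements generate $\Gamma_{\theta}$.

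Next I would show that the set $S$ of $g\in\Gamma_{\theta}$ for which $\Pi_{L,\psi}(g)f_1(w)=\epsilon_g f_1(wg)$ for some constant $\epsilon_g\in\mu_8$ is closed under products and inverses. This follows from the composition law
\[
\Pi_{L,\psi}(g_1)\Pi_{L,\psi}(g_2)=\widetilde{C}_{X^{\ast}}(g_1,g_2)\Pi_{L,\psi}(g_1g_2),
\]
which for $g_i\in\SL_2(\R)$ reduces to $\widetilde{c}_{X^{\ast}}$. Applying it to $f$ gives
\[
\epsilon_{g_1}\epsilon_{g_2}f_1(wg_1g_2)=\widetilde{c}_{X^{\ast}}(g_1,g_2)\,\Pi_{L,\psi}(g_1g_2)f_1(w).
\]
Hence $g_1g_2\in S$, with $\epsilon_{g_1g_2}=\epsilon_{g_1}\epsilon_{g_2}\widetilde{c}_{X^{\ast}}(g_1,g_2)^{-1}$, and this value lies in $\mu_8$. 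Inverses are handled the same way, using $g g^{-1}=1$ and the fact that $\Pi_{L,\psi}(1)$ is the identity.

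The real subtlety is well-definedness. The constant $\epsilon_r$ must be independent of how $r$ is written as a word in the generators. This holds because $\epsilon_r$ is intrinsically determined: the operator $\Pi_{L,\psi}(r)$ is fixed, and some $f_1\in\mathcal{H}_{\psi}(L)$ is nonzero. Indeed, $\theta_{L,X^{\ast}}$ of a nonzero Schwartz function is nonzero by Lemma \ref{chap6thein}. So $\epsilon_r$ is the unique scalar with $\Pi_{L,\psi}(r)f_1=\epsilon_r f_1(\cdot\, r)$. A related point is that $\epsilon_r$ must be the same for every $f$. The generator computations give constants independent of $f$, so closure under products preserves this.

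Finally, the displayed relation $\epsilon_{r_1r_2}=\epsilon_{r_1}\epsilon_{r_2}\widetilde{c}_{X^{\ast}}(r_1,r_2)^{-1}$ rearranges to exactly the claimed identity
\[
\widetilde{c}_{X^{\ast}}(r_1,r_2)=\epsilon(r_1)\epsilon(r_2)\epsilon(r_1r_2)^{-1}.
\]
So the restriction of $[\widetilde{c}_{X^{\ast}}]$ to $\Gamma_{\theta}$ is trivial. As a consistency check, $\epsilon$ should coincide with $\widetilde{\beta}^{-1}$ from Lemma \ref{spgamma12}(1) up to a character of $\Gamma_{\theta}$.
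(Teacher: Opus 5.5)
Your proposal is correct and follows the paper's own route. You read off $\epsilon_r\in\mu_8$ from the lattice-model formulas of Cases (1)--(6), where $f_1(w)\mapsto\epsilon_r f_1(wr)$, and obtain the coboundary identity from the composition law $\Pi_{L,\psi}(g_1)\Pi_{L,\psi}(g_2)=\widetilde{C}_{X^{\ast}}(g_1,g_2)\Pi_{L,\psi}(g_1g_2)$. Your arguments for closure under products and for the uniqueness of $\epsilon_r$ simply make explicit what the paper's one-line ``by the above cases'' leaves implicit.
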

\begin{lemma}
 $\widetilde{\beta}(r)\epsilon_r=1$, for all $r\in \Gamma_{\theta}$.
 \end{lemma}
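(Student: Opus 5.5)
The plan is to show that the function $r\mapsto \widetilde{\beta}(r)\epsilon_r$ is a homomorphism $\Gamma_{\theta}\to\mu_8$, and then to check that it is trivial on a set of generators of $\Gamma_{\theta}$.

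For the homomorphism property I will combine two cocycle identities. Lemma \ref{spgamma12}(1) gives
\[\widetilde{c}_{X^{\ast}}(r_1,r_2)=\widetilde{\beta}(r_1)^{-1}\widetilde{\beta}(r_2)^{-1}\widetilde{\beta}(r_1r_2),\]
while Lemma \ref{ramma1SP} gives
\[\widetilde{c}_{X^{\ast}}(r_1,r_2)=\epsilon(r_1)\epsilon(r_2)\epsilon(r_1r_2)^{-1}.\]
Multiplying these yields $\widetilde{c}_{X^{\ast}}(r_1,r_2)^2=\dots$; the product of the right-hand sides is
\[\frac{\epsilon(r_1)\epsilon(r_2)}{\widetilde{\beta}(r_1)\widetilde{\beta}(r_2)}\cdot\frac{\widetilde{\beta}(r_1r_2)}{\epsilon(r_1r_2)},\]
which is not the combination I want. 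The correct move is to divide the two identities instead:
\[1=\frac{\epsilon(r_1)\epsilon(r_2)\widetilde{\beta}(r_1)\widetilde{\beta}(r_2)}{\epsilon(r_1r_2)\widetilde{\beta}(r_1r_2)}.\]
So $\chi(r):=\widetilde{\beta}(r)\epsilon_r$ is a character of $\Gamma_{\theta}$ with values in $\mu_8$. The sign conventions need care here: $\epsilon_r$ is defined through $\Pi_{L,\psi}(r)f_1(w)=\epsilon_r f_1(wr)$, and $\Pi(r_1)\Pi(r_2)=\widetilde{c}\,\Pi(r_1r_2)$, so I will recheck the multiplicativity of $\epsilon$ directly from these two facts. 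The orientation of $wr$ could force $\epsilon_{r_1r_2}$ to correspond to $r_2r_1$; if so, I will pass to a conjugate or inverse form, but the character conclusion survives either way.

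Next I reduce to generators. By Putman's description already used in the paper, $\Gamma(2)/\{\pm I\}$ is free on $u(2)$ and $u_-(2)$, and $\Gamma_{\theta}=\Gamma(2)\sqcup\Gamma(2)\omega$. It therefore suffices to verify $\chi=1$ on $u(2)$, $u_-(2)$, $-I$ and $\omega$.

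The values of $\epsilon_r$ on these elements come from the explicit cases computed earlier:
\begin{itemize}
\item Case 1 gives $\epsilon_{u(2)}=1$, and since $c=0$ we have $\widetilde{\beta}(u(2))=1$.
\item Case 2 with $a=-1$, $\epsilon=1$ gives $\epsilon_{-I}=(-1,1)_{\R}=1$, and $\widetilde{\beta}(-I)=1$.
\item Case 4 gives $\epsilon_{\omega}=\nu(1,\omega)=1$, and $\widetilde{\beta}(\omega)=G(0,1)=1$.
\item Case 6 gives $\epsilon_{u_-(2)}=e^{\pi i/4}$. By Lemma \ref{chap8betacd} (with $d=2d'$... here $c=2$, $d=1$, first line), $\widetilde{\beta}(u_-(2))=\bigl(\tfrac{1}{1}\bigr)e^{-\pi i/4}$, so the product is $1$.
\end{itemize}
This is consistent with the earlier computation $\widetilde{\beta}(u_-(2))=e^{-\pi i/4}$ in the lemma on $\chi_{\Gamma_{\theta}}$.

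The main obstacle is bookkeeping of conventions rather than any conceptual difficulty. I need to confirm that $\epsilon_r$ on the $\epsilon=1$ component is the scalar appearing in the Cases, including the $\nu(\epsilon,g)$ factors, which are $1$ for $\epsilon=1$. I also need the left-versus-right action to be compatible with Lemma \ref{ramma1SP}. If the generator check gave $\chi(u_-(2))=e^{\pi i/2}$ instead, I would recheck the sign of $\sgn(c)$ in Case 6 against $\widetilde{c}_{X^{\ast}}(\omega^{-1}u(-c),\omega)$ via Lemma \ref{relacto}(2).
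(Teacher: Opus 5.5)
Your proof is correct and is essentially the paper's argument: dividing the two trivializations of $\widetilde{c}_{X^{\ast}}$ on $\Gamma_{\theta}$ (Lemma~\ref{spgamma12}(1) and Lemma~\ref{ramma1SP}) makes $r\mapsto\widetilde{\beta}(r)\epsilon_r$ a character, and that character is then checked to be trivial on generators of $\Gamma_{\theta}$. Your explicit generator values from Cases 1, 2, 4, 6 and Lemma~\ref{chap8betacd} are right and spell out what the paper leaves implicit; your worry about the orientation of $wr$ is unnecessary, since $\Pi_{L,\psi}(r_1)\Pi_{L,\psi}(r_2)f_1(w)=\epsilon_{r_1}\epsilon_{r_2}f_1(wr_1r_2)$ gives exactly the identity of Lemma~\ref{ramma1SP}.
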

 \begin{proof}
 For any $r_1,r_2\in \Gamma_{\theta}$, we have the following relationship:
  \begin{equation}\label{trivramma}
 \widetilde{c}_{X^{\ast}}(r_1,r_2)=\widetilde{\beta}(r_1)^{-1}\widetilde{\beta}(r_2)^{-1}\widetilde{\beta}(r_1r_2)=\epsilon_{r_1}\epsilon_{r_1}\epsilon_{r_1r_2}^{-1}.
 \end{equation} Thus, 
the map  $r \to \widetilde{\beta}(r)\epsilon_r$ defines  a character of $\Gamma_{\theta}$. Since $\Gamma_{\theta}$ is  generated by $ u(2)$, $u(-2)$ and $\omega$,  the result follows.
 \end{proof}
 Let $\widetilde{r}=[r, \epsilon] \in \widetilde{\Gamma}_{\theta}$. According to Eq. (\ref{chapeq8}), we have:
 \begin{equation}\label{chapeq89}
 \Pi_{L,\psi}(\widetilde{r}) f_1( w)= \pi_{\psi}(\widetilde{r})f_1(w)=  \epsilon\widetilde{\beta}(r)^{-1} f_1(wr)=\widetilde{\lambda}(\widetilde{r})f_1(wr).
 \end{equation}
\subsubsection{}  Let us consider  $\Gamma_{\theta}^{\pm}=\langle \omega\rangle \Gamma(2)^{\pm}$. 
Let $r\in \Gamma_{\theta}$.  By the above Cases $1-6$,  $\Pi_{L,\psi}(r) f_2(w)=c_rf_2(wr)$.  So $c_r$ differs from $ \widetilde{\beta}(r)^{-1} $ by a character $\chi_{\omega}$.
By Cases $1$ and $6$, we known that $\chi_{\omega}(u(2))=1=\chi_{\omega}(u_-(2))$. By Case $4$, $\chi_{\omega}(\omega)= \nu(-1, \omega)=\gamma(-1, \psi^{\tfrac{1}{2}})^{-1}=i$. Hence $c_r=\widetilde{\beta}(r)^{-1}\chi_{\omega}(r)$, with $(\chi_{\omega})|_{\Gamma(4,2)}=1$ and $\chi_{\omega}(\omega)=i$.  Let:  $$\lambda^{\epsilon}( r)=\left\{\begin{array}{lc}    \widetilde{\beta}(r)^{-1}  &  \textrm{ if } \epsilon=1,\\ \widetilde{\beta}(r)^{-1} \chi_{\omega}(r) & \textrm{ if } \epsilon=-1. \end{array}\right.$$  
Then: 
$$\Pi_{L,\psi}(r) f([\epsilon, w])= \lambda^{\epsilon}(r) f([\epsilon, wr]).$$
It is known that $\Pi_{L,\psi}(h_{-1}) f([\epsilon, w])=f([-\epsilon, wh_{-1}])$. For any $r \in \Gamma_{\theta}^{\pm}$, let us write $r=h_{\det(r)} (h_{\det(r)}r)$. Then: 
\begin{align*}
 \Pi_{L,\psi}(r) f([\epsilon, w])&= \Pi_{L,\psi}(h_{\det(r)} (h_{\det(r)}r) ) f([\epsilon, w])\\
 &= \widetilde{C}_{X^{\ast}}(h_{\det(r)}, h_{\det(r)}r)^{-1}\Pi_{L,\psi} (h_{\det(r)}r)  f([\epsilon \det(r) , w h_{\det(r)}])\\
 &=\lambda^{\epsilon \det(r)}(  h_{\det(r)}r) f([\epsilon \det(r) , w h_{\det(r)}h_{\det(r)}r])\\
 &=\lambda^{\epsilon\det(r)}( h_{\det(r)}r) f([\epsilon, w] r).
 \end{align*}
\begin{lemma}\label{hminustoh}
 For $\overline{r}=[r, t]\in \overline{\Gamma}_{\theta}$,  $\overline{\lambda}(\overline{r}^{h_{-1}})=\overline{\lambda}(\overline{r})\chi_{\omega}(r) $.
\end{lemma}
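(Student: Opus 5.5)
The plan is to turn the statement into an identity between two characters of $\overline{\Gamma}_{\theta}$ and then check it using the lattice-model computations just made. First I will make the twist explicit. Conjugation by $[h_{-1},1]$ in $\overline{\GL}_2(\R)$ sends $\overline{r}=[r,t]$ to $[r^{h_{-1}},\nu_2(-1,r)t]$; this uses the $\mu_2$-covering construction, the definition of $\nu_2$, and the fact that $[h_{-1},1]^{-1}=[h_{-1},1]$. By Lemma \ref{lambdaoverlin}, the claim is therefore equivalent to the scalar identity
\[
\nu_2(-1,r)\,\lambda(r^{h_{-1}})=\lambda(r)\,\chi_{\omega}(r),\qquad r\in\Gamma_{\theta}.
\]
Both $\overline{r}\mapsto\overline{\lambda}(\overline{r}^{h_{-1}})$ and $\overline{r}\mapsto\overline{\lambda}(\overline{r})\chi_{\omega}(r)$ are characters of $\overline{\Gamma}_{\theta}$. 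The first is $\overline{\lambda}$ composed with an automorphism, and the second is a product of characters, since $\chi_{\omega}$ factors through $\Gamma_{\theta}$. On the central $\mu_2$ both are $t\mapsto t$.

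My first route is conceptual and works in the $\mu_8$-cover. Using the lattice model on $\mathcal{H}^{\pm}_{\psi}(L)$, I will compute $\Pi_{L,\psi}\bigl([h_{-1},1]\,\widetilde{r}\,[h_{-1},1]\bigr)f([1,w])$ in two ways, where $\widetilde{r}=[r,\epsilon]\in\widetilde{\Gamma}_{\theta}$. Composing the three operators directly uses Case 3, $\Pi_{L,\psi}(h_{-1})f([\epsilon,w])=f([-\epsilon,wh_{-1}])$, together with the formula $\Pi_{L,\psi}(r)f([-1,w])=\widetilde{\beta}(r)^{-1}\chi_{\omega}(r)f([-1,wr])$ established just above. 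This gives $\epsilon\,\widetilde{\beta}(r)^{-1}\chi_{\omega}(r)\,f([1,w\,r^{h_{-1}}])$. On the other hand, the conjugate equals $[r^{h_{-1}},\nu(-1,r)\epsilon]\in\widetilde{\Gamma}_{\theta}$ (Example \ref{chap3example2}), and by (\ref{chapeq89}) it acts on the $\epsilon=1$ component by the scalar $\widetilde{\lambda}$ of that element. Comparing the two yields
\[
\widetilde{\lambda}(\widetilde{r}^{h_{-1}})=\widetilde{\lambda}(\widetilde{r})\,\chi_{\omega}(r).
\]

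Finally I will transport this to $\overline{\Gamma}_{\theta}$ through $\iota:[r,\epsilon]\mapsto[r,m_{X^{\ast}}(r)\epsilon]$. The point to check is that $\iota$ intertwines conjugation by $h_{-1}$ on the two covers, i.e.\ that $\iota([r^{h_{-1}},\nu_2(-1,r)t])=[r^{h_{-1}},\nu(-1,r)m_{X^{\ast}}(r)t]$. This is exactly Lemma \ref{chap3nu23}, $\nu_2=\nu\,m_{X^{\ast}}(g)/m_{X^{\ast}}(g^{\alpha})$. With that, $\overline{\lambda}=\widetilde{\lambda}\circ\iota$ gives the statement.

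I expect the main obstacle to be the cocycle bookkeeping in the first route. The products in $\widetilde{\SL}^{\pm}_2$ involve $\widetilde{C}_{X^{\ast}}(h_{-1},\cdot)$ and $\widetilde{C}_{X^{\ast}}(\cdot,h_{-1})$, and I must make sure these reduce to a single factor $\nu(-1,r)$, as in the proof of Lemma \ref{gammatwodim}, and that the sign conventions for $f_2$ match. If this becomes murky, I will fall back on a generator check: it suffices to verify the scalar identity on $\pm I$, $u(2)$, $u_-(2)$ and $\omega$. These values come from Lemma \ref{gammar}, Lemma \ref{nu2} (for instance $\nu_2(-1,\omega)=1$, and $\nu_2(-1,u(2))=1$), $\chi_{\omega}|_{\Gamma(4,2)}=1$ and $\chi_{\omega}(\omega)=i$. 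For $\omega$ one has $\lambda(\omega^{h_{-1}})=\lambda(\omega^{-1})$, computed via Lemma \ref{mutirr12}.
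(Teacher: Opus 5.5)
Your argument is correct. Your fallback is essentially the paper's proof, but your primary route is genuinely different.

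The paper works directly in $\overline{\Gamma}_{\theta}$. It computes $[h_{-1},1]\,\overline{r}\,[h_{-1},1]=[r^{h_{-1}},\nu_2(-1,r)t]$ and then verifies the identity by hand on $\Gamma(4,2)$ and on $\omega$, using $\Gamma_{\theta}=\langle\omega\rangle\Gamma(4,2)$. It plugs into the closed formula of Lemma \ref{gammar}:
\begin{itemize}
\item on $\Gamma(4,2)$ one has $\nu_2(-1,r)=1$, and $\bigl(\frac{-2c}{d}\bigr)=\bigl(\frac{2c}{d}\bigr)$ because $d\equiv 1\pmod 4$;
\item at $\omega$ one gets $\overline{\lambda}([\omega^{-1},1])=ie^{-i\pi/4}=i\,\overline{\lambda}([\omega,1])$.
\end{itemize}
That is your fallback. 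Using $\pm I,u(2),u_-(2),\omega$ instead of $\Gamma(4,2)\cup\{\omega\}$ makes no difference. Note only that at $-I=\omega^2$ the factor $\nu_2(-1,-I)=(-1,-1)_{\R}=-1$ is what balances $\chi_{\omega}(-I)=-1$.

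Your main route instead reads the lemma off the structure of $\Pi_{L,\psi}$. The element $[h_{-1},1]$ swaps the $\epsilon=\pm1$ components, and $\chi_{\omega}(r)$ is by definition the ratio of the multipliers of $r$ on the $\epsilon=-1$ and $\epsilon=1$ components. Comparing $\Pi_{L,\psi}([h_{-1},1])\Pi_{L,\psi}(\widetilde{r})\Pi_{L,\psi}([h_{-1},1])$ with the action of $[r^{h_{-1}},\nu(-1,r)\epsilon]$ therefore gives $\widetilde{\lambda}(\widetilde{r}^{h_{-1}})=\widetilde{\lambda}(\widetilde{r})\chi_{\omega}(r)$ for all $r$ at once. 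The transfer through $\iota$ via Lemma \ref{chap3nu23} is sound; equivalently, $\iota$ is a homomorphism that fixes $[h_{-1},1]$.

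What your route buys: no Kronecker-symbol bookkeeping, no need for the values of $\chi_{\omega}$ on generators, no need even to know that both sides are characters, and it explains why the identity holds.

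What it costs: it relies on $\Pi_{L,\psi}$ being a genuine representation of $\widetilde{\SL}^{\pm}_2(\R)$, and on the component-multiplier formula holding on all of $\Gamma_{\theta}$. The paper supplies both. By contrast, the paper's computation touches the lattice model only through the two values $\chi_{\omega}|_{\Gamma(4,2)}=1$ and $\chi_{\omega}(\omega)=i$. It therefore doubles as a consistency check between those values and the closed form of $\lambda$.
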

\begin{proof}
\[\overline{C}_{X^{\ast}}(h_{-1}, r)\overline{C}_{X^{\ast}}(h_{-1} r, h_{-1} )=\nu_2( 1,1)\overline{c}_{X^{\ast}}(1, r)\nu_2(- 1,r)\overline{c}_{X^{\ast}}(r^{h_{-1}}, 1)= \nu_2(- 1,r)\stackrel{(\ref{chap3alppu2})}{=} \sgn(r,-1),\]
\[[h_{-1},1]\overline{r}[h_{-1},1]=[h_{-1}rh_{-1},\nu_2(- 1,r)t].\]
 For $r=\begin{pmatrix} a& b\\ c& d\end{pmatrix}\in \Gamma(4,2)$, by Lemma \ref{gammar}, we have:
\[\sgn(r,-1)=1, \quad \overline{\lambda}(\overline{r}^{h_{-1}})=\sgn(r,-1)\left(\frac{-2c}{d}\right)\epsilon^{-1}_dt=\left(\frac{2c}{d}\right)\epsilon^{-1}_dt=\overline{\lambda}(\overline{r}).\]
For $r=\omega$,  
\[ \sgn(r,-1)=1,  \quad \overline{\lambda}(\overline{r}^{h_{-1}})=-\epsilon^{-1}_{-1}e^{-\tfrac{i \pi}{4}}t=ie^{-\tfrac{i \pi}{4}}t, \quad\overline{\lambda}(\overline{r})=\epsilon^{-1}_{1}e^{-\tfrac{i \pi}{4}}t=e^{-\tfrac{i \pi}{4}}t.\]
\end{proof}
\section{Intertwining operators for $\SL_2^{\pm}(\R)$ case: Schrödinger model to Fock model}
In this section we fix $\psi=\psi_0$ and $z=z_0$.
By \cite[\wasyparagraph3]{Ta1} or \cite[p.~401]{Sa1} there is a unitary equivalence
\[
\mathcal H(X^*)\simeq L^2(X)\xrightarrow{\;\sim\;} \mathcal H_{F_{z_0}}
\]
of $\Ha(W)$-representations.
We extend it to a unitary equivalence
\[
L^2(\mu_2\times X)\xrightarrow{\;\sim\;} \mathcal H^{\pm}_{F_{z_0}}
\]
of $\Ha^{\pm}(W)$-representations, give two explicit maps, and include full proofs for completeness.
Identify $W=X\oplus X^*\cong\mathbb R\oplus\mathbb R$ and set $ q_{z_0}(v)=e^{-\pi v^2}$.
\begin{itemize}
\item $\theta_{ F_{z_0}, X^{\ast}}: \mathcal{H}^{\pm}(X^{\ast})\simeq L^2(\mu_2 \times X) \longrightarrow \mathcal{H}^{\pm}_{F_{z_0}}$.
  \begin{equation}\label{inter3}
 \begin{split}
\theta_{F_{z_0},X^{\ast}}(f)([\epsilon,  v])&=2^{\tfrac{1}{4}} \int_{\R} e^{2\pi i\big( \tfrac{1}{2} z_0 x^2 +xv\big)}f([\epsilon,x])dx.
    \end{split}
 \end{equation}
 \item  $\theta_{  X^{\ast},F_{z_0}}: \mathcal{H}^{\pm}_{F_{z_0}} \longrightarrow  L^2(\mu_2 \times X) $.
 \begin{equation}\label{inter4}
 \begin{split}
\theta_{  X^{\ast},F_{z_0}}(\phi)([\epsilon, x])&=  2^{\tfrac{1}{4}} \int_{\C}  e^{2\pi i \big[\tfrac{1}{2}z_0x^2-x\overline{v} \big]} e^{-2 \pi (v_y)^2}\phi([\epsilon, v])dv_x dv_y.
  \end{split}
      \end{equation}
      \end{itemize}

 Let us verify that $ \theta_{ F_{z_0},  X^{\ast}}$, $\theta_{  X^{\ast},F_{z_0}}$ both are $\Ha^{\pm}(W)$-intertwining  operators. Let $h=(y,y^{\ast};t)\in \Ha(W)$ and $h_{-1}=\begin{pmatrix} 1 & 0\\0& -1\end{pmatrix}\in \GL_2(\R)$.\\
 1) 
 \begin{align*}
 &\theta_{ F_{z_0}, X^{\ast}} [\Pi_{X^{\ast},\psi }(h) f]([\epsilon,v])\\
 &=2^{\tfrac{1}{4}} \int_{\R} e^{2\pi i \big( \tfrac{1}{2} z_0 x^2 +xv\big)}[\Pi_{X^{\ast},\psi }(h) f]([\epsilon,x])dx\\
 &=2^{\tfrac{1}{4}} \int_{\R} e^{2\pi i  \big( \tfrac{1}{2} z_0 x^2 +xv\big)}\psi(\epsilon  t+\epsilon \langle x,y^{\ast}\rangle+\epsilon \tfrac{1}{2}\langle y, y^{\ast}\rangle) f([\epsilon ,x+y])dx\\
  &=2^{\tfrac{1}{4}} \int_{\R} e^{2\pi i \big( \tfrac{1}{2} z_0 (x-y)^2 +(x-y)v\big)}\psi(\epsilon t+\epsilon \langle x-y,y^{\ast}\rangle+\epsilon \tfrac{1}{2}\langle y, y^{\ast}\rangle) f([\epsilon, x])dx\\
  &=2^{\tfrac{1}{4}}\psi(\epsilon t-\epsilon\tfrac{1}{2}\langle y, y^{\ast}\rangle) \int_{\R} \psi\big(  \tfrac{1}{2} z_0 (x-y)^2 + (x-y)v\big)\psi(\epsilon \langle x,y^{\ast}\rangle) f([\epsilon, x])dx\\
  &=2^{\tfrac{1}{4}}\psi(\epsilon t-\epsilon \tfrac{1}{2} y y^{\ast}+\tfrac{1}{2} z_0 y^2 - yv) \int_{\R}\psi\big( \tfrac{1}{2} z_0x^2 - xz_0y + xv+\epsilon xy^{\ast}\big) f([\epsilon, x])dx;
 \end{align*}
 \begin{align*}
 &\Pi_{F_{z_0}}(h) [\theta_{ F_{z_0}, X^{\ast}} f]([\epsilon,v])\\
 &=\psi(\epsilon t)e^{-2\pi i[\frac{1}{2}(y\overline{z}_0+\epsilon y^{*})y+vy]}[\theta_{ F_{z_0}, X^{\ast}} f]([\epsilon, v+y\overline{z}_0+\epsilon y^{\ast}])\\
&=\psi(\epsilon t)e^{-2\pi i[\frac{1}{2}(y\overline{z}_0+\epsilon y^{*})y+vy]}2^{\tfrac{1}{4}} \int_{\R} e^{2\pi i \big( \tfrac{1}{2} z_0 x^2 +x(v+y\overline{z}_0+\epsilon y^{\ast})\big)} f([\epsilon,x]) dx\\
&=2^{\tfrac{1}{4}} \psi(\epsilon t+\tfrac{1}{2}z_0y^2-\tfrac{1}{2}\epsilon y^{*}y-vy) \int_{\R} e^{2\pi i \big( \tfrac{1}{2} z_0 x^2 +x(v+y\overline{z}_0+\epsilon y^{\ast})\big)} f([\epsilon,x]) dx\\
&= \theta_{ F_{z_0}, X^{\ast}} [\Pi_{ X^{\ast},\psi}(h) f]([\epsilon,v]).
  \end{align*}
2)
\begin{align*}
& \kappa_{z_0}(v-y\overline{z_0}-\epsilon y^{\ast},v-y\overline{z_0}-\epsilon y^{\ast})\\
& \stackrel{v=v_x+iv_y}{=}e^{-2\pi (v_y+y)(v_y+y)}\\
&=\kappa_{z_0}(v,v)e^{2\pi i(-\overline{v} y +v y+iy^2)};
\end{align*}
 \begin{align*}
&\theta_{  X^{\ast},F_{z_0}}([\Pi_{F_{z_0}}(h)\phi])([\epsilon,x])\\
&= 2^{\tfrac{1}{4}}\int_{\C}  e^{2\pi i\big[\tfrac{1}{2}z_0x^2-x\overline{v} \big]} \kappa_{z_0}(v,v)[\Pi_{F_{z_0}}(h)\phi]([\epsilon, v])d_{z_0}(v)\\
&=  2^{\tfrac{1}{4}}\int_{\C} e^{2\pi i\big[\tfrac{1}{2}z_0x^2-x\overline{v} \big]} \kappa_{z_0}(v,v)\psi(\epsilon t)e^{-2\pi i[\frac{1}{2}(y\overline{z_0}+\epsilon y^{*})y+vy]}\phi([\epsilon, v+y\overline{z_0}+\epsilon y^{\ast}])d_{z_0}(v)\\
&=  2^{\tfrac{1}{4}} \psi(\epsilon t) \int_{\C}e^{2\pi i\big[\tfrac{1}{2}z_0x^2-x(\overline{v}-yz_0-\epsilon y^{\ast}) \big]} \kappa_{z_0}(v-y\overline{z_0}-\epsilon y^{\ast},v-y\overline{z_0}-\epsilon y^{\ast})e^{-2\pi i[\frac{1}{2}(y\overline{z_0}+\epsilon y^{*})y+(v-y\overline{z_0}-\epsilon y^{\ast})y]}\phi([\epsilon,v])d_{z_0}(v)\\
&=  2^{\tfrac{1}{4}} \psi(\epsilon t) \int_{\C}e^{2\pi i\big[\tfrac{1}{2}z_0x^2-x(\overline{v}-yz_0-\epsilon y^{\ast}) \big]} \kappa_{z_0}(v-y\overline{z_0}-\epsilon y^{\ast},v-y\overline{z_0}-\epsilon y^{\ast})e^{-2\pi i[-\frac{1}{2}(y\overline{z_0}+\epsilon y^{*})y+vy]}\phi([\epsilon,v])d_{z_0}(v)\\
&=  2^{\tfrac{1}{4}} \psi(\epsilon t) \int_{\C}e^{2\pi i\big[\tfrac{1}{2}z_0x^2-x(\overline{v}-yz_0-\epsilon y^{\ast}) \big]} \kappa_{z_0}(v,v)e^{2\pi i[\frac{1}{2}(yz_0+\epsilon y^{*})y-\overline{v}y]}\phi([\epsilon,v])d_{z_0}(v);
\end{align*}
\begin{align*}
&\Pi_{X^{\ast},\psi}(h) [\theta_{  X^{\ast},F_{z_0}}(\phi)]([\epsilon,x])\\
&=\psi(\epsilon t+\epsilon \langle x,y^{\ast}\rangle+\epsilon \tfrac{1}{2}\langle y, y^{\ast}\rangle) [\theta_{  X^{\ast},F_{z_0}}(\phi)]([\epsilon, x+y])\\
&=\psi(\epsilon t+\epsilon \langle x,y^{\ast}\rangle+\epsilon \tfrac{1}{2}\langle y, y^{\ast}\rangle) \int_{\C} 2^{\tfrac{1}{4}} e^{2\pi i\big[\tfrac{1}{2}\langle x+y,(x+y) z_0\rangle-\langle x+y,\overline{v}\rangle \big]} \kappa_{z_0}(v,v)\phi([\epsilon ,v])d_{z_0}(v)\\
&=2^{\tfrac{1}{4}} \psi(\epsilon t)  \int_{\C} \psi\big( \tfrac{1}{2}z_0x^2+ x(yz_0+\epsilon  y^{\ast}-\overline{v})\big) e^{2\pi i\big(\tfrac{1}{2} y (\epsilon  y^{\ast}) +\tfrac{1}{2}z_0y^2-y\overline{v}\big)} \kappa_{z_0}(v,v)\phi([\epsilon, v])d_{z_0}(v)\\
&=\theta_{  X^{\ast},F_{z_0}}([\Pi_{F_{z_0}}(h)\phi])([\epsilon,x]).
\end{align*}
3) 
\begin{align*}
 &\theta_{ F_{z_0}, X^{\ast}} [\Pi_{X^{\ast},\psi }(h_{-1}) f]([\epsilon,v])\\
 &=2^{\tfrac{1}{4}} \int_{\R} e^{2\pi i \big( \tfrac{1}{2} z_0 x^2 +xv\big)}[\Pi_{X^{\ast},\psi }(h_{-1}) f]([\epsilon,x])dx\\
 &=2^{\tfrac{1}{4}} \int_{\R} e^{2\pi i \big( \tfrac{1}{2} z_0 x^2 +xv\big)}f([-\epsilon,x])dx;
  \end{align*}
   \begin{align*}
& \Pi_{F_{z_0}}(h_{-1}) [\theta_{ F_{z_0}, X^{\ast}} (f)]([\epsilon,v])\\
 &=[\theta_{ F_{z_0}, X^{\ast}} (f)]([-\epsilon,v])\\
 &=2^{\tfrac{1}{4}} \int_{\R} e^{2\pi i\big( \tfrac{1}{2} z_0 x^2 +xv\big)}f([-\epsilon,x])dx\\
 &=\theta_{ F_{z_0}, X^{\ast}} [\Pi_{X^{\ast},\psi }(h_{-1}) f]([\epsilon,v]).
 \end{align*}
 4) \begin{align*}
&\theta_{  X^{\ast},F_{z_0}}([\Pi_{F_{z_0}}(h_{-1})\phi])([\epsilon,x])\\
&= 2^{\tfrac{1}{4}}\int_{\C}  e^{2\pi i\big[\tfrac{1}{2}z_0x^2-x\overline{v} \big]} \kappa_{z_0}(v,v)[\Pi_{F_{z_0}}(h_{-1})\phi]([\epsilon, v])d_{z_0}(v)\\
&= 2^{\tfrac{1}{4}}\int_{\C}  e^{2\pi i\big[\tfrac{1}{2}z_0x^2-x\overline{v} \big]} \kappa_{z_0}(v,v)\phi([-\epsilon, v])d_{z_0}(v);
\end{align*}
\begin{align*}
&\Pi_{X^{\ast},\psi}(h_{-1}) [\theta_{  X^{\ast},F_{z_0}}(\phi)]([\epsilon,x])\\
&=\theta_{  X^{\ast},F_{z_0}}(\phi)([-\epsilon, x])\\
&=  2^{\tfrac{1}{4}} \int_{\C}  e^{2\pi i \big[\tfrac{1}{2}z_0x^2-x\overline{v} \big]} e^{-2 \pi (v_y)^2}\phi([-\epsilon, v])dv_x dv_y\\
&=\theta_{  X^{\ast},F_{z_0}}([\Pi_{F_{z_0}}(h_{-1})\phi])([\epsilon,x]).
\end{align*}
Let us verify that $\theta_{  X^{\ast},F_{z_0}}$ and  $\theta_{  F_{z_0},X^{\ast}}$ are indeed  inverses of  one another. It suffices to show the following:
\begin{align*}
&\theta_{  X^{\ast},F_{z_0}}\Big[\theta_{F_{z_0},X^{\ast}}(f)\Big]([\epsilon,x])\\
&=  2^{\tfrac{1}{4}} \int_{\C}  e^{2\pi i \big[\tfrac{1}{2}z_0x^2-x\overline{v} \big]} e^{-2 \pi (v_y)^2}\Big[\theta_{F_{z_0},X^{\ast}}(f)\Big]([\epsilon, v])dv_x dv_y\\
&=  2^{\tfrac{1}{2}} \int_{\C}  \int_{\R} e^{2\pi i \big[\tfrac{1}{2}z_0x^2-x\overline{v} \big]} e^{-2 \pi (v_y)^2}e^{2\pi i\big( \tfrac{1}{2} z_0 y^2 +yv\big)}f([\epsilon,y])dyd_{z_0}(v) \\  
&= 2^{\tfrac{1}{2}} \int_{\R} e^{2\pi i\big( \tfrac{1}{2} z_0 x^2\big)} e^{2\pi i\big( \tfrac{1}{2} z_0 y^2\big)}f([\epsilon,y]) \bigg[ \int_{\C}  e^{-2\pi i x\overline{v}}  e^{-2 \pi (v_y)^2} e^{2\pi i\big( yv\big)}d_{z_0}(v) \bigg] dy\\
&=  2^{\tfrac{1}{2}} \int_{\R} e^{2\pi i\big( \tfrac{1}{2} z_0 x^2\big)} e^{2\pi i\big( \tfrac{1}{2} z_0 y^2\big)}f([\epsilon,y])\bigg[ \int_{\C}  e^{-2\pi (v_y)^2} e^{-2\pi \big( (x+y) v_y\big)}e^{2\pi i \big( (-x+y) v_x\big)}d_{z_0}(v) \bigg] dy\\
&=2^{\tfrac{1}{2}}\int_{\R} e^{\pi (x  y- \tfrac{1}{2}x^2- \tfrac{1}{2}y^2)}f([\epsilon,y]) dy\cdot \bigg[ \int_{\C}  e^{-2\pi(\tfrac{1}{2}x+\tfrac{1}{2}y+ v_y)(\tfrac{1}{2}x+\tfrac{1}{2}y+v_y)} e^{2\pi i \big( (-x+y) v_x\big)}d_{z_0}(v) \bigg] \\
&= \int_{\R}  e^{\pi (x  y- \tfrac{1}{2}x^2- \tfrac{1}{2}y^2)}f([\epsilon,y]) dy\int_{\R}\bigg[ e^{2\pi i \big( (-x+y) v_x\big)}dv_x\bigg] \\
&=f(x).
\end{align*}
\begin{proposition}\label{Propequ}
For every \(g\in\GL_2(\R)\),
$
\theta_{F_{z_0},X^*}\!\bigl(\Pi_{X^*,\psi}(g)f\bigr)
=\widetilde s(g)^{-1}\Pi_{F_{z_0},\psi}(g)\!\bigl(\theta_{F_{z_0},X^*}(f)\bigr).
$
\end{proposition}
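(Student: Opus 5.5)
The plan rests on two facts. Both $g\mapsto\Pi_{X^*,\psi}(g)$ and $g\mapsto\widetilde s(g)^{-1}\Pi_{F_{z_0}}(g)$ are projective representations of $\GL_2(\R)$ on spaces that $\theta_{F_{z_0},X^*}$ identifies $\Ha^{\pm}(W)$-equivariantly. Moreover, they carry the \emph{same} cocycle $\widetilde C_{X^*}$. For the second map this follows from Lemma~\ref{Cz_0wid}(2): $\Pi_{F_{z_0}}$ carries $\widehat C_{z_0}$, and $\widehat C_{z_0}=\widetilde C_{X^*}\cdot\widetilde s(h_1)\widetilde s(h_2)\widetilde s(h_1h_2)^{-1}$.

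Consequently the set $S$ of $g$ for which the claimed identity holds is closed under products: if the identity holds for $g_1$ and $g_2$, then applying it twice and using the cocycle relations on both sides gives it for $g_1g_2$. It is also closed under inverses. So it suffices to verify the identity on generators.

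\textbf{Step 1 (scalars, up to a constant).} For any $g$, the operators $\theta_{F_{z_0},X^*}\Pi_{X^*,\psi}(g)\theta_{F_{z_0},X^*}^{-1}$ and $\Pi_{F_{z_0}}(g)$ both intertwine $\Ha^{\pm}(W)$ with its $g$-twist. This uses the intertwining property of $\theta$ checked just above, together with the compatibility of both representations with conjugation on $\Ha^{\pm}(W)$. Since $\Pi_\psi$ is irreducible on $\Ha^{\pm}(W)$, Schur's lemma gives
\[
\theta_{F_{z_0},X^*}\,\Pi_{X^*,\psi}(g)=c(g)\,\Pi_{F_{z_0}}(g)\,\theta_{F_{z_0},X^*}
\]
for some scalar $c(g)\in\mathbb T$. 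It therefore suffices to compute $c(g)$ and show $c(g)=\widetilde s(g)^{-1}$. By the cocycle comparison, $c\,\widetilde s$ is a character of $\GL_2(\R)$, and in fact a genuine homomorphism to $\mathbb T$.

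\textbf{Step 2 (generators).} I compute $c(g)$ by evaluating both sides on the Gaussian $f_0([\epsilon,x])=\delta_{\epsilon,1}e^{-\pi x^2}$, whose image under $\theta_{F_{z_0},X^*}$ is a multiple of $f^{1/2}_+$ (a Gaussian integral).
\begin{itemize}
\item For $g\in P_{>0}(\R)$ (including $u(b)$ and $h(a)$ with $a>0$), use formulas (\ref{chap4representationsp23}), (\ref{chap4representationsp24}) against (\ref{z_04}), (\ref{z_05}). Here $\widetilde s(g)=1$ by Lemma~\ref{constants1}.
\item For the central element $\diag(r,r)$, both sides act trivially by (\ref{chap4representationsp26}) and (\ref{z_06}), and $\widetilde s=1$.
\item For $h_{-1}$, the check was already done in items 3) and 4) above, and $\widetilde s(h_{-1})=1$.
\item For $\omega$, I compare $\Pi_{X^*,\psi}(\omega)f_0=f_0$ (the Fourier transform fixes the Gaussian, with the self-dual measure) against $\Pi_{F_{z_0}}(\omega)f^{1/2}_+=f^{1/2}_+$ from Example~\ref{exm2}. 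This gives $c(\omega)=1=\widetilde s(\omega)^{-1}$ by Lemma~\ref{constants1}.
\end{itemize}
Since $\GL_2(\R)$ is generated by $P_{>0}(\R)$, $\omega$, $h_{-1}$ and the scalars, Step 1 then yields the identity for all $g$.

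\textbf{Main obstacle.} The hard part is making sure the scalar bookkeeping is consistent: the representations are only projective, and $c\,\widetilde s$ is a character only once one checks that $\Pi_{F_{z_0}}$ on $\GL_2$ truly has cocycle $\widehat C_{z_0}$ (this uses the induced construction of Section~\ref{FM}, Remark~\ref{equalityonab}, and the $\nu_{z_0}$ twist). The delicate generator is $\omega$, where the normalisation $2^{1/4}$ and the phase $\nu(\epsilon,\omega)$ in (\ref{chap4representationsp25}) must match. I would check $\epsilon=1$ directly and deduce $\epsilon=-1$ by conjugating with $h_{-1}$ rather than recomputing.
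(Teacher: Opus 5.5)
Your proof is correct but organized differently from the paper's.

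\textbf{The paper's route.} The paper never invokes Schur's lemma. It verifies the operator identity with constant $1$ directly on the generators $u(b)$, $h(a)$, $\omega$ of $\SL_2(\R)$, by explicit Gaussian integrals applied to arbitrary $f=\theta_{X^*,F_{z_0}}(\phi)$, using (\ref{z_02})--(\ref{z_05}). From the two projective multiplication laws it deduces that the identity holds up to scalars $c_g$ on all of $\SL_2(\R)$. It then uses $\alpha_{z_0}(g_1,g_2)^{-1}=\widetilde c_{X^*}(g_1,g_2)\,c_{g_1g_2}c_{g_2}^{-1}c_{g_1}^{-1}$, together with Lemma~\ref{eightcoz0} and Remark~\ref{equalityonab}, to force $c_g=\widetilde s(g)^{-1}$. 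The elements $h_{-1}$ and $\diag(r,r)$ are covered by the earlier checks.

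\textbf{Your route.} You get projective equivalence for every $g$ at once from irreducibility of $\Pi_\psi|_{\Ha^{\pm}(W)}$ and the $\Ha^{\pm}(W)$-equivariance of $\theta_{F_{z_0},X^*}$. This needs one input the paper's proof does not: both models must conjugate the Heisenberg operators by the same twist. For the Fock model this is the relation $\pi_{F_{z_0}}(g)\pi_{F_{z_0}}(h)=\pi_{F_{z_0}}(h^{g^{-1}})\pi_{F_{z_0}}(g)$ from the proof of Lemma~\ref{paiFz}. For the Schr\"odinger model it is built into Weil's extension to $\widetilde{\SL}_2(\R)\ltimes\Ha(W)$. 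Step~1 should be stated for $g\in\SL_2^{\pm}(\R)$, since the scalars act trivially on both sides and are handled directly. In exchange, only scalars matter afterwards, which is why one Gaussian test vector suffices for you.

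\textbf{Step 2 is mostly redundant.} Your Step~1 makes most of Step~2 unnecessary. The map $c\,\widetilde s$ is a homomorphism $\GL_2(\R)\to\mathbb T$, so it is trivial on the perfect group $\SL_2(\R)$ and factors through $\det$. The checks at $h_{-1}$ and $\diag(r,r)$ therefore already finish the proof. Your computations for $P_{>0}(\R)$ and for $\omega$ are only consistency checks. At $\omega$ they do give $c(\omega)=1$, since $\Pi_{X^*,\psi}(\omega)f_0=f_0$ and $\Pi_{F_{z_0}}(\omega)f^{1/2}_+=f^{1/2}_+$.

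\textbf{What the paper's longer route buys.} It confirms directly, without any irreducibility argument, that the explicit Fock formulas (\ref{z_04})--(\ref{z_05}) agree with the Schr\"odinger ones.
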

\subsection{ Proof of Proposition \ref{Propequ}}
The identity is already known for $g=h_{-1}$ and $g=\operatorname{diag}(r,r)$ ($r>0$), so we may restrict to $g\in\SL_2(\R)$.
Recall that $\SL_2(\R)$ is generated by $u(b)$, $h(a)$ and $\omega$, and that Lemma~\ref{constants1} gives $\widetilde{s}( g)^{-1}=1$ for these generators.
Below (Sections~\ref{ubxasttoFz01}--\ref{ubxasttoFz03})  we verify the identity for these generators in turn; consequently
\[
\theta_{F_{z_0},X^*}\!\bigl(\pi_{X^*,\psi}(g)f\bigr)
=c_g\,\pi_{F_{z_0},\psi}(g)\!\bigl(\theta_{F_{z_0},X^*}(f)\bigr)
\quad\text{for all }g\in\SL_2(\R)
\]
with some constants $c_g$.
Comparing both sides on $g_1g_2$ yields
\begin{align*}
\theta_{F_{z_0},X^*}\!\bigl(\pi_{X^*,\psi}(g_1g_2)f\bigr)
&=\widetilde c_{X^*}(g_1,g_2)^{-1}\theta_{F_{z_0},X^*}\!\bigl(\pi_{X^*,\psi}(g_1)\pi_{X^*,\psi}(g_2)f\bigr)\\
&=\widetilde c_{X^*}(g_1,g_2)^{-1}c_{g_1}c_{g_2}\alpha^{-1}_{z_0}(g_1,g_2)\pi_{F_{z_0},\psi}(g_1g_2)\!\bigl(\theta_{F_{z_0},X^*}(f)\bigr),
\end{align*}
so
\[
\alpha_{z_0}(g_1,g_2)^{-1}
=\widetilde c_{X^*}(g_1,g_2)\,c_{g_1g_2}\,c_{g_2}^{-1}c_{g_1}^{-1}.
\]
Since these constants $c_{g}$ are uniquely determined by this equality,  Lemma \ref{eightcoz0} and Remark \ref{equalityonab} force $c_g=\widetilde s(g)^{-1}$. Let $f=\big(\theta_{  X^{\ast},F_{z_0}}(\phi)\big)$, for $\phi \in S(\R)\subseteq L^2(\R)$.\\
\subsubsection{}\label{ubxasttoFz01} Let  $g=u(b)$.
\begin{align*}
&\theta_{ F_{z_0}, X^{\ast}}\big(\Pi_{X^{\ast}, \psi}(g)(f)\big)([\epsilon,v])\\
&=2^{\tfrac{1}{4}} \int_{\R} e^{2\pi i\big( \tfrac{1}{2}  z_0 x^2 +xv\big)}\big(\Pi_{X^{\ast}, \psi}(g)f\big)([\epsilon,x])dx\\
&=2^{\tfrac{1}{4}} \int_{\R} e^{2\pi i\big( \tfrac{1}{2} z_0 x^2 +xv\big)}\psi(\tfrac{1}{2} \epsilon bx^2) f([\epsilon, x])dx\\
&=2^{\tfrac{1}{4}} \int_{\R} e^{2\pi i\big( \tfrac{1}{2} z_0 x^2 +xv+\tfrac{1}{2}\epsilon bx^2\big)}f([\epsilon,x])dx\\
&=2^{\tfrac{1}{2}} \int_{\R} \int_{\C}  e^{2\pi i\big( \tfrac{1}{2} z_0 x^2 +xv+\tfrac{1}{2}\epsilon bx^2\big)}e^{2\pi i\big[\tfrac{1}{2}z_0x^2-x\overline{v'} \big]} k_{\widehat{z_0}}(v',v')\phi([\epsilon,v'])d_{\widehat{z_0}}(v')dx\\
&=2^{\tfrac{1}{2}} \int_{\C}  \Big[\int_{\R} e^{2\pi i\big( \tfrac{1}{2} z_0 x^2 +xv+\tfrac{1}{2}\epsilon bx^2\big)}e^{2\pi i\big[\tfrac{1}{2}z_0x^2-x\overline{v'}  \big]} dx\Big]k_{z_0}(v',v')\phi([\epsilon,v'])d_{z_0}(v')\\
&=2^{\tfrac{1}{2}}  \int_{\C}  \Big[\int_{\R} e^{-\pi \big(   (2-\epsilon z_0 b) x^2 \big)}e^{2\pi ix\big(v-\overline{v'}  \big)} dx\Big]k_{z_0}(v',v')\phi([\epsilon,v'])d_{z_0}(v')\\
&\xlongequal{\textrm{ \cite[App.A, Thm.1]{Fo} }} 2^{\tfrac{1}{2}} \int_{\C}  \Big[( 2-\epsilon z_0 b)^{-1/2} e^{-\pi(( 2-\epsilon z_0b)^{-1} (v-\overline{v'})^2)  }\Big]k_{z_0}(v',v')\phi([\epsilon,v'])d_{z_0}(v')\\
&=(\tfrac{ 2i+\epsilon  b}{2i})^{-1/2} \int_{\C}  \Big[ e^{-\pi i\big[( 2i+\epsilon b)^{-1} (v-\overline{v'})^2 \big] }\Big]k_{z_0}(v',v')\phi([\epsilon,v'])d_{z_0}(v')\\
&\xlongequal{\textrm{Eq. (\ref{z_04}) }}\Pi_{ F_{z_0}}(g)(\phi)([\epsilon,v])\\
&=\theta_{ F_{z_0}, X^{\ast}}\big(\Pi_{X^{\ast}, \psi}(g)(f)\big)([\epsilon,v]).
\end{align*}
\subsubsection{}\label{ubxasttoFz02}  Let  $g=h(a)$.  
\begin{align*}
&\theta_{ F_{z_0}, X^{\ast}}\big(\Pi_{X^{\ast}, \psi}(g)(f)\big)([\epsilon,v])\\
&=2^{\tfrac{1}{4}} \int_{\R} e^{2\pi i\big( \tfrac{1}{2} z_0 x^2 +xv\big)}\big(\Pi_{X^{\ast}, \psi}(g)f\big)([\epsilon,x])dx\\
&=2^{\tfrac{1}{4}} |a|^{1/2}  \int_{\R} e^{2\pi i\big( \tfrac{1}{2} z_0 x^2 +xv\big)}(a,\epsilon)_\R f([\epsilon,xa])dx\\
&=2^{\tfrac{1}{2}} |a|^{1/2}(a,\epsilon)_\R \int_{\C} \int_{\R} e^{2\pi i\big( \tfrac{1}{2} z_0 x^2 +xv\big)} e^{2\pi i\big[\tfrac{1}{2}z_0a^2x^2-xa\overline{v'} \big]} k_{z_0}(v',v')\phi(v')d_{z_0}(v')dx\\
&=2^{\tfrac{1}{2}} |a|^{1/2}(a,\epsilon)_\R\int_{\C} \Big[\int_{\R} e^{-\pi    (1+a^2) x^2}   e^{2\pi i\big(x(v-a\overline{v'}) \big)} dx\Big] k_{z_0}(v',v')\phi([\epsilon,v'])d_{z_0}(v') \\
&\xlongequal{\textrm{ \cite[App.A, Thm.1]{Fo} }}2^{\tfrac{1}{2}} |a|^{1/2}(a,\epsilon)_\R  \int_{\C}  \Big[(1+a^2)^{-1/2} e^{-\pi\big(( 1+a^2)^{-1} (v-\overline{v}'a)^2\big)}\Big]k_{z_0}(v',v')\phi([\epsilon,v'])d_{z_0}(v')\\
& =(\tfrac{2|a|}{1+a^2})^{1/2}(a,\epsilon)_\R \int_{\C} \Big[e^{-\pi\big(( 1+a^2)^{-1} (v-\overline{v}'a)^{2} \big) }\Big]k_{z_0}(v',v')\phi([\epsilon,v'])d_{z_0}(v')\\
&\xlongequal{\textrm{Eq. (\ref{z_05}) }}\Pi_{ F_{z_0}}(g)(\phi)([\epsilon,v])=\Pi_{ F_{z_0}}(g)\big(\theta_{ F_{z_0}, X^{\ast}}(f)\big)([\epsilon,v]).
\end{align*}
\subsubsection{}\label{ubxasttoFz03}   Let  $g=\omega$. 

\begin{align*}
&\theta_{ F_{z_0}, X^{\ast}}\big(\Pi_{X^{\ast}, \psi}(g)(f)\big)([\epsilon,v])\\
&=2^{\tfrac{1}{4}} \int_{\R} e^{2\pi i\big( \tfrac{1}{2} z_0 x^2 +xv\big)}\big(\Pi_{X^{\ast}, \psi}(g)f\big)([\epsilon,x])dx\\
&=2^{\tfrac{1}{4}} \int_{\R} e^{2\pi i\big( \tfrac{1}{2} z_0 x^2 +xv\big)}dx\int_{\R} \nu(\epsilon,\omega)e^{2\pi i \epsilon x y}  f([\epsilon,-y]) dy\\
&=2^{\tfrac{1}{4}}\nu(\epsilon,\omega)\int_{\R}  f([\epsilon,-y]) dy \Big[\int_{\R} e^{2\pi i\big( \tfrac{1}{2} z_0 x^2 +x(v+\epsilon y)\big)}dx\Big] \\
&\xlongequal{\textrm{ \cite[App.A, Thm.1]{Fo} }}2^{\tfrac{1}{4}}\nu(\epsilon,\omega) \int_{\R}  f([\epsilon,-y]) \Big[e^{-\pi(v+\epsilon y)^2}\Big] dy ; 
\end{align*}
By Eq. \ref{z_02} and \ref{z_03}, we have:
\begin{align*}
&\Pi_{ F_{z_0}}(g)\big(\theta_{ F_{z_0}, X^{\ast}}(f)\big)([1,v])\\
&=e^{-\pi i (-i) v^2}\theta_{ F_{z_0}, X^{\ast}}(f)([1,v(-i)])\\
&=2^{\tfrac{1}{4}} \int_{\R} e^{-\pi  v^2} e^{2\pi i\big( \tfrac{1}{2} z_0 x^2 -xiv\big)}f([1,x])dx\\
&=2^{\tfrac{1}{4}}\int_{\R} e^{-\pi   v^2} e^{\pi \big( - x^2 +2xv\big)}f([1,x])dx\\
&=2^{\tfrac{1}{4}}\int_{\R} e^{-\pi  v^2} e^{\pi \big( - y^2 -2yv\big)}f([1,-y])dy\\
&=\theta_{ F_{z_0}, X^{\ast}}\big(\Pi_{X^{\ast}, \psi}(g)(f)\big)([1,v]).
\end{align*}
\begin{align*}
&\Pi_{ F_{z_0}}(g)\big(\theta_{ F_{z_0}, X^{\ast}}(f)\big)([-1,v])\\
&=i e^{-\pi v^2} \theta_{ F_{z_0}, X^{\ast}}(f)([-1,vi])\\
&=2^{\tfrac{1}{4}} i \int_{\R} e^{-\pi  v^2} e^{2\pi i\big( \tfrac{1}{2} z_0 x^2 +xiv\big)}f([-1,x])dx\\
&=2^{\tfrac{1}{4}}i \int_{\R} e^{-\pi   v^2} e^{\pi \big( - x^2 -2xv\big)}f([-1,x])dx\\
&=2^{\tfrac{1}{4}}i \int_{\R} e^{-\pi  v^2} e^{\pi \big( - y^2 +2yv\big)}f([-1,-y])dy\\
&=\theta_{ F_{z_0}, X^{\ast}}\big(\Pi_{X^{\ast}, \psi}(g)(f)\big)([-1,v]).
\end{align*}
\subsection{} Retain the notations from  Example \ref{exm2}.

1) \begin{align*}
\theta_{  X^{\ast},F_{z_0}}(f^{1/2}_-)([-1,x])&=\theta_{  X^{\ast},F_{z_0}}(f^{1/2}_+)([1,x])\\
&= 2^{\tfrac{1}{4}} \int_{\C}  e^{2\pi i \big[\tfrac{1}{2}z_0x^2-x\overline{v} \big]} e^{-2 \pi (v_y)^2} e^{-\tfrac{\pi}{2}v^2}dv_x dv_y\\
&= 2^{\tfrac{1}{4}} e^{-\pi x^2}\int_{\C}  e^{2\pi i \big[-x(v_x-iv_y) \big]} e^{-2 \pi (v_y)^2} e^{-\tfrac{\pi}{2}v^2}dv_x dv_y\\
&=2^{1/4}e^{-\pi x^2}.
\end{align*}
2) \begin{align*}
\theta_{  X^{\ast},F_{z_0}}(f^{3/2}_-)([-1,x]) &= \theta_{  X^{\ast},F_{z_0}}(f^{3/2}_+)([1,x])\\
&=\tfrac{1}{2} \cdot 2^{\tfrac{1}{4}} \int_{\C}  e^{2\pi i \big[\tfrac{1}{2}z_0x^2-x\overline{v} \big]} e^{-2 \pi (v_y)^2} ive^{-\tfrac{\pi}{2}v^2}dv_x dv_y\\
 &=   i \tfrac{1}{2} \cdot 2^{\tfrac{1}{4}} e^{-\pi x^2}\int_{\C}  e^{2\pi i \big[-x(v_x-iv_y) \big]} e^{-2 \pi (v_y)^2} v e^{-\tfrac{\pi}{2}v^2}dv_x dv_y\\
 &=i\tfrac{1}{2} \cdot 2^{1/4}e^{-\pi x^2} (-2 ix)= 2^{1/4} x e^{-\pi x^2}.
\end{align*}
\begin{definition}
Let  
\(A^+([\epsilon,x])=\begin{cases}\,e^{-\pi x^2}&\epsilon=1,\\0&\epsilon=-1,\end{cases}\) \(A^-([\epsilon,x])=\begin{cases}0&\epsilon=1,\\\,e^{-\pi x^2}&\epsilon=-1,\end{cases}\)   

\(B^+([\epsilon,x])=\begin{cases}\,x e^{-\pi x^2}&\epsilon=1,\\ 0&\epsilon=-1,\end{cases}\) \(B^-([\epsilon,x])=\begin{cases}0&\epsilon=1,\\\,x e^{-\pi x^2}&\epsilon=-1.\end{cases}\)
\end{definition}
\begin{lemma}\label{ABM}
Let  $g=\begin{pmatrix}a & -b\\ b& a\end{pmatrix}\in \SO_2(\R)$, and $u=a+bi\in \U(\C)$.
\begin{itemize}
\item[(1)] $\Pi_{X^{\ast},\psi }(g) A^{+}=\widetilde{s}(g)^{-1}A^{+}$, $\Pi_{X^{\ast},\psi }(g) A^{-}=\widetilde{s}(g)^{-1}uA^{-}$.
\item[(2)]  $\Pi_{X^{\ast},\psi }(g) B^{+}=\widetilde{s}(g)^{-1}\overline{u}B^{+}$, $\Pi_{X^{\ast},\psi }(g) B^{-}=\widetilde{s}(g)^{-1}u^2B^{-}$.
\end{itemize}
\end{lemma}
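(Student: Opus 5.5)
The plan is to transport Example~\ref{exm2} back to the Schr\"odinger model through the intertwiner of Proposition~\ref{Propequ}. The computations just above give
\[
\theta_{X^{\ast},F_{z_0}}(f^{1/2}_{\pm})=2^{1/4}A^{\pm},\qquad \theta_{X^{\ast},F_{z_0}}(f^{3/2}_{\pm})=2^{1/4}B^{\pm}.
\]
The identities for $f^{1/2}_-,f^{3/2}_-$ follow from those for $f^{1/2}_+,f^{3/2}_+$, since $\Pi_{F_{z_0}}(h_{-1})$ swaps the $\pm$ components and $\theta_{X^{\ast},F_{z_0}}$ intertwines $h_{-1}$. Because $\theta_{X^{\ast},F_{z_0}}$ and $\theta_{F_{z_0},X^{\ast}}$ are mutually inverse, we get $\theta_{F_{z_0},X^{\ast}}(A^{\pm})=2^{-1/4}f^{1/2}_{\pm}$ and $\theta_{F_{z_0},X^{\ast}}(B^{\pm})=2^{-1/4}f^{3/2}_{\pm}$.

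Now apply Proposition~\ref{Propequ} with $g\in\SO_2(\R)$:
\[
\theta_{F_{z_0},X^{\ast}}\bigl(\Pi_{X^{\ast},\psi}(g)A^{+}\bigr)=\widetilde s(g)^{-1}\,\Pi_{F_{z_0}}(g)\bigl(2^{-1/4}f^{1/2}_+\bigr)=\widetilde s(g)^{-1}\,2^{-1/4}f^{1/2}_+,
\]
where the last step uses Example~\ref{exm2}(1). Applying $\theta_{X^{\ast},F_{z_0}}$ to both sides gives $\Pi_{X^{\ast},\psi}(g)A^{+}=\widetilde s(g)^{-1}A^{+}$. The other three cases run the same way. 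Example~\ref{exm2}(1) supplies the eigenvalues $u$ for $f^{1/2}_-$, $\overline u$ for $f^{3/2}_+$, and $u^2$ for $f^{3/2}_-$. These yield the factors $\widetilde s(g)^{-1}u$, $\widetilde s(g)^{-1}\overline u$ and $\widetilde s(g)^{-1}u^2$ respectively.

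The main point needing care is the justification of Proposition~\ref{Propequ} on all of $\SO_2(\R)$, including the boundary element $-I$ where $\widetilde s$ jumps. Its proof pins down the constants $c_g$ through the cocycle relation together with Lemma~\ref{eightcoz0}, so no separate treatment of $-I$ should be needed. One still has to check that the cocycle convention in Remark~\ref{equalityonab}, $\alpha_{z_0}^{-1}=\widehat c_{z_0}$, matches the sign used there.

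There is a second subtlety for the $\epsilon=-1$ components. In the $\widehat{\ }$-model, the action of $g$ on $[-1,v]$ carries the factor $\nu_{z_0}(-1,g^{h_{-1}})^{-1}=u$, and this factor must not be double counted against $\widetilde s$. Since Example~\ref{exm2} already incorporates it, transporting by the intertwiner handles it automatically.

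As a sanity check, one can verify the case $g=\omega$ directly. Here $\widetilde s(\omega)=1$ by Lemma~\ref{constants1}, $u=i$, and the Fourier transform of $e^{-\pi x^2}$, resp.\ $xe^{-\pi x^2}$, from formula~(\ref{chap4representationsp25}) gives the eigenvalues $1$, resp.\ $-i=\overline u$ on the $+$ component. On the $-$ component it gives $\nu(-1,\omega)=i=u$ and $-1=u^2$, consistent with the claim.
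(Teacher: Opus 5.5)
Your proposal is correct and follows essentially the same route as the paper. The paper likewise writes $A^{\pm}=\theta_{X^{\ast},F_{z_0}}(2^{-1/4}f^{1/2}_{\pm})$ and $B^{\pm}=\theta_{X^{\ast},F_{z_0}}(2^{-1/4}f^{3/2}_{\pm})$, then transports the $\SO_2(\R)$-eigenvalues of Example~\ref{exm2} through the intertwining relation of Proposition~\ref{Propequ}; the only cosmetic difference is that you apply Proposition~\ref{Propequ} in the $\theta_{F_{z_0},X^{\ast}}$ direction and invert, whereas the paper states the relation directly for $\theta_{X^{\ast},F_{z_0}}$.
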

\begin{proof}
 By the above 1) and 2), $\theta_{  X^{\ast},F_{z_0}}(2^{-1/4} f^{1/2}_{\pm})=A^{\pm}$,  and $\theta_{  X^{\ast},F_{z_0}}( 2^{-1/4} f^{3/2}_{\pm})=B^{\pm}$. 
\begin{align*}
\Pi_{X^{\ast},\psi }(g) A^+&=\Pi_{X^{\ast},\psi }(g)\theta_{  X^{\ast},F_{z_0}}(2^{-1/4} f^{1/2}_+)\\
&=\widetilde{s}(g)^{-1}\theta_{  X^{\ast},F_{z_0}}\Big(2^{-1/4}\Pi_{F_{z_0},\psi}(g)f^{1/2}_+\Big)\\
&\stackrel{\textrm{Ex. \ref{exm2}}}{=}\widetilde{s}(g)^{-1}\theta_{  X^{\ast},F_{z_0}}(2^{-1/4} f^{1/2}_+)\\
&=\widetilde{s}(g)^{-1}A^+;\\
& \\
\Pi_{X^{\ast},\psi }(g) B^+&=\Pi_{X^{\ast},\psi }(g)\theta_{  X^{\ast},F_{z_0}}(2^{-1/4} f^{3/2}_+)\\
&=\widetilde{s}(g)^{-1}\theta_{  X^{\ast},F_{z_0}}\Big( 2^{-1/4}\Pi_{F_{z_0},\psi}(g)f^{3/2}_+\Big)\\
&\stackrel{\textrm{Ex. \ref{exm2}}}{=}\widetilde{s}(g)^{-1}\theta_{  X^{\ast},F_{z_0}}\Big(2^{-1/4}(-bi+a)f^{3/2}_+\Big)\\
&=\widetilde{s}(g)^{-1}(-bi+a)B^+.
\end{align*}
The remaining equalities follow in the same fashion.
\end{proof}
Following Lemma \ref{deltapm},  for $[g,\epsilon] \in \widetilde{\Oa}_2(\R)$, we define: 
\[\widetilde{\delta}^+: [g,\epsilon] \to \epsilon\widetilde{s}(g)^{-1} \delta^+(g),\] 
\[\widetilde{\delta}^-: [g,\epsilon] \to \epsilon\widetilde{s}(g)^{-1} \delta^-(g).\] 
\begin{lemma}
 $\widetilde{\delta}^+$ and $\widetilde{\delta}^-$ both are    irreducible representations of $\widetilde{\Oa}_2(\R)$.
 \end{lemma}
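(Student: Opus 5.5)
The plan is to deduce the statement from Lemma \ref{deltapm} and Lemma \ref{deltapm2}, transported through the isomorphism between the covering groups $\widetilde{\Oa}_2(\R)$ and $\widehat{\Oa}_2(\R)$ given by (\ref{overwidehatGL}), $[h,\epsilon]\mapsto [h,\widetilde{s}(h)^{-1}\epsilon]$.

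First I would check that $\widetilde{\delta}^\pm$ is a genuine representation. For this I regard $\delta^\pm$ as a function of $g\in\Oa_2(\R)$ via $g\mapsto\delta^\pm([g,1])$, where $[g,1]\in\widehat{\Oa}_2(\R)$. Then $\widetilde{\delta}^\pm([g,\epsilon])=\delta^\pm(\iota'[g,\epsilon])$, because the central $\mathbb{T}$ acts by the scalar $1_{\mathbb{T}}$ in $\delta^\pm$. This identity needs checking against the multiplication in both groups: on $\widetilde{\Oa}_2(\R)$ the cocycle is $\widetilde{C}_{X^{\ast}}$, and on $\widehat{\Oa}_2(\R)$ it is $\widehat{C}_{z_0}$. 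Lemma \ref{Cz_0wid}(2) gives
\[
\widehat{C}_{z_0}(h_1,h_2)=\widetilde{C}_{X^{\ast}}(h_1,h_2)\,\widetilde{s}(h_1)\widetilde{s}(h_2)\widetilde{s}(h_1h_2)^{-1},
\]
so $\iota'$ is a group homomorphism onto its image. Consequently $\widetilde{\delta}^\pm=\delta^\pm\circ\iota'$ is multiplicative on $\widetilde{\Oa}_2(\R)$.

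Alternatively, and more concretely, the realization already in hand makes this clear. Lemma \ref{ABM}, together with the actions of $h_{-1}$ on $A^\pm,B^\pm$ (which swap $\epsilon$), shows that $\Span\{A^+,A^-\}$ and $\Span\{B^+,B^-\}$ are $\Pi_{X^{\ast},\psi}(\widetilde{\Oa}_2(\R))$-stable. On these spans the matrices are exactly $\epsilon\,\widetilde{s}(g)^{-1}\delta^\pm(g)$; this uses the intertwining of Proposition \ref{Propequ} and the basis matching of Lemma \ref{deltapm2}. Hence $\widetilde{\delta}^\pm$ is realized as a subrepresentation of $\Pi_{X^{\ast},\psi}|_{\widetilde{\Oa}_2(\R)}$, and so it is a representation.

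For irreducibility I would argue that an invariant subspace for $\widetilde{\delta}^\pm$ pulled back along $\iota'$ is invariant under $\delta^\pm$ on the image $\Im(\iota')$. Since $\delta^\pm$ is trivial-scalar on the center $\mathbb{T}$, and $\widehat{\Oa}_2(\R)=\mathbb{T}\cdot\Im(\iota')$, such a subspace is invariant for all of $\widehat{\Oa}_2(\R)$. Lemma \ref{deltapm} then gives irreducibility.

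As a self-contained check, one can also run the Mackey argument directly. Restricted to $\widetilde{\SO}_2(\R)$, the representation $\widetilde{\delta}^+$ is a sum of two distinct characters: on $[g,1]$ these are $\widetilde{s}(g)^{-1}$ and $\widetilde{s}(g)^{-1}u$, and they differ whenever $u\neq1$. The element $h_{-1}$ swaps the two lines, so no line is stable. The same holds for $\widetilde{\delta}^-$ with the characters $\widetilde{s}^{-1}\overline{u}$ and $\widetilde{s}^{-1}u^2$.

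The main obstacle is the bookkeeping in the first step. One has to verify that $\widehat{\Oa}_2(\R)$ is generated by the center together with $\Im(\iota')$, and that the sign conventions for $[h_{-1},1]$ agree in the two covers. Concretely this means $\widetilde{s}(h_{-1})=1$ and $\widetilde{C}_{X^{\ast}}(h_{-1},h_{-1})=1=\widehat{C}_{z_0}(h_{-1},h_{-1})$, both of which follow from Lemma \ref{chap3yg1} and $\nu(-1,I)=1$.
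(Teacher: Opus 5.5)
Your proposal is correct and follows the paper's route: $\widetilde{\delta}^\pm=\delta^\pm\circ\iota'$, with $\iota'$ a homomorphism onto its image by Lemma \ref{Cz_0wid}(2), and irreducibility is inherited from Lemma \ref{deltapm}. Your observation that $\widehat{\Oa}_2(\R)=\mathbb{T}\cdot\Im(\iota')$, with $\mathbb{T}$ central and acting by scalars, supplies the step the paper leaves implicit, namely why irreducibility survives restriction to $\Im(\iota')$; note that $\mathbb{T}$ acts by the tautological character $t\mapsto t$, not trivially, and this is exactly what your formula $\widetilde{\delta}^\pm([g,\epsilon])=\delta^\pm(\iota'[g,\epsilon])$ requires.
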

 \begin{proof}
 Recall the isomorphism from (\ref{overwidehatSL}):
 $$\iota': \widetilde{\Oa}_2(\R) \to \Im(\iota') (\subseteq\widehat{\Oa}_2(\R)) ; [g,\epsilon] \longmapsto [g,\epsilon\widetilde{s}(g)^{-1}].$$
 Then $\widetilde{\delta}^+=\delta^+ \circ \iota'$ and $\widetilde{\delta}^-=\delta^- \circ \iota'$. So both representations are irreducible.
 \end{proof}
 We also denote:
\[ \mathcal{D}^{+}=\Span_{\C}\{ A^+, A^-\}, \quad \mathcal{D}^-=\Span_{\C}\{ B^+, B^-\}.\]
For any functions $A\in \mathcal{D}^{+}$, $B\in \mathcal{D}^-$ and $g\in \Oa_2(\R)$,  we have:
\[\Pi_{X^{\ast},\psi }([g, \epsilon]) A=\widetilde{\delta}^+([g, \epsilon])A, \quad \Pi_{X^{\ast},\psi }([g, \epsilon]) B=\widetilde{\delta}^-([g, \epsilon])B.\]
Similarly, for $[g,\epsilon] \in \overline{\Oa}_2(\R)$, we define: 
\[\overline{\delta}^+: [g,\epsilon] \to \epsilon\overline{s}(g)^{-1} \delta^+(g),\] 
\[\overline{\delta}^-: [g,\epsilon] \to \epsilon\overline{s}(g)^{-1} \delta^-(g).\] 
Then $\overline{\delta}^+$ and $\overline{\delta}^-$ both are    irreducible representations. Moreover,  $\overline{s}^{-1}: [g,\epsilon] \to \epsilon\overline{s}(g)^{-1} $ defines a character. 
\begin{equation}\label{deltapmrep}
\overline{\delta}^+\simeq \Ind_{\overline{\SO}_2(\R)}^{\overline{\Oa}_2(\R)} (\overline{s}^{-1}\cdot 1_{\mu_2}), \quad \overline{\delta}^-\simeq \Ind_{\overline{\SO}_2(\R)}^{\overline{\Oa}_2(\R)} (\overline{s}^{-1} \overline{u}\cdot 1_{\mu_2}).
\end{equation}

\section{Siegel modular forms associated to Weil representations: $[\overline{\Gamma}_{\theta} \cap \overline{\Gamma}_0(N^2)]$ \& $\overline{\Gamma}_0(N^2)$}\label{GammathetaN2}
Unless otherwise stated, the following conventions are used throughout this section:
\begin{itemize}
\item $z\in\mathbb H$ and $g=\begin{pmatrix}a&b\\ c&d\end{pmatrix}\in\SL_2(\R)$;
\item $g_i=\begin{pmatrix}a_i&b_i\\ c_i&d_i\end{pmatrix}\in\SL_2(\R)$ for $i=1,2$;
\item the character $\psi$ is fixed to be $\psi_0$.
\item $N$ is a positive integer number, and  $\chi$ is a  primitive Dirichlet character modulo $N$. We say $\chi$  even if $\chi(-1)=1$ and odd if $\chi(-1)=-1$. In particular, $N=1$, $\chi\equiv1$.
\end{itemize}
We recall the functions $A^{\pm}$ and $B^{\pm}$ from Lemma~\ref{ABM}; as we work exclusively with the Weil representation of $\widetilde{\SL}_2(\R)$, we abbreviate $A(x)=A^{+}([1,x])$ and $B(x)=B^+([1,x])$ for $x\in X\simeq \R$.
Let us define the Gauss sum:
 $$G(n, \overline{\chi}) =\sum_{k \bmod N} \overline{\chi}(k) e^{\tfrac{2\pi i k n}{N}}=\sum_{k \bmod N, (k,N)=1} \overline{\chi}(k) e^{\tfrac{2\pi i k n}{N}}.$$
 The following result is known:
\begin{lemma}
\begin{itemize}
\item[(1)] $|G(1, \overline{\chi})|=\sqrt{N}$.
\item[(2)] $G(n,\overline{\chi})=\chi(n)G(1, \overline{\chi})$.
\end{itemize}
\end{lemma}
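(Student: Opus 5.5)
The plan is to prove both items directly from the definition $G(n,\overline{\chi})=\sum_{k \bmod N}\overline{\chi}(k)e^{2\pi i kn/N}$. I will prove (2) first and then deduce (1) from it by a Parseval-type computation. Throughout I write $\zeta=e^{2\pi i/N}$ and use that $\chi$ is primitive modulo $N$, with $\chi(n)=0$ when $(n,N)>1$.

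\textbf{Step 1: (2) when $(n,N)=1$.} Here $k\mapsto kn$ permutes the residues modulo $N$. Substituting $m=kn$ and using $\overline{\chi}(k)=\overline{\chi}(m)\overline{\chi}(n)^{-1}=\chi(n)\overline{\chi}(m)$ gives
\[
G(n,\overline{\chi})=\chi(n)\sum_{m}\overline{\chi}(m)\zeta^{m}=\chi(n)G(1,\overline{\chi}).
\]

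\textbf{Step 2: (2) when $d=(n,N)>1$.} We must show $G(n,\overline{\chi})=0$, which is the main obstacle. Write $N=dN'$ with $N'<N$. By primitivity, $\chi$ is not induced from any character modulo $N'$. Hence there is $a\equiv 1 \pmod{N'}$ with $(a,N)=1$ and $\chi(a)\neq 1$; this is the standard consequence of primitivity, since otherwise $\chi$ would be trivial on the kernel of the reduction map and would factor through modulus $N'$.

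Since $an\equiv n\pmod N$ (because $N\mid (a-1)n$), the substitution $k\mapsto ak$ gives
\[
G(n,\overline{\chi})=\sum_k\overline{\chi}(ak)\zeta^{akn}=\overline{\chi}(a)G(n,\overline{\chi}).
\]
Because $\overline{\chi}(a)\neq 1$, this forces $G(n,\overline{\chi})=0=\chi(n)G(1,\overline{\chi})$.

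\textbf{Step 3: (1).} Sum $|G(n,\overline{\chi})|^2$ over $n \bmod N$ in two ways. By (2), the sum equals $\varphi(N)\,|G(1,\overline{\chi})|^2$. Expanding the square directly gives
\[
\sum_{k,k'}\overline{\chi}(k)\chi(k')\sum_n\zeta^{(k-k')n}=N\sum_k|\chi(k)|^2=N\varphi(N),
\]
by orthogonality of additive characters. Comparing the two expressions yields $|G(1,\overline{\chi})|^2=N$, i.e. $|G(1,\overline{\chi})|=\sqrt N$.

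The trivial case $N=1$ is consistent, with $G=1$. The only delicate point is the existence of the element $a$ in Step 2, and I would justify it exactly as stated there.
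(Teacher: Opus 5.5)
Your proof is correct and complete. Step~1 is the usual change of variables for $(n,N)=1$. Step~2 correctly extracts from primitivity an element $a\equiv 1 \pmod{N/d}$ with $(a,N)=1$ and $\chi(a)\neq 1$. This also covers $n\equiv 0 \pmod N$: there $N'=1$, and one only needs $\chi$ to be nontrivial, which holds for a primitive character with $N>1$. Step~3 is the standard Parseval computation $\varphi(N)\,|G(1,\overline{\chi})|^2=\sum_{n}|G(n,\overline{\chi})|^2=N\varphi(N)$.

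The paper does not prove this lemma at all. It quotes it as a known fact and then uses (2) to evaluate $\sum_{k}\overline{\chi}(k)\psi(kn/N)$ when identifying $\theta^{1/2}_{\chi}$ and $\theta^{3/2}_{\chi}$. So there is no competing argument to compare against: what you wrote is the standard textbook proof (separability of Gauss sums for primitive characters, then $|G|^2=N$), which is exactly what the paper implicitly relies on.
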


\subsection{ $ P_{>0}(\R)$}
Let $p=\begin{pmatrix} a & b\\ 0& a^{-1} \end{pmatrix}\in P_{>0}(\R)$ and write $z_p=p(i)=ba+ia^{2}\in\mathbb{H}$.    By \eqref{chap2representationsp2} and \eqref{chap2representationsp3} we have $\widetilde{s}(p)=1$ and
\begin{align*}
\pi_{X^{\ast}, \psi}(p)A(x)
&= \pi_{X^{\ast}, \psi}(h(a)u(a^{-1}b))A(x) \\
&= |a|^{1/2}\pi_{X^{\ast}, \psi}(u(a^{-1}b))A(xa) \\
&= |a|^{1/2}\psi(\tfrac{1}{2}a^{-1}b\cdot a^2x^2)A(xa) \\
&= |a|^{1/2}e^{\pi i bax^2}\cdot e^{-\pi a^2x^2} \\
&= |a|^{1/2}e^{i\pi z_p x^2},
\end{align*}
\begin{align*}
\pi_{X^{\ast}, \psi}(p)B(x)
&= \pi_{X^{\ast}, \psi}(h(a)u(a^{-1}b))B(x) \\
&= |a|^{1/2}\pi_{X^{\ast}, \psi}(u(a^{-1}b))B(xa) \\
&= |a|^{1/2}\psi(\tfrac{1}{2}a^{-1}b\cdot a^2x^2)B(xa) \\
&= |a|^{1/2}xa\,e^{i\pi z_p x^2}.
\end{align*}
Moreover,
\[
J_{1/2}(p,z_0)=\epsilon(p;z_0,z_0)\cdot|J(p,z_0)|^{1/2}=|a|^{-1/2},\qquad
J_{3/2}(p,z_0)=J_{1/2}(p,z_0)J(p,z_0)=|a|^{-1/2}a^{-1}.
\]
Consequently,
\[
J_{1/2}(p,z_0)\pi_{X^{\ast},\psi}(p)A(x)=e^{i\pi z_p x^2},\qquad
J_{3/2}(p,z_0)\pi_{X^{\ast},\psi}(p)B(x)=e^{i\pi z_p x^2}x.
\]
 \subsection{ $ \SL_2(\R)$}
 Write $z_g=g(i)=x_g+iy_g\in \mathbb{H}$, $g=p_gk_g$(cf. Eq. (\ref{depkg})), $z_g=z_{p_g}$. Then:
\[ p_g=\begin{pmatrix} \sqrt{y_g} & x_g(\sqrt{y_g})^{-1}\\ 0& (\sqrt{y_g})^{-1} \end{pmatrix}, k_g = \begin{pmatrix}\sqrt{y_g}^{-1}a -(\sqrt{y_g})^{-1}x_g c & \sqrt{y_g}^{-1}b-(\sqrt{y_g})^{-1}x_g d\\ \sqrt{y_g} c &  \sqrt{y_g}d\end{pmatrix}. \]
\[ J_{1/2}(p_gk_g, z_0)\stackrel{\textrm{Lem. }  \ref{J121J1}}{=}J_{1/2}(p_g, k_g(z_0))J_{1/2}(k_g, z_0)\widetilde{c}_{X^{\ast}}(p_g,k_g)=J_{1/2}(p_g, z_0)\widetilde{s}(k_g).\]
\[ J_{3/2}(p_gk_g, z_0)= J_{1/2}(p_gk_g, z_0)(cz_0+d)=(cz_0+d)J_{1/2}(p_g, z_0)\widetilde{s}(k_g).\]
\begin{align*}
&J_{1/2}(g, z_0)\pi_{X^{\ast}, \psi} (g)A( x)\\
&= J_{1/2}(p_g, z_0)\widetilde{s}(k_g)\widetilde{c}_{X^{\ast}}(p_g, k_g)^{-1}\pi_{X^{\ast}, \psi} (p_g)\pi_{X^{\ast}, \psi} (k_g)A( x)\\
&= J_{1/2}(p_g, z_0)\pi_{X^{\ast}, \psi} (p_g)\widetilde{s}(k_g)\pi_{X^{\ast}, \psi} (k_g)A( x)\\
&\stackrel{\textrm{Lem. } \ref{ABM}}{=}J_{1/2}(p_g, z_0)\pi_{X^{\ast}, \psi} (p_g)A( x)\\
&=e^{i   \pi xz_g x }.
\end{align*}
\begin{align*}
&J_{3/2}(g, z_0)\pi_{X^{\ast}, \psi} (g)B( x)\\
&=(cz_0+d) J_{1/2}(p_g, z_0)\widetilde{s}(k_g)\widetilde{c}_{X^{\ast}}(p_g, k_g)^{-1}\pi_{X^{\ast}, \psi} (p_g)\pi_{X^{\ast}, \psi} (k_g)B( x)\\
&=(cz_0+d) J_{1/2}(p_g, z_0)\pi_{X^{\ast}, \psi} (p_g)\widetilde{s}(k_g)\pi_{X^{\ast}, \psi} (k_g)B( x)\\
&\stackrel{\textrm{Lem. } \ref{ABM}}{=}(cz_0+d) J_{1/2}(p_g, z_0)\pi_{X^{\ast}, \psi} (p_g)[(-\sqrt{y_g} c z_0+ \sqrt{y_g}d)B]( x)\\
&=(cz_0+d) e^{i   \pi xz_g x }(-\sqrt{y_g} c z_0+ \sqrt{y_g}d)  \sqrt{y_g}x\\
&=e^{i   \pi xz_g x }(cz_0+d) (-cz_0+d)y_gx\\
&=e^{i   \pi xz_g x }x\\
&=J_{3/2}(p_g, z_0)\pi_{X^{\ast}, \psi} (p_g)B( x).
\end{align*}
\subsection{ $\Gamma_{\theta}$}\label{gammatheta}
Let $f\in \mathcal H(X^*) $.   Let $r=\begin{pmatrix}a& b\\ c& d\end{pmatrix}\in \Gamma_{\theta}$.
\begin{align*}
 \theta_{L, X^{\ast}}(f)(0)=\sum_{l\in L\cap X}f(l)=\sum_{n\in \Z}f( n).
 \end{align*}
 \begin{align*}
\overline{ \chi}(k)\theta_{L, X^{\ast}}(f)(\tfrac{k}{N}e^{\ast})=\sum_{l\in L\cap X} \overline{ \chi}(k)f(l)\psi(\tfrac{kl}{N})=\sum_{n\in \Z}f( n)\overline{ \chi}(k)\psi(\tfrac{kn}{N}).
 \end{align*}
 \begin{align*}
\overline{ \chi}(k)\theta_{L, X^{\ast}}(f)(cNke+\tfrac{kd}{N}e^{\ast})&=\overline{ \chi}(k)\theta_{L, X^{\ast}}(f)(\tfrac{kd}{N}e^{\ast})\\
&=\chi(d)\sum_{l\in L\cap X} \overline{ \chi}(kd)f(l)\psi(\tfrac{kdl}{N})\\
&=\chi(d)\sum_{n\in \Z}f( n)\overline{ \chi}(kd)\psi(\tfrac{kdn}{N}).
 \end{align*}
 \begin{align*}
 \theta^{1/2}(g)&\stackrel{Def.}{=} \theta_{L, X^{\ast}}\Big(J_{1/2}(g, z_0)\pi_{X^{\ast}, \psi} (g)A\Big)(0)\\
&=\sum_{n\in \Z}J_{1/2}(g, z_0)\pi_{X^{\ast}, \psi} (g)A(  n)\\
&=\sum_{n\in \Z} e^{i   \pi z_g n^2 }.
 \end{align*}
  \begin{align*}
\theta^{1/2}_{\chi}(g)&\stackrel{Def.}{=} \tfrac{1}{G(1, \overline{\chi})}\sum_{k \bmod N}\overline{ \chi}(k) \theta_{L, X^{\ast}}\Big(J_{1/2}(g, z_0)\pi_{X^{\ast}, \psi} (g)A\Big)(\tfrac{k}{N}e^{\ast})\\
&=\tfrac{1}{G(1, \overline{\chi})} \sum_{k \bmod N} \sum_{n\in \Z}J_{1/2}(g, z_0)\pi_{X^{\ast}, \psi} (g)A(  n)\overline{ \chi}(k) \psi(\tfrac{kn}{N})\\
&= \tfrac{1}{G(1, \overline{\chi})}\sum_{k \bmod N} \sum_{n\in \Z} e^{i   \pi z_g n^2 }\overline{ \chi}(k) \psi(\tfrac{kn}{N})\\
&=\tfrac{1}{G(1, \overline{\chi})}\sum_{n\in \Z} e^{i   \pi z_g n^2 }\Big(\sum_{k \bmod N}\overline{ \chi}(k) \psi(\tfrac{kn}{N})\Big)\\
&=\sum_{n\in \Z} e^{i   \pi z_g n^2}  \chi(n) .
 \end{align*}
 \begin{align*}
 \theta^{3/2}_{\chi}(g)&\stackrel{Def.}{=} \tfrac{1}{G(1, \overline{\chi})}\sum_{k \bmod N}\overline{ \chi}(k)\theta_{L, X^{\ast}}\Big(J_{3/2}(g, z_0)\pi_{X^{\ast}, \psi} (g)B\Big)(\tfrac{k}{N}e^{\ast})\\
 &= \tfrac{1}{G(1, \overline{\chi})}  \sum_{k \bmod N} \sum_{n\in \Z}J_{1/2}(g, z_0)\pi_{X^{\ast}, \psi} (g)B(  n)\overline{ \chi}(k) \psi(\tfrac{kn}{N})\\
 &=\tfrac{1}{G(1, \overline{\chi})}\sum_{n\in \Z} ne^{i   \pi z_g n^2 }\Big(\sum_{k \bmod N}\overline{ \chi}(k) \psi(\tfrac{kn}{N})\Big)\\
&=\sum_{n\in \Z} ne^{i   \pi z_g n^2}  \chi(n) 
  \end{align*}
 Note that $\theta^{1/2}_{\chi}(g)$ and $ \theta^{3/2}_{\chi}(g)$ only depend on $z_g=g(z_0)$. 
 \begin{definition}
If $\chi$ is even,  define
$
\theta^{1/2}_{\chi}(z) = \sum_{n \in \mathbb{Z}} \chi(n)\, e^{i\pi z n^2}$,

and if $\chi$ is odd, define
$\theta^{3/2}_{\chi}(z) = \sum_{n \in \mathbb{Z}} \chi(n)\, n\, e^{i\pi n^2 z}$.
\end{definition}
Let us consider the explicit action of $\Gamma_{\theta}$ on $\theta^{1/2}_{\chi}(z)$ and  $\theta^{3/2}_{\chi}(z)$.

1)  For $r=\begin{pmatrix}   a & b \\  c& d\end{pmatrix} \in \Gamma_{\theta}$, we have:
 \begin{align*}
\theta^{1/2}(r g)&= \theta^{1/2}(r p_g)\\
&=\theta_{L, X^{\ast}}\Big(J_{1/2}(r p_g, z_0)\pi_{X^{\ast}, \psi} (r p_g)A\Big)(0)\\
&\stackrel{\widetilde{c}_{X^{\ast}}(r, p_g)=1}{=} J_{1/2}(r ,  p_g(z_0))\pi_{L, \psi}(r)\theta_{L, X^{\ast}}\Big(J_{1/2}(p_g,z_0)\pi_{X^{\ast}, \psi} ( p_g)A\Big)(0) \\
&\stackrel{(\ref{chapeq8})}{=}J_{1/2}(r , z_g)\epsilon_{r} \theta^{1/2}( g)\\
&=\sqrt{cz_g+d} m_{X^{\ast}}(r)\epsilon_{r} \theta^{1/2}( g)\\
&= m_{X^{\ast}}(r)\widetilde{\beta}^{-1}(r)\sqrt{cz_g+d} \cdot \theta^{1/2}( g)\\
&\stackrel{(\ref{lambda})}{=}\lambda(r)\sqrt{cz_g+d}\cdot \theta^{1/2}( g).
\end{align*} 

 Note that $[h(N^{-1}) \SL_2(\Z) h(N)]\cap \SL_2(\Z)=\Gamma_0(N^2)$.  \\
2) Let $r=\begin{pmatrix} a& b\\ cN^2 & d\end{pmatrix}\in   \Gamma_{\theta} \cap  \Gamma_0(N^2 )$.
\begin{align*}
\theta^{1/2}_{\chi}(r g)&= \theta^{1/2}_{\chi}(r p_g)\\
&= \tfrac{1}{G(1, \overline{\chi})}\sum_{k \bmod N}\overline{ \chi}(k) \theta_{L, X^{\ast}}\Big(J_{1/2}(rp_g, z_0)\pi_{X^{\ast}, \psi} (rp_g)A\Big)(\tfrac{k}{N}e^{\ast})\\
&\stackrel{\widetilde{c}_{X^{\ast}}(r, p_g)=1}{=}\tfrac{1}{G(1, \overline{\chi})}\sum_{k \bmod N}\overline{ \chi}(k)  J_{1/2}(r ,  p_g(z_0))\pi_{L, \psi}(r)\theta_{L, X^{\ast}}\Big(J_{1/2}(p_g,z_0)\pi_{X^{\ast}, \psi} ( p_g)A\Big)(\tfrac{k}{N}e^{\ast}) \\
&\stackrel{(\ref{chapeq8})}{=}J_{1/2}(r ,  p_g(z_0)) \epsilon_{r} \tfrac{1}{G(1, \overline{\chi})}\sum_{k \bmod N}\overline{ \chi}(k) \theta_{L, X^{\ast}}\Big(J_{1/2}(p_g,z_0)\pi_{X^{\ast}, \psi} ( p_g)A\Big)(cNke+\tfrac{kd}{N}e^{\ast})\\
&=J_{1/2}(r ,  p_g(z_0)) \epsilon_{r} \chi(d) \tfrac{1}{G(1, \overline{\chi})}\sum_{k \bmod N}\sum_{n\in \Z}\Big(J_{1/2}(p_g,z_0)\pi_{X^{\ast}, \psi} ( p_g)A\Big)( n)\overline{ \chi}(kd)\psi(\tfrac{kdn}{N}) \\
&=J_{1/2}(r ,  p_g(z_0)) \epsilon_{r} \chi(d)  \theta^{1/2}_{\chi}( g)\\
&=\chi(d) \lambda(r)\sqrt{cN^2z_g+d}\cdot\theta^{1/2}_{\chi}( g).
\end{align*} 
\begin{align*}
\theta^{3/2}_{\chi}(r g)&= \theta^{3/2}_{\chi}(r p_g)\\
&= \tfrac{1}{G(1, \overline{\chi})}\sum_{k \bmod N}\overline{ \chi}(k) \theta_{L, X^{\ast}}\Big(J_{3/2}(rp_g, z_0)\pi_{X^{\ast}, \psi} (rp_g)B\Big)(\tfrac{k}{N}e^{\ast})\\
&\stackrel{\widetilde{c}_{X^{\ast}}(r, p_g)=1}{=}\tfrac{1}{G(1, \overline{\chi})}\sum_{k \bmod N}\overline{ \chi}(k)  J_{3/2}(r ,  p_g(z_0))\pi_{L, \psi}(r)\theta_{L, X^{\ast}}\Big(J_{3/2}(p_g,z_0)\pi_{X^{\ast}, \psi} ( p_g)B\Big)(\tfrac{k}{N}e^{\ast}) \\
&\stackrel{(\ref{chapeq8})}{=}J_{3/2}(r ,  p_g(z_0)) \epsilon_{r} \tfrac{1}{G(1, \overline{\chi})}\sum_{k \bmod N}\overline{ \chi}(k) \theta_{L, X^{\ast}}\Big(J_{3/2}(p_g,z_0)\pi_{X^{\ast}, \psi} ( p_g)B\Big)(cNke+\tfrac{kd}{N}e^{\ast})\\
\end{align*} 
\begin{align*}
&=J_{3/2}(r ,  p_g(z_0)) \epsilon_{r} \chi(d) \tfrac{1}{G(1, \overline{\chi})}\sum_{k \bmod N}\sum_{n\in \Z}\Big(J_{3/2}(p_g,z_0)\pi_{X^{\ast}, \psi} ( p_g)B\Big)( n)\overline{ \chi}(kd)\psi(\tfrac{kdn}{N}) \\
&=J_{3/2}(r ,  p_g(z_0)) \epsilon_{r} \chi(d)  \theta^{3/2}_{\chi}( g)\\
&=\chi(d) \lambda(r)\sqrt{cN^2z_g+d}(cN^2z_g+d)\cdot\theta^{3/2}_{\chi}( g).
\end{align*} 

 We conclude: 
 \begin{theorem}\label{mainthm1}
 For   $r=\begin{pmatrix} a& b\\ c & d\end{pmatrix}\in    \Gamma_{\theta} \cap  \Gamma_0(N^2 )$,  we have:
 \begin{itemize}
 \item[(1)] $\theta^{1/2}_{\chi}(r z)=\chi(d) \lambda(r) \sqrt{cz+d} \, \theta^{1/2}_{\chi}( z)$;
 \item[(2)] $\theta^{3/2}_{\chi}(r z)=\chi(d)\lambda(r) \sqrt{cz+d}(cz+d)\,\theta^{3/2}_{\chi}( z)$. 
 \end{itemize}
\end{theorem}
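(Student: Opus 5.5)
The plan is to deduce Theorem~\ref{mainthm1} from the computations just carried out on $\SL_2(\R)$, by transporting them to $\mathbb H$ via the orbit map $g\mapsto z_g=g(z_0)$. The key observation, already noted, is that $\theta^{1/2}_{\chi}(g)$ and $\theta^{3/2}_{\chi}(g)$ depend only on $z_g$. By the computations in Section~\ref{gammatheta} they coincide with the classical series $\theta^{1/2}_{\chi}(z_g)$ and $\theta^{3/2}_{\chi}(z_g)$. Since the action of $\SL_2(\R)$ on $\mathbb H$ is transitive, for any $z\in\mathbb H$ we may take $g=p_z\in P_{>0}(\R)$ with $p_z(i)=z$. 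Then $rg$ satisfies $(rg)(z_0)=r(z)$, so the left-hand side $\theta^{\kappa/2}_\chi(rg)$ equals $\theta^{\kappa/2}_\chi(rz)$.

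Next, I would rewrite the identities obtained in step 2). That step was stated with $r=\begin{pmatrix} a& b\\ cN^2&d\end{pmatrix}$, where the lower-left entry is $cN^2$. In the theorem this entry is renamed $c$, so the factor $\sqrt{cN^2z_g+d}$ becomes $\sqrt{cz+d}$. Two points need care here. First, $\sqrt{cz+d}$ must be understood as in Definition~\ref{defczd}, namely $J_{1/2}(r,z)m_{X^\ast}(r)^{-1}$; by Remark~\ref{remachoArg} this is the principal branch. Second, the identity $J_{1/2}(r,z_g)\epsilon_r=\lambda(r)\sqrt{cz_g+d}$ follows from $\epsilon_r=\widetilde\beta(r)^{-1}$ together with the definition (\ref{lambda}) of $\lambda$. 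For the weight $3/2$ case, $J_{3/2}=J_{1/2}\,J$ supplies the extra factor $(cz+d)$.

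The main point to check is that the step 2) manipulation is legitimate. Three things must hold.
\begin{itemize}
\item[(i)] $\widetilde c_{X^\ast}(r,p_g)=1$, which follows from the lemma on $P_{>0}(\R)$.
\item[(ii)] The cocycle relation from Lemma~\ref{J121J1}, $J_{\kappa/2}(rp_g,z_0)=J_{\kappa/2}(r,p_g(z_0))J_{\kappa/2}(p_g,z_0)$.
\item[(iii)] The intertwining $\theta_{L,X^\ast}\circ\pi_{X^\ast,\psi}(r)=\pi_{L,\psi}(r)\circ\theta_{L,X^\ast}$, combined with (\ref{chapeq8}) in the form $\pi_{L,\psi}(r)F(w)=\epsilon_rF(wr)$.
\end{itemize}
Evaluating at $w=\tfrac{k}{N}e^\ast$ gives $wr=cNk\,e+\tfrac{kd}{N}e^\ast$. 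The $e$-component $cNk$ is an integer lattice vector in $L\cap X$. Using quasi-periodicity in the lattice model, with the phase $\psi(\tfrac12\langle l,w\rangle)$ trivial because $\langle cNke,\tfrac{kd}{N}e^\ast\rangle=cdk^2\in\Z$, we can drop it. Substituting $k\mapsto kd$ (valid since $\gcd(d,N)=1$) produces the factor $\chi(d)$. The evenness or oddness of $\chi$ only ensures the series are nonzero and does not enter the identity.

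I expect the main obstacle to be verifying that (\ref{chapeq8}) holds uniformly on all of $\Gamma_\theta$ with $\epsilon_r=\widetilde\beta(r)^{-1}$, especially on the coset $\Gamma(2)\omega$ where the lattice shift differs. If this proves delicate, I would check it on the generators $u(2)$, $u_-(2)$ and $\omega$ using Cases 1, 4 and 6, and then extend by multiplicativity through Lemma~\ref{ramma1SP}. Combining these steps yields (1) and (2).
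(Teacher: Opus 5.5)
\emph{There is a gap at the step where you drop the lattice shift. Your proposal follows the paper's own argument step by step.} Those steps are: factor through $rp_g$; use $\widetilde c_{X^\ast}(r,p_g)=1$ and the cocycle rule for $J_{\kappa/2}$; pass to the lattice model via (\ref{chapeq8}); evaluate at $\tfrac kN e^\ast$; substitute $k\mapsto kd$.

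Your reason for discarding the lattice vector $cNk\,e$ is wrong. For an integer $m$ one has $\psi(\tfrac12 m)=(-1)^m$, so the integrality of $\langle cNke,\tfrac{kd}{N}e^\ast\rangle=cdk^2$ does not make the phase trivial. Write the lower-left entry of $r$ as $cN^2$, as in your evaluation. Quasi-periodicity then gives
\[
\theta_{L,X^\ast}(F)\bigl(cNk\,e+\tfrac{kd}{N}e^\ast\bigr)=\psi\bigl(-\tfrac12 cdk^2\bigr)\,\theta_{L,X^\ast}(F)\bigl(\tfrac{kd}{N}e^\ast\bigr)=(-1)^{cdk^2}\,\theta_{L,X^\ast}(F)\bigl(\tfrac{kd}{N}e^\ast\bigr).
\]
The paper's proof writes this identification without comment, so the gap is shared with the source. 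The real difficulty here is this phase, not (\ref{chapeq8}).

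For $N$ odd the phase is harmless, but you need a parity argument you did not give. Since $\Gamma_\theta=\Gamma(2)\sqcup\Gamma(2)\omega$, either $r\in\Gamma(2)$, so $cN^2$ is even and hence $c$ is even, or $r\in\Gamma(2)\omega$, so $d$ is even. In both cases $cd$ is even and your argument goes through.

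For $N$ even the statement itself is false. A primitive character forces $4\mid N$. Then $\Gamma_\theta\cap\Gamma_0(N^2)\subseteq\Gamma(2)$, so $d$ is odd, and only odd $k$ contribute to $\sum_k\overline\chi(k)(\cdots)$. The phase is therefore $(-1)^{c}$, and your argument actually produces the multiplier $(-1)^{cd/N^2}\chi(d)\lambda(r)$ in the theorem's notation.

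A concrete counterexample: take $N=4$, $\chi=\chi_{-4}$, $r=\begin{pmatrix}1&0\\16&1\end{pmatrix}$. Then $\chi(d)\lambda(r)=\bigl(\tfrac{32}{1}\bigr)\epsilon_1^{-1}=1$. By Jacobi's identity, $\theta^{3/2}_\chi(z)=2\eta(4z)^3$. Writing $\begin{pmatrix}1&0\\4&1\end{pmatrix}=\omega^{-1}u(-4)\omega$ gives $\eta\bigl(\tfrac{u}{4u+1}\bigr)=e^{-\pi i/3}\sqrt{4u+1}\,\eta(u)$. With $u=4z$ this yields $\theta^{3/2}_\chi(rz)=-(16z+1)^{3/2}\theta^{3/2}_\chi(z)$, contradicting (2).

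So your proof, once the parity argument is added, establishes the theorem for odd $N$ only. For general $N$ the extra factor $(-1)^{cd/N^2}$ must be included.
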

Note that $\chi$ defines a character of $\Gamma_0(N^2)/\Gamma_1(N^2)$.
\subsection{$\overline{\Gamma}_{\theta}$}\label{Gammatheta}
We let $\overline{\SL}_{2}(\mathbb{R})$ act on $\mathbb{H}$ through its projection to $\SL_{2}(\mathbb{R})$.

\begin{lemma}\label{thetachichar}
The map $\overline{\lambda}_{\chi} \colon \overline{\Gamma}_{\theta} \cap \overline{\Gamma}_0(N^2) \to \mathbb{T}$ defined by $[r,\epsilon] \longmapsto \lambda(r)\chi(r)\epsilon$ is a character.
\end{lemma}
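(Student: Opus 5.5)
The plan is to reduce the statement to two facts already available: $\overline{\lambda}$ is a character of $\overline{\Gamma}_{\theta}$ (Lemma~\ref{lambdaoverlin} together with Lemma~\ref{mutirr12}), and $\chi(r):=\chi(d)$ is a character of $\Gamma_0(N^2)$. The product of two characters is a character, and the extra factor $\epsilon$ is already absorbed in $\overline{\lambda}$.

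First, I will interpret $\chi(r)$ for $r=\begin{pmatrix} a&b\\ c&d\end{pmatrix}\in\Gamma_0(N^2)$ as $\chi(d)$, as in Theorem~\ref{mainthm1}. For $r_1,r_2\in\Gamma_0(N^2)$ with product $r_3$, we have $d_3=c_1b_2+d_1d_2\equiv d_1d_2 \pmod{N^2}$, hence also modulo $N$. Thus $r\mapsto\chi(d)$ is multiplicative on $\Gamma_0(N^2)$; it factors through $\Gamma_0(N^2)/\Gamma_1(N^2)\cong(\Z/N^2\Z)^\times$, as the paper notes. Its values lie in $\mathbb T$ because $d$ is prime to $N$.

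Second, I will use the group law in $\overline{\Gamma}_{\theta}\cap\overline{\Gamma}_0(N^2)$:
\[
[r_1,\epsilon_1][r_2,\epsilon_2]=[r_1r_2,\ \overline{c}_{X^{\ast}}(r_1,r_2)\epsilon_1\epsilon_2].
\]
Lemma~\ref{mutirr12} gives $\lambda(r_1)\lambda(r_2)=\lambda(r_1r_2)\overline{c}_{X^{\ast}}(r_1,r_2)$. Since $\overline{c}_{X^{\ast}}$ takes values in $\mu_2$, this is equivalent to
\[
\lambda(r_1r_2)\,\overline{c}_{X^{\ast}}(r_1,r_2)=\lambda(r_1)\lambda(r_2).
\]
Substituting into $\overline{\lambda}_\chi$ of the product yields
\[
\lambda(r_1r_2)\chi(r_1r_2)\,\overline{c}_{X^{\ast}}(r_1,r_2)\epsilon_1\epsilon_2
=\lambda(r_1)\chi(r_1)\epsilon_1\cdot\lambda(r_2)\chi(r_2)\epsilon_2,
\]
which is the character property. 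Equivalently, $\overline{\lambda}_\chi$ is the restriction of $\overline{\lambda}$ to $\overline{\Gamma}_{\theta}\cap\overline{\Gamma}_0(N^2)$ multiplied by the pullback of $\chi$ along the projection to $\Gamma_0(N^2)$.

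Finally, $|\lambda(r)|=1$ because $m_{X^\ast}$ is a root of unity and $|\widetilde\beta|=1$; the latter holds since $|G(d,c)|=1$ by Lemma~\ref{chap8betacd}. Hence the map lands in $\mathbb T$. The only delicate point is the sign convention $\overline{c}_{X^{\ast}}=\overline{c}_{X^{\ast}}^{-1}$ used above, which holds because the cocycle is $\mu_2$-valued; otherwise the argument is routine.
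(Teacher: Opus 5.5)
Your proof is correct and is the paper's argument: the paper writes $\overline{\lambda}_{\chi}=\overline{\lambda}\otimes\chi$, with $\overline{\lambda}$ the character of Lemma~\ref{lambdaoverlin} and $\chi$ viewed as a character of $\overline{\Gamma}_0(N^2)$ through $r\mapsto\chi(d)$. You make this explicit via Lemma~\ref{mutirr12} and the congruence $d_3\equiv d_1d_2 \pmod{N^2}$; the $\mu_2$-valuedness you flag as delicate is not needed, since your rearranged identity is literally the equation of Lemma~\ref{mutirr12}.
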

 \begin{proof}
 Recall the character $\overline{\lambda}$ from Lemma \ref{lambdaoverlin}. The character $\chi$ can also  be viewed as a character of $\overline{\Gamma}_0(N^2 )$. Then $\overline{\lambda}_{\chi}=\overline{\lambda}\otimes \chi$.
 \end{proof}
 According to Theorem \ref{mainthm1}, we obtain the following:
 \begin{theorem}\label{mainthm1'}
 For $\overline{r}=[r,  \epsilon]\in \overline{\Gamma}_{\theta} \cap  \overline{\Gamma}_0(N^2 )$,  we have:
 \begin{itemize}
 \item[(1)] $\theta^{1/2}_{\chi}(\overline{r} z)=\overline{\lambda}_{\chi}(\overline{r})J_{1/2}(\overline{r},z)\theta^{1/2}_{\chi}( z)$;
 \item[(2)] $\theta^{3/2}_{\chi}(\overline{r} z)=\overline{\lambda}_{\chi}(\overline{r})J_{3/2}(\overline{r},z)\theta^{3/2}_{\chi}( z)$. 
 \end{itemize}
\end{theorem}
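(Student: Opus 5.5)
The statement is a direct repackaging of Theorem~\ref{mainthm1} in terms of the covering group $\overline{\Gamma}_{\theta}\cap\overline{\Gamma}_0(N^2)$. The plan is to unwind the definitions of $\overline{\lambda}_{\chi}$ and of the automorphy factor $J_{\kappa/2}(\overline{r},z)$ on $\overline{\SL}_2(\R)$, and observe that the central parameter $\epsilon$ cancels.

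Write $\overline{r}=[r,\epsilon]$ with $r=\begin{pmatrix}a&b\\c&d\end{pmatrix}\in\Gamma_{\theta}\cap\Gamma_0(N^2)$ and $\epsilon\in\mu_2$. Since $\overline{\SL}_2(\R)$ acts on $\mathbb H$ through its projection, $\overline{r}z=rz$. Next expand the two factors:
\begin{itemize}
\item By Lemma~\ref{thetachichar}, $\overline{\lambda}_{\chi}(\overline{r})=\lambda(r)\chi(d)\epsilon$. Here $\chi(r)$ means $\chi(d)$, i.e.\ $\chi$ is viewed as a character of $\Gamma_0(N^2)/\Gamma_1(N^2)$ as noted after Theorem~\ref{mainthm1}.
\item By the definition preceding Corollary~\ref{Automorphicfactor2}, $J_{1/2}(\overline{r},z)=\epsilon^{-1}\sqrt{cz+d}$ and $J_{3/2}(\overline{r},z)=\epsilon^{-1}\sqrt{cz+d}\,(cz+d)$. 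Here $\sqrt{cz+d}$ is the square root of Definition~\ref{defczd}, which by Remark~\ref{remachoArg} agrees with the principal branch used in Theorem~\ref{mainthm1}.
\end{itemize}

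Multiplying, the factors $\epsilon$ and $\epsilon^{-1}$ cancel:
\[
\overline{\lambda}_{\chi}(\overline{r})J_{\kappa/2}(\overline{r},z)=\chi(d)\lambda(r)\sqrt{cz+d}\,(cz+d)^{(\kappa-1)/2}.
\]
This is exactly the multiplier in Theorem~\ref{mainthm1}(1) for $\kappa=1$ and in Theorem~\ref{mainthm1}(2) for $\kappa=3$, so both parts follow immediately.

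The only point needing care is consistency of square-root conventions: the $\sqrt{cz+d}$ appearing in the derivation of Theorem~\ref{mainthm1}, namely $J_{1/2}(r,z_g)m_{X^{\ast}}(r)^{-1}$, must be the same quantity used in the definition of $J_{1/2}(\overline{r},z)$. Both are literally $J_{1/2}(r,z)m_{X^{\ast}}(r)^{-1}$, so they coincide and no further work is needed.
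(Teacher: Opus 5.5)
Your proposal is correct and follows the paper's route: the paper obtains this theorem directly from Theorem~\ref{mainthm1}, and your unwinding of $\overline{\lambda}_{\chi}([r,\epsilon])=\lambda(r)\chi(d)\epsilon$ and $J_{\kappa/2}([r,\epsilon],z)=\epsilon^{-1}\sqrt{cz+d}\,(cz+d)^{(\kappa-1)/2}$ supplies exactly the omitted bookkeeping. In that bookkeeping $\epsilon$ cancels. You also correctly check that both $\sqrt{cz+d}$'s are the same quantity $J_{1/2}(r,z)m_{X^{\ast}}(r)^{-1}$ of Definition~\ref{defczd}.
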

\subsection{ Twisted by $\overline{\mathfrak{w}} $ }\label{twistmathw}

Let $\mathfrak{w} \in \GL_2(\Q)$, and let $\overline{\mathfrak{w}} = (\mathfrak{w}, t_{\mathfrak{w}}) \in \overline{\GL}_2(\Q)$. For $\overline{s} \in \overline{\Gamma}_{\theta} \cap \overline{\Gamma}_0(N^2)$, let $\overline{r} = \overline{\mathfrak{w}}^{-1} \overline{s} \overline{\mathfrak{w}} \in \overline{\mathfrak{w}}^{-1}[\overline{\Gamma}_{\theta} \cap \overline{\Gamma}_0(N^2)]\overline{\mathfrak{w}}$. We define a character on this  subgroup by
\[
 \overline{\lambda}_{\chi}^{\overline{\mathfrak{w}}}(\overline{r}) := \overline{\lambda}_{\chi}(\overline{s}).
\]

\begin{definition}
Define the slash operators for the theta functions as follows:
\begin{itemize}
\item[(1)] $[\theta^{1/2}_{\chi}]^{\overline{\mathfrak{w}}}(z) := \det(\mathfrak{w})^{1/4} J_{1/2}(\overline{\mathfrak{w}},z)^{-1} \theta^{1/2}_{\chi}(\overline{\mathfrak{w}} z)$;
\item[(2)] $[\theta^{3/2}_{\chi}]^{\overline{\mathfrak{w}}}(z) := \det(\mathfrak{w})^{3/4} J_{3/2}(\overline{\mathfrak{w}},z)^{-1} \theta^{3/2}_{\chi}(\overline{\mathfrak{w}} z)$.
\end{itemize}
\end{definition}
\begin{proposition}\label{slashpro}
For $\overline{r} \in \overline{\mathfrak{w}}^{-1}[\overline{\Gamma}_{\theta} \cap \overline{\Gamma}_0(N^2)]\overline{\mathfrak{w}}$, we have:
\begin{itemize}
\item[(1)] $[\theta^{1/2}_{\chi}]^{\overline{\mathfrak{w}}}(\overline{r}z) = J_{1/2}(\overline{r},z)\overline{\lambda}_{\chi}^{\overline{\mathfrak{w}}}(\overline{r})[\theta^{1/2}_{\chi}]^{\overline{\mathfrak{w}}}(z)$; 
\item[(2)] $[\theta^{3/2}_{\chi}]^{\overline{\mathfrak{w}}}(\overline{r}z) = J_{3/2}(\overline{r},z)\overline{\lambda}_{\chi}^{\overline{\mathfrak{w}}}(\overline{r})[\theta^{3/2}_{\chi}]^{\overline{\mathfrak{w}}}(z)$.
\end{itemize}
\end{proposition}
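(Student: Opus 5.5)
The plan is a direct unwinding of the definitions, using Theorem \ref{mainthm1'} and the cocycle property of the automorphic factor (Corollary \ref{Automorphicfactor2}, or Lemma \ref{Automorphicfactor} in the $\GL_2$ setting). I treat the weight $\kappa/2$, $\kappa\in\{1,3\}$, uniformly. The $\det(\mathfrak w)^{\kappa/4}$ factor is a constant and plays no role in the transformation law, so it can be carried along.

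First, write $\overline r=\overline{\mathfrak w}^{-1}\overline s\,\overline{\mathfrak w}$ with $\overline s\in\overline\Gamma_\theta\cap\overline\Gamma_0(N^2)$, so that $\overline{\mathfrak w}\,\overline r=\overline s\,\overline{\mathfrak w}$ in $\overline{\GL}_2(\Q)$. Since $\overline{\GL}_2$ acts on $\mathbb H$ through its projection, $\mathfrak w r z=s\mathfrak w z$. By definition,
\[
[\theta^{\kappa/2}_\chi]^{\overline{\mathfrak w}}(\overline r z)=\det(\mathfrak w)^{\kappa/4}J_{\kappa/2}(\overline{\mathfrak w},rz)^{-1}\theta^{\kappa/2}_\chi(\overline s\,\overline{\mathfrak w}z).
\]
Applying Theorem \ref{mainthm1'} at the point $\mathfrak w z$ gives
\[
\theta^{\kappa/2}_\chi(\overline s\,\mathfrak w z)=\overline\lambda_\chi(\overline s)\,J_{\kappa/2}(\overline s,\mathfrak w z)\,\theta^{\kappa/2}_\chi(\mathfrak w z).
\]
Implicitly this requires $\mathfrak w z\in\mathbb H$, i.e. $\det\mathfrak w>0$, which holds in the intended applications ($\mathfrak w=\diag(t,1)$, $u(N^2)$, etc.).

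Second, use the cocycle identity for $J_{\kappa/2}$ on the covering group, applied to the equal products $\overline{\mathfrak w}\,\overline r=\overline s\,\overline{\mathfrak w}$:
\[
J_{\kappa/2}(\overline{\mathfrak w},rz)\,J_{\kappa/2}(\overline r,z)=J_{\kappa/2}(\overline s,\mathfrak w z)\,J_{\kappa/2}(\overline{\mathfrak w},z).
\]
Solving for $J_{\kappa/2}(\overline s,\mathfrak w z)J_{\kappa/2}(\overline{\mathfrak w},rz)^{-1}$ and substituting, the expression becomes
\[
\overline\lambda_\chi(\overline s)\,J_{\kappa/2}(\overline r,z)\,\det(\mathfrak w)^{\kappa/4}J_{\kappa/2}(\overline{\mathfrak w},z)^{-1}\theta^{\kappa/2}_\chi(\mathfrak w z).
\]
This equals $J_{\kappa/2}(\overline r,z)\,\overline\lambda^{\overline{\mathfrak w}}_\chi(\overline r)\,[\theta^{\kappa/2}_\chi]^{\overline{\mathfrak w}}(z)$, which is the claim.

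The main obstacle is justifying the cocycle identity when $\overline{\mathfrak w}\notin\overline{\SL}_2$. Corollary \ref{Automorphicfactor2} covers only $\overline{\SL}_2(\R)$, so I would invoke Lemma \ref{Automorphicfactor} for $\overline{\GL}_2(\R)$. This requires checking that the $\GL_2$ definition $J_{\kappa/2}(\overline h,z)=\epsilon^{-1}\sgn(h,z)\sqrt{\sgn(\det h)(cz+d)}\,(cz+d)^{(\kappa-1)/2}$ restricts on $\overline{\SL}_2$ to the one used in Theorem \ref{mainthm1'}. On $\mathbb H$ with $\det h=1$, Lemma \ref{sgnalpha}(3) gives $\sgn(h,z)=1$ (when $c=0$, the symbol $(1,a)_\R=1$), so the two definitions agree.

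If instead one wants $\det\mathfrak w>0$ with $\mathfrak w=|\det\mathfrak w|^{1/2}g$, then Lemma \ref{isoGLR} shows the scalar part is central and contributes only $J_{1/2}=|\det\mathfrak w|^{1/4}$ (Lemma \ref{jpz}). This is exactly what the normalization $\det(\mathfrak w)^{\kappa/4}$ compensates, and it gives an alternative reduction to the $\overline{\SL}_2$ cocycle identity.
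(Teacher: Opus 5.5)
Your proposal is correct and follows the paper's proof. Like the paper, you unwind the slash operator, apply Theorem~\ref{mainthm1'} at the point $\mathfrak{w}z$, and use the cocycle identity for $J_{\kappa/2}$ on $\overline{\mathfrak{w}}\,\overline{r}=\overline{s}\,\overline{\mathfrak{w}}$. Two of your additions sharpen the paper's argument. First, the paper cites Corollary~\ref{Automorphicfactor2}, which covers only $\overline{\SL}_2(\R)$, whereas you rightly invoke Lemma~\ref{Automorphicfactor} for $\overline{\GL}_2(\R)$ and check that the two definitions of $J_{\kappa/2}$ agree on $\overline{\SL}_2$. Second, you make explicit that $\det\mathfrak{w}>0$ is needed, a hypothesis the paper leaves implicit.
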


\begin{proof}
(1) \begin{align*}
& J_{1/2}(\overline{\mathfrak{w}},\overline{r}z)^{-1} J_{1/2}(\overline{s},\overline{\mathfrak{w}}z) J_{1/2}(\overline{\mathfrak{w}},z) \\
&\stackrel{\textrm{ Coro.} \ref{Automorphicfactor2} }{=} J_{1/2}(\overline{\mathfrak{w}},\overline{r}z)^{-1} J_{1/2}(\overline{s}\overline{\mathfrak{w}},z) \\
&= J_{1/2}(\overline{\mathfrak{w}},\overline{r}z)^{-1} J_{1/2}(\overline{\mathfrak{w}}\overline{r},z) \\
&= J_{1/2}(\overline{r},z).
\end{align*}
\begin{align*}
[\theta^{1/2}_{\chi}]^{\overline{\mathfrak{w}}}(\overline{r}z)
&= \det(\mathfrak{w})^{1/4} J_{1/2}(\overline{\mathfrak{w}},\overline{r}z)^{-1}\theta^{1/2}_{\chi}(\overline{\mathfrak{w}}\overline{r}z) \\
&=\det(\mathfrak{w})^{1/4} J_{1/2}(\overline{\mathfrak{w}},\overline{r}z)^{-1}\theta^{1/2}_{\chi}(\overline{s}\overline{\mathfrak{w}}z) \\
&= \det(\mathfrak{w})^{1/4}J_{1/2}(\overline{\mathfrak{w}},\overline{r}z)^{-1} J_{1/2}(\overline{s},\overline{\mathfrak{w}}z)\overline{\lambda}_{\chi}(\overline{s})\theta^{1/2}_{\chi}(\overline{\mathfrak{w}}z) \\
&= J_{1/2}(\overline{\mathfrak{w}},\overline{r}z)^{-1} J_{1/2}(\overline{s},\overline{\mathfrak{w}}z) J_{1/2}(\overline{\mathfrak{w}},z)\overline{\lambda}_{\chi}^{\overline{\mathfrak{w}}}(\overline{r})[\theta^{1/2}_{\chi}]^{\overline{\mathfrak{w}}}(z) \\
&= J_{1/2}(\overline{r},z)\overline{\lambda}_{\chi}^{\overline{\mathfrak{w}}}(\overline{r})[\theta^{1/2}_{\chi}]^{\overline{\mathfrak{w}}}(z).
\end{align*}

(2) \begin{align*}
& J_{3/2}(\overline{\mathfrak{w}},\overline{r}z)^{-1} J_{3/2}(\overline{s},\overline{\mathfrak{w}}z) J_{3/2}(\overline{\mathfrak{w}},z) \\
&= J_{3/2}(\overline{\mathfrak{w}},\overline{r}z)^{-1} J_{3/2}(\overline{s}\overline{\mathfrak{w}},z) \\
&= J_{3/2}(\overline{\mathfrak{w}},\overline{r}z)^{-1} J_{3/2}(\overline{\mathfrak{w}}\overline{r},z) \\
&= J_{3/2}(\overline{r},z).
\end{align*}
\begin{align*}
[\theta^{3/2}_{\chi}]^{\overline{\mathfrak{w}}}(\overline{r}z)
&= \det(\mathfrak{w})^{3/4}J_{3/2}(\overline{\mathfrak{w}},\overline{r}z)^{-1}\theta^{3/2}_{\chi}(\overline{\mathfrak{w}}\overline{r}z) \\
&= \det(\mathfrak{w})^{3/4}J_{3/2}(\overline{\mathfrak{w}},\overline{r}z)^{-1}\theta^{3/2}_{\chi}(\overline{s}\overline{\mathfrak{w}}z) \\
&=\det(\mathfrak{w})^{3/4} J_{3/2}(\overline{\mathfrak{w}},\overline{r}z)^{-1} J_{3/2}(\overline{s},\overline{\mathfrak{w}}z)\overline{\lambda}_{\chi}(\overline{s})\theta^{3/2}_{\chi}(\overline{\mathfrak{w}}z) \\
&= J_{3/2}(\overline{\mathfrak{w}},\overline{r}z)^{-1} J_{3/2}(\overline{s},\overline{\mathfrak{w}}z) J_{3/2}(\overline{\mathfrak{w}},z)\overline{\lambda}_{\chi}^{\overline{\mathfrak{w}}}(\overline{r})[\theta^{3/2}_{\chi}]^{\overline{\mathfrak{w}}}(z) \\
&= J_{3/2}(\overline{r},z)\overline{\lambda}_{\chi}^{\overline{\mathfrak{w}}}(\overline{r})[\theta^{3/2}_{\chi}]^{\overline{\mathfrak{w}}}(z).
\end{align*}
\end{proof}
Let $t$ now be  a positive integer, and let  $\mathfrak{w}=\begin{pmatrix} t& 0\\ 0 & 1\end{pmatrix}$. We consider the associated element  $\overline{\mathfrak{w}}=[\mathfrak{w}, 1]\in \overline{\GL}_2(\Q)$. 
Then: 
\[[\theta^{1/2}_{\chi}]^{\overline{\mathfrak{w}}}( z)=t^{1/4}\theta^{1/2}_{\chi}(tz)=t^{1/4}\sum_{n\in \Z} \chi(n)e^{\pi i  z tn^2}\]
\[[\theta^{3/2}_{\chi}]^{\overline{\mathfrak{w}}}( z)=t^{3/4}\theta^{3/2}_{\chi}(tz)=t^{3/4}\sum_{n\in \Z} \chi(n)n e^{ \pi i  z tn^2 }.\]
\begin{definition}\label{defmainprop1}
\begin{itemize}
\item $\overline{\Gamma}_{\theta,t}\stackrel{ Def.}{=}\overline{\mathfrak{w}}^{-1} \overline{\Gamma}_{\theta}\overline{\mathfrak{w}}$ and $\overline{\Gamma}_0(tN^2, t^{-1})\stackrel{ Def.}{=}\overline{\mathfrak{w}}^{-1} \overline{\Gamma}_0(N^2)\overline{\mathfrak{w}}$.
\item $\theta^{1/2}_{\chi, t}( z)\stackrel{ Def.}{=}\sum_{n\in \Z} \chi(n)e^{\pi i  z tn^2}$.
\item $\theta^{3/2}_{\chi,t}( z)\stackrel{ Def.}{=} \sum_{n\in \Z} \chi(n)n e^{ \pi i  z tn^2 }$.
\end{itemize}
\end{definition}
\begin{proposition}\label{mainprop1}
 For $\overline{r}= \overline{\mathfrak{w}}^{-1} \overline{s}\overline{\mathfrak{w}}\in \overline{\Gamma}_{\theta,t}\cap \overline{\Gamma}_0(tN^2, t^{-1})$, we have:
 \begin{itemize}
\item[(1)] $[\theta^{1/2}_{\chi,t}](\overline{r}z) = J_{1/2}(\overline{r},z)\overline{\lambda}_{\chi}(\overline{s})[\theta^{1/2}_{\chi,t}](z)$; 
\item[(2)] $[\theta^{3/2}_{\chi,t}](\overline{r}z) = J_{3/2}(\overline{r},z)\overline{\lambda}_{\chi}(\overline{s})[\theta^{3/2}_{\chi,t}](z)$.
\end{itemize}
\end{proposition}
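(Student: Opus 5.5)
This follows from Proposition \ref{slashpro} specialized to $\mathfrak{w}=\begin{pmatrix} t&0\\0&1\end{pmatrix}$ and $\overline{\mathfrak{w}}=[\mathfrak{w},1]$. The plan is to identify the slashed theta functions with $\theta^{\kappa/2}_{\chi,t}$ up to the constant $t^{\kappa/4}$, and then divide that constant out.

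\textbf{Step 1: compute the automorphy factor of $\overline{\mathfrak{w}}$.} Since $\det\mathfrak{w}=t>0$, we have $\mathfrak{w}=\diag(\sqrt t,\sqrt t)\cdot h(\sqrt t)$, and the $c$-entry of $\mathfrak{w}$ is $0$. By Lemma \ref{jpz}(1), together with the multiplicativity in Lemma \ref{tildehatover}, we get $J_{1/2}(\mathfrak{w},z)=t^{1/4}\,t^{-1/4}=1$; Lemma \ref{prog0} shows the cocycle term is trivial. Alternatively, use $J_{1/2}(\overline{h},z)=\epsilon^{-1}\sgn(h,z)\sqrt{\sgn(\det h)(cz+d)}$ with $c=0$, $d=1$, $\epsilon=1$. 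Here $\sgn(\mathfrak{w},z)=(\sgn z,t)_\R=1$ by Lemma \ref{sgnalpha}(3), so $J_{1/2}(\overline{\mathfrak{w}},z)=1$. Then $J_{3/2}(\overline{\mathfrak{w}},z)=J_{1/2}(\overline{\mathfrak{w}},z)\cdot 1=1$.

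\textbf{Step 2: identify the slashed functions.} Since $\overline{\mathfrak{w}}z=tz$, the definition of the slash operator gives
\[
[\theta^{\kappa/2}_{\chi}]^{\overline{\mathfrak{w}}}(z)=t^{\kappa/4}\,\theta^{\kappa/2}_{\chi}(tz)=t^{\kappa/4}\,\theta^{\kappa/2}_{\chi,t}(z),
\]
as already displayed just before Definition \ref{defmainprop1}.

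\textbf{Step 3: transfer the group and the character.} The group $\overline{\Gamma}_{\theta,t}\cap\overline{\Gamma}_0(tN^2,t^{-1})$ equals $\overline{\mathfrak{w}}^{-1}[\overline{\Gamma}_{\theta}\cap\overline{\Gamma}_0(N^2)]\overline{\mathfrak{w}}$, because conjugation is a bijection and so commutes with intersections. By definition, $\overline{\lambda}^{\overline{\mathfrak{w}}}_{\chi}(\overline{r})=\overline{\lambda}_{\chi}(\overline{s})$ for $\overline{r}=\overline{\mathfrak{w}}^{-1}\overline{s}\overline{\mathfrak{w}}$.

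\textbf{Step 4: conclude.} Apply Proposition \ref{slashpro} and cancel the nonzero constant $t^{\kappa/4}$ from both sides. This gives both (1) and (2).

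The only point needing care is Step 1, namely checking that no stray eighth root of unity enters $J_{1/2}(\overline{\mathfrak{w}},z)$ through $\widetilde s$, $m_{X^{\ast}}$ or $\overline C_{X^{\ast}}$. Either route settles this, because $\mathfrak{w}$ is upper triangular with positive diagonal entries. Even if $J_{1/2}(\overline{\mathfrak{w}},z)$ turned out to be some other constant, it would still cancel: it enters the slash definition as a constant factor, so the conclusion is unaffected either way.
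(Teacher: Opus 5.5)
Your proposal is correct and is essentially the paper's own argument. Proposition~\ref{mainprop1} is Proposition~\ref{slashpro} specialized to $\overline{\mathfrak{w}}=[\diag(t,1),1]$: there $J_{\kappa/2}(\overline{\mathfrak{w}},z)=1$, so $[\theta^{\kappa/2}_{\chi}]^{\overline{\mathfrak{w}}}(z)=t^{\kappa/4}\theta^{\kappa/2}_{\chi,t}(z)$ (the display preceding Definition~\ref{defmainprop1}), and the constant $t^{\kappa/4}$ cancels.
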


\subsection{Induction  to $\overline{\Gamma}_0(N^2)$: $N$ odd}\label{Gammaaonodd} 
 Recall notations from Def. \ref{Mq123}.  By Lemma \ref{coniso}, we have:
\[ \overline{\Gamma}_0(N^2)=\sqcup_{i=1}^3[\overline{\Gamma}_{\theta} \cap  \overline{\Gamma}_0(N^2 )]  \overline{M}_{q_i}.\]
\begin{definition}
$\overline{\gamma}_{\chi}=\Ind_{\overline{\Gamma}_{\theta} \cap  \overline{\Gamma}_0(N^2 )}^{\overline{\Gamma}_0(N^2)} [\overline{\lambda}_{\chi}]^{-1}, \overline{M}_{\chi}=\Ind_{\overline{\Gamma}_{\theta} \cap  \overline{\Gamma}_0(N^2 )}^{\overline{\Gamma}_0(N^2)} \C.$
\end{definition}
\begin{lemma}\label{gammachiodd}
$\overline{\gamma}_{\chi}$ is an irreducible representation.
\end{lemma}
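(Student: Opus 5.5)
We use Mackey's irreducibility criterion, exactly as in the proof of Proposition~\ref{irrdgamma} and Corollary~\ref{irrdgammaodd}. Put $G=\overline{\Gamma}_0(N^2)$, $H=\overline{\Gamma}_{\theta}\cap\overline{\Gamma}_0(N^2)$ and $\sigma=[\overline{\lambda}_{\chi}]^{-1}$, a character of $H$ by Lemma~\ref{thetachichar}. By Lemma~\ref{coniso}(1), $[G:H]=3$ with representatives $\overline{M}_{q_1},\overline{M}_{q_2},\overline{M}_{q_3}$. First we determine the double cosets $H\backslash G/H$. Since $\Gamma_{\theta}\backslash\SL_2(\Z)$ has three points and $\Gamma_{\theta}$ fixes one and permutes the other two transitively (via $\omega$), Lemma~\ref{coniso}(1) transports this to $G$, so $G=H\sqcup HsH$ for any $s\in G\setminus H$. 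We take $s=[u(N^2),1]$ with $s_0=u(N^2)$; one could equally take $u(1)$, which lies in $\Gamma_0(N^2)\setminus\Gamma_\theta$, and $u(1)$ is the element used in Corollary~\ref{irrdgammaodd}.

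Frobenius reciprocity and Mackey's formula give
\[
\Hom_G(\overline{\gamma}_{\chi},\overline{\gamma}_{\chi})\simeq \C\oplus \Hom_{H_s\cap H}\bigl(\overline{\lambda}_{\chi},\overline{\lambda}_{\chi}^{\,s}\bigr),\qquad H_s=s^{-1}Hs .
\]
So it suffices to exhibit one element $\overline{r}\in H_s\cap H$ with $\overline{\lambda}_{\chi}(\overline{r})\neq\overline{\lambda}_{\chi}(s\overline{r}s^{-1})$. Following Corollary~\ref{irrdgammaodd}, we take $s_0=u(1)$ and $\overline{r}=[r,1]$ with $r=s_0^{-1}u_-(-2N^2)s_0$. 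Then both $r$ and $s_0rs_0^{-1}=u_-(-2N^2)$ lie in $\Gamma(2)\cap\Gamma_0(N^2)$.

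Next we compare the two values. Conjugation by $[u(1),1]$ introduces no cocycle factor: $\overline{c}_{X^{\ast}}$ and $\widetilde{c}_{X^{\ast}}$ are trivial against $P_{>0}(\R)$, and $u(1)\in P_{>0}(\R)$. The $\chi$-parts agree, since the lower-right entries are $1-2N^2\equiv 1$ and $1$ modulo $N$, so $\chi=1$ on both. This reduces the comparison to $\lambda(r)$ versus $\lambda(u_-(-2N^2))$, i.e.\ to $m_{X^{\ast}}\widetilde{\beta}^{-1}$, evaluated by Lemma~\ref{chap8betacd} or Lemma~\ref{gammar}. For $u_-(-2N^2)$ we have $c=-2N^2$, $d=1$, giving $\lambda=\left(\frac{-4N^2}{1}\right)\epsilon_1^{-1}=1$. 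For $r=\begin{pmatrix}1+2N^2&2N^2\\-2N^2&1-2N^2\end{pmatrix}$ we get $\lambda(r)=\left(\frac{-4N^2}{1-2N^2}\right)\epsilon^{-1}_{1-2N^2}$. Because $N$ is odd, $1-2N^2\equiv 3\pmod 4$, so $\epsilon^{-1}_{1-2N^2}=-i$, while the Kronecker symbol is $\pm1$. Hence $\lambda(r)\in\{\pm i\}$, which differs from $1$.

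The main obstacle is the sign and branch bookkeeping in this last evaluation: making sure the Kronecker-symbol conventions, including negative $d$, and the $\mu_8$ factor $m_{X^{\ast}}$ are applied consistently, and that the conjugation really carries trivial cocycle. As a cross-check we will compare with the $\widetilde{\beta}$ values $e^{\pm\pi i/4}$ obtained in Corollary~\ref{irrdgammaodd}, whose ratio $i$ matches. With the nonzero discrepancy in hand, the $\Hom$ space is $\C$, and $\overline{\gamma}_{\chi}$ is irreducible.
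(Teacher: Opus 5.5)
Your proposal is correct and is essentially the paper's argument. The paper pulls $\chi$ out, since $\chi$ is a character of all of $\overline{\Gamma}_0(N^2)$, writes $\overline{\gamma}_{\chi}\simeq\chi^{-1}\otimes\Res^{\overline{\SL}_2(\Z)}_{\overline{\Gamma}_0(N^2)}\overline{\gamma}$, and cites Corollary~\ref{irrdgammaodd}; that corollary's proof is exactly your Mackey computation with $s=[u(1),1]$ and the test element built from $u_-(-2N^2)$, so you have inlined it and checked by hand that $\chi$ is trivial on the test element. One small repair to the double-coset step: $\omega\notin H$, so to justify $G=H\sqcup HsH$ use an element of $H$ congruent to $\omega$ modulo $2$, such as $\begin{pmatrix}2&-1\\ N^2&(1-N^2)/2\end{pmatrix}$, which exists because $N$ is odd and swaps the two nontrivial cosets because $\Gamma(2)$ acts trivially on $\Gamma_{\theta}\backslash\SL_2(\Z)$.
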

\begin{proof}
Note that $\chi$ is a character of $\overline{\Gamma}_0(N^2 )$. So
$$\overline{\gamma}_{\chi}\simeq \chi^{-1}\Big(\Ind_{\overline{\Gamma}_{\theta} \cap  \overline{\Gamma}_0(N^2 )}^{\overline{\Gamma}_0(N^2)} [\overline{\lambda}]^{-1}\Big) \simeq \chi^{-1}\otimes \Res_{\overline{\Gamma}_0(N^2)}^{\overline{\SL}_2(\Z)}\overline{\gamma}.$$
Then the result follows from Corollary \ref{irrdgammaodd}.
\end{proof}
For our purposes, we give an explicit analysis of the representation $\overline{\gamma}_{\chi}$. The vector space  $\overline{M}_{\chi}$ consists of the functions $f: \overline{\Gamma}_0(N^2) \longrightarrow \C$ such that $f(\overline{r}\overline{g})=\overline{\lambda}_{\chi}(\overline{r})^{-1}f(\overline{g})$, for $\overline{r}\in\overline{\Gamma}_{\theta} \cap  \overline{\Gamma}_0(N^2 )$ and $\overline{g}\in \overline{\Gamma}_0(N^2)$.  Let $\overline{e}_i$ denote the function of $\overline{M}_{\chi}$, supported on $[\overline{\Gamma}_{\theta} \cap  \overline{\Gamma}_0(N^2 )] \overline{M}_{q_i}$ and $\overline{e}_i(\overline{M}_{q_i})=1$. Then $\overline{M}=\oplus_{i=1}^3 \C \overline{e}_i$.   Write 
    $$\overline{\gamma}_{\chi}(\overline{g})(\overline{e}_1, \overline{e}_2, \overline{e}_3)=(\overline{e}_1, \overline{e}_2, \overline{e}_3)M(\overline{g}),$$ for some $3\times 3$-matrix $M(\overline{g})$. Then 
    \[\begin{array}{rcll}
    \overline{\gamma}_{\chi}: & \overline{\Gamma}_0(N^2) & \longrightarrow &  M_3(\C);\\
                        &\overline{g}                    & \longmapsto     & M(\overline{g}),
    \end{array}\] gives a matrix representation.  More precisely, we have:
\begin{itemize}
 \item \begin{itemize}
\item[(a)]  $\overline{M}_{q_i} \overline{r}=\overline{s}\overline{M}_{q_j}$, for some $\overline{s}\in \overline{\Gamma}_{\theta} \cap  \overline{\Gamma}_0(N^2 )$ iff $[\overline{\Gamma}_{\theta} \cap  \overline{\Gamma}_0(N^2 )] \overline{M}_{q_i} \overline{r}=[\overline{\Gamma}_{\theta} \cap  \overline{\Gamma}_0(N^2 )] \overline{M}_{q_j}$.
\item[(b)] If the above condition holds, then $ [\overline{\gamma}_{\chi}(\overline{r})(\overline{e}_j)](\overline{M}_{q_i})=\overline{\lambda}_{\chi}(\overline{s})^{-1}$, and $ \supp [\overline{\gamma}_{\chi}(\overline{r})](\overline{e}_j)=[\overline{\Gamma}_{\theta} \cap  \overline{\Gamma}_0(N^2 )] \overline{M}_{q_i}$. So $[\overline{\gamma}_{\chi}(\overline{r})](\overline{e}_j)=\overline{\lambda}_{\chi}(\overline{s})^{-1}\overline{e}_i$, and $[\overline{\gamma}_{\chi}(\overline{r}^{-1})](\overline{e}_i)=\overline{\lambda}_{\chi}(\overline{s})\overline{e}_j$.
\end{itemize}
 \item   
 \begin{itemize}
\item[(c)] $\overline{M}_{q_i}\overline{M}_{q_j}=\overline{s}\overline{M}_{q_r}$, for some $\overline{s}\in \overline{\Gamma}_{\theta} \cap  \overline{\Gamma}_0(N^2 )$ iff $[\overline{\Gamma}_{\theta} \cap  \overline{\Gamma}_0(N^2 )]\overline{M}_{q_i}\overline{M}_{q_j}=[\overline{\Gamma}_{\theta} \cap  \overline{\Gamma}_0(N^2 )]\overline{M}_{q_r}$.
    \item [(d)] If the above condition holds, then $ [\overline{\gamma}_{\chi}(\overline{M}_{q_j})(\overline{e}_r)](\overline{M}_{q_i})=\overline{\lambda}_{\chi}(\overline{s})^{-1}$,  and $ \supp \overline{\gamma}_{\chi}(\overline{M}_{q_j})(\overline{e}_r)=[\overline{\Gamma}_{\theta} \cap  \overline{\Gamma}_0(N^2 )] \overline{M}_{q_i}$.  So $[\overline{\gamma}_{\chi}(\overline{M}_{q_j})](\overline{e}_r)=\overline{\lambda}_{\chi}(\overline{s})^{-1}\overline{e}_i$, and  $[\overline{\gamma}_{\chi}(\overline{M}_{q_j}^{-1})](\overline{e}_i)=\overline{\lambda}_{\chi}(\overline{s})\overline{e}_r$.
    \end{itemize} 
\end{itemize}
We now define:
$$\Theta^{1/2}_{\chi}:  \mathbb{H} \longrightarrow M_{1 \times 3}(\C); z\longmapsto \Big([\theta^{1/2}_{\chi}]^{\overline{M}_{q_1}}(z),[\theta^{1/2}_{\chi}]^{\overline{M}_{q_2}}(z), [\theta^{1/2}_{\chi}]^{\overline{M}_{q_3}}(z)\Big),$$
$$\Theta^{3/2}_{\chi}:  \mathbb{H} \longrightarrow M_{1 \times 3}(\C); z\longmapsto \Big([\theta^{3/2}_{\chi}]^{\overline{M}_{q_1}}(z),[\theta^{3/2}_{\chi}]^{\overline{M}_{q_2}}(z), [\theta^{3/2}_{\chi}]^{\overline{M}_{q_3}}(z)\Big).$$
\subsubsection{}\label{Mqpr1} Let $\overline{r}=(r,t_r)\in \overline{\Gamma}_{\theta} \cap  \overline{\Gamma}_0(N^2 )$. Let $z=z_g\in \mathbb{H}$.  Write $\overline{M}_{q_i}\overline{r}=\overline{s}\overline{M}_{q_j}$, for some $\overline{M}_{q_j}$ , $\overline{s}\in \overline{\Gamma}_{\theta} \cap  \overline{\Gamma}_0(N^2 )$. 
\begin{align*}
[\theta^{1/2}_{\chi}]^{\overline{M}_{q_i}}(\overline{r}z)
&=J_{1/2}(\overline{M}_{q_i},\overline{r}z)^{-1}\theta^{1/2}_{\chi}(\overline{M}_{q_i}\overline{r}z)\\
&=J_{1/2}(\overline{M}_{q_i},\overline{r}z)^{-1}\theta^{1/2}_{\chi}(\overline{s}\overline{M}_{q_j}z)\\
&=J_{1/2}(\overline{M}_{q_i},\overline{r}z)^{-1}J_{1/2}(\overline{s} ,\overline{M}_{q_j}z)\overline{\lambda}_{\chi}(\overline{s}) \theta^{1/2}_{\chi}( \overline{M}_{q_j}z)\\
&=J_{1/2}(\overline{M}_{q_i},\overline{r}z)^{-1}J_{1/2}(\overline{s} ,\overline{M}_{q_j}z)J_{1/2}(\overline{M}_{q_j}, z)\overline{\lambda}_{\chi}(\overline{s})[\theta^{1/2}_{\chi}]^{\overline{M}_{q_j}}(z)\\
&=J_{1/2}(\overline{r},z)\overline{\lambda}_{\chi}(\overline{s})[\theta^{1/2}_{\chi}]^{\overline{M}_{q_j}}(z).
\end{align*}
\subsubsection{}\label{Mqpr2} Let $\overline{M}_{q_i}, \overline{M}_{q_j}\in \overline{\mathcal{M}}_{2}$. Write $\overline{M}_{q_i}\overline{M}_{q_j}=\overline{s}\overline{M}_{q_r}$, for some $\overline{M}_{q_r}\in \overline{\mathcal{M}}_{2}$ , $\overline{s}\in \overline{\Gamma}_{\theta} \cap  \overline{\Gamma}_0(N^2 )$.  
\begin{align*}
[\theta^{1/2}_{\chi}]^{\overline{M}_{q_i}}(\overline{M}_{q_j}z)
&=J_{1/2}(\overline{M}_{q_i},\overline{M}_{q_j}z)^{-1}\theta^{1/2}_{\chi}(\overline{M}_{q_i}\overline{M}_{q_j}z)\\
&=J_{1/2}(\overline{M}_{q_i},\overline{M}_{q_j}z)^{-1}\theta^{1/2}_{\chi}(\overline{s}\overline{M}_{q_r}z)\\
&=J_{1/2}(\overline{M}_{q_i},\overline{M}_{q_j}z)^{-1}J_{1/2}(\overline{s} ,\overline{M}_{q_r}z)\overline{\lambda}_{\chi}(\overline{s}) \theta^{1/2}_{\chi}( \overline{M}_{q_r}z)\\
&=J_{1/2}(\overline{M}_{q_i},\overline{M}_{q_j}z)^{-1}J_{1/2}(\overline{s} ,\overline{M}_{q_r}z)J_{1/2}(\overline{M}_{q_r}, z) \overline{\lambda}_{\chi}(\overline{s}) [\theta^{1/2}_{\chi}]^{\overline{M}_{q_r}}(z)\\
&=J_{1/2}(\overline{M}_{q_j},z)\overline{\lambda}_{\chi}(\overline{s}) [\theta^{1/2}_{\chi}]^{\overline{M}_{q_r}}(z).
\end{align*}
\begin{theorem}\label{maintheorem1}
Let $\overline{r}\in \overline{\Gamma}_0(N^2)$. Then:
 \begin{itemize}
 \item[(1)]$\Theta^{1/2}_{\chi}(\overline{r}z)=J_{1/2}(\overline{r},z)\Theta^{1/2}_{\chi}(z)\overline{\gamma}_{\chi}(\overline{r}^{-1})$.
 \item[(2)] $\Theta^{3/2}_{\chi}(\overline{r}z)=J_{1/2}(\overline{r},z)\Theta^{3/2}_{\chi}(z)\overline{\gamma}_{\chi}(\overline{r}^{-1})$.
 \end{itemize}
\end{theorem}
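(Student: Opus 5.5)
The plan is to deduce Theorem~\ref{maintheorem1} from the two componentwise identities in \S\ref{Mqpr1} and \S\ref{Mqpr2}, combined with the explicit matrix description (a)--(d) of $\overline{\gamma}_{\chi}$. (Part (2) should presumably read with $J_{3/2}$; the argument is the same.) Both sides of the claimed identity are compatible with multiplication. The left side satisfies $\Theta(\overline{r}_1\overline{r}_2z)$, which we evaluate in two steps. The right side behaves well because $J_{1/2}(\overline{r}_1\overline{r}_2,z)=J_{1/2}(\overline{r}_1,\overline{r}_2z)J_{1/2}(\overline{r}_2,z)$ by Corollary~\ref{Automorphicfactor2}, and because $\overline{\gamma}_{\chi}((\overline{r}_1\overline{r}_2)^{-1})=\overline{\gamma}_{\chi}(\overline{r}_2^{-1})\overline{\gamma}_{\chi}(\overline{r}_1^{-1})$. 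So if the identity holds for $\overline{r}_1$ and $\overline{r}_2$, it holds for their product. Since $\overline{\Gamma}_0(N^2)=\sqcup_i[\overline{\Gamma}_{\theta}\cap\overline{\Gamma}_0(N^2)]\overline{M}_{q_i}$, it therefore suffices to verify the identity for $\overline{r}\in\overline{\Gamma}_{\theta}\cap\overline{\Gamma}_0(N^2)$ and for $\overline{r}=\overline{M}_{q_j}$.

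\textbf{Case $\overline{r}\in\overline{\Gamma}_{\theta}\cap\overline{\Gamma}_0(N^2)$.} Right multiplication by $\overline{r}$ permutes the cosets: for each $i$ we write $\overline{M}_{q_i}\overline{r}=\overline{s}_i\overline{M}_{q_{j(i)}}$. By \S\ref{Mqpr1}, the $i$-th entry of $\Theta^{1/2}_{\chi}(\overline{r}z)$ equals $J_{1/2}(\overline{r},z)\overline{\lambda}_{\chi}(\overline{s}_i)[\theta^{1/2}_{\chi}]^{\overline{M}_{q_{j(i)}}}(z)$. On the other side, (b) gives $\overline{\gamma}_{\chi}(\overline{r}^{-1})\overline{e}_i=\overline{\lambda}_{\chi}(\overline{s}_i)\overline{e}_{j(i)}$. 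Hence the $i$-th column of $M(\overline{r}^{-1})$ has the single nonzero entry $\overline{\lambda}_{\chi}(\overline{s}_i)$, in row $j(i)$. Consequently the $i$-th entry of the row vector $\Theta^{1/2}_{\chi}(z)M(\overline{r}^{-1})$ is $\overline{\lambda}_{\chi}(\overline{s}_i)[\theta^{1/2}_{\chi}]^{\overline{M}_{q_{j(i)}}}(z)$, and the two sides agree.

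\textbf{Case $\overline{r}=\overline{M}_{q_j}$.} Here we use \S\ref{Mqpr2} together with (d), which is the same bookkeeping. Write $\overline{M}_{q_i}\overline{M}_{q_j}=\overline{s}\,\overline{M}_{q_{r(i)}}$. Then the $i$-th entry of the left side is $J_{1/2}(\overline{M}_{q_j},z)\overline{\lambda}_{\chi}(\overline{s})[\theta^{1/2}_{\chi}]^{\overline{M}_{q_{r(i)}}}(z)$. By (d), $\overline{\gamma}_{\chi}(\overline{M}_{q_j}^{-1})\overline{e}_i=\overline{\lambda}_{\chi}(\overline{s})\overline{e}_{r(i)}$, which gives the same matrix entry.

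The main point needing care is that the slash $[\cdot]^{\overline{M}_{q_i}}$ is defined with $\det=1$, and that the transformation law of Theorem~\ref{mainthm1'} is applied for $\overline{s}$ acting at the point $\overline{M}_{q_j}z$. This is exactly what \S\ref{Mqpr1}--\ref{Mqpr2} carry out, using $J_{1/2}(\overline{s}\,\overline{M}_{q_j},z)=J_{1/2}(\overline{M}_{q_i}\overline{r},z)$. The only remaining concern is the convention that columns of $M(\cdot)$ record images of the basis vectors. I would check it on one explicit coset, such as $\overline{M}_{q_1}=[u(N^2),1]$, before writing out the general case. For weight $3/2$ one replaces $J_{1/2}$ by $J_{3/2}$ and $\theta^{1/2}_\chi$ by $\theta^{3/2}_\chi$ throughout; the cocycle property of $J_{3/2}$ from Corollary~\ref{Automorphicfactor2} is all that is needed.
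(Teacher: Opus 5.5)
Your proposal is correct and follows the paper's proof essentially verbatim. The paper also checks the identity for $\overline{r}\in\overline{\Gamma}_{\theta}\cap\overline{\Gamma}_0(N^2)$ via \S\ref{Mqpr1} and (a)(b), and for $\overline{r}=\overline{M}_{q_i}$ via \S\ref{Mqpr2} and (c)(d). It then handles $\overline{r}=\overline{s}\,\overline{M}_{q_i}$ using the cocycle property of the automorphic factor together with the multiplicativity of $\overline{\gamma}_{\chi}$.

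Your remark that part (2) must read $J_{3/2}$ instead of $J_{1/2}$ is also right (compare Theorem~\ref{maintheorem1even}). As literally stated, part (2) would be off by the factor $cz+d$.
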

\begin{proof}
1a) If $\overline{r}\in  \overline{\Gamma}_{\theta} \cap  \overline{\Gamma}_0(N^2 )$,  then by the results of Section \ref{Mqpr1} and the above (a)(b), the equality holds.\\
1b)  If  $\overline{r}=\overline{M}_{q_i}$, by the results of Section \ref{Mqpr2} and the above (c)(d), the equality holds.\\
1c) If $\overline{r}=\overline{s}\overline{M}_{q_i}$, for some $\overline{s}\in \overline{\Gamma}_{\theta} \cap  \overline{\Gamma}_0(N^2 )$, then 
\begin{align*}
\Theta^{1/2}_{\chi}(\overline{r}z)&=\Theta^{1/2}_{\chi}(\overline{s}\overline{M}_{q_i}z)\\
&=J_{1/2}(\overline{s},\overline{M}_{q_i}z)\Theta^{1/2}_{\chi}(\overline{M}_{q_i}z)\overline{\gamma}_{\chi}(\overline{s}^{-1})\\
&=J_{1/2}(\overline{s},\overline{M}_{q_i}z)J_{1/2}(\overline{M}_{q_i},z)\Theta^{1/2}_{\chi}(z)\overline{\gamma}_{\chi}(\overline{M}_{q_i}^{-1})\overline{\gamma}_{\chi}(\overline{s}^{-1})\\
&=J_{1/2}(\overline{r},z)\Theta^{1/2}_{\chi}(z)\overline{\gamma}_{\chi}(\overline{r}^{-1}).
\end{align*}
The  proof of Part (2) is similar.
\end{proof}
\subsection{Induction   to $\overline{\Gamma}_0(N^2)$: $N$ even}\label{Gammaaoneven}
 In this case, let $\overline{M}_{q_1}=[u(1), 1]$ and $\overline{M}_{q_2}=[1, 1]$.  By Lemma \ref{conisoeve}, we have:
\[ \overline{\Gamma}_0(N^2)=\sqcup_{i=1}^2[\overline{\Gamma}_{\theta} \cap  \overline{\Gamma}_0(N^2 )]  \overline{M}_{q_i}, \qquad [\overline{\Gamma}_{\theta} \cap  \overline{\Gamma}_0(N^2 )] \unlhd \overline{\Gamma}_0(N^2).\]

\begin{definition}
$\overline{\gamma}_{\chi}=\Ind_{\overline{\Gamma}_{\theta} \cap  \overline{\Gamma}_0(N^2 )}^{\overline{\Gamma}_0(N^2)} [\overline{\lambda}_{\chi}]^{-1}, \overline{M}_{\chi}=\Ind_{\overline{\Gamma}_{\theta} \cap  \overline{\Gamma}_0(N^2 )}^{\overline{\Gamma}_0(N^2)} \C.$
\end{definition}
\begin{lemma}\label{exenchigamma}
\begin{itemize}
\item[(1)]  If $N\equiv 2(\bmod 4)$, then $\overline{\gamma}_{\chi}$ is an irreducible representation of dimension $2$.
\item[(2)]  If $N\equiv 0(\bmod 4)$,  then $\overline{\gamma}_{\chi}$ is a reducible representation of dimension $2$ with two different irreducible components.
\end{itemize}
\end{lemma}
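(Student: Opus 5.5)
Since $H:=\overline{\Gamma}_{\theta}\cap\overline{\Gamma}_0(N^2)$ is normal of index $2$ in $G:=\overline{\Gamma}_0(N^2)$, with $G=H\sqcup H\overline{M}_{q_1}$ and $\overline{M}_{q_1}=[u(1),1]$ (Lemma \ref{conisoeve}), Mackey/Clifford theory for an index-two normal subgroup applies. The induced representation $\overline{\gamma}_{\chi}=\Ind_H^G[\overline{\lambda}_{\chi}]^{-1}$ is $2$-dimensional. It is irreducible exactly when the conjugate character $\overline{\lambda}_{\chi}^{s}(\overline{r}):=\overline{\lambda}_{\chi}(s\overline{r}s^{-1})$, with $s=\overline{M}_{q_1}$, differs from $\overline{\lambda}_{\chi}$ on $H$. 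Otherwise it splits as the sum of the two distinct extensions of $\overline{\lambda}_{\chi}^{-1}$ to $G$, which differ by the sign character of $G/H$. So in both cases the task reduces to comparing $\overline{\lambda}_{\chi}$ with its $u(1)$-conjugate on $H$.

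\textbf{Step 1.} Conjugating by $[u(1),1]$ leaves $\chi$ unchanged, because $\chi(r)=\chi(d)$ and the lower-right entry of $u(1)^{-1}ru(1)$ is $d-c\equiv d \pmod N$ (as $N\mid c$). The cocycle factor is also trivial: $\overline{c}_{X^{\ast}}$ involves $u(1)$ only through upper-triangular terms, and by Lemma \ref{relacto}(2) $\widetilde{c}_{X^{\ast}}(u(1),\cdot)$ and $\widetilde{c}_{X^{\ast}}(\cdot,u(1))$ equal $1$, just as $[s,1][r,t][s^{-1},1]=[r^{s^{-1}},t]$ in the proof of Prop.~\ref{irrdgammapm}. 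Hence the question becomes whether $\lambda(u(1)ru(1)^{-1})=\lambda(r)$ for all $r\in\Gamma_{\theta}\cap\Gamma_0(N^2)$. Since $N$ is even, every such $r$ has $c\equiv0\pmod 4$ and odd $d$, so it lies in $\Gamma(2)$ up to the structure in item (6). We can therefore use the explicit formula $\lambda(r)=\left(\frac{2c}{d}\right)\epsilon_d^{-1}$ from Lemma \ref{gammar}.

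\textbf{Step 2 (computation).} For $r=\begin{pmatrix}a&b\\c&d\end{pmatrix}$, the conjugate $u(1)ru(1)^{-1}$ has lower row $(c,\,d-c)$. The comparison is therefore between $\left(\frac{2c}{d}\right)\epsilon_d^{-1}$ and $\left(\frac{2c}{d-c}\right)\epsilon_{d-c}^{-1}$.
\begin{itemize}
\item If $N\equiv0\pmod4$, then $16\mid c$. Hence $d-c\equiv d\pmod{16}$, so $\epsilon_{d-c}=\epsilon_d$ and the factor $\left(\frac{2}{\cdot}\right)$ agrees. Quadratic reciprocity for the Kronecker symbol (with the sign conventions fixed earlier) gives $\left(\frac{c}{d-c}\right)=\left(\frac{c}{d}\right)$, since the symbol depends only on $d$ modulo $4c$ up to signs handled by $(c,d)_{\R}$. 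So the two characters coincide, and we get two distinct constituents, distinguished by the value $\pm\sqrt{\cdot}$ on $s$.
\item If $N\equiv2\pmod4$, I exhibit one explicit element, e.g. $r=u_-(-N^2)$, or $u_-(N^2)$ times a suitable element, with $c=\pm N^2\equiv4\pmod 8$. For this element $d$ and $d-c$ differ by $4$ modulo $8$, so $\epsilon$ or $\left(\frac{2}{\cdot}\right)$ changes. I compute $\widetilde{\beta}$ via Lemma \ref{chap8betacd}, exactly as in Corollary \ref{irrdgammaodd}, and check that the two values differ, say $e^{\pm\pi i/4}$ versus something else. This yields $\overline{\lambda}_{\chi}\neq\overline{\lambda}_{\chi}^{s}$, hence irreducibility.
\end{itemize}

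\textbf{Main obstacle.} The delicate point is the reciprocity and sign bookkeeping in the $N\equiv0\pmod4$ case, namely verifying $\lambda(r)=\lambda(r^{s})$ for \emph{all} $r$ with the paper's nonstandard conventions for $\left(\frac{0}{\pm1}\right)$ and the choice of $\Arg$. To handle it, I would first check it on generators of $\Gamma_{\theta}\cap\Gamma_0(N^2)$ and then use multiplicativity of both characters.
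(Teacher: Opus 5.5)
Your reduction is the paper's: $H$ is normal of index $2$ in $G$, so $\overline{\gamma}_{\chi}$ is irreducible exactly when $\overline{\lambda}_{\chi}$ differs from its conjugate by $[u(1),1]$. That conjugation fixes $\chi$ and the $\mu_2$-coordinate, by the lemma on $P_{>0}(\R)$ following Lemma~\ref{relacto}. Since $H\subseteq\overline{\Gamma}(2)$, everything reduces to comparing $\left(\frac{2c}{d}\right)\epsilon_d^{-1}$ with the same expression at $(c,d-c)$. The paper conjugates the other way and gets $(c,c+d)$, which is equivalent.

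Part (1) goes through once you actually compute it. For $r=u_-(-N^2)$ one has $\lambda(r)=1$. The conjugate $u(1)ru(1)^{-1}$ has lower row $(-N^2,1+N^2)$ with $1+N^2\equiv 5\pmod 8$, so its value is $\left(\frac{-2N^2}{1+N^2}\right)=\left(\frac{2}{1+N^2}\right)=-1$. The two values are therefore $1$ and $-1$, not $e^{\pm\pi i/4}$. The paper uses the witness $\begin{pmatrix}-1-2N^2&2\\ N^2&-1\end{pmatrix}$ instead, by the same mechanism.

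The gap is in part (2), which is the real content of the lemma.
\begin{itemize}
\item Your reason for $\left(\frac{c}{d-c}\right)=\left(\frac{c}{d}\right)$ is that the symbol depends only on $d$ modulo $4c$. This does not apply, because $d-c\not\equiv d\pmod{4c}$.
\item Moreover $d$ and $d-c$ can have opposite signs, so the convention $\left(\frac{c}{-1}\right)=\sgn(c)$ genuinely enters. Invoking $(c,d)_{\R}$ does not settle this.
\item The fallback of checking on generators of $\Gamma_{\theta}\cap\Gamma_0(N^2)$ is not carried out. It would require an explicit generating set for every $N$, which neither you nor the paper has.
\end{itemize}

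The missing ingredient is the sharper fact that for $c\equiv 0\pmod 4$ the Kronecker symbol $n\mapsto\left(\frac{c}{n}\right)$ is a Dirichlet character modulo $|c|$ on all nonzero integers, with value $\sgn(c)$ at $-1$. To see this, write $c=D_0f^2$ with $D_0$ a fundamental discriminant. Since $d-c\equiv d\pmod{|c|}$, this gives $\left(\frac{c}{d-c}\right)=\left(\frac{c}{d}\right)$ at once. Also $\epsilon_{d-c}=\epsilon_d$ and $\left(\frac{2}{d-c}\right)=\left(\frac{2}{d}\right)$ because $8\mid c$.

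The paper avoids this fact and argues directly. It shows $\left(\frac{2c}{d}\right)\left(\frac{2c}{c+d}\right)=\left(\frac{2c}{(c+d)d}\right)=1$ by splitting $2c$ into its sign, its power of $2$ and its odd part $r_1$. It then uses $(c+d)d\equiv d^2$ and quadratic reciprocity, with a case split on the sign of $(c+d)d$.

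With the character fact, both parts become a single observation. Because $4\mid c$ in all cases, only the factor $\left(\frac{2}{\cdot}\right)$ can change under $d\mapsto d-c$, and it changes precisely when $c\equiv 4\pmod 8$. This also justifies your heuristic in part (1) that a change in $\left(\frac{2}{\cdot}\right)$ cannot be compensated by $\left(\frac{c}{\cdot}\right)$.
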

\begin{proof}
Let $\overline{r}=[\begin{pmatrix}
a&b \\
c& d
\end{pmatrix}, \epsilon]\in \overline{\Gamma}_{\theta} \cap  \overline{\Gamma}_0(N^2 ) \subseteq \overline{\Gamma}(2)$. Then
\[\overline{M}_{q_1}^{-1}\overline{r}\overline{M}_{q_1}=[u(-1)ru(1), \epsilon]=[\begin{pmatrix}
a - c & a + b - c - d \\
c & c + d
\end{pmatrix}, \epsilon],\]
\[
\chi(\overline{M}_{q_{1}}^{-1}\overline{r}\,\overline{M}_{q_{1}})=\chi(\overline{r}),
\]
\begin{gather*}
\overline{\lambda}(\overline{r})
=\epsilon\Bigl(\frac{2c}{d}\Bigr)\epsilon_{d}^{-1},
\qquad
\overline{\lambda}(\overline{M}_{q_{1}}^{-1}\overline{r}\,\overline{M}_{q_{1}})
=\epsilon\Bigl(\frac{2c}{c+d}\Bigr)\epsilon_{c+d}^{-1}.
\end{gather*}
Since $N^2\mid c$, $4\mid c$ and  $\epsilon_{c+d}^{-1}=\epsilon_{d}^{-1} $.  To prove the result, it suffices to check  whether $ \left(\tfrac{2c}{d}\right)$ and $\Bigl(\frac{2c}{c+d}\Bigr)$ are equal or not.
\begin{itemize}
\item[(1)] If $N\equiv 2(\bmod 4)$, then $N^2\equiv 4(\bmod 8)$.  We choose:
\[\begin{pmatrix}
a&b \\
c& d
\end{pmatrix}=\begin{pmatrix}
-1-2N^2&2 \\
N^2& -1
\end{pmatrix}.\]
Then:
\[(\tfrac{2c}{d})=(\tfrac{2N^2}{-1})=1, \quad \Bigl(\frac{2c}{c+d}\Bigr)=\Bigl(\frac{2N^2}{N^2-1}\Bigr)=\Bigl(\frac{2}{N^2-1}\Bigr)=-1.\]
\item[(2)] If $N\equiv 0(\bmod 4)$, then $N^2\equiv 0(\bmod 16)$. Write $c=N^2 r=[\pm 2^{k}] r_1$ with an odd positive number $r_1$.  If $c=0$, $\left(\tfrac{2c}{d}\right)=\Bigl(\frac{2c}{c+d}\Bigr)$. In the following, we assume that $c\neq 0$. Then 
\[\Bigl(\tfrac{2c}{d}\Bigr)\Bigl(\frac{2c}{c+d}\Bigr)=\Bigl(\frac{2c}{(c+d)d}\Bigr)=\Bigl( \frac{\pm 2^k}{(c+d)d}\Bigr)\Bigl(\frac{r_1}{(c+d)d}\Bigr).\]
\begin{itemize}
\item[(i)] If $(c+d)d>0$, then $(c+d)d\equiv cd+d^2\equiv d^2\equiv 1(\bmod 8)$. So $\Bigl(\frac{2}{(c+d)d}\Bigr)=1$.
\item[(ii)] If $(c+d)d<0$, then $-(c+d)d\equiv -cd-d^2\equiv -d^2\equiv -1(\bmod 8)$. So $\Bigl(\frac{2}{(c+d)d}\Bigr)=\Bigl(\frac{2}{-1}\Bigr)\Bigl(\frac{2}{-(c+d)d}\Bigr)=1$.
\item[(iii)] If $(c+d)d>0$, then $(c+d)d\equiv cd+d^2\equiv d^2\equiv 1(\bmod 4)$. So $\Bigl(\frac{-1}{(c+d)d}\Bigr)=1$.
\item[(iv)] If $(c+d)d<0$, then $-(c+d)d\equiv -cd-d^2\equiv -d^2\equiv 3(\bmod 4)$. So $\Bigl(\frac{-1}{(c+d)d}\Bigr)=\Bigl(\frac{-1}{-1}\Bigr)\Bigl(\frac{-1}{-(c+d)d}\Bigr)=(-1)(-1)=1$.
\item[(v)]  If $(c+d)d>0$,  then $\Bigl(\frac{r_1}{(c+d)d}\Bigr)=\Bigl(\frac{(c+d)d}{r_1}\Bigr) (-1)^{\tfrac{(r_1-1)}{2}\tfrac{((c+d)d-1)}{2}}=\Bigl(\frac{d^2}{r_1}\Bigr) (-1)^{\tfrac{(r_1-1)}{2}\tfrac{((c+d)d-1)}{2}}= (-1)^{\tfrac{(r_1-1)}{2}\tfrac{((c+d)d-1)}{2}}=1$.
\item[(vi)]  If $(c+d)d<0$,  then $\Bigl(\frac{r_1}{(c+d)d}\Bigr)=\Bigl(\frac{r_1}{-(c+d)d}\Bigr)=\Bigl(\frac{-(c+d)d}{r_1}\Bigr) (-1)^{\tfrac{(r_1-1)}{2}\tfrac{(-(c+d)d-1)}{2}}=\Bigl(\frac{-d^2}{r_1}\Bigr) (-1)^{\tfrac{(r_1-1)}{2}\tfrac{(-(c+d)d-1)}{2}}=\Bigl(\frac{-1}{r_1}\Bigr) (-1)^{\tfrac{(r_1-1)}{2}\tfrac{(-d^2-1)}{2}}= (-1)^{\tfrac{(r_1-1)}{2}\tfrac{(-d^2+1)}{2}}=1$.
\end{itemize}
Hence, $\Bigl(\tfrac{2c}{d}\Bigr)\Bigl(\frac{2c}{c+d}\Bigr)=1$ and $\Bigl(\tfrac{2c}{d}\Bigr)=\Bigl(\frac{2c}{c+d}\Bigr)$.
\end{itemize}
\end{proof}
We now define:
$$\Theta^{1/2}_{\chi}:  \mathbb{H} \longrightarrow M_{1 \times 2}(\C); z\longmapsto \Big([\theta^{1/2}_{\chi}]^{\overline{M}_{q_1}}(z),[\theta^{1/2}_{\chi}]^{\overline{M}_{q_2}}(z)\Big),$$
$$\Theta^{3/2}_{\chi}:  \mathbb{H} \longrightarrow M_{1 \times 2}(\C); z\longmapsto \Big([\theta^{3/2}_{\chi}]^{\overline{M}_{q_1}}(z),[\theta^{3/2}_{\chi}]^{\overline{M}_{q_2}}(z)\Big).$$
Analogous to the odd case, we have:
\begin{theorem}\label{maintheorem1even}
Let $\overline{r}\in \overline{\Gamma}_0(N^2)$. Then:
 \begin{itemize}
 \item[(1)]$\Theta^{1/2}_{\chi}(\overline{r}z)=J_{1/2}(\overline{r},z)\Theta^{1/2}_{\chi}(z)\overline{\gamma}_{\chi}(\overline{r}^{-1})$;
 \item[(2)] $\Theta^{3/2}_{\chi}(\overline{r}z)=J_{3/2}(\overline{r},z)\Theta^{3/2}_{\chi}(z)\overline{\gamma}_{\chi}(\overline{r}^{-1})$.
 \end{itemize}
\end{theorem}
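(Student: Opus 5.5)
We follow the odd case (Theorem \ref{maintheorem1}) verbatim, now with the two-element coset system $\overline{M}_{q_1}=[u(1),1]$, $\overline{M}_{q_2}=[1,1]$ furnished by Lemma \ref{conisoeve}. Write $H=\overline{\Gamma}_{\theta}\cap\overline{\Gamma}_0(N^2)$. The matrix realization of $\overline{\gamma}_{\chi}$ on the basis $\overline{e}_1,\overline{e}_2$ of $\overline{M}_{\chi}$ (supported on $H\overline{M}_{q_i}$, normalized by $\overline{e}_i(\overline{M}_{q_i})=1$) obeys the same rules (a)--(d) as before, since those rules only use the definition of induction and the coset decomposition. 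Since $H$ is normal of index $2$, the bookkeeping simplifies. An element $\overline{r}\in H$ satisfies $\overline{M}_{q_i}\overline{r}=\overline{s}\,\overline{M}_{q_i}$ with $\overline{s}=\overline{M}_{q_i}\overline{r}\,\overline{M}_{q_i}^{-1}\in H$, so $\overline{\gamma}_{\chi}(\overline{r})$ is diagonal. The products $\overline{M}_{q_i}\overline{M}_{q_j}$ lie in $H\overline{M}_{q_{r}}$ with $r$ determined by parity.

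Step one is the transformation law on $H$. For $\overline{r}\in H$ and $\overline{M}_{q_i}\overline{r}=\overline{s}\,\overline{M}_{q_j}$, we repeat the computation of Section \ref{Mqpr1}. We write $[\theta^{\kappa/2}_{\chi}]^{\overline{M}_{q_i}}(\overline{r}z)=J_{\kappa/2}(\overline{M}_{q_i},\overline{r}z)^{-1}\theta^{\kappa/2}_{\chi}(\overline{s}\,\overline{M}_{q_j}z)$, apply Theorem \ref{mainthm1'} to $\overline{s}$, and collapse the automorphy factors with Corollary \ref{Automorphicfactor2}, using $\overline{s}\,\overline{M}_{q_j}=\overline{M}_{q_i}\overline{r}$. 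This gives $J_{\kappa/2}(\overline{r},z)\overline{\lambda}_{\chi}(\overline{s})[\theta^{\kappa/2}_{\chi}]^{\overline{M}_{q_j}}(z)$, which by (b) is the $i$-th entry of $J_{\kappa/2}(\overline{r},z)\Theta^{\kappa/2}_{\chi}(z)\overline{\gamma}_{\chi}(\overline{r}^{-1})$.

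Step two treats coset representatives. For $\overline{r}=\overline{M}_{q_j}$ we use the computation of Section \ref{Mqpr2} together with (c),(d). The only products that need writing down are $\overline{M}_{q_1}\overline{M}_{q_1}=[u(2),\overline{c}_{X^{\ast}}(u(1),u(1))]=[u(2),1]\in H$ and the trivial products involving $\overline{M}_{q_2}$. Here $u(2)\in\Gamma_{\theta}\cap\Gamma_0(N^2)$, and the cocycle value is $1$ by Lemma \ref{relacto}(1), since $c=0$ throughout.

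Step three handles a general element. We write $\overline{r}=\overline{s}\,\overline{M}_{q_i}$ with $\overline{s}\in H$ and chain steps one and two exactly as in part 1c) of Theorem \ref{maintheorem1}. The cocycle relation for $J_{\kappa/2}$ (Corollary \ref{Automorphicfactor2}) and the homomorphism property of $\overline{\gamma}_{\chi}$ then give $\Theta^{\kappa/2}_{\chi}(\overline{r}z)=J_{\kappa/2}(\overline{r},z)\Theta^{\kappa/2}_{\chi}(z)\overline{\gamma}_{\chi}(\overline{M}_{q_i}^{-1})\overline{\gamma}_{\chi}(\overline{s}^{-1})$, and the last two factors combine to $\overline{\gamma}_{\chi}(\overline{r}^{-1})$.

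The argument is routine. The only point requiring care is consistency of the convention $f(\overline{r}\overline{g})=\overline{\lambda}_{\chi}(\overline{r})^{-1}f(\overline{g})$ with the right action by $\overline{\gamma}_{\chi}(\overline{r}^{-1})$ on row vectors: one must check that (b) and (d) produce $\overline{\lambda}_{\chi}(\overline{s})$ rather than its inverse in the entries. Part (2) is identical, with $J_{3/2}$ and $\theta^{3/2}_{\chi}$ replacing the weight-$1/2$ objects.
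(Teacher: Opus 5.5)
Your proposal is correct and follows the paper's route. The paper proves the even case only as ``analogous to the odd case'', and that odd-case proof is exactly your three steps: elements of $\overline{\Gamma}_{\theta}\cap\overline{\Gamma}_0(N^2)$ via Section~\ref{Mqpr1} and rules (a),(b); the coset representatives via Section~\ref{Mqpr2} and (c),(d); and a general element $\overline{s}\,\overline{M}_{q_i}$ by chaining, now with the two representatives $[u(1),1],[1,1]$ from Lemma~\ref{conisoeve}.
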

\subsection{Tensor induction   to $\overline{\Gamma}_0(N^2)$}\label{tensorGammaaonodd}
Let us give the following definitions:
\begin{itemize}
\item If $N$ is odd, 
\[[\theta^{1/2}_{\chi}]^{\otimes}(z)\stackrel{\mathrm{Def.}}{=}[\theta^{1/2}_{\chi}]^{\overline{M}_{q_1}}(z)[\theta^{1/2}_{\chi}]^{\overline{M}_{q_2}}(z)[\theta^{1/2}_{\chi}]^{\overline{M}_{q_3}}(z),\]
\[[\theta^{3/2}_{\chi}]^{\otimes}(z)\stackrel{\mathrm{Def.}}{=}[\theta^{3/2}_{\chi}]^{\overline{M}_{q_1}}(z)[\theta^{3/2}_{\chi}]^{\overline{M}_{q_2}}(z)[\theta^{3/2}_{\chi}]^{\overline{M}_{q_3}}(z).\]
\item If $N$ is even,
\[[\theta^{1/2}_{\chi}]^{\otimes}(z)\stackrel{\mathrm{Def.}}{=}[\theta^{1/2}_{\chi}]^{\overline{M}_{q_1}}(z)[\theta^{1/2}_{\chi}]^{\overline{M}_{q_2}}(z),\]
\[[\theta^{3/2}_{\chi}]^{\otimes}(z)\stackrel{\mathrm{Def.}}{=}[\theta^{3/2}_{\chi}]^{\overline{M}_{q_1}}(z)[\theta^{3/2}_{\chi}]^{\overline{M}_{q_2}}(z).\]
\end{itemize}
\begin{theorem}\label{Twistedbypro2}
Let $g\in G=\overline{\Gamma}_0(N^2)$, $H=\overline{\Gamma}_{\theta} \cap  \overline{\Gamma}_0(N^2 )$.
\begin{itemize}
\item[(1)] If $N$ is odd, then  $[\theta^{\kappa/2}_{\chi}]^{\otimes} (gz)=J_{\kappa/2}(g,z)^3 [\theta^{\kappa/2}_{\chi}]^{\otimes}(z)  \overline{\lambda}_{\chi}(V_{G\to H}(g))$, for $\kappa=1$ or $3$.
\item[(2)] If $N$ is even, then $[\theta^{\kappa/2}_{\chi}]^{\otimes} (gz)=J_{\kappa/2}(g,z)^2 [\theta^{\kappa/2}_{\chi}]^{\otimes}(z)  \overline{\lambda}_{\chi}(V_{G\to H}(g))$, for $\kappa=1$ or $3$.
  \end{itemize}
   \end{theorem}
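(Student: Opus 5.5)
The plan is to derive the statement directly from the vector-valued transformation law in Theorems \ref{maintheorem1} and \ref{maintheorem1even}, combined with the description of tensor induction in Section \ref{tensorind} and Example \ref{exampI}. Let $m=[G:H]$, so $m=3$ for odd $N$ and $m=2$ for even $N$. Write $\Theta^{\kappa/2}_{\chi}(z)=(\theta_1(z),\dots,\theta_m(z))$ with $\theta_i=[\theta^{\kappa/2}_{\chi}]^{\overline{M}_{q_i}}$. By definition, $[\theta^{\kappa/2}_{\chi}]^{\otimes}(z)=\prod_{i}\theta_i(z)$.

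First we read off the shape of the matrices $\overline{\gamma}_{\chi}(g^{-1})$. From items (a)--(d) of Section \ref{Gammaaonodd}, or the analogous items in the even case, every $\overline{\gamma}_{\chi}(g)$ in the basis $\overline{e}_1,\dots,\overline{e}_m$ is a monomial matrix. Concretely, $\overline{\gamma}_{\chi}(g)\overline{e}_j=\overline{\lambda}_{\chi}(\overline{s}_{i})^{-1}\overline{e}_i$, where $\overline{M}_{q_i}g=\overline{s}_i\overline{M}_{q_j}$ with $\overline{s}_i\in H$.

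Setting $x_i=\overline{M}_{q_i}^{-1}$, we have $G=\bigsqcup x_iH$, and the relation $\overline{M}_{q_i}g=\overline{s}_i\overline{M}_{q_j}$ reads $g^{-1}x_i=x_j\overline{s}_i^{-1}$. This is precisely formula \eqref{eqgx} for $g^{-1}$, with $h_{g^{-1},x_i}=\overline{s}_i^{-1}$.

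Next we apply the theorem. Theorem \ref{maintheorem1} (resp. \ref{maintheorem1even}) gives
\[
\theta_i(gz)=J_{\kappa/2}(g,z)\,\lambda_i\,\theta_{\tau(i)}(z),
\]
where $\tau$ is a permutation of $\{1,\dots,m\}$ and each $\lambda_i$ is one of the nonzero entries of $\overline{\gamma}_{\chi}(g^{-1})$. Taking the product over $i$ and using that $\tau$ is a bijection, we obtain
\[
[\theta^{\kappa/2}_{\chi}]^{\otimes}(gz)=J_{\kappa/2}(g,z)^m\Big(\prod_i\lambda_i\Big)[\theta^{\kappa/2}_{\chi}]^{\otimes}(z).
\]
The product $\prod_i\lambda_i$ of the nonzero entries of a monomial matrix is the scalar by which that matrix acts on $\overline{e}_1\otimes\cdots\otimes\overline{e}_m$ under the permutation-twisted tensor action. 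By Example \ref{exampI} (with $\overline{\mathfrak{w}}=1$ and $G,H$ replaced by $\overline{\Gamma}_0(N^2)$ and $H$), this is the tensor-induced character $\otimes\ind_H^G\overline{\lambda}_{\chi}^{-1}$. Proposition \ref{otimescorr} identifies it with $\overline{\lambda}_{\chi}^{-1}\circ V_{G\to H}$.

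It remains to evaluate at $g^{-1}$. Since the transfer is a homomorphism into an abelian group, $\overline{\lambda}_{\chi}^{-1}(V_{G\to H}(g^{-1}))=\overline{\lambda}_{\chi}(V_{G\to H}(g))$, which is the claimed factor. Alternatively, one can compute directly: $\prod_i\lambda_i=\prod_i\overline{\lambda}_{\chi}(\overline{s}_i)=\overline{\lambda}_{\chi}\big(\prod_i h_{g^{-1},x_i}\big)^{-1}$, which reduces to the same expression.

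The main obstacle is bookkeeping. One must keep straight $g$ versus $g^{-1}$, the row-vector convention $\Theta(z)\overline{\gamma}_{\chi}(g^{-1})$, and left versus right coset representatives, since these determine whether the outcome is $\overline{\lambda}_{\chi}$ or $\overline{\lambda}_{\chi}^{-1}$ evaluated at $V(g)$ or at $V(g^{-1})$. To avoid this, I would carry out the direct product computation above rather than relying on the conventions of Example \ref{exampI}. I would also check the result on one element, for instance $g\in H$, where the transfer in the normal case (even $N$) is $\prod_i \overline{M}_{q_i}g\overline{M}_{q_i}^{-1}$ and the answer can be confirmed via Section \ref{Mqpr1}.

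Finally, for $\kappa=3$ the even-case theorem uses $J_{3/2}$. The odd-case Theorem \ref{maintheorem1}(2) as printed has $J_{1/2}$, which should be read as $J_{3/2}$, consistent with Proposition \ref{slashpro}. The argument is then identical.
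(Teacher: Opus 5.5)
Your proposal is correct and is essentially the paper's proof. The paper takes the componentwise identities $[\theta^{\kappa/2}_{\chi}]^{\overline{M}_{q_i}}(gz)=J_{\kappa/2}(g,z)\,\overline{\lambda}_{\chi}(\overline{s}_i)\,[\theta^{\kappa/2}_{\chi}]^{\overline{M}_{q_j}}(z)$ from Sections \ref{Mqpr1}--\ref{Mqpr2} (equivalently, the columns of the monomial matrix $\overline{\gamma}_{\chi}(g^{-1})$ in Theorems \ref{maintheorem1} and \ref{maintheorem1even}), multiplies them, and, with $x_i=\overline{M}_{q_i}^{-1}$, identifies $\prod_i\overline{\lambda}_{\chi}(\overline{s}_i)=\overline{\lambda}_{\chi}(V_{G\to H}(g^{-1}))^{-1}=\overline{\lambda}_{\chi}(V_{G\to H}(g))$, exactly as in your direct computation; your reading of $J_{1/2}$ as $J_{3/2}$ in Theorem \ref{maintheorem1}(2) is also the right correction.
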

 \begin{proof}  
 The proof is similar to Section \ref{exampI}. Retain the notations  from Sect. \ref{tensorind}. In this case, $ x_i^{-1}=\overline{M}_{q_i}$, $\sigma=\overline{\lambda}_{\chi}$.  Write $$g^{-1}\overline{M}^{-1}_{q_i}=\overline{M}^{-1}_{q_{\tau_g(i)}}h^{-1}_{g,x_{ \tau_g(i)}}, \quad \overline{M}_{q_i} g=h_{g,x_{ \tau_g(i)}}\overline{M}_{q_{\tau_g(i)}}, \quad \quad  h_{g,x_{ \tau_g(i)}}\in H.$$ \\
 1) By Sections \ref{Mqpr1}--\ref{Mqpr2} 
   \[  [\theta^{\kappa/2}_{\chi}]^{\overline{M}_{q_i}}(gz)=J_{\kappa/2}(g,z)[\theta^{\kappa/2}_{\chi}]^{\overline{M}_{q_{\tau_g(i)}}}(z) \sigma(h_{g,x_{ \tau_g(i)}}).\]
   Hence:
   \begin{align*}
   &[\theta^{\kappa/2}_{\chi}]^{\overline{M}_{q_1}}(gz)[\theta^{\kappa/2}_{\chi}]^{\overline{M}_{q_2}}(gz) [\theta^{\kappa/2}_{\chi}]^{\overline{M}_{q_3}}(gz)\\
   &=J_{\kappa/2}(g,z)^3[\theta^{\kappa/2}_{\chi}]^{\overline{M}_{q_1}}(z)[\theta^{\kappa/2}_{\chi}]^{\overline{M}_{q_2}}(z) [\theta^{\kappa/2}_{\chi}]^{\overline{M}_{q_3}}(z) \sigma^{-1}(V_{G\to H}(g^{-1}))\\
   &=J_{\kappa/2}(g,z)^3[\theta^{\kappa/2}_{\chi}]^{\overline{M}_{q_1}}(z)[\theta^{\kappa/2}_{\chi}]^{\overline{M}_{q_2}}(z) [\theta^{\kappa/2}_{\chi}]^{\overline{M}_{q_3}}(z) \sigma(V_{G\to H}(g)).
   \end{align*}
 2) Similarly.
   \end{proof}
   \section{Siegel modular forms associated to Weil representations: $[\overline{\Gamma}^{\pm}_{\theta} \cap \overline{\Gamma}^{\pm}_0(N^2)]$ \& $\overline{\Gamma}^{\pm}_0(N^2)$}\label{GammathetaN2PM}
   Unless otherwise stated, the following conventions are used throughout this section:
\begin{itemize}
\item $z\in\mathbb H^{\pm}$ and $g=\begin{pmatrix}a&b\\ c&d\end{pmatrix}\in\SL_2^{\pm}(\R)$;
\item $g_i=\begin{pmatrix}a_i&b_i\\ c_i&d_i\end{pmatrix}\in\SL^{\pm}_2(\R)$ for $i=1,2$;
\item the character $\psi$ is fixed to be $\psi_0$;
\item $N$ is a positive integer number, and $\chi$ is a  primitive Dirichlet character modulo $N$;
\item Define a Galois action of $\Gal(\mathbb{C}/\mathbb{R})=\langle \sigma \rangle$ on $\mathbb{C}$ by
$
z\mapsto z^{\sigma}
$.
\end{itemize}
\subsection{Weil representations of $\overline{\GL}_2(\R)$}
Note that 
\[  \overline{\GL}_2(\R) \subseteq \overline{\GL}^8_2(\R), \quad \quad \overline{\GL}^8_2(\R) \stackrel{\iota}{\simeq } \widetilde{\GL}_2(\R); [h,\epsilon] \longmapsto [h,m_{X^{\ast}}(h)\epsilon].\]
We modify the Weil representation from  the $8$-fold covering group $\widetilde{\GL}_2(\R)$  to the  two-fold covering groups $\overline{\GL}_2(\R)$ 
via the above embedding map. We denote  Weil representations of   $\overline{\GL}_2(\R) $ by $\overline{\Pi}_{X^{\ast},\psi}$ and $\overline{\Pi}_{L,\psi}$.  They  differ from  $\Pi_{X^{\ast},\psi}$ and $\Pi_{L,\psi}$ by the factors $m_{X^{\ast}}(h)$. For simplicity,  we will take these  factors in account without  repeating the corresponding results.  
\subsection{Automorphic factors}\label{dewiaf}
We recall   $\overline{\delta}^+$ and $\overline{\delta}^-$ from (\ref{deltapmrep}).  By Lemma \ref{weilDelignegroup}, there exists a group embedding of
$\overline{\Oa}_2(\R)$ into $\mathbb{W}_{\R}$. We extend $\overline{\delta}^+$ and $\overline{\delta}^-$ to $\mathbb{W}_{\R}$. Let $z^{k}\overline{z}^l$ denote the character of  $\C^{\times}$  by sending $z$ to $ z^{k}\overline{z}^l$.  Now define:
\[ (\overline{\delta}^+=\Ind^{\mathbb{W}_{\R}}_{\C^{\times}} z^{-1}, \mathcal{D}^+=\Ind^{\mathbb{W}_{\R}}_{\C^{\times}}\C), \qquad ( \overline{\delta}^-=\Ind^{\mathbb{W}_{\R}}_{\C^{\times}} z^{-3} , \mathcal{D}^-=\Ind^{\mathbb{W}_{\R}}_{\C^{\times}}\C).\]
\begin{lemma}
The restrictions of $\overline{\delta}^+$ and $\overline{\delta}^-$ to $\overline{\Oa}_2(\R)$ are compatible with  the representations from (\ref{deltapmrep}).
\end{lemma}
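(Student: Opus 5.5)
We prove the lemma by comparing the two representations explicitly on a generating set of $\overline{\Oa}_2(\R)$. Lemma~\ref{weilDelignegroup} embeds $\overline{\Oa}_2(\R)$ into $\mathbb{W}_{\R}$ via $\iota$, and by the remark following it, $\iota$ sends $[g,\epsilon]\in\overline{\SO}_2(\R)$ to the element $\epsilon\,\overline{s}(g)\in\C^{\times}$. The lift of $h_{-1}$ is sent to $\omega_\sigma$, with the normalisation $[h_{-1},i]\mapsto\omega_\sigma$ fixed there. Since $\overline{\Oa}_2(\R)=\overline{\SO}_2(\R)\sqcup\overline{\SO}_2(\R)\,\overline{h}_{-1}$ and $\mathbb{W}_{\R}=\C^{\times}\sqcup\C^{\times}\omega_\sigma$, the image $\iota(\overline{\SO}_2(\R))$ lies in $\C^{\times}$, and $\overline{h}_{-1}$ lands in the non-trivial coset. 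Consequently the restriction of the induced representation is again induced, by Mackey:
\[
\operatorname{Res}_{\overline{\Oa}_2(\R)}\operatorname{Ind}_{\C^{\times}}^{\mathbb{W}_{\R}}\chi\simeq\operatorname{Ind}_{\overline{\SO}_2(\R)}^{\overline{\Oa}_2(\R)}(\chi\circ\iota).
\]
The key steps are as follows.

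\begin{enumerate}
\item We compute $\chi\circ\iota$ on $\overline{\SO}_2(\R)$. For $\chi=z^{-1}$ this gives $[g,\epsilon]\mapsto(\epsilon\,\overline{s}(g))^{-1}=\epsilon\,\overline{s}(g)^{-1}$, using $\epsilon=\pm1$. This is exactly the character $\overline{s}^{-1}\cdot1_{\mu_2}$ appearing in (\ref{deltapmrep}).

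\item For $\chi=z^{-3}$ we get $\epsilon\,\overline{s}(g)^{-3}$. Since $\overline{s}(g)^2=\alpha_{z_0}(g)=u$ by (\ref{eqalpha}), we have $\overline{s}(g)^{-3}=\overline{s}(g)^{-1}u^{-1}=\overline{s}(g)^{-1}\overline{u}$. This matches $\overline{s}^{-1}\overline{u}\cdot1_{\mu_2}$.

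\item We then invoke (\ref{deltapmrep}), which identifies $\overline{\delta}^{\pm}$ on $\overline{\Oa}_2(\R)$ with exactly these induced representations, to conclude.
\end{enumerate}

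The main obstacle is the Mackey step. We must make sure that $\iota$ is injective and that $\iota^{-1}(\C^{\times})=\overline{\SO}_2(\R)$, so that the double coset space $\overline{\Oa}_2(\R)\backslash\mathbb{W}_{\R}/\C^{\times}$ is a single point and the restriction is irreducible-compatible. Injectivity follows from the construction in Lemma~\ref{weilDelignegroup}. The map $f_1$ is an isomorphism on $\overline{\SO}_2(\R)$, and the element $[h_{-1},i]$, which maps outside $\C^{\times}$, lies in the non-identity coset of $\overline{\SO}_2(\R)$.

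A secondary check concerns the branch points $g=-I$ (that is, $t=-\pi$), where $\overline{s}$ is defined with the convention $\Arg\in[-\pi,\pi)$. There we verify $\overline{s}(-I)^2=-1=u$ directly, so that the identity in step 2 holds without exception.
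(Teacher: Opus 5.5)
Your proposal is correct and follows the paper's route. The paper recalls the embedding $[g,\epsilon]\mapsto\epsilon^{-1}\overline{s}(g)$ on $\overline{\SO}_2(\R)$ and $[h_{-1},1]\mapsto\diag(i,-i)\,\omega_\sigma$, together with (\ref{deltapmrep}), and then concludes ``by observation''; your Mackey restriction step and your explicit computation of $z^{-1}\circ\iota$ and $z^{-3}\circ\iota$ (using $\overline{s}(g)^2=u$) are exactly what makes that observation precise. One small correction: in the two-fold cover the lift $[h_{-1},1]$ maps to $\diag(i,-i)\,\omega_\sigma$ rather than to $\omega_\sigma$ itself, since $[h_{-1},i]$ lives only in the $\mu_8$-cover. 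This does not affect the argument, because all you use is that this element lands in the coset $\C^{\times}\omega_\sigma$.
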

\begin{proof}
Recall:
\[\overline{\SO}_2(\R) \hookrightarrow \C^{\times}; [g=\begin{pmatrix}a&-b\\b&a\end{pmatrix}, \epsilon] \longmapsto  \epsilon^{-1} \overline{s}(g)=\epsilon^{-1}\sqrt{u};\]
\[[h_{-1},1] \longmapsto \begin{pmatrix}i&0\\0&-i\end{pmatrix} \omega_{\sigma}.\]
Form (\ref{deltapmrep}), \[\overline{\delta}^+\simeq \Ind_{\overline{\SO}_2(\R)}^{\overline{\Oa}_2(\R)} (\overline{s}^{-1}\cdot 1_{\mu_2}), \quad \overline{\delta}^-\simeq \Ind_{\overline{\SO}_2(\R)}^{\overline{\Oa}_2(\R)} (\overline{s}^{-1} \overline{u}\cdot 1_{\mu_2}).\]
By observation, the result holds.
\end{proof}
Recall  $\mathbb{W}_{\R}=\C^{\times} \cup \C^{\times} \omega_{\sigma}$ from Sect. \ref{theWdg1}.  Let $e_1$ and $e_2$ be a basis of functions in  $\mathcal{D}^+$ such that (1) $\supp e_1=\C^{\times}$ and $\supp e_2=\C^{\times} \omega_{\sigma}$, (2) $e_1(1)=1=e_2(i\omega_{\sigma})$.   For any $t\in \C^{\times}$, 
\[ \overline{\delta}^+(t) e_1(1)= e_1(t)=t^{-1} e_1(1), \quad \overline{\delta}^+(t) e_2(i\omega_{\sigma})= e_2(i\omega_{\sigma}t)= e_2(\overline{t}i\omega_{\sigma})=\overline{t}^{-1} e_2(i\omega_{\sigma}).\]
So the restriction of $\overline{\delta}^+$ to $\C^{\times}$ is isomorphic to the representation:
 $$z\to \begin{pmatrix} z^{-1}&0 \\ 0& \overline{z}^{-1}\end{pmatrix}.$$ Similarly,  the restriction of $\overline{\delta}^-$ to $\C^{\times}$ is isomorphic to the representation:
 $$z\to \begin{pmatrix}z^{-3} &0 \\ 0& \overline{z}^{-3}\end{pmatrix}.$$
For $\overline{h} \in \overline{\GL}_2(\R)$ and $z\in \mathbb{H}^{\pm}$, let us define:
\begin{itemize}
\item[(1)] $J_{\delta^+}(\overline{h}, z)=  (\overline{\delta}^+)^{-1}(J_{1/2}(\overline{h}, z))= \begin{pmatrix} J_{1/2}(\overline{h}, z)&0 \\ 0& \overline{J_{1/2}(\overline{h}, z)}\end{pmatrix}\\
=\begin{pmatrix} \epsilon^{-1}\sgn(h,z)\sqrt{\sgn(\det h)(cz+d)}&0 \\ 0& \overline{\epsilon^{-1}\sgn(h,z)\sqrt{\sgn(\det h)(cz+d)}}\end{pmatrix}$ ;
\item[(2)] $J_{\delta^-}(\overline{h}, z)= \sgn(h) [(\overline{\delta}^-)^{-1}( J_{1/2}(\overline{h}, z))]=\begin{pmatrix}\sgn(h)J_{1/2}(\overline{h}, z)^3&0 \\ 0& \sgn(h)\overline{J_{1/2}(\overline{h}, z)}^3\end{pmatrix}$\\
    $=\begin{pmatrix}J_{3/2}(\overline{h}, z) &0 \\ 0&\overline{J_{3/2}(\overline{h}, z)}\end{pmatrix}$\\
$=\begin{pmatrix}\epsilon^{-1}\sgn(h,z)(\sqrt{\sgn(\det h)(cz+d)})(cz+d) &0 \\ 0&\epsilon^{-1}\sgn(h,z)\overline{\big(\sqrt{\sgn(\det h)(cz+d)}\big)(cz+d)}\end{pmatrix}$. 
\end{itemize}
\begin{lemma}
$J_{\delta^{\pm}}(\overline{h}_1\overline{h}_2, z)=J_{\delta^{\pm}}(\overline{h}_1,\overline{h}_2 z)J_{\delta^{\pm}}(\overline{h}_2, z)$, for $\overline{h}_i \in \overline{\GL}_2(\R)$.
\end{lemma}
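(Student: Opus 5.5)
The plan is to reduce the matrix cocycle identity to the scalar one already established in Lemma~\ref{Automorphicfactor}. Both $J_{\delta^+}$ and $J_{\delta^-}$ are diagonal $2\times 2$ matrices. The first diagonal entry is the scalar factor $J_{\kappa/2}(\overline{h},z)$ with $\kappa=1$ or $3$. The second entry is its complex conjugate. Diagonal matrices multiply entrywise, so the claim $J_{\delta^{\pm}}(\overline{h}_1\overline{h}_2,z)=J_{\delta^{\pm}}(\overline{h}_1,\overline{h}_2z)J_{\delta^{\pm}}(\overline{h}_2,z)$ splits into two scalar identities, one per diagonal slot.

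For the $(1,1)$ entry, I invoke Lemma~\ref{Automorphicfactor}(1) directly for $\delta^+$:
\[J_{1/2}(\overline{h}_1\overline{h}_2,z)=J_{1/2}(\overline{h}_1,h_2z)J_{1/2}(\overline{h}_2,z).\]
For $\delta^-$, I invoke Lemma~\ref{Automorphicfactor}(2), which gives the same identity for $J_{3/2}$. Here I use the last displayed form of $J_{\delta^-}$, namely $\operatorname{diag}(J_{3/2},\overline{J_{3/2}})$, as its definition. The intermediate expression with $\sgn(h)J_{1/2}^3$ should be checked to agree with it, using $J_{1/2}(\overline{h},z)^2=\pm(cz+d)$ from Lemma~\ref{J12squr}. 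I would only remark on this consistency rather than rely on it.

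For the $(2,2)$ entry, I apply complex conjugation to the same scalar identities. Conjugation is multiplicative, so
\[\overline{J_{\kappa/2}(\overline{h}_1\overline{h}_2,z)}=\overline{J_{\kappa/2}(\overline{h}_1,h_2z)}\;\overline{J_{\kappa/2}(\overline{h}_2,z)}.\]
Here $\overline{h}_2z$ means $h_2z$, the action through the projection to $\GL_2(\R)$, which is the same action on $\mathbb H^{\pm}$ that Lemma~\ref{Automorphicfactor} uses. The two diagonal identities together give the lemma.

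The only genuine content is Lemma~\ref{Automorphicfactor}. That lemma rests in turn on Lemma~\ref{tildehatover}(2) and relation (\ref{chap3293inter}) linking $\overline{C}_{X^{\ast}}$ and $\widetilde{C}_{X^{\ast}}$, and all of these are available. I expect the main point of care, rather than a real obstacle, to be the $\delta^-$ case. There one must make sure the matrix used is $\operatorname{diag}(J_{3/2},\overline{J_{3/2}})$ and not a form whose sign convention $\sgn(h)$ could break multiplicativity. If the sign version were intended, I would check that $\sgn(h)$, presumably $\sgn(\det h)$, is multiplicative; then it factors through the cocycle identity harmlessly.
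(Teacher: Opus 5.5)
Your proposal is correct and follows the paper's route. The paper's proof says only that the identity follows from Lemma~\ref{Automorphicfactor}, and your argument unpacks exactly that: treat the two diagonal entries separately, getting the $(1,1)$ entry from Lemma~\ref{Automorphicfactor}(1) or (2) and the $(2,2)$ entry by complex conjugation. Your side check that $\sgn(\det h)\,J_{1/2}(\overline{h},z)^3=J_{3/2}(\overline{h},z)$, via Lemma~\ref{J12squr} and $\epsilon^2=1$, is also fine.
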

\begin{proof}
It follows from Lemma \ref{Automorphicfactor}.
\end{proof}
\subsection{ $\overline{\Oa}_2(\R)$}We recall the functions $A^{\pm}$, $B^{\pm}$ from Lemma~\ref{ABM}. For $\overline{g}=[g,\epsilon]\in \overline{\SO}_2(\R)$, we have:
\[\overline{\Pi}_{X^{\ast},\psi }(\overline{g}) A^{+}=[\Pi_{X^{\ast},\psi }(g)  A^{+}] \epsilon m_{X^{\ast}}(g) =\widetilde{s}(g)^{-1} m_{X^{\ast}}(g) \epsilon A^+=\overline{s}(g)^{-1}\epsilon A^+=\overline{\delta}^+(\overline{g})  A^+,\]
\[\overline{\Pi}_{X^{\ast},\psi }(\overline{g}) A^{-}=[\Pi_{X^{\ast},\psi }(g)  A^{-}] \epsilon m_{X^{\ast}}(g) =u\widetilde{s}(g)^{-1} m_{X^{\ast}}(g) \epsilon A^{-}=\overline{s}(g)^{-1}\epsilon uA^{-}=\overline{\delta}^+(\overline{g})  A^-.\]
\[\overline{\Pi}_{X^{\ast},\psi }([h_{-1},1]) A^{+}=[\Pi_{X^{\ast},\psi }(h_{-1})  A^{+}]  m_{X^{\ast}}(h_{-1})=\Pi_{X^{\ast},\psi }(h_{-1})  A^{+}=A^-=\overline{\delta}^+([h_{-1},1])  A^+.\]
\[\overline{\Pi}_{X^{\ast},\psi }([h_{-1},1]) A^{-}=A^{+}=\overline{\delta}^+([h_{-1},1])  A^-.\]
Therefore, we have:
\[\overline{\Pi}_{X^{\ast},\psi }(\overline{g}) \begin{bmatrix} A^{+}\\ A^{-}\end{bmatrix}= \overline{\delta}^+(\overline{g}) \begin{bmatrix} A^{+}\\ A^{-}\end{bmatrix}, \quad \textrm{ and similarly } \overline{\Pi}_{X^{\ast},\psi }(\overline{g}) \begin{bmatrix} B^{+}\\ B^{-}\end{bmatrix}= \overline{\delta}^-(\overline{g}) \begin{bmatrix} B^{+}\\ B^{-}\end{bmatrix},\]
where
\begin{itemize}
\item $\overline{\delta}^+: [g=\begin{pmatrix} a& -b\\ b& a\end{pmatrix}, \epsilon] \to \begin{pmatrix} \epsilon\sqrt{ai+b}^{-1}&0 \\ 0& \epsilon\overline{\sqrt{ai+b}}^{-1}\end{pmatrix}$ and $ [g=\begin{pmatrix} 1& 0\\ 0&-1\end{pmatrix}, 1] \to \begin{pmatrix}0&1 \\ 1& 0\end{pmatrix}$, 
     \item $\overline{\delta}^-: [g=\begin{pmatrix} a& -b\\ b& a\end{pmatrix}, \epsilon] \to \begin{pmatrix} \epsilon \sqrt{ai+b}^{-3}&0 \\ 0& \epsilon \overline{\sqrt{ai+b}}^{-3}\end{pmatrix} $ and  $[g=\begin{pmatrix} 1& 0\\ 0&-1\end{pmatrix}, 1] \to \begin{pmatrix}0&1 \\ 1& 0\end{pmatrix}$.
\end{itemize}
For $\overline{g}=[g,\epsilon]\in \overline{\SO}_2(\R)$, we have:
\[J_{\delta^+}(\overline{g}, z_0)\overline{\Pi}_{X^{\ast},\psi }(\overline{g}) \begin{bmatrix} A^{+}\\ A^{-}\end{bmatrix}=\begin{bmatrix} A^{+}\\ A^{-}\end{bmatrix}, \quad J_{\delta^-}(\overline{g}, z_0)\overline{\Pi}_{X^{\ast},\psi }(\overline{g}) \begin{bmatrix} B^{+}\\ B^{-}\end{bmatrix}=\begin{bmatrix}B^{+}\\ B^{-}\end{bmatrix}.\]
For $\overline{g}=[h_{-1},1]\in \overline{\Oa}_2(\R)$, we have:
\[J_{\delta^+}(\overline{g}, z_0)\overline{\Pi}_{X^{\ast},\psi }(\overline{g}) \begin{bmatrix} A^{+}\\ A^{-}\end{bmatrix}=\begin{bmatrix} A^{-}\\ A^{+}\end{bmatrix}, \quad J_{\delta^-}(\overline{g}, z_0)\overline{\Pi}_{X^{\ast},\psi }(\overline{g}) \begin{bmatrix} B^{+}\\ B^{-}\end{bmatrix}=\begin{bmatrix}-B^{-}\\ -B^{+}\end{bmatrix}.\]
\subsection{ $ P^{\pm}_{>0}(\R)$}
Let $p=\begin{pmatrix} a & b\\ 0&  \det(p) a^{-1} \end{pmatrix}\in P^{\pm}_{>0}(\R)$($a>0$) and write $z_p=p(i)=\det(p)ba+i\det(p)a^{2}\in\mathbb{H}^{\pm}$.    
By \eqref{chap4representationsp22}--\eqref{chap4representationsp24}, we have: 
\[m_{X^{\ast}}(p)=1,\]
\begin{align*}
\overline{\Pi}_{X^{\ast}, \psi} (p)f( [\epsilon,x])&=\Pi_{X^{\ast}, \psi} (p)f( [\epsilon,x])\\
&= \Pi_{X^{\ast}, \psi} (h_{\det(p)}h(a) u(a^{-1}b))f( [\epsilon,x])\\
&= \Pi_{X^{\ast}, \psi} (h(a) u(a^{-1}b))f( [\det(p)\epsilon,x])\\
&=|a|^{1/2} (a,\det(p)\epsilon)_\R \Pi_{X^{\ast}, \psi} ( u(a^{-1}b))f([\det(p)\epsilon,xa])\\
&=|a|^{1/2} (a,\det(p)\epsilon)_\R \psi(\tfrac{1}{2}\det(p)\epsilon a^{-1}b (xa)^2)f([\det(p)\epsilon,xa])\\
&=|a|^{1/2} (a,\det(p)\epsilon)_\R \psi(\tfrac{1}{2}\det(p)\epsilon ab x^2)f([\det(p)\epsilon,xa])\\
&= |a|^{1/2} (a,\det(p)\epsilon)_\R e^{\pi i \det(p)\epsilon ab x^2}f([\det(p)\epsilon,xa])\\
&=a^{1/2} e^{\pi i \det(p)\epsilon ab x^2}f([\det(p)\epsilon,xa]).
\end{align*}
\begin{align*}
 J_{1/2}(p,  \pm z_0)\stackrel{\textrm{ Lem. } \ref{jpz}}{=}|a|^{-1/2}=a^{-1/2};
 \end{align*}
\begin{align*}
 [J_{1/2}(p,  \pm z_0)]^3=a^{-3/2};
 \end{align*}
If $\det(p)>0$, then:
$$J_{1/2}(p, z_0)\overline{\Pi}_{X^{\ast}, \psi} (p)A^{+}( [\epsilon,x])=\left\{\begin{array}{lr} e^{  \pi i x^2 z_p}& \epsilon=1,\\ 0 &  \epsilon=-1,\end{array}\right.$$
$$J_{1/2}(p, z_0)\overline{\Pi}_{X^{\ast}, \psi} (p)A^{-}( [\epsilon,x])=\left\{\begin{array}{lr}0& \epsilon=1,\\   e^{- \pi i x^2 z_p^{\sigma}} &  \epsilon=-1,\end{array}\right.$$
$$[J_{1/2}(p,   z_0)]^3\overline{\Pi}_{X^{\ast}, \psi} (p)B^{+}( [\epsilon,x])=\left\{\begin{array}{lr}xe^{ \pi i x^2 z_p}& \epsilon=1,\\ 0 &  \epsilon=-1,\end{array}\right.$$
$$[J_{1/2}(p,   z_0)]^3\overline{\Pi}_{X^{\ast}, \psi} (p)B^{-}( [\epsilon,x])=\left\{\begin{array}{lr}0& \epsilon=1,\\  xe^{-  \pi i x^2 z_p^{\sigma}} &  \epsilon=-1.\end{array}\right.$$
If $\det(p)<0$, then:
$$J_{1/2}(p, z_0)\overline{\Pi}_{X^{\ast}, \psi} (p)A^{+}( [\epsilon,x])=e^{-\pi i \epsilon ab x^2}A^{+}([-\epsilon,xa])=\left\{\begin{array}{lr}0 & \epsilon=1,\\ e^{ - \pi i x^2 z_p} &  \epsilon=-1,\end{array}\right.$$
$$J_{1/2}(p, z_0)\overline{\Pi}_{X^{\ast}, \psi} (p)A^{-}( [\epsilon,x])=\left\{\begin{array}{lr}e^{\pi i x^2 z_p^{\sigma}}& \epsilon=1,\\  0  &  \epsilon=-1,\end{array}\right.$$
$$[J_{1/2}(p,   z_0)]^3\overline{\Pi}_{X^{\ast}, \psi} (p)B^{+}( [\epsilon,x])=\left\{\begin{array}{lr}0& \epsilon=1,\\xe^{ -\pi i x^2 z_p} &  \epsilon=-1,\end{array}\right.$$
$$[J_{1/2}(p,   z_0)]^3\overline{\Pi}_{X^{\ast}, \psi} (p)B^{-}( [\epsilon,x])=\left\{\begin{array}{lr}xe^{\pi i x^2 z_p^{\sigma}} & \epsilon=1,\\ 0 &  \epsilon=-1.\end{array}\right.$$
We conclude:
\begin{itemize}
\item For $\overline{p}=[p,1]\in P_{>0}(\R)$, we have:
 \[J_{\delta^+}(\overline{p}, z_0)\overline{\Pi}_{X^{\ast},\psi }(\overline{p}) \begin{bmatrix} A^{+}([1,x])&  A^{+}([-1,x])\\ A^{-}([1,x])&  A^{-}([-1,x])\end{bmatrix}=\begin{bmatrix}e^{  \pi i x^2 z_p}& 0\\ 0& e^{- \pi i x^2 z_p^{\sigma}}\end{bmatrix},\]
\[J_{\delta^-}(\overline{p}, z_0)\overline{\Pi}_{X^{\ast},\psi }(\overline{p}) \begin{bmatrix} B^{+}([1,x])&  B^{+}([1,x])\\ B^{-}([1,x])& B^{-}([-1,x])\end{bmatrix}=\begin{bmatrix}xe^{  \pi i x^2 z_p}& 0\\ 0&x e^{- \pi i x^2 z_p^{\sigma}}\end{bmatrix}.\]
\item  For $\overline{h}_{-1}=[h_{-1},1]$, we have:
\[ \overline{h}_{-1} \overline{p}=[h_{-1}p, \overline{C}_{X^{\ast}}(h_{-1},p)]=[h_{-1}p,1],\]
\[z_{h_{-1}p}=-z_p,\]
\begin{align*}
&J_{\delta^+}(\overline{h}_{-1}\overline{p}, z_0)\overline{\Pi}_{X^{\ast},\psi }(\overline{h}_{-1}\overline{p}) \begin{bmatrix} A^{+}([1,x])&  A^{+}([-1,x])\\ A^{-}([1,x])&  A^{-}([-1,x])\end{bmatrix}\\
&=\begin{bmatrix}0& e^{  \pi i x^2 z_p}\\  e^{- \pi i x^2 z_p^{\sigma}}& 0\end{bmatrix}\\
&=\begin{bmatrix}e^{  \pi i x^2 z_p}& 0\\ 0& e^{- \pi i x^2 z_p^{\sigma}}\end{bmatrix}\overline{\delta}^+(\overline{h}_{-1}),
\end{align*}
\begin{align*}
&J_{\delta^-}(\overline{h}_{-1}\overline{p}, z_0)\overline{\Pi}_{X^{\ast},\psi }(\overline{h}_{-1}\overline{p}) \begin{bmatrix} B^{+}([1,x])&  B^{+}([-1,x])\\ B^{-}([1,x])& B^{-}([-1,x])\end{bmatrix}\\
&=\begin{bmatrix}0& -xe^{  \pi i x^2 z_p}\\  -xe^{- \pi i x^2 z_p^{\sigma}}& 0\end{bmatrix}\\
&=\begin{bmatrix}-xe^{  \pi i x^2 z_p}& 0\\ 0& -xe^{- \pi i x^2 z_p^{\sigma}}\end{bmatrix}\overline{\delta}^-(\overline{h}_{-1}).
\end{align*}
\end{itemize}
 \subsection{ $ \SL^{\pm}_2(\R)$}
 Write $z_g=g(i)=x_g+iy_g\in \mathbb{H}^{\pm}$, $g=p_gk_g$(cf. Lem. \ref{decomgl2}), $z_g=z_{p_g}$. Then:
\[ p_g=\begin{pmatrix} \sqrt{y_g (\det g)} &  (\det g) x_g(\sqrt{y_g (\det g)})^{-1}\\ 0&  (\det g) (\sqrt{y_g (\det g)})^{-1} \end{pmatrix};\]
\[ k_g = \begin{pmatrix}(\sqrt{y_g (\det g)})^{-1}a - (\sqrt{y_g (\det g)})^{-1}x_g c & (\sqrt{y_g (\det g)})^{-1}b- (\sqrt{y_g (\det g)})^{-1}x_g d\\  (\det g) (\sqrt{y_g (\det g)}) c &   (\det g) (\sqrt{y_g (\det g)})d\end{pmatrix}. \]
For $\overline{g}=[g,\epsilon]\in \overline{\SL}^{\pm}_2(\R)$, we write $\overline{g}=p_g \overline{k}_g$ with $p_g\in P^{\pm}_{>0}(\R)$, $\overline{k}_g \in \overline{\SO}_2(\R)$.\\
(1)
 \begin{align*}
&J_{\delta^+}(\overline{g}, z_0)\overline{\Pi}_{X^{\ast}, \psi} (\overline{g}) \begin{bmatrix} A^{+} \\ A^{-} \end{bmatrix}([1,x])\\
&=J_{\delta^+}(p_g, z_0)\overline{\Pi}_{X^{\ast}, \psi} (p_g)J_{\delta^+}(\overline{k}_g, z_0)\overline{\Pi}_{X^{\ast}, \psi} (\overline{k}_g) \begin{bmatrix} A^{+} \\ A^{-} \end{bmatrix}([1,x])\\
&=J_{\delta^+}(p_g, z_0)\overline{\Pi}_{X^{\ast}, \psi} (p_g) \begin{bmatrix} A^{+} \\ A^{-} \end{bmatrix}([1,x])\\
&=\left\{ \begin{array}{lr} \begin{bmatrix}e^{  \pi i x^2 z_g}\\ 0\end{bmatrix}, & \det(g)>0,\\ \begin{bmatrix}0\\e^{\pi i x^2 z_g^{\sigma}}\end{bmatrix}, & \det(g)<0.\end{array}\right.
\end{align*}
 \begin{align*}
&J_{\delta^+}(\overline{g}, z_0)\overline{\Pi}_{X^{\ast}, \psi} (\overline{g}) \begin{bmatrix} A^{+} \\ A^{-} \end{bmatrix}([-1,x])\\
&=J_{\delta^+}(p_g, z_0)\overline{\Pi}_{X^{\ast}, \psi} (p_g)J_{\delta^+}(\overline{k}_g, z_0)\overline{\Pi}_{X^{\ast}, \psi} (\overline{k}_g) \begin{bmatrix} A^{+} \\ A^{-} \end{bmatrix}([-1,x])\\
&=J_{\delta^+}(p_g, z_0)\overline{\Pi}_{X^{\ast}, \psi} (p_g) \begin{bmatrix} A^{+} \\ A^{-} \end{bmatrix}([-1,x])\\
&=\left\{ \begin{array}{lr} \begin{bmatrix}0\\  e^{- \pi i x^2 z_p^{\sigma}}\end{bmatrix}, & \det(g)>0,\\ \begin{bmatrix}e^{-\pi i x^2 z_g}\\0 \end{bmatrix}, & \det(g)<0.\end{array}\right.
\end{align*}
(2) \begin{align*}
&J_{\delta^-}(\overline{g}, z_0)\overline{\Pi}_{X^{\ast}, \psi} (\overline{g}) \begin{bmatrix} B^{+} \\ B^{-} \end{bmatrix}([1,x])\\
&=J_{\delta^-}(p_g, z_0)\overline{\Pi}_{X^{\ast}, \psi} (p_g)J_{\delta^+}(\overline{k}_g, z_0)\overline{\Pi}_{X^{\ast}, \psi} (\overline{k}_g) \begin{bmatrix}B^{+} \\ B^{-} \end{bmatrix}([1,x])\\
&=J_{\delta^-}(p_g, z_0)\overline{\Pi}_{X^{\ast}, \psi} (p_g) \begin{bmatrix} B^{+} \\ B^{-} \end{bmatrix}([1,x])\\
&=\left\{ \begin{array}{lr} \begin{bmatrix}xe^{  \pi i x^2 z_g}\\ 0\end{bmatrix}, & \det(g)>0,\\ \begin{bmatrix}0\\-xe^{\pi i x^2 z_g^{\sigma}}\end{bmatrix}, & \det(g)<0.\end{array}\right.
\end{align*}
 \begin{align*}
&J_{\delta^-}(\overline{g}, z_0)\overline{\Pi}_{X^{\ast}, \psi} (\overline{g}) \begin{bmatrix} B^{+} \\ B^{-} \end{bmatrix}([-1,x])\\
&=J_{\delta^-}(p_g, z_0)\overline{\Pi}_{X^{\ast}, \psi} (p_g)J_{\delta^+}(\overline{k}_g, z_0)\overline{\Pi}_{X^{\ast}, \psi} (\overline{k}_g) \begin{bmatrix} B^{+} \\ B^{-} \end{bmatrix}([-1,x])\\
&=J_{\delta^-}(p_g, z_0)\overline{\Pi}_{X^{\ast}, \psi} (p_g) \begin{bmatrix} B^{+} \\ B^{-} \end{bmatrix}([-1,x])\\
&=\left\{ \begin{array}{lr} \begin{bmatrix}0\\ x e^{- \pi i x^2 z_p^{\sigma}}\end{bmatrix}, & \det(g)>0,\\ \begin{bmatrix}-xe^{-\pi i x^2 z_g}\\0 \end{bmatrix}, & \det(g)<0.\end{array}\right.
\end{align*}
\subsection{ $\Gamma_{\theta}^{\pm}$} 
Let $f\in \mathcal{H}^{\pm}(X^{\ast})$, $g\in \SL^{\pm}_2(\R)$ and  $ \overline{g}=[g,\epsilon] \in \overline{\SL}^{\pm}_2(\R)$. 
 \begin{equation}
\theta_{L, X^{\ast}}(f)([\epsilon,0])=\sum_{l\in L\cap X}f([\epsilon, l])=\sum_{n\in \Z}f( [\epsilon,n]). 
 \end{equation}
\begin{align*}
\overline{ \chi}(k)\theta_{L, X^{\ast}}(f)([\epsilon,\tfrac{k}{N}e^{\ast}])=\sum_{n\in \Z}f([\epsilon, n])\overline{ \chi}(k)\psi(\tfrac{\epsilon kn}{N}).
 \end{align*}
 \begin{align*}
\overline{ \chi}(k)\theta_{L, X^{\ast}}(f)([\epsilon,cNke+\tfrac{kd}{N}e^{\ast}])
&=\chi(d)\sum_{n\in \Z}f([\epsilon, n])\overline{ \chi}(kd)\psi(\tfrac{\epsilon kdn}{N}).
 \end{align*}

  \begin{align*}
   \begin{bmatrix}\theta^{+, 1/2}_{1, \chi}(\overline{g}) \\\theta^{-, 1/2}_{1,\chi}(\overline{g}) \end{bmatrix}&\stackrel{Def.}{=}  \tfrac{1}{G(1, \overline{\chi})}\sum_{k \bmod N}\overline{ \chi}(k) \theta_{L, X^{\ast}}\Big(J_{\delta^+}(\overline{g}, z_0)\overline{\Pi}_{X^{\ast}, \psi} (\overline{g})\begin{bmatrix} A^{+} \\ A^{-} \end{bmatrix}\Big)([1, \tfrac{k}{N}e^{\ast}])\\
&=\tfrac{1}{G(1, \overline{\chi})} \sum_{k \bmod N} \sum_{n\in \Z}J_{\delta^+}(\overline{g}, z_0)\overline{\Pi}_{X^{\ast}, \psi} (\overline{g})\begin{bmatrix} A^{+} \\ A^{-} \end{bmatrix}( [1, n])\overline{ \chi}(k) \psi(\tfrac{kn}{N})\\
&= \left\{ \begin{array}{c}  \left\{ \begin{array}{cr} \tfrac{1}{G(1, \overline{\chi})}\sum_{k \bmod N} \sum_{n\in \Z} e^{i   \pi z_g n^2 }\overline{ \chi}(k) \psi(\tfrac{kn}{N})& \det(g)>0, \\0 & \det(g)<0,\end{array}\right.\\  \left\{\begin{array}{cr}0  & \det(g)>0, \\  \tfrac{1}{G(1, \overline{\chi})}\sum_{k \bmod N} \sum_{n\in \Z} e^{i   \pi z^{\sigma}_g n^2 }\overline{ \chi}(k) \psi(\tfrac{kn}{N}) & \det(g)<0,\end{array}\right.\end{array}\right.\\
&=\left\{ \begin{array}{c} \left\{ \begin{array}{cr} \sum_{n\in \Z} e^{i   \pi z_g n^2}  \chi(n) & \det(g)>0, \\0 & \det(g)<0.\end{array}\right. \\  \left\{ \begin{array}{cr} 0 & \det(g)>0, \\\sum_{n\in \Z} e^{i   \pi z_g^{\sigma} n^2}  \chi(n) & \det(g)<0.\end{array}\right.\end{array}\right.
 \end{align*}
  \begin{align*}
   \begin{bmatrix}\theta^{+, 1/2}_{-1, \chi}(\overline{g}) \\\theta^{-, 1/2}_{-1,\chi}(\overline{g}) \end{bmatrix}&\stackrel{Def.}{=}  \tfrac{1}{G(1, \overline{\chi})}\sum_{k \bmod N}\overline{ \chi}(k) \theta_{L, X^{\ast}}\Big(J_{\delta^+}(\overline{g}, z_0)\overline{\Pi}_{X^{\ast}, \psi} (\overline{g})\begin{bmatrix} A^{+} \\ A^{-} \end{bmatrix}\Big)([-1, \tfrac{k}{N}e^{\ast}])\\
&=\tfrac{1}{G(1, \overline{\chi})} \sum_{k \bmod N} \sum_{n\in \Z}J_{\delta^+}(\overline{g}, z_0)\overline{\Pi}_{X^{\ast}, \psi} (\overline{g})\begin{bmatrix} A^{+} \\ A^{-} \end{bmatrix}( [-1, n])\overline{ \chi}(k) \psi(-\tfrac{kn}{N})\\
&= \left\{ \begin{array}{c}  \left\{ \begin{array}{cr}0 & \det(g)>0, \\ \tfrac{1}{G(1, \overline{\chi})}\sum_{k \bmod N} \sum_{n\in \Z} e^{-i   \pi z_g n^2 }\overline{ \chi}(k) \psi(-\tfrac{kn}{N}) & \det(g)<0,\end{array}\right.\\  \left\{\begin{array}{cr}\tfrac{1}{G(1, \overline{\chi})}\sum_{k \bmod N} \sum_{n\in \Z} e^{-i   \pi z^{\sigma}_g n^2 }\overline{ \chi}(k) \psi(-\tfrac{kn}{N})  & \det(g)>0, \\   0 & \det(g)<0,\end{array}\right.\end{array}\right.\\
&\stackrel{\chi(n)=\chi(-n)}{=}\left\{ \begin{array}{c} \left\{ \begin{array}{cr}0 & \det(g)>0, \\ \sum_{n\in \Z} e^{-i   \pi z_g n^2}  \chi(n)  & \det(g)<0.\end{array}\right. \\  \left\{ \begin{array}{cr} \sum_{n\in \Z} e^{-i   \pi z_g^{\sigma} n^2}  \chi(n)  & \det(g)>0, \\0& \det(g)<0.\end{array}\right.\end{array}\right.
 \end{align*}
   \begin{align*}
   \begin{bmatrix}\theta^{+, 3/2}_{1, \chi}(\overline{g}) \\\theta^{-, 3/2}_{1, \chi}(\overline{g}) \end{bmatrix}&\stackrel{Def.}{=}  \tfrac{1}{G(1, \overline{\chi})}\sum_{k \bmod N}\overline{ \chi}(k) \theta_{L, X^{\ast}}\Big(J_{\delta^-}(\overline{g}, z_0)\overline{\Pi}_{X^{\ast}, \psi} (\overline{g})\begin{bmatrix} B^{+} \\ B^{-} \end{bmatrix}\Big)([1, \tfrac{k}{N}e^{\ast}])\\
&=\tfrac{1}{G(1, \overline{\chi})} \sum_{k \bmod N} \sum_{n\in \Z}J_{\delta^-}(\overline{g}, z_0)\overline{\Pi}_{X^{\ast}, \psi} (\overline{g})\begin{bmatrix} B^{+} \\ B^{-} \end{bmatrix}( [1, n])\overline{ \chi}(k) \psi(\tfrac{kn}{N})\\
&= \left\{ \begin{array}{c}  \left\{ \begin{array}{cr} \tfrac{1}{G(1, \overline{\chi})}\sum_{k \bmod N} \sum_{n\in \Z} ne^{i   \pi z_g n^2 }\overline{ \chi}(k) \psi(\tfrac{kn}{N})& \det(g)>0, \\0 & \det(g)<0,\end{array}\right.\\  \left\{\begin{array}{cr}0  & \det(g)>0, \\  \tfrac{1}{G(1, \overline{\chi})}\sum_{k \bmod N} \sum_{n\in \Z}(-n) e^{i   \pi z^{\sigma}_g n^2 }\overline{ \chi}(k) \psi(\tfrac{kn}{N}) & \det(g)<0,\end{array}\right.\end{array}\right.\\
&=\left\{ \begin{array}{c} \left\{ \begin{array}{cr} \sum_{n\in \Z}n e^{i   \pi z_g n^2}  \chi(n) & \det(g)>0, \\0 & \det(g)<0.\end{array}\right. \\  \left\{ \begin{array}{cr} 0 & \det(g)>0, \\\sum_{n\in \Z} ne^{i   \pi z_g^{\sigma} n^2}  \chi(-n) & \det(g)<0.\end{array}\right.\end{array}\right.
 \end{align*}
 \begin{align*}
   \begin{bmatrix}\theta^{+, 3/2}_{-1, \chi}(\overline{g}) \\\theta^{-, 3/2}_{-1,\chi}(\overline{g}) \end{bmatrix}&\stackrel{Def.}{=}  \tfrac{1}{G(1, \overline{\chi})}\sum_{k \bmod N}\overline{ \chi}(k) \theta_{L, X^{\ast}}\Big(J_{\delta^+}(\overline{g}, z_0)\overline{\Pi}_{X^{\ast}, \psi} (\overline{g})\begin{bmatrix} B^{+} \\ B^{-} \end{bmatrix}\Big)([-1, \tfrac{k}{N}e^{\ast}])\\
&=\tfrac{1}{G(1, \overline{\chi})} \sum_{k \bmod N} \sum_{n\in \Z}J_{\delta^-}(\overline{g}, z_0)\overline{\Pi}_{X^{\ast}, \psi} (\overline{g})\begin{bmatrix} B^{+} \\ B^{-} \end{bmatrix}( [-1, n])\overline{ \chi}(k) \psi(-\tfrac{kn}{N})\\
&= \left\{ \begin{array}{c}  \left\{ \begin{array}{cr}0 & \det(g)>0, \\ \tfrac{1}{G(1, \overline{\chi})}\sum_{k \bmod N} \sum_{n\in \Z}(-n) e^{-i   \pi z_g n^2 }\overline{ \chi}(k) \psi(-\tfrac{kn}{N}) & \det(g)<0,\end{array}\right.\\  \left\{\begin{array}{cr}\tfrac{1}{G(1, \overline{\chi})}\sum_{k \bmod N} \sum_{n\in \Z} ne^{-i   \pi z^{\sigma}_g n^2 }\overline{ \chi}(k) \psi(-\tfrac{kn}{N})  & \det(g)>0, \\   0 & \det(g)<0,\end{array}\right.\end{array}\right.\\
&\stackrel{\chi(-n)=-\chi(n)}{=}\left\{ \begin{array}{c} \left\{ \begin{array}{cr}0 & \det(g)>0, \\ \sum_{n\in \Z} ne^{-i   \pi z_g n^2}  \chi(n)  & \det(g)<0.\end{array}\right. \\  \left\{ \begin{array}{cr} \sum_{n\in \Z} ne^{-i   \pi z_g^{\sigma} n^2}  \chi(-n)  & \det(g)>0, \\0& \det(g)<0.\end{array}\right.\end{array}\right.
 \end{align*}
Note that $\theta^{\pm,1/2}_{\epsilon, \chi}(\overline{g})$, $\theta^{\pm, 3/2}_{\epsilon, \chi}(\overline{g})$  only depend on $z_g=g(z_0)$, $z_g^{\sigma}=g(-z_0)$. 
 \begin{definition}
\begin{itemize}
\item[(1)] If $\chi$ is even, define:
\begin{itemize}
\item[(a)]
$
\theta^{+, 1/2}_{1, \chi}(z) = \left\{ \begin{array}{cr}\sum_{n \in \mathbb{Z}} \chi(n)\, e^{i\pi z n^2}& \Im(z)>0, \\0 & \Im(z)<0,\end{array}\right.$ $
\theta^{+, 1/2}_{-1, \chi}(z) = \left\{ \begin{array}{cr}0& \Im(z)>0, \\\sum_{n \in \mathbb{Z}} \chi(n)\, e^{-i\pi z n^2} & \Im(z)<0.\end{array}\right.$
\item[(b)] $
\theta^{-, 1/2}_{1, \chi}(z) = \left\{ \begin{array}{cr} 0 &\Im(z)>0, \\\sum_{n\in \Z} \chi(n) e^{i   \pi z^{\sigma} n^2}  & \Im(z)<0,\end{array}\right.$ $\theta^{-, 1/2}_{-1, \chi}(z) = \left\{ \begin{array}{cr} \sum_{n\in \Z} \chi(n) e^{-i   \pi z^{\sigma} n^2}  &\Im(z)>0, \\0 & \Im(z)<0.\end{array}\right.$
\end{itemize}
\item[(2)] If $\chi$ is odd,  define:
\begin{itemize}
\item[(a)]
$
\theta^{+, 3/2}_{1, \chi}(z) = \left\{ \begin{array}{cr}\sum_{n \in \mathbb{Z}} \chi(n)\, ne^{i\pi z n^2}& \Im(z)>0, \\0 & \Im(z)<0,\end{array}\right.$ $
\theta^{+, 3/2}_{-1, \chi}(z) = \left\{ \begin{array}{cr}0& \Im(z)>0, \\\sum_{n \in \mathbb{Z}} \chi(n)\, ne^{-i\pi z n^2} & \Im(z)<0.\end{array}\right.$
\item[(b)] $
\theta^{-, 3/2}_{1, \chi}(z) = \left\{ \begin{array}{cr} 0 &\Im(z)>0, \\\sum_{n\in \Z} \chi(-n)n e^{i   \pi z^{\sigma} n^2}  & \Im(z)<0,\end{array}\right.$ $\theta^{-, 3/2}_{-1, \chi}(z) = \left\{ \begin{array}{cr} \sum_{n\in \Z} \chi(-n)n e^{-i   \pi z^{\sigma} n^2}  &\Im(z)>0, \\0 & \Im(z)<0.\end{array}\right.$
\end{itemize}
\item[(3)]\begin{itemize}
\item[(a)]
$\theta^{\pm,\delta^+}_{\chi}(z) = \begin{bmatrix}\theta^{+, 1/2}_{1, \chi}(z) &\theta^{+, 1/2}_{-1, \chi}(z)  \\\theta^{-, 1/2}_{1, \chi}(z) &\theta^{-, 1/2}_{-1, \chi}(z)  \end{bmatrix}$.
\item[(b)] 
$\theta^{\pm, \delta^-}_{\chi}(z) = \begin{bmatrix}\theta^{+, 3/2}_{1, \chi}(z) &\theta^{+, 3/2}_{-1, \chi}(z)  \\\theta^{-, 3/2}_{1, \chi}(z) &\theta^{-, 3/2}_{-1, \chi}(z)  \end{bmatrix}$.
\end{itemize}
\end{itemize}
\end{definition}
Similar to Theorems  \ref{mainthm1} \ref{mainthm1'}, the following result holds:
\begin{lemma}\label{overrgamma0}
For   $\overline{r}=[r,  \epsilon]\in \overline{\Gamma}_{\theta} \cap  \overline{\Gamma}_0(N^2 )$, we have:
 \begin{itemize}
 \item[(1)] $\theta^{\pm, \delta^+}_{\chi}(\overline{r} z)=J_{\delta^+}(\overline{r},z)\theta^{ \pm,\delta^+}_{\chi}( z)\begin{bmatrix} \overline{\lambda}_{\chi}(\overline{r}) &  0\\ 0& \overline{\lambda}_{\chi}(\overline{r}^{h_{-1}})\end{bmatrix} $.
 \item[(2)] $\theta^{\pm, \delta^-}_{\chi}(\overline{r} z)=J_{\delta^-}(\overline{r},z)\theta^{ \pm, \delta^-}_{\chi}( z)\begin{bmatrix} \overline{\lambda}_{\chi}(\overline{r}) &  0\\ 0& \overline{\lambda}_{\chi}(\overline{r}^{h_{-1}})\end{bmatrix} $. 
 \end{itemize}
 \end{lemma}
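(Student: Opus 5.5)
The plan is to follow the template of the proofs of Theorems \ref{mainthm1} and \ref{mainthm1'}, now using the two-sheeted model $L^2(\mu_2\times X)$, the lattice realization $\mathcal H^{\pm}(L)$, and the vector-valued test functions $\begin{bmatrix}A^+\\A^-\end{bmatrix}$ (resp.\ $B^{\pm}$). For $\overline{g}\in\overline{\SL}^{\pm}_2(\R)$ I would set
\[
\Phi(\overline{g})=J_{\delta^+}(\overline{g},z_0)\,\overline{\Pi}_{X^{\ast},\psi}(\overline{g})\begin{bmatrix}A^+\\A^-\end{bmatrix}.
\]
The matrix $\theta^{\pm,\delta^+}_{\chi}(g(z_0))$ is then the $2\times 2$ matrix whose column $\epsilon\in\{\pm1\}$ is the $\chi$-twisted lattice sum $\tfrac{1}{G(1,\overline\chi)}\sum_k\overline\chi(k)\,\theta_{L,X^{\ast}}(\Phi(\overline g))([\epsilon,\tfrac kN e^{\ast}])$, as computed just before the statement. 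These values depend only on $z_g$, and since $\overline{\SL}_2(\R)$ acts transitively on $\mathbb H$ (and $h_{-1}$ swaps the half-planes), it suffices to check the identity at $z=g(z_0)$ with $g=p_g\in P^{\pm}_{>0}(\R)$.

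\emph{Step 1.} Write $\overline r\,p_g=[rp_g,\epsilon]$. By Lemma \ref{prog0} the cocycle $\overline C_{X^{\ast}}(r,p_g)$ is trivial. The cocycle relation for $J_{\delta^+}$ (the lemma following its definition) then gives
\[
\Phi(\overline r p_g)=J_{\delta^+}(\overline r,z_g)\,\overline\Pi_{X^{\ast},\psi}(\overline r)\,\Phi(p_g),
\]
where $J_{\delta^+}(\overline r,z_g)$ is diagonal and acts on the two rows $A^+,A^-$. This accounts for the left factor $J_{\delta^+}(\overline r,z)$ in the statement.

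\emph{Step 2.} Pass through $\theta_{L,X^{\ast}}$ using Lemma \ref{chap6thein} together with the formula $\Pi_{L,\psi}(r)f([\epsilon,w])=\lambda^{\epsilon}(r)f([\epsilon,wr])$ for $r\in\Gamma_\theta$. After the $m_{X^{\ast}}$-normalization that converts $\Pi$ to $\overline\Pi$, the scalar acting on the $\epsilon=1$ sheet is $\epsilon\, m_{X^{\ast}}(r)\widetilde\beta(r)^{-1}=\overline\lambda(\overline r)$. On the $\epsilon=-1$ sheet it is $\overline\lambda(\overline r)\chi_\omega(r)$, which equals $\overline\lambda(\overline r^{h_{-1}})$ by Lemma \ref{hminustoh}. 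These scalars multiply the columns, and this is why the diagonal matrix appears on the \emph{right}.

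\emph{Step 3.} Handle the evaluation point. Since $r\in\Gamma_0(N^2)$, we have $[\epsilon,\tfrac kN e^{\ast}]r=[\epsilon,\,cNk\,e+\tfrac{kd}{N}e^{\ast}]$. Exactly as in Section \ref{gammatheta} (the displayed identities at the start of the $\Gamma^{\pm}_\theta$ subsection), the $e$-component is absorbed by the lattice, and reindexing $k\mapsto kd$ in the Gauss sum produces the factor $\chi(d)$ on both sheets. The sign $\psi(\epsilon kn/N)$ on the $-1$ sheet is harmless because the reindexing is uniform in $\epsilon$. Combining with Step 2 gives $\overline\lambda_\chi(\overline r)$ and $\overline\lambda_\chi(\overline r^{h_{-1}})$; here I use that $\chi(r^{h_{-1}})=\chi(d)$, since conjugation by $h_{-1}$ fixes $d$.

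Part (2) is identical, with $B^{\pm}$ and $J_{\delta^-}$ in place of $A^{\pm}$ and $J_{\delta^+}$.

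The main obstacle is Step 2 on the $\epsilon=-1$ sheet. There one has to check carefully that the constant produced by $\Pi_{L,\psi}(r)$, after the $m_{X^{\ast}}$ normalization and the conjugate entries of $J_{\delta^+}$, is exactly $\overline\lambda(\overline r^{h_{-1}})$ and not its inverse or conjugate. This must also hold with the $\nu_2(-1,r)$ twist in $\overline r^{h_{-1}}$ taken into account. I would first verify it on the generators $u(2)$, $u_-(2)$, $\omega$ and $-I$, using Cases 1--6, and then extend to all of $\Gamma_\theta$ by multiplicativity: both sides are characters of $\overline\Gamma_\theta$ twisted by the same cocycle.
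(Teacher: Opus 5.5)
Your outline is the paper's own proof, step for step. You reduce to $p_g$, split off $J_{\delta^{\pm}}(\overline r,z_g)$, push $\overline\Pi(r)$ through $\theta_{L,X^{\ast}}$ using $\lambda^{\epsilon}(r)$ and Lemma~\ref{hminustoh}, and then reindex the Gauss sum. But Step~3 contains a genuine error, and the identity you quote from Section~\ref{gammatheta} contains the same one.

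Write the lower-left entry of $r$ as $cN^2$. At $w=cNk\,e+\tfrac{kd}{N}e^{\ast}$ we have $x=cNk$ and $x^{\ast}=\tfrac{kd}{N}$. After reindexing the lattice sum in \eqref{chap6eq7} by $cNk$, the factor $\psi(\epsilon\langle x,x^{\ast}\rangle/2)=\psi(\epsilon ck^2d/2)$ does not disappear:
\[
\theta_{L,X^{\ast}}(f)\bigl([\epsilon,cNk\,e+\tfrac{kd}{N}e^{\ast}]\bigr)=(-1)^{ck^2d}\,\theta_{L,X^{\ast}}(f)\bigl([\epsilon,\tfrac{kd}{N}e^{\ast}]\bigr).
\]
So the $e$-component is \emph{not} absorbed by the lattice for free.
\begin{itemize}
\item If $N$ is odd, the sign is $1$. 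Indeed $\Gamma_\theta$ reduces modulo $2$ to $\{I,\omega\}$, so $cN^2d$, and hence $cd$, is even. In this case your proof is complete.
\item If $N$ is even, then $d$ is odd, and so is every $k$ with $\chi(k)\neq 0$. The factor is therefore $(-1)^{c}$ for every $k$ and on both sheets, and nothing in Steps~1--2 cancels it.
\end{itemize}

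This is not only a gap in the argument: for even $N$ the statement itself is false. Take $N=4$, $\chi$ the nontrivial character modulo $4$, and $r=u_-(16)$, so $c=1$.
\begin{itemize}
\item Jacobi's identity $\eta(w)^3=\sum_{m\ge0}(-1)^m(2m+1)e^{\pi i(2m+1)^2w/4}$ gives $\theta^{3/2}_{\chi}(z)=2\eta(4z)^3$.
\item From $\eta(-1/\tau)=\sqrt{-i\tau}\,\eta(\tau)$ and $\eta(\tau+1)=e^{\pi i/12}\eta(\tau)$ one gets $\eta\bigl(\tfrac{w}{4w+1}\bigr)=e^{-\pi i/3}\sqrt{4w+1}\,\eta(w)$.
\item Hence $\theta^{3/2}_{\chi}(u_-(16)z)=-(16z+1)^{3/2}\theta^{3/2}_{\chi}(z)$ on $\mathbb H$.
\item The $(1,1)$ entry of part~(2) instead predicts $+(16z+1)^{3/2}$, because Lemma~\ref{gammar} gives $\overline\lambda_\chi([u_-(16),1])=\bigl(\tfrac{32}{1}\bigr)\epsilon_1^{-1}\chi(1)=1$.
\end{itemize}
For even $N$, both diagonal entries must be multiplied by $(-1)^{c}$. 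This is a character of $\Gamma_0(N^2)$ and is invariant under conjugation by $h_{-1}$. With that factor kept, your argument goes through unchanged.

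A smaller point concerns Step~1. Lemma~\ref{prog0} does not cover $\det p_g<0$, i.e.\ $z\in\mathbb H^-$. There Lemma~\ref{chap6Cover} gives $\overline C_{X^{\ast}}(r,p_g)=(\det p_g,a)_{\R}$, which equals $-1$ for $r=-I$. This is harmless, because $J_{\delta^{\pm}}([h,\epsilon],z_0)\,\overline\Pi_{X^{\ast},\psi}([h,\epsilon])$ does not depend on $\epsilon\in\mu_2$.
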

 \begin{proof}
 The proof is similar to that of Theorem \ref{mainthm1}. To be easily checked, we give a detailed proof for the first statement. 
  Let $r=\begin{pmatrix} a& b\\ cN^2 & d\end{pmatrix}\in   \Gamma_{\theta} \cap  \Gamma_0(N^2 )$.
  \begin{align*}
  \begin{bmatrix}\theta^{+, 1/2}_{1, \chi}(r\overline{g}) \\\theta^{-, 1/2}_{1,\chi}(r\overline{g}) \end{bmatrix}&=  \tfrac{1}{G(1, \overline{\chi})}\sum_{k \bmod N}\overline{ \chi}(k) \theta_{L, X^{\ast}}\Big(J_{\delta^+}(r\overline{g}, z_0)\overline{\Pi}_{X^{\ast}, \psi} (r\overline{g})\begin{bmatrix} A^{+} \\ A^{-} \end{bmatrix}\Big)([1, \tfrac{k}{N}e^{\ast}])\\
  &= \tfrac{1}{G(1, \overline{\chi})}\sum_{k \bmod N}\overline{ \chi}(k) \theta_{L, X^{\ast}}\Big(J_{\delta^+}(rp_g, z_0)\overline{\Pi}_{X^{\ast}, \psi} (rp_g) \begin{bmatrix} A^{+} \\ A^{-} \end{bmatrix}\Big)([1, \tfrac{k}{N}e^{\ast}])\\
  &= \tfrac{1}{G(1, \overline{\chi})}\sum_{k \bmod N}\overline{ \chi}(k)J_{\delta^+}(r, p_gz_0)\overline{\Pi}_{L, \psi} (r) \theta_{L, X^{\ast}} \Big(J_{\delta^+}(p_g, z_0)\overline{\Pi}_{X^{\ast}, \psi} (p_g) \begin{bmatrix} A^{+} \\ A^{-} \end{bmatrix}\Big)([1, \tfrac{k}{N}e^{\ast}])\\
 & \stackrel{(\ref{chapeq8})}{=} \tfrac{1}{G(1, \overline{\chi})}\sum_{k \bmod N}\overline{ \chi}(k) J_{\delta^+}(r, p_gz_0) \epsilon_{r}m_{X^{\ast}}(r) \theta_{L, X^{\ast}} \Big(J_{\delta^+}(p_g, z_0)\overline{\Pi}_{X^{\ast}, \psi} (p_g) \begin{bmatrix} A^{+} \\ A^{-} \end{bmatrix}\Big)([1, cNke+\tfrac{kd}{N}e^{\ast}])\\
 &=\tfrac{1}{G(1, \overline{\chi})} J_{\delta^+}(r, p_gz_0) \epsilon_{r}m_{X^{\ast}}(r) \sum_{k \bmod N}\overline{ \chi}(k)\theta_{L, X^{\ast}} \Big(J_{\delta^+}(p_g, z_0)\overline{\Pi}_{X^{\ast}, \psi} (p_g) \begin{bmatrix} A^{+} \\ A^{-} \end{bmatrix}\Big)([1, cNke+\tfrac{kd}{N}e^{\ast}])\\
 &=J_{\delta^+}(r, p_gz_0) \epsilon_{r}m_{X^{\ast}}(r)\chi(d)\tfrac{1}{G(1, \overline{\chi})}  \sum_{k \bmod N} \sum_{n\in \Z} \Big(J_{\delta^+}(p_g, z_0)\overline{\Pi}_{X^{\ast}, \psi} (p_g) \begin{bmatrix} A^{+} \\ A^{-} \end{bmatrix}\Big)([1, n])\overline{ \chi}(kd)\psi(\tfrac{kdn}{N})\\
  &=J_{\delta^+}(r, p_gz_0) \epsilon_{r}m_{X^{\ast}}(r)\chi(d)\begin{bmatrix}\theta^{+, 1/2}_{1, \chi}(\overline{g}) \\\theta^{-, 1/2}_{1,\chi}(\overline{g}) \end{bmatrix}\\
  &=J_{\delta^+}(r,z_g) \chi(d)\begin{bmatrix}\theta^{+, 1/2}_{1, \chi}(\overline{g}) \\\theta^{-, 1/2}_{1,\chi}(\overline{g}) \end{bmatrix} \lambda(r).
  \end{align*} 
\begin{align*}
\begin{bmatrix}\theta^{+, 1/2}_{-1, \chi}(r\overline{g}) \\\theta^{-, 1/2}_{-1,\chi}(r\overline{g}) \end{bmatrix}
  &=  \tfrac{1}{G(1, \overline{\chi})}\sum_{k \bmod N}\overline{ \chi}(k) \theta_{L, X^{\ast}}\Big(J_{\delta^+}(r\overline{g}, z_0)\overline{\Pi}_{X^{\ast}, \psi} (r\overline{g})\begin{bmatrix} A^{+} \\ A^{-} \end{bmatrix}\Big)([-1, \tfrac{k}{N}e^{\ast}])\\
  &= \tfrac{1}{G(1, \overline{\chi})}\sum_{k \bmod N}\overline{ \chi}(k) \theta_{L, X^{\ast}}\Big(J_{\delta^+}(rp_g, z_0)\overline{\Pi}_{X^{\ast}, \psi} (rp_g) \begin{bmatrix} A^{+} \\ A^{-} \end{bmatrix}\Big)([-1, \tfrac{k}{N}e^{\ast}])\\
  &= \tfrac{1}{G(1, \overline{\chi})}\sum_{k \bmod N}\overline{ \chi}(k)J_{\delta^+}(r, p_gz_0)\overline{\Pi}_{L, \psi} (r) \theta_{L, X^{\ast}} \Big(J_{\delta^+}(p_g, z_0)\overline{\Pi}_{X^{\ast}, \psi} (p_g) \begin{bmatrix} A^{+} \\ A^{-} \end{bmatrix}\Big)([-1, \tfrac{k}{N}e^{\ast}])\\
 & \stackrel{(\ref{chapeq8})}{=} \tfrac{\epsilon_{r}m_{X^{\ast}}(r)}{G(1, \overline{\chi})}\sum_{k \bmod N}\overline{ \chi}(k) J_{\delta^+}(r, p_gz_0) \chi_{\omega}(r)  \theta_{L, X^{\ast}} \Big(J_{\delta^+}(p_g, z_0)\overline{\Pi}_{X^{\ast}, \psi} (p_g) \begin{bmatrix} A^{+} \\ A^{-} \end{bmatrix}\Big)([-1, cNke+\tfrac{kd}{N}e^{\ast}])\\
 &=\tfrac{\epsilon_{r}m_{X^{\ast}}(r)}{G(1, \overline{\chi})} J_{\delta^+}(r, p_gz_0)  \chi_{\omega}(r)\sum_{k \bmod N}\overline{ \chi}(k)\theta_{L, X^{\ast}} \Big(J_{\delta^+}(p_g, z_0)\overline{\Pi}_{X^{\ast}, \psi} (p_g) \begin{bmatrix} A^{+} \\ A^{-} \end{bmatrix}\Big)([-1, cNke+\tfrac{kd}{N}e^{\ast}])\\
 &=J_{\delta^+}(r, p_gz_0) \chi(d)\chi_{\omega}(r)\tfrac{\epsilon_{r}m_{X^{\ast}}(r)}{G(1, \overline{\chi})}  \sum_{k \bmod N} \sum_{n\in \Z} \Big(J_{\delta^+}(p_g, z_0)\overline{\Pi}_{X^{\ast}, \psi} (p_g) \begin{bmatrix} A^{+} \\ A^{-} \end{bmatrix}\Big)([-1, n])\overline{ \chi}(kd)\psi(-\tfrac{kdn}{N})\\
  &=J_{\delta^+}(r, p_gz_0) \epsilon_{r}m_{X^{\ast}}(r)\chi_{\omega}(r)\chi(d)\begin{bmatrix}\theta^{+, 1/2}_{-1, \chi}(\overline{g}) \\\theta^{-, 1/2}_{-1,\chi}(\overline{g}) \end{bmatrix}\\
  &\stackrel{\textrm{ Lem. }\ref{hminustoh}}{=}J_{\delta^+}(r,z_g) \chi(d)\begin{bmatrix}\theta^{+, 1/2}_{-1, \chi}(\overline{g}) \\\theta^{-, 1/2}_{-1,\chi}(\overline{g}) \end{bmatrix} \lambda(r^{h_{-1}}).
  \end{align*} 
 \end{proof}
For $r=h_{-1}$, we have:
\begin{align*}
\begin{bmatrix}\theta^{+, 1/2}_{1, \chi}\\\theta^{-, 1/2}_{1,\chi} \end{bmatrix}(r z_g)
&=\tfrac{1}{G(1, \overline{\chi})}\sum_{k \bmod N}\overline{ \chi}(k) \theta_{L, X^{\ast}}\Big(J_{\delta^+}(r\overline{g}, z_0)\overline{\Pi}_{X^{\ast}, \psi} (r\overline{g})\begin{bmatrix} A^{+} \\ A^{-} \end{bmatrix}\Big)([1, \tfrac{k}{N}e^{\ast}])\\
&=\tfrac{1}{G(1, \overline{\chi})}\sum_{k \bmod N}\overline{ \chi}(k) J_{\delta^+}(r,gz_0)\overline{\Pi}_{L, \psi} (r) \theta_{L, X^{\ast}}\Big(J_{\delta^+}(\overline{g}, z_0)\overline{\Pi}_{X^{\ast}, \psi} (\overline{g})\begin{bmatrix} A^{+} \\ A^{-} \end{bmatrix}\Big)([1, \tfrac{k}{N}e^{\ast}])\\
&\stackrel{m_{X^{\ast}}(h_{-1})=1}{=}\tfrac{1}{G(1, \overline{\chi})}\sum_{k \bmod N}\overline{ \chi}(k)\Pi_{L, \psi} (r)  \theta_{L, X^{\ast}}\Big(J_{\delta^+}(\overline{g}, z_0)\overline{\Pi}_{X^{\ast}, \psi} (\overline{g})\begin{bmatrix} A^{+} \\ A^{-} \end{bmatrix}\Big)([1, \tfrac{k}{N}e^{\ast}])\\
&=\tfrac{1}{G(1, \overline{\chi})}\sum_{k \bmod N} \overline{ \chi}(k)\theta_{L, X^{\ast}}\Big(J_{\delta^+}(\overline{g}, z_0)\overline{\Pi}_{X^{\ast}, \psi} (\overline{g})\begin{bmatrix} A^{+} \\ A^{-} \end{bmatrix}\Big)([-1, -\tfrac{k}{N}e^{\ast}])\\
&=\left\{ \begin{array}{c} \left\{ \begin{array}{cr} 0& \det(g)>0, \\\sum_{n\in \Z} e^{-i   \pi z_g n^2}  \chi(n)  & \det(g)<0.\end{array}\right. \\  \left\{ \begin{array}{cr}\sum_{n\in \Z}  e^{-i   \pi z_g^{\sigma} n^2}  \chi(n)& \det(g)>0, \\ 0& \det(g)<0.\end{array}\right.\end{array}\right.
\end{align*}
\begin{align*}
\begin{bmatrix}\theta^{+, 1/2}_{-1, \chi}\\\theta^{-, 1/2}_{-1,\chi}\end{bmatrix}(r z_g)
&=\tfrac{1}{G(1, \overline{\chi})}\sum_{k \bmod N}\overline{ \chi}(k) \theta_{L, X^{\ast}}\Big(J_{\delta^+}(r\overline{g}, z_0)\overline{\Pi}_{X^{\ast}, \psi} (r\overline{g})\begin{bmatrix} A^{+} \\ A^{-} \end{bmatrix}\Big)([-1, \tfrac{k}{N}e^{\ast}])\\
&=\tfrac{1}{G(1, \overline{\chi})}\sum_{k \bmod N}\overline{ \chi}(k) J_{\delta^+}(r,gz_0)\overline{\Pi}_{L, \psi} (r) \theta_{L, X^{\ast}}\Big(J_{\delta^+}(\overline{g}, z_0)\overline{\Pi}_{X^{\ast}, \psi} (\overline{g})\begin{bmatrix} A^{+} \\ A^{-} \end{bmatrix}\Big)([-1, \tfrac{k}{N}e^{\ast}])\\
&=\tfrac{1}{G(1, \overline{\chi})}\sum_{k \bmod N}\overline{ \chi}(k)  \theta_{L, X^{\ast}}\Big(J_{\delta^+}(\overline{g}, z_0)\overline{\Pi}_{X^{\ast}, \psi} (\overline{g})\begin{bmatrix} A^{+} \\ A^{-} \end{bmatrix}\Big)([1, -\tfrac{k}{N}e^{\ast}])\\
&\stackrel{\chi(-n)=\chi(n)}{=}\left\{ \begin{array}{c} \left\{ \begin{array}{cr}\sum_{n\in \Z} e^{i   \pi z_g n^2}  \chi(n) & \det(g)>0, \\ 0  & \det(g)<0.\end{array}\right. \\  \left\{ \begin{array}{cr} 0  & \det(g)>0, \\\sum_{n\in \Z} e^{i   \pi z_g^{\sigma} n^2}  \chi(n)& \det(g)<0.\end{array}\right.\end{array}\right.
\end{align*}
If $ \Im(z)>0$, 
\begin{align}\label{hminus1zn}
\theta^{\pm, \delta^+}_{\chi}(rz) = \begin{bmatrix}\theta^{+, 1/2}_{1, \chi}(rz) &\theta^{+, 1/2}_{-1, \chi}(rz)  \\\theta^{-, 1/2}_{1, \chi}(rz) &\theta^{-, 1/2}_{-1, \chi}(rz)  \end{bmatrix}=\begin{bmatrix}0 &\sum_{n\in \Z} e^{i   \pi z n^2}  \chi(n)  \\ \sum_{n\in \Z}  e^{-i   \pi z^{\sigma} n^2}  \chi(n)&0  \end{bmatrix}=\theta^{\pm, \delta^+}_{\chi}(z) \begin{bmatrix} 0& 1\\ 1& 0\end{bmatrix}.
\end{align}
If $ \Im(z)<0$,
\begin{align}\label{hminus1zp}
\theta^{\pm, \delta^+}_{\chi}(rz) = \begin{bmatrix}\theta^{+, 1/2}_{1, \chi}(rz) &\theta^{+, 1/2}_{-1, \chi}(rz)  \\\theta^{-, 1/2}_{1, \chi}(rz) &\theta^{-, 1/2}_{-1, \chi}(rz)  \end{bmatrix}=\begin{bmatrix}\sum_{n\in \Z} e^{-i   \pi z n^2}  \chi(n)  &0  \\ 0&\sum_{n\in \Z} e^{i   \pi z^{\sigma} n^2}  \chi(n) \end{bmatrix}=\theta^{\pm, \delta^+}_{\chi}(z) \begin{bmatrix} 0& 1\\ 1& 0\end{bmatrix}.
\end{align}
Similarly, we have:
\begin{align*}
\begin{bmatrix}\theta^{+, 3/2}_{1, \chi}\\\theta^{-, 3/2}_{1,\chi} \end{bmatrix}(r z_g)&=\tfrac{1}{G(1, \overline{\chi})}\sum_{k \bmod N}  \overline{ \chi}(k)\theta_{L, X^{\ast}}\Big(J_{\delta^-}(\overline{g}, z_0)\overline{\Pi}_{X^{\ast}, \psi} (\overline{g})\begin{bmatrix} B^{+} \\ B^{-} \end{bmatrix}\Big)([-1, -\tfrac{k}{N}e^{\ast}])\\
&=\left\{ \begin{array}{c} \left\{ \begin{array}{cr}0 & \det(g)>0, \\ \sum_{n\in \Z} ne^{-i   \pi z_g n^2}  \chi(n)  & \det(g)<0.\end{array}\right. \\  \left\{ \begin{array}{cr} \sum_{n\in \Z} ne^{-i   \pi z_g^{\sigma} n^2}  \chi(-n)  & \det(g)>0, \\0& \det(g)<0.\end{array}\right.\end{array}\right.
 \end{align*}
 \begin{align*}
\begin{bmatrix}\theta^{+, 3/2}_{-1, \chi}\\\theta^{-, 3/2}_{-1,\chi}\end{bmatrix}(r z_g)
&=\tfrac{1}{G(1, \overline{\chi})}\sum_{k \bmod N}\overline{ \chi}(k)  \theta_{L, X^{\ast}}\Big(J_{\delta^-}(\overline{g}, z_0)\overline{\Pi}_{X^{\ast}, \psi} (\overline{g})\begin{bmatrix} B^{+} \\ B^{-} \end{bmatrix}\Big)([1, -\tfrac{k}{N}e^{\ast}])\\
&=\left\{ \begin{array}{c} \left\{ \begin{array}{cr} \sum_{n\in \Z}n e^{i   \pi z_g n^2}  \chi(n) & \det(g)>0, \\0 & \det(g)<0.\end{array}\right. \\  \left\{ \begin{array}{cr} 0 & \det(g)>0, \\\sum_{n\in \Z} ne^{i   \pi z_g^{\sigma} n^2}  \chi(-n) & \det(g)<0.\end{array}\right.\end{array}\right.
 \end{align*}
 If $ \Im(z)>0$, 
\begin{align}\label{hminus1zn32}
\theta^{\pm, \delta^-}_{\chi}(rz) = \begin{bmatrix}\theta^{+, 3/2}_{1, \chi}(rz) &\theta^{+, 3/2}_{-1, \chi}(rz)  \\\theta^{-, 3/2}_{1, \chi}(rz) &\theta^{-, 3/2}_{-1, \chi}(rz)  \end{bmatrix}=\begin{bmatrix}0 &\sum_{n\in \Z}n e^{i   \pi z n^2}  \chi(n)  \\ \sum_{n\in \Z} n e^{-i   \pi z^{\sigma} n^2}  \chi(-n)&0  \end{bmatrix}=\theta^{\pm, \delta^-}_{\chi}(z) \begin{bmatrix} 0& 1\\ 1& 0\end{bmatrix}.
\end{align}
If $ \Im(z)<0$,
\begin{align}\label{hminus1zp32}
\theta^{\pm, \delta^-}_{\chi}(rz) = \begin{bmatrix}\theta^{+, 3/2}_{1, \chi}(rz) &\theta^{+, 3/2}_{-1, \chi}(rz)  \\\theta^{-, 3/2}_{1, \chi}(rz) &\theta^{-, 3/2}_{-1, \chi}(rz)  \end{bmatrix}=\begin{bmatrix}\sum_{n\in \Z}n e^{-i   \pi z n^2}  \chi(n)  &0  \\ 0&\sum_{n\in \Z}n e^{i   \pi z^{\sigma} n^2}  \chi(-n) \end{bmatrix}=\theta^{\pm, \delta^-}_{\chi}(z) \begin{bmatrix} 0& 1\\ 1& 0\end{bmatrix}.
\end{align}
$$\theta^{ \pm, \delta^-}_{\chi}(r z)= \theta^{\pm,\delta^-}_{ \chi}(z) \begin{bmatrix} 0& 1\\ 1& 0\end{bmatrix}.$$
  \begin{definition}
 $\overline{\lambda}^{\pm}_{\chi}\stackrel{Def.}{=}\Ind_{\overline{\Gamma}_{\theta} \cap \overline{\Gamma}_0(N^2)}^{ \overline{\Gamma}^{\pm}_{\theta} \cap  \overline{\Gamma}^{\pm}_0(N^2 )} \overline{\lambda}_{\chi}^{-1}.$
\end{definition}
\begin{lemma}\label{gammatwodimplus}
$\overline{\lambda}^{\pm}_{\chi}$ is an irreducible representation of two dimension.
\end{lemma}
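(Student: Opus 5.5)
We must show $\overline{\lambda}^{\pm}_{\chi}=\Ind_{H}^{G}\overline{\lambda}_{\chi}^{-1}$ is irreducible of dimension $2$, where $H=\overline{\Gamma}_{\theta}\cap\overline{\Gamma}_0(N^2)$ and $G=\overline{\Gamma}^{\pm}_{\theta}\cap\overline{\Gamma}^{\pm}_0(N^2)$.

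First, $H$ has index $2$ in $G$ and is normal. The determinant map $G\to\{\pm1\}$ has kernel $H$ and is surjective, because $[h_{-1},1]\in G$: $h_{-1}$ is diagonal, so it lies in both $\Gamma^{\pm}_{\theta}$ and $\Gamma^{\pm}_0(N^2)$. Thus $G=H\sqcup H[h_{-1},1]$, and the induced representation has dimension $2$.

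By Mackey's criterion for an index-two normal subgroup, $\Ind_H^G\overline{\lambda}_{\chi}^{-1}$ is irreducible if and only if $\overline{\lambda}_{\chi}$ differs from its conjugate $\overline{r}\mapsto\overline{\lambda}_{\chi}(\overline{r}^{h_{-1}})$ on $H$, where $\overline{r}^{h_{-1}}=[h_{-1},1]\overline{r}[h_{-1},1]$. Lemma~\ref{hminustoh} gives $\overline{\lambda}(\overline{r}^{h_{-1}})=\overline{\lambda}(\overline{r})\chi_{\omega}(r)$. For the other factor we need $\chi(r^{h_{-1}})=\chi(r)$: conjugation by $h_{-1}$ sends $\begin{pmatrix}a&b\\c&d\end{pmatrix}$ to $\begin{pmatrix}a&-b\\-c&d\end{pmatrix}$ and leaves $d$ unchanged, so $\chi(d)$ is preserved. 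Hence
\[
\overline{\lambda}_{\chi}(\overline{r}^{h_{-1}})=\overline{\lambda}_{\chi}(\overline{r})\,\chi_{\omega}(r),
\]
and the task reduces to exhibiting $r\in\Gamma_{\theta}\cap\Gamma_0(N^2)$ with $\chi_{\omega}(r)\neq1$.

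Here $\chi_{\omega}$ is trivial on $\Gamma(4,2)$ and satisfies $\chi_{\omega}(\omega)=i$. However, $\omega\notin\Gamma_0(N^2)$ once $N>1$, so $\omega$ itself is not available. I would look for an element of $H$ outside $\Gamma(4,2)\cdot\{\pm I\}$ and compute $\chi_{\omega}$ on it directly. One way is to write it via $\omega$ times an element of $\Gamma(4,2)$; alternatively, compute $\chi_{\omega}$ from $\overline{\lambda}(\overline{r}^{h_{-1}})/\overline{\lambda}(\overline{r})$ using Lemma~\ref{gammar}.

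Two candidates:
\begin{itemize}
\item $r=-I$: the formula of Lemma~\ref{gammar} with $c=0$, $d=-1$ (using the convention $\left(\frac{0}{-1}\right)=-1$), together with the factor $\nu_2(-1,r)=\sgn(r,-1)$ from the proof of Lemma~\ref{hminustoh}, may already give a nontrivial ratio.
\item $r=\begin{pmatrix}-1&0\\2N^2&-1\end{pmatrix}$, as used in Corollary~\ref{irrdgammapm2nn}.
\end{itemize}
The proof of Lemma~\ref{gammatwodim}, which compares $\widetilde{\beta}(r)$ and $\widetilde{\beta}(r^{h_{-1}})$ through Lemma~\ref{chap8betacd} for $r=\begin{pmatrix}3&4\\2&3\end{pmatrix}$, is the model. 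The analogue here is $r=\begin{pmatrix}1+2N^2&2\\2N^2&1\end{pmatrix}$ or a similar element with $d\equiv3\pmod4$. For such $r$, the sign of $c$ flips under $h_{-1}$ while $d$ is unchanged, so $\left(\frac{2c}{d}\right)$ changes by $\left(\frac{-1}{d}\right)=-1$, and the $\mu_2$-twist from the covering contributes $\nu_2(-1,r)=1$ when $c\neq0$. That makes $\overline{\lambda}_{\chi}(\overline{r}^{h_{-1}})=-\overline{\lambda}_{\chi}(\overline{r})$.

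The main obstacle is choosing a concrete element that lies in $\Gamma_{\theta}\cap\Gamma_0(N^2)$ for every $N$ (with either parity) and has $d\equiv3\pmod4$, $c\neq0$, $\det=1$. My first attempt is $r=\begin{pmatrix}1-2N^2&2\\-2N^2\cdot\ast&\ast\end{pmatrix}$ adjusted so that $d=-1$, for example $r=\begin{pmatrix}-1&0\\2N^2&-1\end{pmatrix}$ from Corollary~\ref{irrdgammapm2nn}. There $d=-1\equiv3\pmod4$, and the computation of the sign in $\widetilde{\beta}$ already done in that corollary shows the two values differ: $e^{-\pi i/4}$ versus $e^{\pi i/4}$, possibly twisted by the covering factor, which must be tracked. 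This element is in $\Gamma(2)\cap\Gamma_0(N^2)\subseteq H$, so it settles irreducibility.
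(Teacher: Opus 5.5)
Your proof is correct, but it is not the paper's route. The paper's proof is one line: it identifies $\overline{\lambda}^{\pm}_{\chi}$ with the $\chi^{-1}$-twist of the contragredient of $\overline{\lambda}^{\pm}$ from Lemma~\ref{gammatwodim}, and inherits irreducibility from there. You instead apply Mackey's criterion directly to the index-two normal subgroup $H=\overline{\Gamma}_{\theta}\cap\overline{\Gamma}_0(N^2)$. You observe that $\chi(d)$ is unchanged by $h_{-1}$-conjugation, which reduces the problem to finding $r\in H$ whose $h_{-1}$-conjugate changes $\overline{\lambda}$.

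Your route buys something real. $\overline{\lambda}^{\pm}_{\chi}$ lives on the smaller group $\overline{\Gamma}^{\pm}_{\theta}\cap\overline{\Gamma}^{\pm}_0(N^2)$, so the paper's identity only makes sense after restricting $\overline{\lambda}^{\pm}$, and irreducibility need not survive restriction. The witness $\begin{pmatrix}3&4\\2&3\end{pmatrix}$ of Lemma~\ref{gammatwodim} lies in $\Gamma_0(N^2)$ only for $N=1$. Your witness works for every $N$, so you supply the step the paper leaves implicit; the paper's route is shorter and reuses the level-one computation.

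In the write-up, make these changes:
\begin{itemize}
\item Drop the hedges and the candidate $\begin{pmatrix}1+2N^2&2\\2N^2&1\end{pmatrix}$, whose determinant is $1-2N^2$.
\item State the final computation outright. For $r=\begin{pmatrix}-1&0\\2N^2&-1\end{pmatrix}$, $\nu_2(-1,r)=1$ by Lemma~\ref{nu2} since $c\neq 0$. Lemma~\ref{gammar} then gives $\overline{\lambda}([r,1])=\left(\frac{4N^2}{-1}\right)\epsilon_{-1}^{-1}=-i$, whereas $\overline{\lambda}([r^{h_{-1}},1])=\left(\frac{-4N^2}{-1}\right)\epsilon_{-1}^{-1}=i$.
\item Do not rely on the bare $\widetilde{\beta}$-values quoted from Corollary~\ref{irrdgammapm2nn}. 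They differ only by a factor $i$; the other factor $i$ comes from $m_{X^{\ast}}$, which depends on $\sgn(c)$. Your Kronecker-symbol computation is the correct bookkeeping.
\end{itemize}

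Simpler still is your first candidate $r=-I$. It is fixed by conjugation, while $\nu_2(-1,-I)=(-1,-1)_{\R}=-1$. Hence $[h_{-1},1][-I,t][h_{-1},1]=[-I,-t]$, and $\overline{\lambda}_{\chi}$ changes sign for every $N$.
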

\begin{proof}
 Recall the character $\overline{\lambda}^{\pm}$ from Lemma \ref{gammatwodim}. The character $\chi$ can also  be viewed as a character of $\overline{\Gamma}^{\pm}_0(N^2 )$. Then $\overline{\lambda}^{\pm}_{\chi}=[\overline{\lambda}^{\pm}]^{-1}\otimes \chi^{-1}$.
 \end{proof}
 Note that   $[ \overline{\Gamma}^{\pm}_{\theta} \cap  \overline{\Gamma}^{\pm}_0(N^2 )]=[\overline{\Gamma}_{\theta} \cap \overline{\Gamma}_0(N^2)] \cup [\overline{\Gamma}_{\theta} \cap \overline{\Gamma}_0(N^2)] h_{-1}$. Let $e_1$ and $e_2$ denote  the complex  functions on $ \overline{\Gamma}^{\pm}_{\theta} \cap  \overline{\Gamma}^{\pm}_0(N^2 )$ such that (1) $\supp e_1=\overline{\Gamma}_{\theta} \cap \overline{\Gamma}_0(N^2)$ and $\supp e_2=[\overline{\Gamma}_{\theta} \cap \overline{\Gamma}_0(N^2)] h_{-1}$, (2) $e_1(1)=1=e_2(h_{-1})$. Let $V$ be the vector space spanned by $e_1,e_2$. The representation $\overline{\lambda}^{\pm}_{\chi}$ can be realized  on $V$. For any $f\in V$, we have:
 \[ \overline{\lambda}^{\pm}_{\chi}(h_{-1}) f(1)=f(h_{-1}), \quad \overline{\lambda}^{\pm}_{\chi}(h_{-1}) f(h_{-1})=f(1).\]
  For any $\overline{r}\in \overline{\Gamma}_{\theta} \cap \overline{\Gamma}_0(N^2)$, 
\[ \overline{\lambda}^{\pm}_{\chi}(\overline{r}) f(1)= [\overline{\lambda}_{\chi}]^{-1}(\overline{r})f(1), \quad \overline{\lambda}^{\pm}_{\chi}(\overline{r}) f(h_{-1})= [\overline{\lambda}_{\chi}]^{-1}(\overline{r}^{h_{-1}})f(h_{-1}).\]
Therefore,
\begin{align}
\overline{\lambda}^{\pm}_{\chi}(h_{-1}) (e_1,e_2)& =(e_1,e_2) \begin{bmatrix} 0 &1\\ 1& 0\end{bmatrix},\label{lamdh1}\\
\overline{\lambda}^{\pm}_{\chi}(\overline{r}) (e_1,e_2)&=(e_1,e_2) \begin{bmatrix} \overline{\lambda}_{\chi}(\overline{r})^{-1} &  0\\ 0& \overline{\lambda}_{\chi}(\overline{r}^{h_{-1}})^{-1}\end{bmatrix}.\label{lamdr}
\end{align}
Similar  to Theorem \ref{maintheorem1}, we have:
 \begin{theorem}\label{mainthm1234}
 For $\overline{r}=[r,  \epsilon]\in \overline{\Gamma}^{\pm}_{\theta} \cap  \overline{\Gamma}^{\pm}_0(N^2 )$,  we have:
 \begin{itemize}
 \item[(1)] $\theta^{\pm, \delta^+}_{\chi}(\overline{r} z)=J_{\delta^+}(\overline{r},z)\theta^{\pm, \delta^+}_{\chi}( z)\overline{\lambda}^{\pm}_{\chi}(\overline{r}^{-1})$.
 \item[(2)] $\theta^{\pm, \delta^-}_{\chi}(\overline{r} z)=J_{\delta^-}(\overline{r},z)\theta^{\pm, \delta^-}_{\chi}( z)\overline{\lambda}^{\pm}_{\chi}(\overline{r}^{-1})$. 
 \end{itemize}
\end{theorem}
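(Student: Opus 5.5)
The approach mirrors the proof of Theorem \ref{maintheorem1}. We first check the identity on the two kinds of generators of $G^{\pm}=\overline{\Gamma}^{\pm}_{\theta}\cap\overline{\Gamma}^{\pm}_0(N^2)$: the index-two subgroup $H=\overline{\Gamma}_{\theta}\cap\overline{\Gamma}_0(N^2)$ and the element $h_{-1}=[h_{-1},1]$. We then extend to all of $G^{\pm}$ by multiplicativity, since $G^{\pm}=H\sqcup Hh_{-1}$. The two parts (1) and (2) are handled in parallel: the $\delta^-$ case uses the identities (\ref{hminus1zn32}), (\ref{hminus1zp32}) in place of (\ref{hminus1zn}), (\ref{hminus1zp}).

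\textbf{Step 1: elements $\overline{r}\in H$.} Lemma \ref{overrgamma0} gives
\[
\theta^{\pm,\delta^{\pm}}_{\chi}(\overline{r}z)=J_{\delta^{\pm}}(\overline{r},z)\,\theta^{\pm,\delta^{\pm}}_{\chi}(z)\,\mathrm{diag}\bigl(\overline{\lambda}_{\chi}(\overline{r}),\overline{\lambda}_{\chi}(\overline{r}^{h_{-1}})\bigr).
\]
By (\ref{lamdr}), in the basis $(e_1,e_2)$ the right-hand diagonal matrix is exactly $\overline{\lambda}^{\pm}_{\chi}(\overline{r})^{-1}=\overline{\lambda}^{\pm}_{\chi}(\overline{r}^{-1})$. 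This proves the claim on $H$.

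\textbf{Step 2: the element $h_{-1}$.} Equations (\ref{hminus1zn}) and (\ref{hminus1zp}) (respectively (\ref{hminus1zn32}) and (\ref{hminus1zp32})) give $\theta^{\pm,\delta}_{\chi}(h_{-1}z)=\theta^{\pm,\delta}_{\chi}(z)\bigl[\begin{smallmatrix}0&1\\1&0\end{smallmatrix}\bigr]$ for both signs of $\Im z$. By (\ref{lamdh1}), the swap matrix is $\overline{\lambda}^{\pm}_{\chi}(h_{-1})=\overline{\lambda}^{\pm}_{\chi}(h_{-1}^{-1})$, because $[h_{-1},1]^2=[1,1]$. It remains to see that $J_{\delta}(h_{-1},z)$ is the identity matrix. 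Indeed $m_{X^{\ast}}(h_{-1})=1$, and Lemma \ref{jpz}(1), applied to $h_{-1}\in P^{\pm}_{>0}(\R)$ with $a=1$, gives $J_{1/2}(h_{-1},z)=1$; hence $J_{3/2}(h_{-1},z)=1$ as well.

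\textbf{Step 3: cocycle extension.} Write a general $\overline{g}\in G^{\pm}$ as $\overline{r}$ or $\overline{r}h_{-1}$ with $\overline{r}\in H$, and use the equivariance of the previous steps. The point is that the automorphic factor satisfies $J_{\delta}(\overline{h}_1\overline{h}_2,z)=J_{\delta}(\overline{h}_1,\overline{h}_2z)J_{\delta}(\overline{h}_2,z)$ (the lemma following the definition of $J_{\delta^{\pm}}$), while $\overline{g}\mapsto\overline{\lambda}^{\pm}_{\chi}(\overline{g}^{-1})$ is an anti-homomorphism. The chain of equalities is then the same as in case 1c) of Theorem \ref{maintheorem1}:
\[
\theta(\overline{r}h_{-1}z)=J_{\delta}(\overline{r},h_{-1}z)\,\theta(h_{-1}z)\,\overline{\lambda}^{\pm}_{\chi}(\overline{r}^{-1})
=J_{\delta}(\overline{r}h_{-1},z)\,\theta(z)\,\overline{\lambda}^{\pm}_{\chi}(h_{-1}^{-1})\,\overline{\lambda}^{\pm}_{\chi}(\overline{r}^{-1}),
\]
which equals the required $J_{\delta}(\overline{r}h_{-1},z)\,\theta(z)\,\overline{\lambda}^{\pm}_{\chi}\bigl((\overline{r}h_{-1})^{-1}\bigr)$. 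The elements of the form $h_{-1}\overline{r}$ are already covered, since they lie in $Hh_{-1}$.

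\textbf{Main obstacle.} The delicate point is the bookkeeping of the central sign. In $\overline{\GL}_2$ the product $\overline{r}\,[h_{-1},1]$ carries the cocycle value $\overline{C}_{X^{\ast}}(r,h_{-1})$, and this must be matched consistently between $J_{\delta}$ and the coset decomposition used in the induced model. One also has to check that the order of the diagonal entries in Lemma \ref{overrgamma0} agrees with the basis convention $e_2(h_{-1})=1$, that is, that conjugation by $h_{-1}$ on the second column produces $\overline{\lambda}_{\chi}(\overline{r}^{h_{-1}})$ and not its conjugate by $h_{-1}^{-1}$. This is harmless here because $h_{-1}^2=1$ in the cover. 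I expect this verification to reduce to Lemma \ref{hminustoh} together with $\overline{C}_{X^{\ast}}(h_{-1},h_{-1})=1$.
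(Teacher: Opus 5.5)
Your three steps are the paper's own 1a)--1c): the check on $H=\overline{\Gamma}_{\theta}\cap\overline{\Gamma}_0(N^2)$ via Lemma~\ref{overrgamma0} and (\ref{lamdr}), the check on $[h_{-1},1]$, and the cocycle extension. For part (1) this is fine.

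\textbf{The gap is in Step 2 for part (2).} From $J_{1/2}(h_{-1},z)=1$ you cannot conclude $J_{3/2}(h_{-1},z)=1$. By definition $J_{3/2}(\overline{h},z)=J_{1/2}(\overline{h},z)\,J(h,z)$, and $J(h_{-1},z)=cz+d=-1$. Hence $J_{\delta^-}([h_{-1},1],z)=-I_2$. This agrees with the paper's own computation in the $\overline{\Oa}_2(\R)$ subsection, which shows that $J_{\delta^-}([h_{-1},1],z_0)$ sends $\bigl[\begin{smallmatrix}B^+\\ B^-\end{smallmatrix}\bigr]$ to $\bigl[\begin{smallmatrix}-B^-\\ -B^+\end{smallmatrix}\bigr]$.

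Now put $S=\bigl(\begin{smallmatrix}0&1\\1&0\end{smallmatrix}\bigr)$.
\begin{itemize}
\item The left side is $\theta^{\pm,\delta^-}_{\chi}(h_{-1}z)=\theta^{\pm,\delta^-}_{\chi}(z)\,S$, by (\ref{hminus1zn32})--(\ref{hminus1zp32}). You can confirm this directly from the definitions: for $\Im z>0$ both sides are the antidiagonal matrix with entries $\sum_n\chi(n)ne^{i\pi zn^2}$ and $\sum_n\chi(-n)ne^{-i\pi z^{\sigma}n^2}$.
\item The right side, computed with (\ref{lamdh1}), is $-\theta^{\pm,\delta^-}_{\chi}(z)\,S$.
\end{itemize}
For odd $\chi$, $\theta^{\pm,\delta^-}_{\chi}\not\equiv 0$; the coefficient of $e^{i\pi z}$ in $\sum_n\chi(n)ne^{i\pi zn^2}$ is $2$. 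So your verification of (2) at $[h_{-1},1]$ fails, and Step 3 carries the sign error to the whole coset $H[h_{-1},1]$. The paper's ``the proof of Part (2) is similar'' passes over the same point. Read with the matrix realization (\ref{lamdh1}), statement (2) is in fact off by this sign on that coset.

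The discrepancy is exactly $\chi(-1)=-1$. It disappears if $[h_{-1},1]$ is represented by $\chi(-1)S$, which happens if you normalize the second basis vector by $e_2(h_{-1})=\chi(-1)$. Equivalently, realize $\overline{\lambda}^{\pm}_{\chi}$ as a twist by $\chi^{-1}$, with $\chi$ extended to $\overline{\Gamma}^{\pm}_0(N^2)$ by $[g,\epsilon]\mapsto\chi(d)$ for $g=\bigl(\begin{smallmatrix}a&b\\ c&d\end{smallmatrix}\bigr)$, so that $\chi(h_{-1})=\chi(-1)$; this is what the proof of Lemma~\ref{gammatwodimplus} implicitly uses. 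This change leaves (\ref{lamdr}) unchanged and does not affect part (1), where $\chi$ is even. With it, your Steps 1 and 3 go through verbatim and Step 2 becomes $(-I_2)\,\theta^{\pm,\delta^-}_{\chi}(z)\,(-S)=\theta^{\pm,\delta^-}_{\chi}(z)\,S$, as required.
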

\begin{proof}
1) a) If $\overline{r}= \overline{r}_0\in \overline{\Gamma}_{\theta} \cap  \overline{\Gamma}_0(N^2 )$, the result follows from Lemma \ref{overrgamma0} and (\ref{lamdr}).\\
b) If $\overline{r}=h_{-1}$, then $J_{\delta^+}(\overline{r},z)=I_2$ and the result follows from (\ref{hminus1zn}), (\ref{hminus1zp})and (\ref{lamdh1}).\\
c) For the general $\overline{r}=h_{-1}\overline{r}_0$, 
\begin{align*}
\theta^{\pm, \delta^+}_{\chi}(h_{-1}\overline{r}_0z)&=J_{\delta^+}(h_{-1},\overline{r}_0z)\theta^{\pm, \delta^+}_{\chi}( \overline{r}_0z)\overline{\lambda}^{\pm}_{\chi}(h_{-1}^{-1})\\
&=J_{\delta^+}(h_{-1},\overline{r}_0z)J_{\delta^+}(\overline{r}_0, z)\theta^{\pm, \delta^+}_{\chi}( z)\overline{\lambda}^{\pm}_{\chi}(\overline{r}_0^{-1})\overline{\lambda}^{\pm}_{\chi}(h_{-1}^{-1})\\
&=J_{\delta^+}(\overline{r},z)\theta^{\pm, \delta^+}_{\chi}( z)\overline{\lambda}^{\pm}_{\chi}(\overline{r}^{-1}).
\end{align*}
2) The proof of Part (2) is similar.
\end{proof}
\subsection{ Twisted by $\overline{\mathfrak{w}} $ }\label{twistmathwII}
Let $\mathfrak{w} \in \GL_2(\Q)$, and let $\overline{\mathfrak{w}} = (\mathfrak{w}, t_{\mathfrak{w}}) \in \overline{\GL}_2(\Q)$. For $\overline{s} \in \overline{\Gamma}^{\pm}_{\theta} \cap  \overline{\Gamma}^{\pm}_0(N^2 )$, let $\overline{r} = \overline{\mathfrak{w}}^{-1} \overline{s} \overline{\mathfrak{w}} \in \overline{\mathfrak{w}}^{-1}[\overline{\Gamma}^{\pm}_{\theta} \cap  \overline{\Gamma}^{\pm}_0(N^2 )]\overline{\mathfrak{w}}$. We define a character on this conjugated subgroup by
\[
 \overline{\lambda}_{\chi}^{\overline{\mathfrak{w}}}(\overline{r}) := \overline{\lambda}_{\chi}(\overline{s}).
\]

\begin{definition}
Define the slash operators for the theta functions as follows:
\begin{itemize}
\item[(1)] $[\theta^{\pm, \delta^+}_{\chi}]^{\overline{\mathfrak{w}}}(z) := \det(\mathfrak{w})^{1/4} J_{\delta^+}(\overline{\mathfrak{w}},z)^{-1} \theta^{\pm, \delta^+}_{\chi}(\overline{\mathfrak{w}} z)$;
\item[(2)] $[\theta^{\pm, \delta^-}_{\chi}]^{\overline{\mathfrak{w}}}(z) := \det(\mathfrak{w})^{3/4} J_{\delta^-}(\overline{\mathfrak{w}},z)^{-1}\theta^{\pm, \delta^-}_{\chi}(\overline{\mathfrak{w}} z)$.
\end{itemize}
\end{definition}
\begin{proposition}\label{slashpro2}
For $\overline{r} \in \overline{\mathfrak{w}}^{-1}[\overline{\Gamma}^{\pm}_{\theta} \cap  \overline{\Gamma}^{\pm}_0(N^2 )]\overline{\mathfrak{w}}$, we have:
\begin{itemize}
\item[(1)] $[\theta^{\pm, \delta^+}_{\chi}]^{\overline{\mathfrak{w}}}(\overline{r}z) = J_{\delta^+}(\overline{r},z)[\theta^{\pm, \delta^+}_{\chi}]^{\overline{\mathfrak{w}}}(z)\overline{\lambda}_{\chi}^{\overline{\mathfrak{w}}}(\overline{r}^{-1})$. 
\item[(2)] $[\theta^{\pm, \delta^-}_{\chi}]^{\overline{\mathfrak{w}}}(\overline{r}z) = J_{\delta^-}(\overline{r},z)[\theta^{\pm, \delta^-}_{\chi}]^{\overline{\mathfrak{w}}}(z)\overline{\lambda}_{\chi}^{\overline{\mathfrak{w}}}(\overline{r}^{-1})$.
\end{itemize}
\end{proposition}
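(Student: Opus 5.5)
The proof will follow the pattern of Proposition \ref{slashpro}, now with matrix-valued automorphic factors and the two-dimensional representation $\overline{\lambda}^{\pm}_{\chi}$ replacing the scalar character. (Here I read $\overline{\lambda}^{\overline{\mathfrak{w}}}_{\chi}(\overline{r}^{-1})$ as $\overline{\lambda}^{\pm}_{\chi}(\overline{s}^{-1})$, the transported induced representation, since Theorem \ref{mainthm1234} is the input.)

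Fix $\overline{r}=\overline{\mathfrak{w}}^{-1}\overline{s}\,\overline{\mathfrak{w}}$ with $\overline{s}\in\overline{\Gamma}^{\pm}_{\theta}\cap\overline{\Gamma}^{\pm}_0(N^2)$. The first step is the cocycle identity
\[
J_{\delta^{\pm}}(\overline{\mathfrak{w}},\overline{r}z)^{-1}J_{\delta^{\pm}}(\overline{s},\overline{\mathfrak{w}}z)J_{\delta^{\pm}}(\overline{\mathfrak{w}},z)=J_{\delta^{\pm}}(\overline{r},z).
\]
This comes from the multiplicativity lemma for $J_{\delta^{\pm}}$ in Section \ref{dewiaf}, applied to the equality $\overline{s}\,\overline{\mathfrak{w}}=\overline{\mathfrak{w}}\,\overline{r}$ in $\overline{\GL}_2(\R)$:
\[
J_{\delta^{\pm}}(\overline{s},\overline{\mathfrak{w}}z)J_{\delta^{\pm}}(\overline{\mathfrak{w}},z)=J_{\delta^{\pm}}(\overline{s}\overline{\mathfrak{w}},z)=J_{\delta^{\pm}}(\overline{\mathfrak{w}}\overline{r},z)=J_{\delta^{\pm}}(\overline{\mathfrak{w}},\overline{r}z)J_{\delta^{\pm}}(\overline{r},z).
\]
The $J_{\delta^{\pm}}$ are diagonal $2\times2$ matrices, hence they commute with one another. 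This lets us rearrange factors without worrying about order.

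Next, compute directly:
\[
[\theta^{\pm,\delta^+}_{\chi}]^{\overline{\mathfrak{w}}}(\overline{r}z)=\det(\mathfrak{w})^{1/4}J_{\delta^+}(\overline{\mathfrak{w}},\overline{r}z)^{-1}\theta^{\pm,\delta^+}_{\chi}(\overline{s}\,\overline{\mathfrak{w}}z).
\]
Theorem \ref{mainthm1234}, applied at the point $\overline{\mathfrak{w}}z$ (which lies in $\mathbb{H}^{\pm}$), turns the right-hand side into
\[
\det(\mathfrak{w})^{1/4}J_{\delta^+}(\overline{\mathfrak{w}},\overline{r}z)^{-1}J_{\delta^+}(\overline{s},\overline{\mathfrak{w}}z)\,\theta^{\pm,\delta^+}_{\chi}(\overline{\mathfrak{w}}z)\,\overline{\lambda}^{\pm}_{\chi}(\overline{s}^{-1}).
\]
Insert $J_{\delta^+}(\overline{\mathfrak{w}},z)J_{\delta^+}(\overline{\mathfrak{w}},z)^{-1}$ before $\theta$ and use the cocycle identity above. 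The left factors then become $J_{\delta^+}(\overline{r},z)$, and what remains is $[\theta^{\pm,\delta^+}_{\chi}]^{\overline{\mathfrak{w}}}(z)\,\overline{\lambda}^{\overline{\mathfrak{w}}}_{\chi}(\overline{r}^{-1})$. The matrix $\overline{\lambda}$ sits on the right throughout, so no commutation issue arises there. Part (2) is identical, with $\delta^-$ and the factor $\det(\mathfrak{w})^{3/4}$.

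The main obstacle is the commutativity step. One must check that $J_{\delta^{\pm}}(\overline{h},z)$ is genuinely diagonal for $\overline{h}$ with $\det h<0$. If $\mathfrak{w}$ has negative determinant, $\overline{\mathfrak{w}}z$ switches half-planes and the sign factors $\sgn(h,z)$ enter. Those factors are scalar on each diagonal entry, however, so diagonality still holds. I would also check that $\det(\mathfrak{w})^{1/4}$ is interpreted consistently (taking $|\det\mathfrak{w}|$ if the determinant is negative), since it is a scalar and cancels trivially in any case.
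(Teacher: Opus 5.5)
Your proof is correct and follows the paper's argument. Like the paper, you derive the identity $J_{\delta^{\pm}}(\overline{\mathfrak{w}},\overline{r}z)^{-1}J_{\delta^{\pm}}(\overline{s},\overline{\mathfrak{w}}z)J_{\delta^{\pm}}(\overline{\mathfrak{w}},z)=J_{\delta^{\pm}}(\overline{r},z)$ from $\overline{s}\,\overline{\mathfrak{w}}=\overline{\mathfrak{w}}\,\overline{r}$, then apply Theorem~\ref{mainthm1234} at $\overline{\mathfrak{w}}z$ and the definition of the slash operator; your reading of $\overline{\lambda}^{\overline{\mathfrak{w}}}_{\chi}(\overline{r}^{-1})$ as $\overline{\lambda}^{\pm}_{\chi}(\overline{s}^{-1})$ is the sensible one. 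The commutativity of the diagonal factors, which you flag as the main obstacle, is not needed: the factors already appear in the order in which the two multiplicativity relations apply.
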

\begin{proof}
The proof parallels that of Proposition~\ref{slashpro}. For completeness, we provide the proof of statement~(1); the case of~(2) is analogous.
Write $\overline{r}=\overline{\mathfrak{w}}^{-1}\overline{s}\overline{\mathfrak{w}}$.\\
 \begin{align*}
& J_{\delta^+}(\overline{\mathfrak{w}},\overline{r}z)^{-1} J_{\delta^+}(\overline{s},\overline{\mathfrak{w}}z) J_{\delta^+}(\overline{\mathfrak{w}},z) \\
&= J_{\delta^+}(\overline{\mathfrak{w}},\overline{r}z)^{-1} J_{\delta^+}(\overline{s}\overline{\mathfrak{w}},z) \\
&= J_{\delta^+}(\overline{\mathfrak{w}},\overline{r}z)^{-1} J_{\delta^+}(\overline{\mathfrak{w}}\overline{r},z) \\
&= J_{\delta^+}(\overline{r},z).
\end{align*}
\begin{align*}
[\theta^{\pm, \delta^+}_{\chi}]^{\overline{\mathfrak{w}}}(\overline{r}z)
&= \det(\mathfrak{w})^{1/4} J_{\delta^+}(\overline{\mathfrak{w}},\overline{r}z)^{-1}\theta^{\pm, \delta^+}_{\chi}(\overline{\mathfrak{w}}\overline{r}z) \\
&=\det(\mathfrak{w})^{1/4} J_{\delta^+}(\overline{\mathfrak{w}},\overline{r}z)^{-1}\theta^{\pm, \delta^+}_{\chi}(\overline{s}\overline{\mathfrak{w}}z) \\
&= \det(\mathfrak{w})^{1/4}J_{\delta^+}(\overline{\mathfrak{w}},\overline{r}z)^{-1} J_{\delta^+}(\overline{s},\overline{\mathfrak{w}}z)\theta^{\pm, \delta^+}_{\chi}(\overline{\mathfrak{w}}z) \overline{\lambda}_{\chi}(\overline{s}^{-1})\\
&= J_{\delta^+}(\overline{\mathfrak{w}},\overline{r}z)^{-1} J_{\delta^+}(\overline{s},\overline{\mathfrak{w}}z) J_{\delta^+}(\overline{\mathfrak{w}},z)[\theta^{\pm, \delta^+}_{\chi}]^{\overline{\mathfrak{w}}}(z)\overline{\lambda}_{\chi}^{\overline{\mathfrak{w}}}(\overline{r}^{-1}) \\
&= J_{\delta^+}(\overline{r},z)[\theta^{\pm, \delta^+}_{\chi}]^{\overline{\mathfrak{w}}}(z)\overline{\lambda}_{\chi}^{\overline{\mathfrak{w}}}(\overline{r}^{-1}).
\end{align*}
\end{proof}
We extend the results of Sections \ref{Gammaaonodd} and \ref{Gammaaoneven} to the group $\overline{\Gamma}^{\pm}_0(N^2 )$. 

\begin{itemize}
\item Follow the notions form Section \ref{Gammaaonodd}.  If $N$ is odd, we define:
$$\Theta^{\pm, 1/2}_{\chi}:  \mathbb{H} \longrightarrow M_{2 \times 3}(\C); z\longmapsto \Big([\theta^{\pm, \delta^+}_{\chi}]^{\overline{M}_{q_1}}(z), \cdots , [\theta^{\pm, \delta^+}_{\chi}]^{\overline{M}_{q_3}}(z)\Big),$$
$$\Theta^{\pm, 3/2}_{\chi}:  \mathbb{H} \longrightarrow M_{2 \times 3}(\C); z\longmapsto \Big([\theta^{\pm, \delta^-}_{\chi}]^{\overline{M}_{q_1}}(z),\cdots, [\theta^{\pm, \delta^-}_{\chi}]^{\overline{M}_{q_3}}(z)\Big).$$
\item Follow the notions form Section \ref{Gammaaoneven}.  If $N$ is even, we define:
$$\Theta^{\pm, 1/2}_{\chi}:  \mathbb{H} \longrightarrow M_{2 \times 3}(\C); z\longmapsto \Big([\theta^{\pm, \delta^+}_{\chi}]^{\overline{M}_{q_1}}(z),  [\theta^{\pm, \delta^+}_{\chi}]^{\overline{M}_{q_2}}(z)\Big),$$
$$\Theta^{\pm, 3/2}_{\chi}:  \mathbb{H} \longrightarrow M_{2 \times 3}(\C); z\longmapsto \Big([\theta^{\pm, \delta^-}_{\chi}]^{\overline{M}_{q_1}}(z), [\theta^{\pm, \delta^-}_{\chi}]^{\overline{M}_{q_2}}(z)\Big).$$
\end{itemize}
\begin{definition}
$\overline{\gamma}^{\pm}_{\chi}=\Ind_{ \overline{\Gamma}^{\pm}_{\theta} \cap  \overline{\Gamma}^{\pm}_0(N^2 )}^{\overline{\Gamma}^{\pm}_0(N^2)} [\overline{\lambda}^{\pm}_{\chi}].$
\end{definition}
\begin{lemma}\label{irrdgamachi}
\begin{itemize}
\item[(1)] If $N$ is odd, then $\overline{\gamma}^{\pm}_{\chi}$ is an irreducible representation.
\item[(2)]
\begin{itemize}
\item[(a)]  If $N\equiv 2(\bmod 4)$, then $\overline{\gamma}^{\pm}_{\chi}$ is an irreducible representation.
\item[(b)]  If $N\equiv 0(\bmod 4)$,  then $\overline{\gamma}^{\pm}_{\chi}$ is a reducible representation with  two different irreducible components.
\end{itemize}
\end{itemize}
\end{lemma}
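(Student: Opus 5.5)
Throughout, write $H^{\pm}=\overline{\Gamma}^{\pm}_{\theta}\cap\overline{\Gamma}^{\pm}_0(N^2)$, $G^{\pm}=\overline{\Gamma}^{\pm}_0(N^2)$, $H=\overline{\Gamma}_{\theta}\cap\overline{\Gamma}_0(N^2)$ and $G=\overline{\Gamma}_0(N^2)$. By the definition of $\overline{\lambda}^{\pm}_{\chi}$ and transitivity of induction,
\[
\overline{\gamma}^{\pm}_{\chi}\simeq\Ind_{H}^{G^{\pm}}\overline{\lambda}_{\chi}^{-1}\simeq\Ind_{G}^{G^{\pm}}\overline{\gamma}_{\chi}.
\]
Here $G$ has index $2$ in $G^{\pm}$ with coset representative $h_{-1}$ (note $[h_{-1},1]^2=[1,1]$). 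The plan is to apply Clifford--Mackey theory to this index-two induction, exactly as in Proposition~\ref{irrdgammapm} and Corollary~\ref{irrdgammapm2nn}.

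The restriction is
\[
\overline{\gamma}^{\pm}_{\chi}|_{G}\simeq\overline{\gamma}_{\chi}\oplus\overline{\gamma}_{\chi}^{h_{-1}}.
\]
When $\overline{\gamma}_{\chi}$ is irreducible, that is, for $N$ odd (Lemma~\ref{gammachiodd}) or $N\equiv 2\pmod 4$ (Lemma~\ref{exenchigamma}(1)), irreducibility of $\overline{\gamma}^{\pm}_{\chi}$ is equivalent to $\overline{\gamma}_{\chi}\not\simeq\overline{\gamma}_{\chi}^{h_{-1}}$. Since both $\overline{\gamma}_{\chi}$ and $\overline{\gamma}_{\chi}^{h_{-1}}$ are induced from $H$ (and $h_{-1}$ normalizes $H$), Mackey's formula gives
\[
\Hom_G(\overline{\gamma}_{\chi},\overline{\gamma}_{\chi}^{h_{-1}})\simeq\bigoplus_{s\in H\backslash G/H}\Hom_{H\cap H_s}\bigl(\overline{\lambda}_{\chi},(\overline{\lambda}_{\chi}^{h_{-1}})^s\bigr).
\]
I will show that every summand vanishes by producing, for each $s$, an explicit element on which the two characters differ. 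The factor $\chi$ is harmless: $\chi(d)$ is unchanged under conjugation by $h_{-1}$ and by $u(1)$. So the comparison reduces to the values of $\overline{\lambda}$, using Lemma~\ref{hminustoh} ($\overline{\lambda}(\overline{r}^{h_{-1}})=\overline{\lambda}(\overline{r})\chi_{\omega}(r)$) together with Lemma~\ref{gammar}.

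\textbf{Case $N$ odd.} The double cosets are $s=1$ and $s=u(1)$.
\begin{itemize}
\item For $s=1$, take $r=\omega$-type elements in $H$ if available; otherwise use $r=-u_-(-2N^2)$, for which $\overline{\lambda}$ and $\overline{\lambda}^{h_{-1}}$ differ by the sign computed in Corollary~\ref{irrdgammapm2nn}.
\item For $s=u(1)$, reuse the element $r=s^{-1}[-u_-(-2N^2)]s$ from that same corollary. There, $\widetilde{\beta}^{-1}$ takes the values $e^{\pi i/4}$ and $e^{-\pi i/4}$.
\end{itemize}
Since $N$ odd already appeared in Corollary~\ref{irrdgammapm2nn}, one could alternatively twist that corollary by $\chi^{-1}$, as in the proof of Lemma~\ref{gammachiodd}: $\overline{\gamma}^{\pm}_{\chi}\simeq\chi^{-1}\otimes\overline{\gamma}^{\pm}|_{G^{\pm}}$. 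This gives (1) immediately, and it is the route I would take first.

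\textbf{Case $N\equiv 2\pmod 4$.} Here $H\trianglelefteq G$ with $G/H$ generated by $u(1)$, so $\overline{\gamma}_{\chi}|_H=\overline{\lambda}_{\chi}^{-1}\oplus(\overline{\lambda}_{\chi}^{-1})^{u(1)}$. It then suffices to check that $\overline{\lambda}_{\chi}^{h_{-1}}$ is neither $\overline{\lambda}_{\chi}$ nor $\overline{\lambda}_{\chi}^{u(1)}$ on $H$.
\begin{itemize}
\item For the first, $H\subseteq\overline{\Gamma}(2)$. On $\Gamma(4,2)$ the character $\chi_{\omega}$ is trivial, but on elements with $d\equiv 3\pmod 4$ Lemma~\ref{gammar} gives $\bigl(\tfrac{-2c}{d}\bigr)=-\bigl(\tfrac{2c}{d}\bigr)$, so a diagonal-type element such as $-I$ or $\begin{pmatrix}-1&0\\N^2&-1\end{pmatrix}$ distinguishes them.
\item For the second, use the element $\begin{pmatrix}-1-2N^2&2\\N^2&-1\end{pmatrix}$ from Lemma~\ref{exenchigamma}(1), combined with the sign flip $d\mapsto$ (under $h_{-1}$, $c\mapsto -c$).
\end{itemize}

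\textbf{Case $N\equiv 0\pmod 4$.} By Lemma~\ref{exenchigamma}(2), $\overline{\gamma}_{\chi}=\rho_1\oplus\rho_2$ with $\rho_1\not\simeq\rho_2$, where each $\rho_i$ is an extension of the character $\overline{\lambda}_{\chi}^{-1}$ to $G$. Then $\overline{\gamma}^{\pm}_{\chi}=\Ind_G^{G^{\pm}}\rho_1\oplus\Ind_G^{G^{\pm}}\rho_2$, and I will show each summand is irreducible (because $\rho_i^{h_{-1}}|_H=\overline{\lambda}_{\chi}^{-1}\chi_\omega\neq\overline{\lambda}_{\chi}^{-1}$, using the same $d\equiv 3\pmod 4$ element) and that the two are non-isomorphic. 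For non-isomorphism, compare their restrictions to $G$: these are $\rho_1\oplus\rho_1^{h_{-1}}$ and $\rho_2\oplus\rho_2^{h_{-1}}$, and their $H$-restrictions contain $\overline{\lambda}_{\chi}^{-1}$ with multiplicities determined by whether $\rho_i^{h_{-1}}$ restricts to $\overline{\lambda}_{\chi}^{-1}$. The main obstacle is this last step, since one has to rule out $\rho_1^{h_{-1}}\simeq\rho_2$. I would try to distinguish the induced representations by evaluating their characters at $[u(1),1]$ and at $h_{-1}u(1)h_{-1}=u(-1)$, tracking the cocycle $\nu_2(-1,u(1))=1$.
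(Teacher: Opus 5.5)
\textbf{Gap in the case $N\equiv 2\pmod 4$.} Your reduction is the right one, and it is the paper's: $H$ is normal in $G^{\pm}$ with quotient $(\Z/2)^2$ generated by $u(1)$ and $h_{-1}$. So, given Lemma~\ref{exenchigamma}(1), you need $\overline{\lambda}_{\chi}^{h_{-1}}\notin\{\overline{\lambda}_{\chi},\overline{\lambda}_{\chi}^{u(1)}\}$ on $H$. However, the witness you name for the decisive inequality $\overline{\lambda}_{\chi}^{h_{-1}}\neq\overline{\lambda}_{\chi}^{u(1)}$ fails.

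Take $r=\begin{pmatrix}-1-2N^2&2\\N^2&-1\end{pmatrix}$. Conjugation by $h_{-1}$ gives $\begin{pmatrix}-1-2N^2&-2\\-N^2&-1\end{pmatrix}$ with trivial cocycle (Lemma~\ref{nu2}, since $c\neq0$). Conjugation by $u(1)$ gives $\begin{pmatrix}-1-3N^2&2-3N^2\\N^2&N^2-1\end{pmatrix}$ with trivial cocycle (Lemma~\ref{prog0}). By Lemma~\ref{gammar},
\[
\overline{\lambda}^{h_{-1}}([r,1])=\Bigl(\frac{-2N^2}{-1}\Bigr)\epsilon_{-1}^{-1}=i,\qquad
\overline{\lambda}^{u(1)}([r,1])=\Bigl(\frac{2N^2}{N^2-1}\Bigr)\epsilon_{N^2-1}^{-1}=(-1)(-i)=i,
\]
because $N^2-1\equiv 3\pmod 8$. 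Both $\chi$-factors equal $\chi(-1)$.

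So the sign $\bigl(\frac{2}{N^2-1}\bigr)=-1$, which made $r$ useful in Lemma~\ref{exenchigamma}(1), exactly cancels the sign coming from $c\mapsto -c$. The same cancellation occurs for $\begin{pmatrix}-1&0\\N^2&-1\end{pmatrix}$. Your argument therefore leaves open that $h_{-1}u(1)$ stabilizes $\overline{\lambda}_{\chi}$, which is precisely the case in which $\overline{\gamma}^{\pm}_{\chi}$ would be reducible.

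There are two ways to close this:
\begin{itemize}
\item The paper uses $r_1=\begin{pmatrix}-1-4N^2&2\\2N^2&-1\end{pmatrix}$, for which $\overline{\lambda}^{h_{-1}}(\overline{r}_1)=-\epsilon_{-1}^{-1}\neq\epsilon_{-1}^{-1}=\overline{\lambda}^{u(1)}(\overline{r}_1)$.
\item Your own first witness $[-I,1]$ already suffices. It is central in $\overline{\SL}_2(\R)$, so conjugation by $u(1)$ fixes it. On the other hand $[h_{-1},1][-I,1][h_{-1},1]=[-I,\nu_2(-1,-I)]=[-I,-1]$. Hence $\overline{\lambda}_{\chi}^{h_{-1}}(-I)=-\overline{\lambda}_{\chi}(-I)=-\overline{\lambda}_{\chi}^{u(1)}(-I)$, which separates $\overline{\lambda}_{\chi}^{h_{-1}}$ from both characters at once. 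Note that for $-I$ the sign comes from this cocycle, not from $c\mapsto -c$ as you wrote.
\end{itemize}

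\textbf{The case $N\equiv 0\pmod 4$.} The step you call the main obstacle is immediate. Mackey gives
\[
\Hom_{G^{\pm}}\bigl(\Ind_G^{G^{\pm}}\rho_1,\Ind_G^{G^{\pm}}\rho_2\bigr)\simeq\Hom_G(\rho_1,\rho_2)\oplus\Hom_G(\rho_1,\rho_2^{h_{-1}}).
\]
The first summand is zero because $\rho_1\not\simeq\rho_2$. The second vanishes because $\rho_2^{h_{-1}}|_H=(\overline{\lambda}_{\chi}^{-1})^{h_{-1}}\neq\overline{\lambda}_{\chi}^{-1}=\rho_1|_H$. This is the same inequality you already use for the irreducibility of each $\Ind_G^{G^{\pm}}\rho_i$. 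It amounts to the paper's observation that the stabilizer of $\overline{\lambda}_{\chi}$ in $G^{\pm}/H$ is exactly $\{1,u(1)\}$. Your character-evaluation idea at $[u(1),1]$ would also work, but it is unnecessary.

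\textbf{Part (1).} Twisting Corollary~\ref{irrdgammapm2nn} by $\chi^{-1}$ coincides with the paper's argument.
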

\begin{proof}
1) Note that $\chi$ is a character of $\overline{\Gamma}^{\pm}_0(N^2 )$. So
$$\overline{\gamma}^{\pm}_{\chi}\simeq \Ind_{\overline{\Gamma}_{\theta} \cap \overline{\Gamma}_0(N^2)}^{ \overline{\Gamma}^{\pm}_0(N^2)} \overline{\lambda}_{\chi}^{-1}\simeq \chi^{-1}\Big(\Ind_{\overline{\Gamma}_{\theta} \cap  \overline{\Gamma}_0(N^2 )}^{\overline{\Gamma}^{\pm}_0(N^2)} [\overline{\lambda}]^{-1}\Big) \simeq \chi^{-1}\otimes \Res_{\overline{\Gamma}^{\pm}_0(N^2)}^{\overline{\SL}^{\pm}_2(\Z)}\overline{\gamma}^{\pm}.$$
By Coro. \ref{irrdgammapm2nn}, the result holds.\\
2) $N$ is even. Retains  the notations from the proof of  Lem. \ref{exenchigamma}.\\
a)   If take $\overline{r}=[r,1]$ with $r=\begin{pmatrix}
a&b \\
c& d
\end{pmatrix}=\begin{pmatrix}
-1-2N^2&2 \\
N^2& -1
\end{pmatrix}$, then:
\[\overline{M}_{q_{1}}^{-1}\overline{r}\,\overline{M}_{q_{1}}=[\begin{pmatrix}
-1-3N^2 & -3N^2 + 2 \\
N^2 & N^2  -1
\end{pmatrix},1],\]
\[ h_{-1}\overline{r}h_{-1}^{-1}=[\begin{pmatrix}
-1-2N^2&-2 \\
-N^2& -1
\end{pmatrix}, \nu_2(-1, r)]=[\begin{pmatrix}
-1-2N^2&-2 \\
-N^2& -1
\end{pmatrix}, 1],\]
\[ h_{-1}\overline{M}_{q_{1}}^{-1}\overline{r}\,\overline{M}_{q_{1}}h_{-1}^{-1}=[\begin{pmatrix}
-1-3N^2 &3N^2- 2 \\
-N^2 & N^2  -1
\end{pmatrix}, \nu_2(-1,M_{q_{1}}^{-1}rM_{q_{1}})]=[\begin{pmatrix}
-1-3N^2 &3N^2- 2 \\
-N^2 & N^2  -1
\end{pmatrix}, 1].\]
So $$\overline{\lambda}(\overline{r})
=\Bigl(\frac{2N^2}{-1}\Bigr)\epsilon_{-1}^{-1} \neq \overline{\lambda}^{h_{-1}}(\overline{r})
=\Bigl(\frac{-2N^2}{-1}\Bigr)\epsilon_{-1}^{-1}, $$
$$\overline{\lambda}(\overline{M}_{q_{1}}^{-1}\overline{r}\,\overline{M}_{q_{1}})
=\Bigl(\frac{2N^2}{N^2-1}\Bigr)\epsilon_{N^2-1}^{-1} \neq \overline{\lambda}^{h_{-1}}(\overline{M}_{q_{1}}^{-1}\overline{r}\,\overline{M}_{q_{1}})
=\Bigl(\frac{-2N^2}{N^2-1}\Bigr)\epsilon_{N^2-1}^{-1}.$$
If take $\overline{r}_1=[r_1,1]$ with  $r_1=\begin{pmatrix}
a&b \\
c& d
\end{pmatrix}=\begin{pmatrix}
-1-4N^2&2 \\
2N^2& -1
\end{pmatrix}$, then:
\[\overline{M}_{q_{1}}^{-1}\overline{r}_1\,\overline{M}_{q_{1}}=[\begin{pmatrix}
-1-6N^2 & -6N^2 + 2  \\
2N^2 & 2N^2 -1
\end{pmatrix},1],\]
\[ h_{-1}\overline{M}_{q_{1}}^{-1}\overline{r}_1\,\overline{M}_{q_{1}}h_{-1}^{-1}=[\begin{pmatrix}
-1-6N^2 & -6N^2 + 2  \\
2N^2 & 2N^2 -1
\end{pmatrix}, \nu_2(-1,M_{q_{1}}^{-1}r_1M_{q_{1}})]=[\begin{pmatrix}
-1-6N^2 & -6N^2 + 2  \\
2N^2 & 2N^2 -1
\end{pmatrix}, 1],\]
\[\overline{\lambda}^{h_{-1}}(\overline{r}_1)
=\Bigl(\frac{-4N^2}{-1}\Bigr)\epsilon_{-1}^{-1}=-\epsilon_{-1}^{-1}\neq\overline{\lambda}(\overline{M}_{q_{1}}^{-1}\overline{r}_1\,\overline{M}_{q_{1}})=\Bigl(\frac{4N^2}{2N^2-1}\Bigr)\epsilon_{2N^2-1}^{-1} =\epsilon_{-1}^{-1}.\]
So the result of Part (a) holds.\\
b) We take the same $\overline{r}$ as above. Then 
\[\overline{\lambda}(\overline{r})\neq \overline{\lambda}^{h_{-1}}(\overline{r}), \quad \overline{\lambda}(\overline{M}_{q_{1}}^{-1}\overline{r}\,\overline{M}_{q_{1}})\neq \overline{\lambda}^{h_{-1}}(\overline{M}_{q_{1}}^{-1}\overline{r}\,\overline{M}_{q_{1}}).\]
So the result of Part (b) holds.
 \end{proof}
Analogous to  Theorem \ref{maintheorem1}, we have:
\begin{theorem}\label{maintheoremplus}
Let $\overline{r}\in \overline{\Gamma}^{\pm}_0(N^2 )$. 
 \begin{itemize}
 \item[(1)]$\Theta^{\pm, \delta^+}_{\chi}(\overline{r}z)=J_{\delta^+}(\overline{r},z)\Theta^{\pm, \delta^+}_{\chi}(z)\overline{\gamma}^{\pm}_{\chi}(\overline{r}^{-1})$.
 \item[(2)] $\Theta^{\pm, \delta^-}_{\chi}(\overline{r}z)=J_{\delta^-}(\overline{r},z)\Theta^{\pm, \delta^-}_{\chi}(z)\overline{\gamma}^{\pm}_{\chi}(\overline{r}^{-1})$.
 \end{itemize}
\end{theorem}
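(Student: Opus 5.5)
The argument copies the proof of Theorem~\ref{maintheorem1}, with the scalar character $\overline{\lambda}_{\chi}$ replaced by the two-dimensional $\overline{\lambda}^{\pm}_{\chi}$ and scalar automorphy factors replaced by the matrix factors $J_{\delta^{\pm}}$. I fix coset representatives $\overline{M}_{q_i}$ (three for $N$ odd, two for $N$ even). Since $h_{-1}$ normalizes both $\overline{\Gamma}_0(N^2)$ and $\overline{\Gamma}_{\theta}$, they also give
\[
\overline{\Gamma}^{\pm}_0(N^2)=\bigsqcup_i\,[\overline{\Gamma}^{\pm}_{\theta}\cap\overline{\Gamma}^{\pm}_0(N^2)]\,\overline{M}_{q_i}.
\]
This follows from Lemmas~\ref{coniso} and \ref{conisoeve} together with $\overline{\Gamma}^{\pm}_0(N^2)=\overline{\Gamma}_0(N^2)\sqcup\overline{\Gamma}_0(N^2)h_{-1}$. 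I then realize $\overline{\gamma}^{\pm}_{\chi}$ on functions with values in the two-dimensional space of $\overline{\lambda}^{\pm}_{\chi}$ and write it in block form. The blocks are indexed by the cosets, and each block is a $2\times 2$ matrix $\overline{\lambda}^{\pm}_{\chi}(\overline{s})^{\pm1}$ whenever $\overline{M}_{q_i}\overline{r}=\overline{s}\,\overline{M}_{q_j}$. These are the block analogues of rules (a)--(d) of Section~\ref{Gammaaonodd}.

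Step one is the block analogue of Section~\ref{Mqpr1}. Take $\overline{r}\in\overline{\Gamma}^{\pm}_{\theta}\cap\overline{\Gamma}^{\pm}_0(N^2)$ and write $\overline{M}_{q_i}\overline{r}=\overline{s}\,\overline{M}_{q_j}$. I unfold the slash operator $[\cdot]^{\overline{M}_{q_i}}$ and apply Theorem~\ref{mainthm1234} to $\overline{s}$. The cocycle relation for $J_{\delta^{\pm}}$ (the lemma following the definition of $J_{\delta^\pm}$, derived from Lemma~\ref{Automorphicfactor}) then gives
\[
[\theta^{\pm,\delta}_{\chi}]^{\overline{M}_{q_i}}(\overline{r}z)=J_{\delta}(\overline{r},z)\,[\theta^{\pm,\delta}_{\chi}]^{\overline{M}_{q_j}}(z)\,\overline{\lambda}^{\pm}_{\chi}(\overline{s}^{-1}).
\]
One point needs care here. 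The matrix $J_\delta$ acts on the left while $\overline{\lambda}^{\pm}_{\chi}$ acts on the right, and the scalar $\det(\mathfrak{w})^{1/4}$ is trivial because $\det\overline{M}_{q_i}=1$. So the computation is the same as in Proposition~\ref{slashpro2}, and the left and right matrix factors never need to commute.

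Step two is the analogue of Section~\ref{Mqpr2}. For products $\overline{M}_{q_i}\overline{M}_{q_j}=\overline{s}\,\overline{M}_{q_r}$ the same manipulation gives
\[
[\theta^{\pm,\delta}_{\chi}]^{\overline{M}_{q_i}}(\overline{M}_{q_j}z)=J_{\delta}(\overline{M}_{q_j},z)\,[\theta^{\pm,\delta}_{\chi}]^{\overline{M}_{q_r}}(z)\,\overline{\lambda}^{\pm}_{\chi}(\overline{s}^{-1}).
\]
I then assemble the $2\times 2$ blocks into the row $\Theta^{\pm,\delta}_{\chi}$. Reading off the block matrix of $\overline{\gamma}^{\pm}_{\chi}(\overline{r}^{-1})$ from the induced model gives the claim for $\overline{r}$ in the subgroup and for $\overline{r}=\overline{M}_{q_i}$. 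A general $\overline{r}=\overline{s}\,\overline{M}_{q_i}$ is handled exactly as in step 1c of Theorem~\ref{maintheorem1}, using multiplicativity of $J_\delta$ and the fact that $\overline{\gamma}^{\pm}_{\chi}$ is a homomorphism.

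I expect the main obstacle to be bookkeeping rather than new analysis. The induced-representation convention must produce the transpose-correct block entries $\overline{\lambda}^{\pm}_{\chi}(\overline{s}^{-1})$, matching the right action in Theorem~\ref{mainthm1234}, which is written with $\overline{r}^{-1}$. The coset decomposition of $\overline{\Gamma}^{\pm}_0(N^2)$ must also be verified to lift to the double cover. I would check both on the generators $h_{-1}$ and $\overline{M}_{q_i}$, using $\overline{C}_{X^{\ast}}(h_{-1},\cdot)$ from Lemma~\ref{chap6Cover}, before doing the general assembly.
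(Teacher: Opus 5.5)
Your reduction is the one the paper has in mind; its proof is only ``analogous to Theorem~\ref{maintheorem1}''. The coset decomposition and the block bookkeeping are correct. The gap is in part (2) on the determinant $-1$ coset: there the input you feed into step one, Theorem~\ref{mainthm1234}(2), is off by a sign.

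Take $\overline{h}_{-1}=[h_{-1},1]$. Then $\sgn(h_{-1},z)=1$ and $\sgn(\det h_{-1})(cz+d)=(-1)(-1)=1$, so $J_{1/2}(\overline{h}_{-1},z)=1$. But $J(h_{-1},z)=-1$. Hence $J_{\delta^+}(\overline{h}_{-1},z)=I_2$, whereas $J_{\delta^-}(\overline{h}_{-1},z)=-I_2$; the $-B^{\mp}$ in the paper's $\overline{\Oa}_2(\R)$ computation shows the same $-1$. On the other side, the explicit definition gives $\theta^{\pm,\delta^-}_{\chi}(h_{-1}z)=\theta^{\pm,\delta^-}_{\chi}(z)P$ with $P=\left(\begin{smallmatrix}0&1\\1&0\end{smallmatrix}\right)$ and no sign, by \eqref{hminus1zn32} and \eqref{hminus1zp32}. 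Meanwhile \eqref{lamdh1} fixes $\overline{\lambda}^{\pm}_{\chi}(h_{-1})=P$.

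So your step-one identity with $\overline{r}=\overline{s}=\overline{h}_{-1}$ and $i=j=2$ reads $\theta^{\pm,\delta^-}_{\chi}(z)P=-\theta^{\pm,\delta^-}_{\chi}(z)P$. This is false, since $\theta^{+,3/2}_{1,\chi}\neq 0$ for odd primitive $\chi$. For the same reason, the $\overline{M}_{q_2}$-block of statement (2) itself fails at $\overline{r}=\overline{h}_{-1}$ in your block realization. On the whole determinant $-1$ coset your identities acquire an extra factor $-1$. The paper's proof of Theorem~\ref{mainthm1234}(2) has the same hole: it calls part (2) ``similar'' to part (1), but part (1) used $J_{\delta^+}(\overline{h}_{-1},z)=I_2$.

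The missing factor is $\chi(-1)$. The operator $\Pi_{L,\psi}(h_{-1})$ moves the evaluation point $[1,\tfrac{k}{N}e^{\ast}]$ to $[-1,-\tfrac{k}{N}e^{\ast}]$. The Gauss sum therefore returns $\chi(n)$ where the $\epsilon=-1$ column of $\theta^{\pm,\delta^-}_{\chi}$ carries $\chi(-n)$, and this $\chi(-1)=-1$ cancels the $-1$ in $J_{\delta^-}$.

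To repair the argument, realize $\overline{\lambda}^{\pm}_{\chi}$ with $h_{-1}\mapsto\chi(-1)P$, keeping the diagonal matrices \eqref{lamdr} on $\overline{\Gamma}_{\theta}\cap\overline{\Gamma}_0(N^2)$. This realization is conjugate to \eqref{lamdh1}--\eqref{lamdr} by $\diag(1,\chi(-1))$, so it is the same representation up to isomorphism. It is also what $[\overline{\lambda}^{\pm}]^{-1}\otimes\chi^{-1}$ from Lemma~\ref{gammatwodimplus} produces if $\chi$ is extended to $\overline{\Gamma}^{\pm}_0(N^2)$ by $r\mapsto\chi(d)$. With this realization Theorem~\ref{mainthm1234} holds in both parts, and your two steps and the final assembly go through verbatim. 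Equivalently, with the paper's realization, part (2) holds only after inserting the factor $\sgn(\det r)$ on the right-hand side. Part (1) is untouched, because $\chi(-1)=1$ there.
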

\section{Examples}\label{examples}
\subsection{Example 1 to Sect. \ref{twistmathw}}\label{Examples11} $\mathfrak{w}=\begin{pmatrix} 2& 0\\ 0 & 1\end{pmatrix}$ and $\overline{\mathfrak{w}}=[\mathfrak{w}, 1]$. Recall $\omega$ and $h(a)$ from Introduction.
Note:
\[ \Gamma_{\theta}=\Gamma(2) \sqcup \Gamma(2) \omega,  \mathfrak{w}^{-1} \Gamma( 2) \mathfrak{w}=\Gamma_0(4),  \mathfrak{w}^{-1}\omega \mathfrak{w}=\begin{pmatrix}0 & -\tfrac{1}{2}\\ 2 & 0\end{pmatrix}=\omega h(2), \mathfrak{w}^{-1}\Gamma_{\theta} \mathfrak{w}=\Gamma_0(4) \sqcup \Gamma_0(4)\omega h(2).\]
\begin{definition}
\begin{itemize}
\item[(1)] $\theta(z)\stackrel{ \textrm{ Def.}}{=} 2^{-1/4}[\theta^{1/2}]^{\overline{\mathfrak{w}}}(z) =\sum_{n\in \Z} e^{2 \pi i   z n^2 }$.
\item[(2)] $\Gamma_{0}(4)^{\kappa}\stackrel{ Def.}{=}\mathfrak{w}^{-1} \Gamma_{\theta}\mathfrak{w}=\Gamma_0(4) \sqcup \Gamma_0(4)\omega h(2)$.
\item[(3)] $\nu_{\theta, 2}(r)=  \lambda(\mathfrak{w}r\mathfrak{w}^{-1})$, for $r\in \Gamma_{0}(4)^{\kappa}$.
\item[(4)]  $\Gamma'_{0}(4)^{\kappa}\stackrel{ Def.}{=}\{[\mathfrak{w}^{-1} r \mathfrak{w}, \lambda(r)^{-1}] \mid r\in \Gamma_{\theta}\}=\{ [r, \nu_{\theta, 2}(r)^{-1}] \mid r\in \Gamma_{0}(4)^{\kappa}\} $.
\end{itemize}
\end{definition}
\begin{lemma}
\begin{itemize}
\item[(1)] There exists a group isomorphism $\iota: \Gamma_{0}(4)^{\kappa} \to \Gamma'_{0}(4)^{\kappa}; r \to [r, \nu_{\theta, 2}(r)^{-1}]$.
\item[(2)] $\nu_{\theta, 2}(r)=\left\{ \begin{array}{lcl}  \left(\frac{c}{d}\right)\epsilon^{-1}_d, &   & r=\begin{pmatrix} a& b\\ c& d\end{pmatrix}\in \Gamma_0(4),\\
 \left(\frac{c}{d}\right)\epsilon^{-1}_d e^{-\tfrac{i \pi}{4}} (-c,d)_{\R},& & r=\begin{pmatrix} a& b\\ c& d\end{pmatrix} \omega h(2)\in  \Gamma_0(4)\omega h(2).\end{array}\right.$
\end{itemize}
\end{lemma}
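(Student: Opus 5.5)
For part (1), I will use that $\lambda$ in (\ref{lambda}) satisfies Lemma \ref{mutirr12}: $\lambda(r_1)\lambda(r_2)=\lambda(r_1r_2)\overline{c}_{X^{\ast}}(r_1,r_2)$ for $r_i\in\Gamma_{\theta}$. Equivalently, $r\mapsto[r,\lambda(r)^{-1}]$ is a homomorphism $\Gamma_{\theta}\to\overline{\Gamma}_{\theta}$ (using $\overline{c}_{X^{\ast}}=\overline{c}_{X^{\ast}}^{-1}$, since it is $\mu_2$-valued). It is a section of the projection, hence injective. Conjugation by $\mathfrak{w}$ identifies $\Gamma_{\theta}$ with $\Gamma_0(4)^{\kappa}$, and $\nu_{\theta,2}(r)=\lambda(\mathfrak{w}r\mathfrak{w}^{-1})$ by definition, so $\iota$ is the composite of two isomorphisms. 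The only subtlety is group structure on $\Gamma'_0(4)^{\kappa}$: I read it as the transport via $\overline{\mathfrak{w}}$-conjugation in $\overline{\GL}_2(\Q)$. Since $\overline{\mathfrak{w}}=[\mathfrak{w},1]$ with $\mathfrak{w}=\diag(2,1)=\diag(1,2)\cdot\diag(2,2)\cdot\diag(1,1/2)\cdots$, I will check via Lemma \ref{prog0} that $\overline{C}_{X^{\ast}}$ is trivial against $\mathfrak{w}$. This holds because $\mathfrak{w}$ is upper triangular with positive diagonal, so $\overline{\mathfrak{w}}^{-1}[s,\epsilon]\overline{\mathfrak{w}}=[\mathfrak{w}^{-1}s\mathfrak{w},\epsilon]$. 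Hence the set $\{[\mathfrak{w}^{-1}s\mathfrak{w},\lambda(s)^{-1}]\}$ is a subgroup and $\iota$ is an isomorphism onto it.

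For part (2), I compute $\nu_{\theta,2}$ from Lemma \ref{gammar} (that is, $\overline{\lambda}([r,1])=\lambda(r)$ by Lemma \ref{lambdaoverlin}).

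\textbf{Case $r=\begin{pmatrix}a&b\\c&d\end{pmatrix}\in\Gamma_0(4)$.} Here $\mathfrak{w}r\mathfrak{w}^{-1}=\begin{pmatrix}a&2b\\c/2&d\end{pmatrix}\in\Gamma(2)$. Lemma \ref{gammar} then gives $\left(\frac{2\cdot c/2}{d}\right)\epsilon_d^{-1}=\left(\frac{c}{d}\right)\epsilon_d^{-1}$.

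\textbf{Case $r=r_1\omega h(2)$ with $r_1=\begin{pmatrix}a&b\\c&d\end{pmatrix}\in\Gamma_0(4)$.} We have $\mathfrak{w}r\mathfrak{w}^{-1}=(\mathfrak{w}r_1\mathfrak{w}^{-1})\,\omega$, since $\mathfrak{w}\omega h(2)\mathfrak{w}^{-1}=\omega$. This is an element of $\Gamma(2)\omega$ whose $\Gamma(2)$-factor has lower row $(c/2,d)$. The second line of Lemma \ref{gammar} then yields $\left(\frac{c}{d}\right)\epsilon_d^{-1}e^{-i\pi/4}(-c/2,d)_{\R}$, and $(-c/2,d)_{\R}=(-c,d)_{\R}$ because the Hilbert symbol depends only on signs.

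The main obstacle I expect is bookkeeping: confirming that the matrix entries $c,d$ in the statement refer to the $\Gamma_0(4)$-factor (as the displayed form $r=\begin{pmatrix}a&b\\c&d\end{pmatrix}\omega h(2)$ indicates), and matching the Kronecker symbol conventions in Lemma \ref{gammar} when $c=0$.
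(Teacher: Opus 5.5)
Your proof is correct and follows the paper's route. Part (1) rests on Lemma~\ref{mutirr12} together with the invariance $\overline{c}_{X^{\ast}}(r_1,r_2)=\overline{c}_{X^{\ast}}(\mathfrak{w}r_1\mathfrak{w}^{-1},\mathfrak{w}r_2\mathfrak{w}^{-1})$, which the paper uses tacitly and which you justify through Lemma~\ref{prog0}. In part (2) you read off the second case of Lemma~\ref{gammar} after conjugating, rather than redoing, as the paper does, the cocycle computation $\overline{c}_{X^{\ast}}(r,\omega h(2))=\overline{c}_{X^{\ast}}(r,\omega)=(-c,d)_{\R}$ on the $\Gamma_0(4)^{\kappa}$ side.

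There are two harmless slips. Since $\lambda$ is $\mu_8$-valued (e.g.\ $\lambda(\omega)=e^{-i\pi/4}$), the section $r\mapsto[r,\lambda(r)^{-1}]$ lands in $\overline{\Gamma}^{\mu_8}_{\theta}$ rather than $\overline{\Gamma}_{\theta}$. Your displayed factorization of $\diag(2,1)$ actually multiplies to $\diag(2,2)$, but you never need it, because Lemma~\ref{prog0} applies to $\mathfrak{w}$ and $\mathfrak{w}^{-1}$ directly.
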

\begin{proof}
1) $\iota(r_1) \iota(r_2)= [r_1, \nu_{\theta, 2}(r_1)^{-1}] \cdot [r_2, \nu_{\theta, 2}(r_2)^{-1}]=[r_1r_2, \lambda(\mathfrak{w}r_1\mathfrak{w}^{-1})^{-1} \lambda(\mathfrak{w}r_2\mathfrak{w}^{-1})^{-1} \overline{c}_{X^{\ast}}(r_1,r_2)]=[r_1r_2, \lambda(\mathfrak{w}r_1\mathfrak{w}^{-1})^{-1} \lambda(\mathfrak{w}r_2\mathfrak{w}^{-1})^{-1} \overline{c}_{X^{\ast}}(\mathfrak{w}r_1\mathfrak{w}^{-1},\mathfrak{w}r_2\mathfrak{w}^{-1})]\stackrel{\textrm{ Lem. } \ref{mutirr12}}{=}[r_1r_2, \nu_{\theta, 2}(r_1r_2)^{-1}]=\iota(r_1r_2) $.\\
2)
\[r=\begin{pmatrix} a& b\\ c& d\end{pmatrix}\in \Gamma_{\theta}, \quad \mathfrak{w} r \mathfrak{w}^{-1}=\begin{pmatrix} a& 2b\\ \tfrac{1}{2}c& d\end{pmatrix}, \quad \mathfrak{w}\omega h(2) \mathfrak{w}^{-1}=\omega ;\]
\begin{align*}
  \overline{c}_{X^{\ast}}(r, \omega h(2))&= \overline{c}_{X^{\ast}}(r, \omega h(2)) \overline{c}_{X^{\ast}}(\omega,  h(2))= \overline{c}_{X^{\ast}}(r, \omega) \overline{c}_{X^{\ast}}(r\omega,  h(2))=\overline{c}_{X^{\ast}}(r, \omega)=(-c,d)_{\R}.
   \end{align*}
\begin{align*}
\nu_{\theta, 2}(r)&=\lambda(\mathfrak{w}r\mathfrak{w}^{-1})\stackrel{\textrm{ Lem. }\ref{gammar}}{=}\left(\frac{c}{d}\right)\epsilon^{-1}_d;\\
\nu_{\theta, 2}(\omega h(2))&=\lambda(\mathfrak{w}r\mathfrak{w}^{-1})=\lambda(\omega )=\overline{\lambda}([\omega,1])=e^{-\tfrac{i \pi}{4}};\\
\nu_{\theta, 2}((r\omega h(2))&=\overline{c}_{X^{\ast}}(r, \omega h(2)) \nu_{\theta, 2}((r)\nu_{\theta, 2}((\omega h(2))=(-c,d)_{\R}\left(\frac{c}{d}\right)\epsilon^{-1}_de^{-\tfrac{i \pi}{4}}.\end{align*}
\end{proof}
For $r=\begin{pmatrix} a& b\\ c& d\end{pmatrix}\in \Gamma_{0}(4)^{\kappa}$, $\overline{r}=[r,1]$, we have:
\[  J_{1/2}(\overline{r},z)=\sqrt{cz+d};\]
\[  \overline{\lambda}^{\overline{\mathfrak{w}}}(\overline{r})= \overline{\lambda}(\overline{\mathfrak{w}}\overline{r}\overline{\mathfrak{w}}^{-1}) =\overline{\lambda}([\mathfrak{w}r\mathfrak{w}^{-1},1])=\lambda(\mathfrak{w}r\mathfrak{w}^{-1})=\nu_{\theta, 2}(r).\]
In this case, we conclude:
\begin{proposition}\label{thetashi}
 $\theta(r z)= \nu_{\theta, 2}(r) \sqrt{cz+d}\theta(z)$, for $r=\begin{pmatrix} a& b\\ c& d\end{pmatrix}\in \Gamma_{0}(4)^{\kappa} $. 
\end{proposition}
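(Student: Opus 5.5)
The plan is to specialize Proposition~\ref{slashpro} (with trivial character, $N=1$, $\chi\equiv 1$) to $\overline{\mathfrak{w}}=[\mathfrak{w},1]$, $\mathfrak{w}=\begin{pmatrix}2&0\\0&1\end{pmatrix}$, and then unwind the covering-group data to the classical formula. Given $r\in\Gamma_0(4)^{\kappa}$, we set $\overline{r}=[r,1]$. Since $\Gamma_0(4)^{\kappa}=\mathfrak{w}^{-1}\Gamma_\theta\mathfrak{w}$, there is an element $\overline{s}\in\overline{\Gamma}_\theta$ with $\overline{r}=\overline{\mathfrak{w}}^{-1}\overline{s}\overline{\mathfrak{w}}$. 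Proposition~\ref{slashpro}(1) then gives
\[
[\theta^{1/2}]^{\overline{\mathfrak{w}}}(rz)=J_{1/2}(\overline{r},z)\,\overline{\lambda}^{\overline{\mathfrak{w}}}(\overline{r})\,[\theta^{1/2}]^{\overline{\mathfrak{w}}}(z).
\]
Multiplying both sides by $2^{-1/4}$ turns this into a statement about $\theta$ by Definition (1). It remains to identify the two factors.

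First, $J_{1/2}(\overline{r},z)=1^{-1}\sqrt{cz+d}$ follows directly from the definition of $J_{1/2}$ on $\overline{\SL}_2(\R)$ (Definition~\ref{defczd}), since $\overline{r}=[r,1]$ has $\epsilon=1$. Second, we need $\overline{\lambda}^{\overline{\mathfrak{w}}}(\overline{r})=\overline{\lambda}(\overline{s})$ with $\overline{s}=\overline{\mathfrak{w}}\,\overline{r}\,\overline{\mathfrak{w}}^{-1}$. The key point is that this conjugate equals $[\mathfrak{w}r\mathfrak{w}^{-1},1]$ in $\overline{\GL}_2$, i.e.\ that conjugation by $[\mathfrak{w},1]$ introduces no sign.

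To see this, write $\mathfrak{w}=2^{1/2}\cdot h(2^{1/2})$, a positive scalar times an element of $P_{>0}(\R)$. By Lemma~\ref{isoGLR}, positive scalars are central with trivial cocycle. By Lemma~\ref{prog0}, $\overline{C}_{X^{\ast}}(h,p)=\overline{C}_{X^{\ast}}(p,h)=1$ for upper triangular $p$ with positive diagonal, and $\mathfrak{w}$ itself is of this form. Hence $\overline{\mathfrak{w}}\,[r,1]\,\overline{\mathfrak{w}}^{-1}=[\mathfrak{w}r\mathfrak{w}^{-1},1]$. Lemma~\ref{lambdaoverlin} then gives $\overline{\lambda}([\mathfrak{w}r\mathfrak{w}^{-1},1])=\lambda(\mathfrak{w}r\mathfrak{w}^{-1})=\nu_{\theta,2}(r)$.

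Combining these, $\theta(rz)=\nu_{\theta,2}(r)\sqrt{cz+d}\,\theta(z)$, as claimed. The explicit form of $\nu_{\theta,2}$ comes from the preceding lemma and is not needed here.

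The main obstacle is justifying that the cocycle factors vanish for the conjugation. I would verify this by checking $\overline{C}_{X^{\ast}}(\mathfrak{w},r)$ and $\overline{C}_{X^{\ast}}(\mathfrak{w}r,\mathfrak{w}^{-1})$ via Lemma~\ref{prog0} applied to $p=\mathfrak{w}^{\pm1}$. I would also check that the $\det(\mathfrak{w})^{1/4}$ normalization in the slash operator matches the factor $2^{-1/4}$ in the definition of $\theta$, so that $\theta(z)=\theta^{1/2}(2z)$ indeed holds.
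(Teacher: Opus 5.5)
Your proposal is correct and follows the paper's own argument: specialize Proposition~\ref{slashpro} (with $N=1$, $\chi\equiv 1$) to $\overline{\mathfrak{w}}=[\diag(2,1),1]$. Then read off $J_{1/2}([r,1],z)=\sqrt{cz+d}$ and $\overline{\lambda}^{\overline{\mathfrak{w}}}([r,1])=\overline{\lambda}([\mathfrak{w}r\mathfrak{w}^{-1},1])=\lambda(\mathfrak{w}r\mathfrak{w}^{-1})=\nu_{\theta,2}(r)$. Your explicit check, via Lemma~\ref{prog0}, that conjugation by $[\mathfrak{w},1]$ introduces no cocycle sign fills in a step the paper uses without comment.
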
\label{proatheta}
\begin{remark}
For $r\in \Gamma_{0}(4)$, it is comparable with the classical result in Shimura paper \cite{Sh}. We  extend it to a little bigger group $\Gamma_{0}(4)^{\kappa}$ by following Lion-Vergne's paper \cite{LiVe}.
\end{remark}

\subsection{Example  2 to Sect. \ref{Gammaaonodd}}\label{Examples12} 
We consider where  $N=1$ and $\chi\equiv 1$. In this case, $ \overline{M}_{q_1}=[u(1),1], \overline{M}_{q_2}=[1,1], \overline{M}_{q_3}=[u_-(-1),1]$. The theta series are related to concepts introduced by Don Zagier in \cite[p. 27]{Za}, specifically the ``minus sign theta series'' and the ``fermionic theta series.'' 
 \[ \theta^{M,1/2}(z)\stackrel{Def.}{=}\sum_{n\in \Z}  (-1)^n e^{i   \pi n^2 z} \quad \quad \textrm{(minus sign)}\]
\[ \theta^{F,1/2}(z)\stackrel{Def.}{=}\sum_{n\in \Z} e^{i   \pi (n+\tfrac{1}{2})^2 z} \quad \quad \textrm{(fermionic)}.\]
Then:
\begin{itemize}
\item[(1)] $[\theta^{1/2}]^{\overline{M}_{q_1}}(z) =\sum_{n\in \Z} (-1)^{n}e^{i   \pi z n^2 }= \theta^{M,1/2}(z)$;
\item[(2)] $[\theta^{1/2}]^{\overline{M}_{q_2}}(z) =\sum_{n\in \Z}e^{i   \pi z n^2 }$;
\item[(3)] $[\theta^{1/2}]^{\overline{M}_{q_3}}(z)=e^{-\tfrac{\pi i}{4}}e^{i   \pi z ( n+\tfrac{1}{2})^2 }=e^{-\tfrac{\pi i}{4}}\theta^{F,1/2}(z)$.
\item[(4)] $\Theta^{1/2}:  \mathbb{H} \longrightarrow M_{1 \times 3}(\C); z\longmapsto \Big([\theta^{1/2}]^{\overline{M}_{q_1}}(z),[\theta^{1/2}]^{\overline{M}_{q_2}}(z), [\theta^{1/2}]^{\overline{M}_{q_3}}(z)\Big)$.
\end{itemize}
\begin{itemize}
\item[1)] $[\theta^{1/2}]^{\overline{M}_{q_1}}(z) =  J_{1/2}( \overline{M}_{q_1},z)^{-1} \theta^{1/2}( z+1)=\sum_{n\in \Z} (-1)^{n}e^{i   \pi z n^2 }$.
\item[3)] \begin{align*}
 \theta_{L, X^{\ast}}(A)(\tfrac{1}{2} e)=\sum_{l\in L\cap X}A(l+\tfrac{1}{2} e)=\sum_{n\in \Z}A( n+\tfrac{1}{2}).
 \end{align*}
 \begin{align*}
 &\theta_{L, X^{\ast}}\Big(J_{1/2}(p_g,z_0)\pi_{X^{\ast}, \psi} ( p_g)A\Big)(\tfrac{1}{2} e)\\
 &=\sum_{n\in \Z}\Big(J_{1/2}(p_g,z_0)\pi_{X^{\ast}, \psi} ( p_g)A\Big)( n+\tfrac{1}{2})\\
 &=\sum_{n\in \Z} e^{i   \pi z_g ( n+\tfrac{1}{2})^2 }.
 \end{align*}
 \begin{align*}
&\theta^{1/2}(\overline{M}_{q_3}z_g)\\
&=\theta^{1/2}(u_-(-1)  p_g)\\
&=\theta_{L, X^{\ast}}\Big(J_{1/2}(u_-(-1) p_g, z_0)\pi_{X^{\ast}, \psi} (u_-(-1) p_g)A\Big)(0)\\
&= J_{1/2}(u_-(-1) ,  p_gz_0)\pi_{L, \psi}(u_-(-1))\theta_{L, X^{\ast}}\Big(J_{1/2}(p_g,z_0)\pi_{X^{\ast}, \psi} ( p_g)A\Big)( 0) \\
&\stackrel{\textrm{Sect. \ref{Gamma2I2}} \  \textrm{Case  8}}{=} J_{1/2}(u_-(-1) , z_g)m_{X^{\ast}}(u_-(-1))^{-1} \theta_{L, X^{\ast}}\Big(J_{1/2}(p_g,z_0)\pi_{X^{\ast}, \psi} ( p_g)A\Big)(\tfrac{1}{2} e)\\
&=\sum_{n\in \Z}J_{1/2}( \overline{M}_{q_3}, z_g) m_{X^{\ast}}(u_-(-1))^{-1} e^{i   \pi z_g ( n+\tfrac{1}{2})^2 }.
\end{align*}
So $[\theta^{1/2}]^{\overline{M}_{q_3}}(z)= m_{X^{\ast}}(u_-(-1))^{-1} e^{i   \pi z_g ( n+\tfrac{1}{2})^2 }=e^{-\tfrac{\pi i}{4}}e^{i   \pi z ( n+\tfrac{1}{2})^2 }$.
\end{itemize}
In this case, the result is the following:
\begin{proposition}\label{mainthm112}
Let $\overline{r}=(r,\epsilon)\in \overline{\SL}_2(\Z)$ with  $r=\begin{pmatrix}a& b\\ c&d\end{pmatrix}\in \SL_2(\Z)$ and  $\epsilon\in \mu_2$. Then
$
\Theta^{1/2}(\overline{r}z)
=\epsilon^{-1}\sqrt{cz+d}\;\Theta^{1/2}(z)\;\overline{\gamma}(\overline{r}^{-1}).
$
\end{proposition}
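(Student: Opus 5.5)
This is the special case $N=1$, $\chi\equiv 1$ of Theorem \ref{maintheorem1}(1). Here $\overline{\Gamma}_0(1)=\overline{\SL}_2(\Z)$, the subgroup $\overline{\Gamma}_{\theta}\cap\overline{\Gamma}_0(1)=\overline{\Gamma}_{\theta}$, and $\overline{\lambda}_{\chi}=\overline{\lambda}$. Consequently $\overline{\gamma}_{\chi}$ coincides with the representation $\overline{\gamma}=\Ind_{\overline{\Gamma}_{\theta}}^{\overline{\SL}_2(\Z)}\overline{\lambda}^{-1}$ of Section \ref{FindimSLwide}. The coset representatives $\overline{M}_{q_1}=[u(1),1]$, $\overline{M}_{q_2}=[1,1]$, $\overline{M}_{q_3}=[u_-(-1),1]$ from Definition \ref{Mq123} are exactly those used for $\Theta^{1/2}$ above. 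Lemma \ref{coniso} with $N=1$ is trivially true, since it just states $\overline{\SL}_2(\Z)=\sqcup_i\overline{\Gamma}_{\theta}\overline{M}_{q_i}$. So Theorem \ref{maintheorem1}(1) gives
\[
\Theta^{1/2}(\overline{r}z)=J_{1/2}(\overline{r},z)\,\Theta^{1/2}(z)\,\overline{\gamma}(\overline{r}^{-1}).
\]

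The remaining step is to identify the automorphic factor. By the definition of $J_{1/2}$ on $\overline{\SL}_2(\R)$ preceding Corollary \ref{Automorphicfactor2}, $J_{1/2}([r,\epsilon],z)=\epsilon^{-1}J_{1/2}(r,z)m_{X^{\ast}}(r)^{-1}$. By Definition \ref{defczd}, this equals $\epsilon^{-1}\sqrt{cz+d}$, which is the claimed factor.

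To be safe, I would also re-run the three-step argument of Theorem \ref{maintheorem1} directly:
\begin{itemize}
\item[(a)] For $\overline{r}\in\overline{\Gamma}_{\theta}$, use Theorem \ref{mainthm1'} with $\chi=1$ (which rests on Lemma \ref{gammar} and (\ref{chapeq8})), together with the computation of Section \ref{Mqpr1} and the matrix bookkeeping (a)(b).
\item[(b)] For $\overline{r}=\overline{M}_{q_j}$, use Section \ref{Mqpr2} and (c)(d).
\item[(c)] Combine the two cases via $\overline{r}=\overline{s}\,\overline{M}_{q_i}$, the cocycle relation of Corollary \ref{Automorphicfactor2}, and the homomorphism property of $\overline{\gamma}$.
\end{itemize}

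The main point to watch is consistency of the twisted entries. The slash $[\theta^{1/2}]^{\overline{M}_{q_i}}$ must be built with $J_{1/2}(\overline{M}_{q_i},\cdot)$ on the two-fold cover, with no stray $\det^{1/4}$ factor, since $\det M_{q_i}=1$. The phase $e^{-\pi i/4}$ in the third entry is then already accounted for inside $[\theta^{1/2}]^{\overline{M}_{q_3}}$, so nothing needs to be adjusted.
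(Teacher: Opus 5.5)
Your proposal is correct and follows the paper's route. The paper obtains Proposition \ref{mainthm112} simply as the case $N=1$, $\chi\equiv 1$ of Theorem \ref{maintheorem1}(1), where $\overline{M}_{q_1}=[u(1),1]$, $\overline{M}_{q_2}=[1,1]$, $\overline{M}_{q_3}=[u_-(-1),1]$ and $\overline{\gamma}_{\chi}=\overline{\gamma}$, and then uses the identification $J_{1/2}([r,\epsilon],z)=\epsilon^{-1}\sqrt{cz+d}$ from the definition preceding Corollary \ref{Automorphicfactor2}, exactly as you do.
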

\subsection{Example 3  to Sect. \ref{twistmathw}}\label{Examples13}
Let $\overline{\mathfrak{w}}= [u(1),1]$, for $\mathfrak{w}=u(1)=\begin{pmatrix} 1& 1\\ 0& 1\end{pmatrix}$. 
Let $\overline{s}=[s,\lambda({s})^{-1}]\in  \overline{\Gamma}^{\mu_8}_{\theta} \subseteq  \overline{\SL}^{\mu_8}_2(\mathbb R)$ and $r=u(-1)s u(1)$. Note:
\[ u(-1)\Gamma_{\theta} u(1)=\Gamma^0(2), \quad  u(-1)\Gamma(2) u(1)=\Gamma(2), \quad \overline{\mathfrak{w}}^{-1}= [u(-1),1].\]
\begin{lemma}
\begin{itemize}
\item[(1)] $\nu_2(\mathfrak{w}^{-1}, g)=1$, for $g\in \SL_2(\R)$.
\item[(2)] $\overline{\mathfrak{w}}^{-1}\overline{s}\overline{\mathfrak{w}}=[r, \lambda^{\mathfrak{w}}(r)^{-1}]$.
\end{itemize}
\end{lemma}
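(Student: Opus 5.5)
For (1), I will compute $\nu_2(\mathfrak{w}^{-1},g)$ using Example~\ref{twocoverex}. Conjugation by $h=u(1)\in\SL_2(\R)$ is an inner automorphism, and there $\nu_2(h,g)=\overline{c}_{X^{\ast}}(h^{-1},g)\,\overline{c}_{X^{\ast}}(h^{-1}g,h)$. Take $h=u(1)$ in that formula; this corresponds to $\mathfrak{w}^{-1}$ in the paper's normalization $g^{h}=h^{-1}gh$.

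Since $u(\pm1)\in P_{>0}(\R)$, the lemma on $P_{>0}(\R)$ following Lemma~\ref{relacto} gives $\overline{c}_{X^{\ast}}(p,g)=\overline{c}_{X^{\ast}}(g,p)=1$ for every $p\in P_{>0}(\R)$. Both factors are therefore $1$, so $\nu_2(\mathfrak{w}^{-1},g)=1$ for all $g\in\SL_2(\R)$. If the convention intended $\mathfrak{w}$ rather than $\mathfrak{w}^{-1}$, the same argument with $u(-1)$ gives the same conclusion.

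For (2), I will compute the product $\overline{\mathfrak{w}}^{-1}\overline{s}\,\overline{\mathfrak{w}}=[u(-1),1]\,[s,\lambda(s)^{-1}]\,[u(1),1]$ directly in $\overline{\SL}^{\mu_8}_2(\R)$. Its underlying matrix is $u(-1)su(1)=r$. Its central coordinate is $\lambda(s)^{-1}\,\overline{c}_{X^{\ast}}(u(-1),s)\,\overline{c}_{X^{\ast}}(u(-1)s,u(1))$, and both cocycle values equal $1$ by the same $P_{>0}(\R)$ lemma. The product is therefore $[r,\lambda(s)^{-1}]$.

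It remains to identify $\lambda(s)$ with $\lambda^{\mathfrak{w}}(r)$. I read $\lambda^{\mathfrak{w}}$ as the transported function $r\mapsto\lambda(\mathfrak{w}r\mathfrak{w}^{-1})$ on $u(-1)\Gamma_\theta u(1)=\Gamma^0(2)$, as in Section~\ref{Twistedby} and Example~1. Since $\mathfrak{w}r\mathfrak{w}^{-1}=s$, this gives $\lambda(s)=\lambda^{\mathfrak{w}}(r)$, which yields (2).

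The only delicate point is the bookkeeping of conventions: the direction of conjugation in $\nu_2$, and the precise definition of $\lambda^{\mathfrak{w}}$. Both are harmless here because all cocycle values involving unipotent upper-triangular elements are trivial.
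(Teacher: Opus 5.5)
Your proof is correct and follows the paper's argument. For (1), the paper likewise writes $\nu_2(\mathfrak{w}^{-1},g)$ via Example~\ref{twocoverex} as a product of two $\overline{c}_{X^{\ast}}$-values involving $u(\pm1)$, and kills both using triviality of the cocycle on upper-triangular elements with positive diagonal: it cites Lemma~\ref{prog0}, the $\GL_2$ version of the $P_{>0}(\R)$ lemma you use. For (2), it multiplies out $[u(-1),1][s,\lambda(s)^{-1}][u(1),1]$ exactly as you do. One small point: in the paper's convention $\nu_2(\mathfrak{w}^{-1},\cdot)$ means taking $h=\mathfrak{w}^{-1}=u(-1)$, so that $g^h=\mathfrak{w}g\mathfrak{w}^{-1}$, rather than $h=u(1)$; your hedge already covers this, and the value is $1$ in either case.
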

\begin{proof}
1) $\nu_2(u(-1), g)\stackrel{\textrm{ Ex.} \ref{twocoverex}}{=} \overline{c}_{X^{\ast}}(u(-1)^{-1},g)\overline{c}_{X^{\ast}}(u(-1)^{-1}g,u(-1))\stackrel{\textrm{ Lem.} \ref{prog0}}{=} 1$.\\
2) 
$
 \overline{\mathfrak{w}}^{-1}\overline{s}\overline{\mathfrak{w}}
=[u(-1) s u(1), \overline{c}_{X^{\ast}}(u(-1), s)\overline{c}_{X^{\ast}}(u(-1) s, u(1))\lambda({s})^{-1}]
=[r, \lambda({s})^{-1}].$
\end{proof}
 \[[\theta^{1/2}]^{\overline{\mathfrak{w}}}(z)=\sum_{n\in \Z} (-1)^{n}e^{i   \pi z n^2 }.\]

\begin{definition}
\begin{itemize}
\item[(1)] $ \theta^{M,1/2}(z)\stackrel{ \textrm{ Def.}}{=} \sum_{n\in \Z} (-1)^{n}e^{i   \pi z n^2 }$  .
\item[(2)] $\Gamma^{0'}(2)\stackrel{ \textrm{ Def.}}{=}\{ [r, \lambda^{\mathfrak{w}}(r)^{-1}] \mid r\in \Gamma^0(2)\}$.
\item[(3)] $\nu_{\theta^M}(r)\stackrel{ \textrm{ Def.}}{=} \lambda^{\mathfrak{w}}(r)$.
\end{itemize}
\end{definition}
\begin{lemma}
 There exists a group isomorphism $\iota: \Gamma^{0}(2) \simeq \Gamma^{0'}(2); r\to [r, \nu_{\theta^M}(r)^{-1}]$.
\end{lemma}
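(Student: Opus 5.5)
The plan is to mirror Example~1, namely the lemma establishing $\iota\colon\Gamma_0(4)^{\kappa}\to\Gamma_0'(4)^{\kappa}$, with $\mathfrak{w}=u(1)$ in place of $\diag(2,1)$. Since $u(-1)\Gamma_{\theta}u(1)=\Gamma^0(2)$, conjugation $r\mapsto s=u(1)ru(-1)$ is a bijection $\Gamma^0(2)\to\Gamma_{\theta}$. Here $\lambda^{\mathfrak{w}}(r)=\lambda(u(1)ru(-1))=\lambda(s)$, and $\overline{\lambda}([s,\epsilon])=\lambda(s)\epsilon$ from Lemma~\ref{lambdaoverlin}. The map $\iota$ is therefore well defined: as a set map it is the composite of $r\mapsto s$, then $s\mapsto [s,\lambda(s)^{-1}]\in\overline{\Gamma}_{\theta}$ (read inside $\overline{\SL}^{\mu_8}_2(\R)$, since $\lambda$ is $\mu_8$-valued), then conjugation by $\overline{\mathfrak{w}}$. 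By part (2) of the preceding lemma, this composite sends $r$ to $[r,\lambda^{\mathfrak{w}}(r)^{-1}]$. Injectivity is clear from the first coordinate, and surjectivity holds by the definition of $\Gamma^{0'}(2)$.

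The main step is multiplicativity. First, $s\mapsto[s,\lambda(s)^{-1}]$ is a homomorphism $\Gamma_{\theta}\to\overline{\SL}^{\mu_8}_2(\R)$. Indeed,
\[
[s_1,\lambda(s_1)^{-1}][s_2,\lambda(s_2)^{-1}]=[s_1s_2,\lambda(s_1)^{-1}\lambda(s_2)^{-1}\overline{c}_{X^{\ast}}(s_1,s_2)],
\]
and Lemma~\ref{mutirr12} gives $\lambda(s_1)\lambda(s_2)=\lambda(s_1s_2)\overline{c}_{X^{\ast}}(s_1,s_2)$. Using $\overline{c}_{X^{\ast}}=\overline{c}_{X^{\ast}}^{-1}$ (it is $\mu_2$-valued), the second coordinate equals $\lambda(s_1s_2)^{-1}$. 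Second, conjugation by $\overline{\mathfrak{w}}$ is a group automorphism of $\overline{\GL}^{\mu_8}_2(\R)$, so the composite is a homomorphism.

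Alternatively, one can check directly that
\[
[r_1,\nu_{\theta^M}(r_1)^{-1}][r_2,\nu_{\theta^M}(r_2)^{-1}]=[r_1r_2,\nu_{\theta^M}(r_1)^{-1}\nu_{\theta^M}(r_2)^{-1}\overline{c}_{X^{\ast}}(r_1,r_2)],
\]
and then transport the cocycle via $\overline{c}_{X^{\ast}}(r_1,r_2)=\overline{c}_{X^{\ast}}(s_1,s_2)$. This identity follows from (\ref{chap3crao2}) together with part (1) of the preceding lemma, $\nu_2(\mathfrak{w}^{\pm1},g)\equiv1$, which is the unipotent analogue of the step $\overline{c}_{X^{\ast}}(r_1,r_2)=\overline{c}_{X^{\ast}}(\mathfrak{w}r_1\mathfrak{w}^{-1},\mathfrak{w}r_2\mathfrak{w}^{-1})$ used in Example~1. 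Applying Lemma~\ref{mutirr12} to $s_1,s_2$ then yields $\nu_{\theta^M}(r_1r_2)^{-1}$.

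The only delicate point I expect is the cocycle invariance under conjugation by $u(1)$. It should be handled by Example~\ref{twocoverex} and Lemma~\ref{prog0}, which show that $\overline{c}_{X^{\ast}}$ is trivial on pairs involving $u(\pm1)\in P_{>0}(\R)$, so that $\nu_2(\mathfrak{w}^{-1},\cdot)=1$. After that, the argument is bookkeeping of signs in $\mu_2$.
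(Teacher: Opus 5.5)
Your proposal is correct. The ``alternatively'' computation is exactly the paper's proof. The paper multiplies $\iota(r_1)\iota(r_2)$ in the cover and rewrites $\overline{c}_{X^{\ast}}(r_1,r_2)\nu_2(\mathfrak{w}^{-1},r_1)^{-1}\nu_2(\mathfrak{w}^{-1},r_2)^{-1}$ as $\overline{c}_{X^{\ast}}(\mathfrak{w}r_1\mathfrak{w}^{-1},\mathfrak{w}r_2\mathfrak{w}^{-1})\nu_2(\mathfrak{w}^{-1},r_1r_2)^{-1}$ via (\ref{chap3crao2}). It carries the $\nu_2$ factors along even though they are trivial here, and then applies Lemma~\ref{mutirr12}.

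Your primary route uses the same two inputs, packaged differently. You factor $\iota$ as $r\mapsto s\mapsto[s,\lambda(s)^{-1}]\mapsto\overline{\mathfrak{w}}^{-1}[s,\lambda(s)^{-1}]\overline{\mathfrak{w}}$. The cocycle transport is then absorbed into an inner automorphism of the cover, so multiplicativity comes for free from Lemma~\ref{mutirr12} applied once on $\Gamma_{\theta}$. The price is that you rely on part (2) of the preceding lemma, i.e.\ on $\overline{c}_{X^{\ast}}$ vanishing on pairs involving $u(\pm1)$, to identify the composite with $[r,\nu_{\theta^M}(r)^{-1}]$.

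One minor point: the appeal to $\overline{c}_{X^{\ast}}$ being $\mu_2$-valued is unnecessary. Lemma~\ref{mutirr12} gives $\lambda(s_1)^{-1}\lambda(s_2)^{-1}\overline{c}_{X^{\ast}}(s_1,s_2)=\lambda(s_1s_2)^{-1}$ directly.
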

\begin{proof}
 \begin{align*}
 \iota(r_1) \iota(r_2)&= [r_1, \nu_{\theta^M}(r_1)^{-1}] \cdot [r_2, \nu_{\theta^M}(r_2)^{-1}]\\
 &=[r_1r_2, \lambda(\mathfrak{w}r_1\mathfrak{w}^{-1})^{-1} \lambda(\mathfrak{w}r_2\mathfrak{w}^{-1})^{-1} \nu_2(\mathfrak{w}^{-1},r_1)^{-1}\nu_2(\mathfrak{w}^{-1},r_2)^{-1}\overline{c}_{X^{\ast}}(r_1,r_2)]\\
 &=[r_1r_2, \lambda(\mathfrak{w}r_1\mathfrak{w}^{-1})^{-1} \lambda(\mathfrak{w}r_2\mathfrak{w}^{-1})^{-1} \overline{c}_{X^{\ast}}(\mathfrak{w}r_1\mathfrak{w}^{-1},\mathfrak{w}r_2\mathfrak{w}^{-1})\nu_2(\mathfrak{w}^{-1},r_1r_2)^{-1}]\\
 &\stackrel{\textrm{ Lem. }\ref{mutirr12}}{=}[r_1r_2, \lambda(\mathfrak{w}r_1r_2\mathfrak{w}^{-1})^{-1}\nu_2(\mathfrak{w}^{-1},r_1r_2)^{-1}]\\
 &=[r_1r_2, \nu_{\theta^M}(r_1r_2)^{-1}]=\iota(r_1r_2) .
 \end{align*}
\end{proof}
Note that 
\[\Gamma(2)/\{\pm I\} \simeq \langle \begin{pmatrix} 1& 0\\ 2& 1\end{pmatrix}, \begin{pmatrix} 1& 2\\ 0& 1\end{pmatrix} \rangle=\{ g=\begin{pmatrix}a& b\\ c& d\end{pmatrix}\in \Gamma(2)\mid  a\equiv d\equiv 1 (\bmod 4)\}.\]
\begin{lemma}
Let $\delta_M: \{ g=\begin{pmatrix}a& b\\ c& d\end{pmatrix}\in \Gamma(2)\mid  a\equiv d\equiv 1 (\bmod 4)\} \to \mu_4; g\to (- i)^{c/2}$. Then $\delta_M$ is a character. 
\end{lemma}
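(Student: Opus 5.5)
The statement asserts that $G_4:=\{g=\begin{pmatrix}a&b\\c&d\end{pmatrix}\in\Gamma(2)\mid a\equiv d\equiv 1\ (\bmod 4)\}$ is a group and that $\delta_M(g)=(-i)^{c/2}$ is multiplicative on it. Since $c$ is even, $c/2\in\Z$ and $(-i)^{c/2}$ depends only on $c/2 \bmod 4$, i.e. on $c\bmod 8$. So the whole claim reduces to a congruence: for $g_1,g_2\in G_4$ we want
\[
\tfrac{c_3}{2}\equiv \tfrac{c_1}{2}+\tfrac{c_2}{2}\pmod 4,\qquad c_3=c_1a_2+d_1c_2 ,
\]
that is, $c_1a_2+d_1c_2\equiv c_1+c_2\pmod 8$.

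First I check that $G_4$ is a subgroup. It contains $I$. For closure, $a_3=a_1a_2+b_1c_2\equiv 1\cdot1+0\equiv1\pmod 4$, because $b_1,c_2$ are both even and so $4\mid b_1c_2$. The same argument gives $d_3=c_1b_2+d_1d_2\equiv 1\pmod 4$. For inverses, $g^{-1}=\begin{pmatrix}d&-b\\-c&a\end{pmatrix}$, which again has diagonal entries $\equiv1\pmod 4$. This also agrees with the remark in the paper that $G_4\simeq\Gamma(2)/\{\pm I\}$, generated by $u(2)$ and $u_-(2)$.

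Next comes the congruence, where the real content lies. I write
\[
c_1a_2+d_1c_2-c_1-c_2=c_1(a_2-1)+c_2(d_1-1).
\]
Since $c_1$ is even and $4\mid(a_2-1)$, the first product is divisible by $8$. Likewise $c_2$ is even and $4\mid(d_1-1)$, so the second product is divisible by $8$. Hence the whole expression is divisible by $8$, and therefore $(-i)^{c_3/2}=(-i)^{c_1/2}(-i)^{c_2/2}$.

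I expect no real obstacle. The only point needing care is that $(-i)^{c/2}$ is well defined for negative $c/2$, and that the exponent need only be determined modulo $4$. As a sanity check, $\delta_M(u_-(2))=-i$ and $\delta_M(u(2))=1$. Because the two generators generate $G_4$ freely, these values alone would already determine a character; the congruence computation confirms that this character is the explicit formula $(-i)^{c/2}$.
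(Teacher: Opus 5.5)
Your proof is correct and matches the paper's: both write $a_2=1+4k$ and $d_1=1+4l$, then use that $c_1,c_2$ are even to get $c_1a_2+d_1c_2\equiv c_1+c_2 \pmod 8$, so that $(-i)^{c_3/2}=(-i)^{c_1/2}(-i)^{c_2/2}$. You also check explicitly that the domain is closed under products and inverses, which the paper leaves implicit.
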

\begin{proof}
$$g_1g_2=\begin{pmatrix} a_1 & b_1 \\ c_1 & d_1 \end{pmatrix}
\begin{pmatrix} a_2 & b_2 \\ c_2 & d_2 \end{pmatrix}
= \begin{pmatrix}
a_1 a_2 + b_1 c_2 & a_1 b_2 + b_1 d_2 \\
c_1 a_2 + d_1 c_2 & c_1 b_2 + d_1 d_2
\end{pmatrix}, a_2=1+4k_1, d_1=1+4l_2, 2\mid c_1, 2\mid c_2;$$
$$\delta_M(g_1g_2)=(-i)^{\tfrac{c_1 a_2 + d_1 c_2}{2}}=(-i)^{\tfrac{ c_2 + c_1 +4k_1c_1+4l_2c_2}{2}}=(-i)^{c_1/2+c_2/2}=\delta_M(g_1)\delta_M(g_2).$$
\end{proof}
So $\delta_M$ is also a character of $\Gamma(2)$ such that $\delta_M(-I)=1=\delta_M(u(2))$ and $\delta_M(u_-(2))=(-i)$. More precisely, 
\[\delta_M(\begin{pmatrix}a& b\\ c& d\end{pmatrix})=(-i)^{c/2} (\tfrac{-1}{d})^{c/2}.\]
\begin{lemma}
Let $r=\begin{pmatrix} a& b\\ c& d\end{pmatrix} \in \Gamma(2)$. Then $\lambda^{\mathfrak{w}}(r )=\lambda(r)\delta_M(r)$.
\end{lemma}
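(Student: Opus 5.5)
The strategy is to show that both sides of the identity are ``projective characters'' of $\Gamma(2)$ with the same defect, so that their ratio is a genuine character; then it suffices to compare them on generators. Recall from the preceding lemma that $\lambda^{\mathfrak{w}}(r)=\lambda(s)=\lambda(\mathfrak{w}r\mathfrak{w}^{-1})$ with $r=u(-1)su(1)$, and that $\Gamma(2)$ is stable under conjugation by $u(1)$.

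\emph{Step 1 (cocycle relations).} By Lemma~\ref{mutirr12}, $\lambda(r_1)\lambda(r_2)=\lambda(r_1r_2)\,\overline{c}_{X^{\ast}}(r_1,r_2)$ for $r_i\in\Gamma(2)$. Next, apply Lemma~\ref{mutirr12} to $s_i=\mathfrak{w}r_i\mathfrak{w}^{-1}$, and convert $\overline{c}_{X^{\ast}}(s_1,s_2)$ back to $\overline{c}_{X^{\ast}}(r_1,r_2)$ via \eqref{chap3crao2}. The $\nu_2$-factors there are all trivial, since $\nu_2(\mathfrak{w}^{-1},g)=1$ by part (1) of the preceding lemma. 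Hence
\[
\lambda^{\mathfrak{w}}(r_1)\lambda^{\mathfrak{w}}(r_2)=\lambda^{\mathfrak{w}}(r_1r_2)\,\overline{c}_{X^{\ast}}(r_1,r_2).
\]
This is exactly the computation already carried out in the proof that $\iota\colon\Gamma^{0}(2)\to\Gamma^{0'}(2)$ is an isomorphism. Dividing the two relations, $r\mapsto \lambda^{\mathfrak{w}}(r)\lambda(r)^{-1}$ is a genuine character of $\Gamma(2)$.

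\emph{Step 2 (reduction to generators).} $\Gamma(2)$ is generated by $-I$, $u(2)$ and $u_-(2)$ (the MathOverflow description recalled earlier). The map $\delta_M$ is a character of $\Gamma(2)$: it is a character on the index-two subgroup $\{a\equiv d\equiv 1 \pmod 4\}$ by the previous lemma, and is extended by $\delta_M(-I)=1$. Its explicit form is $\delta_M\begin{pmatrix}a&b\\ c&d\end{pmatrix}=(-i)^{c/2}\bigl(\tfrac{-1}{d}\bigr)^{c/2}$. So it suffices to check that $\lambda^{\mathfrak{w}}\lambda^{-1}$ and $\delta_M$ agree on these three generators, which is done in Step 3.

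\emph{Step 3 (generators).} Conjugation by $u(1)$ fixes $-I$ and $u(2)$, so the ratio is $1$ there, matching $\delta_M(-I)=\delta_M(u(2))=1$. For $u_-(2)$ we compute
\[
u(1)u_-(2)u(-1)=\begin{pmatrix}3&-2\\ 2&-1\end{pmatrix}.
\]
By Lemma~\ref{gammar} this gives $\lambda=\bigl(\tfrac{4}{-1}\bigr)\epsilon_{-1}^{-1}=-i$, while $\lambda(u_-(2))=\bigl(\tfrac{4}{1}\bigr)\epsilon_1^{-1}=1$. The ratio is therefore $-i=\delta_M(u_-(2))$.

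The main point needing care is Step 3: the Kronecker symbol with negative lower entry, the paper's convention $\bigl(\tfrac{0}{-1}\bigr)=-1$, and $\epsilon_d$ for negative $d$ must be handled consistently with Lemma~\ref{gammar}. One should also double-check in Step 1 that no stray $\nu_2$ or $m_{X^{\ast}}$ factor survives the conjugation.
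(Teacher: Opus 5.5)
Your proposal is correct and is essentially the paper's own proof. The paper likewise combines Lemma~\ref{mutirr12}, \eqref{chap3crao2} and the triviality of $\nu_2(\mathfrak{w}^{-1},\cdot)$ to show that $r\mapsto\lambda(\mathfrak{w}r\mathfrak{w}^{-1})\lambda(r)^{-1}$ is a character of $\Gamma(2)$, then compares it with $\delta_M$ on the generators $-I$, $u(2)$, $u_-(2)$, using the same computation $u(1)u_-(2)u(-1)=\begin{pmatrix}3&-2\\ 2&-1\end{pmatrix}$ to get the value $-i$. Your additional remark that $\Gamma(2)=\{\pm I\}\times\{a\equiv d\equiv 1 \pmod 4\}$, so that $\delta_M$ is genuinely a character on all of $\Gamma(2)$, fills in a step the paper only asserts.
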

\begin{proof}
\[v_2(\mathfrak{w}^{-1}, r)\stackrel{ \textrm{ Ex.} \ref{twocoverex}}{=}\overline{c}_{X^{\ast}}(\mathfrak{w}, r\mathfrak{w}^{-1})\overline{c}_{X^{\ast}}(r,\mathfrak{w}^{-1})=1.\]
\begin{align*}
\overline{c}_{X^{\ast}}(r_1^{\mathfrak{w}^{-1}}, r_2^{\mathfrak{w}^{-1}}) &\stackrel{ (\ref{chap3crao2})}{=}
 \overline{c}_{X^{\ast}}(r_1, r_2) v_2(\mathfrak{w}^{-1}, r_1)^{-1}v_2(\mathfrak{w}^{-1}, r_2)^{-1}v_2(\mathfrak{w}^{-1}, r_1r_2)\\
 &\stackrel{\textrm{Lem. }\ref{mutirr12}}{=} \lambda(r_1)\lambda(r_2)\lambda(r_1r_2)^{-1}v_2(\mathfrak{w}^{-1}, r_1)^{-1}v_2(\mathfrak{w}^{-1}, r_2)^{-1}v_2(\mathfrak{w}^{-1}, r_1r_2)\\
 &\stackrel{\textrm{Lem. }\ref{mutirr12}}{=}\lambda(r_1^{\mathfrak{w}^{-1}})\lambda(r_2^{\mathfrak{w}^{-1}})\lambda(r_1^{\mathfrak{w}^{-1}}r_2^{\mathfrak{w}^{-1}})^{-1}.
 \end{align*}
 So $\lambda(r^{\mathfrak{w}^{-1}})=\lambda(r)v_2(\mathfrak{w}^{-1}, r)^{-1} \delta'(r)$, for some character $\delta'$ of $\Gamma(2)$. Note that $\Gamma(2)$ is generated by $u(2)$, $u_-(2)$ and $-I$. 
\[v_2(\mathfrak{w}^{-1}, r)^{-1}\stackrel{ \textrm{Ex .} \ref{twocoverex}}{=}\overline{c}_{X^{\ast}}(\mathfrak{w}, r\mathfrak{w}^{-1})^{-1}\overline{c}_{X^{\ast}}(r,\mathfrak{w}^{-1})^{-1}\stackrel{ \textrm{ Lem.} \ref{prog0}}{=}1.\] 
 \begin{itemize}
\item[(I)] If $r=-I$, $\lambda(r^{\mathfrak{w}^{-1}})=\lambda(r) $.
So $\delta'(r)=1=\delta_M(r)$.
 \item[(II)]  If $r=u(2)$, $r^{\mathfrak{w}^{-1}}=u(1)u(2) u(-1)=u(2)$, then $\lambda(r^{\mathfrak{w}^{-1}})=\lambda(r)$. So $\delta'(r)=1=\delta_M(r)$.
 \item[(III)]  If $r=u_-(2)$, $r^{\mathfrak{w}^{-1}}=u(1)u_-(2) u(-1)=\begin{pmatrix} 3& -2\\2& -1\end{pmatrix}$, then  
 $$\lambda(r^{\mathfrak{w}^{-1}})=\lambda(\begin{pmatrix} 3& -2\\2& -1\end{pmatrix}) \stackrel{ \textrm{ Lem.} \ref{gammar}}{=}(\frac{4}{-1}) i^{-1}=-i, \quad 
 \lambda(r)\stackrel{ \textrm{ Lem.} \ref{gammar}}{=}1$$ 
  So $\delta'(r)=-i=\delta_M(r)$.
  \end{itemize}
\end{proof}
\begin{lemma}
$\lambda^{\mathfrak{w}}(r)^{-1}=\left\{ \begin{array}{rl} \lambda(r)^{-1} \delta_M(r)^{-1},  & \textrm{ if } r\in \Gamma(2),\\e^{-\tfrac{\pi i}{4}} \lambda(r_1)^{-1} \delta_M(r_1)^{-1}, &  \textrm{ if } r=r_1 u_-(-1) \textrm{ with } r_1\in \Gamma(2).\end{array} \right.$ 
\end{lemma}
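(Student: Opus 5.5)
The lemma splits $\Gamma^0(2)=u(-1)\Gamma_\theta u(1)$ along $\Gamma_\theta=\Gamma(2)\sqcup\Gamma(2)\omega$. Since $u(-1)\Gamma(2)u(1)=\Gamma(2)$ and $u(-1)\omega u(1)=\begin{pmatrix}1&0\\-1&1\end{pmatrix}\cdot(\text{element of }\Gamma(2))$ up to left $\Gamma(2)$-translation, every $r\in\Gamma^0(2)$ is either in $\Gamma(2)$ or of the form $r_1u_-(-1)$ with $r_1\in\Gamma(2)$. Indeed $u_-(-1)=u(-1)\,s_0\,u(1)$ for some $s_0\in\Gamma(2)\omega$: one checks $u(1)u_-(-1)u(-1)=\begin{pmatrix}0&-1\\1&1\end{pmatrix}\cdot u(-1)\cdots$. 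Concretely, I will compute $s_0:=u(1)u_-(-1)u(-1)=\begin{pmatrix}0&1\\-1&2\end{pmatrix}$ and verify $s_0\in\Gamma(2)\omega$.

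For $r\in\Gamma(2)$, the first case is exactly the preceding lemma $\lambda^{\mathfrak{w}}(r)=\lambda(r)\delta_M(r)$; inverting gives the claim.

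For $r=r_1u_-(-1)$, I use that $\lambda^{\mathfrak{w}}(r)=\lambda(\mathfrak{w}r\mathfrak{w}^{-1})$, with $\nu_2(\mathfrak{w}^{-1},\cdot)=1$ by the first lemma of this subsection. By Lemma \ref{mutirr12} applied to $\mathfrak{w}r_1\mathfrak{w}^{-1}$ and $s_0=\mathfrak{w}u_-(-1)\mathfrak{w}^{-1}$,
\[\lambda^{\mathfrak{w}}(r)=\lambda^{\mathfrak{w}}(r_1)\,\lambda(s_0)\,\overline{c}_{X^{\ast}}(\mathfrak{w}r_1\mathfrak{w}^{-1},s_0)^{-1}.\]
The first factor is $\lambda(r_1)\delta_M(r_1)$. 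For the second, $\lambda(s_0)$ follows from Lemma \ref{gammar} by writing $s_0=r_2\omega$ with $r_2=s_0\omega^{-1}=\begin{pmatrix}-1&0\\-2&-1\end{pmatrix}$. Here $c=-2$ and $d=-1$, so $\lambda(s_0)=\big(\tfrac{-4}{-1}\big)\epsilon_{-1}^{-1}e^{-i\pi/4}(2,-1)_\R$, which should reduce to $e^{i\pi/4}$ (I will carefully fix the Kronecker convention $\big(\tfrac{0}{-1}\big)$ and the sign of $\big(\tfrac{-4}{-1}\big)$). Alternatively, use Case 8 of Sect.~\ref{Gamma2I2}, which gives directly the factor $m_{X^*}(u_-(-1))^{-1}=e^{-i\pi/4}$ and is consistent with $\lambda^{\mathfrak{w}}(u_-(-1))^{-1}=e^{-\pi i/4}$ in the statement.

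The main obstacle is the cocycle factor $\overline{c}_{X^{\ast}}(\mathfrak{w}r_1\mathfrak{w}^{-1},s_0)$, which a priori depends on $r_1$ via Hilbert symbols $(x(\cdot),x(\cdot))_\R$. I would compute it with Lemma \ref{relacto}(1) using $x(s_0)=-1\cdot\R^{\times2}$. If it is not identically $1$, I would absorb it by comparing with the stated formula, possibly reinterpreting the statement's sign via $(-1,dt-c)_\R(-c,dt-c)_\R$ as in the introduction's $\nu_{\theta^M,t}$. My first attempt is to show it equals $1$ whenever $r_1\in\Gamma(2)$ has $d\equiv1\pmod 4$, and to handle $-I$ separately.
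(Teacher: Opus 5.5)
Your reduction matches the paper's. You split $\Gamma^0(2)=\Gamma(2)\sqcup\Gamma(2)u_-(-1)$ via $s_0=\mathfrak{w}u_-(-1)\mathfrak{w}^{-1}=\bigl(\begin{smallmatrix}0&1\\-1&2\end{smallmatrix}\bigr)$, use the preceding lemma for $r\in\Gamma(2)$, and use Lemma~\ref{mutirr12} to get $\lambda^{\mathfrak{w}}(r)^{-1}=\lambda(r_1)^{-1}\delta_M(r_1)^{-1}\lambda(s_0)^{-1}\,\overline{c}_{X^{\ast}}(\mathfrak{w}r_1\mathfrak{w}^{-1},s_0)$. Your value $\lambda(s_0)=e^{\pi i/4}$ is correct, with $\bigl(\frac{-4}{-1}\bigr)=-1$ and $\epsilon_{-1}=i$.

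\textbf{The gap.} You never evaluate the cocycle factor and hope it is $1$; it is not. Since $s_0=\omega^{-1}u(-2)$ with $u(-2)\in P_{>0}(\R)$, the cocycle identity gives $\overline{c}_{X^{\ast}}(g,s_0)=\overline{c}_{X^{\ast}}(g,\omega^{-1})$ for $g=\mathfrak{w}r_1\mathfrak{w}^{-1}=\bigl(\begin{smallmatrix}a+c&b+d-a-c\\ c&d-c\end{smallmatrix}\bigr)$. Because $g\omega^{-1}$ has lower-left entry $c-d\neq 0$, Lemma~\ref{relacto}(1) yields
$\overline{c}_{X^{\ast}}(g,\omega^{-1})=(x(g),d-c)_{\R}=(-1,d-c)_{\R}(-c,d-c)_{\R}$.
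Concretely this is $(c,d-c)_{\R}$ if $c\neq 0$ and $(d,-1)_{\R}$ if $c=0$. It equals $-1$ for $r_1=-I$. It also equals $-1$ for $r_1=\bigl(\begin{smallmatrix}1&2\\-2&-3\end{smallmatrix}\bigr)$, which has $a\equiv d\equiv 1\pmod 4$. So your planned first attempt fails, and ``absorbing'' the factor by comparison with the stated formula is circular.

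\textbf{The statement as printed is false.} Take $r=-u_-(-1)$, so $r_1=-I$. Then $\mathfrak{w}r\mathfrak{w}^{-1}=-s_0$ and $\lambda(-s_0)=m_{X^{\ast}}(-s_0)\widetilde{\beta}(-s_0)^{-1}=e^{-\pi i/4}$, so $\lambda^{\mathfrak{w}}(r)^{-1}=e^{\pi i/4}$. The right-hand side is $e^{-\pi i/4}\lambda(-I)^{-1}\delta_M(-I)^{-1}=e^{-\pi i/4}\cdot(-i)=e^{-3\pi i/4}$.

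The paper's proof does exactly the computation above and ends with $\lambda(r_1)^{-1}\delta_M(r_1)^{-1}e^{-\pi i/4}(-1,d-c)_{\R}(-c,d-c)_{\R}$. The Hilbert-symbol factor was dropped from the lemma's statement, and it is this corrected formula that feeds into $\nu_{\theta^M}$ in the next lemma. Once you evaluate the cocycle as above, your outline proves that corrected formula, not the one stated.
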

\begin{proof}
The first statement comes from the above lemma. Let   $r=r_1 u_-(-1)$ with $r_1=\begin{pmatrix} a& b\\ c& d\end{pmatrix}  \in \Gamma(2)$. 
\[\mathfrak{w}r_1\mathfrak{w}^{-1}=\begin{pmatrix}1& 1\\0& 1\end{pmatrix}\begin{pmatrix}a& b\\ c& d\end{pmatrix}\begin{pmatrix}1& -1\\0& 1\end{pmatrix}=\begin{pmatrix}a+c& b+d-a-c\\c&d -c \end{pmatrix};\]
\[\mathfrak{w}r_1\mathfrak{w}^{-1}\begin{pmatrix} 0&1\\-1 & 0\end{pmatrix}=\begin{pmatrix}-b-d+a+c& a+c\\c-d&c \end{pmatrix};\]
\[\lambda(\mathfrak{w}u_-(-1)\mathfrak{w}^{-1} )=\lambda(\begin{pmatrix} 0&1\\-1 &2\end{pmatrix})\stackrel{ ( \ref{lambda})}{=}m_{X^{\ast}}(\begin{pmatrix} 0&1\\-1 & 2\end{pmatrix})\widetilde{\beta}^{-1}(\begin{pmatrix} 0&1\\-1 & 2\end{pmatrix})=e^{\tfrac{\pi i}{4}};\]
\begin{align*}
\overline{c}_{X^{\ast}}(r_1^{\mathfrak{w}^{-1}},u_-(-1)^{\mathfrak{w}^{-1}})
&=\overline{c}_{X^{\ast}}(r_1^{\mathfrak{w}^{-1}},\begin{pmatrix} 0&1\\-1 & 0\end{pmatrix}\begin{pmatrix} 1&-2\\0 & 1\end{pmatrix})\\
&=\overline{c}_{X^{\ast}}(r_1^{\mathfrak{w}^{-1}},\begin{pmatrix} 0&1\\-1 & 0\end{pmatrix}\begin{pmatrix} 1&-2\\0 & 1\end{pmatrix})\overline{c}_{X^{\ast}}(\begin{pmatrix} 0&1\\-1 & 0\end{pmatrix},\begin{pmatrix} 1&-2\\0 & 1\end{pmatrix})\\
&=\overline{c}_{X^{\ast}}(r_1^{\mathfrak{w}^{-1}},\begin{pmatrix} 0&1\\-1 & 0\end{pmatrix}) \overline{c}_{X^{\ast}}(r_1^{\mathfrak{w}^{-1}}\begin{pmatrix} 0&1\\-1 & 0\end{pmatrix},\begin{pmatrix} 1&-2\\0 & 1\end{pmatrix})\\
&=\overline{c}_{X^{\ast}}(r_1^{\mathfrak{w}^{-1}},\begin{pmatrix} 0&1\\-1 & 0\end{pmatrix})\\
&=\left\{\begin{array}{lr} (c,d-c)_{\R} & c\neq 0,\\
(d,-1)_{\R} & c=0,\end{array}\right.\\
&=(-1,d-c)_{\R}(-c,d-c)_{\R}.
\end{align*}

\begin{align*}
\lambda^{\mathfrak{w}}(r_1u_-(-1))^{-1}
&\stackrel{\textrm{Lem. }\ref{mutirr12}}{=}\lambda^{\mathfrak{w}}(r_1)^{-1}\lambda^{\mathfrak{w}}(u_-(-1))^{-1}\overline{c}_{X^{\ast}}(r_1^{\mathfrak{w}^{-1}}, u_-(-1)^{\mathfrak{w}^{-1}})\\
&=\lambda(r_1)^{-1} \delta_M(r_1)^{-1}e^{-\tfrac{\pi i}{4}}(-1,d-c)_{\R}(-c,d-c)_{\R}.
\end{align*}
\end{proof}
As a consequence, we have:
\begin{lemma}

$$\nu_{\theta^M}(r)=\left\{ \begin{array}{ll} \left(\frac{2c}{d}\right)\epsilon^{-1}_d (\tfrac{-1}{d})^{c/2}(-i)^{c/2},  & r=\begin{pmatrix} a& b\\ c& d\end{pmatrix}\in \Gamma(2),\\ \left(\frac{2c}{d}\right)\epsilon^{-1}_d (- i)^{c/2}(\tfrac{-1}{d})^{c/2} e^{\tfrac{i \pi}{4}} (-1,d-c)_{\R}(-c,d-c)_{\R},  & r=\begin{pmatrix} a& b\\ c& d\end{pmatrix} \begin{pmatrix} 1& 0\\ -1& 1\end{pmatrix}\in  \Gamma(2)u_-(-1).\end{array}\right.$$
\end{lemma}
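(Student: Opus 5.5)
The formula is assembled from the two preceding lemmas, which already describe $\lambda^{\mathfrak{w}}$ on the two cosets $\Gamma(2)$ and $\Gamma(2)u_-(-1)$ of $\Gamma^0(2)$. By definition $\nu_{\theta^M}(r)=\lambda^{\mathfrak{w}}(r)$, so it suffices to invert the expression for $\lambda^{\mathfrak{w}}(r)^{-1}$ obtained just above and substitute the explicit values of $\lambda$ and $\delta_M$.

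\emph{The case $r\in\Gamma(2)$.} The previous lemma gives $\lambda^{\mathfrak{w}}(r)=\lambda(r)\delta_M(r)$. For $r\in\Gamma(2)\subseteq\Gamma_\theta$, Lemma \ref{gammar} together with Lemma \ref{lambdaoverlin} (taking $\epsilon=1$) yields $\lambda(r)=\left(\frac{2c}{d}\right)\epsilon_d^{-1}$. The extended formula $\delta_M(r)=(-i)^{c/2}(\tfrac{-1}{d})^{c/2}$ then gives the first line. The one point to check is that this formula for $\delta_M$ agrees with the character on all of $\Gamma(2)$, and not only on the index-two subgroup $\{a\equiv d\equiv 1 \pmod 4\}$. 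For that, observe that $\delta_M(-I)=1$ while $(\tfrac{-1}{d})^{c/2}$ accounts for replacing $r$ by $-r$ when $d\equiv 3\pmod 4$.

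\emph{The case $r=r_1u_-(-1)$ with $r_1\in\Gamma(2)$.} Write $r_1=\begin{pmatrix} a&b\\ c&d\end{pmatrix}$. Inverting the second line of the previous lemma gives
\[
\nu_{\theta^M}(r)=\lambda(r_1)\delta_M(r_1)\,e^{\tfrac{\pi i}{4}}(-1,d-c)_{\R}(-c,d-c)_{\R},
\]
because the Hilbert symbols take values $\pm1$ and are therefore their own inverses. Substituting $\lambda(r_1)=\left(\frac{2c}{d}\right)\epsilon_d^{-1}$ and $\delta_M(r_1)=(-i)^{c/2}(\tfrac{-1}{d})^{c/2}$ produces the second line. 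Here $a,b,c,d$ are to be read as the entries of $r_1$, which is exactly the convention in the statement, where $r$ is written as $\begin{pmatrix} a&b\\ c&d\end{pmatrix}u_-(-1)$.

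\emph{Main obstacle.} The only delicate point is the bookkeeping in the second case, which is inherited from the proof of the preceding lemma. The factor $e^{-\tfrac{\pi i}{4}}$ there arises as $\lambda(\mathfrak{w}u_-(-1)\mathfrak{w}^{-1})^{-1}$, and the cocycle $\overline{c}_{X^{\ast}}(r_1^{\mathfrak{w}^{-1}},u_-(-1)^{\mathfrak{w}^{-1}})$ enters through Lemma \ref{mutirr12}. Before inverting, one should verify that the sign conventions (the order of the factors and whether the cocycle appears with exponent $+1$ or $-1$) match those of Lemma \ref{mutirr12}. One should also confirm that the degenerate case $c=0$ is covered by the combined expression $(-1,d-c)_{\R}(-c,d-c)_{\R}$. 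When $c=0$ this reduces to $(-1,d)_{\R}(0,d)_{\R}$, and by the paper's convention for $(\cdot,\cdot)_{\R}$ we have $(0,d)_{\R}=1$, so the expression equals $(d,-1)_{\R}$, as required.
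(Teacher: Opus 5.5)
Your proposal is correct and follows the paper's own route: the lemma is obtained by inverting the two preceding lemmas on $\lambda^{\mathfrak{w}}$ and substituting Lemma~\ref{gammar} together with the explicit formula $\delta_M(r)=(-i)^{c/2}(\tfrac{-1}{d})^{c/2}$. One small citation fix: the factor $(-1,d-c)_{\R}(-c,d-c)_{\R}$ appears only in the last line of the proof of the preceding lemma, not in its displayed statement, which omits it, so that final identity of the proof is what you should invoke.
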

For $r'=\overline{\mathfrak{w}}^{-1}s'\overline{\mathfrak{w}}=[r, \lambda^{\mathfrak{w}}(r)^{-1}]\in \Gamma^{0'}(2) \subseteq \overline{\SL}^{\mu_8}_2(\R)$, we have:
$$ J_{1/2}(r',z)=J_{1/2}(r,z) m_{X^{\ast}}(r)^{-1}\lambda^{\mathfrak{w}}(r)=J_{1/2}(r,z)m_{X^{\ast}}(r)^{-1}\nu_{\theta^M}(r).$$
In this case, the result is the following:
\begin{proposition}\label{proathetb}
 $\theta^{M,1/2}(r z)=  \nu_{\theta^M}(r)\sqrt{cz+d}\theta^{M,1/2}(z)$, for $r=\begin{pmatrix} a& b\\ c& d\end{pmatrix}\in \Gamma^0(2)$.
\end{proposition}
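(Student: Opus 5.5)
This is the special case $\overline{\mathfrak{w}}=[u(1),1]$, $N=1$, $\chi\equiv1$ of Proposition~\ref{slashpro}, rewritten in classical language.

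\emph{Step 1 (transport of the invariance).} Apply Proposition~\ref{slashpro}(1). For every $\overline{r}\in\overline{\mathfrak{w}}^{-1}\overline{\Gamma}_{\theta}\overline{\mathfrak{w}}$ it gives
\[
[\theta^{1/2}]^{\overline{\mathfrak{w}}}(\overline{r}z)=J_{1/2}(\overline{r},z)\,\overline{\lambda}^{\overline{\mathfrak{w}}}(\overline{r})\,[\theta^{1/2}]^{\overline{\mathfrak{w}}}(z).
\]
Since $\det u(1)=1$ and $J_{1/2}([u(1),1],z)=1$, the left factor is just $\theta^{1/2}(z+1)=\theta^{M,1/2}(z)$, as computed above.

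\emph{Step 2 (choice of lift).} For $r\in\Gamma^0(2)$, write $r=u(-1)su(1)$ with $s\in\Gamma_\theta$. Take the lift $\overline{s}=[s,\lambda(s)^{-1}]$, which lies in the $\mu_8$-extension. The preceding lemma gives $\overline{\mathfrak{w}}^{-1}\overline{s}\overline{\mathfrak{w}}=[r,\lambda^{\mathfrak{w}}(r)^{-1}]=\iota(r)\in\Gamma^{0'}(2)$. By Lemma~\ref{lambdaoverlin}, extended to the $\mu_8$-cover as in the definition of $\overline{\lambda}$,
\[
\overline{\lambda}^{\overline{\mathfrak{w}}}(\iota(r))=\overline{\lambda}(\overline{s})=\lambda(s)\lambda(s)^{-1}=1.
\]
This choice is made precisely to kill the character.

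The automorphy factor is then
\[
J_{1/2}(\iota(r),z)=\lambda^{\mathfrak{w}}(r)\,J_{1/2}(r,z)\,m_{X^\ast}(r)^{-1}=\nu_{\theta^M}(r)\sqrt{cz+d},
\]
using Definition~\ref{defczd}. This is the identity displayed just before the statement. Substituting into Step 1 yields $\theta^{M,1/2}(rz)=\nu_{\theta^M}(r)\sqrt{cz+d}\,\theta^{M,1/2}(z)$.

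\emph{Step 3 (explicit multiplier).} The explicit form of $\nu_{\theta^M}$ is given by the last lemma, so no further computation is needed.

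\emph{Main obstacle.} The one point to check is that Proposition~\ref{slashpro} and Corollary~\ref{Automorphicfactor2} remain valid on the $\mu_8$-extension $\overline{\SL}^{\mu_8}_2(\R)$ and not only on $\overline{\SL}_2(\R)$. This holds because $J_{1/2}(\overline{g},z)=\epsilon^{-1}\sqrt{cz+d}$ satisfies the same cocycle relation there, and $\overline{\lambda}$ is a character on $\overline{\Gamma}^{\mu_8}_\theta$ by Lemma~\ref{mutirr12}.
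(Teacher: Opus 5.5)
Your proposal is correct and follows the paper's own argument. You twist by $\overline{\mathfrak{w}}=[u(1),1]$ via Proposition~\ref{slashpro}, pass to the lift $\iota(r)=[r,\lambda^{\mathfrak{w}}(r)^{-1}]\in\Gamma^{0'}(2)$ on which $\overline{\lambda}^{\overline{\mathfrak{w}}}$ is trivial, and read off $J_{1/2}(\iota(r),z)=\nu_{\theta^M}(r)\sqrt{cz+d}$. The paper leaves implicit that the cocycle identity for $J_{1/2}$ and the character property of $\overline{\lambda}$ carry over to the $\mu_8$-extension, and your justification of that point is sound.
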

\begin{remark}
The result is similar as Prop.\ref{thetashi}.  This means that Zagier's minus theta function is a $1/2$-modular form on  $\Gamma^0(2)$. Certainly, it is just a consequence of  Lion-Vergne \cite{LiVe}.
\end{remark}
\subsection{Example 4 to Sect. \ref{twistmathw}}\label{Examples14}
Let $\overline{\mathfrak{w}}= [u_-(-1),1]$, for $\mathfrak{w}=u_-(-1)=\begin{pmatrix} 1& 0\\ -1& 1\end{pmatrix}$. 
Let $\overline{s}=[s,\lambda({s})^{-1}]\in  \overline{\Gamma}^{\mu_8}_{\theta} \subseteq  \overline{\SL}^{\mu_8}_2(\mathbb R)$ and $r=u_-(1)s u_-(-1)$. Note:
\[ u_-(1)\Gamma_{\theta} u_-(-1)=\Gamma_0(2), \quad  u_-(1)\Gamma(2) u_-(-1)=\Gamma(2), \quad \overline{\mathfrak{w}}^{-1}= [u_-(1),1].\]
\begin{lemma}
\begin{itemize}
\item[(1)] $\nu_2(\mathfrak{w}^{-1}, g)=\nu_2(\omega , g)\nu_2(\omega,  g^{\omega^{-1}u(-1)})$, for $g\in \SL_2(\R)$.
\item[(2)] $\overline{\mathfrak{w}}^{-1}\overline{s}\overline{\mathfrak{w}}=[r, \nu_2(\mathfrak{w}^{-1},r)^{-1}\lambda^{\mathfrak{w}}(r)^{-1}]$.
\end{itemize}
\end{lemma}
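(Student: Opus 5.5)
For part (1) I would mirror the computation made for Example 3, now with $\mathfrak{w}=u_-(-1)$, so that $\mathfrak{w}^{-1}=u_-(1)$. The natural route is to write $u_-(1)=\omega^{-1}u(-1)\omega$, as in Case 6 of Section~\ref{Gamma2I1}, and then apply the composition rule of Lemma~\ref{chap3twocover} for inner automorphisms. Since $g^{\mathfrak{w}^{-1}}=\mathfrak{w}\,g\,\mathfrak{w}^{-1}$ is conjugation by $\mathfrak{w}^{-1}$, it decomposes as three successive conjugations: by $\omega$, then by $u(-1)$, then by $\omega^{-1}$. The ordering convention $g^h=h^{-1}gh$ needs care here. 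Lemma~\ref{chap3twocover} then yields a product of three $\nu_2$-factors, evaluated at $g$, at $g^{\omega}$, and at $g^{\omega u(-1)}$ (or the analogous intermediate points).

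The middle factor, coming from conjugation by the unipotent $u(\pm1)$, is trivial. This holds exactly as in part (1) of the Example 3 lemma: by Example~\ref{twocoverex} it is a product of values of $\overline{c}_{X^{\ast}}$ against an element of $P_{>0}(\R)$, and Lemma~\ref{prog0} kills all such values.

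The two remaining factors are $\nu_2$ for the inner automorphisms given by $\omega$ and $\omega^{-1}$. I would rewrite the $\omega^{-1}$-factor as an $\omega$-factor evaluated at the appropriate conjugate. Two facts make this possible: $\omega^{-1}=-\omega$, and conjugation by the central element $-I$ contributes $\nu_2(-I,\cdot)=\overline{c}_{X^{\ast}}(-I,g)\overline{c}_{X^{\ast}}(-g,-I)$. I expect this product to be $1$, which I would check directly from the Hilbert-symbol formula in Lemma~\ref{relacto}(1). Alternatively, Lemma~\ref{chap3twocover} applied to $\omega\omega^{-1}=1$ gives $\nu_2(\omega^{-1},g)=\nu_2(\omega,g^{\omega^{-1}})^{-1}=\nu_2(\omega,g^{\omega^{-1}})$, since the values lie in $\mu_2$. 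Tracking the point at which each factor is evaluated should produce exactly $\nu_2(\omega,g)\,\nu_2(\omega,g^{\omega^{-1}u(-1)})$. The main obstacle is getting the evaluation points right under the convention $g^h=h^{-1}gh$; I would verify this on a test element such as $g=u(2)$, using the explicit table in Example~\ref{nu2abcd}.

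For part (2) I would compute $\overline{\mathfrak{w}}^{-1}\overline{s}\,\overline{\mathfrak{w}}$ in $\overline{\SL}^{\mu_8}_2(\R)$ directly:
\[
\overline{\mathfrak{w}}^{-1}\overline{s}\,\overline{\mathfrak{w}}=\bigl[r,\ \overline{c}_{X^{\ast}}(\mathfrak{w}^{-1},s)\,\overline{c}_{X^{\ast}}(\mathfrak{w}^{-1}s,\mathfrak{w})\,\lambda(s)^{-1}\bigr],
\]
and identify the cocycle product with $\nu_2$ via Example~\ref{twocoverex}. That example expresses $\nu_2(h,g)$ for conjugation $g\mapsto h^{-1}gh$. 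Here $s=\mathfrak{w}r\mathfrak{w}^{-1}=r^{\mathfrak{w}^{-1}}$, and inverting the automorphism shows that the product equals $\nu_2(\mathfrak{w}^{-1},r)^{-1}$, up to the $\mu_2$-valued inversion identity. Finally, $\lambda(s)=\lambda(\mathfrak{w}r\mathfrak{w}^{-1})=\lambda^{\mathfrak{w}}(r)$ by definition, which gives the stated formula.
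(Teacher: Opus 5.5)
Your overall route is the paper's, and part (2) is fine as you describe it. In part (1) you write $u_-(1)$ as a triple conjugate through $\omega$ and $u(-1)$, expand with Lemma~\ref{chap3twocover}, and kill the unipotent factor by Example~\ref{twocoverex} and Lemma~\ref{prog0}. In part (2) you compute the conjugate in the cover and invert the automorphism. Your worry about ordering is harmless: $\omega u(-1)\omega^{-1}=\omega^{-1}u(-1)\omega=u_-(1)$, and $g^{\omega u(-1)}=g^{\omega^{-1}u(-1)}$ because $-I$ is central. The gap is the step that turns the $\omega^{-1}$-factor into an $\omega$-factor.

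\textbf{Your main justification fails.} By Lemma~\ref{relacto}(1), $\overline{c}_{X^{\ast}}(-I,g)=(-1,x(g))_{\R}$ and $\overline{c}_{X^{\ast}}(-g,-I)=(-x(g),-1)_{\R}$. Their product is $(-1,-x(g)^2)_{\R}=-1$ for every $g$, even $g=1$. The reason is that Example~\ref{twocoverex} tacitly assumes $\overline{c}_{X^{\ast}}(h^{-1},h)=1$. Conjugating $[g,1]$ by $[h,1]$ actually gives
\[
\nu_2(h,g)=\overline{c}_{X^{\ast}}(h^{-1},h)\,\overline{c}_{X^{\ast}}(h^{-1},g)\,\overline{c}_{X^{\ast}}(h^{-1}g,h).
\]
The extra factor is $1$ for $h=\omega,u(\pm1),u_-(\pm1)$, so your other uses of the example are safe. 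For $h=-I$ it equals $\overline{c}_{X^{\ast}}(-I,-I)=-1$.

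\textbf{Your fallback does not close the gap either.} The identity $\nu_2(\omega^{-1},g)=\nu_2(\omega,g^{\omega^{-1}})$ is correct. Substituting it yields $\nu_2(\omega,g)\,\nu_2(\omega,g^{u_-(1)})$. To reach the stated second argument $g^{\omega^{-1}u(-1)}$, you need $\nu_2(\omega,k^{\omega})=\nu_2(\omega,k)$ for $k=g^{\omega^{-1}u(-1)}$. That is, you need $\nu_2(-I,k)=\nu_2(\omega,k)\,\nu_2(\omega,k^{\omega})=1$, which is the same missing fact. Tracking evaluation points alone cannot supply it.

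\textbf{The fix.} What you need, and what the paper uses, is $\nu_2(-I,\cdot)\equiv 1$ with a correct justification. Conjugation by $-I$ is the identity automorphism of $\SL_2(\R)$. Its lift to $\overline{\SL}_2(\R)$ is unique, because two lifts differ by a homomorphism $\SL_2(\R)\to\mu_2$, which is trivial since $\SL_2(\R)$ is perfect. Hence the lift is the identity.

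Concretely, $\overline{c}_{X^{\ast}}(-I,g)=\overline{c}_{X^{\ast}}(g,-I)=(-1,x(g))_{\R}$, so $[-I,1]$ is central in $\overline{\SL}_2(\R)$. Lemma~\ref{chap3twocover} applied to $\omega^{-1}=(-I)\omega$ then gives $\nu_2(\omega^{-1},g)=\nu_2(-I,g)\,\nu_2(\omega,g^{-I})=\nu_2(\omega,g)$. With this, your three-factor expansion yields exactly $\nu_2(\omega,g)\,\nu_2(\omega,g^{\omega^{-1}u(-1)})$.
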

\begin{proof}
1) $\nu_2(u(-1), g)\stackrel{\textrm{ Ex.} \ref{twocoverex}}{=} \overline{c}_{X^{\ast}}(u(-1)^{-1},g)\overline{c}_{X^{\ast}}(u(-1)^{-1}g,u(-1))\stackrel{\textrm{ Lem.} \ref{prog0}}{=} 1$, 
$ \nu_2(-I, g)= 1$. Note that $u_-(1)=\omega^{-1} u(-1)\omega$. Then: 
\begin{align*}
\nu_2(u_-(1), g)&=\nu_2(\omega^{-1} u(-1)\omega, g)\\
&\stackrel{\textrm{ Lem.} \ref{chap3twocover}}{=}\nu_2(\omega^{-1} , g) \nu_2(u(-1)\omega, g^{\omega^{-1}})\\
&=\nu_2(\omega^{-1} , g)\nu_2(u(-1),  g^{\omega^{-1}}) \nu_2(\omega,  g^{\omega^{-1}u(-1)})\\
&=\nu_2(\omega^{-1} , g)\nu_2(\omega,  g^{\omega^{-1}u(-1)})\\
&=\nu_2(\omega , g)\nu_2(\omega,  g^{\omega^{-1}u(-1)}).
\end{align*}
2) $\nu_2(u_-(-1), s)\stackrel{\textrm{ Ex.} \ref{twocoverex}}{=} \overline{c}_{X^{\ast}}(u_-(1),s)\overline{c}_{X^{\ast}}(u_-(1)s,u_-(-1))$.
\[1=\nu_2(u_-(-1)u_-(1), s)=\nu_2(u_-(-1), s)\nu_2(u_-(1), s^{u_-(-1)})=\nu_2(u_-(-1), s)\nu_2(u_-(1), r).\]
\begin{align*}
 \overline{\mathfrak{w}}^{-1}\overline{s}\overline{\mathfrak{w}}
&=[u_-(1) s u_-(-1), \overline{c}_{X^{\ast}}(u_-(1), s)\overline{c}_{X^{\ast}}(u_-(1) s, u_-(-1))\lambda({s})^{-1}]\\
&=[u_-(1)s u_-(-1), \nu_2(u_-(-1),s)\lambda({s})^{-1}]\\
&=[r, \nu_2(u_-(1),r)^{-1}\lambda({s})^{-1}].
\end{align*}
\end{proof}
 \[[\theta^{1/2}]^{\overline{\mathfrak{w}}}(z)= m_{X^{\ast}}(u_-(-1))^{-1} e^{i   \pi z_g ( n+\tfrac{1}{2})^2 }=e^{-\tfrac{\pi i}{4}}e^{i   \pi z ( n+\tfrac{1}{2})^2 }.\]
\begin{definition}
\begin{itemize}
\item[(1)] $ \theta^{F,1/2}(z)\stackrel{ \textrm{ Def.}}{=}\sum_{n\in \Z}e^{i   \pi z ( n+\tfrac{1}{2})^2 }$  .
\item[(2)] $\Gamma'_{0}(2)\stackrel{ \textrm{ Def.}}{=}\{ [r, \lambda^{\mathfrak{w}}(r)^{-1} \nu_2(\mathfrak{w}^{-1},r)^{-1}] \mid r\in \Gamma_0(2)\}$.
\item[(3)] $\nu_{\theta^{F}}(r)\stackrel{ \textrm{ Def.}}{=} \lambda^{\mathfrak{w}}(r)\nu_2(\mathfrak{w}^{-1},r)$.
\end{itemize}
\end{definition}
\begin{lemma}
 There exists a group isomorphism $\iota: \Gamma_{0}(2) \simeq \Gamma'_{0}(2); r\to [r, \nu_{\theta^{F}}(r)^{-1}]$.
\end{lemma}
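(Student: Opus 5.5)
The statement to prove is that $\iota\colon\Gamma_0(2)\to\Gamma'_0(2)$, $r\mapsto[r,\nu_{\theta^F}(r)^{-1}]$, is a group isomorphism. I would follow the pattern of the analogous lemma for $\Gamma^0(2)$ in Example 3. The cleanest route avoids cocycle bookkeeping altogether: realize $\iota$ as a composite of known homomorphisms.

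\textbf{Step 1: the section on $\Gamma_\theta$.} By Lemma \ref{mutirr12}, $s\mapsto[s,\lambda(s)^{-1}]$ is a homomorphism $\Gamma_\theta\to\overline{\Gamma}^{\mu_8}_\theta$. Indeed, the product of two such elements is
\[
[s_1s_2,\ \lambda(s_1)^{-1}\lambda(s_2)^{-1}\overline{c}_{X^{\ast}}(s_1,s_2)]=[s_1s_2,\ \lambda(s_1s_2)^{-1}].
\]
Its image is a subgroup, and its projection to $\Gamma_\theta$ is bijective.

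\textbf{Step 2: conjugation.} Conjugation by $\overline{\mathfrak{w}}$, namely $\overline{s}\mapsto\overline{\mathfrak{w}}^{-1}\overline{s}\,\overline{\mathfrak{w}}$, is an automorphism of $\overline{\SL}^{\mu_8}_2(\R)$. It carries the image of Step 1 onto a subgroup lying over $u_-(1)\Gamma_\theta u_-(-1)=\Gamma_0(2)$.

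\textbf{Step 3: identify the fibre coordinate.} Part (2) of the preceding lemma gives
\[
\overline{\mathfrak{w}}^{-1}\overline{s}\,\overline{\mathfrak{w}}=[r,\ \nu_2(\mathfrak{w}^{-1},r)^{-1}\lambda^{\mathfrak{w}}(r)^{-1}],\qquad r=s^{\mathfrak{w}^{-1}},
\]
where $\lambda^{\mathfrak{w}}(r):=\lambda(s)=\lambda(\mathfrak{w}r\mathfrak{w}^{-1})$. So the conjugated subgroup is exactly $\Gamma'_0(2)$. The composite map
\[
r\mapsto s=\mathfrak{w}r\mathfrak{w}^{-1}\mapsto[s,\lambda(s)^{-1}]\mapsto\overline{\mathfrak{w}}^{-1}[s,\lambda(s)^{-1}]\overline{\mathfrak{w}}
\]
then equals $\iota$. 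It is a homomorphism because it is a composite of homomorphisms: the first map is conjugation in $\SL_2(\R)$. It is injective because its composite with the projection to $\SL_2(\R)$ is the identity on $\Gamma_0(2)$, and it is onto $\Gamma'_0(2)$ by the definition of $\Gamma'_0(2)$.

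\textbf{Main obstacle: the sign convention in Step 3.} One must check that the factor coming out of the preceding lemma matches the definition of $\nu_{\theta^F}$, i.e. that it is $\nu_2(\mathfrak{w}^{-1},r)$ rather than its inverse or $\nu_2(\mathfrak{w},s)$. Since $\nu_2$ is $\mu_2$-valued, inverses are harmless, so only the argument $r$ versus $s$ matters. That point is settled by the cocycle relation of Lemma \ref{chap3twocover} with $\nu_2(1,\cdot)=1$, as in the proof of part (2).

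\textbf{Fallback.} If that identification fails, I would instead verify directly that $\iota(r_1)\iota(r_2)=\iota(r_1r_2)$, as in Example 3. This amounts to showing
\[
\overline{c}_{X^{\ast}}(r_1,r_2)=\overline{c}_{X^{\ast}}(s_1,s_2)\,\nu_2(\mathfrak{w}^{-1},r_1)\,\nu_2(\mathfrak{w}^{-1},r_2)\,\nu_2(\mathfrak{w}^{-1},r_1r_2)^{-1},
\]
which is (\ref{chap3crao2}) applied to the automorphism $\alpha=\mathfrak{w}^{-1}$, i.e. $g\mapsto\mathfrak{w}g\mathfrak{w}^{-1}$. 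Combining this with Lemma \ref{mutirr12} for $s_1,s_2$ then closes the computation.
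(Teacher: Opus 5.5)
Your proposal is correct. Your fallback is exactly the paper's proof. The paper expands $\iota(r_1)\iota(r_2)$ and uses (\ref{chap3crao2}) for $\alpha=\mathfrak{w}^{-1}$ to replace $\overline{c}_{X^{\ast}}(r_1,r_2)$ by $\overline{c}_{X^{\ast}}(s_1,s_2)$ at the cost of the $\nu_2(\mathfrak{w}^{-1},\cdot)$ factors. It then closes with Lemma \ref{mutirr12}.

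Your main route takes a different path. You factor $\iota$ into three homomorphisms: conjugation by $\mathfrak{w}$ in $\SL_2(\R)$, the section $s\mapsto[s,\lambda(s)^{-1}]$ (a homomorphism by Lemma \ref{mutirr12}), and inner conjugation by $\overline{\mathfrak{w}}$ in $\overline{\SL}^{\mu_8}_2(\R)$. Part (2) of the preceding lemma then identifies the fibre coordinate. This is the same content reorganized. The relation (\ref{chap3crao2}) is precisely what makes conjugation lift to an automorphism of the cover, and part (2) is that lift computed explicitly. You correctly resolve the $r$-versus-$s$ argument issue via Lemma \ref{chap3twocover}, as the paper does there.

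Your packaging has two advantages. The homomorphism property comes for free, since inner conjugation needs no cocycle bookkeeping. It also explains why $\Gamma'_0(2)$ is a subgroup at all, namely as the conjugate of the image of a section. You also check bijectivity explicitly, which the paper leaves implicit.

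The paper's direct computation has a different advantage. It does not depend on part (2) of the preceding lemma. It uses only the defining relation for $\nu_2$ together with the definition of $\Gamma'_0(2)$.
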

\begin{proof}
 \begin{align*}
 \iota(r_1) \iota(r_2)&= [r_1, \nu_{\theta^{F}}(r_1)^{-1}] \cdot [r_2, \nu_{\theta^{F}}(r_2)^{-1}]\\
 &=[r_1r_2, \lambda(\mathfrak{w}r_1\mathfrak{w}^{-1})^{-1} \lambda(\mathfrak{w}r_2\mathfrak{w}^{-1})^{-1} \nu_2(\mathfrak{w}^{-1},r_1)^{-1}\nu_2(\mathfrak{w}^{-1},r_2)^{-1}\overline{c}_{X^{\ast}}(r_1,r_2)]\\
 &=[r_1r_2, \lambda(\mathfrak{w}r_1\mathfrak{w}^{-1})^{-1} \lambda(\mathfrak{w}r_2\mathfrak{w}^{-1})^{-1} \overline{c}_{X^{\ast}}(\mathfrak{w}r_1\mathfrak{w}^{-1},\mathfrak{w}r_2\mathfrak{w}^{-1})\nu_2(\mathfrak{w}^{-1},r_1r_2)^{-1}]\\
 &\stackrel{\textrm{ Lem. }\ref{mutirr12}}{=}[r_1r_2, \lambda(\mathfrak{w}r_1r_2\mathfrak{w}^{-1})^{-1}\nu_2(\mathfrak{w}^{-1},r_1r_2)^{-1}]\\
 &=[r_1r_2, \nu_{\theta^{F}}(r_1r_2)^{-1}]=\iota(r_1r_2) .
 \end{align*}
\end{proof}
Note that 
\[\Gamma(2)/\{\pm I\} \simeq \langle \begin{pmatrix} 1& 0\\ 2& 1\end{pmatrix}, \begin{pmatrix} 1& 2\\ 0& 1\end{pmatrix} \rangle=\{ g=\begin{pmatrix}a& b\\ c& d\end{pmatrix}\in \Gamma(2)\mid  a\equiv d\equiv 1 (\bmod 4)\}.\]
\begin{lemma}
Let $\delta_F: \{ g=\begin{pmatrix}a& b\\ c& d\end{pmatrix}\in \Gamma(2)\mid  a\equiv d\equiv 1 (\bmod 4)\} \to \mu_4; g\to i^{b/2}$. Then $\delta_F$ is a character. 
\end{lemma}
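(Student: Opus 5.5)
This is the analogue of the lemma for $\delta_M$, and I would follow that computation with the roles of $b$ and $c$ exchanged. Write $g_j=\begin{pmatrix}a_j&b_j\\ c_j&d_j\end{pmatrix}$, $j=1,2$, in the set $\Gamma(2)_1=\{g\in\Gamma(2)\mid a\equiv d\equiv 1\ (\bmod 4)\}$. That set is closed under multiplication: the diagonal entries of $g_1g_2$ are $a_1a_2+b_1c_2$ and $c_1b_2+d_1d_2$, and products of two even numbers are $\equiv 0\pmod 4$. It is also closed under inverses, since $g^{-1}=\begin{pmatrix}d&-b\\-c&a\end{pmatrix}$. So $\Gamma(2)_1$ is a subgroup, mapping isomorphically onto $\Gamma(2)/\{\pm I\}$ as recorded just above. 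The map $\delta_F$ is well defined because $b$ is even, so $i^{b/2}\in\mu_4$.

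For multiplicativity, the upper right entry of $g_1g_2$ is $a_1b_2+b_1d_2$. Put $a_1=1+4k$ and $d_2=1+4l$. Then
\[
\tfrac{a_1b_2+b_1d_2}{2}=\tfrac{b_2}{2}+\tfrac{b_1}{2}+2kb_2+2lb_1 .
\]
Since $b_1,b_2$ are even, $2kb_2+2lb_1\equiv 0\pmod 4$. Hence
\[
i^{(a_1b_2+b_1d_2)/2}=i^{b_1/2}\,i^{b_2/2},
\]
that is, $\delta_F(g_1g_2)=\delta_F(g_1)\delta_F(g_2)$. Together with $\delta_F(I)=1$, this shows $\delta_F$ is a character.

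There is no real obstacle here. The only point needing care is to use the congruences $a_1\equiv d_2\equiv1\pmod 4$ rather than just $\pmod 2$. As in the $\delta_M$ case, one can then extend $\delta_F$ to $\Gamma(2)$ by setting $\delta_F(-I)=1$, which gives $\delta_F(u(2))=i$ and $\delta_F(u_-(2))=1$.
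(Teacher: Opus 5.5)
Your proposal is correct and is essentially the paper's argument: write $a_1=1+4k$, $d_2=1+4l$, expand the upper-right entry $a_1b_2+b_1d_2$ of $g_1g_2$, and use that $b_1,b_2$ are even so that $2kb_2+2lb_1\equiv 0\pmod 4$. Your extra check that the domain is closed under products and inverses (so it is a group on which "character" makes sense) is a small detail the paper leaves implicit.
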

\begin{proof}
$$g_1g_2=\begin{pmatrix} a_1 & b_1 \\ c_1 & d_1 \end{pmatrix}
\begin{pmatrix} a_2 & b_2 \\ c_2 & d_2 \end{pmatrix}
= \begin{pmatrix}
a_1 a_2 + b_1 c_2 & a_1 b_2 + b_1 d_2 \\
c_1 a_2 + d_1 c_2 & c_1 b_2 + d_1 d_2
\end{pmatrix}, a_1=1+4k_1, d_2=1+4l_2, 2\mid b_1, 2\mid b_2;$$
$$\delta_F(g_1g_2)=i^{\tfrac{a_1 b_2 + b_1 d_2}{2}}=i^{\tfrac{ b_2 + b_1 +4k_1b_2+4l_2b_1}{2}}=i^{b_1/2+b_2/2}=\delta_F(g_1)\delta_F(g_2).$$
\end{proof}
So $\delta_F$ is also a character of $\Gamma(2)$ such that $\delta_F(-I)=1=\delta_F(u_-(2))$ and $\delta_F(u(2))=i$. More precisely, 
\[\delta_F(\begin{pmatrix}a& b\\ c& d\end{pmatrix})=i^{b/2} (\tfrac{-1}{d})^{b/2}.\]
\begin{lemma}\label{deltaf}
Let $r=\begin{pmatrix} a& b\\ c& d\end{pmatrix} \in \Gamma(2)$. Then $\lambda^{\mathfrak{w}}(r )=\lambda(r)v_2(\mathfrak{w}^{-1}, r)^{-1} \delta_F(r)$.
\end{lemma}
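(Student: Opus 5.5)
We follow the proof of the analogous lemma for $\delta_M$ in Example 3. The aim is to show that
\[
r\longmapsto \lambda^{\mathfrak{w}}(r)\,\lambda(r)^{-1}\,v_2(\mathfrak{w}^{-1},r)
\]
is a character $\delta'$ of $\Gamma(2)$, and then to identify $\delta'$ with $\delta_F$ by checking generators. Here $\lambda^{\mathfrak{w}}(r)=\lambda(r^{\mathfrak{w}^{-1}})=\lambda(\mathfrak{w}r\mathfrak{w}^{-1})$, where $\mathfrak{w}=u_-(-1)$ conjugates $\Gamma(2)$ to itself.

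\textbf{Step 1: $\delta'$ is a character.} Apply (\ref{chap3crao2}) with the automorphism given by conjugation by $\mathfrak{w}^{-1}$. This gives
\[
\overline{c}_{X^{\ast}}(r_1^{\mathfrak{w}^{-1}},r_2^{\mathfrak{w}^{-1}})=\overline{c}_{X^{\ast}}(r_1,r_2)\,v_2(\mathfrak{w}^{-1},r_1)^{-1}v_2(\mathfrak{w}^{-1},r_2)^{-1}v_2(\mathfrak{w}^{-1},r_1r_2).
\]
Lemma \ref{mutirr12} expresses both $\overline{c}_{X^{\ast}}(r_1,r_2)$ and $\overline{c}_{X^{\ast}}(r_1^{\mathfrak{w}^{-1}},r_2^{\mathfrak{w}^{-1}})$ as coboundaries of $\lambda$, taken at $r_i$ and at $r_i^{\mathfrak{w}^{-1}}$ respectively. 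Comparing the two expressions, the function $r\mapsto \lambda(r^{\mathfrak{w}^{-1}})\lambda(r)^{-1}v_2(\mathfrak{w}^{-1},r)$ is multiplicative on $\Gamma(2)$. Call it $\delta'$. In contrast to Example 3, $v_2(\mathfrak{w}^{-1},r)$ is no longer identically $1$, which is why it appears in the statement.

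\textbf{Step 2: compare on generators.} Since $\Gamma(2)$ is generated by $-I$, $u(2)$ and $u_-(2)$, and $\delta_F$ is a character with $\delta_F(-I)=\delta_F(u_-(2))=1$ and $\delta_F(u(2))=i$, it suffices to compute $\delta'$ on these three elements.

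For $r=-I$: $r$ is central, so $r^{\mathfrak{w}^{-1}}=r$. By the preceding lemma, $v_2(\mathfrak{w}^{-1},-I)=\nu_2(\omega,-I)^2=1$ (a square of a sign). Hence $\delta'(-I)=1$.

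For $r=u_-(2)$: $u_-(2)$ commutes with $u_-(-1)$, so $r^{\mathfrak{w}^{-1}}=r$. Again $v_2=\nu_2(\omega,r)\nu_2(\omega,r')$ with explicit $r'$, and we evaluate it with Example \ref{nu2abcd}; we expect the value $1$, giving $\delta'(u_-(2))=1$.

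For $r=u(2)$: compute $r^{\mathfrak{w}^{-1}}=u_-(-1)u(2)u_-(1)=\begin{pmatrix}3&2\\-2&-1\end{pmatrix}$. Lemma \ref{gammar} gives
\[
\lambda\!\begin{pmatrix}3&2\\-2&-1\end{pmatrix}=\Bigl(\frac{-4}{-1}\Bigr)\epsilon_{-1}^{-1}=-i\cdot(\pm1),
\]
while $\lambda(u(2))=1$. The factor $v_2(\mathfrak{w}^{-1},u(2))$ again comes from the formula of the preceding lemma via Example \ref{nu2abcd}. The product of these three factors must then equal $i=\delta_F(u(2))$.

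\textbf{Main obstacle.} The delicate point is the sign bookkeeping: the Kronecker symbol with negative lower entry (under the convention $\left(\frac{0}{-1}\right)=-1$ and the $\Arg$ conventions), together with the Hilbert symbols in $\nu_2(\omega,\cdot)$ evaluated at the conjugates $g^{\omega^{-1}u(-1)}$. If the naive value comes out as $-i$ instead of $i$, the discrepancy should be exactly the $v_2$ factor. So we will compute $v_2(\mathfrak{w}^{-1},u(2))$ directly from Example \ref{twocoverex},
\[
v_2(\mathfrak{w}^{-1},g)=\overline{c}_{X^{\ast}}(\mathfrak{w},g\mathfrak{w}^{-1})\,\overline{c}_{X^{\ast}}(g,\mathfrak{w}^{-1}),
\]
using Lemma \ref{relacto}(1), as an independent cross-check.
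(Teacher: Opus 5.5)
Your strategy is the paper's: show that $\delta'(r)=\lambda(r^{\mathfrak{w}^{-1}})\lambda(r)^{-1}v_2(\mathfrak{w}^{-1},r)$ is a character of $\Gamma(2)$ using (\ref{chap3crao2}) and Lemma \ref{mutirr12}, then compare it with $\delta_F$ on $-I$, $u(2)$, $u_-(2)$. Step 1 and the case $-I$ are fine.

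\textbf{The gap.} The only non-formal content of the lemma is the values of $\delta'$ at $u(2)$ and $u_-(2)$, and you never compute them.
\begin{itemize}
\item For $u_-(2)$ you write ``we expect the value $1$''.
\item For $u(2)$ you leave the Kronecker sign as $\pm1$ and then assert the product ``must'' equal $i$. That assumes what is to be proved.
\item Your fallback would also fail. A value $-i$ cannot be absorbed by the $v_2$ factor, because $v_2(\mathfrak{w}^{-1},u(2))=1$.
\end{itemize}

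\textbf{How to close it.}
\begin{itemize}
\item \emph{Value of $\lambda$ at $u(2)^{\mathfrak{w}^{-1}}$.} With the convention $\bigl(\tfrac{a}{-1}\bigr)=\sgn a$ we get $\bigl(\tfrac{-4}{-1}\bigr)=-1$. Since $-1\equiv 3 \pmod 4$, we have $\epsilon_{-1}^{-1}=-i$. Lemma \ref{gammar} then gives $\lambda\begin{pmatrix}3&2\\-2&-1\end{pmatrix}=(-1)(-i)=i$.
\item \emph{Direct cross-check of that value.} Since $c=-2<0$, $m_{X^{\ast}}=e^{\pi i/4}$. Also $\widetilde{\beta}=G(-1,-2)=2^{-1/2}(1-i)=e^{-\pi i/4}$. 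So $\lambda=e^{\pi i/4}e^{\pi i/4}=i$.
\item \emph{The $v_2$ factor at $u(2)$.} We have $u(2)^{\omega^{-1}u(-1)}=\begin{pmatrix}-1&2\\-2&3\end{pmatrix}$. Example \ref{nu2abcd} gives $v_2(\mathfrak{w}^{-1},u(2))=(1,2)_{\R}(3,4)_{\R}=1$. Since $\lambda(u(2))=1$, this yields $\delta'(u(2))=i=\delta_F(u(2))$.
\item \emph{The case $u_-(2)$.} Here $u_-(2)^{\mathfrak{w}^{-1}}=u_-(2)$ and $u_-(2)^{\omega^{-1}u(-1)}=u(-2)$. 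So $v_2(\mathfrak{w}^{-1},u_-(2))=(1,2)_{\R}(1,-2)_{\R}=1$, giving $\delta'(u_-(2))=1=\delta_F(u_-(2))$.
\end{itemize}
Both $v_2$ values agree with the independent computation via Example \ref{twocoverex} and Lemma \ref{relacto}(1) that you proposed as a cross-check.
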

\begin{proof}
\[v_2(\mathfrak{w}^{-1}, r)\stackrel{ \textrm{ Lem.} \ref{twocoverex}}{=}\overline{c}_{X^{\ast}}(\mathfrak{w}, r\mathfrak{w}^{-1})\overline{c}_{X^{\ast}}(r,\mathfrak{w}^{-1}).\]
\begin{align*}
\overline{c}_{X^{\ast}}(r_1^{\mathfrak{w}^{-1}}, r_2^{\mathfrak{w}^{-1}}) &\stackrel{ (\ref{chap3crao2})}{=}
 \overline{c}_{X^{\ast}}(r_1, r_2) v_2(\mathfrak{w}^{-1}, r_1)^{-1}v_2(\mathfrak{w}^{-1}, r_2)^{-1}v_2(\mathfrak{w}^{-1}, r_1r_2)\\
 &\stackrel{\textrm{Lem. }\ref{mutirr12}}{=} \lambda(r_1)\lambda(r_2)\lambda(r_1r_2)^{-1}v_2(\mathfrak{w}^{-1}, r_1)^{-1}v_2(\mathfrak{w}^{-1}, r_2)^{-1}v_2(\mathfrak{w}^{-1}, r_1r_2)\\
 &\stackrel{\textrm{Lem. }\ref{mutirr12}}{=}\lambda(r_1^{\mathfrak{w}^{-1}})\lambda(r_2^{\mathfrak{w}^{-1}})\lambda(r_1^{\mathfrak{w}^{-1}}r_2^{\mathfrak{w}^{-1}})^{-1}.
 \end{align*}
 So $\lambda(r^{\mathfrak{w}^{-1}})=\lambda(r)v_2(\mathfrak{w}^{-1}, r)^{-1} \delta'(r)$, for some character $\delta'$ of $\Gamma(2)$. Note that $\Gamma(2)$ is generated by $u(2)$, $u_-(2)$ and $-I$. 
 \begin{itemize}
\item[(I)] If $r=-I$, $\lambda(r^{\mathfrak{w}^{-1}})=\lambda(r) $ and 
\[v_2(\mathfrak{w}^{-1}, r)^{-1}\stackrel{ \textrm{ Ex.} \ref{twocoverex}}{=}\overline{c}_{X^{\ast}}(\mathfrak{w}, r\mathfrak{w}^{-1})^{-1}\overline{c}_{X^{\ast}}(r,\mathfrak{w}^{-1})^{-1}\stackrel{ \textrm{ Lem.} \ref{relacto}(1)}{=}1.\] So $\delta'(r)=1=\delta_F(r)$.
 \item[(II)]  If $r=u(2)$, $r^{\mathfrak{w}^{-1}}=u_-(-1)u(2) u_-(1)=\begin{pmatrix} 3& 2\\-2& -1\end{pmatrix}$, then: 
 $$\lambda(r^{\mathfrak{w}^{-1}})=\lambda(\begin{pmatrix} 3& 2\\-2& -1\end{pmatrix})\stackrel{ \textrm{ Lem.} \ref{gammar}}{=}(\frac{-4}{-1}) i^{-1}=i, \quad 
 \lambda(r)\stackrel{ \textrm{ Lem.} \ref{gammar}}{=}1,$$ 
 $$ r^{\omega^{-1}u(-1)}=\begin{pmatrix} 1& 1\\0& 1\end{pmatrix}\begin{pmatrix} 0& -1\\1& 0\end{pmatrix}\begin{pmatrix} 1& 2\\0& 1\end{pmatrix}\begin{pmatrix} 0& 1\\-1& 0\end{pmatrix}\begin{pmatrix} 1& -1\\0& 1\end{pmatrix}=\begin{pmatrix} -1& 2\\-2& 3\end{pmatrix},$$ 
  $$v_2(\mathfrak{w}^{-1}, r)^{-1}=\nu_2(\omega , r)^{-1}\nu_2(\omega,  r^{\omega^{-1}u(-1)})^{-1}\stackrel{ \textrm{ Lem.} \ref{nu2abcd}}{=}
  \nu_2(\omega,  \begin{pmatrix} -1& 2\\-2& 3\end{pmatrix})^{-1} \stackrel{ \textrm{ Lem.} \ref{nu2abcd}}{=} 1.$$ 
  So $\delta'(r)=i=\delta_F(r)$.
 \item[(III)]  If $r=u_-(2)$, $r^{\mathfrak{w}^{-1}}=u_-(-1)u_-(2) u_-(1)=r$, then  $\lambda(r^{\mathfrak{w}^{-1}})=\lambda(r) $, 
 $$r^{\omega^{-1}u(-1)}=\begin{pmatrix} 1& 1\\0& 1\end{pmatrix}\begin{pmatrix} 0& -1\\1& 0\end{pmatrix}\begin{pmatrix} 1&0\\2& 1\end{pmatrix}\begin{pmatrix} 0& 1\\-1& 0\end{pmatrix}\begin{pmatrix} 1& -1\\0& 1\end{pmatrix}=\begin{pmatrix} 1& -2\\0& 1\end{pmatrix},$$
 $$v_2(\mathfrak{w}^{-1}, r)^{-1}=\nu_2(\omega , r)^{-1}\nu_2(\omega,  r^{\omega^{-1}u(-1)})^{-1}\stackrel{ \textrm{ Lem.} \ref{nu2abcd}}{=}
  \nu_2(\omega,  \begin{pmatrix} 1& -2\\0& 1\end{pmatrix})^{-1} \stackrel{ \textrm{ Lem.} \ref{nu2abcd}}{=} 1.$$ 
  So $\delta'(r)=1=\delta_F(r)$.
  \end{itemize}
\end{proof}
\begin{lemma}
$\lambda^{\mathfrak{w}}(r)^{-1}\nu_2(\mathfrak{w}^{-1},r)^{-1}=\left\{ \begin{array}{rl} \lambda(r)^{-1} \delta_F(r)^{-1},  & \textrm{ if } r\in \Gamma(2),\\e^{-\tfrac{\pi i}{4}} \lambda(r_1)^{-1} \delta_F(r_1)^{-1}, &  \textrm{ if } r=r_1 u(1) \textrm{ with } r_1\in \Gamma(2).\end{array} \right.$ 
\end{lemma}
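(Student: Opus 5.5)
The plan is to split along the coset decomposition $\Gamma_0(2)=\Gamma(2)\sqcup\Gamma(2)u(1)$, which is the image under $\mathfrak{w}^{-1}=u_-(1)$ of $\Gamma_{\theta}=\Gamma(2)\sqcup\Gamma(2)\omega$. Indeed $u_-(1)\,\omega\, u_-(-1)=u(1)$ up to a matrix of $\Gamma(2)$, so the second coset is represented by $u(1)$.

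For $r\in\Gamma(2)$ the claim is immediate from Lemma \ref{deltaf}. That lemma gives $\lambda^{\mathfrak{w}}(r)=\lambda(r)\nu_2(\mathfrak{w}^{-1},r)^{-1}\delta_F(r)$. Multiplying by $\nu_2(\mathfrak{w}^{-1},r)$ and inverting yields $\lambda^{\mathfrak{w}}(r)^{-1}\nu_2(\mathfrak{w}^{-1},r)^{-1}=\lambda(r)^{-1}\delta_F(r)^{-1}$.

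For $r=r_1u(1)$ with $r_1\in\Gamma(2)$, I would use that $\nu_{\theta^F}(r)=\lambda^{\mathfrak{w}}(r)\nu_2(\mathfrak{w}^{-1},r)$ is multiplicative up to $\overline{c}_{X^{\ast}}$. This is exactly the computation in the proof that $\iota:\Gamma_0(2)\to\Gamma'_0(2)$ is a homomorphism, and it gives
\[
\nu_{\theta^F}(r_1u(1))^{-1}=\nu_{\theta^F}(r_1)^{-1}\,\nu_{\theta^F}(u(1))^{-1}\,\overline{c}_{X^{\ast}}(r_1,u(1)).
\]
Two of the three factors are easy. First, $\overline{c}_{X^{\ast}}(r_1,u(1))=1$ by Lemma \ref{prog0}, since $u(1)\in P_{>0}(\R)$. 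Second, $\nu_{\theta^F}(r_1)^{-1}=\lambda(r_1)^{-1}\delta_F(r_1)^{-1}$ by the first case.

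It remains to show $\nu_{\theta^F}(u(1))=e^{\pi i/4}$. Compute $\mathfrak{w}u(1)\mathfrak{w}^{-1}=u_-(-1)u(1)u_-(1)=\begin{pmatrix}2&1\\-1&0\end{pmatrix}$. Evaluate $\lambda$ on this matrix via (\ref{lambda}), i.e. $\lambda=m_{X^{\ast}}\widetilde{\beta}^{-1}$. Here $c=-1$ and $d=0$, so $G(0,-1)=1$ and $m_{X^{\ast}}=e^{\pi i/4}$, which gives $\lambda=e^{\pi i/4}$; alternatively use Lemma \ref{gammar} after writing the matrix as an element of $\Gamma(2)\omega$. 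Next compute $\nu_2(\mathfrak{w}^{-1},u(1))=\nu_2(\omega,u(1))\,\nu_2(\omega,u(1)^{\omega^{-1}u(-1)})$ using part (1) of the preceding lemma together with Example \ref{nu2abcd}. The case $d\neq0$, $c=0$, $b\neq0$ gives $(1,1)_\R=1$ for the first factor, and the second factor is evaluated similarly on the explicit conjugate. Combining these gives $\nu_{\theta^F}(u(1))=e^{\pi i/4}$ and hence the claimed factor $e^{-\pi i/4}$.

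The main obstacle is keeping the conventions straight: conjugation direction ($\lambda^{\mathfrak{w}}(r)=\lambda(\mathfrak{w}r\mathfrak{w}^{-1})$), the sign choices in the Hilbert symbols, and the factor $\nu_2$ at $u(1)$. If the direct evaluation gives a discrepancy, I would instead compute $\lambda^{\mathfrak{w}}(u(1))$ through $\overline{\mathfrak{w}}^{-1}[\omega,1]\overline{\mathfrak{w}}$ using $\overline{\lambda}([\omega,1])=e^{-i\pi/4}$ and track the cocycle terms explicitly.
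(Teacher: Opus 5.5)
Your proposal is correct and follows the paper's proof. You treat the $\Gamma(2)$ case by Lemma \ref{deltaf}, and the coset $\Gamma(2)u(1)$ by the cocycle-twisted multiplicativity, using $\overline{c}_{X^{\ast}}(r_1,u(1))=1$ and $\lambda\bigl(\begin{pmatrix}2&1\\-1&0\end{pmatrix}\bigr)=e^{\pi i/4}$, exactly as the paper does. The only difference is that the paper gets $\nu_2(\mathfrak{w}^{-1},u(1))=1$ directly from Example \ref{twocoverex} as $\overline{c}_{X^{\ast}}\bigl(\begin{pmatrix}1&0\\-1&1\end{pmatrix},\begin{pmatrix}2&1\\1&1\end{pmatrix}\bigr)=1$; your factorization also works, since the second conjugate is $\begin{pmatrix}0&1\\-1&2\end{pmatrix}$ and Example \ref{nu2abcd} gives $(2,1)_{\R}=1$.
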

\begin{proof}
The first statement comes from the above lemma. If  $r=r_1 u(1)$, then:
\[v_2(\mathfrak{w}^{-1}, u(1))\stackrel{ \textrm{ Ex.} \ref{twocoverex}}{=}\overline{c}_{X^{\ast}}(\mathfrak{w}, u(1)\mathfrak{w}^{-1})\overline{c}_{X^{\ast}}(u(1),\mathfrak{w}^{-1})=\overline{c}_{X^{\ast}}(\begin{pmatrix} 1& 0\\-1& 1\end{pmatrix},\begin{pmatrix} 2& 1\\1& 1\end{pmatrix})=1;\]
\[\lambda(\mathfrak{w}u(1)\mathfrak{w}^{-1} )=\lambda(\begin{pmatrix} 2&1\\-1 & 0\end{pmatrix})\stackrel{ \textrm{(\ref{lambda})}}{=}m_{X^{\ast}}(\begin{pmatrix} 2&1\\-1 & 0\end{pmatrix})\widetilde{\beta}^{-1}(\begin{pmatrix} 2&1\\-1 & 0\end{pmatrix})=e^{\tfrac{\pi i}{4}};\]
\begin{align*}
&\lambda^{\mathfrak{w}}(r_1u(1))^{-1}\nu_2(\mathfrak{w}^{-1},r_1u(1))^{-1}\\
&=\overline{c}_{X^{\ast}}(r_1, u(1))\lambda^{\mathfrak{w}}(r_1)^{-1}\nu_2(\mathfrak{w}^{-1},r_1)^{-1}\lambda^{\mathfrak{w}}(u(1))^{-1}\nu_2(\mathfrak{w}^{-1},u(1))^{-1}\\
&=\lambda^{\mathfrak{w}}(r_1)^{-1}\nu_2(\mathfrak{w}^{-1},r_1)^{-1}\lambda^{\mathfrak{w}}(u(1))^{-1}\nu_2(\mathfrak{w}^{-1},u(1))^{-1}\\
&=\lambda(r_1)^{-1} \delta_F(r_1)^{-1}e^{-\tfrac{\pi i}{4}}.
\end{align*}
\end{proof}
As a consequence, we have:
\begin{lemma}
$\nu_{\theta^{F}}(r)=\left\{ \begin{array}{ccl} \left(\frac{2c}{d}\right)\epsilon^{-1}_d (\tfrac{-1}{d})^{b/2}i^{b/2}, & & r=\begin{pmatrix} a& b\\ c& d\end{pmatrix}\in \Gamma(2),\\ \left(\frac{2c}{d}\right)\epsilon^{-1}_d i^{b/2}(\tfrac{-1}{d})^{b/2} e^{\tfrac{i \pi}{4}}, & & r=\begin{pmatrix} a& b\\ c& d\end{pmatrix} \begin{pmatrix} 1& 1\\ 0& 1\end{pmatrix}\in  \Gamma(2)u(1).\end{array}\right.$
\end{lemma}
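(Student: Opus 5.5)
The plan is to read this off from the two preceding lemmas of this subsection. By definition $\nu_{\theta^F}(r)=\lambda^{\mathfrak{w}}(r)\nu_2(\mathfrak{w}^{-1},r)$, which is the inverse of the quantity $\lambda^{\mathfrak{w}}(r)^{-1}\nu_2(\mathfrak{w}^{-1},r)^{-1}$ computed in the last lemma. Since $u_-(1)\Gamma_\theta u_-(-1)=\Gamma_0(2)$ and $\Gamma_\theta=\Gamma(2)\sqcup\Gamma(2)\omega$, we get $\Gamma_0(2)=\Gamma(2)\sqcup\Gamma(2)u(1)$. So it suffices to invert the two cases of that lemma and then make $\lambda(r_1)$ and $\delta_F(r_1)$ explicit.

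For $r=\begin{pmatrix} a&b\\ c&d\end{pmatrix}\in\Gamma(2)$, the previous lemma gives
\[\nu_{\theta^F}(r)=\lambda(r)\delta_F(r).\]
Lemma~\ref{gammar} gives $\lambda(r)=\overline{\lambda}([r,1])=\left(\frac{2c}{d}\right)\epsilon_d^{-1}$. The explicit formula for $\delta_F$ recorded after its definition gives $\delta_F(r)=i^{b/2}\left(\frac{-1}{d}\right)^{b/2}$. That formula extends the character from the index-two subgroup with $a\equiv d\equiv 1 \pmod 4$ to all of $\Gamma(2)$, using $\delta_F(-I)=1$. I would check that the factor $\left(\frac{-1}{d}\right)^{b/2}$ is exactly what makes $\delta_F(-r)=\delta_F(r)$: for $-r$, $b/2$ changes sign and $\left(\frac{-1}{-d}\right)=-\left(\frac{-1}{d}\right)$. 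Multiplying the two factors gives the first line.

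For $r=r_1u(1)$ with $r_1=\begin{pmatrix} a&b\\ c&d\end{pmatrix}\in\Gamma(2)$, the previous lemma gives
\[\lambda^{\mathfrak{w}}(r)^{-1}\nu_2(\mathfrak{w}^{-1},r)^{-1}=e^{-\pi i/4}\lambda(r_1)^{-1}\delta_F(r_1)^{-1}.\]
Inverting, and substituting the same expressions for $\lambda(r_1)$ and $\delta_F(r_1)$, yields the second line with the extra factor $e^{i\pi/4}$. Here the entries $b,c,d$ refer to $r_1$, matching the parametrization $r=\begin{pmatrix} a&b\\ c&d\end{pmatrix}u(1)$ in the statement.

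The main obstacle is hidden in the second case of the previous lemma. There the cocycle term $\overline{c}_{X^{\ast}}(r_1,u(1))$ was dropped, and with it the analogue of the factor $(-1,d-c)_{\R}(-c,d-c)_{\R}$ that appeared in the minus-theta case. I would verify this using Lemma~\ref{relacto}(1) with $x(u(1))=1$ and $x(r_1u(1))=x(r_1)$ when $c\ne0$, which gives $\overline{c}_{X^{\ast}}(r_1,u(1))=(x(r_1),1)_\R(-x(r_1),x(r_1))_\R=1$. The case $c=0$ is checked the same way. Once that is confirmed, the statement is immediate.
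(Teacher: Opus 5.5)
Your proposal is correct and follows the paper's own route. The paper states this lemma as a direct consequence of the preceding lemma: invert $\lambda^{\mathfrak{w}}(r)^{-1}\nu_2(\mathfrak{w}^{-1},r)^{-1}$, then substitute $\lambda(r)=\left(\frac{2c}{d}\right)\epsilon_d^{-1}$ from Lemma~\ref{gammar} and the explicit formula $\delta_F(r)=i^{b/2}\left(\frac{-1}{d}\right)^{b/2}$. Your check that $\overline{c}_{X^{\ast}}(r_1,u(1))=1$ is exactly the step the paper uses implicitly when it drops that term. It also follows at once from the earlier lemma giving $\overline{c}_{X^{\ast}}(g,p)=1$ for $p\in P_{>0}(\R)$, since $u(1)\in P_{>0}(\R)$.
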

For $r'=\overline{\mathfrak{w}}^{-1}s'\overline{\mathfrak{w}}=[r, \nu_2(\mathfrak{w}^{-1},r)^{-1}\lambda^{\mathfrak{w}}(r)^{-1}]\in \Gamma'_{0}(2) \subseteq \overline{\SL}^{\mu_8}_2(\R)$, we have:
$$ J_{1/2}(r',z)=J_{1/2}(r,z) m_{X^{\ast}}(r)^{-1} \nu_2(\mathfrak{w}^{-1},r)\lambda^{\mathfrak{w}}(r)=J_{1/2}(r,z)m_{X^{\ast}}(r)^{-1}\nu_{\theta^{F}}(r).$$
In this case, the result is the following:
\begin{proposition}\label{proathetc}
 $\theta^{F,1/2}(r z)=  \nu_{\theta^{F}}(r)\sqrt{cz+d}\theta^{F,1/2}(z)$, for $r\in \Gamma_0(2)$.
\end{proposition}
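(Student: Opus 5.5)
The plan is to derive Proposition \ref{proathetc} from Proposition \ref{slashpro}, applied with $N=1$, $\chi\equiv 1$, $t=1$ and the twisting element $\overline{\mathfrak{w}}=[u_-(-1),1]$. This is the same template used for Propositions \ref{thetashi} and \ref{proathetb}.

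First we identify the slash. By Example 2 (the computation through the Case 8 formula of Section \ref{Gamma2I2}), we have
\[
[\theta^{1/2}]^{\overline{\mathfrak{w}}}(z)=e^{-\tfrac{\pi i}{4}}\theta^{F,1/2}(z).
\]
The factor $\det(\mathfrak{w})^{1/4}$ equals $1$ because $\det u_-(-1)=1$. Since the constant $e^{-\pi i/4}$ cancels from both sides of any transformation law, it suffices to prove the law for $[\theta^{1/2}]^{\overline{\mathfrak{w}}}$.

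Next, take $r\in\Gamma_0(2)=u_-(1)\Gamma_\theta u_-(-1)$ and write $r=\mathfrak{w}^{-1}s\mathfrak{w}$ with $s\in\Gamma_\theta$. Work in $\overline{\SL}^{\mu_8}_2(\R)$ and use the lift $\overline{s}=[s,\lambda(s)^{-1}]$, on which $\overline{\lambda}(\overline{s})=1$. By the preceding lemma, $\overline{\mathfrak{w}}^{-1}\overline{s}\,\overline{\mathfrak{w}}=[r,\nu_{\theta^F}(r)^{-1}]=:r'$. Proposition \ref{slashpro} (part (1), whose proof uses only the cocycle relation of Corollary \ref{Automorphicfactor2}, so it works verbatim on the $\mu_8$-cover) then gives
\[
[\theta^{1/2}]^{\overline{\mathfrak{w}}}(r'z)=J_{1/2}(r',z)\,\overline{\lambda}(\overline{s})\,[\theta^{1/2}]^{\overline{\mathfrak{w}}}(z)=J_{1/2}(r',z)\,[\theta^{1/2}]^{\overline{\mathfrak{w}}}(z).
\]
From the definition of $J_{1/2}$ on the cover and Definition \ref{defczd},
\[
J_{1/2}(r',z)=\nu_{\theta^F}(r)\,J_{1/2}(r,z)\,m_{X^*}(r)^{-1}=\nu_{\theta^F}(r)\sqrt{cz+d},
\]
which is the displayed identity just before the statement. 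Since $r'$ acts on $\mathbb H$ through $r$, this yields the claim for $[\theta^{1/2}]^{\overline{\mathfrak{w}}}$, and hence for $\theta^{F,1/2}$.

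The main obstacle is the bookkeeping of the central element in $\overline{\mathfrak{w}}^{-1}\overline{s}\,\overline{\mathfrak{w}}$, that is, the factor $\nu_2(\mathfrak{w}^{-1},r)$. This factor is absorbed into $\nu_{\theta^F}$ by definition, and the previous lemma is exactly what checks that it does not spoil the character property. We would rely on that lemma, together with the group isomorphism $\Gamma_0(2)\simeq\Gamma'_0(2)$ and the explicit formula for $\nu_{\theta^F}$ (through Lemma \ref{deltaf}), and not recompute them. The one point to double-check is that applying Proposition \ref{slashpro} on $\overline{\SL}^{\mu_8}_2$ rather than on $\overline{\SL}_2$ is legitimate. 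This holds because $\theta^{1/2}(\overline{s}z)=J_{1/2}(\overline{s},z)\overline{\lambda}(\overline{s})\theta^{1/2}(z)$ extends to $\mu_8$-central elements, since both $J_{1/2}$ and $\overline{\lambda}$ scale by the same central character.
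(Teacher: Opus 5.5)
Your proposal is correct and follows the paper's own route. The paper twists by $\overline{\mathfrak{w}}=[u_-(-1),1]$, uses the lift $\overline{s}=[s,\lambda(s)^{-1}]$ on which $\overline{\lambda}$ is trivial, identifies $\overline{\mathfrak{w}}^{-1}\overline{s}\,\overline{\mathfrak{w}}=[r,\nu_{\theta^F}(r)^{-1}]$, and reads off $J_{1/2}(r',z)=\nu_{\theta^F}(r)\sqrt{cz+d}$ via Proposition~\ref{slashpro}. That is exactly what you do.

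One small correction to your last remark: on a central element $[1,\epsilon]$ the factors $J_{1/2}$ and $\overline{\lambda}$ pick up mutually inverse scalars $\epsilon^{-1}$ and $\epsilon$, not the same one. That is why their product is unchanged and the law passes to the $\mu_8$-cover.
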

In particular, if  $r=\begin{pmatrix} 1& 8\\ 0&1\end{pmatrix}$, then  $\theta^{F,1/2}(r z)=\theta^{F,1/2}(z)$.  
\begin{remark}
The result is similar as Prop. \ref{thetashi}.  This means that Zagier's fermionic theta function is a $1/2$-modular form on  $\Gamma_0(2)$, not just $\Gamma(2)$ or $\Gamma_0(4)$. Certainly, it is just a consequence of  Lion-Vergne \cite{LiVe}.
\end{remark}
\subsection{Example 5 to Sect. \ref{twistmathw}}\label{Examples15}
Let now $\mathfrak{w}=\begin{pmatrix} t& 0\\ 0 & 1\end{pmatrix}$ with positive integer  $t$ and $\overline{\mathfrak{w}}=[\mathfrak{w}, 1]$.  Then:
\[\mathfrak{w}^{-1}\Gamma_{\theta}\mathfrak{w}=\mathfrak{w}^{-1}\Gamma(2)\mathfrak{w} \sqcup \mathfrak{w}^{-1}\Gamma(2)\mathfrak{w} \begin{pmatrix}0& -t^{-1}\\ t& 0\end{pmatrix}.\]
If $r\in \mathfrak{w}^{-1}\Gamma_{\theta}\mathfrak{w}$, then $r'=\mathfrak{w}r\mathfrak{w}^{-1}\in \Gamma_{\theta}$. We define:
\[\nu_{\theta,t}(r)=\nu_{\theta}(r').\]
If $r=\begin{pmatrix} a& b\\ c & d\end{pmatrix} \in \mathfrak{w}^{-1}\Gamma(2)\mathfrak{w} $, then $r'=\begin{pmatrix} a& tb\\ t^{-1}c & d\end{pmatrix} \in \Gamma(2)$. Hence,
\[\nu_{\theta,t}(r)=\nu_{\theta}(\mathfrak{w}r\mathfrak{w}^{-1})=  \left(\frac{2c/t}{d}\right)\epsilon^{-1}_d= \left(\frac{2ct}{d}\right)\epsilon^{-1}_d.\]
If $r\in \mathfrak{w}^{-1}\Gamma(2)\mathfrak{w} \begin{pmatrix}0& -t^{-1}\\ t& 0\end{pmatrix}$, we write:
\[r=\begin{pmatrix} a& b\\ c & d\end{pmatrix}  \begin{pmatrix}0& -t^{-1}\\ t& 0\end{pmatrix}, \quad r'=\begin{pmatrix} a& tb\\ t^{-1}c & d\end{pmatrix} \begin{pmatrix} 0& -1\\ 1&0\end{pmatrix}.\]
Hence,
\[\nu_{\theta,t}(r)=\nu_{\theta}(\mathfrak{w}r\mathfrak{w}^{-1})= \left(\frac{2ct^{-1}}{d}\right)\epsilon^{-1}_de^{-\tfrac{i \pi}{4}}(-ct^{-1},d)_{\R}=\left(\frac{2ct}{d}\right)\epsilon^{-1}_de^{-\tfrac{i \pi}{4}}(-ct,d)_{\R}=\left(\frac{2ct}{d}\right)\epsilon^{-1}_de^{-\tfrac{i \pi}{4}}(-c,d)_{\R}.\]
Let:
 \[r=\begin{pmatrix} a& b\\ c & d\end{pmatrix}\in  \mathfrak{w}^{-1}[\Gamma_{\theta}\cap \Gamma_0(N^2)]\mathfrak{w}, \quad \quad r'=\mathfrak{w}r\mathfrak{w}^{-1}=\begin{pmatrix} a& tb\\ ct^{-1} & d\end{pmatrix}\in \Gamma_{\theta}\cap \Gamma_0(N^2).\] Then:
\[\overline{\mathfrak{w}}^{-1}=[\mathfrak{w}^{-1}, 1], \quad \overline{\mathfrak{w}}^{-1}[r', \nu_{\theta}(r')^{-1}]\overline{\mathfrak{w}}=[\mathfrak{w}^{-1}r'\mathfrak{w}, \nu_{\theta}(r')^{-1}]=[r, \nu_{\theta}(\mathfrak{w}r\mathfrak{w}^{-1})^{-1}],\]
\[[\theta^{ 1/2}_{\chi}]^{\overline{\mathfrak{w}}}( z)=t^{1/4}J_{1/2}(\overline{\mathfrak{w}},z)^{-1}  \theta^{ 1/2}_{\chi}(tz)= t^{1/4}\theta^{ 1/2}_{\chi}(tz)= t^{1/4}\sum_{n \in \mathbb{Z}} \chi(n)e^{\pi i  tz n^2},\]
\[[\theta^{ 3/2}_{\chi}]^{\overline{\mathfrak{w}}}( z)=t^{3/4}J_{3/2}(\overline{\mathfrak{w}},z)^{-1}  \theta^{ 3/2}_{\chi}(tz)=t^{3/4}\theta^{ 3/2}_{\chi}(tz)= t^{3/4}\sum_{n \in \mathbb{Z}} \chi(n)ne^{\pi i  tz n^2}.\]
Recall  $\theta^{ 1/2}_{\chi,t}$ and $\theta^{3/2}_{\chi,t} $ from  Def. \ref{defmainprop1}.
\begin{definition}
 $\nu_{\theta,t}(r)= \left\{ \begin{array}{lcl}  \left(\frac{2ct}{d}\right)\epsilon^{-1}_d, & & r=\begin{pmatrix} a& tb\\ ct^{-1} & d\end{pmatrix}\in \Gamma(2),\\ \left(\frac{2ct}{d}\right)\epsilon^{-1}_de^{-\tfrac{i \pi}{4}}(-c,d)_{\R}, & & r=\begin{pmatrix} a& tb\\ ct^{-1} & d\end{pmatrix}\begin{pmatrix} 0& -1\\ 1& 0\end{pmatrix}\in  \Gamma(2)\omega.\end{array}\right.$
\end{definition}
Then the result is the following:
\begin{proposition}\label{examthetaN}
  $\theta^{ \kappa/2}_{\chi,t}( rz)=\nu_{\theta,t}(r)(cz+d)^{\kappa/2}\chi(d)\theta^{ \kappa/2}_{\chi,t}(z)$,  for  $r=\begin{pmatrix} a& b\\ c & d\end{pmatrix}\in   \mathfrak{w}^{-1}[\Gamma_{\theta}\cap \Gamma_0(N^2)]\mathfrak{w}$, $\kappa=1$ or $3$.
\end{proposition}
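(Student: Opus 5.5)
The plan is to deduce Proposition~\ref{examthetaN} from Proposition~\ref{mainprop1} (equivalently Proposition~\ref{slashpro} with $\overline{\mathfrak{w}}=[\mathfrak{w},1]$) by making the multiplier explicit. The first step is to transport $r$: set $r'=\mathfrak{w}r\mathfrak{w}^{-1}=\begin{pmatrix} a& tb\\ ct^{-1}& d\end{pmatrix}\in\Gamma_\theta\cap\Gamma_0(N^2)$, and put $\overline{s}=[r',\lambda(r')^{-1}]\in\overline{\Gamma}^{\mu_8}_\theta$. The conjugate $\overline{\mathfrak{w}}^{-1}\overline{s}\,\overline{\mathfrak{w}}$ should equal $[r,\lambda(r')^{-1}]$. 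To verify this I compute $\overline{C}_{X^*}(\mathfrak{w}^{-1},r')\,\overline{C}_{X^*}(\mathfrak{w}^{-1}r',\mathfrak{w})$, which is $\nu_2(t,\cdot)$-type data. By Lemma~\ref{prog0} these vanish because $\mathfrak{w}$ and $\mathfrak{w}^{-1}$ are upper triangular with positive diagonal. This is exactly the computation already displayed before the definition of $\nu_{\theta,t}$.

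The second step is to evaluate both sides of Proposition~\ref{mainprop1}, working in $\overline{\SL}^{\mu_8}_2$ as in Examples~\ref{Examples11}--\ref{Examples14}. On the left, $\overline{\lambda}_\chi(\overline{s})=\lambda(r')\chi(d)\lambda(r')^{-1}\cdot$ (the $\epsilon$-component), so the character part is simply $\chi(d)$; here $d$ is unchanged under conjugation by $\mathfrak{w}$. On the right, $J_{\kappa/2}([r,\lambda(r')^{-1}],z)=\lambda(r')\sqrt{cz+d}\,(cz+d)^{(\kappa-1)/2}$ by the definition of $J_{\kappa/2}$ on the cover together with Definition~\ref{defczd}. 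Substituting, the identity of Proposition~\ref{mainprop1} becomes
\[
\theta^{\kappa/2}_{\chi,t}(rz)=\lambda(r')\,(cz+d)^{\kappa/2}\chi(d)\,\theta^{\kappa/2}_{\chi,t}(z).
\]

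The third step is to set $\nu_{\theta,t}(r)=\lambda(r')$ and rewrite it via Lemma~\ref{gammar}. There are two cases. If $r'\in\Gamma(2)$, the lower-left entry is $ct^{-1}$, giving $\left(\tfrac{2ct^{-1}}{d}\right)\epsilon_d^{-1}$. If $r'\in\Gamma(2)\omega$, we write $r'=r'_1\omega$ and use $\overline{c}_{X^*}(r'_1,\omega)=(-ct^{-1},d)_\R$, exactly as in the proof of Lemma~\ref{gammar}. To convert to the stated form I use $\left(\tfrac{t^{-1}}{d}\right)=\left(\tfrac{t}{d}\right)$ (the Kronecker symbol is multiplicative in the rational sense) and $(-ct^{-1},d)_\R=(-c,d)_\R$ (since $t>0$). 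This matches the definition of $\nu_{\theta,t}$ once the variables are read through $\mathfrak{w}$.

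I expect the main obstacle to be the bookkeeping of the sign and $\mu_8$ factors. One must check that the central coordinate really transports unchanged under $\overline{\mathfrak{w}}$, that $m_{X^*}(r)=m_{X^*}(r')$ (true because $\sgn c$ and $\sgn d$ are preserved), and that the Kronecker symbol with the non-integral argument $ct^{-1}$ is interpreted consistently. I would handle the last point by rewriting the symbol through $\left(\tfrac{2c't^2}{d}\right)=\left(\tfrac{2c'}{d}\right)$, using that $t^2$ is a square prime to $d$.
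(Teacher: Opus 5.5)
Your first two steps follow the paper's route (Section~\ref{Examples15}). Conjugating by $\overline{\mathfrak{w}}=[\mathfrak{w},1]$ leaves the central coordinate unchanged by Lemma~\ref{prog0}: the cocycle values are trivial, i.e.\ equal to $1$. Proposition~\ref{mainprop1} then correctly gives
\[
\theta^{\kappa/2}_{\chi,t}(rz)=\lambda(\mathfrak{w}r\mathfrak{w}^{-1})\,(cz+d)^{\kappa/2}\chi(d)\,\theta^{\kappa/2}_{\chi,t}(z).
\]

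The gap is in your last conversion step. Write the $\Gamma(2)$-factor of $\mathfrak{w}r\mathfrak{w}^{-1}$ as $\begin{pmatrix} a& b'\\ c'& d\end{pmatrix}$. The corresponding factor of $r$ then has entries $b=b'/t$ and $c=c't$. From $ad-b'c'=1$ you only get $\gcd(c',d)=1$. Since $b$ need not be an integer, nothing forces $\gcd(t,d)=1$.

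So your claim that ``$t^2$ is a square prime to $d$'' is false in general, and so is the ``rational'' identity $\left(\frac{t^{-1}}{d}\right)=\left(\frac{t}{d}\right)$. As a result $\left(\frac{2ct}{d}\right)$ can be $0$, while $\lambda(\mathfrak{w}r\mathfrak{w}^{-1})\in\mathbb{T}$ never is.

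Concretely, take $N=1$, $\chi\equiv 1$, $t=3$ and $\mathfrak{w}r\mathfrak{w}^{-1}=\begin{pmatrix}3&2\\4&3\end{pmatrix}\in\Gamma(2)$, i.e.\ $r=\begin{pmatrix}3&2/3\\12&3\end{pmatrix}$.
\begin{itemize}
\item The true multiplier is $\lambda(\mathfrak{w}r\mathfrak{w}^{-1})=\left(\frac{8}{3}\right)\epsilon_3^{-1}=i$.
\item The stated formula gives $\left(\frac{2ct}{d}\right)\epsilon_3^{-1}=\left(\frac{72}{3}\right)\epsilon_3^{-1}=0$, so read literally it would force $\theta^{1/2}_{1,3}(rz)=0$.
\item Multiplying this $r$ on the right by $\begin{pmatrix}0&-1/3\\3&0\end{pmatrix}$ gives the integral element $\begin{pmatrix}2&-1\\9&-4\end{pmatrix}$ of the $\omega$-coset. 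For it the stated $\nu_{\theta,t}$ also vanishes.
\end{itemize}

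The paper's own derivation makes the same unjustified step, $\left(\frac{2c/t}{d}\right)=\left(\frac{2ct}{d}\right)$, so following the paper does not repair this. What your argument (and the paper's) actually proves is the transformation law with $\left(\frac{2ct}{d}\right)$ replaced by $\left(\frac{2ct^{-1}}{d}\right)$. Here $ct^{-1}\in\Z$ is the lower-left entry of the $\Gamma(2)$-factor of $\mathfrak{w}r\mathfrak{w}^{-1}$; the Hilbert-symbol part $(-ct^{-1},d)_{\R}=(-c,d)_{\R}$ is fine.

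This agrees with the stated $\nu_{\theta,t}(r)$ exactly when $\gcd(t,d)=1$. You therefore need either that extra hypothesis or the corrected multiplier.

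One further point: in the $\omega$-coset, the $c,d$ inside $\nu_{\theta,t}$ belong to the $\Gamma(2)$-factor. The $c,d$ in $(cz+d)^{\kappa/2}$ and $\chi(d)$ are the entries of $r$ itself, so keep the two apart when you ``read the variables through $\mathfrak{w}$''.
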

\subsection{Example 6 to Sect. \ref{twistmathw} }\label{Examples16} Let $N$ be a positive natural number.
 Let $\mathfrak{w}=u(1)\in \Gamma_0(N^2)$ and  $\overline{\mathfrak{w}}=[\mathfrak{w}, 1]\in \overline{\SL}_2(\Q)$. Then:
  \[[\theta^{1/2}_{\chi}]^{\overline{\mathfrak{w}}}( z)=J_{1/2}(\overline{\mathfrak{w}},z)^{-1} \theta^{1/2}_{\chi}(z+1)= \sum_{n \in \mathbb{Z}} \chi(n)\, e^{i\pi (z+1) n^2}=\sum_{n \in \mathbb{Z}} (-1)^{n}\chi(n)\, e^{i\pi z n^2} \]
\[[\theta^{3/2}_{\chi}]^{\overline{\mathfrak{w}}}( z)=J_{3/2}(\overline{\mathfrak{w}},z)^{-1} \theta^{3/2}_{\chi}(z+1)=\sum_{n\in \Z}(-1)^n\chi(n) ne^{\pi i zn^2} .\]
Let $s=\begin{pmatrix} a& b\\ c& d\end{pmatrix}\in \Gamma_{\theta}$,  $s'=[s,\lambda(s)^{-1}] \in  \overline{\Gamma}^{\mu_8}_{\theta} \subseteq  \overline{\SL}^{\mu_8}_2(\mathbb R)$, $r=\mathfrak{w}^{-1}s \mathfrak{w}=\begin{pmatrix} a-c & (a-c)+(b-d) \\ c & c+d \end{pmatrix}$ and $r'=\overline{\mathfrak{w}}^{-1} s'\overline{\mathfrak{w}}$.  According to Sect. \ref{Examples13},  
\[\overline{\mathfrak{w}}^{-1}=[\mathfrak{w}^{-1}, 1], \quad \overline{\mathfrak{w}}^{-1}s'\overline{\mathfrak{w}}=[r, \lambda^{\mathfrak{w}}(r)^{-1}]=[r, \nu_{\theta^M}(r)^{-1}] \in  \Gamma^{0'}(2).\]
If $s\in \Gamma_{\theta}\cap \Gamma_0(N^2)$, we have: 
\[\chi^{\overline{\mathfrak{w}}}(r') = \chi(s')=\chi(s)=\chi(d)=\chi( c+d ).\]
\begin{definition}
\begin{itemize}
\item $\theta^{M, 1/2}_{\chi}( z)\stackrel{ Def.}{=} \sum_{n \in \mathbb{Z}} (-1)^{n}\chi(n)\, e^{i\pi z n^2} $.
\item $\theta^{M,3/2}_{\chi}( z)\stackrel{ Def.}{=} \sum_{n\in \Z}(-1)^n\chi(n) ne^{\pi i zn^2} $.
\end{itemize}
\end{definition}
In this case, the result is the following:
\begin{proposition}\label{examthetaMN}
 $\theta^{M, \kappa/2}_{\chi}( rz)=\nu_{\theta^M}(r)(cz+d)^{\kappa/2}\chi(d)\theta^{M, \kappa/2}_{\chi}(z)$, for  $r=\begin{pmatrix} a& b\\ c & d\end{pmatrix}\in  \Gamma^0(2)\cap \Gamma_0(N^2)$, $\kappa=1$ or $3$.
\end{proposition}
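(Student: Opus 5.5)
The plan is to specialize Proposition~\ref{slashpro} to $\overline{\mathfrak{w}}=[u(1),1]$ and then convert the abstract multiplier into $\nu_{\theta^M}$ and $\chi(d)$. The argument parallels Example~5, with the multiplier computations of Section~\ref{Examples13} (Example~3) doing the work that Lemma~\ref{gammar} did there.

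First, Proposition~\ref{slashpro} applies to $[\theta^{\kappa/2}_{\chi}]^{\overline{\mathfrak{w}}}$ for $\overline{r}\in\overline{\mathfrak{w}}^{-1}[\overline{\Gamma}_{\theta}\cap\overline{\Gamma}_0(N^2)]\overline{\mathfrak{w}}$, where $\det\mathfrak{w}=1$. We have already computed
\[
[\theta^{\kappa/2}_{\chi}]^{\overline{\mathfrak{w}}}=\theta^{M,\kappa/2}_{\chi},
\]
using $J_{\kappa/2}(\overline{\mathfrak{w}},z)=1$ and $e^{i\pi n^2}=(-1)^n$. Next I identify the group. Conjugation by $u(1)$ preserves $\Gamma_0(N^2)$, since $u(1)\in\Gamma_0(N^2)$. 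Moreover $u(-1)\Gamma_{\theta}u(1)=\Gamma^0(2)$, as noted in Example~3. Hence $\mathfrak{w}^{-1}[\Gamma_{\theta}\cap\Gamma_0(N^2)]\mathfrak{w}=\Gamma^0(2)\cap\Gamma_0(N^2)$.

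For $r$ in this group, put $s=\mathfrak{w}r\mathfrak{w}^{-1}$ and use the lift $s'=[s,\lambda(s)^{-1}]$. By the lemma in Example~3, $\nu_2(\mathfrak{w}^{-1},\cdot)=1$, so $r'=\overline{\mathfrak{w}}^{-1}s'\overline{\mathfrak{w}}=[r,\nu_{\theta^M}(r)^{-1}]$.

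Then I evaluate both factors at $r'$. For the character,
\[
\overline{\lambda}_\chi(s')=\lambda(s)\lambda(s)^{-1}\chi(s)=\chi(d_s)=\chi(d),
\]
where $d_s$ is the lower-right entry of $s$. The last equality holds because conjugation by $u(1)$ changes the lower-right entry by a multiple of $c$, and $N^2\mid c$. For the automorphy factor, the definition of $J_{\kappa/2}$ on $\overline{\SL}^{\mu_8}_2(\R)$ together with Definition~\ref{defczd} gives
\[
J_{\kappa/2}(r',z)=\nu_{\theta^M}(r)\sqrt{cz+d}\,(cz+d)^{(\kappa-1)/2}.
\]
This is exactly the displayed relation preceding Proposition~\ref{proathetb}. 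Substituting into Proposition~\ref{slashpro} gives the claim for $\kappa=1$ and $\kappa=3$ at once.

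The main obstacle is bookkeeping rather than new ideas. Proposition~\ref{slashpro} is stated for the $\mu_2$-cover, but $s'$ lives in the $\mu_8$-cover. I would justify the transfer using $\overline{\Gamma}_{\theta}\subseteq\overline{\SL}^{\mu_8}_2$ and the fact that every identity used (Corollary~\ref{Automorphicfactor2} and Theorem~\ref{mainthm1'}) extends verbatim with $\epsilon\in\mu_8$. The point to check is that the $\lambda(s)^{-1}$ in the lift exactly cancels the $\lambda(s)$ coming from $\overline{\lambda}_\chi$. What remains is the character $\chi(d)$, and $\nu_{\theta^M}$ appears only inside $J$.
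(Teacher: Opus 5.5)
Your proposal is correct and is essentially the paper's own argument. The paper likewise specializes Proposition~\ref{slashpro} to $\overline{\mathfrak{w}}=[u(1),1]$, identifies $[\theta^{\kappa/2}_{\chi}]^{\overline{\mathfrak{w}}}=\theta^{M,\kappa/2}_{\chi}$, imports from Example~3 the identity $\overline{\mathfrak{w}}^{-1}[s,\lambda(s)^{-1}]\overline{\mathfrak{w}}=[r,\nu_{\theta^M}(r)^{-1}]$, and reads off $\chi(s)=\chi(d)$ because the lower-right entry changes by $c\equiv 0\pmod{N^2}$. Your explicit check that the $\lambda(s)^{-1}$ in the lift cancels the $\lambda(s)$ in $\overline{\lambda}_\chi$, leaving $\nu_{\theta^M}$ inside $J_{\kappa/2}(r',z)$, is exactly the bookkeeping the paper leaves implicit.
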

\subsubsection{}\label{exf5} Let now $\mathfrak{w}=\begin{pmatrix} t& 0\\ 0 & 1\end{pmatrix}$ with positive integer  and $\overline{\mathfrak{w}}=[\mathfrak{w}, 1]$.  Then:
\[\mathfrak{w}^{-1}\Gamma^0(2)\mathfrak{w}=\mathfrak{w}^{-1}\Gamma(2)\mathfrak{w} \sqcup \mathfrak{w}^{-1}\Gamma(2)\mathfrak{w} \begin{pmatrix} 1& 0\\ -t& 1\end{pmatrix}.\]
If $r\in \mathfrak{w}^{-1}\Gamma^0(2)\mathfrak{w}$, then $r'=\mathfrak{w}r\mathfrak{w}^{-1}\in \Gamma^0(2)$. We define:
\[\nu_{\theta^M,t}(r)=\nu_{\theta^M}(r').\]
If $r=\begin{pmatrix} a& b\\ c & d\end{pmatrix} \in \mathfrak{w}^{-1}\Gamma(2)\mathfrak{w} $, then $r'=\begin{pmatrix} a& tb\\ t^{-1}c & d\end{pmatrix} \in \Gamma(2)$. Hence,
\begin{align*}
\nu_{\theta^M,t}(r)&=\nu_{\theta^M}(\mathfrak{w}r\mathfrak{w}^{-1})= \left(\frac{2c/t}{d}\right)\epsilon^{-1}_d (\tfrac{-1}{d})^{c/2t}(-i)^{c/2t}=\left(\frac{2ct}{d}\right)\epsilon^{-1}_d (\tfrac{-1}{d})^{c/2t}(-i)^{c/2t} .
\end{align*}
If $r\in \mathfrak{w}^{-1}\Gamma(2)\mathfrak{w} \begin{pmatrix} 1& 0\\ -t& 1\end{pmatrix}$, we write:
\[r=\begin{pmatrix} a& b\\ c & d\end{pmatrix} \begin{pmatrix} 1& 0\\ -t& 1\end{pmatrix}, \quad r'=\begin{pmatrix} a& tb\\ t^{-1}c & d\end{pmatrix} \begin{pmatrix} 1& 0\\ -1& 1\end{pmatrix}.\]
Hence,
\begin{align*}\nu_{\theta^M,t}(r)=\nu_{\theta^M}(\mathfrak{w}r\mathfrak{w}^{-1})&= \left(\frac{2ct}{d}\right)\epsilon^{-1}_d (\tfrac{-1}{d})^{c/2t}(-i)^{c/2t}e^{\tfrac{\pi i}{4}}(-1,d-t^{-1}c)_{\R}(-t^{-1}c,d-t^{-1}c)_{\R}\\
&= \left(\frac{2ct}{d}\right)\epsilon^{-1}_d (\tfrac{-1}{d})^{c/2t}(-i)^{c/2t}e^{\tfrac{\pi i}{4}}(-1,dt-c)_{\R}(-c,dt-c)_{\R}.
\end{align*}
Let:
 \[r=\begin{pmatrix} a& b\\ c & d\end{pmatrix}\in  \mathfrak{w}^{-1}[\Gamma^0(2)\cap \Gamma_0(N^2)]\mathfrak{w}, \quad \quad r'=\mathfrak{w}r\mathfrak{w}^{-1}=\begin{pmatrix} a& tb\\ ct^{-1} & d\end{pmatrix}\in \Gamma^0(2)\cap \Gamma_0(N^2).\] Then:
\[\overline{\mathfrak{w}}^{-1}=[\mathfrak{w}^{-1}, 1], \quad \overline{\mathfrak{w}}^{-1}[r', \nu_{\theta^M}(r')^{-1}]\overline{\mathfrak{w}}=[\mathfrak{w}^{-1}r'\mathfrak{w}, \nu_{\theta^M}(r')^{-1}]=[r, \nu_{\theta^M}(\mathfrak{w}r\mathfrak{w}^{-1})^{-1}],\]
\[[\theta^{M, 1/2}_{\chi}]^{\overline{\mathfrak{w}}}( z)=t^{1/4}J_{1/2}(\overline{\mathfrak{w}},z)^{-1}  \theta^{M, 1/2}_{\chi}(tz)=t^{1/4}\theta^{M, 1/2}_{\chi}(tz)= t^{1/4}\sum_{n \in \mathbb{Z}} (-1)^{n}\chi(n)\, e^{i\pi tz n^2},\]
\[[\theta^{M, 3/2}_{\chi}]^{\overline{\mathfrak{w}}}( z)=t^{3/4}J_{3/2}(\overline{\mathfrak{w}},z)^{-1}  \theta^{M, 3/2}_{\chi}(tz)= t^{3/4}\theta^{M, 3/2}_{\chi}(tz)= t^{3/4}\sum_{n \in \mathbb{Z}} (-1)^{n}\chi(n)\, ne^{i\pi tz n^2}.\]
\begin{definition}
\begin{itemize}
\item $\theta^{M, 1/2}_{\chi,t}( z)\stackrel{ Def.}{=} \sum_{n \in \mathbb{Z}} (-1)^{n}\chi(n)\, e^{i\pi tz n^2} $.
\item $\nu_{\theta^M,t}(r)= \left\{ \begin{array}{ll} \left(\frac{2ct}{d}\right)\epsilon^{-1}_d (\tfrac{-1}{d})^{\tfrac{c}{2t}}(-i)^{\tfrac{c}{2t}}, & r=\begin{pmatrix} a& tb\\ ct^{-1} & d\end{pmatrix}\in \Gamma(2),\\ \left(\frac{2ct}{d}\right)\epsilon^{-1}_d (\tfrac{-1}{d})^{\tfrac{c}{2t}}(-i)^{\tfrac{c}{2t}} e^{\tfrac{i \pi}{4}}(-1,dt-c)_{\R}(-c,dt-c)_{\R},  & r=\begin{pmatrix} a& tb\\ ct^{-1} & d\end{pmatrix}\begin{pmatrix} 1& 0\\ -1& 1\end{pmatrix}\in  \Gamma(2)u_-(-1).\end{array}\right.$
\item $\theta^{M,3/2}_{\chi,t}( z)\stackrel{ Def.}{=} \sum_{n\in \Z}(-1)^n\chi(n) ne^{\pi i tzn^2} $.
\end{itemize}
\end{definition}
Then the result is the following:
\begin{proposition}\label{examthetaMNt} 
$\theta^{M, \kappa/2}_{\chi,t}( rz)=\nu_{\theta^M,t}(r)(cz+d)^{\kappa/2}\chi(d)\theta^{M, \kappa/2}_{\chi,t}(z)$, for $r=\begin{pmatrix} a& b\\ c & d\end{pmatrix}\in   \mathfrak{w}^{-1}[\Gamma^0(2)\cap \Gamma_0(N^2)]\mathfrak{w}$, $\kappa=1$ or $3$.
\end{proposition}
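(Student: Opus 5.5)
The plan is to reduce Proposition~\ref{examthetaMNt} to Proposition~\ref{examthetaMN} by the slash operator attached to $\overline{\mathfrak{w}}=[\mathfrak{w},1]$ with $\mathfrak{w}=\diag(t,1)$, exactly as Proposition~\ref{examthetaN} was obtained from Theorem~\ref{mainthm1'} via Proposition~\ref{slashpro}. Concretely, I would first record that for $r=\begin{pmatrix} a&b\\ c&d\end{pmatrix}\in \mathfrak{w}^{-1}[\Gamma^0(2)\cap\Gamma_0(N^2)]\mathfrak{w}$ the conjugate $r'=\mathfrak{w}r\mathfrak{w}^{-1}=\begin{pmatrix} a&tb\\ ct^{-1}&d\end{pmatrix}$ lies in $\Gamma^0(2)\cap\Gamma_0(N^2)$, and that $r'z'=t\,(rz)$ for $z'=tz$. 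Since $\theta^{M,\kappa/2}_{\chi,t}(z)=\theta^{M,\kappa/2}_{\chi}(tz)$ by definition, Proposition~\ref{examthetaMN} applied to $r'$ at the point $tz$ gives
\[
\theta^{M,\kappa/2}_{\chi,t}(rz)=\theta^{M,\kappa/2}_{\chi}(r'(tz))=\nu_{\theta^M}(r')\,(ct^{-1}\cdot tz+d)^{\kappa/2}\chi(d)\,\theta^{M,\kappa/2}_{\chi}(tz),
\]
and $ct^{-1}\cdot tz+d=cz+d$, so the automorphy factor is unchanged. This is the same computation as in Proposition~\ref{slashpro}, using that $J_{\kappa/2}(\overline{\mathfrak{w}},z)=1$ for the diagonal $\mathfrak{w}$ with lower row $(0,1)$ (Lemma~\ref{jpz}), so the factors $t^{\kappa/4}$ cancel on both sides.

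The remaining work is the identification $\nu_{\theta^M}(r')=\nu_{\theta^M,t}(r)$ in the explicit form of the definition, and $\chi(d)$ is unchanged since $r'$ has the same diagonal. I would split by the coset decomposition $\mathfrak{w}^{-1}\Gamma^0(2)\mathfrak{w}=\mathfrak{w}^{-1}\Gamma(2)\mathfrak{w}\sqcup\mathfrak{w}^{-1}\Gamma(2)\mathfrak{w}\,u_-(-t)$, noting $\mathfrak{w}u_-(-t)\mathfrak{w}^{-1}=u_-(-1)$. In the first case substitute $c\mapsto c/t$ into the $\Gamma(2)$ formula for $\nu_{\theta^M}$, and use $\left(\frac{2c/t}{d}\right)=\left(\frac{2ct}{d}\right)$ (multiplying the top by $t^2$, coprime to $d$ since $d$ is a unit mod $t$ here). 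In the second case substitute into the $\Gamma(2)u_-(-1)$ formula and simplify the Hilbert symbols using $(x,y)_\R$ depends only on signs, with $t>0$: $(-1,d-c/t)_\R=(-1,dt-c)_\R$ and $(-c/t,d-c/t)_\R=(-c,dt-c)_\R$.

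The main obstacle I expect is the bookkeeping in the second coset: making sure that the decomposition $r=r_1u_-(-t)$ with $r_1\in\mathfrak{w}^{-1}\Gamma(2)\mathfrak{w}$ matches the parametrization in the definition of $\nu_{\theta^M,t}$ (where the entries $a,b,c,d$ refer to $r_1$, not $r$), and that $d$ in $\chi(d)$ is read off $r$ consistently. I would handle this by writing everything through $r'$ and $r_1'=\mathfrak{w}r_1\mathfrak{w}^{-1}$, invoking the explicit lemma for $\nu_{\theta^M}$ on $\Gamma(2)u_-(-1)$ from Sect.~\ref{Examples13}, and only at the end transporting back by $c\mapsto c/t$, $b\mapsto tb$.
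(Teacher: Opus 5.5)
Your reduction is the paper's own (Sect.~\ref{exf5}). You twist by $\overline{\mathfrak{w}}=[\diag(t,1),1]$, use $J_{\kappa/2}(\overline{\mathfrak{w}},z)=1$ and $\theta^{M,\kappa/2}_{\chi,t}(z)=\theta^{M,\kappa/2}_{\chi}(tz)$, apply Proposition~\ref{examthetaMN} to $r'=\mathfrak{w}r\mathfrak{w}^{-1}$, and then unwind $\nu_{\theta^M}(r')$ on the two cosets $\Gamma(2)$ and $\Gamma(2)u_-(-1)$ of $\Gamma^0(2)$. The identity $r'(tz)=t\,(rz)$, the invariance of $cz+d$ and of $\chi(d)$, the coset bookkeeping, and the Hilbert-symbol simplifications are all correct.

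The step that fails is your parenthetical claim that $d$ is a unit mod $t$. An element of $\mathfrak{w}^{-1}[\Gamma^0(2)\cap\Gamma_0(N^2)]\mathfrak{w}$ has the form $\begin{pmatrix} a'&b'/t\\ tc'&d'\end{pmatrix}$. Its upper-right entry need not be integral, so $ad-bc=1$ gives no congruence mod $t$. For example, take $N=1$, $t=5$ and $r'=\begin{pmatrix}1&2\\2&5\end{pmatrix}\in\Gamma(2)$. Then $r=\begin{pmatrix}1&2/5\\10&5\end{pmatrix}$, and $\left(\frac{2ct}{d}\right)=\left(\frac{100}{5}\right)=0$, whereas $\left(\frac{2c/t}{d}\right)=\left(\frac{4}{5}\right)=1$.

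So the identity $\left(\frac{2c/t}{d}\right)=\left(\frac{2ct}{d}\right)$, which the paper also uses, holds only when $\gcd(t,d)=1$. That is automatic for integral $r$, because $t\mid c$ forces $ad\equiv 1 \pmod t$. Your argument therefore proves the proposition in two forms:
\begin{itemize}
\item with $\nu_{\theta^M,t}(r):=\nu_{\theta^M}(\mathfrak{w}r\mathfrak{w}^{-1})$, i.e.\ with $\left(\frac{2c/t}{d}\right)$ in the explicit formula, for all $r$ in the group;
\item with $\left(\frac{2ct}{d}\right)$ as displayed, only under the extra hypothesis $\gcd(t,d)=1$.
\end{itemize}
You should state that restriction explicitly rather than assert coprimality.
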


\subsection{Example 7  to Sect. \ref{twistmathw} }\label{Examples17} Let $N$ be an odd positive integer.
 Let $\mathfrak{w}=u_-(-N^2)$ and  $\overline{\mathfrak{w}}=[\mathfrak{w}, 1]\in \overline{\SL}_2(\Q)$. Then:
  \[[\theta^{1/2}_{\chi}]^{\overline{\mathfrak{w}}}( z)=J_{1/2}(\overline{\mathfrak{w}},z)^{-1} \theta^{1/2}_{\chi}(\mathfrak{w} z) \]
\[[\theta^{3/2}_{\chi}]^{\overline{\mathfrak{w}}}( z)=J_{3/2}(\overline{\mathfrak{w}},z)^{-1} \theta^{3/2}_{\chi}(\mathfrak{w}z).\]
1)
  \begin{align*}
 \theta^{1/2}_{\chi}(\mathfrak{w} g)&= \tfrac{1}{G(1, \overline{\chi})}\sum_{k \bmod N}\overline{ \chi}(k) \theta_{L, X^{\ast}}\Big(J_{1/2}(\mathfrak{w} g, z_0)\pi_{X^{\ast}, \psi} (\mathfrak{w} g)A\Big)(\tfrac{k}{N}e^{\ast})\\
&= \tfrac{1}{G(1, \overline{\chi})}\sum_{k \bmod N}\overline{ \chi}(k)  J_{1/2}(\mathfrak{w} ,  p_g(z_0))\pi_{L, \psi}(\mathfrak{w} )\theta_{L, X^{\ast}}\Big(J_{1/2}(p_g,z_0)\pi_{X^{\ast}, \psi} ( p_g)A\Big)(\tfrac{k}{N}e^{\ast}) \\
&\stackrel{\textrm{Sect. \ref{Gamma2I2}} \  \textrm{Case  $8'$}}{=}e^{-\tfrac{\pi i}{4}} \psi(\tfrac{1}{4}\tfrac{k}{N}) \tfrac{1}{G(1, \overline{\chi})}\sum_{k \bmod N}\overline{ \chi}(k)  J_{1/2}(\mathfrak{w} ,  z_g)\theta_{L, X^{\ast}}\Big(J_{1/2}(p_g,z_0)\pi_{X^{\ast}, \psi} ( p_g)A\Big)([\tfrac{k}{N} e^{\ast} -kN e+\tfrac{1}{2} e])\\
&=e^{-\tfrac{\pi i}{4}} \psi(\tfrac{1}{4}\tfrac{k}{N}) \tfrac{1}{G(1, \overline{\chi})}\sum_{k \bmod N}\overline{ \chi}(k)  J_{1/2}(\mathfrak{w} ,  z_g)\theta_{L, X^{\ast}}\Big(J_{1/2}(p_g,z_0)\pi_{X^{\ast}, \psi} ( p_g)A\Big)([\tfrac{k}{N} e^{\ast} +\tfrac{1}{2} e]) \psi(\tfrac{k^2}{2})\\
&=e^{-\tfrac{\pi i}{4}} \tfrac{1}{G(1, \overline{\chi})}\sum_{k \bmod N}\sum_{n\in \Z}\overline{ \chi}(k)  J_{1/2}(\mathfrak{w} ,  z_g) J_{1/2}(p_g,z_0)\pi_{X^{\ast}, \psi} ( p_g)A( \tfrac{1}{2}+n)\psi(\tfrac{kn}{N} ) \psi(\tfrac{k^2}{2}+\tfrac{k}{2N}) \\
&=e^{-\tfrac{\pi i}{4}} \tfrac{1}{G(1, \overline{\chi})}\sum_{k \bmod N}\sum_{n\in \Z}\overline{ \chi}(k)  J_{1/2}(\mathfrak{w} ,  z_g) J_{1/2}(p_g,z_0)\pi_{X^{\ast}, \psi} ( p_g)A( \tfrac{1}{2}+n)\psi(\tfrac{kn}{N} ) \psi(\tfrac{k}{2}+\tfrac{k}{2N}) \\
&= J_{1/2}(\mathfrak{w} ,  z_g) e^{-\tfrac{\pi i}{4}} \tfrac{1}{G(1, \overline{\chi})} \sum_{k \bmod N} \sum_{n\in \Z}J_{1/2}(p_g, z_0)\pi_{X^{\ast}, \psi} (p_g)A( \tfrac{1}{2}+n)\overline{ \chi}(k) \psi(\tfrac{k}{N}[n+\tfrac{N+1}{2}])\\
&= J_{1/2}(\mathfrak{w} ,  z_g) e^{-\tfrac{\pi i}{4}}   \tfrac{1}{G(1, \overline{\chi})}\sum_{k \bmod N} \sum_{n\in \Z} e^{i   \pi z_g (n+\tfrac{1}{2})^2 }\overline{ \chi}(k) \psi(\tfrac{k}{N}[n+\tfrac{N+1}{2}])\\
&=J_{1/2}(\mathfrak{w} ,  z_g) e^{-\tfrac{\pi i}{4}} \tfrac{1}{G(1, \overline{\chi})}\sum_{n\in \Z} e^{i   \pi z_g (n+\tfrac{1}{2})^2 }\Big(\sum_{k \bmod N}\overline{ \chi}(k)\psi(\tfrac{k}{N}[n+\tfrac{N+1}{2}])\Big)\\
&=J_{1/2}(\mathfrak{w} ,  z_g) e^{-\tfrac{\pi i}{4}}\sum_{n\in \Z} e^{i   \pi z_g (n+\tfrac{1}{2})^2 }   \chi(n+\tfrac{N+1}{2}).
 \end{align*}
2) Similarly, we have:\\
  \begin{align*}
 \theta^{3/2}_{\chi}(\mathfrak{w} g)&=J_{3/2}(\mathfrak{w} ,  z_g) e^{-\tfrac{\pi i}{4}}\sum_{n\in \Z} (n+\tfrac{1}{2})e^{i   \pi z_g (n+\tfrac{1}{2})^2 } \chi(n+\tfrac{N+1}{2}).
 \end{align*}
Let $s=\begin{pmatrix} a& b\\ c& d\end{pmatrix}\in \Gamma_{\theta}$,  $s'=[s,\lambda(s)^{-1}] \in  \overline{\Gamma}^{\mu_8}_{\theta} \subseteq  \overline{\SL}^{\mu_8}_2(\mathbb R)$, $r=\mathfrak{w}^{-1}s \mathfrak{w}=\begin{pmatrix} a-bN^2 & b \\ c+(a-d)N^2-bN^4 & d+bN^2 \end{pmatrix}$ and $r'=\overline{\mathfrak{w}}^{-1} s'\overline{\mathfrak{w}}$. 
\[ \mathfrak{w}^{-1} \omega\mathfrak{w}= \begin{pmatrix} N^2 & -1\\ 1+N^4 & -N^2 \end{pmatrix}\stackrel{(\bmod 2)}{\equiv } \begin{pmatrix} 1 & 1\\ 0&1 \end{pmatrix}, \quad \mathfrak{w}^{-1} \Gamma(2)\mathfrak{w}=\Gamma(2), \quad \mathfrak{w}^{-1}\Gamma_{\theta}\mathfrak{w}=\Gamma_0(2).\]

Following Sect. \ref{Examples13},  we have:
\begin{lemma}
\begin{itemize}
\item[(1)] $\nu_2(\mathfrak{w}^{-1}, g)=\nu_2(\omega , g)\nu_2(\omega,  g^{\omega^{-1}u(-N^2)})$, for $g\in \SL_2(\R)$.
\item[(2)] $\overline{\mathfrak{w}}^{-1}s'\overline{\mathfrak{w}}=[r, \nu_2(\mathfrak{w}^{-1},r)^{-1}\lambda^{\mathfrak{w}}(r)^{-1}]$.
\end{itemize}
\end{lemma}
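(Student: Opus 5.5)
The plan follows the template of the lemma in Sect.~\ref{Examples14}, with $u_-(-1)$ replaced by $\mathfrak{w}=u_-(-N^2)$.

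For (1), first write $\mathfrak{w}^{-1}=u_-(N^2)=\omega^{-1}u(-N^2)\omega$. This identity can be checked directly, exactly as $u_-(1)=\omega^{-1}u(-1)\omega$ was used before. Apply the cocycle rule Lemma~\ref{chap3twocover} twice. This gives
\[\nu_2(\mathfrak{w}^{-1},g)=\nu_2(\omega^{-1},g)\,\nu_2(u(-N^2),g^{\omega^{-1}})\,\nu_2(\omega,g^{\omega^{-1}u(-N^2)}).\]
The middle factor is $1$. By Example~\ref{twocoverex} it equals $\overline{c}_{X^{\ast}}(u(N^2),\cdot)\,\overline{c}_{X^{\ast}}(\cdot,u(-N^2))$, and both of these vanish (i.e.\ equal $1$) by Lemma~\ref{prog0}, since $u(\pm N^2)$ is upper triangular with positive diagonal.

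It then remains to show $\nu_2(\omega^{-1},g)=\nu_2(\omega,g)$. Write $\omega^{-1}=(-I)\omega$ and apply Lemma~\ref{chap3twocover} again. Conjugation by $-I$ is trivial on $\SL_2(\R)$, so $\nu_2(-I,\cdot)=1$; this is the same point already used in Sect.~\ref{Examples14}. Alternatively, $\nu_2(-I,g)=\overline{c}_{X^{\ast}}(-I,-g)\,\overline{c}_{X^{\ast}}(g,-I)$, and this equals $1$ by Lemma~\ref{relacto}(1), because $x(-I)=-1$ and the Hilbert symbols cancel. Expanding the product in either order gives the stated formula.

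For (2), compute $\overline{\mathfrak{w}}^{-1}s'\overline{\mathfrak{w}}=[u_-(N^2)\,s\,u_-(-N^2),\,\overline{c}_{X^{\ast}}(u_-(N^2),s)\,\overline{c}_{X^{\ast}}(u_-(N^2)s,u_-(-N^2))\,\lambda(s)^{-1}]$. By Example~\ref{twocoverex} applied with $h=\mathfrak{w}$, the product of the two cocycles is $\nu_2(\mathfrak{w},s)$. Next use $1=\nu_2(\mathfrak{w}\mathfrak{w}^{-1},s)=\nu_2(\mathfrak{w},s)\,\nu_2(\mathfrak{w}^{-1},s^{\mathfrak{w}})$ together with $s^{\mathfrak{w}}=r$. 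This turns the factor into $\nu_2(\mathfrak{w}^{-1},r)^{-1}$. Finally, $\lambda(s)=\lambda(\mathfrak{w}r\mathfrak{w}^{-1})=\lambda^{\mathfrak{w}}(r)$ by definition.

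The only delicate point is the bookkeeping of conjugation conventions, namely $g^h=h^{-1}gh$ and the order of the arguments in Lemma~\ref{chap3twocover}. I would check these against the $N=1$ computation of Sect.~\ref{Examples14}, which the present statement must reduce to.
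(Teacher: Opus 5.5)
Your proposal follows the paper's proof step for step. It uses the factorization $u_-(N^2)=\omega^{-1}u(-N^2)\omega$, two applications of Lemma~\ref{chap3twocover}, $\nu_2(u(-N^2),\cdot)=1$ via Example~\ref{twocoverex} and Lemma~\ref{prog0}, and $\nu_2(-I,\cdot)=1$. For (2) it uses the identity $1=\nu_2(\mathfrak{w},s)\,\nu_2(\mathfrak{w}^{-1},r)$ together with $\lambda(s)=\lambda^{\mathfrak{w}}(r)$.

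Delete your ``alternative'' verification of $\nu_2(-I,g)=1$: it is false. Put $y=x(g)$, so that $x(-g)=-y$. Lemma~\ref{relacto}(1) gives $\overline{c}_{X^{\ast}}(-I,-g)=(-1,-y)_{\R}$ and $\overline{c}_{X^{\ast}}(g,-I)=(y,-1)_{\R}$. Their product is $-1$ for every $g$; at $g=I$ it is just $\overline{c}_{X^{\ast}}(-I,-I)=-1$.

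The underlying issue is that the formula of Example~\ref{twocoverex} is missing a normalization. Conjugation by $[h,1]$ actually produces $\overline{c}_{X^{\ast}}(h^{-1},gh)\,\overline{c}_{X^{\ast}}(g,h)\,\overline{c}_{X^{\ast}}(h,h^{-1})^{-1}$, and this extra factor equals $-1$ for $h=-I$. So rely only on your first argument. Conjugation by $-I$ is the identity, and the lift of an automorphism is unique because $\SL_2(\R)$ is perfect, hence has no nontrivial homomorphism to $\mu_2$.

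For the other values of $h$ you feed into Example~\ref{twocoverex}, the missing factor is $1$. First, $\overline{c}_{X^{\ast}}(u(-N^2),u(N^2))=1$ by Lemma~\ref{prog0}. Second, $\overline{c}_{X^{\ast}}(\mathfrak{w},\mathfrak{w}^{-1})=(-1,1)_{\R}(1,1)_{\R}=1$, since $x(\mathfrak{w})=-1$ and $x(\mathfrak{w}^{-1})=1$. This second identity also justifies your implicit use of $\overline{\mathfrak{w}}^{-1}=[\mathfrak{w}^{-1},1]$ in (2).

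With that one sentence removed, the proposal is correct.
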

\begin{proof}
1) $\nu_2(u(-N^2), g)\stackrel{\textrm{ Ex.} \ref{twocoverex}}{=} \overline{c}_{X^{\ast}}(u(-N^2)^{-1},g)\overline{c}_{X^{\ast}}(u(-N^2)^{-1}g,u(-N^2))\stackrel{\textrm{ Lem.} \ref{prog0}}{=} 1$, 
$ \nu_2(-I, g)= 1$. Note that $u_-(N^2)=\omega^{-1} u(-N^2)\omega$. Then: 
\begin{align*}
\nu_2(u_-(N^2), g)&=\nu_2(\omega^{-1} u(-N^2)\omega, g)\\
&\stackrel{\textrm{ Lem.} \ref{chap3twocover}}{=}\nu_2(\omega^{-1} , g) \nu_2(u(-N^2)\omega, g^{\omega^{-1}})\\
&=\nu_2(\omega^{-1} , g)\nu_2(u(-N^2),  g^{\omega^{-1}}) \nu_2(\omega,  g^{\omega^{-1}u(-N^2)})\\
&=\nu_2(\omega^{-1} , g)\nu_2(\omega,  g^{\omega^{-1}u(-N^2)})\\
&=\nu_2(\omega , g)\nu_2(\omega,  g^{\omega^{-1}u(-N^2)}).
\end{align*}
2) $\nu_2(u_-(-N^2), s)\stackrel{\textrm{ Ex.} \ref{twocoverex}}{=} \overline{c}_{X^{\ast}}(u_-(N^2),s)\overline{c}_{X^{\ast}}(u_-(N^2)s,u_-(-N^2))$.
\[1=\nu_2(u_-(-N^2)u_-(N^2), s)=\nu_2(u_-(-N^2), s)\nu_2(u_-(N^2), s^{u_-(-N^2)})=\nu_2(u_-(-N^2), s)\nu_2(u_-(N^2), r).\]
\begin{align*}
 \overline{\mathfrak{w}}^{-1}s'\overline{\mathfrak{w}}
&=[u_-(N^2) s u_-(-N^2), \overline{c}_{X^{\ast}}(u_-(N^2), s)\overline{c}_{X^{\ast}}(u_-(N^2) s, u_-(-N^2))\lambda({s})^{-1}]\\
&=[u_-(N^2)s u_-(-N^2), \nu_2(u_-(-N^2),s)\lambda({s})^{-1}]\\
&=[r, \nu_2(u_-(N^2),r)^{-1}\lambda({s})^{-1}].
\end{align*}
\end{proof}
\begin{lemma}\label{deltafN^2}
Let $r=\begin{pmatrix} a& b\\ c& d\end{pmatrix} \in \Gamma(2)$. Then $\lambda^{\mathfrak{w}}(r )=\lambda(r)v_2(\mathfrak{w}^{-1}, r)^{-1} \delta_F(r)$.
\end{lemma}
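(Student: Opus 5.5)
The plan is to copy the proof of Lemma~\ref{deltaf} (the case $\mathfrak{w}=u_-(-1)$), with $u_-(-1)$ replaced by $u_-(-N^2)$ and $u(-1)$ replaced by $u(-N^2)$ in the conjugation formula. First I show that
\[
\delta'(r):=\lambda(r^{\mathfrak{w}^{-1}})\,\lambda(r)^{-1}\,v_2(\mathfrak{w}^{-1},r)
\]
is a character of $\Gamma(2)$. For this, apply (\ref{chap3crao2}) with $\alpha=\mathfrak{w}^{-1}$ to write $\overline{c}_{X^{\ast}}(r_1^{\mathfrak{w}^{-1}},r_2^{\mathfrak{w}^{-1}})$ as $\overline{c}_{X^{\ast}}(r_1,r_2)$ times a coboundary in $v_2(\mathfrak{w}^{-1},-)$. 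Then use Lemma~\ref{mutirr12} twice: once for $r_1,r_2\in\Gamma(2)$, and once for $r_1^{\mathfrak{w}^{-1}},r_2^{\mathfrak{w}^{-1}}\in\Gamma(2)$. This is legitimate because $\mathfrak{w}^{-1}\Gamma(2)\mathfrak{w}=\Gamma(2)$ and conjugation by $u_-(\pm N^2)$ preserves $\Gamma(2)$. Comparing the two expressions gives multiplicativity of $\delta'$.

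Since $\Gamma(2)$ is generated by $-I$, $u(2)$ and $u_-(2)$, and $\delta_F$ is a character with $\delta_F(-I)=\delta_F(u_-(2))=1$ and $\delta_F(u(2))=i$, it suffices to check $\delta'=\delta_F$ on these three generators.

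\textbf{$r=-I$.} The element $-I$ is central, so $\lambda(r^{\mathfrak{w}^{-1}})=\lambda(r)$. By Example~\ref{twocoverex} and Lemma~\ref{relacto}(1), $v_2(\mathfrak{w}^{-1},-I)=1$.

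\textbf{$r=u_-(2)$.} It commutes with $u_-(\pm N^2)$, so again $\lambda(r^{\mathfrak{w}^{-1}})=\lambda(r)$. For $v_2$, use part (1) of the preceding lemma: $v_2(\mathfrak{w}^{-1},r)=\nu_2(\omega,r)\,\nu_2(\omega,r^{\omega^{-1}u(-N^2)})$, where
\[
r^{\omega^{-1}u(-N^2)}=u(N^2)\,u(-2)\,u(-N^2)=u(-2).
\]
Example~\ref{nu2abcd} gives $\nu_2(\omega,u_-(2))=(1,2)_\R=1$ and $\nu_2(\omega,u(-2))=(1,-2)_\R=1$.

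\textbf{$r=u(2)$.} This is the case that carries the content. Here
\[
r^{\mathfrak{w}^{-1}}=u_-(-N^2)\,u(2)\,u_-(N^2)=\begin{pmatrix}1+2N^2&2\\-2N^4&1-2N^2\end{pmatrix}.
\]
By Lemma~\ref{gammar}, $\lambda(r^{\mathfrak{w}^{-1}})=\bigl(\tfrac{-4N^4}{1-2N^2}\bigr)\epsilon_{1-2N^2}^{-1}$. Since $N$ is odd, $1-2N^2\equiv 3\pmod 4$, so $\epsilon^{-1}=-i$, and $\bigl(\tfrac{-1}{1-2N^2}\bigr)$ needs care with the sign convention for negative denominators. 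I expect this to give $i$, while $\lambda(u(2))=1$. For $v_2$,
\[
r^{\omega^{-1}u(-N^2)}=u(N^2)\,u_-(-2)\,u(-N^2)=\begin{pmatrix}1-2N^2&2N^4\\-2&1+2N^2\end{pmatrix},
\]
and Example~\ref{nu2abcd} (first case) gives $(d,-bc)_\R=(1+2N^2,4N^4)_\R=1$. Also $\nu_2(\omega,u(2))=(1,2)_\R=1$. So $\delta'(u(2))=i=\delta_F(u(2))$.

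The main obstacle is this last case: evaluating the Kronecker symbol $\bigl(\tfrac{-4N^4}{1-2N^2}\bigr)$ with a negative denominator under the paper's conventions, and pinning down the sign of $\epsilon_{1-2N^2}$. The Hilbert-symbol factors are all trivial. If the symbol comes out as $-1$ instead, I would recheck the conjugation order in $r^{\mathfrak{w}^{-1}}$ against the $N=1$ computation in Lemma~\ref{deltaf}, where the analogous matrix was $\begin{pmatrix}3&2\\-2&-1\end{pmatrix}$ with value $i$. The present matrix specializes to that one at $N=1$, which gives a consistency check.
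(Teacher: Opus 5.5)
Your proposal follows the paper's proof essentially line for line. It builds the same character $\delta'$ from (\ref{chap3crao2}) and Lemma~\ref{mutirr12}, checks it on the same generators $-I$, $u(2)$, $u_-(2)$ using the same conjugated matrices, and reads off the same values from Example~\ref{nu2abcd}. The Kronecker-symbol evaluation you left as an expectation does come out as needed: $\bigl(\tfrac{-4N^4}{1-2N^2}\bigr)=\bigl(\tfrac{-1}{-1}\bigr)\bigl(\tfrac{-1}{2N^2-1}\bigr)=-1$ because $2N^2-1\equiv 1\pmod 4$, so $\lambda(r^{\mathfrak{w}^{-1}})=(-1)(-i)=i$. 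Note that $-1$ is exactly the value you want for the symbol, so your fallback remark about the symbol coming out as $-1$ has the sign reversed.
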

\begin{proof}
The proof is similar to that of Lem. \ref{deltaf}. Then  $\lambda(r^{\mathfrak{w}^{-1}})=\lambda(r)v_2(\mathfrak{w}^{-1}, r)^{-1} \delta'(r)$, for some character $\delta'$ of $\Gamma(2)$. 
 \begin{itemize}
\item[(I)] If $r=-I$, $\lambda(r^{\mathfrak{w}^{-1}})=\lambda(r) $ and 
\[v_2(\mathfrak{w}^{-1}, r)^{-1}\stackrel{ \textrm{ Ex.} \ref{twocoverex}}{=}\overline{c}_{X^{\ast}}(\mathfrak{w}, r\mathfrak{w}^{-1})^{-1}\overline{c}_{X^{\ast}}(r,\mathfrak{w}^{-1})^{-1}\stackrel{ \textrm{ Lem.} \ref{relacto}(1)}{=}1.\] So $\delta'(r)=1=\delta_F(r)$.
 \item[(II)]  If $r=u(2)$, $r^{\mathfrak{w}^{-1}}=u_-(-N^2)u(2) u_-(N^2)=\begin{pmatrix} 2N^2+1 & 2 \\ -2N^4 & 1-2N^2 \end{pmatrix}$, then: 
 $$\lambda(r^{\mathfrak{w}^{-1}})=\lambda(\begin{pmatrix} 2N^2+1 & 2 \\ -2N^4 & 1-2N^2 \end{pmatrix})\stackrel{ \textrm{ Lem.} \ref{gammar}}{=}(\frac{-4N^4}{1-2N^2}) i^{-1}=i, \quad 
 \lambda(r)\stackrel{ \textrm{ Lem.} \ref{gammar}}{=}1,$$ 
 $$ r^{\omega^{-1}u(-N^2)}=\begin{pmatrix} 1& N^2\\0& 1\end{pmatrix}\begin{pmatrix} 0& -1\\1& 0\end{pmatrix}\begin{pmatrix} 1& 2\\0& 1\end{pmatrix}\begin{pmatrix} 0& 1\\-1& 0\end{pmatrix}\begin{pmatrix} 1& -N^2\\0& 1\end{pmatrix}= \begin{pmatrix} 1-2N^2 & 2N^4 \\ -2 & 2N^2+1 \end{pmatrix},$$ 
  $$v_2(\mathfrak{w}^{-1}, r)^{-1}=\nu_2(\omega , r)^{-1}\nu_2(\omega,  r^{\omega^{-1}u(-N^2)})^{-1}\stackrel{ \textrm{ Lem.} \ref{nu2abcd}}{=}
  \nu_2(\omega, \begin{pmatrix} 1-2N^2 & 2N^4 \\ -2 & 2N^2+1 \end{pmatrix})^{-1} \stackrel{ \textrm{ Lem.} \ref{nu2abcd}}{=} 1.$$ 
  So $\delta'(r)=i=\delta_F(r)$.
 \item[(III)]  If $r=u_-(2)$, $r^{\mathfrak{w}^{-1}}=u_-(-N^2)u_-(2) u_-(N^2)=r$, then  $\lambda(r^{\mathfrak{w}^{-1}})=\lambda(r) $, 
 $$r^{\omega^{-1}u(-N^2)}=\begin{pmatrix} 1& N^2\\0& 1\end{pmatrix}\begin{pmatrix} 0& -1\\1& 0\end{pmatrix}\begin{pmatrix} 1&0\\2& 1\end{pmatrix}\begin{pmatrix} 0& 1\\-1& 0\end{pmatrix}\begin{pmatrix} 1& -N^2\\0& 1\end{pmatrix}=\begin{pmatrix} 1& -2\\0& 1\end{pmatrix},$$
 $$v_2(\mathfrak{w}^{-1}, r)^{-1}=\nu_2(\omega , r)^{-1}\nu_2(\omega,  r^{\omega^{-1}u(-N^2)})^{-1}\stackrel{ \textrm{ Lem.} \ref{nu2abcd}}{=}
  \nu_2(\omega,  \begin{pmatrix} 1& -2\\0& 1\end{pmatrix})^{-1} \stackrel{ \textrm{ Lem.} \ref{nu2abcd}}{=} 1.$$ 
  So $\delta'(r)=1=\delta_F(r)$.
  \end{itemize}
\end{proof}
\begin{lemma}
$\lambda^{\mathfrak{w}}(r)^{-1}\nu_2(\mathfrak{w}^{-1},r)^{-1}=\left\{ \begin{array}{rl} \lambda(r)^{-1} \delta_F(r)^{-1},  & \textrm{ if } r\in \Gamma(2),\\e^{-\tfrac{\pi i}{4}} \lambda(r_1)^{-1} \delta_F(r_1)^{-1}, &  \textrm{ if } r=r_1 u(1) \textrm{ with } r_1\in \Gamma(2).\end{array} \right.$ 
\end{lemma}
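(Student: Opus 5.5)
I will follow the proof of the analogous lemma in Sect.~\ref{Examples14}, with $\mathfrak{w}=u_-(-N^2)$ in place of $u_-(-1)$. The first case, $r\in\Gamma(2)$, is immediate from Lemma~\ref{deltafN^2}. That lemma gives $\lambda^{\mathfrak{w}}(r)=\lambda(r)\nu_2(\mathfrak{w}^{-1},r)^{-1}\delta_F(r)$, and inverting and multiplying by $\nu_2(\mathfrak{w}^{-1},r)^{-1}$ yields $\lambda(r)^{-1}\delta_F(r)^{-1}$.

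For the second case, write $r=r_1u(1)$ with $r_1\in\Gamma(2)$. This is legitimate because $\mathfrak{w}^{-1}\Gamma_{\theta}\mathfrak{w}=\Gamma_0(2)=\Gamma(2)\sqcup\Gamma(2)u(1)$, using that $\mathfrak{w}^{-1}\omega\mathfrak{w}\equiv u(1)\pmod 2$.

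\begin{itemize}
\item[(i)] The map $r\mapsto[r,\nu_2(\mathfrak{w}^{-1},r)^{-1}\lambda^{\mathfrak{w}}(r)^{-1}]$ is the conjugation isomorphism $s'\mapsto\overline{\mathfrak{w}}^{-1}s'\overline{\mathfrak{w}}$ of the preceding lemma. Hence $F(r):=\lambda^{\mathfrak{w}}(r)^{-1}\nu_2(\mathfrak{w}^{-1},r)^{-1}$ satisfies $F(r_1)F(u(1))=F(r_1u(1))\,\overline{c}_{X^{\ast}}(r_1,u(1))$, exactly as in the computation of $\iota(r_1)\iota(r_2)$ via Lemma~\ref{mutirr12}.
\item[(ii)] $\overline{c}_{X^{\ast}}(r_1,u(1))=1$ by Lemma~\ref{prog0}, since $u(1)$ is upper triangular with positive diagonal.
\item[(iii)] It remains to show $F(u(1))=e^{-\pi i/4}$. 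For the $\nu_2$ factor, Example~\ref{twocoverex} gives $\nu_2(\mathfrak{w}^{-1},u(1))=\overline{c}_{X^{\ast}}(\mathfrak{w},u(1)\mathfrak{w}^{-1})\,\overline{c}_{X^{\ast}}(u(1),\mathfrak{w}^{-1})$. The second factor is $1$ by Lemma~\ref{prog0}. For the first I expand with Lemma~\ref{relacto}(1), using $x(\mathfrak{w})=-N^2\R^{\times2}$ and the fact that $u(1)u_-(N^2)$ has lower-left entry $N^2>0$. Alternatively I can use part (1) of the preceding lemma together with Example~\ref{nu2abcd}.
\item[(iv)] For the $\lambda$ factor, $\lambda^{\mathfrak{w}}(u(1))=\lambda(\mathfrak{w}u(1)\mathfrak{w}^{-1})$ with
\[
\mathfrak{w}u(1)\mathfrak{w}^{-1}=\begin{pmatrix}1+N^2&1\\-N^4&1-N^2\end{pmatrix}.
\]
This matrix lies in $\Gamma_{\theta}$ because $N$ is odd. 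I compute $\lambda=m_{X^{\ast}}\widetilde{\beta}^{-1}$ by \eqref{lambda}. Since $c=-N^4<0$, $m_{X^{\ast}}=e^{\pi i/4}$. The factor $\widetilde{\beta}=G(1-N^2,-N^4)$ is evaluated by Lemma~\ref{chap8betacd}, in the case $d=1-N^2$ even and $|c|=N^4\equiv1\pmod 4$. This gives $\bigl(\tfrac{(1-N^2)/2}{N^4}\bigr)=1$, because the symbol modulo an odd square is $1$ on units. Hence $\lambda^{\mathfrak{w}}(u(1))=e^{\pi i/4}$.
\end{itemize}

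Combining (ii)--(iv) with the first case applied to $r_1$ then gives $F(r_1u(1))=e^{-\pi i/4}\lambda(r_1)^{-1}\delta_F(r_1)^{-1}$.

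The main obstacle is the sign bookkeeping in (iii)--(iv). The Hilbert symbols from $\nu_2$ must combine with the Gauss sum value so that the total is exactly $e^{-\pi i/4}$. If the Gauss-sum branch or a sign in $\nu_2$ comes out flipped, I would cross-check against the case $N=1$, where Sect.~\ref{Examples14} already gives $e^{-\pi i/4}$. I would also check that the dependence on $N$ drops out, since it enters only through the squares $N^2$ and $N^4$.
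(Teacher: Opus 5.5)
Your proposal is correct and follows the paper's proof essentially step for step. The first case is read off from the preceding lemma. The second case combines three facts, exactly as the paper does:
\begin{itemize}
\item[(a)] the multiplicativity of $r\mapsto[r,\lambda^{\mathfrak{w}}(r)^{-1}\nu_2(\mathfrak{w}^{-1},r)^{-1}]$;
\item[(b)] $\nu_2(\mathfrak{w}^{-1},u(1))=1$, via Example~\ref{twocoverex};
\item[(c)] $\lambda(\mathfrak{w}u(1)\mathfrak{w}^{-1})=e^{\pi i/4}$, via \eqref{lambda}.
\end{itemize}
You are in fact slightly more explicit than the paper, which silently drops $\overline{c}_{X^{\ast}}(r_1,u(1))=1$ (Lemma~\ref{prog0}) and does not spell out the Gauss sum. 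The only nitpick is the case $N=1$, where $d=1-N^2=0$: there $\widetilde{\beta}=G(0,-1)=1$ comes directly from the definition, not from Lemma~\ref{chap8betacd}, which assumes nonzero entries.
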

\begin{proof}
The first statement comes from the above lemma. If  $r=r_1 u(1)$, then:
\[v_2(\mathfrak{w}^{-1}, u(1))\stackrel{ \textrm{ Ex.} \ref{twocoverex}}{=}\overline{c}_{X^{\ast}}(\mathfrak{w}, u(1)\mathfrak{w}^{-1})\overline{c}_{X^{\ast}}(u(1),\mathfrak{w}^{-1})=\overline{c}_{X^{\ast}}(\begin{pmatrix} 1& 0\\-N^2& 1\end{pmatrix},\begin{pmatrix} N^2+1 & 1 \\ N^2 & 1 \end{pmatrix})=1;\]
\[\lambda(\mathfrak{w}u(1)\mathfrak{w}^{-1} )=\lambda(\begin{pmatrix} N^2+1 & 1 \\ -N^4 & 1-N^2 \end{pmatrix})\stackrel{ \textrm{(\ref{lambda})}}{=}m_{X^{\ast}}(\begin{pmatrix} N^2+1 & 1 \\ -N^4 & 1-N^2 \end{pmatrix})\widetilde{\beta}^{-1}(\begin{pmatrix} N^2+1 & 1 \\ -N^4 & 1-N^2 \end{pmatrix})=e^{\tfrac{\pi i}{4}};\]
\begin{align*}
&\lambda^{\mathfrak{w}}(r_1u(1))^{-1}\nu_2(\mathfrak{w}^{-1},r_1u(1))^{-1}\\
&=\overline{c}_{X^{\ast}}(r_1, u(1))\lambda^{\mathfrak{w}}(r_1)^{-1}\nu_2(\mathfrak{w}^{-1},r_1)^{-1}\lambda^{\mathfrak{w}}(u(1))^{-1}\nu_2(\mathfrak{w}^{-1},u(1))^{-1}\\
&=\lambda^{\mathfrak{w}}(r_1)^{-1}\nu_2(\mathfrak{w}^{-1},r_1)^{-1}\lambda^{\mathfrak{w}}(u(1))^{-1}\nu_2(\mathfrak{w}^{-1},u(1))^{-1}\\
&=\lambda(r_1)^{-1} \delta_F(r_1)^{-1}e^{-\tfrac{\pi i}{4}}.
\end{align*}
\end{proof}
As a consequence, we have:
$$\lambda^{\mathfrak{w}}(r)\nu_2(\mathfrak{w}^{-1},r)=\nu_F(r).$$
If $s\in \Gamma_{\theta}\cap \Gamma_0(N^2)$, we have: 
\[\chi^{\overline{\mathfrak{w}}}(r') = \chi(s')=\chi(s)=\chi(d)=\chi( bN^2+d ).\]
\begin{definition}
\begin{itemize}
\item $\theta^{F, 1/2}_{\chi}( z)\stackrel{ Def.}{=} \sum_{n\in \Z} e^{i   \pi z (n+\tfrac{1}{2})^2 } \chi(n+\tfrac{N+1}{2}).$
\item $\theta^{F, 3/2}_{\chi}( z)\stackrel{ Def.}{=}\sum_{n\in \Z} (n+\tfrac{1}{2})e^{i   \pi z(n+\tfrac{1}{2})^2 }\chi(n+\tfrac{N+1}{2}).$
\end{itemize}
\end{definition}
In this case, the result is the following:
\begin{proposition}\label{examthetaFN}
 $\theta^{F,\kappa/2}_{\chi}( rz)=\nu_{\theta^F}(r)(cz+d)^{\kappa/2}\chi(d)\theta^{F, \kappa/2}_{\chi}(z)$, for  $r=\begin{pmatrix} a& b\\ c & d\end{pmatrix}\in  \Gamma_0(2)\cap \Gamma_0(N^2)$, $\kappa=1$ or $3$.
\end{proposition}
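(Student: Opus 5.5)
The plan is to follow the template of Proposition~\ref{slashpro} with $\overline{\mathfrak{w}}=[u_-(-N^2),1]$, exactly as in Proposition~\ref{examthetaMN} for the minus case. The computation just above already identifies the slash transform:
\[
[\theta^{\kappa/2}_{\chi}]^{\overline{\mathfrak{w}}}(z)=e^{-\frac{\pi i}{4}}\theta^{F,\kappa/2}_{\chi}(z).
\]
This uses Case $8'$ of Sect.~\ref{Gamma2I2}, the shift by $\tfrac12 e$, the Gauss sum identity $G(n,\overline{\chi})=\chi(n)G(1,\overline{\chi})$, and the fact that $\det\mathfrak{w}=1$. Since the constant $e^{-\pi i/4}$ cancels on both sides, it suffices to prove the transformation law for $[\theta^{\kappa/2}_{\chi}]^{\overline{\mathfrak{w}}}$.

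Let $r\in\Gamma_0(2)\cap\Gamma_0(N^2)$. Using $\mathfrak{w}^{-1}\Gamma_{\theta}\mathfrak{w}=\Gamma_0(2)$, write $r=\mathfrak{w}^{-1}s\mathfrak{w}$ with $s\in\Gamma_{\theta}$. We also need $s\in\Gamma_0(N^2)$. Since $\mathfrak{w}\in\Gamma_0(N^2)$, we have $s=\mathfrak{w}r\mathfrak{w}^{-1}\in\Gamma_0(N^2)$, so $s\in\Gamma_{\theta}\cap\Gamma_0(N^2)$.

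Next, lift $s$ to $s'=[s,\lambda(s)^{-1}]$ in the $\mu_8$-cover. By Theorem~\ref{mainthm1'}, $\overline{\lambda}_\chi(s')=\chi(d_s)$, so $\theta^{\kappa/2}_\chi(s'z)=\chi(d_s)J_{\kappa/2}(s',z)\theta^{\kappa/2}_\chi(z)$. The earlier lemma of this subsection gives
\[
\overline{\mathfrak{w}}^{-1}s'\overline{\mathfrak{w}}=[r,\nu_2(\mathfrak{w}^{-1},r)^{-1}\lambda^{\mathfrak{w}}(r)^{-1}]=:r'.
\]
Applying Proposition~\ref{slashpro} in the form it was already used in Sect.~\ref{Examples13}--\ref{Examples14} yields
\[
[\theta^{\kappa/2}_\chi]^{\overline{\mathfrak{w}}}(r'z)=J_{\kappa/2}(r',z)\chi(d_s)[\theta^{\kappa/2}_\chi]^{\overline{\mathfrak{w}}}(z).
\]

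Two identifications remain. First, $\chi(d_s)=\chi(d)$. Here $d=d_s+b_sN^2$, so the two agree modulo $N$, and this matches the line $\chi^{\overline{\mathfrak{w}}}(r')=\chi(bN^2+d)$ recorded above. Second, we need
\[
J_{\kappa/2}(r',z)=\nu_2(\mathfrak{w}^{-1},r)\lambda^{\mathfrak{w}}(r)(cz+d)^{\kappa/2}.
\]
This follows from the definition $J_{1/2}([g,\epsilon],z)=\epsilon^{-1}\sqrt{cz+d}$, with the extra factor $(cz+d)$ when $\kappa=3$. By the preceding lemma, $\lambda^{\mathfrak{w}}(r)\nu_2(\mathfrak{w}^{-1},r)=\nu_{\theta^F}(r)$.

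The main obstacle is confirming that this product equals the displayed $\nu_{\theta^F}$ on both cosets $\Gamma(2)$ and $\Gamma(2)u(1)$ of $\Gamma_0(2)$. This relies on Lemma~\ref{deltafN^2} and the subsequent lemma, in particular the evaluation $\lambda(\mathfrak{w}u(1)\mathfrak{w}^{-1})=e^{\pi i/4}$ and the vanishing of the $\nu_2$ and cocycle corrections. I would check these generator by generator on $-I$, $u(2)$, $u_-(2)$ and $u(1)$, exactly as in Lemma~\ref{deltaf}. Once that is done, substituting these identities proves the claim for $\kappa=1$ and $\kappa=3$ simultaneously.
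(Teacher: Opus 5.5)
Your proposal is correct and follows the paper's own argument essentially step for step. You conjugate by $\overline{\mathfrak w}=[u_-(-N^2),1]\in\overline{\Gamma}_0(N^2)$, so that $\mathfrak w^{-1}[\Gamma_\theta\cap\Gamma_0(N^2)]\mathfrak w=\Gamma_0(2)\cap\Gamma_0(N^2)$, apply Proposition~\ref{slashpro}, match $\chi(d_s)=\chi(d)$, and identify $\nu_2(\mathfrak w^{-1},r)\lambda^{\mathfrak w}(r)$ with $\nu_{\theta^F}(r)$ via Lemma~\ref{deltafN^2} on $\Gamma(2)$ and the evaluation at $u(1)$ on the other coset.

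One harmless slip: the barred factor is $J_{1/2}([\mathfrak w,1],z)=\sqrt{cz+d}$, whereas the unbarred $J_{1/2}(\mathfrak w,z)$ in the computation carries $m_{X^{\ast}}(\mathfrak w)=e^{\pi i/4}$. So the slash transform is actually $\theta^{F,\kappa/2}_{\chi}$ itself, not $e^{-\pi i/4}\theta^{F,\kappa/2}_{\chi}$; as you note, any nonzero constant cancels anyway.
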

\subsubsection{}\label{exf17} Let now $\mathfrak{w}=\begin{pmatrix} t& 0\\ 0 & 1\end{pmatrix}$ with positive integer and $\overline{\mathfrak{w}}=[\mathfrak{w}, 1]$.  Then:
\[\mathfrak{w}^{-1}\Gamma_0(2)\mathfrak{w}=\mathfrak{w}^{-1}\Gamma(2)\mathfrak{w} \sqcup \mathfrak{w}^{-1}\Gamma(2)\mathfrak{w} \begin{pmatrix} 1& t^{-1}\\ 0& 1\end{pmatrix}.\]
If $r\in \mathfrak{w}^{-1}\Gamma_0(2)\mathfrak{w}$, then $r'=\mathfrak{w}r\mathfrak{w}^{-1}\in \Gamma_0(2)$. We define:
\[\nu_{\theta^F,t}(r)=\nu_{\theta^F}(r').\]
If $r=\begin{pmatrix} a& b\\ c & d\end{pmatrix} \in \mathfrak{w}^{-1}\Gamma(2)\mathfrak{w} $, then $r'=\begin{pmatrix} a& tb\\ t^{-1}c & d\end{pmatrix} \in \Gamma(2)$. Hence,
\[\nu_{\theta^F,t}(r)=\nu_{\theta^F}(\mathfrak{w}r\mathfrak{w}^{-1})=  \left(\frac{2c/t}{d}\right)\epsilon^{-1}_d (\tfrac{-1}{d})^{bt/2}i^{bt/2}= \left(\frac{2ct}{d}\right)\epsilon^{-1}_d (\tfrac{-1}{d})^{bt/2}i^{bt/2}.\]
If $r\in \mathfrak{w}^{-1}\Gamma(2)\mathfrak{w} \begin{pmatrix} 1& t^{-1}\\ 0& 1\end{pmatrix}$, we write:
\[r=\begin{pmatrix} a& b\\ c & d\end{pmatrix} \begin{pmatrix} 1& t^{-1}\\ 0& 1\end{pmatrix}, \quad r'=\begin{pmatrix} a& tb\\ t^{-1}c & d\end{pmatrix} \begin{pmatrix} 1& 1\\0& 1\end{pmatrix}.\]
Hence,
\[\nu_{\theta^F,t}(r)=\nu_{\theta^F}(\mathfrak{w}r\mathfrak{w}^{-1})= \left(\frac{2ct}{d}\right)\epsilon^{-1}_d (\tfrac{-1}{d})^{bt/2}i^{bt/2}e^{\tfrac{\pi i}{4}}.\]
Let:
 \[r=\begin{pmatrix} a& b\\ c & d\end{pmatrix}\in  \mathfrak{w}^{-1}[\Gamma_0(2)\cap \Gamma_0(N^2)]\mathfrak{w}, \quad \quad r'=\mathfrak{w}r\mathfrak{w}^{-1}=\begin{pmatrix} a& tb\\ ct^{-1} & d\end{pmatrix}\in \Gamma_0(2)\cap \Gamma_0(N^2).\] Then:
\[\overline{\mathfrak{w}}^{-1}=[\mathfrak{w}^{-1}, 1], \quad \overline{\mathfrak{w}}^{-1}[r', \nu_{\theta^F}(r')^{-1}]\overline{\mathfrak{w}}=[\mathfrak{w}^{-1}r'\mathfrak{w}, \nu_{\theta^F}(r')^{-1}]=[r, \nu_{\theta^F}(\mathfrak{w}r\mathfrak{w}^{-1})^{-1}],\]
\[[\theta^{F, 1/2}_{\chi}]^{\overline{\mathfrak{w}}}( z)=t^{1/4}J_{1/2}(\overline{\mathfrak{w}},z)^{-1}  \theta^{F, 1/2}_{\chi}(tz)=\theta^{F, 1/2}_{\chi}(tz)= t^{1/4}\sum_{n\in \Z} e^{i   \pi z (n+\tfrac{1}{2})^2 } \chi(n+\tfrac{N+1}{2}),\]
\[[\theta^{F, 3/2}_{\chi}]^{\overline{\mathfrak{w}}}( z)=t^{3/4}J_{3/2}(\overline{\mathfrak{w}},z)^{-1}  \theta^{F, 3/2}_{\chi}(tz)=\theta^{F, 3/2}_{\chi}(tz)= t^{3/4}\sum_{n\in \Z} (n+\tfrac{1}{2})e^{i   \pi z(n+\tfrac{1}{2})^2 }\chi(n+\tfrac{N+1}{2}).\]
\begin{definition}
\begin{itemize}
\item $\nu_{\theta^F,t}(r)= \left\{ \begin{array}{lcl}\left(\frac{2ct}{d}\right)\epsilon^{-1}_d (\tfrac{-1}{d})^{bt/2}i^{bt/2}, & & r=\begin{pmatrix} a& tb\\ ct^{-1} & d\end{pmatrix}\in \Gamma(2),\\ \left(\frac{2ct}{d}\right)\epsilon^{-1}_d (\tfrac{-1}{d})^{bt/2}i^{bt/2}e^{\tfrac{i \pi}{4}}, & & r=\begin{pmatrix} a& tb\\ ct^{-1} & d\end{pmatrix}\begin{pmatrix} 1& 1\\0& 1\end{pmatrix}\in  \Gamma(2)u(1).\end{array}\right.$
\item $\theta^{F, 1/2}_{\chi,t}( z)\stackrel{ Def.}{=}\sum_{n\in \Z} e^{i   \pi z (n+\tfrac{1}{2})^2 } \chi(n+\tfrac{N+1}{2})$.
\item $\theta^{F,3/2}_{\chi,t}( z)\stackrel{ Def.}{=}\sum_{n\in \Z} (n+\tfrac{1}{2})e^{i   \pi z(n+\tfrac{1}{2})^2 }\chi(n+\tfrac{N+1}{2})$.
\end{itemize}
\end{definition}
Then the result is the following:
\begin{proposition}\label{examthetaFNt}
  $\theta^{F, \kappa/2}_{\chi,t}( rz)=\nu_{\theta^F,t}(r)(cz+d)^{\kappa/2}\chi(d)\theta^{F, 1/2}_{\chi,t}(z)$, for  $r=\begin{pmatrix} a& b\\ c & d\end{pmatrix}\in   \mathfrak{w}^{-1}[\Gamma_0(2)\cap \Gamma_0(N^2)]\mathfrak{w}$, $\kappa=1$ or $3$.
\end{proposition}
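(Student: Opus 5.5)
The plan is to deduce Proposition~\ref{examthetaFNt} from Proposition~\ref{examthetaFN} by conjugating with $\overline{\mathfrak{w}}=[\mathfrak{w},1]$, $\mathfrak{w}=\begin{pmatrix} t&0\\0&1\end{pmatrix}$. This is exactly the pattern by which Proposition~\ref{examthetaN} and Proposition~\ref{examthetaMNt} were obtained, using Proposition~\ref{slashpro} applied to the fermionic series rather than to $\theta^{\kappa/2}_{\chi}$.

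\textbf{Step 1.} I will re-run the argument of Proposition~\ref{slashpro} with $\theta^{F,\kappa/2}_{\chi}$ and the multiplier $\nu_{\theta^F}(\cdot)\chi(\cdot)$ in place of $\theta^{\kappa/2}_{\chi}$ and $\overline{\lambda}_{\chi}$. For $r'\in\Gamma_0(2)\cap\Gamma_0(N^2)$, Proposition~\ref{examthetaFN} says $\theta^{F,\kappa/2}_{\chi}$ transforms under the element $[r',\nu_{\theta^F}(r')^{-1}]$ of $\overline{\SL}^{\mu_8}_2(\R)$ with factor $J_{\kappa/2}(\,\cdot\,,z)\chi(d)$. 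By the cocycle identity (Corollary~\ref{Automorphicfactor2}, valid on $\overline{\SL}^{\mu_8}_2$ since $J_{\kappa/2}$ was defined there too), the slash $[\theta^{F,\kappa/2}_{\chi}]^{\overline{\mathfrak{w}}}$ is invariant up to $J_{\kappa/2}(\overline{r},z)\chi(d)$ under $\overline{r}=\overline{\mathfrak{w}}^{-1}[r',\nu_{\theta^F}(r')^{-1}]\overline{\mathfrak{w}}$.

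\textbf{Step 2.} I will identify $\overline{r}$. Since $\nu_2(t,\cdot)$ is trivial on the relevant elements by Lemma~\ref{nu2} together with Lemma~\ref{prog0}, we get $\overline{r}=[r,\nu_{\theta^F}(\mathfrak{w}r\mathfrak{w}^{-1})^{-1}]=[r,\nu_{\theta^F,t}(r)^{-1}]$ with $r=\mathfrak{w}^{-1}r'\mathfrak{w}$; this was displayed just before the statement. Then
\[
J_{\kappa/2}(\overline{r},z)=\nu_{\theta^F,t}(r)\,(cz+d)^{\kappa/2}
\]
by Definition~\ref{defczd}, where $\nu_{\theta^F,t}$ takes the explicit case-by-case values already computed from $\nu_{\theta^F}$ and $r'=\begin{pmatrix} a&tb\\ ct^{-1}&d\end{pmatrix}$. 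The lower-right entries of $r$ and $r'$ agree, so $\chi(d)$ is unchanged.

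\textbf{Step 3.} I will compute the slash itself. Here $J_{\kappa/2}(\overline{\mathfrak{w}},z)=1$ because $c=0$, $d=1$ and $m_{X^{\ast}}(\mathfrak{w})=1$. Hence $[\theta^{F,\kappa/2}_{\chi}]^{\overline{\mathfrak{w}}}(z)=t^{\kappa/4}\theta^{F,\kappa/2}_{\chi}(tz)=t^{\kappa/4}\theta^{F,\kappa/2}_{\chi,t}(z)$, and the constant $t^{\kappa/4}$ cancels from both sides. Note that the right-hand side of the statement should read $\theta^{F,\kappa/2}_{\chi,t}$; the ``$1/2$'' there is a typo.

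\textbf{Main obstacle.} The delicate point is Step~1's dependence on Proposition~\ref{examthetaFN}, whose multiplier came from $\lambda^{\mathfrak{w}}\nu_2(\mathfrak{w}^{-1},\cdot)$ with $\mathfrak{w}=u_-(-N^2)$. I must check that, for $\kappa=3$, the extra factor $(cz+d)$ is carried correctly through the conjugation, and that the identification of $\nu_{\theta^F}$ on both cosets $\Gamma(2)$ and $\Gamma(2)u(1)$ matches the coset decomposition $\mathfrak{w}^{-1}\Gamma(2)\mathfrak{w}\sqcup\mathfrak{w}^{-1}\Gamma(2)\mathfrak{w}\,u(t^{-1})$. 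I would verify both points directly on the generators $u(2)$, $u_-(2)$, $-I$ and $u(1)$, exactly as in Lemma~\ref{deltafN^2}.
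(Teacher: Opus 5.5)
Your proposal is correct and follows the paper's own route. Like the paper, you conjugate Proposition~\ref{examthetaFN} by $\overline{\mathfrak{w}}=[\diag(t,1),1]$, note that this conjugation leaves the cover coordinate unchanged so that $\overline{\mathfrak{w}}^{-1}[r',\nu_{\theta^F}(r')^{-1}]\overline{\mathfrak{w}}=[r,\nu_{\theta^F,t}(r)^{-1}]$, and compute the slash as $t^{\kappa/4}\theta^{F,\kappa/2}_{\chi}(tz)$; you are also right that the right-hand side should read $\theta^{F,\kappa/2}_{\chi,t}$. The generator-by-generator check in your ``main obstacle'' paragraph is unnecessary, because $\nu_{\theta^F,t}(r)$ is defined by transport as $\nu_{\theta^F}(\mathfrak{w}r\mathfrak{w}^{-1})$, and $c'(tz)+d'=cz+d$ with $d'=d$ already makes the statement a direct substitution $z\mapsto tz$ in Proposition~\ref{examthetaFN}.
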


\subsection{Example 8: The discriminant function}\label{Examples18}
Let $\eta(z)$ denote the classical Dedekind eta function and $\Delta(z)$ the discriminant cusp form (see Zagier’s treatise \cite{Za}). We keep the notations $[\theta^{1/2}]^{\overline{M}_{q_i}}$ from Section  \ref{Examples12}.
\begin{theorem}\label{eta3}
\begin{itemize}
\item[(1)] $\eta(z)^3= \tfrac{1}{2} e^{\tfrac{\pi i}{4}}[\theta^{1/2}]^{\overline{M}_{q_1}}(z) \cdot [\theta^{1/2}]^{\overline{M}_{q_2}}(z) \cdot  [\theta^{1/2}]^{\overline{M}_{q_3}}(z)=\tfrac{1}{2} e^{\tfrac{\pi i}{4}}[\theta^{1/2}]^{\otimes}(z)$.
\item[(2)]  $\Delta(z) = \tfrac{1}{256}\Big[ [\theta^{1/2}]^{\overline{M}_{q_1}}(z) \cdot [\theta^{1/2}]^{\overline{M}_{q_2}}(z) \cdot  [\theta^{1/2}]^{\overline{M}_{q_3}}(z)\Big]^8$.
\end{itemize}
\end{theorem}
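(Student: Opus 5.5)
The identity in (1) reduces to Zagier's product formula $\theta^{M}(\tfrac{z}{2})\,\theta(\tfrac{z}{2})\,\theta^F(\tfrac{z}{2})=2\eta^3(z)$, quoted in the Introduction from \cite[pp.~27--30]{Za}. The only work is to identify the three slashed functions from Section \ref{Examples12} with these classical series and to track the constant. By the computations there,
\[
[\theta^{1/2}]^{\overline{M}_{q_1}}(z)=\theta^{M,1/2}(z)=\theta^{M}(\tfrac z2),\qquad
[\theta^{1/2}]^{\overline{M}_{q_2}}(z)=\theta^{1/2}(z)=\theta(\tfrac z2),
\]
\[
[\theta^{1/2}]^{\overline{M}_{q_3}}(z)=e^{-\tfrac{\pi i}{4}}\theta^{F,1/2}(z)=e^{-\tfrac{\pi i}{4}}\theta^{F}(\tfrac z2).
\]
Here the phase $e^{-\pi i/4}=m_{X^{\ast}}(u_-(-1))^{-1}$ comes from Case 8 of Section \ref{Gamma2I2}. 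Multiplying these gives
\[
[\theta^{1/2}]^{\otimes}(z)=e^{-\tfrac{\pi i}{4}}\,\theta^M(\tfrac z2)\theta(\tfrac z2)\theta^F(\tfrac z2)=2e^{-\tfrac{\pi i}{4}}\eta(z)^3,
\]
and hence $\eta(z)^3=\tfrac12 e^{\pi i/4}[\theta^{1/2}]^{\otimes}(z)$. The second equality in (1) is just the definition of $[\theta^{1/2}]^{\otimes}$ in Section \ref{tensorGammaaonodd}.

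Before using this I will re-check two conventions. First, the first factor should equal $\theta^{M}$ and not a shifted series: $J_{1/2}(\overline{M}_{q_1},z)=1$ since $c=0$, $d=1$, and $e^{i\pi n^2}=(-1)^n$. Second, the phase of the third factor must be right. I expect this phase to be the main point of care. I would confirm it by recomputing Case 8 with $m_{X^{\ast}}(u_-(-1))=e^{\pi i/4}$, since $c=-1<0$ and $\mathfrak e>0$. As an independent check I would compare leading $q$-expansion coefficients, which also rules out a sign error: $\theta^F(z/2)=2q^{1/8}+\dots$ with $q=e^{2\pi i z}$, while $\theta,\theta^M=1+\dots$, so the product is $2q^{1/8}+\cdots=2\eta^3$.

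For (2), raise (1) to the eighth power. Since $(e^{\pi i/4})^8=1$ and $2^8=256$, this gives $\eta^{24}=\tfrac1{256}\big([\theta^{1/2}]^{\otimes}\big)^8$. The classical identity $\Delta=\eta^{24}$ from \cite{Za} then finishes the proof. As a consistency check, Theorem \ref{Twistedbypro2} with $N=1$ says $[\theta^{1/2}]^{\otimes}$ transforms with $J_{1/2}^3$ and a character. Its eighth power therefore has weight $12$ and multiplier $\overline{\lambda}(V_{G\to H}(\cdot))^8$, which must be trivial, in agreement with the modularity of $\Delta$.
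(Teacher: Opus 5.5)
Your route is the paper's route: Zagier's product formula combined with the identifications of Section~\ref{Examples12}. However, the step you single out as delicate actually fails. Under the paper's own definitions,
\[
[\theta^{1/2}]^{\overline{M}_{q_3}}(z)=\theta^{F,1/2}(z)
\]
exactly, with no factor $e^{-\pi i/4}$.

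To see this, note that $J_{1/2}(\overline{M}_{q_3},z)=\sqrt{1-z}$ is the principal root (Definition~\ref{defczd}, Remark~\ref{remachoArg}). Factor $u_-(-1)z=z/(1-z)$ as $z\mapsto -1/z$, then $w\mapsto w+1$, then $w\mapsto -1/w$. The classical laws
\[
\theta^{1/2}(-1/w)=\sqrt{-iw}\,\theta^{1/2}(w),\qquad \theta^{1/2}(w+1)=\theta^{M,1/2}(w),\qquad \theta^{M,1/2}(-1/w)=\sqrt{-iw}\,\theta^{F,1/2}(w)
\]
then give
\[
\theta^{1/2}\bigl(\tfrac{z}{1-z}\bigr)=\sqrt{-i\,\tfrac{z-1}{z}}\;\sqrt{-iz}\;\theta^{F,1/2}(z)=\sqrt{1-z}\;\theta^{F,1/2}(z).
\]
The last equality holds because both principal roots have argument in $(-\pi/4,\pi/4)$. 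Their product is therefore the square root of $1-z$ with positive real part, which is the principal one. Numerically, at $z=i$ the slashed function is $\approx 0.9136=\theta^{F,1/2}(i)$, a positive real number, not $e^{-\pi i/4}\theta^{F,1/2}(i)$.

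The spurious phase enters at the last line of the computation of $\theta^{1/2}(\overline{M}_{q_3}z_g)$ in Section~\ref{Examples12}. There the cocycle relation produces the \emph{unbarred} factor
\[
J_{1/2}(u_-(-1),z)=\sqrt{1-z}\,m_{X^{\ast}}(u_-(-1))=e^{\pi i/4}\sqrt{1-z},
\]
which cancels the $e^{-\pi i/4}$ of Case~8 in Section~\ref{Gamma2I2}. The text instead silently replaces it by the barred $J_{1/2}(\overline{M}_{q_3},z)=\sqrt{1-z}$.

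Consequently
\[
[\theta^{1/2}]^{\otimes}=\theta^{M,1/2}\,\theta^{1/2}\,\theta^{F,1/2}=2\eta^3,
\]
and part (1) as stated is off by $e^{\pi i/4}$. At $z=i$ one has $\eta(i)^3>0$, while $\tfrac12 e^{\pi i/4}[\theta^{1/2}]^{\otimes}(i)=e^{\pi i/4}\eta(i)^3$. The correct identity is $\eta^3=\tfrac12[\theta^{1/2}]^{\otimes}$. Part (2) is unaffected since $(e^{\pi i/4})^8=1$, and your eighth-power argument for it goes through.

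Neither of your proposed checks would have caught the error. Recomputing Case~8 just reproduces $e^{-\pi i/4}$, which is correct. Comparing leading $q$-coefficients of $\theta^M\theta\theta^F$ with $2\eta^3$ tests Zagier's identity, not the phase of the slashed function. To test that phase you must expand $[\theta^{1/2}]^{\overline{M}_{q_3}}$ itself, and its leading term is $2q^{1/8}$ with no phase.
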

\begin{proof}
See \cite[p.29]{Za}. 
\end{proof}

Let $g\in G=\overline{\SL}_2(\Z)$, $H=\overline{\Gamma}_{\theta}$.  According to Thm. \ref{Twistedbypro2}, we have:
 \begin{proposition}
   For $g\in G$, $\eta^3(gz)=J_{1/2}(g,z)^3 \eta^3(z)  \overline{\lambda}(V_{G\to H}(g))$.
   \end{proposition}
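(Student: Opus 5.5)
The plan is to deduce the statement directly from Theorem \ref{eta3}(1) together with Theorem \ref{Twistedbypro2}(1), in the special case $N=1$, $\chi\equiv 1$, $\kappa=1$. For $N=1$ we have $\Gamma_0(N^2)=\SL_2(\Z)$. Hence $G=\overline{\Gamma}_0(1)=\overline{\SL}_2(\Z)$ and $H=\overline{\Gamma}_{\theta}\cap\overline{\Gamma}_0(1)=\overline{\Gamma}_{\theta}$. Since $\chi$ is trivial, $\overline{\lambda}_{\chi}=\overline{\lambda}$, and the coset representatives $\overline{M}_{q_1},\overline{M}_{q_2},\overline{M}_{q_3}$ of Section \ref{Gammaaonodd} coincide with those of Section \ref{Examples12}.

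First, Theorem \ref{Twistedbypro2}(1) (odd case, $N=1$) gives
\[
[\theta^{1/2}]^{\otimes}(gz)=J_{1/2}(g,z)^3\,[\theta^{1/2}]^{\otimes}(z)\,\overline{\lambda}(V_{G\to H}(g)).
\]
Next, Theorem \ref{eta3}(1) gives $\eta(z)^3=\tfrac12 e^{\pi i/4}[\theta^{1/2}]^{\otimes}(z)$ for every $z\in\mathbb H$. Applying this at $gz$ and at $z$, and multiplying the displayed identity by the constant $\tfrac12 e^{\pi i/4}$, yields $\eta^3(gz)=J_{1/2}(g,z)^3\eta^3(z)\,\overline{\lambda}(V_{G\to H}(g))$. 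Here $gz$ means the action through the projection $\overline{\SL}_2(\Z)\to\SL_2(\Z)$, as in Section \ref{Gammatheta}.

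The only point needing care is that Theorem \ref{Twistedbypro2} really specializes to $N=1$. Specifically, one needs the coset decomposition $\overline{\SL}_2(\Z)=\sqcup_{i=1}^{3}\overline{\Gamma}_{\theta}\overline{M}_{q_i}$, which Lemma \ref{coniso} provides (its representatives $u(N^2)=u(1)$ and $u_-(-N^2)=u_-(-1)$ are exactly those used in Example 2). One also needs the transformation rules of Sections \ref{Mqpr1}--\ref{Mqpr2}, which are stated for general odd $N$, including $N=1$.

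I expect no genuine obstacle, since the transfer map $V_{G\to H}$ lands in $H^{ab}$ and $\overline{\lambda}$ is a character of $H$ (Lemma \ref{lambdaoverlin}), so the composite is well defined.
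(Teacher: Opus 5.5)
Your proposal is correct and follows the paper's own route: the paper obtains this proposition by specializing Theorem \ref{Twistedbypro2} to $N=1$, $\chi\equiv 1$ (so $G=\overline{\SL}_2(\Z)$ and $H=\overline{\Gamma}_{\theta}$), and by using Theorem \ref{eta3}(1) to identify $[\theta^{1/2}]^{\otimes}$ with the constant multiple $2e^{-\pi i/4}\eta^3$. Your extra remarks on the coset representatives and on the well-definedness of $\overline{\lambda}\circ V_{G\to H}$ simply make explicit what the paper leaves implicit.
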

   As a consequence, we have:
\begin{proposition}\label{Twistedbypro24}
For $g\in G=\overline{\SL}_2(\Z) $,  $\eta^{12}(gz)=J(g,z)^{6} \eta^{12}(z)  \overline{\lambda}^4(V_{G\to H}(g))$.
   \end{proposition}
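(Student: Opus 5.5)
The identity follows by raising the preceding proposition to the fourth power. That proposition gives
\[\eta^3(gz)=J_{1/2}(g,z)^3\,\eta^3(z)\,\overline{\lambda}(V_{G\to H}(g)),\qquad g\in G=\overline{\SL}_2(\Z).\]
It is itself the case $N=1$, $\chi\equiv 1$, $\kappa=1$ of Theorem~\ref{Twistedbypro2}(1), combined with the identity $\eta^3=\tfrac12 e^{\pi i/4}[\theta^{1/2}]^{\otimes}$ of Theorem~\ref{eta3}(1). The constant $\tfrac12 e^{\pi i/4}$ appears on both sides of that transformation law and cancels. Taking fourth powers, the character $\overline{\lambda}\circ V_{G\to H}$ becomes $\overline{\lambda}^4\circ V_{G\to H}$, and the automorphy factor becomes $J_{1/2}(g,z)^{12}$. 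Since $V_{G\to H}$ is a homomorphism into the abelian group $H^{ab}$ and $\overline{\lambda}$ factors through $H^{ab}$, $\overline{\lambda}^4(V_{G\to H}(g))$ is unambiguous.

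The only real step is to show that $J_{1/2}(g,z)^{12}=J(g,z)^6$ for $g=[r,\epsilon]\in\overline{\SL}_2(\Z)$. By the definition of $J_{1/2}$ on $\overline{\SL}_2(\R)$ and Definition~\ref{defczd},
\[J_{1/2}(g,z)=\epsilon^{-1}J_{1/2}(r,z)\,m_{X^{\ast}}(r)^{-1}=\epsilon^{-1}\sqrt{cz+d}.\]
Lemma~\ref{J12squr2} gives $\bigl(J_{1/2}(r,z)m_{X^{\ast}}(r)^{-1}\bigr)^2=cz+d$, so $J_{1/2}(g,z)^2=\epsilon^{-2}(cz+d)=cz+d$, because $\epsilon\in\mu_2$. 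Raising to the sixth power yields $J_{1/2}(g,z)^{12}=(cz+d)^6=J(g,z)^6$.

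The main obstacle is bookkeeping rather than analysis: one must check that the $J_{\kappa/2}(g,z)$ in Theorem~\ref{Twistedbypro2} really is the $\overline{\SL}_2$-version $\epsilon^{-1}\sqrt{cz+d}$. That theorem is stated on $\overline{\Gamma}_0(N^2)$, which for $N=1$ is $\overline{\SL}_2(\Z)$, using the factors from Corollary~\ref{Automorphicfactor2}, so this holds. Consequently the $\mu_2$-ambiguity $\epsilon$ disappears at even powers, and the square-root branch issue disappears as well, since only the square of $\sqrt{cz+d}$ enters the final formula.
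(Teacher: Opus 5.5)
Your proposal is correct and follows essentially the paper's argument. The paper also obtains the statement by raising to the fourth power the $\eta^3$ transformation law, which itself comes from Theorem~\ref{Twistedbypro2} with $N=1$, $\chi\equiv 1$ combined with Theorem~\ref{eta3}(1). Your check via Lemma~\ref{J12squr2} that $J_{1/2}(g,z)^{2}=\epsilon^{-2}(cz+d)=cz+d$ for $g=[r,\epsilon]\in\overline{\SL}_2(\Z)$ fills in the one step the paper leaves implicit.
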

   \begin{lemma}
   $\overline{\lambda}^4(V_{G\to H}(-))$ is a character of $\SL_2(\Z)$ determined by $\overline{\lambda}^4|_{\Gamma(2)}=1$ and $\overline{\lambda}^4=$ the sign character  on $\SL_2(\Z)/\Gamma(2)\simeq S_3$.
   \end{lemma}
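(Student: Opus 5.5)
We need to show that $\overline{\lambda}^4(V_{G\to H}(-))$ factors through $\SL_2(\Z)$, is trivial on $\Gamma(2)$, and induces the sign character of $S_3$. Write $\rho(g)=\overline{\lambda}^4(V_{G\to H}(g))$ for $g\in G=\overline{\SL}_2(\Z)$.

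First I would show that $\rho$ factors through $\SL_2(\Z)$. The center $\mu_2=\{[1,\pm1]\}$ lies in $H=\overline{\Gamma}_\theta$. For a central element $\zeta$ of a group, the transfer gives $V_{G\to H}(\zeta)=\zeta^{[G:H]}=\zeta^3$ modulo $D(H)$. Here $\zeta=[1,-1]$, so $\overline{\lambda}(\zeta^3)=(-1)^3=-1$, and its fourth power is $1$. Hence $\rho$ descends to a character of $\SL_2(\Z)$. Since $\rho$ is a product of values of characters, it is automatically a homomorphism, by Proposition~\ref{otimescorr} and the fact that the transfer is a homomorphism.

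Next I would show that $\rho|_{\Gamma(2)}=1$. The group $\Gamma(2)$ is generated by $-I$, $u(2)$ and $u_-(2)$, so it suffices to evaluate $\rho$ on these three elements, lifted as $[g,1]$. Use the coset representatives $x_i^{-1}=\overline{M}_{q_i}$ with $N=1$, namely $u(1)$, $1$ and $u_-(-1)$. Since $\Gamma(2)$ is normal in $\SL_2(\Z)$ and lies in $\Gamma_\theta$, each element $g\in\Gamma(2)$ fixes every coset, so $V(g)=\prod_i \overline{M}_{q_i}\, g\, \overline{M}_{q_i}^{-1}$ computed in the cover. Therefore
\[\rho(g)=\prod_i \overline{\lambda}\bigl(\overline{M}_{q_i} g \overline{M}_{q_i}^{-1}\bigr)^4.\]
By Lemma~\ref{gammar}, $\overline{\lambda}([r,t])^4=\bigl(\tfrac{2c}{d}\bigr)^4\epsilon_d^{-4}t^4=1$ for every $[r,t]$ with $r\in\Gamma(2)$, because $t\in\mu_2$ and $\epsilon_d\in\{1,i\}$. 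So every factor is $1$, and hence $\rho|_{\Gamma(2)}=1$.

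Finally I would identify the induced character on $S_3$. The quotient $\SL_2(\Z)/\Gamma(2)\cong S_3$ has only two characters, the trivial one and the sign. It therefore suffices to check that $\rho(\omega)=-1$, since $\omega$ maps to a transposition (it fixes the coset $\Gamma_\theta$). To compute $V(\omega)$: the element $\omega$ fixes the coset $\Gamma_\theta$ and swaps the cosets of $u(1)$ and $u_-(-1)$. Thus $V(\omega)$ is a product of one $H$-element $[\omega,1]$ and one $H$-element coming from a $2$-cycle. That second element is $h=\overline{M}_{q_1}\,\omega\,\overline{M}_{q_3}^{-1}\cdot\overline{M}_{q_3}\,\omega\,\overline{M}_{q_1}^{-1}$, up to ordering; since the target is abelian, this equals $\overline{M}_{q_1}\,\omega^2\,\overline{M}_{q_1}^{-1}$ in the cover. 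Its underlying matrix is $-I$, with some sign $t$ determined by the cocycle. Then
\[\rho(\omega)=\overline{\lambda}([\omega,1])^4\,\overline{\lambda}([-I,t])^4.\]
Here $\overline{\lambda}([\omega,1])=e^{-i\pi/4}$, whose fourth power is $-1$. For the other factor, $\overline{\lambda}([-I,t])=\bigl(\tfrac{0}{-1}\bigr)\epsilon_{-1}^{-1}t=(-1)(-i)t$, whose fourth power is $1$. Hence $\rho(\omega)=-1$, which gives the sign character.

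The main obstacle is the bookkeeping of the transfer in the cover, in particular checking that the cocycle signs do not matter. They do not, because $\overline{\lambda}^4$ kills $\mu_2$. I would organize the computation so that only fourth powers of the values from Lemma~\ref{gammar} appear.
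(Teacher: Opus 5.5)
Your argument is correct and rests on the same input as the paper's proof. Lemma~\ref{gammar} gives $\overline{\lambda}^4=1$ on $\overline{\Gamma}(2)$, hence on $\mu_2$, and $\overline{\lambda}^4([\omega,1])=e^{-i\pi}=-1$. The paper stops at these values and leaves the passage through $V_{G\to H}$ implicit, whereas you carry it out: the central-element formula, the fact that $\Gamma(2)$ fixes every coset, and the extra $2$-cycle factor over $-I$ at $\omega$, which $\overline{\lambda}^4$ kills. Your version therefore fills in exactly the step the paper glosses over.
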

   \begin{proof}
   By Lemma \ref{gammar}, $\overline{\lambda}^4(g)=1$ on $\overline{\Gamma}(2)$ and $\overline{\lambda}^4([\omega, 1])=-1$. So the result follows.    
   \end{proof}
   As a consequence, we have:
   \begin{proposition}\label{Twistedbypro242}
  $\eta^{12}(gz)=(cz+d)^{6} \eta^{12}(z) \sgn(g)$, for  $g=\begin{pmatrix} a& b\\ c& d\end{pmatrix}\in\SL_2(\Z) $, $\sgn: \SL_2(\Z)/\Gamma(2)\to \{\pm1\}$.
   \end{proposition}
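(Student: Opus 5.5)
The plan is to obtain the claimed identity directly from Proposition~\ref{Twistedbypro24} by specialising to elements of the form $[g,1]$ and then computing the character factor via the preceding lemma. I take $\overline{g}=[g,1]\in\overline{\SL}_2(\Z)$. For this element the automorphic factor is $J_{1/2}(\overline{g},z)=\sqrt{cz+d}$, in the sense of Definition~\ref{defczd}.

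\textbf{Step 1: the automorphic factor.} By Lemma~\ref{J12squr2}, $J_{1/2}(\overline{g},z)^2=cz+d$. Hence
\[J_{1/2}(\overline{g},z)^{12}=(cz+d)^{6}.\]
This is the power appearing in Proposition~\ref{Twistedbypro24}, where $J(g,z)^6$ is to be read as $J_{1/2}(\overline g,z)^{12}$. Raising $\eta^3(gz)=J_{1/2}(g,z)^3\eta^3(z)\,\overline{\lambda}(V_{G\to H}(g))$ to the fourth power then gives
\[\eta^{12}(gz)=(cz+d)^6\,\eta^{12}(z)\,\overline{\lambda}^4\bigl(V_{G\to H}([g,1])\bigr).\]

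\textbf{Step 2: identifying the character.} It remains to show that $[g,1]\mapsto\overline{\lambda}^4(V_{G\to H}([g,1]))$ equals $\sgn(g)$.

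First, the central element $[1,-1]$ contributes $(-1)^4=1$. Since $\overline{\lambda}^4$ is a character that is trivial on the centre, the composite descends to a character of $\SL_2(\Z)$. The preceding lemma then states that it is trivial on $\Gamma(2)$ and induces the sign character on $\SL_2(\Z)/\Gamma(2)\simeq S_3$.

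If I check this directly, the triviality on $\Gamma(2)$ comes from Lemma~\ref{gammar}. For $r\in\Gamma(2)$ one has $\overline{\lambda}([r,1])^4=\bigl((\tfrac{2c}{d})\epsilon_d^{-1}\bigr)^4=1$, because $\epsilon_d^4=1$. For $\omega$ one has $\overline{\lambda}([\omega,1])^4=(e^{-i\pi/4})^4=-1$.

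\textbf{Step 3: handling the transfer.} The transfer $V_{G\to H}(\omega)$ is a product of three elements $h_{g,x_i}\in H$ computed from the coset representatives $\overline M_{q_i}$. Evaluating the fourth power of $\overline{\lambda}$ on this product gives $(-1)^{\#\{i:\,h_{g,x_i}\in\Gamma(2)\omega\}}$. The parity of this count is the parity of the permutation $\tau_\omega$ acting on the three cosets, so $\omega\mapsto\sgn$ of a transposition, which is $-1$.

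This transfer step is where I expect the main obstacle. To handle it cleanly, I would note that $\Gamma_\theta/\Gamma(2)$ has order $2$, and that $\overline{\lambda}^4$ factors through $\Gamma_\theta/\Gamma(2)$. The composite $\overline{\lambda}^4\circ V_{G\to H}$ is then the transfer $S_3\to S_3/A_3\cdots$ restricted appropriately, i.e.\ it is the transfer from $S_3$ to a subgroup of order two, followed by the nontrivial character of that subgroup. Computing it on a transposition and on a $3$-cycle, I would verify that it gives the sign character: for the $3$-cycle, each $h_{g,x_i}$ lies in $\Gamma(2)$ or the contributions cancel in pairs.

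\textbf{Conclusion.} Since $\SL_2(\Z)/\Gamma(2)$ is generated by the images of $\omega$ and $u(1)$, both transpositions, matching the values on these generators with $\sgn(g)$ finishes the proof.
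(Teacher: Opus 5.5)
Your proposal follows the paper's route: raise the $\eta^{3}$ law to the fourth power to get Proposition~\ref{Twistedbypro24}, use $J_{1/2}([g,1],z)^{12}=(cz+d)^{6}$, and identify $\overline{\lambda}^{4}\circ V_{G\to H}$ with $\sgn$. Your Step~3 is more careful than the paper, whose proof of the preceding lemma only evaluates $\overline{\lambda}^{4}$ itself on $\overline{\Gamma}(2)$ and on $[\omega,1]$, not its composite with the transfer. Your reduction modulo $\overline{\Gamma}(2)$ to the transfer $S_3\to C_2$ closes that step: a transposition $t$ has one fixed coset contributing $t$ and one $2$-cycle contributing $xt^{2}x^{-1}=1$. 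That computation, not the general ``parity of $\tau_g$'' heuristic, is what justifies your count.
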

It is comparable with the following  results from Ono \cite{Ono}.
   \begin{lemma}\label{cuspformsgamma26}
 \begin{itemize}
  \item[(1)] $\eta^{12}(z)$ is a cusp form of $\Gamma(2)$ of weight $6$.
  \item[(2)] $\eta^{12}(2z)$ is a cusp form of $\Gamma_0(4)$ of weight $6$.
  \end{itemize}
   \end{lemma}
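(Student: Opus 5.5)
The statement is Lemma~\ref{cuspformsgamma26}, which the paper cites from Ono. I would derive it from Proposition~\ref{Twistedbypro242} together with the standard $q$-expansion of $\eta$.

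\textbf{Part (1).} Proposition~\ref{Twistedbypro242} gives
\[
\eta^{12}(gz)=(cz+d)^6\,\eta^{12}(z)\,\sgn(g).
\]
Since $\sgn$ is trivial on $\Gamma(2)$, $\eta^{12}$ transforms with trivial multiplier and weight $6$ under $\Gamma(2)$; the weight is even, so $-I$ imposes no extra condition. Holomorphy on $\mathbb H$ is clear from the product expansion $\eta(z)=e^{\pi i z/12}\prod_{n\ge1}(1-e^{2\pi i nz})$. It remains to check behaviour at the three cusps $\infty,0,1$ of $\Gamma(2)$.

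At $\infty$, $\eta^{12}(z)=q^{1/2}\prod_n(1-q^n)^{12}$ with $q=e^{2\pi i z}$. Since $u(2)\in\Gamma(2)$, the local parameter is $q^{1/2}$, and $\eta^{12}$ vanishes to order $1$ there.

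For any other cusp $\gamma\infty$ with $\gamma\in\SL_2(\Z)$, Proposition~\ref{Twistedbypro242} gives $(\eta^{12}|_6\gamma)(z)=\sgn(\gamma)\,\eta^{12}(z)$. This has the same vanishing expansion at $\infty$, so every cusp is a zero. Hence $\eta^{12}$ is a cusp form on $\Gamma(2)$ of weight $6$.

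\textbf{Part (2).} Let $\mathfrak{w}=\begin{pmatrix}2&0\\0&1\end{pmatrix}$ and use $\mathfrak{w}^{-1}\Gamma(2)\mathfrak{w}=\Gamma_0(4)\cap\Gamma^0(1)$, as noted in Section~\ref{Examples11}. Then $f(z):=\eta^{12}(2z)=2^{-3}\,\eta^{12}|_6\mathfrak{w}$, and for $r\in\Gamma_0(4)$ the relation $\mathfrak{w}r\mathfrak{w}^{-1}=\begin{pmatrix}a&2b\\c/2&d\end{pmatrix}$ gives
\[
f(rz)=(\tfrac c2\cdot 2z+d)^6\,\eta^{12}(2z)\,\sgn(\mathfrak{w}r\mathfrak{w}^{-1}).
\]
The matrix $\mathfrak{w}r\mathfrak{w}^{-1}$ lies in $\SL_2(\Z)$ and is congruent mod $2$ to $\begin{pmatrix}1&0\\ c/2&1\end{pmatrix}$. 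It therefore lies in $\Gamma(2)$ or in $\Gamma(2)u_-(1)$, and the latter is a transposition class in $S_3$ with $\sgn=-1$.

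This is the main obstacle: the sign character is \emph{not} trivial on all of $\Gamma_0(4)$. For example, $c=4$ would need to be checked, but $c/2=2$ is even, so that element lies in $\Gamma(2)$ after all. For $c\equiv 2\pmod 4$, i.e.\ $c/2$ odd, one would land in the nontrivial class, but such $r$ are not in $\Gamma_0(4)$. Since $4\mid c$ forces $c/2$ even, $\mathfrak{w}r\mathfrak{w}^{-1}\in\Gamma(2)$ always, and the multiplier is trivial.

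Cuspidality at the cusps of $\Gamma_0(4)$ then follows by the same transport. Each cusp of $\Gamma_0(4)$ is $\mathfrak{w}^{-1}$ of a cusp of $\Gamma(2)$, up to an $\SL_2(\Z)$-translate, and $f|_6\gamma$ is a constant multiple of $\eta^{12}|_6(\mathfrak{w}\gamma)$. Writing $\mathfrak{w}\gamma=\gamma'\,p$ with $\gamma'\in\SL_2(\Z)$ and $p$ upper triangular rational reduces this to part (1) for $\gamma'$ and a harmless translation-and-rescaling by $p$. That reduction preserves vanishing at $\infty$.
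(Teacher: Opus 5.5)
Your argument is correct, but it follows a different route from the paper. The paper gives no argument of its own: its proof is just ``See \cite[p.3]{Ono},'' and it presents the lemma as an \emph{external} check on Proposition~\ref{Twistedbypro242} (``It is comparable with the following results from Ono'').

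You instead derive the lemma \emph{from} Proposition~\ref{Twistedbypro242}, in three steps:
\begin{itemize}
\item restricting the sign character to $\Gamma(2)$ gives the weight-$6$ invariance;
\item the leading term $q^{1/2}$, together with $\eta^{12}|_6\gamma=\sgn(\gamma)\,\eta^{12}$ for all $\gamma\in\SL_2(\Z)$, gives vanishing at every cusp at once;
\item conjugation by $\mathfrak{w}=\mathrm{diag}(2,1)$, using $\mathfrak{w}\Gamma_0(4)\mathfrak{w}^{-1}=\Gamma(2)$, transports all of this to $\Gamma_0(4)$.
\end{itemize}

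What this buys is a self-contained, elementary proof inside the paper's framework. What it gives up is independence. There is no logical circularity, since Proposition~\ref{Twistedbypro242} does not use the lemma. But the lemma then no longer corroborates the theta/transfer computation behind that proposition, namely Zagier's identity, Theorem~\ref{Twistedbypro2}, and the identification of $\overline{\lambda}^4\circ V_{G\to H}$ with $\sgn$. Your proof also inherits whatever that chain is worth.

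If you want independence, replace Proposition~\ref{Twistedbypro242} by the classical identities $\eta(z+1)=e^{\pi i/12}\eta(z)$ and $\eta(-1/z)=\sqrt{z/i}\,\eta(z)$. These give $\eta^{12}|_6u(1)=-\eta^{12}$ and $\eta^{12}|_6\omega=-\eta^{12}$. Hence $\eta^{12}$ transforms by the unique nontrivial quadratic character of $\SL_2(\Z)$, which is $\sgn$, and the rest of your argument goes through verbatim.

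Two cosmetic points:
\begin{itemize}
\item $\Gamma_0(4)\cap\Gamma^0(1)$ is just $\Gamma_0(4)$.
\item The ``main obstacle'' paragraph in part (2) is a non-issue. Whether $\sgn$ is trivial on $\Gamma_0(4)$ itself is irrelevant, since $\sgn$ is only ever evaluated on $\mathfrak{w}r\mathfrak{w}^{-1}\in\mathfrak{w}\Gamma_0(4)\mathfrak{w}^{-1}=\Gamma(2)$, which is exactly the identity you quoted at the start of that part.
\end{itemize}
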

   \begin{proof}
   See \cite[p.3]{Ono}.
   \end{proof}

\labelwidth=4em
\setlength\labelsep{0pt}
\addtolength\parskip{\smallskipamount}

\end{document}